\documentclass{amsart}
\usepackage{amsmath, amssymb, graphicx, epsfig, verbatim, psfrag, color,amscd,stmaryrd,leftidx} 
\usepackage{tikz}
\usepackage{amsthm}
\usepackage{enumitem}
\usetikzlibrary{arrows}

\newcommand{\lsupv}[2]{{}^{#1}\mskip-.2\thinmuskip\mathord{#2}}

\newcommand\state{\mathbf{K}}

\usetikzlibrary{matrix,arrows,positioning}
\tikzset{cdlabel/.style={above,sloped,
    execute at begin node=$\scriptstyle,execute at end node=$}}
\tikzset{algarrow/.style={->, thick}}
\tikzset{blgarrow/.style={->, thick}}
\tikzset{clgarrow/.style={->, thick}}
\tikzset{tensoralgarrow/.style={double, double equal sign distance, -implies}}
\tikzset{tensorblgarrow/.style={double, double equal sign distance, -implies}}
\tikzset{tensorclgarrow/.style={double, double equal sign distance, -implies}}
\tikzset{modarrow/.style={->, dashed}}
\tikzset{othmodarrow/.style={->, thick}}
\tikzset{Amodar/.style={->, dashed}}
\tikzset{Dmodar/.style={->, dashed}}
\newcommand\op{\mathrm{op}}
\newcommand\opp{\op}

\newcommand\MasGr{M}
\newcommand\TerMin{t\GenMin}
\newcommand\TerMinM{\TerMin^-}
\newcommand\TerMina{\widehat{\TerMin}}
\newcommand\GenMax{\Omega}
\newcommand\Min{\GenMin}
\newcommand\GenMin{\mho}
\newcommand\KHm{H^-}
\newcommand\KHa{\widehat H}
\newcommand\KCm{C^-}
\newcommand\KC{{\mathcal C}}
\newcommand\KCa{\widehat{C}}
\newcommand\Diag{\mathcal D}
\newcommand\States{\mathfrak{S}}
\newcommand\HFKm{\mathrm{HFK}^-}

\def\endproof{\relax\ifmmode\expandafter\endproofmath\else
  \unskip\nobreak\hfil\penalty50\hskip.75em\hbox{}\nobreak\hfil\bull
  {\parfillskip=0pt \finalhyphendemerits=0 \bigbreak}\fi}
\def\endproofmath$${\eqno\bull$$\bigbreak}
\def\bull{\vbox{\hrule\hbox{\vrule\kern3pt\vbox{\kern6pt}\kern3pt\vrule}\hrule}}

\def\mathcenter#1{%
  \vcenter{\hbox{$#1$}}%
}
\newcommand\CanonDD{\mathcal K}
\newcommand\North{\mathbf N}
\newcommand\South{\mathbf S}
\newcommand\East{\mathbf E}
\newcommand\West{\mathbf W}

\newcommand\Pos{\mathcal P}
\newcommand\Neg{\mathcal N}
\newcommand\Upwards{\mathcal{S}}
\newcommand\HFKa{\widehat{\mathrm{HFK}}}
\newcommand\ground{\mathbf k}
\newcommand\groundj{\mathbf j}
\newcommand\groundi{\mathbf i}
\newtheorem{thm}{Theorem}[section]

\newtheorem{cor}[thm]{Corollary}

\newtheorem{lemma}[thm]{Lemma}
\newtheorem{prop}[thm]{Proposition}
\newtheorem{defn}[thm]{Definition}

\newtheorem{example}[thm]{Example}
\newtheorem{rem}[thm]{Remark}
\newtheorem{remark}[thm]{Remark}

\numberwithin{equation}{section}

\newcommand\OneHalf{\frac{1}{2}}

\mathsurround=1pt
\setlength{\parindent}{0em}
\setlength{\parskip}{1.2ex}

%
%

%
%

\newcommand\IdempRing{\mathbf{I}}
\newcommand\Idemp[1]{\mathbf{I}_{#1}}
\newcommand\DT{\boxtimes}
\newcommand\x{\mathbf x}
\newcommand\w{\mathbf w}
\newcommand\y{\mathbf y}

\newcommand\lsup[2]{\leftidx{^{#1}}{#2}{}}
\newcommand\lsub[2]{\leftidx{_{#1}}{#2}}

\newcommand{\AlgA}{{\mathcal A}}
\newcommand{\AlgB}{{\mathcal B}}
\newcommand{\Ideal}{\mathcal I}
\newcommand{\Mor}{\mathrm{Mor}}

\newcommand\z{\mathbf z}

\newcommand{\de}{\delta}

%
%




 \newcommand{\Z}{\mathbb Z}  \newcommand{\Q}{\mathbb Q} \newcommand{\R}{\mathbb R}

\newcommand\XX{\mathbf X}
\newcommand\YY{\mathbf Y}
\newcommand\ZZ{\mathbf Z}
\newcommand\Max{\Omega}
\newcommand\Crit{\mathcal E}

\newcommand\Field{\mathbb F}
\DeclareMathOperator{\Hom}{Hom}

\DeclareMathOperator{\Id}{Id}

%
%
\newcommand\VRot{\mathcal R}
\newcommand\Alg{\mathcal A}
\newcommand\Blg{\mathcal B}
\newcommand\Clg{\mathcal C}

\newcommand\Dlg{\mathcal D}
\newcommand\Tensor{\mathcal T}
\newcommand\Ainf{{\mathcal A}_{\infty}}
\newcommand\Ainfty\Ainf
\newcommand\Zmod[1]{{\mathbb Z}/{#1}{\mathbb Z}}
\newcommand\Mod{\mathfrak{Mod}}
\newcommand\AlgBZ{\AlgB_0}
\newcommand\Opposite{o}

\newcommand\sgen{\mathbf 1}
\newcommand\MasFilt{\mathfrak m}


\newcommand\NGradingSet{T}
\newcommand\MGradingSet{S}
\newcommand\gr{\mathbf{gr}}

\newcommand\MinGen{\mathbf T}

\makeatletter
\renewenvironment{proof}[1][\proofname]{\par
\pushQED{\qed}%
\normalfont \topsep6\p@\@plus6\p@\relax
\trivlist
\item\relax
{\bf#1\@addpunct{.}}\hspace\labelsep\ignorespaces
}{%
\popQED\endtrivlist\@endpefalse
}
\makeatother

\begin{document}
\title{Kauffman states, bordered algebras, and  a bigraded knot invariant}

\author[Peter S. Ozsv\'ath]{Peter Ozsv\'ath}
\thanks {PSO was partially supported by NSF grants DMS-1258274, DMS-1405114, and DMS-1708284}
\address {Department of Mathematics, Princeton University\\ Princeton, New Jersey 08544} 
\email {petero@math.princeton.edu}

\author[Zolt{\'a}n Szab{\'o}]{Zolt{\'a}n Szab{\'o}}
\thanks{ZSz was partially supported by NSF grants DMS-1006006, DMS-1309152, and DMS-1606571}
\address{Department of Mathematics, Princeton University\\ Princeton, New Jersey 08544}
\email {szabo@math.princeton.edu}

\begin{abstract}We define and study a bigraded knot invariant whose Euler
   characteristic is the Alexander polynomial, closely connected to
   knot Floer homology.  The invariant is the homology of a chain complex
   whose generators correspond to Kauffman states
   for a knot diagram. The definition uses 
   decompositions of knot diagrams: to a collection of points on the line,
   we associate a differential graded algebra; to a partial knot
   diagram, we associate modules over the algebra. The knot invariant
   is obtained from these modules by an appropriate tensor product.
\end{abstract}

\maketitle

\section{Introduction}
\label{sec:Intro}

The Alexander polynomial can be given a state sum
formulation as a count of certain Kauffman states,
each of which contributes a monomial in a formal variable
$t$~\cite{Kauffman}. In~\cite{AltKnots}, this description was lifted to knot Floer
homology~\cite{OSKnots,RasmussenThesis}: knot Floer homology is given
as the homology of a chain complex whose generators correspond
to Kauffman states.  The differentials in the complex, though,
  were not understood explicitly; they were given as counts of
  pseudo-holomorphic curves. A much larger model for knot Floer
homology was described in~\cite{MOS}, where the generators correspond
to certain states in a grid diagram, and whose differentials
  count certain embedded rectangles in the torus. The grid diagram
can be used to compute invariants for small
knots~\cite{BaldwinGillam,Droz}, but computations are limited by the
size of the chain complex (which has $n!$ many generators for a grid
diagram of size $n$).

The aim of this article is to construct and study an invariant of knots,
$\KHm(K)$, with the following properties.
\begin{enumerate}[label=(H-\arabic*),ref=(H-arabic*)]
  \item \label{item:ModuleStructure}
    Letting $\Field=\Zmod{2}$, $\KHm(K)$ is a bigraded module over the polynomial algebra $\Field [U]$.
    That is, there is a vector space splitting $\KHm(K)\cong \oplus_{d,s}\KHm_d(K,s)$, and an endomorphism $U$ of $\KHm(K)$
    with $U\colon \KHm_d(K,s)\to \KHm_{d-2}(K,s-1)$.
  \item If $\Diag$ is a diagram for $K$,
    with a marked edge, then 
    $\KHm(K)$ is obtained as the homology of a chain complex $\KCm(\Diag)$
    associated to the diagram.
  \item The complex $\KCm(\Diag)$ is a bigraded chain complex over $\Field[U]$,
    which is freely generated by the Kauffman states of $\Diag$.
  \item 
    \label{item:Euler}
    The graded Euler characteristic of
    $\KHm(K)$ is related to the symmetrized Alexander polynomial $\Delta_K(t)$ of the knot $K$, as follows:
    there is an identification of  Laurent series in $\Z[t,t^{-1}\rrbracket$
    \begin{equation}
      \label{eq:GradedEulerHFm}
      \sum_{d,s} (-1)^d \dim \KHm_d(K,s) t^s   = \frac{\Delta_K(t)}{(1-t^{-1})}.
    \end{equation}
\end{enumerate}

From its description, this invariant comes equipped with a great deal
of algebraic structure, similar to the Khovanov and Khovanov-Rozansky
categorifications of the Jones polynomial and its generalizations. The
structure also makes computations of the invariant for large examples
feasible.
In this paper, we also give an algebraic proof of invariance, hence
giving a self-contained treatment of this invariant. 

Building on the present work,
  in~\cite{HFKa2}, we generalize the constructions to define an
  invariant with more algebraic structure. In~\cite{HolKnot}, we
  relate the  constructions here and their generalizations
  from~\cite{HFKa2} with pseudo-holomorphic curve counting, to give an
  identification between these algebraically-defined invariants and a
  suitable variant of knot Floer homology~\cite{HolKnot}. See~\cite{OverviewHFK} for an expository paper overviewing this materal.
  A generalization
  of these constructions to links is given in~\cite{Links}.

\subsection{Decomposing knot diagrams}

The knot invariant $\KHm(K)$ is constructed by decomposing
a knot projection for $K$ into elementary pieces, and using those
pieces to put together a chain complex whose homology is $\KHm(K)$.

In a little more detail, a {\em decorated knot diagram} $\Diag$ for
$K$ is an oriented, generic knot projection of $K$ onto ${\mathbb R}^2$, 
together with a choice of a
distinguished edge, which meets the infinite region.
The projection gives a planar graph $G$ whose vertices
correspond to the double-points of the projection of $K$. Since $G$ is
four-valent, there are four distinct quadrants (bounded by edges)
emanating from each vertex, each of which is a corner of the closure
of some region of ${\mathbb R}^2\setminus G$.  Let $m$ denote the number of vertices of
$G$. Clearly, $G$ divides ${\mathbb R}^2$ into $m+2$ regions, one of which is the unbounded one.

\begin{defn}
A {\em Kauffman state} (cf.~\cite{Kauffman}; see also Figure~\ref{fig:SampleState}) for a decorated knot
projection of $K$ is a map $\state$ that associates to each vertex of $G$ one
of the four in-coming quadrants, subject to the following constraints:
\begin{itemize}
\item The quadrants assigned by $\state$ to distinct vertices are subsets of
  distinct bounded regions in ${\mathbb R}^2\setminus G$.
\item The quadrants of the
  bounded region that meets the distinguished edge 
  are not assigned by $\state$ to any of the vertices in $G$.
\end{itemize}
\end{defn}
\begin{figure}[ht]
\includegraphics{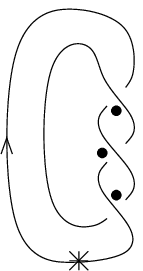}
\caption{\label{fig:SampleState} 
  {\bf{Decorated knot projection for the left-handed trefoil.}}
  The distinguished edge is marked with a star. We have illustrated
   one
  of the three Kauffman states for this projection.}
\end{figure}

Each Kauffman state sets up a one-to-one
correspondence between vertices of $G$ and the
connected components of
$\R^2\setminus G$ that do not meet the distinguished edge.

In~\cite{Kauffman}, Kauffman describes the Alexander polynomial of a
knot as a sum of monomials in $t$ associated to every Kauffman
state. We recall this description (with slight modifications to suit
our purposes). 

\begin{defn}
  Label the four quadrants around each crossing with $0$, and $\pm
  \OneHalf$, according to the orientations as specified in the second
  column of Figure~\ref{fig:LocalCrossing}. The {\em Alexander function}
  of a Kauffman state $\state$, $A(\state)$, is the sum, over each crossing,
  of the contribution of the quadrant occupied by the state.  The
  {\em Maslov function} of a Kauffman state $\state$ is obtained similarly,
  with local contributions as specified in the
  third column of Figure~\ref{fig:LocalCrossing}.
\end{defn}

 \begin{figure}[h]
 \centering
 \input{LocalCrossing.pstex_t}
 \caption{{\bf Sign conventions for crossings, and local Alexander and
     Maslov contributions.}  The first column illustrates the chirality
   of a crossing; the second the Alexander contribution of each
   quadrant; the third the Maslov contribution.}
 \label{fig:LocalCrossing}
 \end{figure}

Let $\States=\States(\Diag)$ denote the set of Kauffman states.
Kauffman shows that the  Alexander polynomial is computed by
$\Delta_K(t) = \sum_{\state\in\States} (-1)^{M(\state)} t^{A(\state)}$.
(Kauffman's description is slightly different; he
does not define the integer $M(\state)$, only its parity,
which suffices to compute the Alexander
polynomial.)

In~\cite{AltKnots}, we gave a description of knot Floer homology as
the homology groups of a chain complex whose generators are Kauffman
states, with bigradings given by the $M$ and $A$ functions defined
above, and whose differential counts pseudo-holomorphic disks. In
certain special cases (for example, for alternating knot diagrams,
after a possible change of basis),
these differentials could be computed explicitly; but in general, their
computation
remained elusive (compare~\cite{MOS}). 

In the present paper, we define $\KHm(K)$, which is the homology of a
chain complex $\KCm(\Diag)$ associated to a diagram. Its generators are
Kauffman states, and its differential is described algebraically.
The construction will involve decompositions of the knot diagram $\Diag$, 
as follows.

Given a generic, oriented knot projection in the $xy$-plane, 
we will consider the intersection of
$\Diag$ with half-planes $y\geq t$ and $y\leq t$, for generic values
of $t$. 
Intersections with $y\geq t$ are called {\em upper partial knot
  diagrams}, and intersections with $y\leq t$ are called {\em lower partial
  knot diagrams}.  
\begin{figure}[ht]
\input{SlicingDiagram.pstex_t}
\caption{\label{fig:SlicingDiagram} {\bf{Slicing the trefoil
      diagram.}}   The $y=t$ slice for the diagram on the left,
  gives the line with $4$ marked points (and orientations) in the
  middle.  The $y\geq t$ upper diagram is on the right.}
\end{figure}
The decorated edge of $\Diag$ will contain one of the
points with minimal $y$ value. See Figure~\ref{fig:SlicingDiagram}.

Our knot invariant will be constructed out of
invariants for upper and lower knot diagrams. In the spirit of
bordered Floer homology~\cite{InvPair}, we will associate the following algebraic 
objects to this picture:
\begin{itemize}
  \item  an algebra associated to
    the intersection of the knot diagram $\Diag$ with generic lines $y=t$.
  \item a ``type $D$ structure'' (in the sense of~\cite{InvPair}, recalled here in
    Section~\ref{subsec:TypeD})
    associated 
    to each (generic) upper diagram.
  \item a right ${\mathcal A}_{\infty}$-module associated to 
    each (generic) lower diagram.
\end{itemize}
Given this data, the complex $\KCm(\Diag)$ is obtained, for generic
$t$, as the tensor product (in the sense of~\cite{InvPair}, recalled
here in Section~\ref{subsec:Tensor}) of the invariant associated to
the $y\geq t$ upper diagram with the $y\leq t$ lower diagram, where
the pairing is taken over the algebra associated to $y=t$ slice.

In more detail, the intersection of the knot diagram $\Diag$ with a
generic horizontal line $y=t$ can be encoded as a line equipped with
$2n$ points $p_1,\dots,p_{2n}$ (i.e. where the knot meets the slice),
half of which are oriented upwards, and half of which are oriented
downwards. Let $\Upwards$ denote the subset of those points oriented
upwards. 
We will define an algebra associated to this configuration.
The points subdivide
the line into $2n-1$ bounded intervals, and two unbounded ones.  There are certain
distinguished {\em basic idempotents} in the algebra which correspond to
choices of $n$ of those bounded intervals. These algebras are
constructed in Section~\ref{sec:Algebras}.

We will describe certain natural modules over this algebra in terms of the following:

\begin{defn}
  \label{def:PartialKauffman}
  An {\em upper Kauffman state} for an upper diagram $y\geq t$ is a
  pair $(\state,I)$ where
  \begin{itemize}
  \item $\state$ associates to each crossing in the upper diagram
    one of the four adjacent quadrants.
  \item $I$ is a basic idempotent for the $y=t$ algebra.
  \end{itemize}
  Moreover, these data are required to satisfy the compatibility conditions:
  \begin{itemize}
  \item The quadrants assigned by $\state$ to distinct vertices are subsets
    of distinct bounded regions in the upper diagram $y\geq t$.
  \item The unbounded region meets none of the intervals in the idempotent $I$.
  \item Each bounded region in the upper diagram $y\geq t$ either:
    \begin{itemize}
      \item  contains a quadrant associated to some vertex by $\state$,
        and  meets none of the intervals in the idempotent $I$; or
      \item  meets exactly one of the intervals in the idempotent $I$,
        and  is not associated to any vertex by $\state$.
    \end{itemize}
  \end{itemize}
\end{defn}

It is easy to see that any Kauffman state can be restricted 
canonically to give an upper Kauffman state on any of its  upper diagrams;
whereas upper Kauffman states on an upper diagram might 
not extend to give a global Kauffman state.
\begin{figure}[ht]
\input{PartialKauffman.pstex_t}
\caption{\label{fig:PartialKauffman} {\bf{Upper Kauffman states.}}
  Here are all three upper Kauffman states for the upper diagram
  obtained from the trefoil diagram in
  Figure~\ref{fig:SlicingDiagram}, with  idempotent $I$ represented
  by the pairs of darkened intervals.}
\end{figure}

The type $D$ structure of an upper diagram is generated by its
upper Kauffman states.

We will also describe bimodules for enlarging the upper knot diagram.
Specifically, we will associate bimodules to crossings, caps, and cups, 
in Sections~\ref{sec:CrossingDA},~\ref{sec:Maximum},
and~\ref{sec:Minimum} respectively. These bimodules are generated
by certain localized Kauffman states, and the type $D$ structure of an
upper diagram is built as a tensor product of the bimodules associated
to these various pieces. Tensoring together all the pieces gives the
desired chain complex $\KCm(\Diag)$ whose generators are Kauffman
states.

The topological
invariance of $\KHm(K)$ is established in 
Theorem~\ref{thm:KnotInvariance}.
The proof proceeds by showing that
$\KHm(\Diag)$ is invariant under planar isotopies of the knot diagram,
and then locally verifying invariance under the three Reidemeister moves.
In fact, invariance
under Reidemeister moves $2$ and $3$ are part of the
``braid relations'' that the crossing bimodules satisfy (see Section~\ref{sec:Braid}), while Reidemeister $1$ invariance is an easy computation.

The complex $\KCm(\Diag)$ is a chain complex over the polynomial ring
in one generator $U$. We can set $U=0$ to get another chain complex
$\KCa(\Diag)$ whose homology $\KHa(K)$ is also a knot invariant.

Knot Floer homology~\cite{OSKnots} is an invariant with the above
properties, according to~\cite{AltKnots}.  The differentials appearing
in the original definition of knot Floer homology involve analytical
choices; algebraic constructions over a larger base ring were given
in~\cite{CubeRes} and~\cite{BaldwinLevine}, and a chain complex with
many more generators was given in~\cite{MOS}.  The invariant $\KHa(K)$
corresponds to the version of knot Floer homology denoted $\HFKa(K)$
and $\KHm(K)$ corresponds to another version of knot Floer homology,
denoted $\HFKm(K)$.  In~\cite{HolKnot}, we verify that $\KHa(K)$
and $\KHm(K)$ coincides with knot Floer homology $\HFKa(K)$ and
$\HFKm(K)$.

The methods of this paper are conceptually similar to the computation
of Heegaard Floer homology groups of three-manifolds by factoring
mapping classes~\cite{HFa}; but the present constructions are
ultimately algebraic in nature, as is the invariance proof we give
here; compare~\cite{Bohua}.  Although the bimodules we write
  down here might seem {\em ad hoc} in nature; the holomorphic theory
  did give us guiding principles for what to look for; see
  Section~\ref{subsec:Motivation}.
  (See also~\cite{HolKnot})

\subsection{Organization}
In Section~\ref{sec:Prelims} we discuss the algebraic preliminaries,
based on the algebra from of~\cite{Bimodules}, using throughout the
notions of bimodules of various types ($DD$, $DA$, and $AA$).  In
Section~\ref{sec:Algebras} we describe the algebras associated to knot
diagram slices, along with a canonical (invertible) dualizing bimodule
of type $DD$. In Section~\ref{sec:CrossingDD}, we associate natural
type $DD$ bimodules to crossings, which are simple to describe.  In
Section~\ref{sec:CrossingDA} we construct the corresponding type $DA$
bimodules, which are most useful to work with. These bimodules induce
a braid group action on the category of modules over our algebras, as
verified in Section~\ref{sec:Braid}. In Section~\ref{sec:Crit}, we
construct the type $DD$ bimodules associated to a critical point in
the knot diagram, and verify the ``trident relation'', which describes
how these bimodules interact with nearby crossing bimodules.  The
critical point bimodules have two kinds of corresponding type $DA$
bimodules: the bimodule associated to a maximum, constructed in
Section~\ref{sec:Maximum}, and the bimodule associated to a minimum,
constructed in Section~\ref{sec:Minimum}. The theory is equipped with
several symmetries, collected in Section~\ref{sec:Symmetries}.  In
Section~\ref{sec:ConstructionAndInvariance}, we construct the knot
invariant from the constitutent bimodules, and verify its invariance
properties. In Section~\ref{sec:Properties} we verify a few basic
properties of this invariant.

A bordered theory for tangles in the
grid context was developed by Ina Petkova and Vera
V{\'e}rtesi~\cite{PetkovaVertesi}; compare
also~\cite{GentleIntroduction}.  In a different direction, Rumen
Zarev~\cite{Zarev} constructed a ``bordered sutured'' invariant that
can also be used to study knot Floer homology.  Explicit computations
of $\KHa(K)$, for three-stranded pretzel knots, were done by Andrew
Manion. A version of this construction for singular
knots is also studied by Manion~\cite{ManionSingular}.

In~\cite{HFKa2}, we give a slight variation on the present
construction, together with a sign refinement.  The techniques from
that paper lead to efficient computations of $\KHm(K)$ for large
knots~\cite{Program}.  Similar to fast computations for Khovanov
homology~\cite{BarNatan} for non-alternating projections, there are
various cancelling differentials in the invariants associated to
partial knot diagrams that allow for fast computer
calculation. Details will be explained in~\cite{HFKa2}.  As an
illustration, we took the Gauss codes (which specify a knot up to reflection) 
for the knot $K_3$ from~\cite{FreedmanGompfMorrisonWalker},
to obtain a $91$-crossing presentation of $K$.
The  Poincar{\'e}
polynomial of $\KHa(K)$,
$P_K(m,t)=\sum_{d,s} \dim\KHa_d(K,s) m^d t^s$,
is given by
\begin{align*}
2 &t^4 m^6 + t^3(7m^5+3m^3+m) +t^2(10m^4+m^3+16m^2 + m+3)
\\
&+t(7m^3+2m^2+39m + 2 + 3 m^{-1}) +(4m^2+2m+53+2m^{-1}+2m^{-2}) \\
&+t^{-1}(7m+2+39m^{-1}+2m^{-2}+3m^{-3}) +t^{-2}(10+m^{-1}+16m^{-2}+m^{-3}+3m^{-4})\\
&+t^{-3}(7m^{-1}+3m^{-3}+m^{-5})+2t^{-4}m^{-2}.
\end{align*}

{\bf Acknowledgements} We wish to thank Adam Levine, Robert Lipshitz,
Andy Manion, B{\'e}la R{\'a}cz, Dylan Thurston, Rumen Zarev, and Bohua
Zhan for interesting discussions. We would like to thank Robert
Lipshitz, Andy Manion, and Ina Petkova, for their input on an early
draft of this paper; and we would like to thank the referee
for various suggestions.

\newcommand\Barop{\mathrm{Bar}}
\newcommand\DGradingSet\MGradingSet
\section{Algebraic preliminaries}
\label{sec:Prelims}

We recall some algebraic preliminaries from bordered Floer homology.
Further background on $\Ainf$ algebras can be found in~\cite{Keller}.
Most of this material (except  Section~\ref{subsec:Duality}) can be
found, with more detail, in~\cite{Bimodules}.

\subsection{Algebras}
In this paper, we will be concerned with differential graded 
algebras $\AlgA$ (DG algebras) in characteristic $2$.

The DG algebra $\AlgA$  is an abelian group equipped with a differential
$\mu_1\colon \AlgA\to \AlgA$,
and a 
multiplication map
$\mu_2\colon \AlgA\otimes\AlgA\to \AlgA$,
satisfying the usual compatibility conditions
\begin{align*}
 \mu_1^2 &= 0; \\
 \mu_2\circ (\mu_1\otimes \Id + \Id\otimes \mu_1)&=\mu_1\circ \mu_2; \\
\mu_2 \circ (\mu_2\otimes \Id)&=\mu_2\circ
(\Id\otimes\mu_2).
\end{align*}
(The latter two are the Leibniz rule and associativity rule respectively.)
It is customary to abbreviate $\mu_1(a)$ by 
$da$ and $\mu_2(a\otimes b)$ by $a\cdot b$.)

Our algebras are strictly unital; i.e. they are equipped with a
distinguished multiplicative unit $1$ which is a cycle.  We will typically think
of our algebras as defined over a ground ring $\ground$, which in turn
is a direct sum of finitely many copies of $\Field=\Zmod{2}$, equipped
with a vanishing differential. This means that there is a
distinguished subalgebra of $\Alg$, identified with $\ground$, whose
unit $1$ is also the unit in $\AlgA$. Moreover,
$\Alg$ as a bimodule over $\ground$, $\mu_1$ as a bimodule
homomorphism, and $\mu_2\colon \Alg\otimes_{\ground}\Alg\to \Alg$ is a
bimodule homomorphism. (Our algebras will be typically not
finitely generated over the ground ring $\ground$.)

\subsection{Gradings}
\label{sec:PrelimGradings}

Our algebras will be equipped with gradings, a {\em Maslov grading},
which takes values in $\Z$, and an {\em Alexander multi-grading} which 
takes values in some Abelian group $\Lambda=\Lambda_{\Alg}$,
compatible with the algebra actions as follows.

The algebra $\Alg$ is equipped with a direct sum splitting
$\Alg= \bigoplus_{(d;\ell)\in \Z\oplus \Lambda}\Alg_{d;\ell}$.
A non-zero element $a\in\Alg_{d;\ell}$ is called {\em homogeneous with
  grading $(d;\ell)$}; or simply {\em homogeneous with respect to the
  grading by $\Z\oplus\Lambda$}, when we do not wish to specify its
actual grading. Similarly, if $a\in\bigoplus_{d\in\Z} \Alg_{d;\ell}$
for some fixed $\ell\in\Lambda$, we say that $a$ is $\Lambda$-homogeneous
(with grading $\ell$). We require
\[
  \mu_1 \colon \Alg_{d;\ell} \to \Alg_{d-1;\ell} \qquad
  \mu_2\colon \Alg_{d_1;\ell_1}\otimes \Alg_{d_2;\ell_2}  \to 
  \Alg_{d_1+d_2;\ell_1+\ell_2}.
\]
In our present applications, the group $\Lambda_\AlgA$ is  $(\OneHalf {\mathbb Z})^m\subset \Q^m$. 

The algebras considered here will satisfy the following further condition:

\begin{defn}
  \label{def:PositiveAlexanderGrading}
  We say that the Alexander multi-grading on $\Alg$ is {\em positive over $\ground$} if the following two conditions hold:
  \begin{itemize}
  \item $\ground$ is the set of algebra elements in $\Alg$ with Alexander multi-grading $0$.
  \item 
    For
    $a_1,\dots,a_\ell$ with Alexander multi-gradings $\lambda_1,\dots,\lambda_\ell$, if
    $\sum_{i=1}^\ell \lambda_i=0$, then each $\lambda_i=0$.
  \end{itemize}
\end{defn}

In fact, in this paper $\ground$ will consist of homogeneous elements
with bigrading $(0;0)$.

\subsection{Modules}
We will consider several kinds of modules over our algebras.
A {\em right differential module} over $\AlgA$ is a right
$\ground$-module $M$, equipped  with maps $m_1\colon M \to M$
and 
$m_2\colon M \otimes_{\ground} \AlgA\to M$
satisfying 
\begin{align*}
  m_1^2 &= 0;\\
  m_2\circ (m_1\otimes \Id + \Id\otimes \mu_1)&=m_1\circ m_2; \\
  m_2 \circ (m_2\otimes \Id)&=m_2\circ
(\Id\otimes\mu_2).
\end{align*}

We will consider modules that are {\em strictly unital}, meaning that
$m_2(x,1)=x$ for all $x\in M$. 
In this case,   $m_2$ is a right $\ground$-module map
$m_2\colon M\otimes_{\ground} \Alg\to M$.

Weakening associativity, one naturally arrives at the notion of an
$\Ainfty$ module $M$.
A {\em right $\Ainfty$ module} over $\AlgA$ is a right
$\ground$-module $M$, equipped with  maps
\[ m_i\colon M\otimes_{\ground}
\overbrace{\Alg\otimes_{\ground}\dots\otimes_{\ground}\AlgA}^{i-1}
\to M\]
for $i\geq 1$,
satisfying the following strict unitality conditions:
\begin{enumerate}
  \item 
    $m_{2}(x\otimes 1)=x$ for all $x\in M$
  \item 
    $m_{i}(x\otimes a_1\otimes \dots \otimes a_{i-1})=0$
    if $i>2$ and there is some $1\leq j \leq i-1$ with $a_j=1$;
  \end{enumerate}
and a 
compatibility condition which is perhaps best phrased in
terms of the bar construction. (See Equation~\eqref{eq:TypeACondition} below.)
Define
$\Tensor^*(\Alg)=\bigoplus_{i=0}^{\infty} 
\overbrace{\Alg\otimes_{\ground}\dots\otimes_{\ground}
    \Alg}^i$,
with the convention that the $0^{th}$ tensor product of $\Alg$
is $\ground$, which is a chain complex, with a differential
induced by $\mu_1$ and $\mu_2$; i.e.
\begin{align*}
  {\underline d}(a_1\otimes\dots\otimes a_i)
  & = 
  \sum_{j=1}^i a_1\otimes \dots\otimes a_{j-1}\otimes \mu_1(a_j) \otimes a_{j+1} \otimes \dots\otimes  a_i \\
  &+  \sum_{j=1}^{i-1} a_1\otimes\dots\otimes a_{j-1}\otimes \mu_2(a_j\otimes a_{j+1}) \otimes\dots
  \otimes a_i,
\end{align*}
with the understanding that ${\underline d}$ vanishes on $\Tensor^0(\Alg)$.
Consider
${\mathcal T}^*(M)=M\otimes_{\ground} \Tensor^*(\Alg)$.
The maps $m_i$ ($i\geq 1$)
induce a map 
${\underline m}\colon {\mathcal T}^*(M)\to {\mathcal T}^*(M)$
by the formula
\[
  {\underline m}(x\otimes a_1\otimes \dots \otimes a_i)
  = \sum_{j=0}^i m_{j+1}(x\otimes a_1 \otimes \dots \otimes a_j) \otimes a_{j+1}\otimes \dots\otimes a_i \]
The compatibility condition is equivalent to the condition that
\begin{equation}
  \label{eq:TypeACondition}
  {\underline m}({\underline m}(x\otimes {\underline a}))
  + {\underline m}(x\otimes {\underline d}(a))=0;
\end{equation}
i.e. the map ${\underline d}_M\colon {\mathcal T}^*(M) \to {\mathcal T}^*(M)$ 
given by
${\underline d}_M (x\otimes {\underline a})={\underline m}(x\otimes {\underline a})+ x\otimes {\underline d}(\underline a)$
is a differential.

Left differential modules and
$\Ainfty$ modules are defined analogously (though in adherence with
the conventions laid down in~\cite{InvPair}, our $\Ainfty$ modules will
typically be right modules). A differential module is an $\Ainfty$
module with $m_i=0$ for all $i\geq 3$.

Gradings are as follows. The modules $M$ we consider will typically have a Maslov
grading by $\Z$, and a further Alexander multi-grading set $\MGradingSet$, which is a
set $\MGradingSet$ with an action by $\Lambda_{\Alg}$. That is, there is a direct sum splitting
(as $\ground$-modules) $M = \bigoplus_{d\in\Z, s\in \MGradingSet} M_{d;s}$
and we assume that each summand $M_{d;s}$ is a finitely generated $\ground$-module.
The actions $m_i$ will be graded as follows. 
\[ m_i\colon M_{d_0;s} \otimes \Alg_{d_1;\ell_1}\otimes\dots \Alg_{d_{i-1};\ell_{i-1}}
\to M_{i-2+ \sum_{j=0}^{i-1}{d_j};s+\sum_{j=1}^{i-1}{\ell_j}}.\]

We will typically record the algebra $\Alg$ as a subscript for the
$\Ainfty$-module $M$, writing $M_{\Alg}$ if $M$ is a right
$\Ainfty$-module, and $\lsub{\Alg} M$ if $M$ is a left
$\Ainfty$-module.

Given two $\Ainfty$ modules $M_{\Alg}$ and $N_{\Alg}$, a {\em
  morphism} from $M_{\Alg}$ to $N_{\Alg}$ is a sequence of $\ground$-module maps 
$\{\phi_i\colon M_{\AlgA}\otimes_{\ground}\overbrace{\Alg\otimes_{\ground} \dots\otimes_{\ground}
  \Alg}^{i-1} \to N_{\Alg}\}_{i\geq 1}$. A morphism naturally induces a map
${\underline \phi}\colon M_{\AlgA}\otimes \Tensor^*(\Alg) \to N_{\AlgA}\otimes \Tensor^*(\Alg)$
by
\[ {\underline \phi}(x\otimes a_1 \otimes \cdots \otimes a_n) =
\sum_{i=0}^{n} \phi_{i+1}(x \otimes a_1\otimes\dots \otimes a_i)\otimes
a_{i+1}\otimes \dots\otimes a_n.\] When this induced map is a chain
map, we say that the morphism is a {\em homomorphism}.  More generally,
the space of morphisms can be given a differential, so that
$d_{\Mor}(\underline \phi)= {\underline d}_N \circ {\underline \phi} + {\underline \phi}\circ {\underline d}_{M}$.

Let $\Mod_{\Alg}$ resp. $\lsub{\Alg}\Mod$ denote the category of right
resp. left $\Ainfty$ modules over $\Alg$. This is a differential category
(so the morphism spaces are chain complexes). Specifically, given
$M_{\Alg}, N_{\Alg}\in \Mod_{\Alg}$, let $\Mor_{\Alg}(M,N)$ denote
the chain complex whose elements are 
maps $\{ \phi_i \colon M\otimes \Alg^{i-1}\to N\}_{i\geq 1}$, with differential given by
\begin{align*}
  (d\phi)_{i}&(x,a_1,\dots,a_{i-1})=
    \sum_{j=1}^{i} \phi_{i-j+1}(m_j^M(x,a_1,\dots,a_{j-1}),a_{j},\dots,a_{i-1}) \\
     &+  \sum_{j=1}^{i} m^N_{i-j+1}(\phi_j(x,a_1,\dots,a_{j-1}),a_{j},\dots,a_{i-1}) \\
   & + \sum_{j=1}^{i-1} \phi_i(x,a_1,\dots,\mu_1(a_j),\dots,a_{i-1}) \\
    &+
   \sum_{j=1}^{i-2} \phi_{i-1}(x,a_1,\dots,\mu_2(a_j, a_{j+1}),\dots,a_{i-1}).
\end{align*}

There is a  convenient graphical representation of formulas such as the
one above. We represent elements of $M$ by dashed arrows, elements of
$\Tensor^*(\Alg)$ by doubled arrows, and various maps between them
by labelled nodes. For instance, the map  $m\colon M\otimes
\Tensor^*(\Alg)\to M$, the morphism $\phi\colon M \otimes \Tensor^*(\Alg)\to N$,
and the differential ${\underline d}\colon \Tensor^*(\Alg)\to\Tensor^*(\Alg)$ are represented by the pictures
\[ 
\begin{tikzpicture}[scale=.8]
  \node at (0,1) (inM) {};
  \node at (1,1) (inAlg) {};
  \node at (0,0) (m) {$m$};
  \node at (0,-1) (out) {};
  \draw[modarrow] (inM) to (m); 
  \draw[tensoralgarrow] (inAlg) to (m);
  \draw[modarrow] (m) to (out);
\end{tikzpicture} 
\qquad
\qquad
\begin{tikzpicture}[scale=.8]
  \node at (0,1) (inM) {};
  \node at (1,1) (inAlg) {};
  \node at (0,0) (phi) {$\phi$};
  \node at (0,-1) (out) {};
  \draw[modarrow] (inM) to (phi); 
  \draw[tensoralgarrow] (inAlg) to (phi);
  \draw[modarrow] (phi) to (out);
\end{tikzpicture} 
\qquad
\qquad
\begin{tikzpicture}[scale=.8]
  \node at (0,1) (in) {};
  \node at (0,0) (d) {${\underline d}$};
  \node at (0,-1) (out) {};
  \draw[tensoralgarrow] (in) to (d);
  \draw[tensoralgarrow] (d) to (out);
\end{tikzpicture} 
\]
Using the map 
$\Delta\colon \Tensor^*(\Alg)\to\Tensor^*(\Alg)\otimes\Tensor^*(\Alg)$ defined by
$\Delta(a_1\otimes\dots\otimes a_j)=\sum_{i=0}^j (a_1\otimes\dots\otimes a_i)\otimes (a_{i+1}\otimes\dots\otimes a_j)$,
the differential of $\phi$ can be written:
\[ 
\mathcenter{
\begin{tikzpicture}[scale=.7]
  \node at (0,1) (inM) {};
  \node at (1,1) (inAlg) {};
  \node at (0,0) (dphi) {$d\phi$};
  \node at (0,-1) (out) {};
  \draw[modarrow] (inM) to (dphi); 
  \draw[tensoralgarrow] (inAlg) to (dphi);
  \draw[modarrow] (dphi) to (out);
\end{tikzpicture} 
}
= 
\mathcenter{
\begin{tikzpicture}[scale=.7]
  \node at(0,2) (inM) {};
  \node at (1.5,2) (inAlg) {};
  \node at (1.2,.5) (Delta) {$\Delta$};
  \node at (0,0) (m) {$m$};
  \node at (0,-1) (phi) {$\phi$};
  \node at (0,-2) (Mout) {};
  \draw[modarrow] (inM) to (m);
  \draw[tensoralgarrow] (inAlg) to (Delta);
    \draw[modarrow] (m) to (phi);
  \draw[tensoralgarrow] (Delta) to (phi);
  \draw[tensoralgarrow] (Delta) to (m);
  \draw[modarrow] (phi) to (Mout);
\end{tikzpicture}}
+
\mathcenter{
\begin{tikzpicture}[scale=.7]
  \node at(0,2) (inM) {};
  \node at (1.5,2) (inAlg) {};
  \node at (1.2,.5) (Delta) {$\Delta$};
  \node at (0,0) (phi) {$\phi$};
  \node at (0,-1) (m) {$m$};
  \node at (0,-2) (Mout) {};
  \draw[modarrow] (inM) to (phi);
  \draw[tensoralgarrow] (inAlg) to (Delta);
    \draw[modarrow] (phi) to (m);
  \draw[tensoralgarrow] (Delta) to (m);
  \draw[tensoralgarrow] (Delta) to (phi);
  \draw[modarrow] (m) to (Mout);
\end{tikzpicture}}
+ 
\mathcenter{
\begin{tikzpicture}[scale=.7]
  \node at(0,2) (inM) {};
  \node at (1.5,2) (inAlg) {};
  \node at (1.2,.5) (d) {${\underline d}$};
  \node at (0,0) (phi) {$\phi$};
  \node at (0,-2) (Mout) {};
  \draw[modarrow] (inM) to (phi);
  \draw[tensoralgarrow] (inAlg) to (d);
  \draw[tensoralgarrow] (d) to (phi);
  \draw[modarrow] (phi) to (Mout);
\end{tikzpicture}}.\]

Let $M_{\Alg}$ be a right $\Ainfty$-module over $\ground$, with a ($\Z$-valued) Maslov grading and an Alexander multi-grading 
with values in
$\MGradingSet$. We can form
the {\em opposite module} $\lsub{\Alg}{\overline M}$, which is a space of
maps from $M$ to $\Field$, also equipped with a Maslov grading and a grading by $\MGradingSet$, 
\begin{equation}
  \label{eq:OppositeTypeA}
  \lsub{\Alg}{\overline M} = \bigoplus_{d\in\Z, s\in \MGradingSet} \Hom_{\Field}(M_{-d;-s},\Field),
\end{equation}
with action specified as
follows. For fixed $\phi\in\Hom_{\Field}(M,\Field)$ and
$a_1\dots,a_{i-1}\in\Alg$, let ${\overline m}_i(a_1\otimes \dots
a_{i-1}\otimes \phi)$ be the homomorphism from $M$ to $\Field$ whose evaluation on $x$ is given by
$\phi(m_i(x\otimes a_{i-1}\otimes \dots\otimes a_{1}))$.
(Note that the tensor factors appearing here are over $\Zmod{2}$;
when $M$ is strictly unital, we can think of the tensor factors as taken 
over $\ground$; with the understanding that in 
$a_{i-1}\otimes\dots\otimes a_{1}$, the bimodule actions of $\ground$ on 
the $a_j$ are also opposites.) 

\subsection{Type $D$ structures}
\label{subsec:TypeD}

A {\em left type $D$ structure} over $\Alg$ is a  left
$\ground$-module $X$, equipped with a $\ground$-linear map
$\delta^1\colon X \to \Alg\otimes_{\ground} X$,
satisfying the compatibility condition
$(\mu_2\otimes \Id_{X})\circ (\Id_{\Alg}\otimes \delta^1)\circ \delta^1 +
(\mu_1\otimes \Id)\circ \delta^1=0$.  A right type $D$ structure is
defined analogously.

The maps will be drawn 
\[\begin{tikzpicture}[scale=.7]
  \node at (0,1) (inD) {};
  \node at (0,0) (d) {$\delta^1$};
  \node at (-1,-1) (outA) {};
  \node at (0,-1) (outD) {};
  \draw[modarrow] (inD) to (d); 
  \draw[algarrow] (d) to (outA);
  \draw[modarrow] (d) to (outD);
\end{tikzpicture}\]
and the structure relation is drawn
\[ 
\mathcenter{\begin{tikzpicture}[scale=.8]
  \node at (0,1) (inD) {};
  \node at (0,0) (d) {$\delta^1$};
  \node at (-.5,-1) (mu) {$\mu_1$};
  \node at (-1,-2) (outA) {};
  \node at (0,-2) (outD) {};
  \draw[modarrow] (inD) to (d); 
  \draw[algarrow] (d) to (mu);
  \draw[modarrow] (d) to (outD);
  \draw[algarrow] (mu) to (outA);
\end{tikzpicture}
}+
\mathcenter{
\begin{tikzpicture}[scale=.7]
  \node at (0,1.5) (inD) {};
  \node at (0,.5) (d1) {$\delta^1$};
  \node at (0,-1) (d2) {$\delta^1$};
  \node at (-1.5,-1.25) (mu) {$\mu_2$};
  \node at (0,-2.5) (outD) {};
  \node at (-2,-2.5) (outA) {};
  \draw[modarrow] (inD) to (d1); 
  \draw[algarrow] (d1) to (mu);
  \draw[modarrow] (d1) to (d2);
  \draw[algarrow] (d2) to (mu);
  \draw[algarrow] (mu) to (outA);
  \draw[modarrow] (d2) to (outD);
\end{tikzpicture}}
=0
\]

As in the case of modules, our type $D$ structures will have a
grading by $\Z$ (the Maslov grading), and an  Alexander grading with set
set $\DGradingSet=\DGradingSet_X$, a set with an action by
$\Lambda_{\Alg}$. (In our case, this will typically be a quotient of
$\Lambda_{\Alg}$.) Thus, $X$ is given a direct sum
splitting
$X = \bigoplus_{d\in\Z;s\in \DGradingSet} X_{d;s}$,
where each $X_{d;s}$ is a $\ground$-module.
(In fact, in the cases of relevance to us, $X$ will be finitely generated as a 
$\ground$-module.)
The actions respect these gradings, in the sense that
\[ \delta^1\colon X_{d;s} \to 
\bigoplus_{d_0+d_1=d-1; \ell_0 + s_1=s}
\Alg_{d_0;\ell_0}\otimes X_{d_1;s_1}.\]
We abbreviate this, writing
$\delta^1\colon X_{d;s}\to (\Alg\otimes X)_{d-1;s}$.

A left type $D$ structure $X$ induces a left differential module $\Alg\DT X$ in
a natural way. As a left $\Alg$-module, the space is $\Alg\otimes_{\ground} X$; i.e.
given $a\in\Alg$ and $b\otimes \x\in \Alg\otimes_{\ground} X$, we define
$m_2(a,b\otimes x)=\mu_2(a,b)\otimes x$.
The operator
$m_1=\mu_1\otimes \Id_{X} + (\mu_2\otimes \Id_X) \circ
(\Id_{\Alg}\otimes \delta^1_X)$
which can be graphically represented by
\[
{\mathcenter{\begin{tikzpicture}[scale=.7]
  \node at (0,1) (inA) {};
  \node at (1,1) (inD) {};
  \node at (0,-1) (outA) {};
  \node at (1,-1) (outD) {};
  \node at (0,0) (diffA) {$\mu_1$};
  \draw[algarrow] (inA) to (diffA);
  \draw[algarrow] (diffA) to (outA);
  \draw[modarrow] (inD) to (outD);
\end{tikzpicture}}}+
\mathcenter{\begin{tikzpicture}[scale=.7]
  \node at (0,1) (inA) {};
  \node at (1,1) (inD) {};
  \node at (0,-2) (outA) {};
  \node at (1,-2) (outD) {};
  \node at (1,0) (diffD) {$\delta^1$};
  \node at (0,-1) (diffA) {$\mu_2$};
  \draw[algarrow] (inA) to (diffA);
  \draw[algarrow] (diffA) to (outA);
  \draw[modarrow] (inD) to (diffD);
  \draw[modarrow] (diffD) to (outD);
  \draw[algarrow] (diffD) to (diffA);
\end{tikzpicture}}
\]
induces differential on  $\Alg\otimes_{\ground}X=\Alg\DT X$; i.e. $m_1^2=0$.
It is easy to see that
the differential and the $\Alg$-action satisfy a Leibniz rule; i.e. $\Alg\DT X$ is a
differential $\Alg$-module.
(This is the special case of a more
general construction: it is the tensor product of $\Alg$, viewed as a
bimodule over itself, with the type $D$ structure $X$.)

Conversely,
let $M$ be a left DG-module over $\Alg$ that splits as a
left $\Alg$-module as a direct sum of modules that are
isomorphic to left ideals in $\Alg$ by  elements of $\ground$; i.e.
\begin{equation} 
  \label{eq:TypeDModuleSplitting}
  M = \bigoplus_{g\in G} \Alg \cdot \YY(g)
\end{equation}
for some finite set $G$, and a map $\YY\colon G\to \ground$.
There is a $\ground$-submodule of $M$, 
$X=\bigoplus_{g\in G} \ground\cdot \YY(g)$.
Restricting the differential $m_1$ to $X$ gives a map
$\delta^1\colon X \to \Alg\otimes X=M$;
the hypothesis that $m_1^2=0$ is equivalent to the condition
that $\delta^1$ determines a type $D$ structure.

We will typically record the algebra as a superscript for a type $D$
structure, writing $\lsup{\Alg}X$ to denote a left type $D$ structure
$X$ over $\Alg$.  There is an analogous notion of right type $D$
structures; for a right type $D$ structure, we record the algebra (as
a superscript) on the right, e.g.  writing $X^{\Alg}$.

Analogously, we let $\lsup{\Alg}\Mod$ and $\Mod^{\Alg}$ denote the
category of left resp. right type $D$ structures. For this category,
$\Mor(\lsup{\Alg}P,~\lsup{\Alg}Q)$ is defined to be the chain complex of
maps
$h^1\colon  P \to \Alg \otimes_{\ground} Q$,
where the differential is specified by
\[ d (h^1) = (\mu_1^{\Alg} \otimes \Id_{Q}) \circ h^1 + (\mu_2^{\Alg}\otimes \Id_{Q})
\circ (\Id_{\Alg}\otimes h^1)\circ \delta^1_{P} + 
 (\mu_2^{\Alg}\otimes \Id_{Q})
\circ (\Id_{\Alg}\otimes \delta^1_Q)\circ h^1. \]
There is also a composition map, which is a chain map $\Mor(\lsup{\Alg}P,\lsup{\Alg}Q)\otimes \Mor(\lsup{\Alg}Q,\lsup{\Alg}R)\to
\Mor(\lsup{\Alg}P,~\lsup{\Alg}R)$, defined by taking $f^1\otimes g^1$ to
\begin{equation}
  \label{eq:ComposeD}
  (f\circ g)^1= (\mu_2\otimes \Id_Z)\circ (\Id_{\Alg}\otimes g^1)\circ f^1.
\end{equation}

If $\lsup{\Alg}X$ is a left type $D$ structure, we can form the {\em
  opposite} type $D$ structure as follows. 
As a $\ground$-module, ${\overline X}^{\Alg}$  is a  space of vector
space maps from $M$ to $\Field$:
\[ {\overline X}^{\Alg} = \bigoplus_{d\in\Z; s\in \DGradingSet}\Hom_{\Field}(X_{-d;-s},\Field).\]
This inherits naturally the structure
of a right $\ground$-module: the action of $\iota\in\ground$ on
$\phi\colon M \to \Field$ is the map that sends $x$ to
$\phi(\iota\cdot x)$.
The requisite map
${\overline\delta}^1\colon {\overline X}^{\Alg}\to {\overline X}^{\Alg}\otimes \Alg$
is adjoint to the map $\delta^1$ for $\lsup{\Alg}X$; i.e.
given $\phi\in {\overline X}^{\Alg}$, define ${\overline \delta}^1(\phi)\in 
\Hom(X,\Field) 
\otimes_{\ground} \Alg
\subset \Hom(X,\Alg)$ to be the map that sends
$x$ to $\langle \delta^1 x,\phi\rangle$, where 
\[ \langle \cdot,\cdot\rangle\colon \Alg\otimes X \otimes \Hom(X,\Field) \rightarrow \Alg \]
is induced by the evaluation  map.

\subsection{Tensor products}
\label{subsec:Tensor}

We recall the pairing between $\Ainfty$ modules and type $D$
structures from~\cite{InvPair}, which in fact can be thought of as a
model for the derived tensor product. (See for
example~\cite[Proposition~2.3.18]{Bimodules})

Fix first a type $D$ structure $\lsup{\Alg}X$.  There are
maps for integers $j\geq 0$ with
\begin{equation}
  \label{eq:DefIteratedDelta}
  \delta^{j}\colon X \mapsto \overbrace{\Alg\otimes_\ground \dots\otimes_\ground \Alg}^j \otimes_{\ground} X,
\end{equation}
with the following inductive definition:
\begin{itemize}
  \item $\delta^0$ is the identity map; 
  \item $\delta^1$ is as specified by the type $D$ structure;
  \item 
    and finally,
    $\delta^j= (\Id_{\Alg^{\otimes{j-1}}} \otimes \delta^1)\circ \delta^{j-1}$.
\end{itemize}
The sum $\sum_{j=0}^{\infty} \delta^j$ is notated
\[ 
\mathcenter{
\begin{tikzpicture}[scale=.8]
  \node at (0,1) (inD) {};
  \node at (0,0) (d) {$\delta$};
  \node at (-1,-1) (outA) {};
  \node at (0,-1) (outD) {};
  \draw[modarrow] (inD) to (d); 
  \draw[tensoralgarrow] (d) to (outA);
  \draw[modarrow] (d) to (outD);
\end{tikzpicture}}
=
\mathcenter{
\begin{tikzpicture}[scale=.8]
  \node at (0,1) (inD) {};
  \node at (0,-1) (outD) {};
  \draw[modarrow] (inD) to (outD);
\end{tikzpicture}}
+
\mathcenter{
\begin{tikzpicture}[scale=.8]
  \node at (0,1) (inD) {};
  \node at (0,0) (d) {$\delta^1$};
  \node at (-1,-1) (outA) {};
  \node at (0,-1) (outD) {};
  \draw[modarrow] (inD) to (d); 
  \draw[algarrow] (d) to (outA);
  \draw[modarrow] (d) to (outD);
\end{tikzpicture}}+
\mathcenter{
\begin{tikzpicture}[scale=.8]
  \node at (0,1) (inD) {};
  \node at (0,0) (d1) {$\delta^1$};
  \node at (0,-1) (d2) {$\delta^1$};
  \node at (-1.5,-2) (outA1) {};
  \node at (-1,-2) (outA2) {};
  \node at (0,-2) (outD) {};
  \draw[modarrow] (inD) to (d1); 
  \draw[algarrow] (d1) to (outA1);
  \draw[modarrow] (d1) to (d2);
  \draw[algarrow] (d2) to (outA2);
  \draw[modarrow] (d2) to (outD);
\end{tikzpicture}}
+\dots.
\]
(Note that in general, the image of $\delta$ is contained in 
$\left(\prod_{m=0}^{\infty}\Tensor^*(\Alg)\right)\otimes X$.)

Recall that if $M$ is a right $\Ainfty$-module and $X$ is a left type
$D$ structure, then under suitable circumstances, we can form the
tensor product $M\DT X$. This is a chain complex whose underlying
vector space is $M\otimes_{\ground} X$, and with  differential 
\begin{align*} 
\partial (p\otimes x)&=
\sum_{j=0}^{\infty} (m_{j+1} \otimes \Id_{X})\circ (p \otimes \delta^j(x)) \\
&=
{\mathcenter{\begin{tikzpicture}[scale=.8]
  \node at (0,1) (inA) {};
  \node at (.75,1) (inD) {};
  \node at (0,-1) (outA) {};
  \node at (.75,-1) (outD) {};
  \node at (0,0) (diffA) {$m_1$};
  \draw[algarrow] (inA) to (diffA);
  \draw[algarrow] (diffA) to (outA);
  \draw[modarrow] (inD) to (outD);
\end{tikzpicture}}}+
\mathcenter{\begin{tikzpicture}[scale=.8]
  \node at (0,1) (inA) {};
  \node at (1,1) (inD) {};
  \node at (0,-2) (outA) {};
  \node at (1,-2) (outD) {};
  \node at (1,0) (diffD) {$\delta^1$};
  \node at (0,-1) (diffA) {$m_2$};
  \draw[algarrow] (inA) to (diffA);
  \draw[algarrow] (diffA) to (outA);
  \draw[modarrow] (inD) to (diffD);
  \draw[modarrow] (diffD) to (outD);
  \draw[algarrow] (diffD) to (diffA);
\end{tikzpicture}}+
\mathcenter{\begin{tikzpicture}[scale=.8]
  \node at (0,1) (inA) {};
  \node at (1,1) (inD) {};
  \node at (0,-3) (outA) {};
  \node at (1,-3) (outD) {};
  \node at (1,0) (diffD1) {$\delta^1$};
  \node at (1,-1) (diffD2) {$\delta^1$};
  \node at (0,-2) (diffA) {$m_3$};
  \draw[algarrow] (inA) to (diffA);
  \draw[algarrow] (diffA) to (outA);
  \draw[modarrow] (inD) to (diffD1);
  \draw[modarrow] (diffD1) to (diffD2);
  \draw[modarrow] (diffD2) to (outD);
  \draw[algarrow] (diffD1) to (diffA);
  \draw[algarrow] (diffD2) to (diffA);
\end{tikzpicture}}
+~\dots = 
\mathcenter{\begin{tikzpicture}[scale=.8]
  \node at (0,1) (inA) {};
  \node at (1,1) (inD) {};
  \node at (0,-2) (outA) {};
  \node at (1,-2) (outD) {};
  \node at (1,0) (diffD) {$\delta$};
  \node at (0,-1) (diffA) {$m$};
  \draw[algarrow] (inA) to (diffA);
  \draw[algarrow] (diffA) to (outA);
  \draw[modarrow] (inD) to (diffD);
  \draw[modarrow] (diffD) to (outD);
  \draw[tensoralgarrow] (diffD) to (diffA);
\end{tikzpicture}}
\end{align*}

In general, the sum appearing in the definition of $\partial$ has infinitely many terms. The suitable
circumstances needed to define $\partial$ are those where the above
sum is finite. For instance, if the module $M$ has the property that
for all $p\in M$, $m_{j+1}(p,a_1,\dots,a_{j})=0$ for all sufficiently
large $j$ and any sequence $a_1,\dots,a_j$, then the sum is guaranteed
to be finite. Such an $\Ainfty$ module is called a {\em bounded
  $\Ainfty$ module}.  Similarly, finiteness is guaranteed if $X$ has
the property that for all $x\in X$, there is a $j$ with the property
that $\delta^j x=0$ for all sufficiently large $j$. Such a type $D$
structure is called a {\em bounded type $D$ structure}. To
recapitulate, $M\DT X$ exists if either $M_{\Alg}$ or $\lsup{\Alg}X$
is bounded.

Let $M$ be a right $\Ainfty$ module with Alexander grading set $\MGradingSet$ and
$X$ a left type $D$ structure with Alexander grading set $T$.  Then the tensor product
$M\DT X$ is naturally graded by  the product of $\Z$ (the Maslov
grading) and the Alexander grading set $\MGradingSet\times_{\Lambda} \NGradingSet= 
(\MGradingSet\times \NGradingSet)/\Lambda_{\Alg}$.

\subsection{Bimodules}

If $\Alg$ and $\Blg$ are two differential graded algebras over ground rings
$\groundj$ and $\ground$ respectively, a
{\em left/left type $DD$ bimodule} is a type $D$ structure over the
tensor product $\Alg\otimes\Blg$. 
In particular, it is a left module over $\groundj\otimes_{\Field}\ground$.
A left/right type $DD$ bimodule is a left/left type $DD$ bimodule over $\Alg\otimes\Blg^\op$.

A left/right {\em type~$DA$ bimodule} is a
$\groundj-\ground$ bimodule equipped with maps for $i\geq 1$:
\[\delta^1_i\colon X\otimes_{\ground}
\overbrace{\Blg\otimes_{\ground}\dots\otimes_{\ground}\Blg}^{i-1}
\to \Alg\otimes_{\groundj} X,\]
satisfying the structure equation
\begin{align*}
  0 &= 
  (\mu_1^{\Alg}\otimes \Id_{X})\circ \delta^1_{i}(x\otimes a_1 \otimes \dots\otimes a_{i-1})  \\
  &+\sum_{j=1}^{i-1} \delta^1_{i}(x\otimes  a_1\otimes\dots\otimes a_{j-1}\otimes \mu_1^{\Blg}(a_j)\otimes
  a_{j+1}\otimes\dots\otimes a_{i-1}) \\
  &+
  \sum_{j=1}^{i-2} \delta^1_i(x\otimes  a_1\otimes\dots\otimes a_{j-1}\otimes  \mu_2^{\Blg}(a_j\otimes a_{j+1})
  \otimes
  a_{j+2}\otimes\dots\otimes a_{i-1}) \\
  & + 
  \sum_{j=1}^{i}
  (\mu_2^{\Alg} \otimes \Id_{X}) \circ (\Id_{\Alg}\otimes \delta^1_{i-j+1})\circ (\delta^1_{j}(x\otimes a_{1}\otimes\dots\otimes
  a_{j-1})\otimes a_{j}\otimes\dots\otimes a_{i-1}) \\
  &=
\mathcenter{\begin{tikzpicture}[scale=.8]
  \node at (0,1) (inD) {};
  \node at (0,0) (d) {$\delta^1$};
  \node at (-1,-1) (mu) {$\mu_1$};
  \node at (-2,-2) (outA) {};
  \node at (1,1) (inB) {};
  \node at (0,-2) (outD) {};
  \draw[modarrow] (inD) to (d); 
  \draw[tensoralgarrow] (inB) to (d);
  \draw[algarrow] (d) to (mu);
  \draw[modarrow] (d) to (outD);
  \draw[algarrow] (mu) to (outA);
\end{tikzpicture}
}+
\mathcenter{\begin{tikzpicture}[scale=.8]
  \node at (0,2) (inD) {};
  \node at (0,0) (d) {$\delta^1$};
  \node at (1,1) (mu) {${\underline d}$};
  \node at (-1,-1) (outA) {};
  \node at (2,2) (inB) {};
  \node at (0,-1) (outD) {};
  \draw[modarrow] (inD) to (d); 
  \draw[tensoralgarrow] (inB) to (mu);
  \draw[tensoralgarrow] (mu) to (d);
  \draw[algarrow] (d) to (outA);
  \draw[modarrow] (d) to (outD);
\end{tikzpicture}}
+
\mathcenter{
\begin{tikzpicture}[scale=.8]
  \node at (0,2) (inD) {};
  \node at (0,0) (d1) {$\delta^1$};
  \node at (0,-1) (d2) {$\delta^1$};
  \node at (-1,-2) (mu) {$\mu_2$};
  \node at (2,2) (inB) {};
  \node at (1,1) (Delta) {$\Delta$};
  \node at (0,-3) (outD) {};
  \node at (-2,-3) (outA) {};
  \draw[modarrow] (inD) to (d1); 
  \draw[tensoralgarrow] (inB) to (Delta);
  \draw[tensoralgarrow] (Delta) to (d1);
  \draw[tensoralgarrow] (Delta) to (d2);
  \draw[algarrow] (d1) to (mu);
  \draw[modarrow] (d1) to (d2);
  \draw[algarrow] (d2) to (mu);
  \draw[algarrow] (mu) to (outA);
  \draw[modarrow] (d2) to (outD);
\end{tikzpicture}}
\end{align*}

Here, we think of $\Blg$ as the algebra of inputs, and $\Alg$ as
the output algebra.

\begin{example}
  \label{ex:IdBimodule}
  Fix an algebra $\Alg$ over $\ground$. 
  The identity bimodule $\lsup{\Alg}\Id_{\Alg}$
  is the type $DA$ bimodule whose underlying
  $\ground$-$\ground$ bimodule is $\ground$,
  so that the maps  $\delta^1_j$ have the form
  \[\delta^1_j\colon \overbrace{\Alg\otimes_{\ground} \dots\otimes_{\ground}\Alg}^{j-1} \to \Alg\]
  and whose operations are given by $\delta^1_2(a)=a$, and
  $\delta^1_j=0$ for $j=1$ and $j>2$.
\end{example}

\begin{example}
  \label{ex:MorphismBimodule}
  Let $\phi\colon \Blg\to\Alg$ be a homomorphism of DG algebras over a base ring $\ground$.
  This can be viewed as a bimodule $\lsup{\Alg}[\phi]_{\Blg}$ with a single generator over $\ground$,
  which we denote $\sgen$, with $\delta^1_2(\sgen,b)=\phi(b)\otimes \sgen$ and $\delta^1_j=0$ for $j\neq 2$.
\end{example}

Our $DA$ bimodules $\lsup{\Blg}X_{\Alg}$ will be bigraded as in Section~\ref{sec:PrelimGradings}.
Specifically, there will be some set $\MGradingSet$ with action by $\Lambda_{\Alg}\oplus\Lambda_{\Blg}$, and
$X=\bigoplus_{(d;s)\in\Z\oplus \MGradingSet} X_{(d;s)}$.
The actions respect  these gradings as follows:
\[ \delta^1_i\colon X_{d_1;s}\otimes_{\ground} \Blg_{d_2;\ell_2}
\otimes_{\ground}\dots\otimes_{\ground} \Blg_{d_i;\ell_i}\to
(\Alg \otimes X)_{i-2+\sum d_j;s+\sum_{j=2}^{i}\ell_j}.\]

A left/right {\em type~AA bimodule over $\Alg$ and $\Blg$}   is a
left/right bimodule over $\IdempRing(\Alg)$ and $\IdempRing(\Blg)$,
equipped with maps indexed by integers $i,j\geq 0$,
\[ m_{i|1|j}\colon
\overbrace{\Alg\otimes_{\IdempRing(\Alg)}\otimes\dots\otimes\Alg}^i
\otimes_{\IdempRing(\Alg)}\otimes X \otimes_{\IdempRing(\Blg)}
\overbrace{\Blg\otimes_{\IdempRing(\Blg)}\otimes\dots\otimes\Blg}^j \to
X.\] defined for all non-negative integers $i$ and $j$, satisfying a structure
equation which we will state shortly.  The maps can $m_{i|1|j}$ can be
assembled to form a map
\begin{align*}
m \colon \Tensor^*(\Alg)\otimes_{\IdempRing(\Alg)}\otimes M \otimes
_{\IdempRing(\Blg)} \Tensor^*(\Blg) \to 
M 
\end{align*}
that is represented by the diagram
\[
    \begin{tikzpicture}[scale=.8,baseline=(x.base)]
        \node at (-1,2) (out) {};
        \node at (-1,4) (m) {$m$};
        \node at (-1,6) (x) {${\mathbf x}$};
        \node at (0,6) (alg) {${\underline b}$};
        \node at (-2,6) (lalg) {${\underline a}$};
        \draw[tensorblgarrow, bend left=15] (alg) to (m);
        \draw[tensoralgarrow, bend right=15] (lalg) to (m);
        \draw[modarrow] (x) to (m);
        \draw[modarrow](m) to (out);
      \end{tikzpicture}
\]
and satisfies the structure equation 
(for all ${\underline a}\in \Tensor^*(\Alg)$ and ${\underline b}\in \Tensor^*(\Blg)$):
\[
    \begin{tikzpicture}[scale=.8,baseline=(x.base)]
        \node at (-1,2) (out) {};
        \node at (-1,4) (m) {$m$};
        \node at (0,5) (d) {${\underline d}$};
        \node at (-1,6) (x) {${\mathbf x}$};
        \node at (0,6) (alg) {${\underline b}$};
        \node at (-2,6) (lalg) {${\underline a}$};
        \draw[tensorblgarrow] (alg) to (d);
        \draw[tensorblgarrow, bend left=15] (d) to (m);
        \draw[tensoralgarrow, bend right=15] (lalg) to (m);
        \draw[modarrow] (x) to (m);
        \draw[modarrow](m) to (out);
      \end{tikzpicture}
      +
    \begin{tikzpicture}[scale=.8,baseline=(x.base)]
        \node at (-1,2) (out) {};
        \node at (-1,4) (m) {$m$};
        \node at (-2,5) (d) {${\underline d}$};
        \node at (-1,6) (x) {${\mathbf x}$};
        \node at (0,6) (alg) {${\underline b}$};
        \node at (-2,6) (lalg) {${\underline a}$};
        \draw[tensorblgarrow, bend left=15] (alg) to (m);
        \draw[tensoralgarrow] (lalg) to (d);
        \draw[tensoralgarrow, bend right=15] (d) to (m);
        \draw[modarrow] (x) to (m);
        \draw[modarrow](m) to (out);
      \end{tikzpicture}+
    \begin{tikzpicture}[scale=.8,baseline=(x.base)]
        \node at (-1,2) (out) {};
        \node at (-1,4) (m1) {$m$};
        \node at (-1,3) (m2) {$m$};
        \node at (0,5) (d) {$\Delta$};
        \node at (-2,5) (dl) {$\Delta$};
        \node at (-1,6) (x) {${\mathbf x}$};
        \node at (0,6) (alg) {${\underline b}$};
        \node at (-2,6) (lalg) {${\underline a}$};
        \draw[tensorblgarrow] (alg) to (d);
        \draw[tensorblgarrow, bend left=15] (d) to (m2);
        \draw[tensorblgarrow, bend left=15] (d) to (m1);
        \draw[tensoralgarrow] (lalg) to (dl);
        \draw[tensoralgarrow, bend right=15] (dl) to (m2);
        \draw[tensoralgarrow, bend right=15] (dl) to (m1);
        \draw[modarrow] (x) to (m1);
        \draw[modarrow] (m1) to (m2);
        \draw[modarrow](m2) to (out);
      \end{tikzpicture}=0
\]

As a basic example, if $\Alg$ is a DG algebra, we can view it as a
bimodule over itself; in this case, we write
$\lsub{\Alg}\Alg_{\Alg}$. The operation $m_{0|1|0}$ is the
differential on the algebra, $m_{1|1|0}(a\otimes b)= a \cdot b$, and
$m_{0|1|1}(b\otimes c)=b\cdot c$, for any $a,b,c\in \Alg$ (except now
$b$ is viewed as an element in the bimodule). All other operations
vanish.

Bimodules have opposites, defined by the straightforward generalization of 
Equation~\eqref{eq:OppositeTypeA}.
For example, the 
bimodule $\lsub{\Alg}{\overline A}_{\Alg}$, the opposite bimodule of
$\lsub{\Alg}{\Alg}_{\Alg}$, consists of 
maps from $\Alg\to\Field$; more precisely given $(d,\ell)\in\Z\oplus \Lambda$,
\begin{equation}
  \label{eq:OppositeBimodule}
  {\overline{\Alg}}_{(d,\ell)} = \Hom_{\Field}(\Alg_{(-d,-\ell)},\Field).
\end{equation}
This has operations
\begin{align*}
  m_{0|1|0}(x\mapsto \psi(x))&= (x\mapsto \psi(dx)) \\
  m_{1|1|0}(a\otimes (x\mapsto \psi(x))) &= (x\mapsto \psi(x\cdot a))  \\
  m_{0|1|1}((x\mapsto \psi(x))\otimes b) &= (x\mapsto \psi(b\cdot x)) 
\end{align*}

A right/right type $AA$ bimodule over
$\Alg$ and $\Blg$ is a  left/right bimodules
over $\Alg^{op}$ (the ``opposite algebra'') and $\Blg$.

A {\em morphism} between type $DA$ bimodules $h^1\colon \lsup{\Alg}X_{\Blg}\to \lsup{\Alg}Y_{\Blg}$ is a sequence of maps
$\{h^1_j\colon X \otimes_{\ground} \overbrace{\Blg\otimes_{\ground}\dots\otimes_{\ground}\Blg}^{j-1} \to \Alg\otimes_{\groundj} Y\}_{j=1}^{\infty}$,
abbreviated
\[\mathcenter{\begin{tikzpicture}[scale=.8]
  \node at (0,1) (inD) {};
  \node at (0,0) (h) {$h^1$};
  \node at (-1,-1) (outA) {};
  \node at (1,1) (inB) {};
  \node at (0,-1) (outD) {};
  \draw[modarrow] (inD) to (h); 
  \draw[tensoralgarrow] (inB) to (h);
  \draw[algarrow] (h) to (outA);
  \draw[modarrow] (h) to (outD);
\end{tikzpicture}}\]
The differential of $h^1$ is the morphism $dh^1$ represented as the sum
\[\mathcenter{\begin{tikzpicture}[scale=.8]
  \node at (0,1) (inD) {};
  \node at (0,0) (d) {$h^1$};
  \node at (-1,-1) (mu) {$\mu_1$};
  \node at (-2,-2) (outA) {};
  \node at (1,1) (inB) {};
  \node at (0,-2) (outD) {};
  \draw[modarrow] (inD) to (d); 
  \draw[tensoralgarrow] (inB) to (d);
  \draw[algarrow] (d) to (mu);
  \draw[modarrow] (d) to (outD);
  \draw[algarrow] (mu) to (outA);
\end{tikzpicture}
}+
\mathcenter{\begin{tikzpicture}[scale=.8]
  \node at (0,2) (inD) {};
  \node at (0,0) (d) {$h^1$};
  \node at (1,1) (mu) {${\underline d}$};
  \node at (-1,-1) (outA) {};
  \node at (2,2) (inB) {};
  \node at (0,-1) (outD) {};
  \draw[modarrow] (inD) to (d); 
  \draw[tensoralgarrow] (inB) to (mu);
  \draw[tensoralgarrow] (mu) to (d);
  \draw[algarrow] (d) to (outA);
  \draw[modarrow] (d) to (outD);
\end{tikzpicture}}+
\mathcenter{
\begin{tikzpicture}[scale=.8]
  \node at (0,2) (inD) {};
  \node at (0,0) (d1) {$\delta^1$};
  \node at (0,-1) (d2) {$h^1$};
  \node at (-1,-2) (mu) {$\mu_2$};
  \node at (2,2) (inB) {};
  \node at (1,1) (Delta) {$\Delta$};
  \node at (0,-3) (outD) {};
  \node at (-2,-3) (outA) {};
  \draw[modarrow] (inD) to (d1); 
  \draw[tensoralgarrow] (inB) to (Delta);
  \draw[tensoralgarrow] (Delta) to (d1);
  \draw[tensoralgarrow] (Delta) to (d2);
  \draw[algarrow] (d1) to (mu);
  \draw[modarrow] (d1) to (d2);
  \draw[algarrow] (d2) to (mu);
  \draw[algarrow] (mu) to (outA);
  \draw[modarrow] (d2) to (outD);
\end{tikzpicture}}
+
\mathcenter{
\begin{tikzpicture}[scale=.8]
  \node at (0,2) (inD) {};
  \node at (0,0) (d1) {$h^1$};
  \node at (0,-1) (d2) {$\delta^1$};
  \node at (-1,-2) (mu) {$\mu_2$};
  \node at (2,2) (inB) {};
  \node at (1,1) (Delta) {$\Delta$};
  \node at (0,-3) (outD) {};
  \node at (-2,-3) (outA) {};
  \draw[modarrow] (inD) to (d1); 
  \draw[tensoralgarrow] (inB) to (Delta);
  \draw[tensoralgarrow] (Delta) to (d1);
  \draw[tensoralgarrow] (Delta) to (d2);
  \draw[algarrow] (d1) to (mu);
  \draw[modarrow] (d1) to (d2);
  \draw[algarrow] (d2) to (mu);
  \draw[algarrow] (mu) to (outA);
  \draw[modarrow] (d2) to (outD);
\end{tikzpicture}}
\]
A {\em homomorphism} is a morphism whose differential is zero. Two homomorphisms are {\em homotopic}
if their difference is the differential of another morphism.

Morphisms can be composed; given morphisms 
$f^1\colon \lsup{\Alg}X_{\Blg}\to \lsup{\Alg}Y_{\Blg}$ and
$g^1\colon \lsup{\Alg}Y_{\Blg}\to \lsup{\Alg}Z_{\Blg}$, the composition is defined by the picture
\[\mathcenter{
\begin{tikzpicture}[scale=.8]
  \node at (0,2) (inD) {};
  \node at (0,0.5) (d1) {$f^1$};
  \node at (0,-.7) (d2) {$g^1$};
  \node at (-1.2,-1) (mu) {$\mu_2$};
  \node at (2.4,2) (inB) {};
  \node at (1.2,1) (Delta) {$\Delta$};
  \node at (0,-1.5) (outD) {};
  \node at (-2,-1.5) (outA) {};
  \draw[modarrow] (inD) to (d1); 
  \draw[tensoralgarrow] (inB) to (Delta);
  \draw[tensoralgarrow] (Delta) to (d1);
  \draw[tensoralgarrow] (Delta) to (d2);
  \draw[algarrow] (d1) to (mu);
  \draw[modarrow] (d1) to (d2);
  \draw[algarrow] (d2) to (mu);
  \draw[algarrow] (mu) to (outA);
  \draw[modarrow] (d2) to (outD);
\end{tikzpicture}}
\]

Morphisms, homomorphisms, and homotopies can be defined for bimodules
of other types in a straightforward way;
see~\cite[Section~2.2.4]{Bimodules}.

\subsection{Tensor products of bimodules}

We recall here the tensor products of various types of bimodules; see~\cite[Section~2.3.2]{Bimodules} for details.  Let $\Alg$, $\Blg$, $\Clg$ be
three differential graded algebras over base rings $\groundi$, $\groundj$, and $\ground$ respectively, 
and and let $\lsup{\Alg} X_{\Blg}$ and
$\lsup{\Blg} Y_{\Clg}$ be type $DA$ bimodules, their tensor product $X\DT Y$
is
a type $DA$ bimodule structure on the vector space
$X\otimes Y$ (where the tensor product is taken over the ground ring of $\Blg$), with structure maps $\delta^1\colon X\otimes_{\groundj} Y \to \Alg\otimes_{\groundi} X \otimes_{\groundj} Y$ which can
be represented as
\[\mathcenter{\begin{tikzpicture}
  \node at (0,1) (inX) {};
  \node at (2,1) (inC) {};
  \node at (1,1) (inY) {};
  \node at (0,-2) (outX) {};
  \node at (1,-2) (outY) {};
  \node at (-1,-2) (outA) {};
  \node at (1,0) (diffY) {$\delta_Y$};
  \node at (0,-1) (diffX) {$\delta^1_X$};
  \draw[modarrow] (inX) to (diffX);
  \draw[modarrow] (diffX) to (outX);
  \draw[modarrow] (inY) to (diffY);
  \draw[modarrow] (diffY) to (outY);
  \draw[tensoralgarrow] (diffY) to (diffX);
  \draw[tensoralgarrow] (inC) to (diffY);
  \draw[algarrow] (diffX) to (outA);
\end{tikzpicture}}\]
Here the map $\delta_Y$ is obtained by iterating $\delta^1_Y$ (as in Equation~\eqref{eq:DefIteratedDelta}).

The following is immediate from the definition: 

\begin{lemma}
  \label{lem:AssociateDA}
  Let $\Alg$, $\Blg$, $\Clg$, and $\Dlg$ be four differential graded algebras,
  and fix type $DA$ bimodules $\lsup{\Alg} X_{\Blg}$ and $\lsup{\Blg} Y_{\Clg}$
  and $\lsup{\Clg} Z_{\Dlg}$. Then, there is an isomorphism
  \[ (\lsup{\Alg} X_{\Blg}\DT~ \lsup{\Blg}Y_{\Clg})\DT~ \lsup{\Clg}Z_{\Dlg}
  \cong 
  \lsup{\Alg}\!\!X_{\Blg}\DT (\lsup{\Blg} Y_{\Clg}\DT~ \lsup{\Clg}Z_{\Dlg}).
  \]
\end{lemma}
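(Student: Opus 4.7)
The plan is to show that the identity map on $X \otimes_{\groundj} Y \otimes_{\ground} Z$ (using strict associativity of tensor products over the ground rings) is a bimodule isomorphism between the two parenthesizations, so the entire content lies in matching the $\delta^1$ operations on the two sides. For any type $DA$ bimodule $\lsup{\Alg}W_{\Blg}$, I would first introduce the iterated structure map
\[ \delta_W \colon W \otimes \Tensor^*(\Blg) \to \Tensor^*(\Alg) \otimes W \]
built from the $\delta^1_k$ exactly as in Equation~\eqref{eq:DefIteratedDelta}, and then rewrite the diagram defining $\lsup{\Alg}X_{\Blg}\DT~\lsup{\Blg}Y_{\Clg}$ in the compressed form $\delta^1_{X\DT Y} = (\delta^1_X \otimes \Id_Y) \circ (\Id_X \otimes \delta_Y)$.

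The heart of the argument is the intermediate identity
\[ \delta_{X\DT Y} = (\delta_X \otimes \Id_Y) \circ (\Id_X \otimes \delta_Y), \]
which I would verify by induction on the number of $\Clg$-algebra inputs. The induction step amounts to showing that a further application of $\delta^1_{X\DT Y}$ to the output of $\delta^k_{X\DT Y}$ can be reassembled, via coassociativity of the comultiplication $\Delta$ on $\Tensor^*(\Blg)$, into a single application of iterated $\delta_X$ to the $\Blg$-stream produced by iterated $\delta_Y$ acting on the full $\Clg$-stream. Diagrammatically, all intermediate splittings of the $\Blg$-stream across multiple invocations of $\delta^1_X$ collapse into one.

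Granted this, both $\delta^1_{(X \DT Y) \DT Z}$ and $\delta^1_{X \DT (Y \DT Z)}$ unfold, via two applications of the compressed formula in succession, into the same composite: iterate $\delta_Z$ on the incoming $\Dlg$-stream, feed the resulting $\Clg$-stream into $\delta_Y$, then feed the resulting $\Blg$-stream into $\delta^1_X$. The only real obstacle is the bookkeeping in the inductive step establishing the intermediate identity; once the compact $\delta_W$ notation is in place, the remaining comparison is a direct diagram chase using coassociativity of $\Delta$ on $\Tensor^*(\Blg)$ and on $\Tensor^*(\Clg)$, and it never invokes the structure equation of any of the three constituent bimodules.
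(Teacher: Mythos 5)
Your proposal is correct, and it is essentially the argument the paper has in mind: the paper states Lemma~\ref{lem:AssociateDA} as ``immediate from the definition,'' and your identification of both structure maps with the single composite (iterate $\delta_Z$, feed into iterated $\delta_Y$, feed into one $\delta^1_X$), via the interchange identity $\delta_{X\DT Y}=(\delta_X\otimes\Id_Y)\circ(\Id_X\otimes\delta_Y)$ and coassociativity of $\Delta$, is exactly that unfolding, at the same level of rigor (including the tacit treatment of possibly infinite sums) as the paper's own definition of $\DT$.
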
 


Given a type $DA$ bimodule  $\lsup{\Alg}X_{\Blg}$  and
a type $DD$ bimodule
$\lsup{\Blg}Y^{\Clg}$,
their tensor product $\lsup{\Alg}X_{\Blg}\DT \lsup{\Blg}Y^{\Clg}$,
when it makes sense, is a type $DD$ bimodule over $\Alg$ and $\Clg$. 
The type $DD$ structure map is described by
\begin{equation}
  \label{eq:DefDD}
  \mathcenter{\begin{tikzpicture}
  \node at (0,1) (inX) {};
  \node at (2,-1) (mult) {$\Pi$};
  \node at (2,-2) (outC) {};
  \node at (1,1) (inY) {};
  \node at (0,-2) (outX) {};
  \node at (1,-2) (outY) {};
  \node at (-1,-2) (outA) {};
  \node at (1,0) (diffY) {$\delta_Y$};
  \node at (0,-1) (diffX) {$\delta^1_X$};
  \draw[modarrow] (inX) to (diffX);
  \draw[modarrow] (diffX) to (outX);
  \draw[modarrow] (inY) to (diffY);
  \draw[modarrow] (diffY) to (outY);
  \draw[tensoralgarrow] (diffY) to (diffX);
  \draw[tensoralgarrow] (diffY) to (mult);
  \draw[algarrow] (mult) to (outC);
  \draw[algarrow] (diffX) to (outA);
\end{tikzpicture}}
\end{equation}
where $\Pi(b_1\otimes \dots\otimes b_j)=b_1\cdots b_j$.

Of course, the sum implicit in the above description is not always
finite; we describe a case where it is. (See
also~\cite[Section~2.2.4]{Bimodules}.)
 Consider the map
\[ \delta^j_Y\colon Y \to (\Blg^{\otimes j})\otimes Y\otimes (\Clg^{\otimes j})\] 
obtained by iterating $\delta^1$ 
(i.e. so that the map $\delta_Y$ appearing in Equation~\eqref{eq:DefDD}
is given as $\delta_Y=\sum_{j=0}^{\infty} \delta^j_Y$).

\begin{defn}
  For
fixed integer $j\geq 1$, {\em a length $j$ $\Blg$-sequence out of $\y\in X$}
is any sequence of algebra elements $(b_1,\dots,b_j)$ in $\Blg$ with
the property that, for a suitable choice of $\z\in X$ and sequence
$(c_1,\dots,c_j)$ in $\Clg$, $(b_1\otimes\dots\otimes b_j)\otimes
\z\otimes (c_1\otimes\dots\otimes c_j)$ appears with non-zero
multiplicity in $\delta^j(\y)$. 
\end{defn}

\begin{defn}
  \label{def:Ycompatible}
  Fix DG algebras $\Alg$, $\Blg$, and $\Clg$, and bimodules
  $\lsup{\Alg}X_{\Blg}$ and $\lsup{\Blg}Y^{\Clg}$. We say that $X$ is
  {\em {$Y$-compatible over $\Blg$}}, or when it is unambiguous,
  simply {\em{$Y$-compatible}} if for any non-zero $\x\otimes \y\in
  X\otimes Y$, if $j$ is a sufficiently large integer, then for any
  length $j$ $\Blg$-sequence out of $\y$ $(b_1,\dots,b_j)$, we have
  that $\delta^1_{j+1}(\x,b_1,\dots b_j)=0$.  Similarly, a morphism
  $\phi\in\Mor(X,X')$ is called {\em $Y$-compatible} if for any
  $\x\otimes\y\in X\otimes Y$,  if $j$ is sufficiently large,
  then for any length $j$ $\Blg$-sequence out of $\y$
  $(b_1,\dots,b_j)$, we have that $\phi^1_{j+1}(\x,b_1,\dots,b_j)=0$.
  If $X$ and $X'$ are $Y$-compatible, we say that they are {\em
    {$Y$-compatibly homotopy equivalent}} if there are $Y$-compatible
  morphisms $\phi\colon X\to X'$, $\psi\colon X'\to X$, $h\colon X\to
  X$, and $h'\colon X'\to X'$, so that $d \phi=0$, $d\psi=0$, $\psi
  \circ \phi = \Id_{X} + d h$ and $\phi\circ \psi=\Id_{X'} + dh'$.
\end{defn}

\begin{prop}
  Fix $\lsup{\Alg}X_{\Blg}$ and $\lsup{\Blg}Y^{\Clg}$. If $X$ is
  $Y$-compatible, we can form the type $DD$ bimodule $X\DT Y$, as
  defined in Equation~\eqref{eq:DefDD}.  Moreover, if $\lsup{\Alg}X'_{\Blg}$
  is also $Y$-compatible, and it is $Y$-compatibly homotopy-equivalent to 
  $X$, then $X\DT Y$ and $X'\DT Y$ are homotopy equivalent type $DD$ bimodules.
\end{prop}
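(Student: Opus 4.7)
The plan is in two parts: first to show that $X \DT Y$ is a well-defined type $DD$ bimodule, and then to show homotopy invariance in $X$.

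For the first part, I would start by checking that the sum implicit in Equation~\eqref{eq:DefDD} is finite under the $Y$-compatibility assumption. Fixing $\x \otimes \y \in X \otimes Y$, the terms contributing to $(\delta^1_X \circ \delta_Y)(\x \otimes \y)$ with $j$ algebra inputs to $\delta^1_X$ are indexed by length $j$ $\Blg$-sequences out of $\y$, times the evaluation $\delta^1_{j+1}(\x, b_1, \dots, b_j)$. By $Y$-compatibility, this vanishes for $j$ sufficiently large, so the sum is finite and $\delta^1_{X \DT Y}$ is defined. Once $\delta^1_{X \DT Y}$ exists, the type $DD$ structure equation is verified by a standard diagrammatic computation: expand $(\mu_1 \otimes \Id) \circ \delta^1 + (\mu_2 \otimes \mu_2 \otimes \Id) \circ (\Id \otimes \delta^1 \otimes \Id) \circ \delta^1$ applied to $X \DT Y$, and use the structure equations of $\lsup{\Alg}X_{\Blg}$ and $\lsup{\Blg}Y^{\Clg}$. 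The terms reorganize so that every internal $\mu_1^{\Blg}$ or $\mu_2^{\Blg}$ that appears when pushing differentials past $\delta_Y$ cancels against the corresponding contribution from the $DA$ structure equation of $X$ (evaluating $\delta^1_X$ on sequences with a $d$ inserted or a pair multiplied), leaving $0$.

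For the second part, given $Y$-compatibly homotopy equivalent $\lsup{\Alg}X_{\Blg}$ and $\lsup{\Alg}X'_{\Blg}$ with morphisms $\phi, \psi, h, h'$ as in Definition~\ref{def:Ycompatible}, I would define type $DD$ morphisms $\phi \DT \Id_Y$, $\psi \DT \Id_Y$, $h \DT \Id_Y$, $h' \DT \Id_Y$ by the analogue of Equation~\eqref{eq:DefDD} with $\delta^1_X$ replaced by $\phi^1$, $\psi^1$, $h^1$, or $(h')^1$ respectively. The $Y$-compatibility of the morphisms ensures finiteness of these sums, exactly as in the first part. One then checks that these assignments respect composition and the differential: namely that $d(\phi \DT \Id_Y) = (d\phi) \DT \Id_Y$ and $(\phi \circ \psi) \DT \Id_Y = (\phi \DT \Id_Y) \circ (\psi \DT \Id_Y)$ (modulo the $Y$-compatibility needed to make all these expressions converge). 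Given this, applying $- \DT \Id_Y$ to the defining relations $d\phi = 0$, $d\psi = 0$, $\psi \circ \phi = \Id_X + dh$, $\phi \circ \psi = \Id_{X'} + dh'$ immediately yields a homotopy equivalence between $X \DT Y$ and $X' \DT Y$.

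The main obstacle is bookkeeping: verifying compatibility of the composition and differential with $- \DT \Id_Y$ requires carefully manipulating iterated applications of $\delta_Y$ using the coassociativity of $\Delta$ on $\Tensor^*(\Blg)$ and the fact that iterating $\delta^1_Y$ already satisfies an appropriate compatibility with the internal differential ${\underline d}$ on $\Tensor^*(\Blg)$. None of this is conceptually difficult, but the diagrammatic proofs sketched in Section~\ref{subsec:TypeD} and the conventions for $\delta^j$ in Equation~\eqref{eq:DefIteratedDelta} have to be assembled with care to see that all intermediate sums remain finite under the $Y$-compatibility hypothesis.
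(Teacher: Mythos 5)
Your proposal is correct and follows essentially the same route as the paper: the paper's proof consists of the single observation that $Y$-compatibility makes all the infinite sums in the structure maps, morphisms, and homotopies finite, after which the type $DD$ structure relation and the homotopy equivalence follow from the standard box-tensor-product formalism, exactly as you spell out.
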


\begin{proof}
  It is straightforward to see that the 
  $Y$-compatibility ensures that all the infinite sums appearing in the needed maps
  are all finite.
\end{proof}

For a very simple special case, suppose that $\lsup{\Alg}X_{\Blg}$ has
the property that for all sufficiently large $j$, $\delta^1_j=0$; then $X$
is $Y$-compatible for all $\lsup{\Blg}Y^{\Clg}$.

\begin{lemma}
  \label{lem:BarResolution}
  Let $\Alg$ and $\Blg$ be DG modules, and let $\lsup{\Alg}M_{\Blg}$ be
  a type $DA$ bimodule with the property that $\delta^1_j=0$ for all sufficiently large $j$.
  Then, $M$ is homotopy equivalent to a type $DA$ bimodule with $\delta^1_j=0$ for $j>2$.
\end{lemma}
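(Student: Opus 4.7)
The plan is to realize $M'$ by a bar-resolution-style replacement of the right $\Blg$-action.

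\textbf{Construction.} Let $M' := M \otimes_{\ground} \Tensor^*(\Blg)$ as a $\groundj$--$\ground$ bimodule. Define $\delta'^1_2$ to be pure concatenation,
\[
\delta'^1_2\bigl(x \otimes b_1 \otimes \cdots \otimes b_n,\ c\bigr) \;=\; 1_\Alg \otimes \bigl(x \otimes b_1 \otimes \cdots \otimes b_n \otimes c\bigr),
\]
and set $\delta'^1_j = 0$ for $j > 2$. Define $\delta'^1_1(x \otimes b_1 \otimes \cdots \otimes b_n)$ as the sum of two contributions: (i) the usual bar differential on $\Tensor^*(\Blg)$, applying $\mu_1^{\Blg}$ to each single $b_i$ and $\mu_2^{\Blg}$ to each adjacent pair $b_i, b_{i+1}$, tensored with $1_\Alg$; and (ii) the ``evaluation'' term
\[
\sum_{k=0}^{n} \delta^1_{k+1}(x,\, b_1,\dots,b_k) \,\widehat{\otimes}\, (b_{k+1} \otimes \cdots \otimes b_n),
\]
where $\widehat{\otimes}$ simply appends the leftover $b_i$'s onto the $M$-factor of the output of $\delta^1_{k+1}$. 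The boundedness hypothesis that $\delta^1_j \equiv 0$ for $j$ large makes the sum in (ii) finite on each element.

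\textbf{Structure equation and comparison morphisms.} Since $\delta'^1_{\ge 3} \equiv 0$, the DA structure equation for $M'$ has only a small number of types of terms. Expanding $\delta'^1_1$ into its bar and evaluation pieces and grouping the resulting terms according to how many of the $b_i$'s each summand consumes reproduces precisely the full DA structure equation for $M$; the verification is a mechanical bookkeeping check. Define $\iota \colon M \to M'$ with $\iota^1_1(x) = 1_\Alg \otimes x$ (landing in the $n=0$ summand) and $\iota^1_j = 0$ for $j > 1$; and $\pi \colon M' \to M$ with
\[
\pi^1_j\bigl(x \otimes b_1 \otimes \cdots \otimes b_n;\ c_1,\ldots,c_{j-1}\bigr) \;=\; \delta^1_{n+j}\bigl(x, b_1,\dots,b_n, c_1,\ldots,c_{j-1}\bigr).
\]
Both are DA homomorphisms, each being another incarnation of the DA equation for $M$, and one checks $\pi \circ \iota = \Id_M$ on the nose.

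\textbf{Main obstacle.} The one genuinely delicate step is to show $\iota \circ \pi \simeq \Id_{M'}$ as DA morphisms. I would exhibit an explicit DA pre-homotopy $H \colon M' \to M'$ obtained by promoting the classical extra-degeneracy chain contraction of the bar complex to the DA setting: heuristically, $H$ splits off the rightmost tensor factor and re-emits it via $\delta'^1_2$, with higher components tailored to cancel the cross-terms between the bar differential and the evaluation piece of $\delta'^1_1$. Verifying $dH = \iota \circ \pi + \Id_{M'}$ is then a bookkeeping calculation; boundedness guarantees finiteness of the needed sums and characteristic two removes sign considerations, but arranging the correction terms so everything cancels is where the main work lies.
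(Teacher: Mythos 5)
Your overall strategy, replacing $M$ by a bar-type model whose higher right actions vanish, is the same as the paper's, but the explicit object you build is not the bar resolution, and the argument breaks already at the construction stage. The paper's $M'$ is $\lsup{\Alg}M_{\Blg}\DT \lsup{\Blg}\Barop^{\Blg}\DT \lsub{\Blg}\Blg_{\Blg}$, whose underlying space is $M\otimes\Tensor^*(\Blg)\otimes\Blg$: the right $\Blg$-action multiplies into the final $\Blg$-factor, and the differential contains a term merging the last bar factor into that final factor. Your $M'=M\otimes_{\ground}\Tensor^*(\Blg)$ with concatenation as the right action drops that factor, and then the claimed ``mechanical'' structure equation fails already with a single algebra input $c$: after the cancellations you describe, the terms $1\otimes\bigl(x\otimes b_1\otimes\cdots\otimes b_{n-1}\otimes\mu_2(b_n\otimes c)\bigr)$ and $\delta^1_{n+2}(x,b_1,\dots,b_n,c)$ survive uncancelled, since $\delta^1_3$ vanishes on your $M'$ and concatenation never multiplies $c$ into anything. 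The object also has the wrong homotopy type: take $\Alg=\Blg=\ground=\Field$ and $M=\Field$, where DA bimodules are just chain complexes; strict unitality forces your $\delta^1_1$ on $M'$ to be the map $\Field^{\otimes n}\to\Field^{\otimes(n-1)}$ given by multiplication by $n\bmod 2$, so $M'$ is acyclic while $M$ is not (in general, appending $1\in\Blg$ is a contracting homotopy of your complex). Relatedly, $\pi\circ\iota\neq\Id_M$: its component with no algebra inputs is $x\mapsto\delta^1_1(x)$, not the identity, precisely because there is no final $\Blg$-factor for the unit to act on.

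The second gap is that the one step you flag as the main work, the homotopy $\iota\circ\pi\simeq\Id_{M'}$, is only described heuristically, and that is exactly the content that must be supplied. The paper avoids constructing any homotopy on $M'$ by hand: it quotes the standard homotopy equivalence $\lsup{\Blg}\Id_{\Blg}\simeq\Barop\DT\lsub{\Blg}\Blg_{\Blg}$ and tensors it with the identity map of $M$, the hypothesis that $\delta^1_j=0$ for all large $j$ being what makes $M\DT(-)$ well defined and guarantees that the induced map is again a homotopy equivalence (cf.~\cite[Lemma~2.3.19]{Bimodules}). If you want a self-contained argument, first repair the model to $M\otimes\Tensor^*(\Blg)\otimes\Blg$ with the action on the last factor, and then promote the classical extra degeneracy (inserting a unit next to the final $\Blg$-factor) to a DA homotopy; as written, the proposal has neither the correct object nor the homotopy.
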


\begin{proof}
  Let $M'$ be the bar resolution of $M$; see~\cite{Keller} or~\cite{Bimodules},
 $\lsup{\Alg}M'_{\Blg}=
  \lsup{\Alg}M_{\Blg}\DT \lsupv{\Blg}\Barop\!\lsup{\Blg}\DT_{\Blg}{\Blg}_{\Blg}$.
  The desired homotopy equivalence is obtained from the homotopy equivalence
  $\lsup{\Blg}\Id_{\Blg}\to \lsupv{\Blg}\Barop\lsup{\Blg}\DT_{\Blg}{\Blg}_{\Blg}$
  by tensor product with the identity map on $M$. 
  (Boundedness of $M'$ ensures that this is a homotopy equivalence; compare~\cite[Lemma~2.3.19]{Bimodules}.)
\end{proof}

We have the following version of associativity (compare~\cite[Proposition~2.3.15]{Bimodules}):

\begin{lemma}
  \label{lem:AssociateDD}
  Let $\Alg$, $\Blg$, $\Clg$, and $\Dlg$ be four differential graded algebras;
  $\lsup{\Alg}X_{\Blg}$ and $\lsub{\Clg}Z^{\Dlg}$
  are bimodules of type $DA$; and
  $\lsup{\Blg}Y^{\Clg}$ is of type $DD$. Suppose 
  that $\lsup{\Alg}X_{\Blg}$ and $\lsub{\Clg}Z^{\Dlg}$ are both
  bounded, in the sense that for sufficiently large $j$, $\delta^1_j=0$. Then,
  \[ (\lsup{\Alg} X_{\Blg}\DT~ \lsup{\Blg}Y^{\Clg})\DT~ \lsub{\Clg}Z^{\Dlg}
  \simeq 
  \lsup{\Alg} X_{\Blg}\DT~(\lsup{\Blg}Y^{\Clg}\DT~ \lsub{\Clg}Z^{\Dlg}).
  \]
\end{lemma}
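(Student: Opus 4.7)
The plan is to reduce the statement to a routine identification of structure maps by first using boundedness to guarantee convergence of the relevant sums, and then using Lemma~\ref{lem:BarResolution} to reduce to the case of strict bimodules, where associativity is straightforward.

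First I would verify that each of the tensor products appearing in the statement is well-defined. The hypothesis that $\lsup{\Alg}X_{\Blg}$ has $\delta^1_j=0$ for all sufficiently large $j$ makes $X$ automatically $Y$-compatible over $\Blg$ in the sense of Definition~\ref{def:Ycompatible}, so $\lsup{\Alg}X_{\Blg}\DT \lsup{\Blg}Y^{\Clg}$ is defined as a $DD$ bimodule $\lsup{\Alg}(X\DT Y)^{\Clg}$. Tracing through the diagram of Equation~\eqref{eq:DefDD} shows that the iterates $\delta^k_{X\DT Y}$ introduce only as many $\Clg$-algebra outputs as appear in iterates of $\delta_Y$; combined with the boundedness of $\lsub{\Clg}Z^{\Dlg}$ this implies that $X\DT Y$ is $Z$-compatible, and hence $(X\DT Y)\DT Z$ is defined. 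A symmetric argument handles $X\DT(Y\DT Z)$.

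Next, I would observe that the underlying $\ground$-module of both sides is the same, namely $X\otimes Y\otimes Z$ tensored over the appropriate ground rings. Unpacking the structure map of $(X\DT Y)\DT Z$ by applying Equation~\eqref{eq:DefDD} twice gives a sum of graphs combining one application of $\delta^1_X$, iterates of $\delta_Y$ (collecting algebra elements on both the $\Blg$ and $\Clg$ sides), and iterates of $\delta_Z$, with outputs in $\Alg$ multiplied together by $\Pi$ on the left and outputs in $\Dlg$ multiplied by $\Pi$ on the right. The same combinatorial sum of graphs arises for $X\DT(Y\DT Z)$, so the two sides agree term-by-term as formal sums of diagrams.

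The main obstacle is upgrading this formal matching to a genuine homotopy equivalence, since the grouping conventions of the infinite sums in the two tensor products differ and one must ensure all reorderings respect signs and $\ground$-bilinearity. The cleanest route, suggested by the $\simeq$ in the statement, is to apply Lemma~\ref{lem:BarResolution} (and its evident $AD$-analog) to replace $\lsup{\Alg}X_{\Blg}$ and $\lsub{\Clg}Z^{\Dlg}$ by their bar resolutions $X'$ and $Z'$. These replacements satisfy $\delta^1_j=0$ for $j>2$, so the tensor products involving them reduce to the ordinary derived tensor product of DG bimodules, where associativity is an honest isomorphism. Using the homotopy invariance of $\DT$ under compatible homotopy equivalence stated immediately before this lemma, these isomorphisms transport to the desired homotopy equivalence between $(X\DT Y)\DT Z$ and $X\DT(Y\DT Z)$.
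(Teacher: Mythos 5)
Your proposal is correct and follows essentially the same route as the paper: the paper's proof also reduces, via Lemma~\ref{lem:BarResolution}, to the case where $\delta^1_j=0$ for $j>2$ (on $X$; resolving $Z$ as well is harmless but unnecessary), in which case the two iterated tensor products visibly agree. The preliminary compatibility checks and the term-by-term comparison of diagrams you include are consistent with, if more detailed than, the paper's one-line argument.
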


\begin{proof}
  This is clear if $\delta^1_j=0$ for all $j>2$ on
  $\lsup{\Alg}X_{\Blg}$. We can reduce to this case by
  Lemma~\ref{lem:BarResolution}.
\end{proof}

\subsection{Taking homology of bimodules}

By the ``homological perturbation lemma'', $\Ainfty$ structures
persist after taking homology.  (See for example~\cite{Keller} for the
case of algebras.) We will make use of the analogous result for bimodules.

We start with a standard result from homological algebra:

\begin{lemma}
  \label{lem:TakeHomology}
  Let $\ground\cong \Field^k$ for some $k$.
  Let $Y$ be an $\MGradingSet$-graded chain complex over $\ground$, and $Z=H(Y)$. Then, $Z$ is also an $\MGradingSet$-graded chain complex
  (with trivial differential), and there are $\MGradingSet$-graded chain maps $f\colon Z \to Y$ and $g\colon Y\to Z$ and an 
  $\MGradingSet$-graded map
  $T\colon Y\to Y$ satisfying the identities
  \begin{align*}
    T\circ T = 0 \qquad &T\circ f=0 \qquad g\circ T=0  \\
    g \circ f = \Id_Z \qquad  &f\circ g = \Id_Y + \partial\circ T + T\circ \partial.
    \end{align*}
\end{lemma}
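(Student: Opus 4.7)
The plan is to establish this via the standard splitting/contraction argument, made straightforward by the hypothesis that $\ground\cong \Field^k$ is semisimple, so that every $\ground$-module is projective and every short exact sequence of $\ground$-modules splits. Since $\partial$ preserves the $\MGradingSet$-grading, I can carry out the construction in each graded piece separately; for clarity I will suppress the $\MGradingSet$-grading in the notation, understanding that every subspace chosen below is $\MGradingSet$-graded.

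First I would fix, in each Maslov degree $d$, a $\ground$-submodule decomposition
\[ Y_d = B_d \oplus H_d \oplus C_d, \]
where $B_d = \mathrm{im}(\partial\colon Y_{d+1}\to Y_d)$, $H_d\subset \ker(\partial)$ is a $\ground$-complement to $B_d$ inside the cycles, and $C_d$ is a $\ground$-complement to $\ker(\partial)=B_d\oplus H_d$ in $Y_d$. The existence of these splittings uses precisely that $\ground$ is semisimple. The restriction $\partial|_{C_d}\colon C_d\to B_{d-1}$ is then a $\ground$-linear isomorphism, since $\partial$ is injective on $C_d$ and surjective onto $B_{d-1}$ by construction.

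Next I would define the maps explicitly. Let $g\colon Y\to Z$ be the projection onto $\bigoplus_d H_d$ composed with the canonical identification $\bigoplus_d H_d\cong Z=H(Y)$, and let $f\colon Z\to Y$ be this identification followed by inclusion. By construction $g$ and $f$ are chain maps (the target $Z$ has trivial differential, and $f$ lands in cycles), and $g\circ f=\Id_Z$. Define $T\colon Y\to Y$ to be zero on $H_d$ and on $C_d$, and on $B_d$ to be the composition
\[ B_d \xrightarrow{(\partial|_{C_{d+1}})^{-1}} C_{d+1} \hookrightarrow Y_{d+1}. \]
Then $T$ lands in $C$, so $T\circ T=0$ and $g\circ T=0$ are immediate; and since $f$ lands in $H$, we get $T\circ f=0$ as well.

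Finally I would verify $f\circ g=\Id_Y+\partial T+T\partial$ by checking it summand by summand. On $H_d$: both $f\circ g$ and $\Id$ act as the identity, while $\partial T+T\partial$ vanishes because $T$ kills $H$ and $\partial$ kills $H$. On $C_d$: here $f\circ g=0$ and $\Id+\partial T+T\partial = \Id + T\partial$; but $T\partial$ on $C_d$ factors as $C_d\xrightarrow{\partial}B_{d-1}\xrightarrow{(\partial|_C)^{-1}}C_d$, which is $-\Id$ in characteristic zero, or simply $\Id$ in characteristic $2$, giving the correct cancellation. On $B_d$: $f\circ g=0$ and $\Id+\partial T+T\partial = \Id + \partial T$; and $\partial T$ on $B_d$ is $B_d\xrightarrow{(\partial|_C)^{-1}}C_{d+1}\xrightarrow{\partial}B_d$, again the identity, yielding the required cancellation.

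The only real subtlety is the existence of $\ground$-linear splittings, which is why the hypothesis $\ground\cong \Field^k$ is stated; everything else is direct bookkeeping, and the identities are formal consequences of the choice of decomposition.
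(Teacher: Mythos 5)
Your proof is correct, and it is the standard splitting argument; the paper in fact states this lemma without proof, citing it as a standard result from homological algebra, so there is nothing in the text to diverge from. Your use of the semisimplicity of $\ground\cong\Field^k$ to choose $\MGradingSet$-graded decompositions $Y_d=B_d\oplus H_d\oplus C_d$, and the definitions of $f$, $g$, $T$ from them, give exactly the stated identities. One small remark: the composite $T\circ\partial|_{C_d}=(\partial|_{C_d})^{-1}\circ\partial|_{C_d}$ is literally the identity in any characteristic (a sign would only enter if one insisted on the homotopy identity with signs over $\Z$), so your aside about ``$-\Id$ in characteristic zero'' is not quite right; but since the paper works over $\Field=\Zmod{2}$ throughout, this does not affect the argument.
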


We will use two versions of the homological perturbation lemma:

\begin{lemma}
  \label{lem:HomologicalPerturbation}
  Let $\lsub{\Alg}Y_{\Blg}$ be a strictly unital $\Ainfty$ bimodule with grading set
  $\MGradingSet$, let $Z$ denote its homology. Let $f\colon Z\to Y$ be a
  homotopy equivalence of $\MGradingSet$-graded complexes of $\ground$-modules
  as in Lemma~\ref{lem:TakeHomology}. Then there is an $\Ainfty$
  bimodule structure on $Z$, denoted $\lsub{\Alg}Z_{\Blg}$, and an
  $\Ainfty$ homotopy equivalence $\phi\colon \lsub{\Alg}Z_{\Blg}\to
  \lsub{\Alg}Y_{\Blg}$ with $\phi_{0|1|0}=f$. 
\end{lemma}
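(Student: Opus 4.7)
The plan is to apply the standard homological perturbation lemma, using the data $(f,g,T)$ from Lemma~\ref{lem:TakeHomology} to transfer the $\Ainfty$ structure from $\lsub{\Alg}Y_{\Blg}$ to its homology $Z$. The higher operations on $Z$ and the morphism $\phi$ will be defined by explicit planar tree-sum formulas; the main work is then to verify the $\Ainfty$ relations by tracking cancellations.

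More concretely, for each pair $(i,j)$ of non-negative integers, I would define $m^Z_{i|1|j}$ as a sum over a combinatorial family of planar rooted trees. Each contributing tree has $i+1+j$ leaves labeled in planar order by the $i$ left algebra inputs from $\Alg$, the module input, and the $j$ right algebra inputs from $\Blg$. Internal vertices are labeled by the operations $m^Y_{p|1|q}$ of $\lsub{\Alg}Y_{\Blg}$; the distinguished module leaf feeds into its first vertex through $f$; every internal module edge along the spine carries the homotopy $T$; and the root output is obtained by applying $g$ to the topmost module output. The $\Ainfty$ morphism $\phi$ is defined by the same family of tree sums, except that the root output is taken directly in $Y$ (no final $g$), and the $(0|1|0)$ term is declared to be $f$. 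Since only finitely many planar trees have a fixed number of leaves, each individual operation $m^Z_{i|1|j}$ and $\phi_{i|1|j}$ is a finite sum, so the construction is well-defined.

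To verify that these data give a strictly unital $\Ainfty$ bimodule structure on $Z$ and that $\phi$ is an $\Ainfty$ homotopy equivalence lifting $f$, I would expand both sides of the $\Ainfty$ relations into tree sums and identify pairwise cancellations. The algebraic input is twofold: the $\Ainfty$ relations on $\lsub{\Alg}Y_{\Blg}$ cancel terms that differ by collapsing or splitting adjacent internal vertices, while the perturbation identities $fg = \Id_Y + \partial T + T\partial$ together with $T^2 = Tf = gT = 0$ produce telescoping cancellations along the spine of each tree. What survives is exactly the $\Ainfty$ relations for $m^Z$ and the morphism relations for $\phi$. Strict unitality on $Z$ is inherited from that of $Y$: any tree carrying an algebra input equal to $1$ is absorbed into a strictly unital $m^Y$ operation and vanishes, except in the tautological cases $(0|1|1)$ and $(1|1|0)$, which give the correct $\ground$-bimodule action through $gf = \Id_Z$.

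The main obstacle is the combinatorial bookkeeping needed to match terms in the $\Ainfty$ relations to the tree expansion and to organize the cancellations. After that, producing a homotopy inverse $\psi\colon \lsub{\Alg}Y_{\Blg} \to \lsub{\Alg}Z_{\Blg}$ with $\psi_{0|1|0}=g$ and an explicit homotopy between $\phi\circ\psi$ and $\Id_{\lsub{\Alg}Y_{\Blg}}$ is a direct variant of the same tree formulas (using $T$ in place of $f$ on the input leg, and placing an additional $T$ on a marked internal edge to build the homotopy). The whole package is standard in the $\Ainfty$ literature; see~\cite{Keller} for the algebra case and~\cite{Bimodules} for the bimodule adaptation, from which the present statement follows with no essential change, since the left and right algebra inputs simply sit to either side of each internal vertex in the planar trees.
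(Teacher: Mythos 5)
Your strategy is the same as the paper's: transfer the $\Ainfty$ structure along the data $(f,g,T)$ of Lemma~\ref{lem:TakeHomology} by explicit tree/spine formulas, with $f$ on the incoming module leg, $T$ on each internal module edge, the algebra inputs distributed among the $m^Y$ vertices, and $g$ at the root for the transferred operations $m^Z$; the relations are then checked by the cancellations coming from the $\Ainfty$ relations on $Y$ together with $f\circ g=\Id_Y+\partial\circ T+T\circ\partial$, $g\circ f=\Id_Z$, $T\circ T=T\circ f=g\circ T=0$, and strict unitality is inherited exactly as you describe. Your closing remark about also producing $\psi$ with $\psi_{0|1|0}=g$ and an explicit homotopy is a mild elaboration of the paper's appeal to~\cite{Keller}.

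One concrete correction, however: your recipe for the higher components of $\phi$ is wrong as stated. You define $\phi_{i|1|j}$ for $i+j>0$ by the same trees as $m^Z$ but with ``the root output taken directly in $Y$ (no final $g$)'', i.e.\ ending at the last $m^Y$ vertex. Those maps do not satisfy the morphism relation. Already with one algebra input, taking $Y$ an honest DG bimodule and $\phi_{1|1|0}(a\otimes z)=m^Y_{1|1|0}(a\otimes f(z))$, the relation collapses to the single unmatched term $(\Id_Y+f\circ g)\,m^Y_{1|1|0}(a\otimes f(z))=(\partial\circ T+T\circ\partial)\,m^Y_{1|1|0}(a\otimes f(z))$, which is nonzero in general. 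The correct formula, and the one used in the paper, replaces the final $g$ by the homotopy $T$, so that each higher component of $\phi$ ends with $T$ applied to the output of the last $m^Y$ vertex; then $\partial\circ T=\Id_Y+f\circ g+T\circ\partial$ (in characteristic $2$) supplies exactly the missing cancellation, and $T\circ f=0$ keeps $\phi$ strictly unital. With that one change your argument coincides with the paper's proof.
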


\begin{proof}
  By hypothesis, we have maps $f\colon Z \to Y$ and $g\colon Y \to Z$
  so that $f\circ g = \Id + \partial \circ T + T \circ \partial$.
  The differential on $Z$ vanishes; i.e.  $m_{0|1|0}=0$.
  Operations $m_{i|1|j}$ with $i+j>0$ are described by
  \[    m^Z(\underline{a}\otimes \x\otimes{\underline{b}})=
    \begin{tikzpicture}[scale=.8,baseline=(x.base)]
        \node at (-1,2) (terminal) {};
        \node at (-1,3) (g) {$g$};
        \node at (-1,4) (m1) {$m$};
        \node at (-1,5) (f) {$f$};
        \node at (-1,6) (x) {${\mathbf x}$};
        \node at (0,6) (alg) {${\underline b}$};
        \node at (-2,6) (lalg) {${\underline a}$};
        \draw[modarrow] (g) to (terminal);
        \draw[othmodarrow] (m1) to (g);
        \draw[modarrow] (x) to (f);
        \draw[othmodarrow] (f) to (m1);
        \draw[tensorblgarrow, bend left=15] (alg) to (m1);
        \draw[tensoralgarrow, bend right=15] (lalg) to (m1);
      \end{tikzpicture}
      +
    \begin{tikzpicture}[scale=.8,baseline=(x.base)]
        \node at (-1,1) (terminal) {};
        \node at (-1,2) (g) {$g$};
        \node at (-1,3) (m2) {$m$};
        \node at (-1,4) (T1) {$T$};
        \node at (-1,5) (m1) {$m$};
        \node at (-1,6) (f) {$f$};
        \node at (-2,6) (lmu) {$\Delta$};
        \node at (0,6) (mu) {$\Delta$};
        \node at (-1,7) (x) {${\mathbf x}$};
        \node at (0,7) (alg) {${\underline b}$};
        \node at (-2,7) (lalg) {${\underline a}$};
        \draw[othmodarrow] (m2) to (g);
        \draw[modarrow] (g) to (terminal);
        \draw[othmodarrow] (T1) to (m2);
        \draw[othmodarrow] (m1) to (T1);
        \draw[modarrow] (x) to (f);
        \draw[othmodarrow] (f) to (m1);
        \draw[tensorblgarrow] (alg) to (mu);
        \draw[tensorblgarrow, bend left=15] (mu) to (m2);
        \draw[tensorblgarrow, bend left=15] (mu) to (m1);
        \draw[tensoralgarrow] (lalg) to (lmu);
        \draw[tensoralgarrow, bend right=15] (lmu) to (m2);
        \draw[tensoralgarrow, bend right=15] (lmu) to (m1);
      \end{tikzpicture}
      +
          \begin{tikzpicture}[scale=.8,baseline=(x.base)]
        \node at (-1,-1) (terminal) {};
        \node at (-1,0) (g) {$g$};
        \node at (-1,1) (m3) {$m$};
        \node at (-1,2) (T2) {$T$};
        \node at (-1,3) (m2) {$m$};
        \node at (-1,4) (T1) {$T$};
        \node at (-1,5) (m1) {$m$};
        \node at (-1,6) (f) {$f$};
        \node at (-2,6) (lmu) {$\Delta$};
        \node at (0,6) (mu) {$\Delta$};
        \node at (-1,7) (x) {${\mathbf x}$};
        \node at (0,7) (alg) {${\underline b}$};
        \node at (-2,7) (lalg) {${\underline a}$};
        \draw[othmodarrow] (m3) to (g);
        \draw[modarrow] (g) to (terminal);
        \draw[othmodarrow] (T2) to (m3);
        \draw[othmodarrow] (m2) to (T2);
        \draw[othmodarrow] (T1) to (m2);
        \draw[othmodarrow] (m1) to (T1);
        \draw[modarrow] (x) to (f);
        \draw[othmodarrow] (f) to (m1);
        \draw[tensorblgarrow] (alg) to (mu);
        \draw[tensorblgarrow, bend left=15] (mu) to (m3);
        \draw[tensorblgarrow, bend left=15] (mu) to (m2);
        \draw[tensorblgarrow, bend left=15] (mu) to (m1);
        \draw[tensorblgarrow, bend left=15] (mu) to (m3);
        \draw[tensoralgarrow] (lalg) to (lmu);
        \draw[tensoralgarrow, bend right=15] (lmu) to (m3);
        \draw[tensoralgarrow, bend right=15] (lmu) to (m2);
        \draw[tensoralgarrow, bend right=15] (lmu) to  (m1);
      \end{tikzpicture}
      +\;
      \cdots \]
      where  $a_1\otimes\dots\otimes a_i\in {\underline a}\in\Tensor^*(\Alg)$ and 
      $b_1\otimes\dots\otimes b_j={\underline b}\in\Tensor^*(\Blg)$;
      and each node laebelled by $m$ contains an operation
      $m_{a|1|b}$ with $a+b>0$;
      where the splitting operator $\Delta$ is generalized to have arbitrarily many outputs.
      With those operations, we have an $\Ainfty$-bimodule homomorphism
      defined by 
      $\phi_{0|1|0}=f$ and $\phi_{i|1|j}$ with $i+j>0$ specified by
    \[\phi(\underline{a}\otimes\x\otimes{\underline{b}})=
    \begin{tikzpicture}[scale=.8,baseline=(x.base)]
        \node at (-1,2) (terminal) {};
        \node at (-1,3) (T1) {$T$};
        \node at (-1,4) (m1) {$m$};
        \node at (-1,5) (f) {$f$};
        \node at (-1,6) (x) {${\mathbf x}$};
        \node at (0,6) (blg) {${\underline b}$};
        \node at (-2,6) (alg) {${\underline a}$};
        \draw[algarrow] (T1) to (terminal);
        \draw[algarrow] (m1) to (T1);
        \draw[modarrow] (x) to (f);
        \draw[algarrow] (f) to (m1);
        \draw[tensorblgarrow, bend left=15] (blg) to (m1); 
        \draw[tensoralgarrow, bend right=15] (alg) to (m1); 
      \end{tikzpicture}
    +
    \begin{tikzpicture}[scale=.8,baseline=(x.base)]
        \node at (-1,1) (terminal) {};
        \node at (-1,2) (T2) {$T$};
        \node at (-1,3) (m2) {$m$};
        \node at (-1,4) (T1) {$T$};
        \node at (-1,5) (m1) {$m$};
        \node at (-1,6) (f) {$f$};
        \node at (0,6) (mu) {$\Delta$};
        \node at (-2,6) (lmu) {$\Delta$};
        \node at (-1,7) (x) {${\mathbf x}$};
        \node at (0,7) (blg) {${\underline b}$};
        \node at (-2,7) (alg) {${\underline a}$};
        \draw[algarrow] (m2) to (T2);
        \draw[algarrow] (T2) to (terminal);
        \draw[algarrow] (T1) to (m2);
        \draw[algarrow] (m1) to (T1);
        \draw[modarrow] (x) to (f);
        \draw[algarrow] (f) to (m1);
        \draw[tensorblgarrow] (blg) to (mu);
        \draw[tensorblgarrow, bend left=15] (mu) to (m2);
        \draw[tensorblgarrow, bend left=15] (mu) to (m1);
        \draw[tensoralgarrow] (alg) to (lmu);
        \draw[tensoralgarrow, bend right=15] (lmu) to (m1);
        \draw[tensoralgarrow, bend right=15] (lmu) to (m2);
      \end{tikzpicture}
    + \;\cdots
    \]
    (again where each $m$-labelled node must have at least two
    inputs).  With these definitions, it is straightforward to verify
    that $\phi$ is a homotopy equivalence of $\Ainfty$ bimodules;
    compare~\cite{Keller}. Strict unitality of both $m^Z$ and $\phi$ 
    follows immediately from these explicit
    descriptions of $m^Z$ and $\phi$, the conditions that $T^2=f\circ
    T=T\circ g=0$, and the strict unitality of $Y$.
\end{proof}

We will also use a variant for $DA$ bimodules:

\begin{lemma}
  \label{lem:HomologicalPerturbation2}
  Let $\lsup{\Alg}Y_{\Blg}$ be a strictly unital type $DA$ bimodule
  with grading set
  $\MGradingSet$, and let $\lsup{\Alg}Z$ be type $D$ structure over $\Alg$. 
  Suppose that there are type $D$ structure homomorphisms $f\colon \lsup{\Alg}Z\to \lsup{\Alg}Y$ 
  (i.e. as the notation suggests, we are forgetting here about the right $\Blg$-action)
  and $g\colon \lsup{\Alg}Y\to \lsup{\Alg}Z$
  and a type $D$ structure morphism $T\colon \lsup{\Alg}Y \to~ \lsup{\Alg}\!Y$ so that
  \[\begin{array}{lll}
    f\circ g = \Id_{Z}, & 
    g\circ f= \Id_{Y} + d T, &
    T\circ T =0.
    \end{array}\]
    (Here, $T\circ T$ denotes the composite of type $D$ structures; see Equation~\eqref{eq:ComposeD}.)
    Then $\lsup{\Alg}Z$ can be turned into
  a strictly unital type $DA$ bimodule, denoted $\lsup{\Alg}Z_{\Blg}$; and there is an 
  $\Ainfty$ homotopy equivalence 
  $\phi\colon \lsup{\Alg}Z_{\Blg}\to  \lsup{\Alg}Y_{\Blg}$ with $\phi^1_{1}=f$. 
\end{lemma}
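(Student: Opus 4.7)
The plan is to adapt the zig-zag formula used in Lemma~\ref{lem:HomologicalPerturbation} to the type $DA$ setting, where the tricky point is that the ``output'' side carries a type $D$ structure rather than an $\Ainfty$ action. First I would declare $\delta^1_1$ on $Z$ to be the given type $D$ structure map $\delta^1_Z$ (this is forced by the requirement that $\phi^1_1=f$ be a morphism of type $D$ structures), and set $\delta^1_{1+j}$ for $j\ge 1$ as the sum over all ways of splitting the $j$ inputs from $\Blg$ into consecutive blocks: apply $f$ to $z$, then alternate between $\delta^1$ of $\lsup{\Alg}Y_{\Blg}$ (absorbing one block of $\Blg$-inputs at each stage) and the homotopy $T\colon \lsup{\Alg}Y\to\lsup{\Alg}Y$, finally closing with $g$. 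The $\Alg$-outputs generated by $f$, by each $\delta^1$ of $Y$, by each $T$, and by $g$ are all multiplied together via $\mu_2$, producing a single $\Alg$-factor on the left of the final element of $Z$.

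For the homotopy equivalence $\phi$, define $\phi^1_1=f$ and $\phi^1_{1+j}$ for $j\ge 1$ by the same kind of zig-zag, except that the trailing $g$ is replaced by $T$ (so the terminal object lies in $\Alg\otimes Y$, not $\Alg\otimes Z$). A homotopy inverse $\psi\colon \lsup{\Alg}Y_{\Blg}\to \lsup{\Alg}Z_{\Blg}$ is defined by starting with $g$ and running the analogous zig-zag on the $Y$-side.

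The next step is to verify the structure equations. For the $DA$-relation of $\lsup{\Alg}Z_{\Blg}$, I would expand $(\mu_2\otimes \Id)\circ(\Id\otimes \delta^1)\circ \delta^1$ on $Z\otimes \Blg^{\otimes j}$; each summand is a longer zig-zag in which two consecutive $Y$-actions are joined by either $g\circ f=\Id_Y+dT$ or by $T$ followed by another $T$. The identity contribution merges two consecutive $Y$-actions, and this cancels against the $\mu_2$-term inside one of the original $\delta^1$ nodes via the structure equation of $\lsup{\Alg}Y_{\Blg}$; the $dT$ contribution cancels against the boundary placed on adjacent $T$-nodes, because $T\circ T=0$ and because $dT=g\circ f-\Id_Y$. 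A parallel expansion verifies the morphism equation for $\phi$, using in addition that $f$ is a cycle in $\Mor(\lsup{\Alg}Z,\lsup{\Alg}Y)$. Strict unitality of $\lsup{\Alg}Z_{\Blg}$ and of $\phi$ is immediate from the strict unitality of $\lsup{\Alg}Y_{\Blg}$: any $1\in\Blg$ inserted into an interior slot forces the corresponding $Y$-action to vanish.

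Finally, to check that $\phi$ is a homotopy equivalence, I would run the zig-zag calculus one more time on the composites $\psi\circ \phi$ and $\phi\circ \psi$, where two back-to-back telescopes collapse: on $Z$ the middle $f\circ g$ produces $\Id_Z$, giving exactly $\Id_{Z_{\Blg}}$ plus a $d$-exact term; on $Y$ the middle $g\circ f=\Id_Y+dT$ splits off the identity and a boundary, and the boundary is absorbed into an explicit homotopy built from $T$. The main obstacle is purely combinatorial bookkeeping: organizing the enormous sum of zig-zag diagrams and matching up cancellations using $T\circ T=0$, $g\circ f=\Id+dT$, $f\circ g=\Id$, and the structure equation of $\lsup{\Alg}Y_{\Blg}$. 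No new ideas beyond those in Lemma~\ref{lem:HomologicalPerturbation} are needed, but keeping track of where the $\mu_2$-multiplications land on the $\Alg$-side (versus the $\Ainfty$-compositions on the module side in the previous lemma) is what has to be adapted.
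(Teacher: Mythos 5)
Your proposal is correct and matches the paper's argument: the paper defines $\delta^1_j$ on $Z$ (for $j>1$) by exactly the zig-zag $f,\, m,\, T,\, m,\dots, m,\, g$ with all $\Alg$-outputs collected by a single multiplication node $\Pi$, keeps $\delta^1_1$ as the given type $D$ structure, and defines $\phi^1_j$ by the same zig-zag with the trailing $g$ replaced by $T$ (Figures accompanying the proof), leaving the structure-equation and homotopy-equivalence checks as the standard perturbation-lemma bookkeeping you sketch. The only difference is cosmetic: you spell out the cancellation mechanism and a candidate inverse $\psi$, which the paper leaves implicit.
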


\begin{proof}
  $\lsup{\Alg}Z$ is already equipped with an action $\delta^1_1$. For $j>1$, operations
  $\delta^1_j$ on ${\underline b}=b_1\otimes \dots \otimes b_{j-1}$ are 
  specified by in Figure~\ref{fig:Delta1}
\begin{figure}
\[    \delta^1(\x\otimes{\underline{b}})=
    \begin{tikzpicture}[scale=.8,baseline=(x.base)]
        \node at (-1,1) (terminal) {};
        \node at (-1,3) (g) {$g$};
        \node at (-1,4) (m1) {$m$};
        \node at (-1,5) (f) {$f$};
        \node at (-1,6) (x) {${\mathbf x}$};
        \node at (0,6) (alg) {${\underline b}$};
        \node at (-2,2) (pi) {$\Pi$};
        \node at (-2,1) (aout) {};
        \draw[modarrow] (g) to (terminal);
        \draw[othmodarrow] (m1) to (g);
        \draw[modarrow] (x) to (f);
        \draw[othmodarrow] (f) to (m1);
        \draw[tensorblgarrow, bend left=15] (alg) to (m1);
        \draw[algarrow, bend right=15] (f) to (pi);
        \draw[algarrow, bend right=15] (m1) to (pi);
        \draw[algarrow, bend right=15] (g) to (pi);
        \draw[algarrow] (pi) to (aout);
      \end{tikzpicture}
      +
    \begin{tikzpicture}[scale=.8,baseline=(x.base)]
        \node at (-1,0) (terminal) {};
        \node at (-1,2) (g) {$g$};
        \node at (-1,3) (m2) {$m$};
        \node at (-1,4) (T1) {$T$};
        \node at (-1,5) (m1) {$m$};
        \node at (-1,6) (f) {$f$};
        \node at (0,6) (mu) {$\Delta$};
        \node at (-1,7) (x) {${\mathbf x}$};
        \node at (0,7) (alg) {${\underline b}$};
        \node at (-2,0) (aout) {};
        \node at (-2,1) (pi) {$\Pi$};
        \draw[othmodarrow] (m2) to (g);
        \draw[modarrow] (g) to (terminal);
        \draw[othmodarrow] (T1) to (m2);
        \draw[othmodarrow] (m1) to (T1);
        \draw[modarrow] (x) to (f);
        \draw[othmodarrow] (f) to (m1);
        \draw[tensorblgarrow] (alg) to (mu);
        \draw[tensorblgarrow, bend left=15] (mu) to (m2);
        \draw[tensorblgarrow, bend left=15] (mu) to (m1);
        \draw[algarrow, bend right=15] (m2) to (pi);
        \draw[algarrow, bend right=15] (T1) to (pi);
        \draw[algarrow, bend right=15] (m1) to (pi);
        \draw[algarrow, bend right=15] (f) to (pi);
        \draw[algarrow, bend right=15] (g) to (pi);
        \draw[algarrow] (pi) to (aout);
      \end{tikzpicture}
      +
          \begin{tikzpicture}[scale=.8,baseline=(x.base)]
        \node at (-1,-1.5) (terminal) {};
        \node at (-1,0) (g) {$g$};
        \node at (-1,1) (m3) {$m$};
        \node at (-1,2) (T2) {$T$};
        \node at (-1,3) (m2) {$m$};
        \node at (-1,4) (T1) {$T$};
        \node at (-1,5) (m1) {$m$};
        \node at (-1,6) (f) {$f$};
        \node at (0,6) (mu) {$\Delta$};
        \node at (-1,7) (x) {${\mathbf x}$};
        \node at (0,7) (alg) {${\underline b}$};
        \node at (-2,-1.5) (aout) {};
        \node at (-2,-.5) (pi) {$\Pi$};
        \draw[othmodarrow] (m3) to (g);
        \draw[modarrow] (g) to (terminal);
        \draw[othmodarrow] (T2) to (m3);
        \draw[othmodarrow] (m2) to (T2);
        \draw[othmodarrow] (T1) to (m2);
        \draw[othmodarrow] (m1) to (T1);
        \draw[modarrow] (x) to (f);
        \draw[othmodarrow] (f) to (m1);
        \draw[tensorblgarrow] (alg) to (mu);
        \draw[tensorblgarrow, bend left=15] (mu) to (m3);
        \draw[tensorblgarrow, bend left=15] (mu) to (m2);
        \draw[tensorblgarrow, bend left=15] (mu) to (m1);
        \draw[tensorblgarrow, bend left=15] (mu) to (m3);
        \draw[algarrow, bend right=15] (m3) to (pi);
        \draw[algarrow, bend right=15] (T2) to (pi);
        \draw[algarrow, bend right=15] (m2) to (pi);
        \draw[algarrow, bend right=15] (T1) to (pi);
        \draw[algarrow, bend right=15] (m1) to (pi);
        \draw[algarrow, bend right=15] (f) to (pi);
        \draw[algarrow, bend right=15] (g) to (pi);
        \draw[algarrow] (pi) to (aout);
      \end{tikzpicture}
      +\;
      \cdots \]
      \caption{\label{fig:Delta1}{\bf $\delta^1$ action on $Z$}}
\end{figure}
      similarly, define $\phi^1_1=f$, and for $j>1$,  $\phi^1$ 
      on ${\underline b}=b_1\otimes\dots\otimes b_{j-1}$ is as shown
      in Figure~\ref{fig:phi1}
\begin{figure}
   \[   \phi^1(\x\otimes{\underline{b}})=
    \begin{tikzpicture}[scale=.8,baseline=(x.base)]
        \node at (-1,2) (terminal) {};
        \node at (-1,2) (g) {};
        \node at (-2,3) (pi) {$\Pi$};
        \node at (-1,4) (T1) {$T$};
        \node at (-1,5) (m1) {$m$};
        \node at (-1,6) (f) {$f$};
        \node at (-1,7) (x) {${\mathbf x}$};
        \node at (0,7) (alg) {${\underline b}$};
        \node at (-2,2) (aout) {};
        \draw[algarrow, bend right=15] (m1) to (pi);
        \draw[algarrow, bend right=15] (T1) to (pi);
        \draw[algarrow, bend right=15] (f) to (pi);
        \draw[othmodarrow] (T1) to (terminal);
        \draw[othmodarrow] (m1) to (T1);
        \draw[modarrow] (x) to (f);
        \draw[othmodarrow] (f) to (m1);
        \draw[tensorblgarrow, bend left=15] (alg) to (m1);
        \draw[algarrow] (pi) to (aout);
      \end{tikzpicture}
      +
          \begin{tikzpicture}[scale=.8,baseline=(x.base)]
        \node at (-1,-1) (terminal) {};
        \node at (-1,.5) (out) {};
        \node at (-1,2) (T2) {$T$};
        \node at (-1,3) (m2) {$m$};
        \node at (-1,4) (T1) {$T$};
        \node at (-1,5) (m1) {$m$};
        \node at (-1,6) (f) {$f$};
        \node at (0,6) (mu) {$\Delta$};
        \node at (-1,7) (x) {${\mathbf x}$};
        \node at (0,7) (alg) {${\underline b}$};
        \node at (-2,.5) (aout) {};
        \node at (-2,1.5) (pi) {$\Pi$};
        \draw[othmodarrow] (T2) to (out);
        \draw[othmodarrow] (m2) to (T2);
        \draw[othmodarrow] (T1) to (m2);
        \draw[othmodarrow] (m1) to (T1);
        \draw[modarrow] (x) to (f);
        \draw[othmodarrow] (f) to (m1);
        \draw[tensorblgarrow] (alg) to (mu);
        \draw[tensorblgarrow, bend left=15] (mu) to (m2);
        \draw[tensorblgarrow, bend left=15] (mu) to (m1);
        \draw[algarrow, bend right=15] (m2) to (pi);
        \draw[algarrow, bend right=15] (T2) to (pi);
        \draw[algarrow, bend right=15] (m1) to (pi);
        \draw[algarrow, bend right=15] (T1) to (pi);
        \draw[algarrow] (pi) to (aout);
      \end{tikzpicture}
      +\;
      \cdots \]
      \caption{\label{fig:phi1}{\bf $\phi^1$ morphism}}
\end{figure}
\end{proof}

\subsection{Koszul duality}
\label{subsec:Duality}

Let $\Alg$ and $\Blg$ be two differential graded algebras 
$\Lambda$.
Let $\lsup{\Alg} X^{\Blg}$ be a type $DD$ bimodule and
$\lsub{\Blg} Y_{\Alg}$ be a type $AA$ bimodule. We say that 
these two bimodules are {\em quasi-inverses} if there are 
homotopy equivalences
\[
  \lsup{\Alg} X^{\Blg} \DT~ \lsub{\Blg} Y_{\Alg} 
  \simeq \lsup{\Alg}\Id_{\Alg} \qquad{\text{and}}\qquad
  \lsub{\Blg} Y_{\Alg} \DT~\lsup{\Alg}\!\! X^{\Blg} 
  \simeq \lsub{\Blg}\Id^{\Blg};
\]
and $\lsup{\Alg} X^{\Blg}$
resp. $\lsub{\Blg}Y_{\Alg}$ as a {\em quasi-invertible} type $DD$
resp. type $AA$ bimodule.

To see that $X$ and $Y$ are quasi-inverses, it suffices to exhibit
homomorphisms
$\phi^1\colon\lsup{\Alg}{X^{\Blg}} \DT \lsub{\Blg}{Y_{\Alg}}\to\lsup{\Alg}\Id_{\Alg} $ and
$\psi^1\colon \lsub{\Blg} Y_{\Alg} \DT~\lsup{\Alg}{X^{\Blg}} \to
\lsub{\Blg}\Id^{\Blg}$
so that the maps 
\begin{align*}
  \Id_{\Alg}\DT \phi^1\colon
\lsub{\Alg}\Alg_{\Alg}\DT ~\lsup{\Alg} X^{\Blg} \DT~ \lsub{\Blg}
Y_{\Alg} & \to \lsub{\Alg}\Alg_{\Alg} \\
\psi^1\DT \Id_{\Blg}\colon
\lsub{\Blg}Y_{\Alg}\DT \lsup{\Alg}X^{\Blg}\DT \lsub{\Blg}\Blg_{\Blg}&\to
\lsub{\Blg}\Blg_{\Blg} 
\end{align*}
induce isomorphisms on homology, according to~\cite[Corollary~{2.4.4}]{Bimodules}.

\begin{defn}
  \label{def:KoszulDual}
  Let $\ground$ be a ring that is a finite direct sum of $\Field$.
  Let $\Alg$ and $\Blg$ be two DG algebras, both of which are positively graded over $\ground$
  (in the sense of Definition~\ref{def:PositiveAlexanderGrading}),
  with Alexander grading specified by the same Abelian group $\Lambda=\Lambda_{\Alg}=\Lambda_{\Blg}$.
  A bimodule
$\lsup{\Alg}X^{\Blg}$ is called a {\em Koszul dualizing bimodule}
if it satisfies the following properties:
\begin{enumerate}[label=(K-\arabic*),ref=(K-\arabic*)]
\item
  \label{Koszul:Grading}
  $\lsup{\Alg}X^{\Blg}$ is graded by $\Z\oplus \Lambda$, where
    the action of 
    $\Lambda_\Alg\oplus\Lambda_\Blg$ is specified by 
    the map $\Lambda_\Alg\oplus\Lambda_\Blg\to\Lambda$ given by $(a,b)\mapsto
  (b-a)$.
\item
  \label{Koszul:Rk1}
  $\lsup{\Alg}X^{\Blg}$ has rank one; i.e. there is an isomorphism
  $\lsup{\Alg}X^{\Blg}\cong \ground$
  as bigraded $\ground$-$\ground$ bimodules. (In particular, $\lsup{\Alg}X^{\Blg}$ is supported in bigrading $(0;0)$).
\item 
  \label{Kosz:Inv}
  $\lsup{\Alg}X^{\Blg}$ is quasi-invertible.
\end{enumerate}
If such a bimodule exists, $\Alg$ and $\Blg$ are called {\em Koszul dual} to one another.
\end{defn}

For a Koszul dualizing bimodule, it follows that
  \begin{equation}
    \label{Koszul:Pos}
    \delta^1(\lsup{\Alg}X^{\Blg})\subset \bigoplus_{\lambda\in\Lambda\setminus 0} \Alg_\lambda\otimes \lsup{\Alg}X^{\Blg} \otimes \Blg_\lambda.
  \end{equation}
  (Indeed, this conclusion holds even for $\lsup{\Alg}X^{\Blg}$ as in the definition that do not satisfy Property~\ref{Kosz:Inv}.)

\begin{defn}
  If $\lsup{\Alg}X^{\Blg}$ is a type $DD$ bimodule, we can form 
  the {\em candidate quasi-inverse module}
  \begin{equation}
    \label{eq:CandidateInverseModule}
    \lsup{\Alg}\Mor(\lsup{\Alg}X^{\Blg}\DT \lsub{\Blg}\Blg_{\Blg},\lsup{\Alg}\Id_{\Alg})
  \cong \lsub{\Blg}{\overline B}_{\Blg}\DT \lsup{\Blg}{\overline
    X}^{\Alg} \DT \lsub{\Alg}\Alg_{\Alg}.
  \end{equation}
\end{defn}

\begin{lemma}
  \label{lem:CanonicalSubcomplex}
  For a type $DD$ bimodule $\lsup{\Alg}X^{\Blg}$ satisfying properties~\ref{Koszul:Grading} and
  \ref{Koszul:Rk1}, the
  candidate quasi-inverse module has a subcomplex ${\overline \ground}\DT \lsup{\Blg}{\overline X}^{\Alg}\DT {\ground}$ which is isomorphic to
  $\ground$.
\end{lemma}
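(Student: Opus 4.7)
The plan is to identify $\overline{\ground}\otimes_{\ground}\overline{X}\otimes_{\ground}\ground$ as a subspace of the underlying $\ground$-bimodule of the candidate quasi-inverse $\lsub{\Blg}\overline{\Blg}_{\Blg}\DT\lsup{\Blg}\overline{X}^{\Alg}\DT\lsub{\Alg}\Alg_{\Alg}$ and show that the AA-bimodule differential $m_{0|1|0}$ restricts to zero on it. Property~\ref{Koszul:Rk1} gives $\overline{X}\cong\ground$ as a bigraded $\ground$-$\ground$ bimodule concentrated in bigrading $(0;0)$, so the underlying vector space of the subcomplex is canonically $\ground\otimes_{\ground}\ground\otimes_{\ground}\ground\cong\ground$; once vanishing of the restricted differential is established, the subcomplex is isomorphic to $\ground$ as a chain complex with trivial differential.

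Since $\Blg$ and $\Alg$ are honest DG algebras, the bimodules $\lsub{\Blg}\overline{\Blg}_{\Blg}$ and $\lsub{\Alg}\Alg_{\Alg}$ have only $m_{0|1|0}$, $m_{1|1|0}$, $m_{0|1|1}$ nonzero. Unpacking the threefold DT, the $m_{0|1|0}$-image of an element $\psi\otimes\xi\otimes a$ decomposes into three kinds of terms: (i) $\mu_{1}(\psi)\otimes\xi\otimes a$; (ii) $\psi\otimes\xi\otimes\mu_{1}(a)$; and (iii) for each $j\geq 1$, a contribution of the form $(\psi\cdot b_{1}\cdots b_{j})\otimes\xi'\otimes(a_{1}\cdots a_{j}\cdot a)$, arising from an iterate $\overline{\delta}^{j}(\xi)=\sum(b_{1}\otimes\cdots\otimes b_{j})\otimes\xi'\otimes(a_{1}\otimes\cdots\otimes a_{j})$, whose $\Blg$- and $\Alg$-outputs are combined by $\mu_{2}$ before being absorbed by $\psi$ and $a$. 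Bookkeeping these bar-complex configurations, and verifying that DG-ness collapses every configuration into one of the three types above, is the main subtle point.

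I would then check that each type vanishes on $\overline{\ground}\otimes\overline{X}\otimes\ground$. Type (ii) is immediate from the trivial differential on $\ground\subset\Alg$. For (i), the hypothesis that $\ground$ consists only of bigrading $(0;0)$ elements forces $\Blg_{1;0}=0$; since $\mu_{1}(\psi)=\psi\circ\mu_{1}^{\Blg}$ can be nonzero only on $\{b:\mu_{1}^{\Blg}(b)\in\ground\}\subseteq\Blg_{1;0}$, it vanishes. For (iii), the positivity~\eqref{Koszul:Pos} (which follows from~\ref{Koszul:Grading} and~\ref{Koszul:Rk1}) gives each $b_{i}\in\Blg_{\lambda_{i}}$ with $\lambda_{i}\neq 0$, so $b_{1}\cdots b_{j}\in\Blg_{\lambda}$ with $\lambda=\sum\lambda_{i}>0$; the resulting action on $\psi\in\overline{\ground}$ is the functional $x\mapsto\psi(b_{1}\cdots b_{j}\cdot x)$, which vanishes because $b_{1}\cdots b_{j}\cdot x\in\ground=\Blg_{0}$ would force $x\in\Blg_{-\lambda}=0$ by positivity of $\Blg$. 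Hence $m_{0|1|0}$ is identically zero on the subspace, which is therefore a subcomplex isomorphic to $\ground$.
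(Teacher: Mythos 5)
Your proof is correct and takes essentially the same route as the paper's: positivity of the Alexander grading over $\ground$, together with Equation~\eqref{Koszul:Pos} (which, as you note, is forced by \ref{Koszul:Grading} and \ref{Koszul:Rk1}), kills every differential term touching $\overline{\ground}$ and $\ground$, and \ref{Koszul:Rk1} then identifies the resulting zero-differential subcomplex with $\ground$. The only cosmetic discrepancy is your type (iii) terms with $j\geq 2$: because $\overline{\Blg}$ and $\lsub{\Alg}\Alg_{\Alg}$ are honest DG bimodules, only a single application of $\delta^1$ of $\overline{X}$ contributes to $m_{0|1|0}$ of the candidate quasi-inverse, so those configurations never actually arise — but since you show they vanish anyway, the argument is unaffected.
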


\begin{proof}
  For any $DD$ bimodule, ${\overline \ground}\DT \lsup{\Blg}{\overline
    X}^{\Alg}\DT \Alg$ is a subcomplex of the candidate quasi-inverse
  module.  
  Comditions~\ref{Koszul:Grading} and~\ref{Koszul:Rk1} ensures that 
  Equation~\eqref{Koszul:Pos} holds, so the induced
  differential on ${\overline \ground}\DT \lsup{\Blg}{\overline
    X}^{\Alg}\DT \Alg$ is simply the differential on $\Alg$; and therefore
  ${\overline \ground}\DT \lsup{\Blg}{\overline X}^{\Alg}\DT \ground$
  is a subcomplex. Condition~\ref{Koszul:Rk1} now gives the desired
  identification of the subcomplex with $\ground$.
\end{proof}

\begin{lemma}
  \label{lem:CandidateIsInverse}
  Fix $\ground$ as above in Definition~\ref{def:KoszulDual}, and let
  $\lsup{\Alg}X^{\Blg}$ be a type $DD$ bimodule satisfying
  Conditions~\ref{Koszul:Grading}-\ref{Koszul:Pos} of
  Definition~\ref{def:KoszulDual}. Suppose that the inclusion map
  from $\ground$ to the candidate quasi-inverse module coming from
  Lemma~\ref{lem:CanonicalSubcomplex} induces an isomorphism on
  homology. Then the candidate quasi-inverse module is a quasi-inverse
  of $\lsup{\Alg}X^{\Blg}$.
\end{lemma}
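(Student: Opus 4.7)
The plan is to exhibit natural evaluation homomorphisms pairing $X$ with $Y$, reduce the claim to a quasi-isomorphism check via the criterion recalled after the definition of quasi-inverses, and close via a grading argument that invokes the hypothesis.

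Since $Y$ is by construction a morphism space, there is a tautological evaluation homomorphism $\phi^1 \colon \lsup{\Alg}X^{\Blg} \DT \lsub{\Blg}Y_{\Alg} \to \lsup{\Alg}\Id_{\Alg}$; by an analogous construction (using $\overline X$ in place of $X$, swapping the roles of $\Alg$ and $\Blg$), one produces a companion $\psi^1 \colon \lsub{\Blg}Y_{\Alg} \DT \lsup{\Alg}X^{\Blg} \to \lsub{\Blg}\Id^{\Blg}$. By \cite[Corollary~2.4.4]{Bimodules}, proving that $X$ and $Y$ are quasi-inverses is equivalent to proving that $\Id_{\Alg} \DT \phi^1$ and $\psi^1 \DT \Id_{\Blg}$ are quasi-isomorphisms of chain complexes.

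I would focus on $\Id_\Alg \DT \phi^1$; the argument for $\psi^1$ is strictly parallel. Using Condition~\ref{Koszul:Rk1} (so that $\lsup{\Alg}X^{\Blg} \cong \ground$ as a bigraded bimodule), together with the positivity of the Alexander gradings on $\Alg$ and $\Blg$ (Definition~\ref{def:PositiveAlexanderGrading}) and its consequence~\eqref{Koszul:Pos}, the chain complex $\Alg \DT X \DT Y$ carries a bounded-below filtration indexed by the Alexander grading $\Lambda$: the differential on $\Alg \DT X$ produced via $\delta^1_X$ strictly increases the $\Lambda$-grading on the $\Alg$-factor, and the same holds for the part of the differential that couples to $Y$. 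On the associated graded, the evaluation map identifies, in the bottom filtration level (Alexander-multi-grading zero), with the map obtained by tensoring $\Alg$ with the canonical inclusion $\ground \hookrightarrow Y$ of Lemma~\ref{lem:CanonicalSubcomplex}. The hypothesis is precisely that this inclusion is a quasi-isomorphism; a standard filtration (or spectral-sequence) comparison then upgrades this to the conclusion that $\Id_{\Alg} \DT \phi^1$ itself is a quasi-isomorphism.

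The main obstacle is the bookkeeping in the preceding paragraph: producing the correct filtration on $\Alg \DT X \DT Y$, verifying that it is bounded-below so that the spectral sequence converges, and checking that on the associated graded, $\Id_\Alg \DT \phi^1$ reduces to the inclusion from Lemma~\ref{lem:CanonicalSubcomplex} tensored with $\Alg$. Once this identification is made, the hypothesis closes the argument, and the statement for $\psi^1$ follows by running the symmetric construction.
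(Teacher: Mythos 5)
Your overall strategy differs from the paper's: you reduce to showing $\Id_{\Alg}\DT\phi^1$ is a quasi-isomorphism (as the paper also does, via \cite[Corollary~2.4.4]{Bimodules}) and then try to conclude by filtering $\Alg\DT X\DT Y$ by the Alexander weight and comparing with the associated graded, whereas the paper never runs a spectral sequence: it uses the homological perturbation lemma to transfer the $\Ainfty$-structure from $Y$ to its homology $Z\cong\ground$ (so the hypothesis becomes an honest homotopy equivalence $\phi\colon Z\to Y$, which survives $\DT$), and then uses positivity of the grading together with rank-one-ness to show that the transferred operations $m_{k|1|0}$ and the components $\phi_{k|1|0}$ vanish for $k>0$; this identifies $\Alg\DT X\DT Z$ with $\Alg$ on the nose and exhibits the evaluation as an isomorphism of chain complexes, with no limiting process.

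The step you defer to ``standard filtration comparison'' is, however, a genuine gap rather than bookkeeping. The filtration you propose (by the $\Lambda$-grading of the outer $\Alg$-factor) is exhaustive but it is \emph{not} bounded in each bidegree: in $\Alg\DT X\DT{\overline\Blg}\DT{\overline X}\DT\Alg$ the weight of the $\Alg$-factor and the weight of the dual factor ${\overline\Blg}$ can grow in tandem while the total Alexander and Maslov gradings stay fixed (already for the algebras of Section~\ref{sec:Algebras}, e.g.\ using powers of the $U_i$; this is the same phenomenon behind the infinitely many acyclic summands of $Y$ noted in Section~\ref{subsec:CandidateInverse}). Since the underlying module is a direct sum and not complete with respect to this filtration, acyclicity (or quasi-isomorphism) of the associated graded does not imply the same for the total complex: the standard counterexample is the complex over $\Field$ with basis $\{a_i,b_i\}_{i\ge 0}$, $d a_i=b_i+b_{i+1}$, filtered by $i$, whose associated graded is acyclic while $b_0$ is a non-bounding cycle. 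So the very situation your argument must rule out --- an infinite zig-zag through the high-weight acyclic pieces of $Y$ creating homology invisible to the spectral sequence --- is not excluded by ``bounded below,'' and closing this gap essentially forces a mechanism like the paper's: replace the quasi-isomorphism $\ground\to Y$ by a homotopy equivalence via homological perturbation and then use the grading positivity (Conditions~\ref{Koszul:Grading}--\ref{Koszul:Rk1} and Equation~\eqref{Koszul:Pos}) to cancel the transferred higher actions exactly. (Your identification of the associated graded of $\Id_{\Alg}\DT\phi^1$ also needs a small correction --- it is $\Id_{\Alg}$ tensored with an evaluation $Y\to\ground$ retracting the inclusion of Lemma~\ref{lem:CanonicalSubcomplex}, not with the inclusion itself --- but that is repairable; the convergence issue is the substantive one.)
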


\begin{proof}
  Let $\lsub{\Blg}Y_{\Alg}$ be the candidate quasi-inverse.  Our goal
  is to show, under the stated hypotheses, that $\lsub{\Blg}Y_{\Alg}$
  is a quasi-inverse to $\lsup{\Alg}X^{\Blg}$. 

  To this end, consider a $DA$ bimodule by tensoring $X$ with $Y$:
  \[ \lsup{\Alg}X^{\Blg}\DT \lsub{\Blg}Y_{\Alg}
  =\lsup{\Alg}X^{\Blg}\DT \lsub{\Blg}{\overline B}_{\Blg}\DT \lsup{\Blg}{\overline
    X}^{\Alg} \DT \lsub{\Alg}\Alg_{\Alg}.\]
  The differential on this bimodule is given pictorially by:
  \[ 
  \mathcenter{\begin{tikzpicture}
      \node at (0,-1) (x) {$X$};
      \node at (1,-1) (bbar) {${\overline\Blg}$};
      \node at (1.5,-1) (xbar) {${\overline X}$};
      \node at (2,-1) (a) {$\Alg$};
      \node at (0,-5) (xend) {$~$};
      \node at (1,-5) (bbarend) {$~$};
      \node at (1.5,-5) (xbarend) {$~$};
      \node at (2,-5) (aend) {$~$};
      \node at (0,-2) (xd) {$\delta^1_{X}$};
      \node at (1,-4) (xb) {$m^{\overline B}_{1|1|0}$};
      \node at (-1,-5) (output) {$\Alg$};
      \draw[->](x) to (xd);
      \draw[->](xd) to (xb);
      \draw[->](xd) to (output);
      \draw[->](xd) to (xend);
      \draw[->](bbar) to (xb);
      \draw[->](xb) to (bbarend);
      \draw[->] (xbar) to (xbarend);
      \draw[->](a) to (aend);
    \end{tikzpicture}}+
  \mathcenter{\begin{tikzpicture}
      \node at (0.5,-1) (x) {$X$};
      \node at (1,-1) (bbar) {${\overline\Blg}$};
      \node at (1.5,-1) (xbar) {${\overline X}$};
      \node at (2,-1) (a) {$\Alg$};
      \node at (0.5,-5) (xend) {$~$};
      \node at (1,-5) (bbarend) {$~$};
      \node at (1.5,-5) (xbarend) {$~$};
      \node at (2,-5) (aend) {$~$};
      \node at (1,-3) (bd) {$m_{0|1|0}^{\overline B}$};
      \draw[->](x) to (xend);
      \draw[->](bbar) to (bd);
      \draw[->] (bd) to (bbarend);
      \draw[->] (xbar) to (xbarend);
      \draw[->](a) to (aend);
    \end{tikzpicture}}+
  \mathcenter{\begin{tikzpicture}
      \node at (0.5,-1) (x) {$X$};
      \node at (1,-1) (bbar) {${\overline\Blg}$};
      \node at (2,-1) (xbar) {${\overline X}$};
      \node at (3,-1) (a) {$\Alg$};
      \node at (0.5,-5) (xend) {$~$};
      \node at (1,-5) (bbarend) {$~$};
      \node at (2,-5) (xbarend) {$~$};
      \node at (3,-5) (aend) {$~$};
      \node at (2,-2) (xd) {$\delta^1_{\overline X}$};
      \node at (1,-4) (k) {$m^{\overline B}_{0|1|1}$};
      \node at (3,-4) (mu) {$\mu_2^{\Alg}$};
      \draw[->](x) to (xend);
      \draw[->](bbar) to (k);
      \draw[->](k) to (bbarend);
      \draw[->](xbar) to (xd);
      \draw[->](xd) to (xbarend);
      \draw[->](xd) to (k);
      \draw[->](xd) to (mu);
      \draw[->](mu) to (aend);
      \draw[->](a) to (mu);
    \end{tikzpicture}}+ 
  \mathcenter{\begin{tikzpicture}
      \node at (0.5,-1) (x) {$X$};
      \node at (1,-1) (bbar) {${\overline\Blg}$};
      \node at (1.5,-1) (xbar) {${\overline X}$};
      \node at (2,-1) (a) {$\Alg$};
      \node at (0.5,-5) (xend) {$~$};
      \node at (1,-5) (bbarend) {$~$};
      \node at (1.5,-5) (xbarend) {$~$};
      \node at (2,-5) (aend) {$~$};
      \node at (2,-3) (ad) {$\mu_1^{\Alg}$};
      \draw[->](x) to (xend);
      \draw[->](bbar) to (bbarend);
      \draw[->] (xbar) to (xbarend);
      \draw[->](a) to (ad);
      \draw[->] (ad) to (aend);
    \end{tikzpicture}}
  \]
  (Arrows that output $1\in\Alg$, which would appear
    in the second, third, and fourth terms, are suppressed.)
  There is a natural map
  $h\colon \lsup{\Alg}X^{\Blg}\DT \lsub{\Blg}Y_{\Alg} \to \lsup{\Alg}\Id_{\Alg},$
  defined by
  $x\otimes \psi\to \psi(x\otimes {\mathbf 1}),$ where here
  ${\mathbf 1}\in \Blg$ is the unit; 
  i.e. 
  given $\x\in X$ and 
  $\psi\in Y=\lsup{\Alg}\Mor(\lsup{\Alg}X^{\Blg}\DT \lsub{\Blg}\Blg_{\Blg},\lsup{\Alg}\Id_{\Alg})$,
  $x\otimes {\mathbf 1}\in \lsup{\Alg}X^{\Blg}\DT \lsub{\Blg}\Blg_{\Blg}$, so 
  $\psi(x\otimes {\mathbf 1})\in \lsup{\Alg}\Id_{\Alg}$.
  This natural map can be viewed as a $DA$ morphism. Recall that
  a $DA$ morphism 
  $\lsup{\Alg}X^{\Blg}\DT \lsub{\Blg}Y_{\Alg} \to \lsup{\Alg}\Id_{\Alg}$
  is specified by a sequence of maps indexed by $j\geq 1$:
  \[ h^1_j\colon \lsup{\Alg}X^{\Blg}\DT \lsub{\Blg}Y_{\Alg}\otimes
  \overbrace{\Alg\otimes\dots\otimes\Alg}^{j-1} \to \Alg.\] The
  morphism under question has $h^1_j=0$ for $j>1$, and it was
  specified above in the case where $j=1$.
  Pictorially, $h$ is represented as follows:
  \[ 
  \mathcenter{\begin{tikzpicture}
      \node at (0.5,-1) (x) {$X$};
      \node at (1,-1) (bbar) {${\overline\Blg}$};
      \node at (1.5,-1) (xbar) {${\overline X}$};
      \node at (2,-1) (a) {$\Alg$};
      \node at (1,-2) (bbarend) {$\Field$};
      \node at (1,-3) (k) {$K$};
      \node at (0,-5) (Aend) {$~$};
      \draw[->,bend right=15](x) to (k);
      \draw[->,bend left=15](xbar) to (k);
      \draw[->](bbar) to node[right]{\tiny{${\overline 1}$}} (bbarend);
      \draw[->] (a) .. controls (2,-4) and (0,-4) ..  (Aend);
    \end{tikzpicture}}
  \]
  Where here $K$ denotes the Kronecker pairing, and ${\overline 1}$ is
  dual to the map $1\colon \Field\to \Blg$.
  
  The verification that $h$ is a $DA$ bimodule homomorphism is
  straightforward.  

  So far, we have not used the hypothesis on the
  homology of $\lsub{\Blg}Y_{\Alg}$ (which is needed to verify that
  $h$ is a homotopy equivalence).  From the hypothesis, there is a rank
  one bimodule $\lsub{\Blg}Z_{\Alg}$; this is the bimodule structure
  induced on $\ground$ from its quasi-isomorphism with
  $\lsub{\Blg}Y_{\Alg}$ as in Lemma~\ref{lem:HomologicalPerturbation}.
  Let $\phi \colon \lsub{\Blg}Z_{\Alg}\to \lsub{\Blg}Y_{\Alg}$
  be the induced bimodule quasi-isomorphism.
  In more detail, denote the isomorphism $\ground\cong X$ by $\iota\mapsto x_\iota$;
  taking its dual and using an isomorphism $\ground\cong {\overline \ground}$
  induced from an $\Field$-basis for $\ground$ (which in our case is
  supplied by the basic idempotents), we get an isomorphism
  $\ground\to {\overline{X}}$ denoted $\iota\mapsto {\overline x}_{\iota}$.
  Now, $\phi_{0|1|0}(\iota)={\overline 1}\otimes {\overline x}_{\iota}\otimes 1$.

  We claim that on the bimodule $\lsub{\Blg}Z_{\Alg}$, for all $k> 0$,
  \begin{equation}
    \label{eq:SkewActionsVanish}
    m_{k|1|0}=0.
  \end{equation}
  So see this, observe that, the image of
  $m_{k|1|0}(b_1,\dots,b_{k-1},x)$ is supported in the portion in
  grading $\lambda(x)+\lambda(b_1)+\dots+\lambda(b_{k-1})$ which,
  since $\lambda(x)=0$ (because of Condition~\ref{Koszul:Rk1}), means
  that $\lambda(b_1)+\dots+\lambda(b_{k-1})=0$.  By positivity of the
  grading on $\Alg$, we conclude that each $\lambda(b_i)=0$, and hence
  that each $b_i\in \ground$; so these algebra elements have Maslov
  grading $0$.  Since $Z$ is supported in Maslov grading $0$, as well,
  we conclude from the grading conventions that $k=1$, as required by
  Equation~\eqref{eq:SkewActionsVanish}.  Similar reasoning shows
that for the bimodule homomorphism, for all $k> 0$,
  \begin{equation}
    \label{eq:SkewMapsVanish}
    \phi_{k|1|0}=0.
  \end{equation}

  To verify $h$ is a $DA$ bimodule quasi-isomorphism, consider the induced
  $AA$-bimodule homomorphism
  \[ (\Id_{\Alg}\DT h)
  \colon \lsub{\Alg}\Alg_{\Alg}\DT \lsup{\Alg}X^{\Blg}\DT \lsub{\Blg}Y_{\Blg}
  =\lsub{\Alg}\Alg_{\Alg}\DT \lsup{\Alg}X^{\Blg}\DT
  \lsub{\Blg}{\overline \Blg}_{\Blg}\DT \lsub{\Blg}X^{\Alg}\DT
  \lsub{\Alg}\Alg_{\Alg}
  \to \lsub{\Alg}\Alg_{\Alg}.\]
  To verify that it is a quasi-isomorphism,
  we form the following composite
  \[ \lsub{\Alg}\Alg_{\Alg} \DT \lsup{\Alg}X^{\Blg}\DT
  \lsub{\Blg}Z_{\Alg} \overset{\Id_{\Alg\DT X}\DT \phi}{\longrightarrow}
  \Alg\DT X\DT {\overline\Blg}\DT {\overline X} \DT{\Alg}
  \overset{\Id_{\Alg}\DT h}{\longrightarrow} \lsub{\Alg}\Alg_{\Alg},\]
  which we denote $\kappa$.  From
  Equation~\eqref{eq:SkewActionsVanish}, we conclude that the chain
  complex on the left is identified with $\Alg$.  From
  Equation~\eqref{eq:SkewMapsVanish}, we conclude that that
  $\kappa_{0|1|0}$ is the chain map $(\Id_{\Alg} \otimes h^1_{1})\circ
  (\Id_{\Alg\DT X}\otimes \phi_{0|1|0})$ that sends $a\otimes x_{\iota} \otimes
  \iota$ to $a$; in particular, it is an isomorphism of chain complexes.
\end{proof}

\begin{remark}
  In fact, if $\lsup{\Alg}X^{\Blg}$ is quasi-invertible, the
  quasi-inverse is always given by the above bimodules by the argument
  from~\cite[Proposition~9.2]{Bimodules}; i.e. 
  there are quasi-isomorphisms:
  \begin{align*}
    \lsub{\Blg}Y_{\Alg}&\simeq 
    \Mor_{\Blg}(\lsub{\Blg}\Blg_{\Blg},\lsub{\Blg}Y_{\Alg}) \\
    &\simeq \Mor^{\Alg}(\lsup{\Alg}X^{\Blg}\DT \lsub{\Blg}\Blg_{\Blg},
    \lsup{\Alg}X^{\Blg}\DT \lsub{\Blg} Y_\Blg) \\
    &\simeq \Mor^{\Alg}(\lsup{\Alg}X^{\Blg}\DT \lsub{\Blg}\Blg_{\Blg},
    \lsup{\Alg}\Id_{\Alg});
  \end{align*}
  the first of these is true for arbitrary $\lsub{\Blg}Y_{\Alg}$,
  the second uses the fact that $X\DT $ induces an equivalence of
  categories, and the third uses the fact that $X$ and $Y$ are quasi-inverses.
\end{remark}

\newcommand\Alex{\mathrm{Alex}}
\newcommand\Maslov{\mathfrak{m}}
\newcommand\Filt{\mathcal F}
\section{The algebras}
\label{sec:Algebras}

We describe differential graded algebras used in the construction of
our knot invariant.  In fact, we will find it convenient to work with
a more general construction $\AlgB(m,k,\Upwards)$, where $0\leq k\leq
m+1$, and $\Upwards$ is an arbitrary subset of $\{1,...,m\}$.  The
integer $m$ is called the {\em index}.  The integer $k$ is the {\em
  number of occupied positions}; together with the index, it
determines the base ring $\ground=\IdempRing(\AlgB)$.  When
$\Upwards_1\subset\Upwards_2\subset\{1,\dots,m\}$, then
$\AlgB(m,k,\Upwards_1)$ will be a differential subalgebra of
$\AlgB(m,k,\Upwards_2)$.

When $\Upwards=\emptyset$, the differential on the algebra
$\AlgB(m,k,\emptyset)=\AlgB(m,k)$ vanishes. In fact, this 
algebra is the quotient of a larger algebra
$\AlgBZ(m,k)$, which we define first.

When constructing the knot invariant for a knot with bridge number
$n$, we will have $m=2n$, $k=n$, and $\Upwards$ will correspond to
those ($n$) strands that are oriented upwards.  In fact, for the
purposes of the knot invariant, it would suffice to work in a summand
corresponding to certain idempotents; see
Remark~\ref{rem:CanUseSubalgebra}. We have chosen to use the larger
algebra in our constructions, as it satisfies a duality described in
Section~\ref{subsec:OurDuality}.

\subsection{The algebra $\AlgB_0(m,k)$}
\label{sec:BlgZ}

We define
$\AlgBZ(m,k)$, which is a graded algebra over $\Field[U_1,\dots,U_m]$,
whose (Alexander multi-grading) set is $(\frac{1}{2}\Z)^m$. (See Section~\ref{sec:PrelimGradings}.)
Basic idempotents in $\AlgB_0(m,k)$ correspond to {\em idempotent states}, or
{\em I-states} for short,  $\x=(x_1,\dots,x_k)$,
which
are increasing sequences of integers 
\begin{equation}
  \label{eq:Unrestricted}
  0\leq x_1<...<x_k\leq m. 
\end{equation}
The basic idempotent corresponding to ${\bf x}$ will be denoted by
$\Idemp{\bf x}$. 
The elements $\Idemp{\x}$ are generators of a ring
$\ground=\IdempRing(m,k)$ satisfying:
\[ \Idemp{\x}\cdot\Idemp{\y}=\left\{\begin{array}{ll}
\Idemp{\x} &{\text{if $\x=\y$}} \\
0 &{\text{if $\x\neq\y$}.}
\end{array}
\right.\]

\begin{rem}
  Recall from Section~\ref{sec:Intro} that these idempotents can be
  interpreted as marking intervals which are intersections of the
  regions in a knot diagram with the $y=t$ slice.  In fact, the
    basic idempotents referred to there do not allow for the unbounded
    intervals, corresponding to replacing Equation~\eqref{eq:Unrestricted}
    by the condition
    \begin{equation}
      \label{eq:Restricted}
    1\leq x_1<\dots<x_k\leq m-1.
  \end{equation} 
  Our basic idempotents now will be as in Equation~\eqref{eq:Unrestricted};
  we return to the restricted case in Section~\ref{sec:Properties}.
\end{rem}

The unit ${\bf 1}$ in the algebra is given by
the sum of the basic idempotents.

Given an I-state $\x$, define its {\em weight}
$v^{\x}\in \Z^{m}$ by
\begin{equation}
  \label{eq:DefOfV}
  v^{\x}_{i} = \# \{x\in \x\big| x\geq i\}.
\end{equation}
Given two I-states $\x$ and $\y$, define their {\em minimal relative weight vector} $w^{\x,\y} \in (\frac{1}{2} \Z)^m$ to be  given by
\[ w^{\x,\y}_{i} = \frac{1}{2}\big|v^{\x}_i-v^{\y}_i\big|.\]

$\AlgB_0(m,k)$ is defined so that  there is an identification of $\Field[U_1,\dots,U_m]$-modules
$\Idemp{\x} \cdot \AlgB_0(m,k) \cdot \Idemp{\y} \cong \Field[U_1,\dots,U_m]$; denote the identification by 
\[ \phi^{\x,\y}\colon \Field[U_1,\dots,U_m] \rightarrow 
\Idemp{\x} \cdot \AlgB_0(m,k) \cdot \Idemp{\y} \]
A grading by $(\OneHalf \Z)^{m}$ on $\Idemp{\x} \cdot \AlgB_0(m,k) \cdot \Idemp{\y}$  is specified by
\begin{equation}
  \label{eq:WeightFunction}
  w(\phi(U_1^{t_1}\dots U_m^{t_m}))= w^{\x,\y}+(t_1,\dots,t_m),
\end{equation}
for non-negative integers $t_1,\dots,t_m$.

Multiplication 
\[ 
\Big(\Idemp{\x} \cdot \AlgB_0(m,k) \cdot \Idemp{\y} \Big)
*
\Big(\Idemp{\y} \cdot \AlgB_0(m,k) \cdot \Idemp{\z} \Big)
\to \Big(\Idemp{\x} \cdot \AlgB_0(m,k) \cdot \Idemp{\z} \Big)
\] 
is the unique non-trivial, grading-preserving
$\Field[U_1,\dots,U_m]$-equivariant map.
Explicitly,
given I-states $\x$, $\y$,
and $\z$, if we define
$g^{\x,\y,\z}= U^{t_1}\cdots U^{t_m}$,
where 
$t_i=w^{\x,\y}_i + w^{\y,\z}_i - w^{\x,\z}_i$,
then for $a, b\in \Field[U_1,\dots,U_m]$, 
\[ \phi^{\x,\y}(a)* \phi^{\y,\z}(b) = 
\phi^{\x,\z}(a \cdot b \cdot g^{\x,\y,\z}).
\]

Suppose that $\x$ is an I-state with $j-1\in \x$ but $j\not\in\x$. 
Then, we can form a new I-state $\y=\x\cup \{j\}\setminus\{j-1\}$, and let  
$R_j^\x = \phi^{\x,\y}(1)$. We define
\[R_{j} = \sum_{\{\x\big|j-1\in \x, j\not\in\x\}} R_j^\x,\]
so that
\[ \Idemp{\x} \cdot R_j = \left\{\begin{array}{ll}
R_j^\x &{\text{if $j-1\in \x$ and $j\not\in \x$}} \\
0 &{\text{otherwise.}}
\end{array}\right.\]
Similarly, let $L_j^\y = \phi^{\y,\x}(1)$, and
$L_j = \sum_{\{\y\big| j\in \y, j-1\not\in\y\}} L_j^\y$.
Less formally $R_j$  moves
one of the coordinates of an I-state from the 
$(j-1)^{st}$ position to the $j^{th}$, and $L_j$ changes it 
back from $j^{th}$ to $(j-1)^{st}$. 
The elements $L_j$ and $R_j$ are called the {\em left shifts} and {\em right shifts} respectively.
See Figure~\ref{fig:LRrelation}.

For $i=1,\dots,m$,  $U_i$ induces an algebra element in $\AlgBZ$,
defined by $\sum_{\x}\phi^{\x,\x}(U_i)$. 
For notational simplicity, we also denote this induced element by $U_i$.

\begin{figure}[ht]
\input{LRrelation.pstex_t}
\caption{\label{fig:LRrelation} {\bf{Picture of algebra elements in ${\mathcal B}_0(4,3)$.}}  
Idempotents $\Idemp{\{0,2,3\}}$  and $\Idemp{\{1,2,3\}}$ are pictured here;
the algebra elements $L_1$ and $R_1$ which connect them are indicated
(e.g. $\Idemp{\{0,2,3\}}\cdot R_1\cdot \Idemp{\{1,2,3\}}$  is non-zero). Here, 
$\Idemp{\{0,2,3\}}\cdot R_1\cdot L_1 = \Idemp{\{0,2,3\}} \cdot U_1$.}
\end{figure}

\begin{prop}
  The weight function from Equation~\eqref{eq:WeightFunction} descends
  to a grading $w=(w_1,\dots,w_m)$ on $\AlgBZ(m,k)$ with values in
  $(\OneHalf \Z)^m$.
\end{prop}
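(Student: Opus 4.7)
The plan is to verify two things: first that $w$ is well-defined on each summand $\Idemp{\x}\cdot\AlgBZ\cdot\Idemp{\y}$ and induces a vector-space splitting, and second that it is multiplicative, i.e.\ $w(\alpha\cdot\beta)=w(\alpha)+w(\beta)$ for homogeneous $\alpha,\beta$.

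Well-definedness and the splitting are essentially automatic: since $\phi^{\x,\y}$ identifies $\Idemp{\x}\cdot\AlgBZ\cdot\Idemp{\y}$ with $\Field[U_1,\dots,U_m]$, we get a direct sum decomposition of this summand indexed by the monomials $U_1^{t_1}\cdots U_m^{t_m}$, and declaring the $w$-grading of $\phi^{\x,\y}(U_1^{t_1}\cdots U_m^{t_m})$ to be $w^{\x,\y}+(t_1,\dots,t_m)$ just relabels this decomposition. Summing over pairs $(\x,\y)$ gives a $(\OneHalf\Z)^m$-graded vector space structure on all of $\AlgBZ(m,k)$.

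For multiplicativity, take homogeneous elements $\alpha=\phi^{\x,\y}(U_1^{s_1}\cdots U_m^{s_m})$ and $\beta=\phi^{\y,\z}(U_1^{u_1}\cdots U_m^{u_m})$ (products with mismatched idempotents vanish and so are automatic). By the definition of multiplication,
\[
\alpha\cdot\beta=\phi^{\x,\z}\!\left(U_1^{s_1+u_1+t_1}\cdots U_m^{s_m+u_m+t_m}\right),\qquad t_i=w^{\x,\y}_i+w^{\y,\z}_i-w^{\x,\z}_i,
\]
so by definition of $w$,
\[
w(\alpha\cdot\beta)=w^{\x,\z}+(s_1+u_1+t_1,\dots,s_m+u_m+t_m)=\bigl(w^{\x,\y}+(s_1,\dots,s_m)\bigr)+\bigl(w^{\y,\z}+(u_1,\dots,u_m)\bigr)=w(\alpha)+w(\beta),
\]
after substituting the expression for $t_i$ and cancelling $w^{\x,\z}$. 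The only subtlety is to make sure the multiplication is actually defined, i.e.\ that each $t_i\geq 0$; this follows from the triangle inequality $|v^{\x}_i-v^{\z}_i|\leq |v^{\x}_i-v^{\y}_i|+|v^{\y}_i-v^{\z}_i|$ applied to the scalars $v^{\x}_i,v^{\y}_i,v^{\z}_i\in\Z$ and the definition $w^{\x,\y}_i=\tfrac12|v^{\x}_i-v^{\y}_i|$.

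There is no serious obstacle here; the whole content of the statement is that the factor $g^{\x,\y,\z}$ inserted into the multiplication rule is precisely the one needed to make the relative weight vectors $w^{\x,\y}$ behave like a cocycle up to the monomial correction. Once the arithmetic above is recorded, the result follows.
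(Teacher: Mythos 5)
Your proof is correct and follows essentially the same route as the paper: the whole content is the multiplicativity check, which the paper dispatches in one line by noting it "follows from the definition of $g^{\x,\y,\z}$," and your computation is just the written-out version of that (with the additional, harmless observation that $t_i\geq 0$ by the triangle inequality).
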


\begin{proof}
  It suffices to show that 
  \[ w(\phi^{\x,\y}(1) * \phi^{\y,\z}(1))=w(\phi^{\x,\y}(1)) + w(\phi^{\y,\z}(1)).\]
  which follows from the definition of $g^{\x,\y,\z}$.
\end{proof}

We have no further need to distinguish the product on the algebra $\AlgBZ(m,k)$
from other products; so we will abbreviate $a* b$ by $a \cdot b$.

In the notation from Section~\ref{sec:PrelimGradings},
$\AlgBZ(m,k)$ is graded by the Abelian group $\Lambda=(\OneHalf\Z)^m$.
Recall that an element $a\in\AlgBZ(m,k)$ that is supported in some fixed grading
$\lambda\in\Lambda$ is called {\em homogeneous}.

\begin{prop}
  \label{prop:AlgebraGenerators}
  The algebra $\AlgBZ(m,k)$ is generated over $\Field$
  by the elements
  $L_i$, $R_i$, $U_i$, and the idempotents $\Idemp{\x}$. 
\end{prop}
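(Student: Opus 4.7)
The plan is to induct on the distance $d(\x,\y) := \sum_i w^{\x,\y}_i = \tfrac{1}{2}\sum_i|v^{\x}_i-v^{\y}_i|$, showing that each $\Field$-basis element $\phi^{\x,\y}(U_1^{t_1}\cdots U_m^{t_m})$ lies in the subalgebra generated by the $L_j$, $R_j$, $U_i$, and $\Idemp{\x}$. The base case $d(\x,\y)=0$ forces $\x=\y$, and then $\phi^{\x,\x}(U_1^{t_1}\cdots U_m^{t_m}) = \Idemp{\x}\cdot U_1^{t_1}\cdots U_m^{t_m}$ is manifestly in the generated subalgebra.

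The inductive step reduces to the following \emph{Reduction Lemma}: whenever $\x\neq \y$, there is an I-state $\x'$ obtained from $\x$ by a single elementary move (so $\phi^{\x,\x'}(1)$ equals $\Idemp{\x}\cdot L_j$ or $\Idemp{\x}\cdot R_j$ for some $j$) satisfying $d(\x',\y)=d(\x,\y)-\tfrac{1}{2}$. Granted this, $d(\x,\x')+d(\x',\y)=d(\x,\y)$; since the pointwise triangle inequality $w^{\x,\x'}_i + w^{\x',\y}_i \geq w^{\x,\y}_i$ sums to this equality, every term must be saturated, hence $g^{\x,\x',\y}=1$. The multiplication formula then yields
\[ \phi^{\x,\x'}(1)\cdot \phi^{\x',\y}(U_1^{t_1}\cdots U_m^{t_m}) \;=\; \phi^{\x,\y}(U_1^{t_1}\cdots U_m^{t_m}), \]
with the left factor a generator and the right factor handled inductively.

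To prove the Reduction Lemma, let $\ell$ be the largest index with $v^{\x}_\ell\neq v^{\y}_\ell$. Swapping $\x\leftrightarrow \y$ (and $L_j\leftrightarrow R_j$) if necessary, I may assume $v^{\x}_\ell<v^{\y}_\ell$. I first rule out $\ell\in\x$: otherwise, using maximality of $\ell$ and the unit-step bound $v^{\y}_\ell\leq v^{\y}_{\ell+1}+1$ (with convention $v_{m+1}:=0$), one would get $v^{\x}_\ell = v^{\x}_{\ell+1}+1 = v^{\y}_{\ell+1}+1 \geq v^{\y}_\ell$, a contradiction. Then I let $i\leq \ell$ be the smallest index with $v^{\x}_i=v^{\x}_\ell$, equivalently $i,i+1,\ldots,\ell\notin\x$. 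If $i>1$, minimality forces $i-1\in \x$; if $i=1$, the bound $v^{\x}_1=v^{\x}_\ell<v^{\y}_\ell\leq v^{\y}_1\leq k$ forces $0\in \x$. In either case $R_i$ is applicable to $\x$, producing $\x' = (\x\setminus\{i-1\})\cup\{i\}$, and the strict inequality $v^{\x}_i=v^{\x}_\ell<v^{\y}_\ell\leq v^{\y}_i$ ensures $d(\x',\y)=d(\x,\y)-\tfrac{1}{2}$.

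The main (and essentially only) obstacle is the Reduction Lemma: one must locate an elementary move that is simultaneously \emph{applicable} to $\x$ and \emph{helpful}, meaning it strictly decreases the distance to $\y$. The trick is to begin at the largest disagreement index $\ell$ and descend through the ``flat'' portion of the non-increasing staircase $v^{\x}$ until an element of $\x$ is uncovered, with the boundary case $i=1$ forced by $v^{\x}_1<k$.
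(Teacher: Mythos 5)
Your proposal is correct and follows essentially the same route as the paper: peel a single shift generator $L_j$ or $R_j$ off a basis element and induct on a complexity measure (the paper inducts on total weight and compares sorted coordinates $x_t$ vs.\ $y_t$, while you induct on the idempotent distance using the weight vectors $v^{\x}$), with your saturation argument giving $g^{\x,\x',\y}=1$ making explicit a point the paper's proof leaves implicit. The only loose point is the WLOG swap: as literally stated it produces an elementary move on $\y$ rather than on $\x$, but this is harmless, since one can then peel the shift generator off on the right of $\phi^{\x,\y'}(\cdot)$ and the identical saturation argument applies.
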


\begin{proof}
  Let $a$ be a homogeneous algebra elements with 
  $\Idemp{\x}\cdot a\cdot  \Idemp{\y} =a$.  If
  $\x=\y$, then $a$ factors as a product of $U_i$.  Otherwise, suppose
  that some $i=x_t>y_t$; and indeed choose $t$ minimal with this
  property.  Then, it is easy to see that we can factor $a=L_i\cdot
  b$.  Otherwise,  there is some $y_t>x_t=i$, and we can choose $t$
  maximal with this property.  Then, we can factor $a=R_{i+1}\cdot b$.  In
  both cases, the total weight of $b$ is smaller than that of $a$, so
  the result follows by induction on weight. 
\end{proof}

\subsection{The $\AlgB(m,k)$}

The algebra $\AlgB(m,k)$ is the quotient of $\AlgBZ(m,k)$ by the relations
\begin{align}
 L_{i+1}\cdot L_i &= 0 
  \label{eq:MoveTooFarLs} \\
 R_i\cdot R_{i+1}&=0;
  \label{eq:MoveTooFarRs}
\end{align}
and also, if 
$\{x_1,...,x_k\}\cap \{j-1,j\}=\emptyset$, then 
\begin{equation}
\label{eq:KillUs} 
\Idemp{\x}\cdot U_j=0.
\end{equation}

As we shall see, the relations Equations~\eqref{eq:MoveTooFarLs}
and~\eqref{eq:MoveTooFarRs} guarantee that algebra elements in
$\AlgB(m,k)$ cannot move coordinates in the idempotents states by more
than one unit.
We formulate this quotient operation as follows:
\begin{defn}
  Let ${\mathcal J}$ be the (two-sided) ideal in $\AlgBZ(m,k)$ that is generated by 
  $L_{i+1}\cdot L_i$, $R_i\cdot R_{i+1}$, and $\Idemp{\x} \cdot U_j$, when
  $\{x_1,...,x_k\}\cap \{j-1,j\}=\emptyset$.
  Let $\AlgB(m,k)$ be the quotient of $\AlgBZ(m,k)$ by this two-sided ideal.
\end{defn}

Note that ${\mathcal J}$ is generated by elements that are homogeneous with
respect to the weights. Thus, the weights induce a grading on the quotient algebra $\AlgB(m,k)$.

The ideal ${\mathcal J}$ can be understood concretely. To do so, we set up some notation.

\begin{defn}
\label{def:CloseEnough}
Two I-states $\x$ and $\y$ are said to be {\em far}  if there is some
$i=1,\dots,k $ with
$|x_i-y_i|>1$; otherwise they are called {\em close enough}.
\end{defn}

Given two I-states $\x$ and $\y$, we define an ideal
$\Ideal(\x,\y)\subset \Field[U_1,\dots,U_m]$.  If the I-states $\x$ and
$\y$ are far, let $\Ideal(\x,\y)=\Field[U_1,\dots,U_m]$.  If
$\x$ and $\y$ are close enough, let $\Ideal(\x,\y)$
be the ideal generated by monomials $U_{i+1}\cdots U_j$, taken over all
$0\leq i<j\leq m$ where $i$ and $j$ satisfy the following conditions:
\begin{itemize}
\item $i, j \in \{0,\dots,m\}\setminus (\x\cap\y)$
\item 
for all integers $t$ with $i< t < j$, $t\in\x\cap\y$.
\item
$w^{\x,\y}_t=0$ for all $i+1\leq  t \leq j$.
\end{itemize}

\begin{defn}
  \label{def:GeneratingInterval}
  For $i<j$ as above, we call the interval $[i+1,j]$ a  {\em generating interval for $\x$ and
    $\y$}. (Observe that a generating interval can have $i+1=j$, and 
  this corresponds to $U_j$.)
\end{defn}

\begin{figure}
\input{Unused.pstex_t}
\caption{\label{fig:Unused} 
  The ideal $\Ideal(\x,\y)$ is generated by $U_1 U_2$, $U_3$,
  $U_6$, $U_8 U_9$, and $U_{11}$.}
\end{figure}

\begin{prop}
  \label{prop:IdentifyJ}
  For all I-states $\x$ and $\y$, $\Idemp{\x}\cdot {\mathcal J}\cdot \Idemp{\y}
  = \phi^{\x,\y}(\Ideal(\x,\y))$.
\end{prop}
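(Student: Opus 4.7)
The plan is to prove the equality by establishing both containments. Writing $\mathcal{J}'_{\x',\y'} := \phi^{\x',\y'}(\Ideal(\x',\y'))$ and $\mathcal{J}' := \bigoplus_{\x',\y'} \mathcal{J}'_{\x',\y'} \subseteq \AlgBZ(m,k)$, for the forward inclusion $\Idemp{\x}\cdot\mathcal{J}\cdot\Idemp{\y} \subseteq \mathcal{J}'_{\x,\y}$ I will show that $\mathcal{J}'$ is a two-sided ideal containing the three generating families of $\mathcal{J}$, which gives $\mathcal{J}\subseteq\mathcal{J}'$ immediately. Closure of $\mathcal{J}'$ under multiplication by $U_i$ is clear since each $\Ideal(\x',\y')$ is a $\Field[U_1,\dots,U_m]$-ideal; closure under left and right multiplication by $L_i$ or $R_i$ is a direct computation that tracks how the weight vector $w^{\x',\y'}$ shifts and how generating intervals transform under such a shift. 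Containment of the generators of $\mathcal{J}$ is then checked family by family: the elements $L_{i+1}L_i$ and $R_iR_{i+1}$ connect I-states differing by $2$ in one coordinate, hence far, so $\Ideal(\x',\y') = \Field[U_1,\dots,U_m]$; the elements $\Idemp{\z}U_j$ (with $\z\cap\{j-1,j\}=\emptyset$) equal $\phi^{\z,\z}(U_j)$, and the singleton $[j,j]$ is immediately verified to be a generating interval for $(\z,\z)$.

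For the reverse inclusion, I will show each generator $\phi^{\x,\y}(U_{i+1}\cdots U_j)$ of $\mathcal{J}'_{\x,\y}$ lies in $\mathcal{J}$, splitting into two cases. In the \emph{far} case ($\Ideal(\x,\y) = \Field[U_1,\dots,U_m]$), it suffices to show $\phi^{\x,\y}(1)\in\mathcal{J}$: I choose a coordinate $t$ with $|x_t-y_t|\geq 2$, and, assuming $y_t\geq x_t+2$ without loss of generality, I build a path from $\x$ to $\y$ that first clears positions $x_t+1$ and $x_t+2$ by rearranging other coordinates, then advances coordinate $t$ by the two consecutive moves $R_{x_t+1}R_{x_t+2}$ (which lies in $\mathcal{J}$ by~\eqref{eq:MoveTooFarRs}), and finally completes the path to $\y$.

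In the \emph{close enough} case, for a generating interval $[i+1,j]$, I construct an intermediate I-state $\z$ from $\x$ by sliding the block $\{i+1,\dots,j-1\}\subseteq\x\cap\y$ down by one via the sequence $L_{i+1}L_{i+2}\cdots L_{j-1}$ (with minor modifications at the endpoints if $i$ or $j$ belongs to exactly one of $\x,\y$). The resulting $\z$ satisfies $\z\cap\{j-1,j\}=\emptyset$, so $\Idemp{\z}U_j\in\mathcal{J}$ by~\eqref{eq:KillUs}. I then continue the factorization with $U_j$ followed by a reverse sequence of $R$-shifts returning $\z$ to $\y$. Using the identity $L_tR_t=U_t$ on the appropriate idempotent, the net weight picked up along the path is exactly $(0,\dots,0,1,\dots,1,0,\dots,0)$ with $1$'s in positions $i+1,\dots,j$, so the entire product equals $\phi^{\x,\y}(U_{i+1}\cdots U_j)$ on the nose, and we read off that it lies in $\mathcal{J}$.

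The main obstacle will be handling the boundary subcases of the close-enough construction when $i$ or $j$ lies in exactly one of $\x$ or $\y$. The construction of $\z$ must then be adjusted (for instance, initiating the slide at $L_i$ when $i\in\x\setminus\y$, or pushing the coordinate at $j$ up to $j+1$ when $j\in\x\setminus\y$ so that $\z\cap\{j-1,j\}$ is emptied), and the weight bookkeeping in each subcase depends critically on the vanishing condition $w^{\x,\y}_t=0$ for $i+1\leq t\leq j$ built into the definition of a generating interval. Analogous care is needed in the far case when other coordinates would block the preliminary rearrangement before the $R_{x_t+1}R_{x_t+2}$ pair can be invoked.
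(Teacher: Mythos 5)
Your proposal is correct and follows essentially the same route as the paper: one containment is obtained by showing that $\bigoplus_{\x,\y}\phi^{\x,\y}(\Ideal(\x,\y))$ is a two-sided ideal (checking closure under $U_i$, $L_i$, $R_i$ via the weight/generating-interval bookkeeping) which contains the defining generators of ${\mathcal J}$, and the other by factoring $\phi^{\x,\y}(1)$ through a forbidden double shift in the far case and factoring $\phi^{\x,\y}(U_{i+1}\cdots U_j)$ as $(a\cdot L_{i+1}\cdots L_{j-1})\cdot\Idemp{\w}\cdot U_j\cdot R_{j-1}\cdots R_{i+1}\cdot b$ in the close-enough case, exactly the paper's factorizations.
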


\begin{proof}
  Let ${\mathcal I}=\bigoplus_{\x,\y} \phi^{\x,\y}(\Ideal(\x,\y))$.
  First we prove ${\mathcal I}\subseteq {\mathcal J}$; i.e. for any two 
  I-states $\x$ and $\y$
we have $ \phi^{\x,\y}(\Ideal(\x,\y)) \subset {\mathcal J}$.

  If $\x$ and $\y$ are far, so that
  $\Ideal(\x,\y)= \Field[U_1,\dots,U_m]$, then we claim that
   $\phi^{\x,\y}(1)$
  can be factored as a product $a\cdot R_{t}R_{t+1}\cdot b$ or
  $a\cdot L_{t} L_{t-1} \cdot b$ for some $t$. This is obvious, for example
if there is an  $i$ with 
  $x_i> y_i+ 1$, then
$\phi^{\x, \y} (1)$ decomposes as $ a\cdot L_t\cdot L_{t-1}\cdot b$,
where $t=x_i$.

  If $\x$ and $\y$ are not far, we show that for all generating intervals $[i+1,j]$ for $\x$ and $\y$, the
  algebra elements $\phi^{\x,\y}(U_{i+1}\dots U_j)$ are in ${\mathcal
    J}$.

  Consider first the case where $i+1=j$.  Then, $\phi^{\x,\y}(U_j)$ can be factored as $
  a\cdot \Idemp{{\mathbf w}} \cdot U_j\cdot b$, where $j-1, j\not\in {\mathbf
    w}$.   In general, if $i<j$ is a generating interval for $\x$ and $\y$, 
  $\phi^{\x,\y}(U_{i+1}\cdots U_j)$ has a decomposition as 
  \[ (a \cdot L_{i+1}\cdots L_{j-1} )\cdot \Idemp{{\mathbf w}}\cdot  U_{j} \cdot
  R_{j-1}\cdots R_{i+1}\cdot b,\]
  where $j-1,j\not\in {\mathbf w}$. In these cases, $\Idemp{{\mathbf w}}\cdot U_j\in {\mathcal J}$.
  (For example, for the I-states $\x$ and $\y$ in Figure~\ref{fig:Unused}, 
  \[
\begin{array}{lll}
  \phi^{\x,\y}(U_3)&=
    \phi^{\x,\y}(1) \cdot \Idemp{\y}\cdot U_3 \cdot \phi^{\y,\y}(1) 
    & \\
    \phi^{\x,\y}(U_{11})&= \phi^{\x,{\mathbf w}}(1) 
    \cdot\Idemp{\w}\cdot U_{11} \cdot L_{12}
    &{\text{where $\w=\{1,4,5,6,7,8,9,12\}$}} \\
    \phi^{\x,\y}(U_1 U_2) &= \phi^{\x,{\mathbf w}}(1) \cdot\Idemp{\w}\cdot U_2 \cdot \phi^{\w,\y}(1)
    &{\text{where $\w=\{0,3,4,7,8,10,12\}$}}) \\  
  \end{array}
  \]
  This completes the proof that ${\mathcal I}\subseteq {\mathcal J}$.

  To prove that ${\mathcal J}\subseteq {\mathcal
    I}$, we first prove that ${\mathcal I}$ is an ideal; i.e. if $a$ is an
  arbitrary algebra element, then 
  $a\cdot {\mathcal I}\subseteq  {\mathcal I}$;
  and also
  $ {\mathcal I}\cdot a \subseteq {\mathcal I}$.
  In view of Proposition~\ref{prop:AlgebraGenerators}, it suffices
  to verify this in the case where $a= \Idemp{\x} \cdot U_i$, $\Idemp{\x} \cdot R_i$, or
  $\Idemp{\x} \cdot L_i$.
 
  When $a=\Idemp{\x} \cdot U_i$, the result follows readily. 

  By symmetry, we can consider the case where $a= \Idemp{\x} \cdot R_i \cdot \Idemp{\y} \neq 0$. 
  Let $t$ denote the index with $x_t=i-1$ and $y_t=i$.
  We wish to show that $\Idemp{\x} \cdot R_i\cdot
  \phi^{\y,\z}(\Ideal(\y,\z))\subset \phi ^{\x, \z} (\Ideal(\x,\z))$.  There are
  cases:
  \begin{enumerate}
  \item
    \label{case:xzfar}
    If $\x$ and $\z$ are far, the statement is vacuously true.
  \item If $\y$ and $\z$ are far, but $\x$ and $\z$ are
    close enough, then $z_t=i-2$ and $g^{\x,\y,\z}=U_i$. 
Futhermore we have $i\not\in \x$, $i-1 \not\in \z$, and $w_i^{\x,\z}=0$.
It follows that  
$U_i\in \Ideal(\x,\z)$ and
this finishes the argument in this subcase.
  \item 
    \label{case:xyNotFar}
    If $\y$ and $\z$ are not far, and $\x$ and $\z$ are not far,
    as well, then there are two subcases according to the value of 
    $z_t$.
    \begin{enumerate}[label=(\ref{case:xyNotFar}\alph*),ref=(\ref{case:xyNotFar}\alph*)]
      \item
      \label{case:MovingOn}
      If $z_t=i$, the generating
      intervals for $\x,\z$ are contained in the generating intervals
      for $\y,\z$, and the containment is clear.
      \item
        \label{case:MovingBack}
        If $z_t=i-1$, there can be at most one generating interval for
        $(\y,\z)$ which is not also a generating interval for
        $(\x,\z)$, and that is an interval which terminates in
        $i$. Since $g^{\x,\y,\z}=U_i$, it follows that 
        \[\Idemp{\x}\cdot
        R_i\cdot \phi^{\y,\z}(U_{j}U_{j+1}\cdots U_{i-1})=\phi^{\x,\z}(U_{j}U_{j+1}
        \cdots U_{i}).\]
\end{enumerate}
\end{enumerate}      

  The containment $\phi^{\x,\y}(\Ideal(\x,\y))\cdot R_i \cdot \Idemp{\z} 
  \subset \phi ^{\x, \z}( \Ideal(\x,\z))$ follows symmetrically.

  We must now verify that the definining relations for ${\mathcal J}$
  are contained in $\Ideal$.  Clearly, if $\Idemp{\x}\cdot R_{i}\cdot
  R_{i+1}\cdot\Idemp{\y}$ is non-zero in $\AlgBZ$, then $\x$ and $\y$
  are too far, and so $\Ideal(\x,\y)=\Field[U_1,\dots,U_m]$, proving
  the containment.  The same argument works for $L_{i+1}\cdot
  L_{i}$. Finally, if $\x\cap\{i-1,i\}=\emptyset$, then $U_i$ is a
  monomial corresponding to a generating interval for $(\x,\x)$, so
  $U_i\in \Ideal(\x,\x)$.
\end{proof}

The following lemma will be useful later.

\begin{lemma}
  \label{lem:NontrivProd}
  Let $a\in\AlgB(m,k)$ be homogeneous, and suppose that $w_i(a)\in\Z$.
  Then $a\cdot R_i\neq 0$ implies that $a\cdot U_i\neq 0$; similarly,
  $R_i\cdot a \neq 0$ implies that $U_i\cdot a \neq 0$.
\end{lemma}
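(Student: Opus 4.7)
We handle the first claim; the second follows by the symmetric argument with left and right interchanged (using the analogous identity $\Idemp{\z} \cdot L_i \cdot R_i = \Idemp{\z} \cdot U_i$ for $\z$ with $i \in \z$, $i-1 \notin \z$).

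Write $a = \phi^{\x,\y}(U^{\vec t})$, the unique monomial lift to $\AlgBZ(m,k)$: a single monomial suffices because $a$ is homogeneous and the weight grading separates monomials in a fixed $(\x,\y)$-block, and the lift is well-defined modulo $\Ideal(\x,\y)$ by Proposition~\ref{prop:IdentifyJ}. The hypothesis $a \cdot R_i \neq 0$ forces $i-1 \in \y$ and $i \notin \y$; set $\y' = \y \cup \{i\} \setminus \{i-1\}$.

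The main obstacle is to pin down the local geometry at position $i$: I claim $v^{\x}_i = v^{\y}_i$ and $i-1 \in \x$. Since $a \neq 0$ and $a \cdot R_i \neq 0$, both $(\x,\y)$ and $(\x,\y')$ are close enough (Definition~\ref{def:CloseEnough}); otherwise the corresponding ideal would equal the full polynomial ring. An elementary argument using strict monotonicity shows that for close-enough I-states, $|v^{\x}_i - v^{\y}_i| \leq 1$. Combined with $w^{\x,\y}_i = w_i(a) - t_i \in \Z$ (from the grading hypothesis and $t_i \in \Z_{\geq 0}$), this forces $v^{\x}_i = v^{\y}_i$. Writing $y_s = i-1$, the close-enough constraints in both $(\x,\y)$ and $(\x,\y')$ together pin $x_s \in \{i-1, i\}$; and $x_s = i$ would produce $v^{\x}_i - v^{\y}_i = 1$, a contradiction. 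So $x_s = i-1$, giving $i-1 \in \x$.

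Using these two facts, a direct computation of the relevant weights yields $g^{\x,\y,\y'} = 1$, hence $a \cdot R_i = \phi^{\x,\y'}(U^{\vec t})$ and $U^{\vec t} \notin \Ideal(\x,\y')$. On the other hand $a \cdot U_i = \phi^{\x,\y}(U^{\vec t} \cdot U_i)$, so the claim reduces to showing
\[
U^{\vec t} \cdot U_i \notin \Ideal(\x,\y).
\]

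For this last step, argue by contrapositive. Suppose $U^{\vec t} \cdot U_i \in \Ideal(\x,\y)$ while $U^{\vec t} \notin \Ideal(\x,\y)$ (the latter holds because $a \neq 0$). Then some monomial generator $U_{c+1} \cdots U_d$ of $\Ideal(\x,\y)$ must involve $U_i$, so $c+1 \leq i \leq d$. The interior condition $\{c+1,\dots,d-1\} \subset \x \cap \y$ together with $i \notin \y$ forces $d = i$, and the endpoint condition $c \notin \x \cap \y$ together with $i-1 \in \x \cap \y$ (from the previous paragraph) forces $c \leq i-2$. A direct check then shows that $[c+1, i-1]$ is a generating interval for $(\x,\y')$: at indices strictly less than $i$ the membership and weight data for $(\x,\y)$ and $(\x,\y')$ coincide, and the right endpoint $i-1$ lies outside $\x \cap \y'$ because $i-1 \notin \y'$. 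Hence $U_{c+1} \cdots U_{i-1}$, a generator of $\Ideal(\x,\y')$, divides $U^{\vec t}$, so $U^{\vec t} \in \Ideal(\x,\y')$, contradicting the nonvanishing of $a \cdot R_i$.
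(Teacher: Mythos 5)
Your proof is correct, but it is organized differently from the paper's, which is a two-line argument: the paper passes immediately to $b=a\cdot R_i$ and observes that, since $w_i(a)\in\Z$, the relative weight $w_i^{\x,\y}$ for the idempotent pair of $b$ equals $\OneHalf$; hence $i$ lies in \emph{no} generating interval for that pair, so $b\cdot U_i\neq 0$, and since $U_i$ is central, $a\cdot U_i\cdot R_i=b\cdot U_i\neq 0$ forces $a\cdot U_i\neq 0$. You instead stay with the idempotent pair $(\x,\y)$ of $a$ itself: integrality gives $w^{\x,\y}_i=0$, you extract the extra structural facts $i-1\in\x\cap\y$ and $g^{\x,\y,\y'}=1$ (so $a\cdot R_i$ is literally the same monomial viewed in the $(\x,\y')$ block), and then you show that any generating interval of $(\x,\y)$ that could absorb $a\cdot U_i$ must end at $i$ and have length at least two, so its truncation $[c+1,i-1]$ is a generating interval for $(\x,\y')$ killing $a\cdot R_i$ --- a transfer-of-generating-intervals argument via Proposition~\ref{prop:IdentifyJ}. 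Both proofs rest on the same ingredients (the monomial-ideal description of the quotient and the mod-$1$ weight bookkeeping), but the paper's route is shorter because applying the parity observation to $b$ makes the "no generating interval through $i$" statement immediate, with centrality of $U_i$ doing the rest; your route is more laborious but yields slightly more explicit information (the position of $i-1$ in both idempotents and the exact form of $a\cdot R_i$), and it never needs to invoke centrality. One cosmetic remark: both you and the paper implicitly reduce to the case where the homogeneous element lies in a single block $\Idemp{\x}\cdot\AlgB(m,k)\cdot\Idemp{\y}$; this reduction is harmless since multiplication by $R_i$ and $U_i$ preserves the block decomposition, but it is worth a sentence.
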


\begin{proof}
  Suppose that $b=a\cdot R_i\neq 0$, and $b=\Idemp{\x}\cdot b
  \cdot \Idemp{\y}$.  Since $w_i(a)\in\Z$ and $b\neq 0$, it follows
  that $w_i^{\x,\y}=\OneHalf$.  Thus, it follows that $i$ is not in
  any of the generating intervals for $\x$ and $\y$, so $b\cdot
  U_i\neq 0$, so $a\cdot U_i\neq 0$, as well.
\end{proof}

\begin{prop}
  \label{prop:DeterminedByMultiplicities}
  A homogeneous non-zero algebra element $a=\Idemp{\x} \cdot a\cdot
  \Idemp{\y}\in \Blg(m,k)$ is uniquely characterized by its initial
  (or terminal) idempotent and its weight.
\end{prop}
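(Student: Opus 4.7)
The plan is to first observe that for any fixed pair of idempotents $\x$ and $\y$, the homogeneous weight-$w$ subspace of $\Idemp{\x}\cdot\Blg(m,k)\cdot\Idemp{\y}$ is at most one-dimensional. In the unquotiented algebra $\AlgBZ(m,k)$ we have $\Idemp{\x}\cdot\AlgBZ(m,k)\cdot\Idemp{\y}\cong\Field[U_1,\dots,U_m]$ under $\phi^{\x,\y}$, and $\phi^{\x,\y}(U_1^{t_1}\cdots U_m^{t_m})$ has weight $w^{\x,\y}+(t_1,\dots,t_m)$; the quotient $\Blg(m,k)$ can only collapse this further. So the real work is to show that the terminal idempotent $\y$ is itself recoverable from $\x$ and $w$ alone whenever some non-zero $a$ exists; the ``terminal plus weight determines initial'' case will then follow by a symmetric argument.

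The non-vanishing of $a$ forces $\x$ and $\y$ to be close enough in the sense of Definition~\ref{def:CloseEnough}, since otherwise $\Ideal(\x,\y)$ is the whole polynomial ring by construction. A short counting argument then gives $|v^\x_i-v^\y_i|\leq 1$ for every $i$: writing $v^\x_i - v^\y_i = \#\{j:x_j\geq i\}-\#\{j:y_j\geq i\}$, only indices $j$ with $\{x_j,y_j\}=\{i-1,i\}$ contribute, and strict monotonicity of $\x$ and $\y$ permits at most one such $j$. Consequently $w^{\x,\y}_i\in\{0,\OneHalf\}$. Since $t_i = w_i-w^{\x,\y}_i$ must be a non-negative integer, $w^{\x,\y}_i$ is forced to equal the fractional part of $w_i$, and therefore $d_i := |v^\x_i - v^\y_i|\in\{0,1\}$ is read off directly from $\x$ and $w$.

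The remaining step is to determine the sign $\sigma_i\in\{\pm 1\}$ of $v^\y_i-v^\x_i$ at positions with $d_i=1$, using the fact that $v^\y$ itself must come from an I-state and hence is a non-increasing integer sequence with consecutive differences in $\{0,1\}$. Within any maximal block $[i_0,i_1]$ of consecutive positions with $d_i=1$, the sign must be constant: if $\sigma_i\neq\sigma_{i+1}$ for some adjacent pair, then $v^\y_i-v^\y_{i+1}$ would differ from $v^\x_i-v^\x_{i+1}$ by $\pm 2$, violating the step constraint on $v^\y$. At either end of the block the common sign is then pinned down: at the left boundary (when $i_0>1$), comparing $v^\y_{i_0-1}-v^\y_{i_0}$ to $v^\x_{i_0-1}-v^\x_{i_0}$ forces $\sigma=+1$ precisely when $i_0-1\in\x$, and analogous conditions at the right boundary and at the extremes $i_0=1$, $i_1=m$ cover the remaining cases. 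This locates $v^\y$, hence $\y$, uniquely, and then the first paragraph recovers $a$.

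The main obstacle is this final sign-determination step: the key inequality $|v^\x_i-v^\y_i|\leq 1$ for close-enough pairs is what makes the parity/fractional-part argument work, and the block-wise case analysis is routine but requires care at the boundaries. All other parts of the argument are formal consequences of the presentation of $\AlgBZ$ and of Proposition~\ref{prop:IdentifyJ}.
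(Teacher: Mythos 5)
Your proof is correct and follows essentially the same route as the paper's: one-dimensionality of weight spaces in $\Idemp{\x}\cdot\Blg(m,k)\cdot\Idemp{\y}$ via Proposition~\ref{prop:IdentifyJ}, then recovering $w^{\x,\y}$ from the weight modulo $1$ and deducing $\y$ from $\x$ and $w^{\x,\y}$. The only difference is that you spell out, via the sign-on-blocks analysis of $v^{\y}-v^{\x}$, the final step that the paper simply asserts ("it follows that $\x$ and $w(a)$ determines $\y$"), and that added detail checks out.
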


\begin{proof}
  Fix initial and terminal idempotent states $\x$ and $\y$.
  By Proposition~\ref{prop:IdentifyJ}, $\Idemp{\x}\cdot \Blg(m,k)\cdot \Idemp{\y}$
  is the quotient of the polynomial algebra $\Field[U_1,\dots,U_m]$ by
  an ideal which is homogeneous with respect to the weights.  An
  element of this quotient space in turn is uniquely determined by its
  various weights.  

  Note that the weight of $a$ modulo $1$ determines $w^{\x,\y}$.  It
  follows that $\x$ and $w(a)$ determines $\y$.
\end{proof}

\subsection{Defining $\AlgB(m,k,\Upwards)$.}

Let $\Upwards\subset \{1,\dots,m\}$ be a subset. 
Define $\AlgB(m,k,\Upwards)$ to be the differential graded algebra obtained
by adjoining new algebra elements $C_i$ for $i\in \Upwards$ to $\AlgB(m,k)$,
which satisfy the following properties:
\begin{itemize}
  \item The $C_i$ commute with all other algebra elements in
    $\AlgB(m,k,\Upwards)$.
  \item The square of $C_i$ vanishes.
  \item The differential of $C_i$ is $U_i$.
\end{itemize}
This construction can be done since the $U_i$ are in the center of
$\AlgB(m,k,\Upwards)$ and $d U_i=0$. 
More formally, if $\Upwards=\{i_1,...,i_n\}$, then
 $\AlgB(m,k,\Upwards)$ is defined by the formula
\[\AlgB(m,k,\Upwards)=
\frac{\AlgB(m,k)[C_{i_1},\dots,C_{i_n}]}{\{C_{j}^2=0,
  dC_{j} = U_{j}\}_{j\in \Upwards}}.\]

The algebra $\AlgB(m,k,\Upwards)$ is equipped with a distinguished basis as an $\Field$-vector space, with basis vectors corresponding to the following data:
\begin{itemize}
  \item  A pair of idempotents $\x$ and $\y$ that are not far,
  \item a monomial $p$ in $\Field[U_1,\dots,U_m]$ that is not divisible
    by any monomial associated to any generating interval for $\x$ and $\y$,
  \item a (possibly empty) subset $J$ of $\Upwards$.
\end{itemize}
The corresponding algebra element is $\phi^{\x,\y}(p) \cdot
\prod_{j\in J} C_j$. We call these basis vectors {\em pure algebra elements}.

\subsection{Gradings}


Note first since $\Blg(m,k)$ is obtained as a quotient of
$\AlgBZ(m,k)$ by a $w$-homogeneous ideal, the $w$-grading by
$(\OneHalf \Z)^m$ descends to a grading on $\Blg(m,k)$. The $w$-gradings
extend to $\AlgB(m,k,\Upwards)$, by declaring $w_i(C_j)=1$ if $i=j$ and $0$
otherwise.

Explicitly, for $1\leq i\leq m$, the $i^{th}$ weight $w_i$ of $L_i$
and $R_i$ is $1/2$, and $w_i$ of $U_i$ and $C_i$ is $1$, and $w_i$
vanishes on $L_j$, $R_j$, $U_j$,  and $C_j$  with $j\neq i$.  These functions
each induce gradings on $\AlgB(m,k,\Upwards)$; i.e. if $a$ and $b$ are
homogenous elements with $a\cdot b\neq 0$, then $w_i(a\cdot
b)=w_i(a)+w_i(b)$.

We will consider the following specialization, called the {\em Alexander grading}:
\begin{equation}
  \label{eq:DefAlex}
  \Alex(a)= -\sum_{s\in \Upwards} w_s(a)+ \sum_{t\not\in \Upwards} w_t (a).
\end{equation}

For $i=1,\dots,m$, there is a filtration $\MasFilt_i$  
$\AlgB(m,k,\Upwards)$ with values in $\{0,1\}$, specified by the function on pure algebra
elements $b$ so that $\MasFilt_i(b)=1$ if $b$ is divisible by $C_i$ and $0$
otherwise.  This extends to a filtration on the algebra:
\[ \AlgB(m,k,\Upwards)=\AlgB(m,k,\Upwards)_{\MasFilt_i=0}\oplus
\AlgB(m,k,\Upwards)_{\MasFilt_i=1},\] where $\Blg(m,k,\Upwards)_{\MasFilt_i=c}$
for $c=0$ or $1$ is the vector space spanned by generators
$a\in\Blg(m,k,\Upwards)$ with $\MasFilt_i(a)=c$. 
We call this a filtration,
because the differential on the algebra $d$ preseves
$\Blg(m,k,\Upwards)_{\MasFilt_i=0}$, but not
$\Blg(m,k,\Upwards)_{\MasFilt_i=1}$. 
Elements of $\Blg(m,k,\Upwards)$ that are contained entirely in 
$\AlgB(m,k,\Upwards)_{\MasFilt_i=0}$ or $\AlgB(m,k,\Upwards)_{\MasFilt_i=1}$ are called {\em $\MasFilt_i$-homogeneous}.
Note that $\sum_{i=1}^m \MasFilt_i$
induces a $\Z$-grading on $\AlgB(m,k,\Upwards)$ that drops by one
under the differential. We will be interested in a different normalization of this, 
the {\em Maslov grading}, defined by 
\begin{equation} 
\label{eq:DefMaslov}
\Maslov(a)= \#(~\text{$C_i$ in $a$}) - 2 \sum_{s\in \Upwards} w_s(a) 
= \sum_{i=1}^m \MasFilt_i(a) - 2 \sum_{s\in \Upwards} w_s(a).
\end{equation}

\subsection{Examples}
When $k=0$, $\AlgB(m,0)\cong \Field$; where $1=\Idemp{\emptyset}$. The
elements $L_i$, $R_i$, and $U_i$ are zero.  When $k=m+1$,
$\AlgB(m,m+1)=\Field[U_1,...,U_m]$;
there is only one idempotent $1=\Idemp{\{0,\dots,m\}}$.

Some other examples can be illustrated by the use of path algebras.
Given a directed graph $\Gamma$, the {\em path algebra} is the
$\Field$-vector space generated by sequences of edges $e_1*\dots *e_n$
where the terminal vertex of $e_i$ is the initial vertex of $e_{i+1}$. We include
also trivial paths, based at any vertex. If two paths can be concatenated, then their
product is the concatenation; otherwise it is zero. 

For example, consider Figure~\ref{fig:B11}, which is a graph with two
vertices and four edges.  Two of the edges (closed loops) are labelled
by $U$, but they are not the same, as they have different initial
points.  Identifying $L*R$ and $R*L$ with the corresponding closed
loops labelled by $U$, we obtain the algebra $\AlgB(1,1)$.

\begin{figure}[ht]
  \input{B11.pstex_t}
\caption{\label{fig:B11} {\bf{Picture of $\AlgB(1,1)$.}}  The two
  idempotents $\Idemp{\{0\}}$  and $\Idemp{\{1\}}$  are pictured, with arrows
  corresponding to algebra elements connecting idempotents. The
  algebra $\AlgB(1,1)$ is the quotient of the pictured path algebra, 
  where $R*L$ and $L*R$ are identified with the two closed loops
  labelled by $U$ (that are distinguished by their starting points).}
\end{figure}

\begin{figure}[ht]
\input{B21.pstex_t}
\caption{\label{fig:B21} {\bf{Picture of $\AlgB(2,1)$.}}  The three
  idempotents $\Idemp{\{0\}}$, $\Idemp{\{1\}}$, and $\Idemp{\{2\}}$  are
  pictured, with arrows corresponding to algebra elements connecting
  idempotents.  The algebra $\AlgB(2,1)$ can be thought of as a
  quotient of the pictured path algebra, divided out by relations $R_1* R_2=0$,
  $L_2* L_1=0$, $R_i * L_i=U_i$, $L_i*R_i=U_i$, $U_1*U_2=U_2*U_1$.}
\end{figure}

\begin{figure}[ht]
\input{B22.pstex_t}
\caption{\label{fig:B22} {\bf{Picture of $\AlgB(2,2)$.}}  The three
  idempotents $\Idemp{\{0,1\}}$, $\Idemp{\{0,2\}}$, and
  $\Idemp{\{1,2\}}$ are pictured, with arrows corresponding to algebra
  generators.  The algebra $\AlgB(2,2)$ can be thought of as a
  quotient of the pictured path algebra by $R_i * L_i=U_i$, $L_i*
  R_i=U_i$, 
  $U_1* L_2=L_2*U_1$,
  $U_1* R_2=R_2*U_1$, $U_2* L_1=L_1*U_2$, and $U_2* R_1=R_1*U_2$.}
\end{figure}

\subsection{Symmetries in the algebras}
\label{subsec:Symmetry}

Consider the map $\rho\colon \{0,\dots,m\}\to \{0,\dots,m\}$ with
$\rho(i)=m-i$. There is a map
\begin{equation}
  \label{eq:DefVRot}
  \VRot\colon \Blg(m,k,\Upwards)\to \Blg(m,k,\rho'_m(\Upwards))
\end{equation}
where $\rho'_m(i)=m+1-i$, characterized by the following properties:
\begin{itemize}
\item $\VRot(\Idemp{\x})=\Idemp{\rho(\x)}$.
\item If $a\in\Blg(m,k)$ is non-zero and homogeneous with  weights 
$(w_1(a),\dots,w_m(a))$,
then $\VRot(a)=b$ is the non-zero element that is homogeneous with weights given by
$w_{i}(b)=w_{m+1-i}(a)$. 
\end{itemize}
We extend this 
to $\Blg(m,k,\Upwards)\supset \Blg(m,k)$ 
by requiring $\VRot(C_j\cdot a)=C_{m+1-j}\cdot \VRot(a)$.
Note that for all $i=1,\dots,m$ and $j\in\Upwards$,
\[\begin{array}{llll}
 \VRot(L_i)=R_{m+1-i} &\VRot(R_i)=L_{m+1-i} &
 \VRot(U_i)=U_{m+1-i} & \VRot(C_j)=C_{m+1-j}
\end{array}\]
Clearly, this induced map $\VRot$ induces an isomorphism of algebras.

Another symmetry identifies the algebra with its ``opposite'' algebra. Specifically, 
the map
$\Opposite(\Idemp{\x})=\Idemp{\x}$
extends to an isomorphism of rings
\begin{equation}
  \label{eq:OppositeIsomorphism}
  \Opposite\colon \Blg(m,k,\Upwards)\to \Blg(m,k,\Upwards)^{\op},
\end{equation}
with $\Opposite(L_i)=R_i$, $\Opposite(R_i)=L_{i}$, 
$\Opposite(U_i)=U_{i}$, and $\Opposite(C_j)=C_j$.

\subsection {Canonical $DD$ bimodules}
\label{subsec:CanonDD}

Let 
\begin{equation}
  \label{eq:SpecifyCanonicalDDAlgebras}
    \AlgB_1=\AlgB(m,k_1,\Upwards_1), \ \ \ \AlgB_2=\AlgB(m,k_2,\Upwards_2)
\end{equation}
where $k_1+k_2=m+1$ and $\Upwards_2=\{1,...,m\}-\Upwards_1$. 

Note that there is a natural one-to-one correspondence between the
$I$-states for $\AlgB_1$ and $\AlgB_2$: if $\x\subset \{0,\dots,m\}$
is a $k_1$-element subset, then its complement $\x'$ is a
$k_2$-element subset of $\{0,\dots,m\}$. In this case, we say that
$\x$ and $\x'$ are {\em complementary $I$-states}.

We define 
a $DD$ bimodule $\AlgB_1$ and $\AlgB_2$  as follows.
Let $\CanonDD$ be the $\Field$-vector space whose generators $k_\x$ correspond
to $I$-states for $\AlgB(m,k_1,\Upwards_1)$. We give $\CanonDD$ the structure of 
a left module over $\IdempRing(\AlgB_1)\otimes \IdempRing(\AlgB_2)$,  determined 
by
\[(\Idemp{\y} \otimes \Idemp{\w})  \cdot k_\x=
\left\{\begin{array}{ll}
k_\x &{\text{if $\x=\y$ and $\w$ is complementary to $\x$}} \\ 
0 & {\text{otherwise.}}
\end{array}\right.\]
The algebra element
\[
A = \sum_{i=1}^m \left(L_i\otimes R_i + R_i\otimes L_i\right) + \sum_{s\in \Upwards_1}
  C_s\otimes U_s + \sum_{t\in \Upwards_2} U_t \otimes C_t\in
  \AlgB_1\otimes\AlgB_2\]
specifies a map
$\delta^1 \colon \CanonDD \to \AlgB_1\otimes\AlgB_2 \otimes \CanonDD$
by
$\delta^1(v)=A\otimes v$
(where the tensor product is taken over $\IdempRing(\AlgB_1)\otimes \IdempRing(\AlgB_2)$).

These data can be represented graphically, as follows. Take $m$
vertical segments, and orient them arbitrarily.  
The upwards
pointing segments specify $\Upwards_2$ and the downwards pointing ones
specify $\Upwards_1$.  
We draw a horizontal
arc crossing each of the vertical segment as a placeholder, and keep only half of each vertical 
segment above (resp. below) the horizontal arc if the segment is oriented upwards (resp. downwards).
The
horizontal arc is divided into $m+1$ intervals by the vertical arcs.
An element of $k_\x$ is represented as a collection of $m+1$ dark dots 
corresponding to the intervals in the horizontal segment, 
each of distributed either above or below te horizontal segment. The set of intervals 
whose dots are below give $\x$. When illustrating generators of $\Blg_1\otimes\Blg_2\otimes \CanonDD$,
we draw the algebra element from $\Blg_1$ below the diagram for $k_\x$ and the algebra element
from $\Blg_2$ above. See Figure~\ref{fig:TermsInDDid} for an illustration.

\begin{figure}[ht]
\input{TermsInDDid.pstex_t}
\caption{\label{fig:TermsInDDid} {\bf{The canonical $DD$ bimodule.}}
  In the left column, we have the three generators  for the $DD$ bimodule,
  with $\AlgB_1=\AlgB(2,1,\{2\})$ and $\AlgB_2=\AlgB(2,2,\{1\})$. They correspond to
  the $I$-states $\{2\}$, $\{1\}$, and $\{0\}$ respectively. 
  To the right we have 
  the non-zero terms 
  in the differential. To read off the algebra element in $\Blg_1$, reflect the 
  picture vertically. For example, if the three generators
  on the left are denoted $E$, $F$, and $G$, then the first equation
  expresses
  $\delta^1(E)=(L_2\otimes R_2) \otimes F + (C_2\otimes U_2)\otimes E$.}
\end{figure}

\begin{lemma}
  \label{lem:CanonicalIsDD}
  The map $\delta^1$ satisfies the type $DD$ structure relation.
\end{lemma}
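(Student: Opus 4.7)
The plan is to verify the type $DD$ structure equation on $\CanonDD$ by showing that the algebra element
\[ A = \sum_{i=1}^m (L_i\otimes R_i + R_i\otimes L_i) + \sum_{s\in \Upwards_1} C_s\otimes U_s + \sum_{t\in \Upwards_2} U_t\otimes C_t \]
of $\AlgB_1\otimes\AlgB_2$ satisfies $(\Idemp{\x}\otimes\Idemp{\x^c})\cdot (dA + A^2) \cdot (\Idemp{\z}\otimes\Idemp{\z^c}) = 0$ for every pair of complementary $I$-states $\x,\z$. Because $\delta^1(k_\x) = \sum_\z (\Idemp{\x}\otimes \Idemp{\x^c})\cdot A\cdot (\Idemp{\z}\otimes \Idemp{\z^c}) \otimes k_\z$, this identity is exactly the structure relation applied to $k_\x$.

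The differential term is immediate: the only summands of $A$ whose differential is nonzero are the $C$-summands, and $d(C_s\otimes U_s) = U_s\otimes U_s$, $d(U_t\otimes C_t) = U_t\otimes U_t$, so $dA = \sum_{i=1}^m U_i\otimes U_i$. To compute $A^2$, I would decompose $A = A_{LR} + A_{RL} + A_{CU} + A_{UC}$ in the obvious notation, and observe that every cross product involving a $C$-factor (namely $A_{CU}^2$, $A_{UC}^2$, and the five anticommutator-type sums such as $A_{LR}A_{CU}+A_{CU}A_{LR}$) vanishes in characteristic $2$: each is a sum that is symmetric in two indices, thanks to the fact that $C_s$ commutes with every generator other than itself and $C_s^2 = 0$. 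Further, the pure shift squares $A_{LR}^2$ and $A_{RL}^2$ vanish: terms with $|i-j|=1$ die by relations \eqref{eq:MoveTooFarLs} and \eqref{eq:MoveTooFarRs}; terms with $|i-j|\geq 2$ pair as $(i,j)\leftrightarrow (j,i)$ and coincide by Proposition~\ref{prop:DeterminedByMultiplicities}, so they cancel modulo $2$; and the $i=j$ contributions $L_i^2 = R_i^2 = 0$ vanish by idempotent obstruction.

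The main work is the analysis of $A_{LR}A_{RL} + A_{RL}A_{LR} = \sum_{i,j}(L_i R_j\otimes R_i L_j + R_i L_j\otimes L_i R_j)$. For $i = j$, the two terms produce $(U_i\otimes U_i)\cdot (\Idemp{\x}\otimes\Idemp{\x^c})$ on precisely the idempotents where $|\x\cap\{i-1,i\}|=1$, which is exactly the support of the corresponding $(U_i\otimes U_i)$ contribution from $dA$; these match and cancel. For $|i-j|=1$, every product among $L_i R_{i\pm 1}$, $R_i L_{i\pm 1}$ vanishes by a direct idempotent check, as the intermediate state lacks the coordinate required by the second factor. For $|i-j|\geq 2$, I would fix an unordered pair $\{i,j\}$ and an idempotent $\Idemp{\x}\otimes\Idemp{\x^c}$, and run a short case analysis on the configurations of $\{i-1,i\}$ and $\{j-1,j\}$ relative to $\x$: at most two of the four candidate contributions to a given output state $k_\z$ are nonzero, and those two are identified with one another by Proposition~\ref{prop:DeterminedByMultiplicities}, so they cancel in characteristic $2$.

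I expect the main obstacle to lie in the $|i-j|\geq 2$ sub-case, not because any individual case is hard but because the bookkeeping across the four configurations of $\{i-1,i\}$ and $\{j-1,j\}$ relative to $\x$ must be organized carefully, so that no contribution is overlooked and each surviving pair is confirmed to coincide. The pictorial representation used in the paper — dots distributed above and below a horizontal arc, with each summand of $A$ corresponding to one elementary ``dot move'' — is probably the cleanest language for doing this: a term in $A^2$ becomes a two-step dot move, and the cancellations correspond to pairs of two-step sequences having the same net effect on the picture.
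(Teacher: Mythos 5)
Your proposal is correct and follows essentially the same route as the paper's proof: reduce the structure relation to the identity $dA + A\cdot A = 0$ (evaluated against complementary idempotent pairs) and cancel terms using centrality of the $U_i$ and $C_i$, the relations killing adjacent shifts, commutation of distant shifts, and the matching of the diagonal $i=j$ contributions with $dA$. The paper's argument is just a compressed version of this same case analysis, so no further comparison is needed.
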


\begin{proof}
  This is equivalent to the statement that $dA + A\cdot A =0$,
  thought of as an element of $\AlgB_1\otimes \AlgB_2$.
  We consider the four types of terms $A$: $L_i\otimes R_i$, $R_i\otimes L_i$, $C_i\otimes U_i$ when $i\in \Upwards_1$, and $U_i\otimes C_i$ when $i\in \Upwards_2$.
  In $dA + A\cdot A$, 
  some of the terms cancel since $U_i$ and $C_i$ are in the center;
  other terms cancel when the indices $i$ and $j$ are sufficiently
  far ($|i-j|>1$). When $|i-j|=1$, terms of the first type and the
 second type cancel. When $i=j$, a term of the first and second type
  cancel with differentials of terms of the third or fourth type.
\end{proof}

The two out-going algebras $\Blg_1$ and $\Blg_2$ are graded by $\OneHalf\Z^m$, 
\[
  \gr_{\Blg_1}(a_1)=(w_1(a_1),\dots,w_m(a_1)) \qquad
  \gr_{\Blg_2}(a_2)=(w_1(a_2),\dots,w_m(a_2));
\]
the canonical $DD$ bimodule is also graded by $\OneHalf\Z^m$
where $(v_1,v_2)\in\OneHalf \Z^m \oplus \OneHalf\Z^m$ acts on $w\in \OneHalf\Z^m$
by $(v_1,v_2)\cdot w = (v_1-v_2+w)$.
In fact, the module is supported in grading $0$;
i.e. for
each term $a_1\otimes a_2$ in $A$ specifying $\delta^1$,
$\gr_{\Blg_2}(a_2)=\gr_{\Blg_1}(a_1)$.

\subsection{The canonical $DD$ bimodule is invertible}
\label{subsec:OurDuality}

\subsubsection{A candidate for the inverse module}
\label{subsec:CandidateInverse}

The main result is to prove that the bimodule defined in the previous 
section is invertible. 
Let
\[
Y_{\AlgB_1,\AlgB_2}=\Mor^{\AlgB_1}((\lsub{\AlgB_2}{(\AlgB_2)}_{\AlgB_2})\DT_{\AlgB_2} (\lsup{\AlgB_1,\AlgB_2}\CanonDD),\lsup{\AlgB_1}{\mathbb
  I}_{\AlgB_1}).\]
This is naturally a  $\AlgB_2$-$\AlgB_1$-bimodule (of type $AA$, with both
actions on the right).

As a vector space, $Y$ is spanned by elements of the form
$({\overline a}\big| b)$, where ${\overline a}\in{\overline{\AlgB_2}}$
and $b\in\AlgB_1$, where here ${\overline{\AlgB}}$ is opposite
bimodule to $\AlgB$, thought of as a bimodule (as in
Equation~\eqref{eq:OppositeBimodule}), subject the restriction
that the left idempotent of ${\overline a}$ is complementary to the left
idempotent of $b$. 
Recall that ${\overline {\AlgB_2}}$ is the $\AlgB_2$-module
consisting of maps from $\AlgB_2$ to
$\Field$. This is a left-right $\AlgB_2$-$\AlgB_2$ bimodule by the rule
\[x\cdot {\overline a}\cdot y = (\xi \mapsto {\overline a}(y\cdot
\xi\cdot x)),\]
for $x,y\in\AlgB_2$ and ${\overline a}\in {\overline {\AlgB_2}}$.
We can take these generating vectors so that ${\overline a}$ is dual to
a generating algebra element in $\AlgB_2$; note that the right
idempotent of $a$ is the left idempotent of its dual ${\overline a}$.

The differential on $Y$ has terms
$({\overline a}|b)  \mapsto (L_i\cdot {\overline a} | R_i \cdot b)$,
$({\overline a}|b)  \mapsto (R_i\cdot  {\overline a} | L_i \cdot b)$, 
and furthermore
$({\overline  a}| b)  \mapsto (U_i\cdot {\overline a} | C_i \cdot b)$
if $i\in \Upwards_1$; otherwise 
$({\overline  a}| b)  \mapsto (C_i \cdot {\overline a} | U_i \cdot b)$.
Finally, there are terms
$( {\overline a} | C_i \cdot b) \mapsto ({\overline a}| U_i\cdot b)$
or
$( {\overline{U_i \cdot a}} | b) \mapsto ({\overline{C_i \cdot a}}| b)$.
The action by $\AlgB_2$-$\AlgB_1$ is given by
\[ ({\overline a}|b)\cdot
 (b_2\otimes b_1) =(\xi\mapsto {\overline a}(b_2\cdot \xi)\big| b \cdot
b_1) \]

We draw pictures of this action as follows. We draw a pair
$({\overline a}|b)$ where $a$ and $b$ are pure algebra elements, by drawing first a
graphical representation of $a$ (dual to ${\overline a}$) on top of a
graphical representation for $b$.  In this picture, the right
idempotent of $b$ is on the bottom, and the right idempotent of
${\overline a}$ is the initial state (on the top) of $a$.
See Figure~\ref{fig:AApic}.

\begin{figure}[ht]
  \input{AApic.pstex_t}
\caption{\label{fig:AApic}
{\bf{Graphical representative of elements of $Y$.}}
Here we have a picture of $({\overline{L_1 \cdot U_1^2 \cdot C_2}}|L_1\cdot L_2 \cdot U_2)$
in $Y$, thought of as an element of ${\overline{\AlgB_2(2,1,\{1,2\})}}\otimes \AlgB_1(2,2,\emptyset)$.}
\end{figure}

\subsubsection{An example: }

Consider $\AlgB_1={\mathcal B}(1,1,\{1\})$
and $\AlgB_2={\mathcal B}(1,1,\emptyset)$.
The algebra $\AlgB_2$ has generators $L$, $R$, and $U$, and idempotents
$\Idemp{\{0\}}$  and $\Idemp{\{1\}}$.  Moreover,
\[ \Idemp{\{0\}} \cdot R \cdot \Idemp{\{1\}} = R\qquad 
\Idemp{\{1\}} \cdot L \cdot \Idemp{\{0\}} = L,\]
while $\AlgB_1$ has the same generators, and an additional $C$.
The bimodule decomposes into four summands, according to the right
idempotent
of $({\overline a}|b)$. 

In right idempotent $\Idemp{\{1\}}\otimes \Idemp{\{0\}}$, the complex further decomposes
into summands. One of these summands contains the single element
$({\overline{\Idemp{\{1\}}}} | \Idemp{\{0\}}) $.
Another summand is the square
  \begin{equation}
  \begin{tikzpicture}[x=3cm,y=55pt]
      \node at (0,0) (L) {$({\overline {\Idemp{\{1\}}\cdot U}}| \Idemp{\{0\}})$} ;
      \node at (1,.5) (T) {$({\overline{L}}|L)$};
      \node at (1,-.5)(B) {$({\overline{\Idemp{\{1\}}}}| C\cdot \Idemp{\{0\}}) $};
      \node at (2,0) (R) {$({\overline{\Idemp{\{1\}}}}|U \cdot \Idemp{\{0\}})$};
      \draw[->] (L) to (T) ;
      \draw[->] (L) to (B) ;
      \draw[->](B) to (R);
      \draw[->](T) to (R);
    \end{tikzpicture}
    \label{eq:WtOne}
  \end{equation}
See Figure~\ref{fig:AAidEx} for a picture.
(In fact, there are infinitely many different summands.)
\begin{figure}[ht]
\input{AAidEx.pstex_t}
\caption{\label{fig:AAidEx} 
  Terms in the differential of $Y$.
  We have drawn pairs of algebra elements; the top algebra element should be dualized
  to get the corresponding generator.}
\end{figure}

In right idempotent $\Idemp{\{0\}}\otimes \Idemp{\{0\}}$, there is a collection of acyclic complexes.
For example, in the portion with total weight $1/2$, we have:
  \begin{equation}
  \begin{tikzpicture}[x=3cm,y=55pt]
      \node at (0,0) (L) {$({\overline {R}}| \Idemp{\{0\}})$} ;
      \node at (2,0) (R) {$({\overline{\Idemp{\{0\}}}} |L).$};
      \draw[->] (L) to (R) ;
    \end{tikzpicture}
  \end{equation}
In the portion with total weight $3/2$, we have:
  \begin{equation}
  \begin{tikzpicture}[x=3cm,y=55pt]
    \node at (-1,-1) (URn) {$({\overline{R\cdot U}}|\Idemp{\{0\}})$} ;
    \node at (1,0) (RnC) {$({\overline{R}}|C \cdot \Idemp{\{0\}})$};
      \node at (0,-1) (UnL) {$({\overline{\Idemp{\{0\}} \cdot U}}| L)$} ;
      \node at (1,-1) (RnU) {$({\overline{R}}|U \cdot \Idemp{\{0\}})$};
      \node at (2,0)(nLC) {$({\overline{\Idemp{\{0\}}}}| L \cdot C)$};
      \node at (2,-1) (nLU) {$({\overline{\Idemp{\{0\}}}}|L \cdot U).$};
      \draw[->] (URn) to (RnC) ;
      \draw[->] (URn) to (UnL) ;
      \draw[->] (UnL) to (RnU) ;
      \draw[->] (UnL) to (nLC) ;
      \draw[->](nLC) to (nLU);
      \draw[->](RnU) to (nLU);
      \draw[->] (RnC) to (nLC) ;
      \draw[->] (RnC) to (RnU) ;
    \end{tikzpicture}
    \label{eq:WtThreeHalves}
  \end{equation}

\subsubsection{The candidate is the inverse}

We show now that the candidate inverse from Section~\ref{subsec:CandidateInverse} is indeed the inverse for the canonical
type $DD$ bimodule, by verifying the hypotheses of Lemma~\ref{lem:CandidateIsInverse}.
Continuing notation from earlier, the algebras $\Blg_1$ and $\Blg_2$ are as in Equation~\eqref{eq:SpecifyCanonicalDDAlgebras};
$Y$ is as in Section~\ref{subsec:CandidateInverse}, generated by pairs $({\overline a}|b)$,
with $a\in \Blg_2$ and $b\in\Blg_1$.

\begin{prop}
  \label{prop:RankOneHomology}
  The rank of the homology group of 
  $Y$ is $\binom{m+1}{k}$;
  it is generated by the elements of the form
  $({\overline{\Idemp{\x'}}} |\Idemp{\x} )$,
  where $\x$ and $\x'$ are complementary idempotents.
\end{prop}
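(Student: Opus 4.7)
The plan is to decompose $Y$ according to its right $\Blg_2\otimes\Blg_1$-idempotent structure. The right action splits
\[ Y = \bigoplus_{(\y,\z)} Y_{\y,\z}, \]
where $(\y,\z)$ ranges over pairs of I-states. Each candidate generator $({\overline{\Idemp{\x'}}}|\Idemp{\x})$ with $\x,\x'$ complementary lies in a distinct summand (the one with $(\y,\z)=(\x',\x)$), so the statement reduces to proving two facts: when $\y,\z$ are complementary, $H_*(Y_{\y,\z})\cong \Field$ is spanned by $[({\overline{\Idemp{\y}}}|\Idemp{\z})]$; and when they are not complementary, $H_*(Y_{\y,\z})=0$.

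Within each summand $Y_{\y,\z}$ I would introduce a filtration indexed by the total algebra weight $w(a)+w(b)\in (\OneHalf\Z)^m$. The six kinds of differentials on $Y$ split into two groups according to their effect on this filtration: the four kinds inherited from the type $DD$ structure of $\CanonDD$ (the $L_i\otimes R_i$, $R_i\otimes L_i$, $C_i\otimes U_i$, and $U_i\otimes C_i$ terms) strictly increase the total weight, while the two internal algebra differentials ($dC_i=U_i$ on the $\Blg_1$ side for $i\in\Upwards_1$, and the dual on the $\Blg_2$ side for $i\in\Upwards_2$) preserve it. The filtration is bounded below and, within each fixed weight, finite dimensional over $\Field$, so the associated spectral sequence converges. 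Its $E^1$-page is the homology with respect to the internal differentials alone, which act independently on the two tensor factors; hence the $E^1$-page decomposes as a tensor product of Koszul-type cohomologies of $\overline{\Blg_2}$ and $\Blg_1$, cut down by the complementarity constraint on left idempotents.

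The heart of the argument is then this Koszul-type calculation. The key input is the relation $\Idemp{\x}\cdot U_i = 0$ when $\{i-1,i\}\cap \x=\emptyset$ (cf.~Proposition~\ref{prop:IdentifyJ}), which forces many would-be Koszul classes to vanish. An index-by-index analysis then shows that when $\y$ and $\z$ are complementary the $E^1$-page collapses to a single class at minimum weight, namely $({\overline{\Idemp{\y}}}|\Idemp{\z})$; and when they are not complementary the $E^1$-page already vanishes. In both cases there is nothing left for the higher differentials to do, and the spectral sequence collapses to the claimed homology. The main technical obstacle is the explicit Koszul computation: one must trace how the non-regularity of the sequence $(U_i)$ on individual idempotent summands interacts with the complementary pairing of left idempotents, and this is where the precise structure of the ideal ${\mathcal J}$ and the distinguished basis of pure algebra elements are essential.
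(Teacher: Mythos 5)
There is a genuine gap, and it sits exactly at what you call the heart of the argument: your description of the $E^1$ page is false. After filtering so that the internal differentials ($dC_i=U_i$ on the $\Blg_1$ factor and its dual on the $\overline{\Blg}_2$ factor) become the associated graded differential, the resulting homology is not ``a single class at minimum weight'' in the complementary case, nor zero in the non-complementary case. The paper's own worked example $\Blg_1=\Blg(1,1,\{1\})$, $\Blg_2=\Blg(1,1,\emptyset)$ already shows this: since $\Upwards_2=\emptyset$, there is no internal differential at all on the $\overline{\Blg}_2$ factor, so in the complementary summand every element $({\overline{\Idemp{\{1\}}\cdot U^k}}\,|\,\Idemp{\{0\}})$, $k\geq 1$, is an internal cycle and not an internal boundary and hence survives to $E^1$ (the $E^1$ page of that single summand is infinite dimensional); and in the non-complementary summand with total weight $\OneHalf$, both $({\overline{R}}\,|\,\Idemp{\{0\}})$ and $({\overline{\Idemp{\{0\}}}}\,|\,L)$ survive to $E^1$. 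These classes are in fact killed, but only by the cross terms $L_i\otimes R_i$, $R_i\otimes L_i$, $C_i\otimes U_i$, $U_i\otimes C_i$ coming from the $DD$ structure, which in your filtration are higher differentials; so the spectral sequence does not collapse at $E^1$, and the ``index-by-index Koszul analysis'' you defer to cannot produce the $E^1$ page you assert. (Separately, your bookkeeping only works if the weight of ${\overline a}$ is taken with the dual, i.e.\ negative, sign; with the weight of the underlying pure element $a$, all six types of terms preserve $w(a)+w(b)$, which is why the paper gets an honest direct sum decomposition $C(Z,\x,\y)$ rather than a filtration.)

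The structural lesson, visible in the paper's proof, is that the internal term at strand $i$ must be grouped with the cross terms at the same strand, not separated from them: the paper writes $\partial=\sum_i\partial_i$, where $\partial_i$ contains the $L_i\otimes R_i$, $R_i\otimes L_i$, $C_i\otimes U_i$ (or $U_i\otimes C_i$) terms together with the differential of $C_i$, shows by a lengthy case analysis (Lemma~\ref{lem:HomologyOfCwt1} and Proposition~\ref{prop:HomologyTrivialSomewhere}) that every summand $C(Z,\x,\y)$ with $Z\neq 0$ is acyclic for some single $\partial_i$, and only then runs a filtration argument in which that $\partial_i$ is the associated graded differential. The cancellations there pair, e.g., $({\overline{aU_i^k}}|b)$ against $({\overline{aU_i^{k-1}C_i}}|b)$ or $({\overline{aR_i}}|b)$ against $({\overline a}|L_i b)$ --- precisely the mixed internal/cross pairings your decomposition discards. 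If you want to salvage a spectral-sequence argument along your lines, you would need to either change the filtration so that these pairs sit in the same associated graded piece, or carry out the (nontrivial) analysis of the higher differentials; as written, the claimed degeneration at $E^1$ is the missing --- and false --- step.
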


\begin{lemma}
  \label{lem:Ysplits}
  The candidate complex $Y$ decomposes into a direct sum of complexes
  $C(Z,\x,\y)$,
  indexed by idempotent states $\x$ and $\y$ and $Z\in (\OneHalf \Z)^m$,
  where $C(Z,\x,\y)$ is the vector space generated by pairs $({\overline a}|b)$
  where $a$ and $b$ are pure algebra elements
  with $\Idemp{\x}\cdot a=a$ and $b\cdot\Idemp{\y}=b$, and $w(a)+w(b)=Z$.
\end{lemma}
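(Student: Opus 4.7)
The plan is to show that the differential $\partial$ on $Y$ respects the proposed decomposition by verifying three conservation laws for every term of $\partial$: (i) the right $\AlgB_2$-idempotent of $({\overline a}|b)$, which equals the left idempotent $\x$ of the pure algebra element $a$; (ii) the right $\AlgB_1$-idempotent of $({\overline a}|b)$, which equals the right idempotent $\y$ of $b$; and (iii) the total weight $Z = w(a) + w(b) \in (\OneHalf \Z)^m$.

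For (ii), every term of $\partial$ acts on $b$ only by left multiplication with one of the generators $L_i$, $R_i$, $U_i$, $C_i$ of $\AlgB_1$, and such left multiplication does not alter the right idempotent of $b$. For (i), the action on ${\overline a}$ comes in two flavors. In the paired terms it replaces ${\overline a}$ by ${\overline{a'}}$, where $a'$ is the unique pure algebra element with $a'\cdot x = a$ for the relevant generator $x$ of $\AlgB_2$ (uniqueness follows from Proposition~\ref{prop:DeterminedByMultiplicities}). In the remaining term coming from the internal differential of $\AlgB_2$ it replaces ${\overline{U_i\cdot a}}$ by ${\overline{C_i\cdot a}}$ for some $i\in\Upwards_2$. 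In every case the new underlying pure algebra element has the same left idempotent as $a$, so $\x$ is preserved.

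For (iii), recall that each $w_i$ is a grading with $w_i(L_i)=w_i(R_i)=\OneHalf$ and $w_i(U_i)=w_i(C_i)=1$, and that $w_j$ vanishes on any of $L_i$, $R_i$, $U_i$, $C_i$ when $j\neq i$. If $a'\cdot x = a$, then $w(a') = w(a)-w(x)$, so the ``stripping'' action of $x$ on ${\overline a}$ decreases the weight of the underlying algebra element by $w(x)$, while the left multiplication of $b$ by the paired generator increases $w(b)$ by the weight of that generator. In every paired term the two generators acting on ${\overline a}$ and on $b$ carry equal $w$-multi-weight: $L_i$ and $R_i$ each have weight $\OneHalf$ in position $i$, and $U_i$ and $C_i$ each have weight $1$ in position $i$. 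Hence the decrease in $w(a)$ exactly cancels the increase in $w(b)$, and $Z$ is preserved. The internal differential terms $({\overline a}|C_i\cdot b)\mapsto({\overline a}|U_i\cdot b)$ and $({\overline{U_i\cdot a}}|b)\mapsto({\overline{C_i\cdot a}}|b)$ preserve $Z$ by the same identity $w(U_i)=w(C_i)$.

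Combining (i)--(iii), each summand $C(Z,\x,\y)$ is closed under $\partial$, and the direct sum $Y=\bigoplus_{(Z,\x,\y)} C(Z,\x,\y)$ is a splitting of chain complexes. No individual step is hard; the only care needed is to exhaust the list of differential terms given in the definition of $Y$ and to track the weight bookkeeping correctly, both of which are routine from the grading axioms of Section~\ref{sec:Algebras}.
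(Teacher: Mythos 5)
Your proof is correct and follows essentially the same route as the paper: the paper likewise observes that the splitting by $(\x,\y)$ is the splitting by the right idempotents of $Y$, and that each term of $\partial$ pairs algebra elements of equal weight (formally, the structure element $A=\sum a_i\otimes b_i$ of $\CanonDD$ has $w(a_i)=w(b_i)$), so $Z$ is preserved; you merely spell this out term by term. (The only cosmetic slip is calling the internal-differential term $({\overline a}|C_i b)\mapsto({\overline a}|U_i b)$ a left multiplication, but you handle its weight and idempotent bookkeeping correctly anyway.)
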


\begin{proof}
  Since the left idempotent of $a$ is the right idempotent of
  ${\overline a}$, the splitting by idempotents corresponds to the
  splitting of $Y$ according to right idempotents.  The fact that the
  splitting by $Z$ is well defined follows immediately from the
  definition of $\partial$. (More abstractly, it is a formal
  consequence of the fact that $\delta^1$ on the tautological
  $DD$ bimodule is specified by an algebra element $A=\sum a_i\otimes
  b_i$ where the weight of $a_i$ equals the weight of $b_i$.)
\end{proof}

Write the differential $\partial$ on $Y$ as a sum of terms
$\partial= \sum_{i=1}^m \partial_i$, where $\partial_i$ involves terms
on the $i^{th}$ strand. More precisely, there is a differential
$d_i \colon \AlgB(m,k,\Upwards)\to \AlgB(m,k,\Upwards)$ that vanishes unless $i\in \Upwards$, in which 
case $d_i (C_i)=U_i$, but $d_i(C_j)=0$ for all $j\neq i$ and $d_i(L_j)=d_i(U_j)=d_i(R_j)=0$
for all $j=1,\dots,m$.
Define 
\begin{equation}
  \label{eq:ExplicitPartialI}
  \partial_i({\overline a}|b)=
\left\{\begin{array}{ll}
({\overline a}| d_ib) + 
(R_i \cdot {\overline a}| L_i \cdot b)
+ (L_i \cdot {\overline a}| R_i \cdot b)
+ (U_i\cdot {\overline a}|C_i\cdot b) &{\text{if $i\in \Upwards_1$}} \\
({\overline d_i}{\overline a}| b) + 
(R_i \cdot {\overline a}| L_i \cdot b)
+ (L_i \cdot {\overline a}| R_i \cdot b)
+ (C_i\cdot {\overline a}|U_i\cdot b) &{\text{if $i\in \Upwards_2$,}} 
\end{array}
\right.
\end{equation}
where ${\overline d_i}\colon {\overline \Blg}_2\to {\overline \Blg}_2$ is dual to the differential
$d_i\colon \Blg_2\to \Blg_2$.

\begin{lemma}
  \label{lem:PlaceFiltrations}
  The differential $\partial$ on $Y$ can be written as $\partial = \sum_{i=1}^m \partial_i$,
  where $\partial_i^2 = 0$ and $\partial_i \circ \partial_j=\partial_j \circ \partial_i$ for $i\neq j$.
\end{lemma}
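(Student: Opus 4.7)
The plan has three parts. First, the decomposition $\partial = \sum_{i=1}^m \partial_i$ is essentially tautological: inspecting the list of differential terms on $Y$ given just before Proposition~\ref{prop:RankOneHomology}, every such term acts by a strand-$i$ generator $L_i, R_i, U_i$, or $C_i$ on one side of $({\overline a}|b)$ with a compensating strand-$i$ operation on the other side, or by the strand-$i$ piece of an algebra differential. Grouping these terms by $i$ gives exactly Equation~\eqref{eq:ExplicitPartialI}. Second, the commutation $\partial_i\partial_j = \partial_j\partial_i$ for $i\neq j$ reduces to the fact that strand-$i$ and strand-$j$ generators commute in both $\Blg_1$ and $\Blg_2$ (visible in the path-algebra descriptions of Figures~\ref{fig:B21}, \ref{fig:B22}, and easily extended in general), as do the internal differentials $d_i$ and $d_j$. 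Expanding both compositions matches terms pairwise.

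The substantive step is $\partial_i^2 = 0$. I would prove it by upgrading the identity $dA + A\cdot A = 0$ used in the proof of Lemma~\ref{lem:CanonicalIsDD} to a strand-by-strand statement. Writing $A = \sum_{i=1}^m A_i$ where
\[ A_i = L_i\otimes R_i + R_i\otimes L_i + \varepsilon_i, \qquad \varepsilon_i = \begin{cases} C_i\otimes U_i & \text{if } i\in\Upwards_1, \\ U_i\otimes C_i & \text{if } i\in\Upwards_2, \end{cases} \]
the claim is that $dA_i + A_i\cdot A_i = 0$ on its own for each $i$. Granting this, $\partial_i^2 = 0$ follows by the standard mechanism by which the $DD$-structure equation for the strand-$i$ piece produces vanishing of the square of the corresponding component of the $\Mor$-complex differential on $Y$: the pieces of $\partial$ assembled from $A_i$ and $d_i$ form a closed subcomplex structure of their own.

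To verify $dA_i + A_i^2 = 0$, one expands directly. The squares $(L_i\otimes R_i)^2$ and $(R_i\otimes L_i)^2$ vanish because $L_i^2 = R_i^2 = 0$ on idempotent grounds. The term $\varepsilon_i^2$ vanishes because $C_i^2 = 0$, and $A_i\cdot \varepsilon_i + \varepsilon_i\cdot A_i$ vanishes by centrality of $C_i$ and $U_i$ in their algebras together with working in characteristic two. This leaves the cross term $L_iR_i\otimes R_iL_i + R_iL_i\otimes L_iR_i$, which must cancel with $dA_i = d\varepsilon_i = U_i\otimes U_i$. The main obstacle -- and the delicate point -- is to confirm that on each complementary pair of idempotents $(\Idemp{\x},\Idemp{\x'})$ these two contributions appear on precisely the same idempotent components, so they cancel in characteristic two. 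This reduces to the observation (from Equation~\eqref{eq:KillUs}) that $\Idemp{\x}\cdot U_i\neq 0$ if and only if $\{i-1,i\}\cap\x\neq\emptyset$, combined with a brief case analysis on whether $i-1$ or $i$ lies in $\x$. Once this is checked in both cases $i\in\Upwards_1$ and $i\in\Upwards_2$, the lemma follows.
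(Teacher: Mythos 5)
Your strategy---a direct, strand-by-strand verification organized around a strand-$i$ refinement of the identity $dA+A\cdot A=0$ from Lemma~\ref{lem:CanonicalIsDD}---is viable, and it is essentially the ``direct check'' the paper alludes to; the paper's written proof is different in flavor, observing that the weights $w_j$ and the $C_j$-divisibility functions for $j\neq i$ filter each summand $C(Z,\x,\y)$ and that $\partial_i$ is the induced associated-graded differential. The genuine problem in your write-up is the commutation step. Strand-$i$ and strand-$j$ generators do \emph{not} commute when $|i-j|=1$: by Equations~\eqref{eq:MoveTooFarLs}--\eqref{eq:MoveTooFarRs} one has $L_{i+1}\cdot L_i=0=R_i\cdot R_{i+1}$, while $L_i\cdot L_{i+1}$ and $R_{i+1}\cdot R_i$ are in general nonzero (already in $\Blg(2,2)$, $\Idemp{\{1,2\}}\cdot L_1L_2\neq 0$ but $L_2L_1=0$), and the path-algebra presentations in Figures~\ref{fig:B21} and~\ref{fig:B22} impose no such commutation relations. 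So ``expanding both compositions and matching terms pairwise'' fails for adjacent strands. What actually makes $\partial_i\partial_{i\pm 1}=\partial_{i\pm1}\partial_i$ true is that every adjacent cross-composite vanishes outright in both orders: in each such composite at least one tensor factor of the output is built from one of the vanishing products $L_{i+1}L_i$, $R_iR_{i+1}$, or an idempotent-forbidden product such as $L_{i+1}R_i$, $R_iL_{i+1}$, $L_iR_{i+1}$, $R_{i+1}L_i$. Your argument needs this short case check in place of the commutativity claim (for the $U_j$, $C_j$, and $d_j$ pieces, and for $|i-j|\geq 2$, your reasoning is fine).

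A smaller imprecision in the substantive step: $dA_i+A_i\cdot A_i$ is \emph{not} zero as an element of $\Blg_1\otimes\Blg_2$; for instance the part of $U_i\otimes U_i$ supported on pairs of idempotents both containing $\{i-1,i\}$ is not cancelled by $L_iR_i\otimes R_iL_i+R_iL_i\otimes L_iR_i$, since the latter vanish there. The identity holds only after evaluation on the relevant complementary idempotent pair---here the right idempotent of $a$ and the left idempotent of $b$ in a generator $({\overline a}|b)$---and on such a pair exactly one of the two mixed products is nonzero and coincides with the corresponding $U_i$ term (or all three terms vanish), giving the mod-$2$ cancellation. Your ``delicate point'' discussion via Equation~\eqref{eq:KillUs} and the case analysis on whether $i-1$ or $i$ lies in the idempotent is exactly this repair, so this part of your plan is sound once stated at the level of the action on $Y$ (as in Equation~\eqref{eq:ExplicitPartialI}) rather than as an algebra identity. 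With the commutation step corrected as above, your direct computation establishes the lemma; the paper's filtration argument reaches the same conclusions without expanding the products.
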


\begin{proof}
  This can be seen directly. A little more conceptually, 
  on homogeneous generators $({\overline a}|b)$ where $a$ and $b$ are pure algebra elements, the functions
  $w_j(a)$ for $j=1,\dots,m$  and 
  \begin{equation} {\mathfrak m}_j({\overline a}|b)=
\left\{\begin{array}{ll}
    \MasFilt_j(b) & {\text{if $j\in\Upwards_1$}} \\
    1-\MasFilt_j(a) &{\text{if $j\in\Upwards_2$},}
\end{array}\right.
    \label{eq:DefOfMuJ}
    \end{equation}
  for $j=1,\dots,m$ induce a filtration on $C(Z,\x,\y)$ by
  $(\OneHalf \Z)^m \times \{0,1\}^m$. Restricting to values with $j\neq i$, we have a filtration
  on $C(Z,\x,\y)$ with   $(\OneHalf \Z)^{m-1} \times \{0,1\}^{m-1}$. 
  The associated graded object is clearly $(C(Z,\x,\y),\partial_i)$.
\end{proof}

\begin{prop}
  \label{prop:HomologyTrivialSomewhere}
  Fix idempotents $\x$ and $\y$ and a total weight $Z$;
  suppose moreover that the weight $Z$ is non-zero. 
  Then there is a position $i$ with the property that 
  $H_*(C(Z,\x,\y),\partial_i)=0$
\end{prop}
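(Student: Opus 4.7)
Pick $i\in\{1,\dots,m\}$ with $Z_i>0$, which exists because each coordinate of $Z$ is a non-negative half-integer and $Z\neq 0$. I will argue that $(C(Z,\x,\y),\partial_i)$ is acyclic for this $i$.

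Every term in $\partial_i$ involves only the generators $L_i$, $R_i$, $U_i$, and $C_i$ (cf.~Equation~\eqref{eq:ExplicitPartialI}); in particular, $\partial_i$ preserves the weights $w_j$ and the $\MasFilt_j$-values on both $a$ and $b$ for every $j\neq i$. Hence $C(Z,\x,\y)$ splits under $\partial_i$ as a direct sum of finite-dimensional sub-complexes $C^{fr}$ indexed by a ``background frame'' $fr$ which records the values of $(w_j(a),w_j(b),\MasFilt_j(a),\MasFilt_j(b))$ for all $j\neq i$. By Proposition~\ref{prop:DeterminedByMultiplicities}, once $\x$, $\y$, and $fr$ are fixed, each generator of $C^{fr}$ is parametrized by its position-$i$ data $(w_i(a),w_i(b),\MasFilt_i(a),\MasFilt_i(b))$, constrained by $w_i(a)+w_i(b)=Z_i$ and by the non-vanishing condition coming from the ideal $\J$ of Proposition~\ref{prop:IdentifyJ}.

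The plan is to identify each non-empty $C^{fr}$ with a ``pyramid'' complex of the form already illustrated in the $m=1$ examples~\eqref{eq:WtOne} and~\eqref{eq:WtThreeHalves}, and to exhibit an explicit contracting homotopy $H_i$ on it. The homotopy is built from the operations inverse to those appearing in $\partial_i$: schematically, on a generator $({\overline a}|b)$ for which $b$ factors as $R_i\cdot b'$, set $H_i({\overline a}|b)=({\overline{L_i\cdot a}}|b')$ (``peel an $R_i$ off the left of $b$ and attach an $L_i$ to the left of $\overline a$''), and similarly for factorizations $b=C_i\cdot b'$, with the dual rules on $a$ when $i\in\Upwards_2$. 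A fixed priority among these peelings ensures $H_i^2=0$, and the identity $H_i\partial_i+\partial_iH_i=\mathrm{Id}$ then reduces to the defining relations $R_iL_i=L_iR_i=U_i$ and $dC_i=U_i$ in $\AlgB(m,k,\Upwards)$, together with the hypothesis $Z_i>0$ (which guarantees that at least one peeling is always available on every generator).

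The main obstacle will be organizing the definition of $H_i$ uniformly across the possible local idempotent configurations at positions $\{i-1,i\}$ --- specifically the choices of $\x\cap\{i-1,i\}$, $\y\cap\{i-1,i\}$, and $|\w\cap\{i-1,i\}|\in\{0,1,2\}$, and whether $i\in\Upwards_1$ or $i\in\Upwards_2$ --- and then verifying the homotopy identity in each case. A more conceptual alternative would be to factor each $C^{fr}$ as a tensor product of the standard Koszul-type two-term complexes associated with the pairs $(L_i,R_i)$ and $(U_i,C_i)$, and to invoke the well-known acyclicity of these Koszul complexes when $Z_i>0$; this factorization is already visible in the pyramid shape of the $m=1$ case worked out above.
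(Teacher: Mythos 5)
Your opening step is where the argument breaks: it is \emph{not} true that $(C(Z,\x,\y),\partial_i)$ is acyclic for every position $i$ with $Z_i>0$, and the whole difficulty of the proposition lies in choosing the position correctly. The paper's Lemma~\ref{lem:HomologyOfCwt1} records exactly the failure modes: the summands of $(C(Z,\x,\y),\partial_i)$ from Lemma~\ref{lem:ComplexSplits} can consist of a \emph{single} generator, in which case their homology is one-dimensional even though $z_i>0$. For instance, when $z_i=\OneHalf$ the two candidate generators are $({\overline{a\cdot R_i}}|b)$ and $({\overline a}|L_i\cdot b)$, and it can happen that exactly one of $a\cdot R_i$, $L_i\cdot b$ is killed by the ideal $\J$ of Proposition~\ref{prop:IdentifyJ} (this is the fifth column of Figure~\ref{fig:HomologyOfCwt1}); similarly, when $z_i=1$ one can have $a\cdot R_i=0=R_i\cdot b$, leaving the lone surviving generator $({\overline a}|C_i b)$. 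This is also precisely why your contracting homotopy $H_i$ and the Koszul-factorization alternative fail: the relations $L_{i+1}L_i=R_iR_{i+1}=0$ and $\Idemp{\w}\cdot U_j=0$ (when $\w\cap\{j-1,j\}=\emptyset$) mean the algebra is not free, the ``peeling'' partner you need may be zero in the quotient, and the putative two-term Koszul factors degenerate to rank one on exactly those summands, so $H_i\partial_i+\partial_iH_i=\mathrm{Id}$ fails there. Your plan acknowledges a case analysis over local idempotent configurations but offers no mechanism for ruling these single-generator summands out, and indeed they cannot be ruled out for an arbitrary $i$ with $Z_i>0$.

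What the paper actually does is select $i$ as a function of the idempotents: it introduces $f(i)=\#\{j<i: j\in\x\}+\#\{j<i:j\in\y\}$, takes $n$ minimal with $z_n\neq 0$, and splits into the cases $f(n)=n$, $n+1$, $n-1$ (the last with further subcases involving the minimal $j>n$ with $f(j)=j$, where the acyclic position is $j-1$ or $j$ rather than $n$). In each case it verifies, using Lemma~\ref{lem:HomologyOfCwt1}, that at the chosen position every summand either has cancelling pairs or contains an element such as $({\overline{a\cdot U_i}}|b)$ that the lemma already shows lies in an acyclic summand; the point of the combinatorial condition on $f$ is exactly to guarantee that the relevant products ($a\cdot R_n$, $L_n\cdot b$, $a\cdot U_n$, etc.) do not vanish modulo $\J$. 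To repair your argument you would need to supply an analogous selection rule for $i$ together with a verification that no single-generator summand occurs at that position; without it, the proposal does not prove the statement.
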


Proposition~\ref{prop:HomologyTrivialSomewhere} is proved using the following lemma:

\begin{lemma}
  \label{lem:ComplexSplits}
  The chain complex $(C(Z,\x,\y),\partial_i)$ splits into subcomplexes indexed by
  $(\OneHalf \Z)^{m-1}\times\{0,1\}^{m-1}$ that are spanned by
  $({\overline a}|b)$, where $a$ and $b$ are pure algebra elements
  whose weights $w_j(b)$ with $j\neq i$ and 
  values ${\mathfrak m}_j({\overline a}|b)$
  (as defined in Equation~\eqref{eq:DefOfMuJ} above)
  for all $j\neq i$ are specified.
\end{lemma}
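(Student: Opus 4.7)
The plan is to verify directly that the differential $\partial_i$, given by Equation~\eqref{eq:ExplicitPartialI}, preserves the values of $w_j(b)$ and ${\mathfrak m}_j(\overline{a}|b)$ on basis vectors $(\overline{a}|b)$ for every $j \neq i$. Once this is established, $C(Z,\x,\y)$ splits as a direct sum over the joint values of these $2(m-1)$ functions, giving the claimed decomposition. The guiding intuition is that every summand of $\partial_i$ is built from operators concentrated on the $i$-th strand and should not touch data indexed by the other strands.

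Fix $j \neq i$. Each term of $\partial_i$ acts on $b$ either trivially, by left multiplication by one of $L_i$, $R_i$, $U_i$, $C_i$, or by the partial differential $d_i$ of $\Blg_1$ (which sends $C_i$ to $U_i$). Each of these five operations has zero $j$-th weight on the algebra, and $d_i$ is $w_j$-homogeneous since $w_j(C_i) = w_j(U_i) = 0$; since $w_j$ is a grading on $\Blg_1$, the value $w_j(b)$ is preserved. Moreover, none of $L_i$, $R_i$, $U_i$, $C_i$ contains a $C_j$ factor, so left multiplication by them neither creates nor destroys $C_j$-divisibility, and $d_i$ acts only on $C_i$, leaving $C_j$ factors alone. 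Hence $\MasFilt_j(b)$ is preserved, which handles ${\mathfrak m}_j(\overline{a}|b)$ in the case $j \in \Upwards_1$.

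For $j \in \Upwards_2$ with $j \neq i$, we must instead show $\MasFilt_j(a)$ is preserved, where $a \in \Blg_2$ is the pure algebra element dual to $\overline{a}$. Unwinding Equation~\eqref{eq:OppositeBimodule}, left multiplication of $\overline{a}$ by a pure element $x$ of index $i$ yields the functional dual to the pure algebra element $a'$ satisfying $a' \cdot x = a$ (and zero if no such $a'$ exists); since $x$ has no $C_j$ factor and $C_j$ is central, the $C_j$-divisibility of $a'$ matches that of $a$, so $\MasFilt_j(a)$ is preserved. The remaining term in $\partial_i$ on the $\overline{a}$ side is the dual differential $\overline{d_i}$ (when $i \in \Upwards_2$), which is dual to the replacement of a $C_i$ factor by $U_i$ in $\Blg_2$ and thus also does not disturb any $C_j$ factors. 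Combining these observations gives that ${\mathfrak m}_j(\overline{a}|b)$ is preserved.

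I do not expect a serious obstacle: the lemma is essentially a bookkeeping observation, and once one keeps track of which algebraic generators carry which index, the verification is a mechanical check term by term. The one spot demanding care is the dualization of the $\Blg_2$-action on $\overline{\Blg_2}$, but after invoking the definition of the opposite module the argument reduces to the elementary fact that the generators of index $i$ commute with, and contain no occurrences of, the generators of index $j \neq i$.
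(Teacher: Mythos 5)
Your proposal is correct and follows the same route as the paper's own (one-line) proof: the paper simply observes that $\partial_i$ involves no $C_j$ with $j\neq i$ and does not change the weight of $b$ away from $i$, which is exactly what you verify term by term, including the careful unwinding of the dual action on ${\overline \Blg}_2$.
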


\begin{proof}
  This follows from the form of $\partial_i$: it does not involve any of the $C_j$ with $j\neq i$, and it does
  not change the weight of $b$ away from $i$.
\end{proof}

\begin{lemma}
  \label{lem:HomologyOfCwt1}
  For fixed $Z$ and $i\in\{1,\dots,m\}$, $\x$, $\y$ and $(\OneHalf
  \Z)^{m-1}\times \{0,1\}^{m-1}$, consider the corresponding
  subcomplex $C$ of $(C(Z,\x,\y),\partial_i)$ as in
  Lemma~\ref{lem:ComplexSplits}.  Then $H(C)\neq 0$ precisely when
  there is a single element in $C$.  When the homology of $C$ is
  non-zero, there are pure algebra elements $a$ and $b$, and a single
  generator $({\overline a}|b)$ in $C$; and one of the following
  holds:
  \begin{itemize}
  \item $w_i(a)=w_i(b)=0$
  \item $w_i(a)+w_i(b)=\OneHalf$
  \item $w_i(a)+w_i(b)=1$ and either $a=a_0\cdot C_i$ or $b=b_0\cdot C_i$.
  \end{itemize}
  In particular, writing $z_i=w_i(a)+w_i(b)$, if $H(C(Z,\x,\y),\partial_i)\neq 0$, then $z_i\leq 1$.
\end{lemma}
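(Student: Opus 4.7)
The plan is to reduce the homology computation of $(C,\partial_i)$ to a finite, local enumeration at position $i$. By Proposition~\ref{prop:DeterminedByMultiplicities}, a pure algebra element is uniquely determined by its left idempotent together with its weight vector (and, for $\AlgB(m,k,\Upwards)$, by its $C_j$-decorations). In the subcomplex $C$ we have pinned down the endpoints $\x$ and $\y$, the weights $w_j(b)$ (and hence $w_j(a)$) for all $j \neq i$, and the flags $\MasFilt_j$ for $j\neq i$, so a generator $({\overline a}|b)$ of $C$ is determined by the pair $(w_i(a), w_i(b))$ with $w_i(a)+w_i(b) = z_i$ together with the $C_i$-decoration on the appropriate side (depending on which of $\Upwards_1, \Upwards_2$ contains $i$). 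In particular $C$ is finite-dimensional, with only a handful of possible generators.

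Once this is set up, the direction ``single generator $\Rightarrow H(C)\neq 0$'' is immediate, since $\partial_i$ on a one-dimensional complex must vanish. For the converse I would unpack $\partial_i$ via Equation~\eqref{eq:ExplicitPartialI} into its four constituent pieces --- the internal $d_i$, the two half-integer shift-swaps $L_i \leftrightarrow R_i$, and the integer $C_i \leftrightarrow U_i$ transition --- and proceed by case analysis on $z_i$. When $z_i=0$ the only generator is $(0,0)$, giving the first bullet. When $z_i=\OneHalf$ the shift-swap pairs the two candidates $(\OneHalf,0)$ and $(0,\OneHalf)$ when both are realizable; if only one is realizable, it survives and we land in the second bullet. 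When $z_i=1$ the candidate list expands to the three weight distributions $(1,0), (\OneHalf,\OneHalf), (0,1)$ together with their $C_i$-decorated versions, and the differential pieces pair most of them off; a unique generator survives exactly when the would-be $U_i$-partner of a $C_i$-bearing generator is killed by the defining ideal via Proposition~\ref{prop:IdentifyJ}, which is the third bullet. Finally, for $z_i \geq 3/2$, the shift-swap always finds a partner, so $C$ is acyclic.

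The main obstacle will be the $z_i=1$ analysis, where three differential terms, three weight distributions, and a $C_i$-flag all interact. The key observation that makes the bookkeeping go through is that Proposition~\ref{prop:IdentifyJ} precisely identifies when a would-be $U_i$-partner vanishes in $\AlgB$, namely when $U_i$ lies in a generating-interval monomial for the relevant pair of idempotents; this is exactly the condition that makes a lone $C_i$-bearing generator survive as a homology class with no target for its $d_i$-image, giving the third bullet and completing the proof that $H(C)\neq 0$ forces $z_i \leq 1$.
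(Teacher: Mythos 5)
Your reduction of the subcomplex to data at position $i$ is sound as far as it goes (Proposition~\ref{prop:DeterminedByMultiplicities} does pin down a generator once you fix $w_i(a)$ and the $C_i$-flag), and your $z_i=0$, $z_i=\OneHalf$, and $z_i=1$ outlines match the shape of the paper's argument. But there is a genuine gap in the regime $z_i>1$, which you dismiss with ``the shift-swap always finds a partner, so $C$ is acyclic.'' First, the premise that $C$ has ``only a handful of possible generators'' is false: for fixed $z_i$ the generators form families such as $({\overline{a R_i U_i^j}}\mid b\,U_i^k)$, $({\overline{a R_i U_i^j}}\mid b\,C_i U_i^{k-1})$, $({\overline{a U_i^j}}\mid L_i b\,U_i^k)$, $({\overline{a U_i^j}}\mid L_i b\,C_i U_i^{k-1})$ with $j+k$ determined by $z_i$, so the dimension of $C$ grows linearly in $z_i$. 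Second, even granting a pairing of generators, $\partial_i$ is a sum of several terms (the internal $d_i$, the two $L_i/R_i$ swaps, and the $C_i$--$U_i$ cross term, as in Equation~\eqref{eq:ExplicitPartialI}), and matching generators under a single term of a multi-term differential does not by itself prove acyclicity; moreover in several subcases (e.g.\ $a\cdot R_i\neq 0$, $L_i\cdot b=0$) the cancelling term is the $C_i$--$U_i$ one, not the shift-swap at all.

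This is exactly where the paper has to do real work: in the cases $z_i\notin\Z$, $z_i>\OneHalf$ and $z_i\in\Z$, $z_i>1$ with both $a\cdot R_i\neq 0$ and $L_i\cdot b\neq 0$, it introduces a filtration of $C$ by the weight $w_i(b)$, observes that the associated graded differential is just $\partial^i_0({\overline a}\mid b)=({\overline a}\mid d_i b)$, computes that the associated graded homology is concentrated in at most two filtration levels (e.g.\ generated by $({\overline{a R_i U_i^t}}\mid b)$ and $({\overline{a U_i^t}}\mid L_i b)$), and then cancels those two classes against each other by the residual swap term. Your proposal would need to supply this filtration (or an explicit contracting homotopy) to close the argument; without it, the claim that $H(C)=0$ whenever $z_i\geq \tfrac32$ is unsupported, and this is the main content of the lemma since it is what forces $z_i\leq 1$ when the homology is nonzero. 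A secondary omission: the case analysis should also track whether the middle idempotent contains both, one, or neither of $i-1,i$ (the paper's Cases~\ref{case:YesYes} and~\ref{case:YesNo}), since this governs which of the candidate generators are actually nonzero in the quotient algebra; your appeal to Proposition~\ref{prop:IdentifyJ} gestures at this but the $z_i=1$ enumeration you sketch only covers part of the possibilities.
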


\begin{proof}
  When $z_i=0$, the statement is clear; otherwise, there are cases, according to the local picture of the summand of
  $C(Z,\x,\y)$ near $i$.
  Specifically, if $C(Z,\x,\y)$ contains an element $({\overline a}|b)$ with $a=\Idemp{\x}\cdot a\cdot \Idemp{\w}$,
  we consider separately the two cases:
  \begin{enumerate}[label=(C-\arabic*),ref=(C-\arabic*)]
  \item \label{case:YesYes} $i-1,i\in\w$.
  \item \label{case:YesNo} Exactly one of $i-1$ or $i$ is in $\w$.
  \end{enumerate}
  There is a third case, where neither $i-1,i\in\w$; but that is symmetric to Case~\ref{case:YesYes} by a symmetry
  that exchanges roles of $\Blg_1$ and $\Blg_2$, and then dualizes $C$.

  There are five further subcases of  Case~\ref{case:YesYes}, 
  as follows.
  if $i-1$ and $i$ are the $j^{th}$ and $j+1^{st}$ terms in the sequence $\w$,
  we further subdivide according to the placement of the 
  $j^{th}$ and the $j+1^{st}$ terms in the sequence of $\x$. 
  Since $\x$ and $\w$ are not too far, the pair $\{j,j+1\}$ must be one of
  $\{i-2,i-1\}$, $\{i,i+1\}$,  
  $\{i-2,i\}$,
  $\{i-1,i+1\}$,
  $\{i-1,i\}$,
  $\{i-2,i+1\}$. 

  The cases where $\{j,j+1\}=\{i-2,i\}$ and $\{i-1,i+1\}$ are
  exchanged by reflection of through a vertical axis (compare the map
  $\VRot$ from Equation~\eqref{eq:DefVRot}), as are $\{i-2,i-1\}$ and
  $\{i,i+1\}$. Dropping two symmetric cases, we arrive at the four pictures
  in the first three columns of Figure~\ref{fig:HomologyOfCwt1}.

  Similarly, in Case~\ref{case:YesNo}, 
  if $i-1\in \w$ (which can be arranged after a vertical reflection)
  is the $j^{th}$ term in $\w$ and $i$ is the $k^{th}$ one in the left idempotent of $b$,
  then we further subdivide according to the placement of the $j^{th}$ and $k^{th}$ terms in $\x$ and $\y$ respectively.
  These possibilities 
  (after eliminating symmetric duplicates)
  are represented in the remaining pictures in Figure~\ref{fig:HomologyOfCwt1}.

  \begin{figure}[ht]
    \input{HomologyOfCwt1.pstex_t}
    \caption{\label{fig:HomologyOfCwt1} {\bf{$C(Z,\x,\y)$ cases.}}}
  \end{figure}

  We consider now Cases~\ref{case:YesYes} and~\ref{case:YesNo} separately, and further subdivide them into subcases
  (that are related to the cases in Figure~\ref{fig:HomologyOfCwt1}).
  
  {\bf{Case~\ref{case:YesYes}, with $z_i=1/2$.}}
  (The first column in Figure~\ref{fig:HomologyOfCwt1}.)
  There is now a single generator of this type, of the form $({\overline a}|b)$ with $w_i(a)=1/2$ and $w_i(b)=0$.
  so the homology is one-dimensional.

  {\bf{Case~\ref{case:YesYes}, with $z_i\not\in\Z$ and $z_i>1/2$.}}
  (Again, this is illustrated in first column in Figure~\ref{fig:HomologyOfCwt1}.)
  This has two further subcases, according to whether $i\in\Upwards_1$ 
  or $\Upwards_2$ (i.e. 
  whether $C_i\in\AlgB_1$ or $\AlgB_2$). If $i\in \Upwards_2$, there are two 
  generators of the complex, $({\overline{a U_i^k}}| b)$
  and $({\overline{a U_i^{k-1} C_i}}| b)$ where $k=z_i-\OneHalf\geq 1$, 
  and $a$ and $b$ are fixed pure algebra elements with
  $w_i(a)=\OneHalf$, $w_i(b)=0$.
  To see that
  both types of terms appear, note that under the present hypotheses
  on $a=a\cdot \Idemp{\w}$  and $\w$, if $a\neq 0$ then $U_i\cdot
  a\neq 0$, as well. Clearly,
  \[ \partial_i  ({\overline{a U_i^k}}| b)=({\overline{a U_i^{k-1} C_i}}| b),\]
  so the homology of the corresponding complex vanishes. When $i\not \in \Upwards_2$, the terms are of the form
  $({\overline{a U_i^n}}| b)$ and $({\overline{a U_i^{n-1}}}| C_i b)$,
  and the homology again vanishes.
  
  {\bf{Case~\ref{case:YesYes}, with $z_i\in\Z$.}}  
  When $i\not\in\Upwards_2$, we either have a single element
  $({\overline{a}}|C_i b)$ with $w_i(a)=w_i(b)=0$ (as in the third column of Figure~\ref{fig:HomologyOfCwt1};
  this can also appear in the second column),
  or elements of the form
  $({\overline{a U_i^j}}| b)$ and $({\overline{a U_i^j}}|C_i b)$, with fixed $a$, $b$ so that 
  $w_i(a)=w_i(b)=0$, where $j=z_i$ and the differential is given by
  \[ \partial_i  ({\overline{a U_i^j}}|b)=
  ({\overline{a U_i^{j-1}}}|  C_i\cdot b);\]
  so the homology is trivial.
  The case where
  $i\in\Upwards_2$ works similarly.  We either have a single element element
  $({\overline{a C_i}} |b)$, $w_i(a)=w_i(b)=0$  or elements of the form
  $({\overline{a U_i^j}}| b)$ and $({\overline{a U_i^{j-1} C_i}}|
  b)$. 
  Once again, the differentials cancel out the homology.


  {\bf{Case~\ref{case:YesNo}, with $z_i=\OneHalf$.}}  (This is the
  fifth or the eight column of Figure~\ref{fig:HomologyOfCwt1}.)
  There are two possibly non-zero elements in the complex.  
  These elements have the form
  $({\overline {a \cdot R_i}}|b)$ and $({\overline a}| L_i \cdot b)$,
  where $w_i(a)=w_i(b)=0$.  There are two subcases: in one subcase,
  only one of $a\cdot R_i$ or $L_i\cdot b$ is zero, so we have a
  single generator.  (This case occurs in the fifth column of
  Figure~\ref{fig:HomologyOfCwt1}.)  In the other case, when both
  $a\cdot R_i$ and $L_i \cdot b$ are non-zero (which occurs now only
  in the eighth column of Figure~\ref{fig:HomologyOfCwt1}), we have
  that
    $\partial_i({\overline {a \cdot R_i}}|b)=({\overline a}| L_i \cdot  b)$,
    so the homology is trivial.

  {\bf{Case~\ref{case:YesNo}, with $z_i\not\in\Z$, and 
      $z_i=\OneHalf+t>\OneHalf$.}}
  (These cases are illustrated in the fifth and  eighth columns of Figure~\ref{fig:HomologyOfCwt1}.)
  Suppose that $i\not\in\Upwards_2$.
  We can assume that
  all the generators a corresponding summand of $C(Z,\x,\y)$ have the following form
  $({\overline{a R_i U_i^j}}|b U_i^ k)$,
  $({\overline{a R_i U_i^j}}|b C_i U_i^{k-1})$,
  $({\overline{a U_i^j}}|L_i b U_i^ k)$, and
  $({\overline{a U_i^j}}| L_i b C_i U_i^{k-1})$,
  for fixed $a$ and $b$ with $w_i(a)=w_i(b)=0$, and $j+k+\OneHalf=z_i$.
  
  Note that at least one of $a R_i\neq 0$ or $L_i b\neq 0$, 
  otherwise the chain complex would be trivial.
  If $a R_i \neq 0$ and $L_i b=0$, then there are exactly two
  elements in the chain complex, $({\overline{a R_i U_i^t}}, b)$ and
  $({\overline{a U_i^{t-1}}}, C_i b)$, and the differential cancels them.
  (Again, cases of this kind can occur in either the fifth or the eighth columns of Figure~\ref{fig:HomologyOfCwt1}.)
  If $a R_i=0$ and $L_i b \neq 0$, there are once again two terms,
  $(\overline a, L_i b U_i^t)$
  and $(\overline a, L_i b C_i U_i^{t-1})$, and these two terms cancel in the differential.

  In the remaining case where $a R_i\neq 0$ and $L_i b \neq 0$, we show that
  $H(C(Z,\x,\y),\partial_i)=0$, using a further filtration on the complex, 
  induced by the function on pure algebra elements $a$ and $b$ that
  associates to $({\overline a}|b)$ the $i^{th}$ weight of $b$, $w_i(b)$.
  This induces a grading on the vector space underlying $C(Z,\x,\y)=\bigoplus_{k\in\frac{1}{2}\Z} \Filt_k$.
  Writing $\partial^i_0({\overline a}|b)=({\overline a}|d_i b)$, we have that
  $\partial_i = \partial^i_0 + L$,
  where $\partial^i_0\colon \Filt_k\to\Filt_k$, and $L\colon \Filt_k \to \bigoplus_{\ell>k}\Filt_\ell$.
  (We are using the form of $\partial_i$ from Equation~\eqref{eq:ExplicitPartialI}.) Thus, the associated graded complex on
  $(C(Z,\x,\y),\partial_i)$ is equipped with the differential $\partial^i_0$, and we will show that its homology vanishes.

  When $k\in \Z$, the complex
  $\Filt_k$ is spanned by
  $({\overline{a R_i U_i^j}}, b U_i^ k)$ and $({\overline{a R_i U_i^j}}, b C_i U_i^{k-1})$,
  and they are connected by a differential in $\partial^i_0$. Similarly,
  when $k\geq 1$, the complex $\Filt_{k+\OneHalf}$ contains the two elements when 
  $({\overline{a U_i^j}}, L_i b U_i^ k)$, and
  $({\overline{a U_i^j}}, L_i b C_i U_i^{k-1})$, and these elements are connected by a differential
  in $\partial^i_0$. Thus, $H(\Filt_k,\partial^i_0)=0$ except when
  $k=0$ and $1/2$; and $\Filt_0$ is generated by the single element $(\overline{a R_i U_i^t}|b)$
  and $\Filt_{1/2}$ is generated by the element $(\overline{a U_i^t}|L_i b)$. Thus,
  by a simple filtration argument, $H(C(Z,\x,\y),\partial^i)$ is computed as the homology of a two-dimensional
  vector space generated by these latter two generators
  $(\overline{a R_i U_i^t}|b)$ and $(\overline{a U_i^t}|L_i b)$; and these two elements are connected by a differential.
  This completes the verification
  that $H(C(Z,\x,\y),\partial_i)=0$ in this case. See Equation~\eqref{eq:WtThreeHalves} for an illustration when $z=3/2$
  (and observe that for the diagram, the horizontal coordinate in the plane measures the filtration considered here).
  The case where $i\in\Upwards_2$ works similarly: the homology of $\partial^i_0$ is now supported in $\Filt_{t}$ and $\Filt_{t+1/2}$,
  with generators $({\overline {a R_i}} | U_i^t b)$ and 
  $({\overline {a}} | L_i U_i^t b)$, with a cancelling differential.

  {\bf{Case~\ref{case:YesNo}, with $z_i=1$.}}
  Assume that $i\not\in\Upwards_2$. 
  The chain complex $C(Z,\x,\y)$ is spanned by four vectors
  elements of the form
  \[
  \begin{array}{llll}
    (\overline{a} | C_i b), &(\overline{a} | U_i b),&
    (\overline{a U_i} | b), &(\overline{a R_i} | R_i \cdot b),
  \end{array}
  \]
  where $a$ and $b$ are pure algebra elements with $w_i(a)=w_i(b)=0$.
  One of the following must hold:
  \begin{itemize}
    \item All four elements are non-zero 
      (which can occur in the ninth column of Figure~\ref{fig:HomologyOfCwt1}).
      In this case, 
      \[ 
      \begin{array}{lll}
        \partial_i ({\overline{a U_i}}|b)=({\overline{a R_i}}|R_i\cdot b)
        + ({\overline a}|C_i b) &
        \partial_i ({\overline a}|C_i b)=({\overline a}|U_i b) &
        \partial_i ({\overline{a R_i}}|R_i\cdot  b)=
        ({\overline a}|U_i b),
        \end{array}
\]
        and the complex has trivial homology.
        (See for example Equation~\eqref{eq:WtOne}.)
    \item 
      $a \cdot R_i=0$ and $R_i\cdot b=0$, in which case also
      $a U_i=bU_i =0$, so three of the four vectors are zero,
      and the remaining vector $(\overline{a} | C_i b)$ generates the homology.
    \item $a \cdot R_i=0$ but $R_i\cdot b\neq 0$. 
      In this case $U_i\cdot b\neq 0$ (by Lemma~\ref{lem:NontrivProd}), 
      so
      two of the above elements $(\overline{a} | C_i b)$
      and  $(\overline{a} | U_i b)$ are non-zero. 
      These two elements are connected by a differential, and the homology is trivial.
    \item $a\cdot R_i\neq 0$ and $R_i\cdot b=0$. 
      In this case, once
      again, two of the above elements are zero, and the remaining two
      are connected by a differential, so once again the homology is trivial.
    \end{itemize}      
    The case where $i\in\Upwards_2$ works similarly.

  {\bf{Case~\ref{case:YesNo}, with $z_i\in\Z$ and $z_i>1$.}}  Let $({\overline
    a}|b)$ be some non-trivial generator in the complex, where
  $i\in\w$, so $i+1\not\in\w$. If either $a\cdot R_i=0$ or $R_i\cdot b=0$, then
  the argument works as in earlier cases (i.e. we have a pair of generators that cancel in homology).
  Otherwise, as in the case where $z_i\not\in\Z$ and $z_i>\OneHalf$, we consider the filtration by the weight $w_i$
  of the $b$ component. If $i\not\in\Upwards_2$, then  for $\Filt_k$ with $k\geq 1$, $H(\Filt_k,\partial^i_0)=0$, since the complex 
  $\Filt_k$ has two terms in it that are connected by a differential in $\partial^i_0$.
  The remaining two terms are the generators of $\Filt_k$ with $k=0$ and $\OneHalf$, which have the form
  $(\overline{a U_i^t} | b)$ and
  $(\overline{a U_i^{t-1} R_i} | R_i\cdot b)$; and these two terms are connected by a differential.
  The case where $i\in\Upwards_2$ works similarly.
\end{proof}

\begin{proof}[Proof of Proposition~\ref{prop:HomologyTrivialSomewhere}]
  Let $f\colon \{1,\dots,m+1\}\to \Z$ be defined by
  \[ f(i) = \#\{j\big| j<i~\text{and}~j\in \x\} + 
  \#\{j\big| j<i~\text{and}~j\in \y\}\]
  It is easy to see that $i-1\leq f(i)\leq i+1$.

  Choose $n$ minimal so that the weight $z_n\neq 0$. 
  There are three cases:

  {\bf Case 1: $f(n)=n$.} (Note that in this case, $z_i\in\Z$.) We claim that
  $H_*(C(Z,\x,\y),\partial_n)=0$.  By Lemma~\ref{lem:HomologyOfCwt1},
  we need only consider cases where $z_n=1$.  

  See Figure~\ref{fig:fnnCases} when $n-1\in\x$; the cases where $n-1\in\y$ work similarly.
  \begin{figure}[ht]
    \input{fnnCases.pstex_t}
    \caption{\label{fig:fnnCases} {\bf{Four cases where $f(n)=n$.}}
      This is Case~1 from the proof of Proposition~\ref{prop:HomologyTrivialSomewhere}.}
  \end{figure}

  We claim that in all the cases
  from the figure, we will have an element of the form
  $(\overline{a\cdot U_n}|b)$ (where $w_n(a)=0$) in the subcomplex.
  This is clear because in each case, either $n$ is not contained in 
  a generating interval, or the generating interval containing $n$ also
  contains $n-1$; but $w_{n-1}(a\cdot U_n)=0$. 
  Thus, Lemma~\ref{lem:HomologyOfCwt1}
  completes the case.
  
  {\bf Case 2: $f(n)=n+1$.} (See Figure~\ref{fig:fnnP1Case}.) Again, we claim that $H_*(C(Z,\x,\y),\partial_n)=0$. 
  Note that in this case, $z_n\in\OneHalf+\Z$; so
  by
  Lemma~\ref{lem:HomologyOfCwt1}, we can assume $z_n=\OneHalf$. The
  two possible chain complex generators are $({\overline a}| L_n \cdot b)$ and $({\overline{a \cdot R_n}}|b)$.
  Since by assumption $z_i=0$ for $i< n$, and 
  clearly $n$ is not the left endpoint of any generating interval (in either $\Blg_1$ or $\Blg_2$),
  it follows that both $a U_n\neq 0$ and $b U_n \neq 0$;
  and correspondingly $a\cdot R_n\neq 0$ and $L_n \cdot b\neq 0$; 
  i.e. both generators are non-zero, so they cancel in homology.

  \begin{figure}[ht]
    \input{fnnP1Case.pstex_t}
    \caption{\label{fig:fnnP1Case} {\bf{$f(n)=n+1$.}}
     This is Case~2 from the proof.}
  \end{figure}

  {\bf Case 3: $f(n)=n-1$.}
  Since $f(m+1)=m+1$, we can find a minimal $j> n$ so that $f(j)=j$. Note that $z_{j-1}\equiv \OneHalf\pmod{1}$.
  We distinguish four further subcases. 

  {\bf Case 3a:  $z_{j-1}>\OneHalf$.}  By Lemma~\ref{lem:HomologyOfCwt1}, 
  $H_*(C(Z,\x,\y),\partial_{j-1})=0$.

  {\bf Case 3b: $j<m+1$ and $z_{j-1}=\OneHalf$ and $z_j=0$.}
  We claim that 
  \[H_*(C(Z,\x,\y),\partial_{j-1})=0.\]
   \begin{figure}[ht]
     \input{fnnM1andHalf.pstex_t}
     \caption{\label{fig:fnnM1andHalf} {\bf{$f(n)=n-1$ and $z_{j-1}=\OneHalf$.}}}
   \end{figure}
   The two possible generators are of the form 
   $({\overline a}|R_j\cdot b)$ and 
   $({\overline{a\cdot L_j}}| b)$.
   Since $z_j=0$, it follows that $a \cdot U_{j-1}\neq 0$. and $b\cdot U_{j-1}\neq 0$,
   so both generators are non-zero, and cancel in homology.

   {\bf Case 3c: $j=m+1$ and $z_{m}=\OneHalf$.}
 $H_*(C(Z,\x,\y),\partial_m)=0$, exactly as in Case 3b.

   {\bf Case 3d: $j<m+1$ and $z_{j-1}=\OneHalf$ and $z_j>0$.}
   We will show $H_*(C(Z,\x,\y),\partial_{j})=0$.
   \begin{figure}[ht]
     \input{fnnM1andOne.pstex_t}
     \caption{\label{fig:fnnM1andOne} {\bf{$f(n)=n-1$, $z_{j-1}=\OneHalf$, $z_j=1$.}}}
   \end{figure}
   We have illustrated five cases in Figure~\ref{fig:fnnM1andOne}.
   The remaining cases are symmetric, obtained by switching the roles
   of $a$ and $b$.

   In the first four of these five cases, observe that the
   corresponding chain complex contains a non-zero element of the form
   $(\overline{a \cdot U_j}| b)$ with $w_j(a)=w_j(b)=0$.  Thus, by
   Lemma~\ref{lem:HomologyOfCwt1}, the homology is trivial (since the generating intervals
   on $\Blg_2$ containing $j$ also contain $j-1$).  In the
   final case, the homology is also trivial by
   Lemma~\ref{lem:HomologyOfCwt1}, since $C_j$ is not present in the
   displayed generator.
\end{proof}

\begin{proof}[Proof of Proposition~\ref{prop:RankOneHomology}]
  Decompose $H(Y)$ into the summands $C(Z,\x,\y)$ as before.  Suppose
  that the total weight $Z$ is non-zero, and choose $i$ as in
  Proposition~\ref{prop:HomologyTrivialSomewhere}.  As in the proof of
  Lemma~\ref{lem:PlaceFiltrations}, there is a filtration on the
  complex $C(Z,\x,\y)$ of $Y$, whose associated graded object is
  $(C(Z,\x,\y),\partial_i)$.  It follows at once by an elementary
  spectral sequence argument that $H(C(Z,\x,\y),\partial)=0$ if $Z$ is a non-zero weight vector.

  It remains to consider summands where the weight is zero. These correspond to pairs of complementary 
  idempotents, equipped with a vanishing differential. There are, of course,
  $\binom{m+1}{k}$ such complementary pairs, as claimed.
\end{proof}

\begin{thm}
  \label{thm:DDisInvertible}
  The module $Y$ is a quasi-inverse of the type $DD$
  bimodule $\CanonDD$.
\end{thm}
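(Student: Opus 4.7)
The plan is to invoke Lemma~\ref{lem:CandidateIsInverse}: it reduces the theorem to verifying that $\CanonDD$ is a type $DD$ bimodule of the form specified by that lemma, and that the inclusion of $\ground$ into the candidate quasi-inverse module induces an isomorphism on homology. The heart of the argument has already been done in Proposition~\ref{prop:RankOneHomology}; the remaining work is formal.

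First, I would identify the candidate quasi-inverse from Equation~\eqref{eq:CandidateInverseModule} with $Y$. By construction,
\[ Y = \Mor^{\Blg_1}(\lsub{\Blg_2}\Blg_2{}_{\Blg_2}\DT~\lsup{\Blg_1,\Blg_2}\CanonDD,\lsup{\Blg_1}\Id_{\Blg_1})\cong \lsub{\Blg_2}{\overline{\Blg_2}}_{\Blg_2}\DT~\lsup{\Blg_2}{\overline{\CanonDD}}^{\Blg_1}\DT~\lsub{\Blg_1}\Blg_1{}_{\Blg_1}, \]
so this is exactly the candidate quasi-inverse.

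Next I would check the Koszul hypotheses. Condition~\ref{Koszul:Grading}: both $\Blg_1$ and $\Blg_2$ carry the weight grading $(w_1,\dots,w_m)$ valued in $\Lambda=(\OneHalf\Z)^m$, and as noted at the end of Section~\ref{subsec:CanonDD}, each term $a_1\otimes a_2$ in the algebra element $A$ specifying $\delta^1$ has $\gr_{\Blg_2}(a_2)=\gr_{\Blg_1}(a_1)$; under the map $(a,b)\mapsto b-a$, every such term contributes $0\in\Lambda$, and the entire bimodule is supported in grading $0$. Condition~\ref{Koszul:Rk1}: as a $\ground_1$-$\ground_2$ bimodule, $\CanonDD$ is freely generated by the vectors $k_{\x}$ indexed by complementary pairs, and carries the obvious isomorphism with $\ground_1$ (equivalently $\ground_2$). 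Together these two conditions imply Equation~\eqref{Koszul:Pos} (as stated in the text after Definition~\ref{def:KoszulDual}), so all hypotheses of Lemma~\ref{lem:CandidateIsInverse} except the homology condition are verified.

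The final step is to show that the inclusion $\ground\hookrightarrow Y$ supplied by Lemma~\ref{lem:CanonicalSubcomplex} is a quasi-isomorphism. Both sides decompose according to pairs of right idempotents $(\Idemp{\w},\Idemp{\y})$; on the left-hand side, the only surviving summands are those with $\w$ complementary to $\y$, each contributing one generator. On the right-hand side, by Lemma~\ref{lem:Ysplits} the complex $Y$ further decomposes as $\bigoplus_{Z,\x,\y} C(Z,\x,\y)$, and the inclusion lands in the weight-zero summand, hitting precisely the generators $(\overline{\Idemp{\x'}}|\Idemp{\x})$ with $\x'$ complementary to $\x$. Proposition~\ref{prop:RankOneHomology} states exactly that these generators span $H_*(Y)$ and there are $\binom{m+1}{k}$ of them, so the inclusion is a quasi-isomorphism. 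Invoking Lemma~\ref{lem:CandidateIsInverse} then yields the theorem.

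There is essentially no obstacle remaining: the genuinely hard combinatorial input — the vanishing of $H_*(C(Z,\x,\y))$ for $Z\neq 0$ and the enumeration of surviving generators when $Z=0$ — has been established in Propositions~\ref{prop:HomologyTrivialSomewhere} and~\ref{prop:RankOneHomology}. The proof of Theorem~\ref{thm:DDisInvertible} should therefore be only a few lines assembling these pieces.
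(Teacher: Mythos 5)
Your argument is correct and is essentially the paper's own proof: the paper likewise verifies the grading and rank-one conditions of Definition~\ref{def:KoszulDual} directly from the construction of $\CanonDD$, notes positivity of the grading, and feeds Proposition~\ref{prop:RankOneHomology} into Lemma~\ref{lem:CandidateIsInverse} to conclude. Your write-up just makes the identification of $Y$ with the candidate quasi-inverse and the idempotent bookkeeping slightly more explicit than the paper does.
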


\begin{proof}
  Note that $\Blg$ is positively graded over $\ground$, using the
  grading set $\Lambda=(\OneHalf \Z)^{m}$.
  Conditions~\ref{Koszul:Grading}-\ref{Koszul:Pos} of
  Definition~\ref{def:KoszulDual} are obvious from the construction of
  $\CanonDD$. The remaining condition of   Lemma~\ref{lem:CandidateIsInverse} 
  is supplied by Proposition~\ref{prop:RankOneHomology}; so the result
  follows from Lemma~\ref{lem:CandidateIsInverse}.
\end{proof}

\subsection{Grading sets associated to one-manifolds}
\label{subsec:OurGradingSets}

Our knot invariant will be constructed by tensoring together bimodules
with a particular kind of grading set. We formalize these grading sets
presently, and study the boundedness needed for forming the tensor product.

Let $W$ be an oriented disjoint union of finitely many intervals,
equipped with a partition of its boundary $\partial W = Y_1\cup Y_2$
into two sets of points. Let $Y_i$ consist of $m_i$ points. Let $s_i$
denote the number of intervals in $W$ that connect $Y_i$ to itself,
and $s_0$ denote the number of intervals that connect $Y_1$ to $Y_2$
in $W$.  Let $\Upwards_1$ be those points in $Y_1$ for which the oriented boundary of $W$
appears with positive multiplicity in the
oriented boundary of $W$, and let $\Upwards_2$ be those points in $Y_2$ for which 
the oriented boundary of $W$ appears with negative multiplicity in $W$.
Choose any integer $0\leq s\leq s_0+1$.
Let $\Blg_1=\Blg(m_1,s+s_1,\Upwards_1)$, $\Blg_2(m_2,s+s_2,\Upwards_2)$.

We can think of the Alexander multi-grading of
$\Blg_i$ as taking values in $H^0(Y_i;\Q)$: the weights of algebra elements
are functions on the points in $Y_i$. The sum of grading groups $H^0(Y_1;\Q)\oplus H^0(Y_2;\Q)=H^0(\partial W;\Q)$
act on $H^1(W,\partial W;\Q)$, via the coboundary map $d^0\colon H^0(\partial W)\to H^1(W,\partial W)$.

\begin{remark}
  In fact, the grading on the algebras is supported in
  $H^0(Y_i;\OneHalf\Z)\subset H^0(Y_i;\Q)$.  Also, the grading set for our modules is 
  contained in $H^1(W,\partial W;\frac{1}{4}\Z)$; compare
  Equation~\eqref{eq:GradeCrossing}.
\end{remark}

\begin{defn}
  \label{def:Adapted}
  Fix $W$ as above. 
  A type $DA$ bimodule $\lsup{\Blg_2}X_{\Blg_1}$ is called {\em adapted to $W$}
  if it has a $\Z$-grading (compatible with the Maslov grading on the algebra)
  and an Alexander multi-grading, with grading set $H^1(W,\partial W)$ as
  described above, and it is finite dimensional (as a vector space).
\end{defn}

\begin{prop}
  \label{prop:AdaptedTensorProducts}
  Let $W_1$ be a disjoint union of finitely many intervals joining
  $Y_1$ to $Y_2$; and let $W_2$ be a disjoint union of finitely many
  intervals joining $Y_2$ to $Y_3$.  Suppose moreover that $W_1\cup
  W_2$ has no closed components, i.e. it is a disjoint union of
  finitely many intervals joining $Y_1$ to $Y_3$.  Given any two
  bimodules $\lsup{\Blg_2}X^1_{\Blg_1}$ and
  $\lsup{\Blg_3}X^2_{\Blg_2}$ adapted to $W_1$ and $W_2$
  respectively, we can form their tensor product
  $\lsup{\Blg_3}X^2_{\Blg_2}\DT~\lsup{\Blg_2}X^1_{\Blg_1}$ (i.e. the infinite sums in its
  definition are finite); and moreover, it is
  a bimodule that is adapted to $W_1\cup W_2$.
\end{prop}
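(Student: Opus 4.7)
The proposition has two parts to verify: finiteness of the sums defining the type $DA$ tensor product $\lsup{\Blg_3}(X^2\DT X^1)_{\Blg_1}$, and the identification of its Alexander grading set as $H^1(W_1\cup W_2,\partial(W_1\cup W_2))$. The geometric heart of the argument is that, under the no-closed-components hypothesis, the combined coboundary
\[ d^0_1\oplus(-d^0_2)\colon H^0(Y_2;\Q)\to H^1(W_1,\partial W_1;\Q)\oplus H^1(W_2,\partial W_2;\Q) \]
is injective. Combined with positivity of $\Blg_2$ over $\ground_2$, this forces the iterated type $D$ action of $X^1$ feeding into the right $\Blg_2$-action on $X^2$ to use only bounded-length sequences of algebra elements.

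For finiteness, I would analyze a contribution coming from a sequence $(b_1,\dots,b_j)\in\Blg_2^{\otimes j}$ produced by iterating $\delta^j$ on $X^1$ and fed into $\delta^1_{j+1}$ of $X^2$. First, any $j\ge 2$ term in which some $b_i$ is a basic idempotent $\Idemp{\x}\in\ground_2$ vanishes: in $X^2\otimes_{\ground_2}\Blg_2^{\otimes j}$ one has $b_{i-1}\otimes\Idemp{\x}=(b_{i-1}\cdot\Idemp{\x})\otimes 1$, and strict unitality of the right $\Blg_2$-action on $X^2$ kills any input carrying an interior $1\in\Blg_2$. Second, for sequences of non-idempotent $b_i$, positivity of $\Blg_2$ says each multi-grading $w(b_i)\in H^0(Y_2;\OneHalf\Z_{\ge 0})$ is non-zero, with total coordinate sum at least $1/2$; so $\sum_i w(b_i)$ grows linearly in $j$ inside the positive cone. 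Third, grading compatibility requires $d^0_1(\sum_i w(b_i))$ to lie in the finite set of Alexander-grading differences within $X^1$ and $d^0_2(\sum_i w(b_i))$ in the corresponding finite set for $X^2$; by injectivity of $d^0_1\oplus(-d^0_2)$, the multi-grading $\sum_i w(b_i)$ itself lies in a finite subset of the positive cone, which bounds $j$.

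The injectivity I would establish cohomologically. From the long exact sequence of each $(W_i,\partial W_i)$, the kernel of $d^0_i|_{H^0(Y_2)}$ consists of functions on $Y_2$ that vanish on the $Y_2$-endpoints of arcs in $W_i$ going to the ``other'' boundary side, and that take opposite signs on the two $Y_2$-endpoints of any arc in $W_i$ running from $Y_2$ to itself. Their intersection is thus the space of harmonic functions on the graph $G$ with vertex set $Y_2$ and edges the $Y_2$-to-$Y_2$ arcs of $W_1\cup W_2$, subject to vanishing at every vertex incident to some arc of $W_1\cup W_2$ terminating on $Y_1$ or $Y_3$. This space is nontrivial precisely when $G$ contains a cycle among the ungrounded vertices, which is exactly a closed component of $W_1\cup W_2$; the hypothesis therefore yields injectivity.

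With these in hand, adaptedness is straightforward. Finite-dimensionality of $X^2\otimes_{\ground_2}X^1$ is immediate; the Maslov grading is additive under the tensor; and the Alexander grading set is the quotient of $H^1(W_1,\partial W_1)\oplus H^1(W_2,\partial W_2)$ by the diagonal action of $\Lambda_{\Blg_2}=H^0(Y_2)$ via $d^0_1\oplus(-d^0_2)$, which by the Mayer--Vietoris sequence for the decomposition $W_1\cup W_2=W_1\cup_{Y_2}W_2$ (again using injectivity) is canonically $H^1(W_1\cup W_2,\partial(W_1\cup W_2))$. The main obstacle throughout is the injectivity step: it is the only place where the no-closed-components hypothesis is genuinely used, and it is the bridge between the algebraic finiteness of the tensor sum and the topological combinatorics of how arcs of $W_1$ and $W_2$ pair up across $Y_2$.
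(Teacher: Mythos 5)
Your injectivity observation is correct (the kernel of $d^0_1\oplus(-d^0_2)$ on $H^0(Y_2)$ is spanned exactly by closed components of $W_1\cup W_2$), and using positivity of $\Blg_2$ over its idempotent ring to convert a bound on $\sum_i w(b_i)$ into a bound on $j$ is fine. The gap is the step where you claim grading compatibility confines $d^0_2\bigl(\sum_i w(b_i)\bigr)$ to a finite set determined by $X^2$. The grading relation for $X^2$ reads $\gr(\x_2)+d^0_2|_{H^0(Y_2)}\bigl(\sum_i w(b_i)\bigr)=d^0_2|_{H^0(Y_3)}\bigl(\gr(c)\bigr)+\gr(\y_2)$, where $c\in\Blg_3$ is the output algebra element, which is not fixed and whose Alexander grading is a priori unbounded. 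So what you actually control is $d^0_2\bigl(\sum_i w(b_i)\bigr)$ only modulo the image of $d^0_2|_{H^0(Y_3)}$, and the relevant map $H^0(Y_2)\to H^1(W_1,\partial W_1)\oplus\bigl(H^1(W_2,\partial W_2)/d^0H^0(Y_3)\bigr)$ fails to be injective precisely when $W_1\cup W_2$ has a component meeting $Y_3$ but not $Y_1$ --- a configuration the hypotheses allow (e.g.\ a cap of $W_1$ whose two $Y_2$-endpoints are joined by arcs of $W_2$ down to $Y_3$: the weights of the $b_i$ at those two points can grow in tandem, compensated by $\gr(c)$, without violating any Alexander-grading constraint). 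On such components your argument bounds neither the weights nor $j$.

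This is exactly where the paper's proof does its real work. It first bounds the weights at $Y_2$-points of components meeting $Y_1$ (essentially your argument), and for components meeting $Y_3$ but not $Y_1$ it brings in the Maslov grading: the additivity relation (Equation~\eqref{eq:MaslovBound}) pins down $\Maslov(c)$ from the fixed data, and since by Equation~\eqref{eq:DefMaslov} one has $\Maslov(c)=\#\{\text{$C_i$ in $c$}\}-2\sum_{s\in\Upwards_3}w_s(c)$ with the number of $C_i$'s universally bounded, this gives an upper bound on $w_p(c)$ at the initial point $p\in\Upwards_3$ of such a component, which then propagates along the component to bound the weights of the $b_i$ there. (The paper then bounds $j$ with the Maslov grading as well; your positivity argument is an acceptable substitute for that final step.) Without some version of this Maslov-grading input, your finiteness claim is not established. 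A minor further point: strict unitality of $X^2$ is not part of Definition~\ref{def:Adapted}, so your disposal of idempotent outputs needs either that extra hypothesis or a separate argument (e.g.\ via the Maslov grading and finite generation of $X^1$).
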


\begin{proof}
  Recall that the grading set of $X^2\DT X^1$ is 
  $H^1(W_2,Y_3\cup Y_2)\oplus H^1(W_1,Y_2\cup Y_1)$ modulo the coboundary of $H^0(Y_2)$, which is identified
  with $H^1(W_2\cup W_1,Y_3\cup Y_1)$.
  Clearly, the tensor product is finite-dimensional.

  Next, we argue the necessary finiteness.
  Fix algebra elements $(a_1,\dots,a_\ell)\in \Blg_1$.
  Fix generators $\x_1,\y_1$ for $X^1$ and $\x_2,\y_2$ for $X^2$.
  Suppose that $(b_1\otimes\dots \otimes b_j)\otimes \y_1$ appears in 
  $\delta^j_i(\x_1,a_1,\dots,a_\ell)$. 
  Since $X^1$ is graded, 
  for each point $i$ in $Y_2$ that is matched by $W_1$ to a point in $Y_1$, we have a constant $K$,
  depending only on the gradings of $(a_1,\dots,a_\ell)$ and $\x_1$ and $\y_1$, so that
  \[ |w_i(b_1\otimes\dots\otimes b_j)|\leq K.\]
  (In more detail, consider the component of $W_1$ that matched $i\in Y_2$ with some $i'\in Y_1$,
  and let $\xi_i$ and $\eta_i$ be the coefficient of $\gr(\x_1)$ and $\gr(\y_1)$ in that component.
  Since $X_1$ is graded, 
  \[ w_i(b_1\otimes\dots\otimes b_j)+\eta_i =
  \xi_i+w_i(a_1\otimes\dots \otimes a_{\ell}).\] Now let
  $K=|\xi_i+w_i(a_1\otimes\dots\otimes a_{\ell})-\eta_i|$.)  By the
  same reasonining, we can adjust $K$ so that, for any two points
  $i$ and $i'$ in $Y_2$ that are matched in $W_1$,
  \[ |w_{i}(b_1\otimes\dots\otimes b_j)-w_{i'}(b_1\otimes\dots\otimes b_j)|\leq K.\]
  Suppose that $c\otimes \y_2$ appears with non-zero multiplicity in 
  $\delta^1_{j+1}(\x_2,b_1,\dots,b_j)$, then for any point $i'$ in $Y_2$ that is matched by $W_2$ to $Y_3$,
  we can further adjust $K$ (depending now on $\x_2$ and $\y_2$) so that
  \[ |w_i(b_1\otimes\dots\otimes b_j)-w_{i'}(c)|\leq K.\] By further
  adjusting $K$ if necessary (depending only on $\x_2$ and $\y_2$), we can arrange that for any two points
  $i$ and $i'$ in $Y_2$ that are matched in $W_2$,
  \[ |w_i(b_1\otimes\dots\otimes b_j)-w_{i'}(b_1,\dots,b_j)|\leq K.\]

  Grading properties ensure that
  \begin{equation}
    \label{eq:MaslovBound}
    \Maslov(\x_1)+\Maslov(\x_2)-\Maslov(\y_1)-\Maslov(\y_2)+\sum_{i=1}^{\ell} \Maslov(a_i) =\Maslov(c)+\ell-1
  \end{equation}

  By the above considerations, for any point $i$ in $Y_2$ that is
  contained in a path connected component of $W_2\cup W_1$ that meets
  $Y_1$, there is a bound on $w_i(b_1\otimes\dots\otimes b_j)$
  depending only on $\x_1$, $\x_2$, $\y_1$, $\y_2$, and
  $(a_1,\dots,a_{\ell})$. 

  Suppose next that $i\in Y_2$ is contained in a component of $W_2
  \cup W_1$ that meets $Y_3$ but not $Y_1$.  Consider the inital point
  $p$ of that arc, with respect to the orientation it inherits from
  $W$, and observe that $p\in\Upwards_3$.  The above considerations
  give a bound on $|w_i(b_1\otimes\dots\otimes b_j)-w_p(c)|$, again
  depending only on $\x_1$, $\x_2$, $\y_1$, $\y_2$, and
  $(a_1,\dots,a_{\ell})$. Finally, observe that since $p\in
  \Upwards_3$, Equation~\eqref{eq:DefMaslov} shows that we can adjust
  $K$ so that (depending on $|\Upwards_2|$) with 
  $|\Maslov(c)+2 w_p(c)|\leq K$. An upper bound on $\Maslov(c)$ is
  provided by Equation~\eqref{eq:MaslovBound}.  Thus, we have an upper
  bound on $w_i(b_1\otimes \dots\otimes b_j)$ for any $i$ in $Y_2$
  that is contained in component of $W_2\cup W_1$ that meets $Y_3$ but
  not $Y_1$.

  Since $W_2\cup W_1$ has no closed components, we have obtained a
  universal bound on any weight $w_i(b_1\otimes\dots\otimes b_j)$ for
  $i\in Y_2$. 
  That upper bound implies also an upper bound on $j$; for if 
  $j$ could be arbitrarily large, we would be able to find
  arbitrarily large $k$ so that $\delta^k(\y')=b_{m}\otimes\dots
  \otimes b_{m+k}\otimes \y''$.  (Here, $\delta^k\colon X^1\to \Blg_2^{\otimes
    k}\otimes X^1$ is the map obtained by iterating $\delta^1$.)  But
  \[ \Maslov(\y')-k = \Maslov(b_m)+\dots+\Maslov(b_{m+k})+
  \Maslov(\y''),\] so since $X^1$ is finitely generated, we conclude
  that $\Maslov(b_m)+\dots+\Maslov(b_{m+k})$ can be arbitrarily large,
  contradicting the above obtained universal bound on the weight of
  $b_1\otimes\dots\otimes b_j$ at each point in $Y_2$, in view of
  Definition~\ref{eq:DefMaslov}.
  
  The resulting bound on $j$ shows that the coefficient of $\y_1\DT \y_2$ in
  $\partial(\x_1\DT \x_2)$ is a sum of finitely many terms.  The result now follows
  since 
  $X^2\DT X^1$ is finitely generated.
  (See Figure~\ref{fig:GradingSets}.)
\begin{figure}[ht]
\input{GradingSets.pstex_t}
\caption{\label{fig:GradingSets} 
  In the proof of Proposition~\ref{prop:AdaptedTensorProducts},
  the total weights of the algebra elements at the circled points in $Y_1$ and $Y_3$ 
  give bounds on the weights at points of $Y_2$ (drawn as dark dots) 
  of the algebra elements in $b_1\otimes\dots\otimes b_j$.}
\end{figure}
\end{proof}

\section{$DD$ bimodules for crossings}
\label{sec:CrossingDD}

Having defined the algebra associated to $y=t$ slice of the generic
diagram, we turn now to the definitions of the modules associated to
the partial knot diagrams. In Section~\ref{sec:CrossingDA}, we will construct
$DA$ bimodules associated to special partial knot diagrams, consisting
of a collection of vertical strands passing through the region $y_1\leq y \leq y_2$ in the plane,
containing exactly one crossing.  Before doing this, we
construct presently a simpler type $DD$ bimodule associated to such a
configuration.

\subsection{The $DD$ bimodule of a positive crossing}
We describe first the $DD$ bimodule $\Pos_i$ associated to a positive
crossing between the $i^{th}$ and $(i+1)^{st}$ strands.

Let $\tau\colon \{1,\dots,m\}\to  \{1,\dots,m\}$ 
be the transposition that 
switches $i$ and $i+1$. 
Let 
\begin{equation}
  \label{eq:DefB1B2}
  \Blg_1=\Blg(m,k_1,\Upwards_1)~\qquad{\text{and}}\qquad\Blg_2=\Blg(m,k_2,\Upwards_2),
\end{equation} where
$k_1+k_2=m+1$, $|\Upwards_1|+|\Upwards_2|=m$, and $\Upwards_1\cap \tau(\Upwards_2)=\emptyset$.
We think of the algebra $\Blg_1$ as coming from above the crossing  and algebra $\Blg_2$ as coming from below.

\begin{figure}[ht]
\input{PosCrossDD.pstex_t}
\caption{\label{fig:PosCrossDD} {\bf{Positive crossing $DD$ bimodule generators.}}  
The four generator types are pictured to the right.}
\end{figure}

As an $\IdempRing(m,k_2,\Upwards_2)$-$\IdempRing(m,k_1,\Upwards_1)$-bimodule, $\Pos_i$ is the submodule
of $\IdempRing(m,k_2,\Upwards_2)\otimes_{\Field}\IdempRing(m,k_1,\Upwards_1)$ generated by elements
$\Idemp{\y}\otimes \Idemp{\x}$ where one of the following two conditions holds:
\begin{itemize}
  \item $\x\cap\y=\emptyset$ or 
  \item $\x\cap\y=\{i\}$ and $\{0,\dots,m\}\setminus (\x\cup \y) =\{i-1\}~\text{or}~\{i+1\}$.
\end{itemize}

In a little more detail, generators correspond to certain pairs of
idempotent states $\x$ and $\y$, where $|\x|=k_1$ and $|\y|=k_2$. They
are further classified into four types, $\North$, $\South$, $\West$,
and $\East$. For generators of type $\North$ the subsets $\x$ and $\y$
are complementary subsets of $\{0,\dots,m\}$, with $i\in \y$.  For
generators of type $\South$, $\x$ and $\y$ are complementary subsets
of $\{0,\dots,m\}$ with $i\in \x$.  For generators of type $\West$,
$i-1\not\in \x$ and $i-1\not\in \y$, and $\x\cap\y=\{i\}$.  For
generators of type $\East$, $i+1\not\in \x$ and $i+1\not\in \y$, and
$\x\cap\y=\{i\}$. 

The differential has the following types of terms:
\begin{enumerate}[label=(P-\arabic*),ref=(P-\arabic*)]
\item 
  \label{type:OutsideLRP}
  $R_j\otimes L_j$
  and $L_j\otimes R_j$ for all $j\in \{1,\dots,m\}\setminus \{i,i+1\}$; these connect
  generators of the same type. 
\item
  \label{type:UCP}
  $U_{\tau(j)}\otimes C_j$ if $j\in \Upwards_1$
  and $C_{\tau(j)}\otimes U_j$ if $j\not\in \Upwards_1$; these connect generators of the same type.
\item 
  \label{type:InsideP}
  Terms in the diagram below connect  generators
  of different types:
  \begin{equation}
    \label{eq:PositiveCrossing}
    \begin{tikzpicture}[scale=1.7]
    \node at (0,3) (N) {$\North$} ;
    \node at (-2,2) (W) {$\West$} ;
    \node at (2,2) (E) {$\East$} ;
    \node at (0,1) (S) {$\South$} ;
    \draw[->] (S) [bend left=7] to node[below,sloped] {\tiny{$R_i\otimes U_{i+1}+L_{i+1}\otimes R_{i+1}R_i$}}  (W)  ;
    \draw[->] (W) [bend left=7] to node[above,sloped] {\tiny{$L_{i}\otimes 1$}}  (S)  ;
    \draw[->] (E)[bend right=7] to node[above,sloped] {\tiny{$R_{i+1}\otimes 1$}}  (S)  ;
    \draw[->] (S)[bend right=7] to node[below,sloped] {\tiny{$L_{i+1}\otimes U_i + R_i \otimes L_{i} L_{i+1}$}} (E) ;
    \draw[->] (W)[bend right=7] to node[below,sloped] {\tiny{$1\otimes L_i$}} (N) ;
    \draw[->] (N)[bend right=7] to node[above,sloped] {\tiny{$U_{i+1}\otimes R_i + R_{i+1} R_i \otimes L_{i+1}$}} (W) ;
    \draw[->] (E)[bend left=7] to node[below,sloped]{\tiny{$1\otimes R_{i+1}$}} (N) ;
    \draw[->] (N)[bend left=7] to node[above,sloped]{\tiny{$U_{i}\otimes L_{i+1} + L_{i} L_{i+1}\otimes R_i$}} (E) ;
  \end{tikzpicture}
\end{equation}
\end{enumerate}

Note that for a generator of type $\East$, the terms of
Type~\ref{type:OutsideLRP} with $j=i+2$ vanish; while for one of type
$\West$, the terms of Type~\ref{type:OutsideLRP} with $j=i-1$ vanish.


\begin{prop}
  \label{prop:PosIsDD}
  The bimodule $\lsup{\Blg_2,\Blg_1}\Pos_i$ is a $DD$ bimodule.
\end{prop}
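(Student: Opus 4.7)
The plan is to verify directly that the structure map $\delta^1$ defined above satisfies the type $DD$ structure equation
\[
(\mu_1 \otimes \Id_{\Pos_i}) \circ \delta^1 + (\mu_2 \otimes \Id_{\Pos_i}) \circ (\Id \otimes \delta^1) \circ \delta^1 = 0.
\]
Since $\delta^1$ is specified as left multiplication by a sum of tensor-factored algebra elements, it suffices to show that the resulting element $A \in \Blg_2 \otimes \Blg_1$, acting on each generator $\y \otimes \x$ subject to the relevant idempotent restrictions, satisfies $dA + A \cdot A = 0$ on that generator. I would organize the computation by splitting the terms of $A$ into the three families~\ref{type:OutsideLRP},~\ref{type:UCP}, and~\ref{type:InsideP}, and checking cancellation within three (nearly) independent regimes.

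First I would handle the terms of $A \cdot A$ and $dA$ that involve only indices $j, j' \notin \{i, i+1\}$. These arise only from the type~\ref{type:OutsideLRP} and~\ref{type:UCP} terms, and as algebra elements they have exactly the same form as the corresponding terms for the canonical $DD$ bimodule $\lsup{\Blg_1, \Blg_2}\CanonDD$ of Section~\ref{subsec:CanonDD}. The argument of Lemma~\ref{lem:CanonicalIsDD} applies verbatim: pairs $L_{j+1} L_j \otimes R_j R_{j+1}$ and $R_j R_{j+1} \otimes L_{j+1} L_j$ vanish in $\Blg_1 \otimes \Blg_2$ by the quotient relations~\eqref{eq:MoveTooFarLs}--\eqref{eq:MoveTooFarRs}, and for $j = j'$ the two double-compositions $(R_j \otimes L_j)(L_j \otimes R_j)$ and $(L_j \otimes R_j)(R_j \otimes L_j)$ sum to $U_j \otimes U_j + U_j \otimes U_j$ (after applying relations in $\Blg$), which cancels the differential of the $C_j$ term. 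The only additional check is that the idempotent restrictions defining $\Pos_i$ do not drop any necessary summand — this is immediate, since those restrictions constrain only the positions $i-1, i, i+1$ in the idempotent states.

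Next I would treat the cross terms in $A \cdot A$ involving one factor of type~\ref{type:OutsideLRP} or~\ref{type:UCP} at an outer strand $j \notin \{i, i+1\}$ and one factor from the crossing diagram~\eqref{eq:PositiveCrossing}. For $|j - i|, |j - i - 1| \geq 2$ the two algebra factors commute in both $\Blg_1$ and $\Blg_2$ and their compositions in the two orders cancel in pairs. The delicate cases are $j = i - 1$ and $j = i + 2$, where operators like $L_{i-1}$ or $R_{i+2}$ appear next to $L_i, R_i, L_{i+1}, R_{i+1}$; here I would exploit the generator-type idempotent constraints (notably $i - 1 \notin \x \cup \y$ on $\East$-generators and $i + 1 \notin \x \cup \y$ on $\West$-generators, together with the complementarity conditions on $\North$ and $\South$) together with the relations~\eqref{eq:MoveTooFarLs}--\eqref{eq:MoveTooFarRs} to force each such cross-term to vanish on the appropriate generator.

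The remaining and most substantive step, which I expect to be the main obstacle, is to verify cancellation of all terms in $dA + A \cdot A$ supported purely at indices $i, i+1$, coming from the type~\ref{type:UCP} terms at $j \in \{i, i+1\}$ and from compositions of the eight arrows in diagram~\eqref{eq:PositiveCrossing}. Here I would check each of the four generator types in turn. For $\South$, for instance, the composite $\South \to \West \to \South$ along the arrows of~\eqref{eq:PositiveCrossing} contributes $(R_i \otimes U_{i+1} + L_{i+1} \otimes R_{i+1} R_i)(L_i \otimes 1)$, which simplifies using $R_i L_i = U_i$ and $L_{i+1} L_i = 0$ to $U_i \otimes U_{i+1}$; similarly $\South \to \East \to \South$ gives $U_{i+1} \otimes U_i$; the total $U_i \otimes U_{i+1} + U_{i+1} \otimes U_i$ is precisely $(\mu_1 \otimes \Id)$ applied to the $j = i$ and $j = i+1$ type~\ref{type:UCP} contributions, using $d C_j = U_j$. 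Analogous bookkeeping at $\North$ (via two-step loops through $\West$ and $\East$) and at $\West, \East$ (via two-step loops through $\South, \North$, together with the ``triangular'' compositions that produce $\West \to \West$ and $\East \to \East$ contributions once the $\Blg$-relations are applied) must exhaust all remaining terms. Throughout this step the main bookkeeping burden is to confirm that each algebra product is computed in the correct idempotent and survives the quotient defining $\Blg$; granting that, the cancellations are forced by the relations $R_j L_j = U_j = L_j R_j$, $L_{j+1} L_j = 0$, $R_j R_{j+1} = 0$, and $dC_j = U_j$.
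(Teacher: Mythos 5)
Your overall strategy is the paper's: verify $dA+A\cdot A=0$ generator by generator, splitting the terms into the families~\ref{type:OutsideLRP},~\ref{type:UCP},~\ref{type:InsideP} and reducing the outer-index part to the argument of Lemma~\ref{lem:CanonicalIsDD}. The gap is in your first step, at exactly the point where the paper has to do something beyond that lemma. You claim the outer cancellation applies ``verbatim'' because the idempotent restrictions defining $\Pos_i$ ``constrain only the positions $i-1,i,i+1$'' and hence drop no needed summand. That justification is false: the algebra generators at index $j$ reference positions $j-1$ and $j$, so the $\East$-constraint $i+1\notin\x\cup\y$ kills both Type~\ref{type:OutsideLRP} terms at $j=i+2$, and the $\West$-constraint $i-1\notin\x\cup\y$ kills them at $j=i-1$ (the paper flags this right after the definition of the differential). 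Thus on an $\East$ (resp.\ $\West$) generator the double-composition that would cancel the differential of the Type~\ref{type:UCP} term at $j=i+2$ (resp.\ $j=i-1$) is simply absent, and the check is not that ``nothing is dropped'' but that the orphaned term $U_{i+2}\otimes U_{i+2}$ (resp.\ $U_{i-1}\otimes U_{i-1}$) itself annihilates the generator. This does hold, but it needs the separate idempotent argument the paper gives: since $\x\cap\y=\{i\}$, at most one of $\x,\y$ contains $i+2$ (resp.\ $i-2$), and the factor acting on the idempotent missing both $i+1,i+2$ (resp.\ $i-2,i-1$) vanishes by~\eqref{eq:KillUs}. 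As written, your step 1 would leave these terms unaccounted for. (Incidentally, in your step 2 you have the constraints swapped: $i-1\notin\x\cup\y$ is the $\West$ condition and $i+1\notin\x\cup\y$ the $\East$ condition.)

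A smaller caveat on your third step: the $\South$-based loop you computed is the one case where the cross terms die by the quotient relations ($L_{i+1}L_i=0$, $R_iR_{i+1}=0$). For the loops based at $\West$, $\East$, and $\North$, the leftover terms such as $R_{i+1}R_i\otimes L_iL_{i+1}+L_iL_{i+1}\otimes R_{i+1}R_i$ are \emph{not} zero in the algebra; they vanish only because for a $\West$ or $\East$ generator one has $|\x\cap\{i-1,i,i+1\}|+|\y\cap\{i-1,i,i+1\}|=3$ while those products force a count of $4$. So the cancellations there are ``forced by the idempotents,'' not by the listed relations; your parenthetical about checking idempotents gestures at this, but the explicit claim that the relations suffice should be corrected when you write the argument out.
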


\begin{proof}
  The square of the differential, which we must verify vanishes, is obtained by either
  differentiating any of the terms of the Types~\ref{type:OutsideLR}-\ref{type:InsideP} above or,
  multiplying together two of them.

  We start by analyzing the terms of Types~\ref{type:InsideP}. Clearly, all of those terms have vanishing differential.
  The non-zero algebra elements obtained as products of pairs of such elements either connect generators of
  any type to itself, or it connects $\North$ and $\South$ or $\West$ and $\East$.
  
  As an example,
  consider products of terms of Type~\ref{type:InsideP} connecting $\West$
  to itself. Those products that factor through $\North$ give terms
  \[ (1\otimes L_i)\cdot (U_{i+1}\otimes R_i + R_{i+1} R_i \otimes L_{i+1}) 
  = U_{i+1}\otimes U_{i} + R_{i+1} R_{i}\otimes L_{i} L_{i+1}\]
  and those that factor through $\South$ give
  \[ (L_i \otimes 1) \cdot (R_i \otimes U_{i+1} + L_{i+1} \otimes R_{i+1} R_{i})
  = U_i\otimes U_{i+1} + L_{i} L_{i+1}\otimes R_{i+1} R_i.\] 
  Note that $(R_{i+1}R_i \otimes L_i L_{i+1} + L_{i} L_{i+1}\otimes R_{i+1} R_i)\otimes \West$
  vanishes for idempotent reasons: if $\x$ and $\y$ are pairs of idempotent states with
  $(\Idemp{\x}\otimes  \Idemp{\y})\cdot \West\neq 0$, then
  \[|\x \cap \{i-1,i,i+1\}| + |\y \cap \{i-1,i,i+1\}| = 3.\]
  The terms
  $U_i\otimes U_{i+1}$ and $U_{i+1}\otimes U_i$ cancel with the
  differentials of terms of Type~\ref{type:UC} with $j=i$ and $i+1$.

  Next consider products of terms of Type~\ref{type:InsideP} connecting $\North$ to $\South$.
  The products that factor through $\West$ give terms:
  \[(U_{i+1}\otimes R_i  + R_{i+1} R_i\otimes L_{i+1}) \cdot  (L_i\otimes 1)
  = L_{i} U_{i+1} \otimes R_i   + R_{i+1} U_i\otimes L_{i+1}.\]
  Similarly, the products that factor through $\East$ give
  \[ (U_i\otimes L_{i+1} + L_{i} L_{i+1} \otimes R_i)  \cdot (R_{i+1}\otimes 1)
  = R_{i+1} U_{i} \otimes L_{i+1}  + L_i U_{i+1}  \otimes R_i ; \]
  thus, these two factorizations give terms that cancel in pairs.

  The cancellation of terms of Type~\ref{type:InsideP} in pairs or with differentials
  of terms of Type~\ref{type:UC} with $j=i$ and $i+1$ proceeds to other types of generators similarly.

  Next, we consider products of pairs of terms of
  Type~\ref{type:OutsideLR}.  When $j\neq i, i+1$, these terms cancel
  differentials of terms  of Type~\ref{type:UC} exactly as in
  the proof Lemma~\ref{lem:CanonicalIsDD}. When the generators are of
  type $\East$ and $\West$, there is a possible complication in this argument,
  since in that case, the corresponding term with $j=i+2$ or $i-1$ of
  Type~\ref{type:OutsideLR} might vanish. For example, for terms of
  type $\East$, and $i+2\not\in\Upwards_1$, there is a possibly non-zero
  term $C_{i+2} \otimes U_{i+2}$ of Type~\ref{type:UC}, but the terms
  of Type~\ref{type:OutsideLR} of the form $L_{i+2}\otimes R_{i+2}$
  and $R_{i+2}\otimes L_{i+2}$ vanish. However, the differential of
  $C_{i+2}\otimes U_{i+2}$ vanishes in the idempotents of $\East$ (since
  if $\Idemp{\y}\otimes \Idemp{\x}$ is of type $\East$, then
  $i+1\not\in\x$ and $i+1\not\in\y$, and either $i+2\not\in\x$ or
  $i+2\not\in\y$; in either case, $(U_{i+2}\otimes U_{i+2})\cdot
  (\Idemp{\y}\otimes\Idemp{\x})=0$).

  Consider next terms that are products of terms of Type~\ref{type:InsideP} and those of
  Type~\ref{type:OutsideLR}.
  Typically, these are easily seen to cancel in pairs; 
  the case where support of the Type~\ref{type:OutsideLR} is immediately next to $\{i,i+1\}$ requires special care.
  Consider for example the terms of the form $L_{i+2}\otimes R_{i+2}$. Each term of Type~\ref{type:InsideP} commutes
  with this term; but both product might be zero. For example, $L_i\otimes 1$ commutes with 
  $(L_{i+2}\otimes R_{i+2})$. Also, if $a=L_i L_{i+1}\otimes R_i$, then 
  $(L_{i+2}\otimes R_{i+2})\cdot a = 0$ and $a\cdot (L_{i+2}\otimes R_{i+2})=0$.

  The remaining terms of Type~\ref{type:UC} and are easily seen to commute with each other and with terms
  of Type~\ref{type:InsideP}, giving the desired cancellation.
\end{proof}

It is interesting to note that the bimodule has some symmetries; for instance,
\begin{equation}
  \label{eq:SymmetryOfPos}
  \lsup{\Blg_2,\Blg_1}\Pos_i\cong \lsup{\Blg_1,\Blg_2}\Pos_i,
\end{equation}
by a symmetry which switches the roles of $\North$ and $\South$, and fixes $\West$ and $\East$.

\subsection{$DD$ bimodule for a negative crossing}

We can define a type $DD$ bimodule for a negative crossing. 
The generators are the same as for a positive crossing. 
Terms in the differential are also the same, except that those
of Type~\ref{type:InsideP} are replaced by the following:

\begin{equation}
  \label{eq:NegativeCrossing}
\begin{tikzpicture}[scale=1.7]
    \node at (0,3) (N) {$\North$} ;
    \node at (-2,2) (W) {$\West$} ;
    \node at (2,2) (E) {$\East$} ;
    \node at (0,1) (S) {$\South$} ;
    \draw[->] (W) [bend left=7] to node[above,sloped] {\tiny{${U_{i+1}}\otimes{L_i}+{L_{i} L_{i+1}}\otimes{R_{i+1}}$}}  (N)  ;
    \draw[->] (N) [bend left=7] to node[below,sloped] {\tiny{${1}\otimes{R_{i}}$}}  (W)  ;
    \draw[->] (N)[bend right=7] to node[below,sloped] {\tiny{${1}\otimes{L_{i+1}}$}}  (E)  ;
    \draw[->] (E)[bend right=7] to node[above,sloped] {\tiny{${U_i}\otimes{R_{i+1}} + {R_{i+1} R_{i}}\otimes{L_i}$}} (N) ;
    \draw[->] (S)[bend right=7] to node[above,sloped] {\tiny{${R_i}\otimes{1}$}} (W) ;
    \draw[->] (W)[bend right=7] to node[below,sloped] {\tiny{${L_i}\otimes{U_{i+1}} + {R_{i+1}}\otimes{L_{i} L_{i+1}}$}} (S) ;
    \draw[->] (S)[bend left=7] to node[above,sloped]{\tiny{${L_{i+1}}\otimes{1}$}} (E) ;
    \draw[->] (E)[bend left=7] to node[below,sloped]{\tiny{${R_{i+1}}\otimes{U_{i}} + {L_i}\otimes{R_{i+1} R_{i}}$}} (S) ;
  \end{tikzpicture}
\end{equation}

\begin{prop}
  The bimodule $\Neg_i$ is a $DD$ bimodule.
\end{prop}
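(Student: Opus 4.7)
The plan is to verify the type $DD$ structure relation $\delta^1\circ \delta^1 + (d\otimes \Id + \Id\otimes d)\circ \delta^1 = 0$ by classifying all compositions of pairs of terms in the differential, paralleling the strategy used in the proof of Proposition~\ref{prop:PosIsDD}. Since the terms of Types~\ref{type:OutsideLRP} and~\ref{type:UCP} in $\Neg_i$ coincide with those in $\Pos_i$, the cancellations involving only these two types of terms (including the delicate boundary behavior near $\West$ and $\East$ generators at positions $i-1$, $i+2$) go through verbatim. The only new content is checking the contributions of the modified inside terms from Equation~\eqref{eq:NegativeCrossing}.

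First I would verify that pairs of the new Type~\ref{type:InsideP} terms compose to zero modulo differentials of Type~\ref{type:UCP}. These break into endomorphisms of a single generator type and maps between opposite types ($\North\leftrightarrow\South$ or $\West\leftrightarrow\East$). For the endomorphism case, e.g.\ $\West\to\West$, the two routes (via $\North$ and via $\South$) produce $(U_{i+1}\otimes L_i + L_iL_{i+1}\otimes R_{i+1})\cdot(1\otimes R_i)$ and $(L_i\otimes U_{i+1} + R_{i+1}\otimes L_iL_{i+1})\cdot (R_i\otimes 1)$; the $U\otimes U$ terms cancel the differentials of Type~\ref{type:UCP} at positions $i$ and $i+1$ acting on $\West$, while the cross terms $L_iL_{i+1}\otimes R_{i+1}R_i$ and $R_{i+1}R_i\otimes L_iL_{i+1}$ vanish on a $\West$ idempotent by the same count-of-occupied-positions argument used in the positive case. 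The $\North\leftrightarrow\South$ and $\West\leftrightarrow\East$ compositions should cancel in pairs by symmetric bookkeeping of the two possible routes.

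Next, mixed products of Type~\ref{type:InsideP} with Type~\ref{type:OutsideLRP} terms cancel in pairs, since the outside strand algebra elements commute with all inside contributions. The care required, just as in the positive case, is in the boundary cases where the outside strand is at position $i-1$ or $i+2$ and the generator is of type $\West$ or $\East$: there one direction of the product can vanish for idempotent reasons, but the symmetric direction vanishes as well, and the corresponding Type~\ref{type:UCP} differential also evaluates to zero on the relevant idempotent.

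The main obstacle is the volume of case-by-case bookkeeping rather than any conceptual difficulty; there are eight Type~\ref{type:InsideP} arrows with several incoming/outgoing at each generator type. A potentially slicker alternative would be to recognize the inside terms of $\Neg_i$ as the image of those of $\Pos_i$ under a symmetry that simultaneously reverses the roles of the ``short'' and ``long'' arrows and applies the involution $\Opposite$ of Equation~\eqref{eq:OppositeIsomorphism} on each tensor factor; if such a symmetry can be made precise at the level of $DD$ bimodules, the structure relation for $\Neg_i$ would follow formally from Proposition~\ref{prop:PosIsDD}. However, spelling this out cleanly seems to require as much care as the direct verification sketched above, so I expect the brute-force route to be the preferred write-up.
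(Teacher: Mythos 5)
Your proposal is correct and follows essentially the same route as the paper: the paper's proof simply states that the verification proceeds exactly as for $\Pos_i$ (Proposition~\ref{prop:PosIsDD}), with Diagram~\eqref{eq:NegativeCrossing} obtained from Diagram~\eqref{eq:PositiveCrossing} by reversing arrows and exchanging $L_j$ with $R_j$. The ``slicker alternative'' you hesitate over is in fact the paper's own formal justification — it identifies $\lsup{\Blg_1,\Blg_2}\Neg_i$ with the opposite module ${\overline\Pos}_i$ of $\Pos_i$ via the isomorphism $\Opposite$ of Equation~\eqref{eq:OppositeIsomorphism} — so either write-up is consistent with the paper.
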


\begin{proof}
  This follows exactly as in the proof of
  Proposition~\ref{prop:PosIsDD}.  Note that
  Diagram~\eqref{eq:NegativeCrossing} is obtained from
  Diagram~\ref{eq:PositiveCrossing} by reversing all the arrows, and
  switching the roles of $L_j$ and $R_j$.  More formally, for $\Blg_1$
  and $\Blg_2$ as in Equation~\eqref{eq:DefB1B2}, 
  $\lsup{\Blg_1,\Blg_2}\Neg_i$ is obtained from the opposite module of $\Pos_i$,
  ${\overline\Pos}_i^{\Blg_1,\Blg_2}=\lsup{{\Blg_1^{\op},\Blg_2^{\op}}}{\overline\Pos}_i$
  using the isomorphism
  $\Blg_i\cong\Blg_i^\op$ (denoted $\Opposite$ in Equation~\eqref{eq:OppositeIsomorphism}).
\end{proof}

\subsection{Gradings}

The bimodules $\Pos_i$ are graded by the set $\MGradingSet=\Q^m$ as follows
Let $e_1,\dots,e_m$ be the standard basis for $\Q^m$. 
Let 
\begin{equation}
  \label{eq:GradeCrossing}
  \begin{array}{llll}
  \gr(\North)=\frac{e_i + e_{i+1}}{4} & 
  \gr(\West)=\frac{e_{i}-e_{i+1}}{4} & 
  \gr(\East)=\frac{-e_{i}+e_{i+1}}{4} &
  \gr(\South)=\frac{-e_{i}-e_{i+1}}{4},
  \end{array}
\end{equation}
Now,
if $(a\otimes b)\otimes Y$ appears in $\partial X$, then 
\begin{equation}
  \label{eq:WeightGradingDD}
  \gr(X)=\gr(a)-\tau^{\gr}\gr(b)+\gr(Y),
\end{equation}
where $\tau_i^{\gr}$ is the linear transformation acting by $\tau_i$
on the standard basis vectors, and $\gr(a)=(w_1(a),\dots,w_m(a))$.
Verifying Equation~\eqref{eq:WeightGradingDD} is straightforward,
using Equation~\eqref{eq:PositiveCrossing}.

In the notation of Section~\ref{sec:PrelimGradings}, the grading set of $\Pos_i$ is
half-integral valued functions on the
the arcs in the diagram, thought of as an affine space for $\Lambda_{\Blg_1}\times\Lambda_{\Blg_2}$
in an obvious way.

Similarly, for $\Neg_i$, the gradings of the four generators $\North$, $\South$, $\East$, $\West$ are given by
\begin{equation}
  \label{eq:GradeNegCrossing}
  \begin{array}{llll}
  \gr(\North)=\frac{-e_i - e_{i+1}}{4} 
  &  \gr(\West)=\frac{-e_i+e_{i+1}}{4} 
  & \gr(\East)=\frac{e_i-e_{i+1}}{4} 
  & \gr(\South)=\frac{e_{i}+e_{i+1}}{4},
  \end{array}
\end{equation}

Note that the modules determine these gradings only up to an overall additive shift. 
The present gradings are consistent with conventions on the multivariable Alexander polynomial~\cite{Kauffman};
for further motivation, see Remark~\ref{rem:MotivateGradings}.

\subsection{Motivation}
\label{subsec:Motivation}

The modules $\lsup{\Blg_2,\Blg_1}\Pos_i$ and
$\lsup{\Blg_2,\Blg_1}\Neg_i$ came from considering a version of
Lagrangian Floer homology in the four-punctured sphere, shown in
Figure~\ref{fig:HeegaardPicture}, taken with respect to a single
$\beta$-circle and four $\alpha$-arcs that go out to the
punctures. Regions in the complement of this configuration are
labelled by algebra elements (as shown in
Figure~\ref{fig:HeegaardPicture}, for the negative crossing).

\begin{figure}
  \input{Heegaard.pstex_t}
  \caption{\label{fig:HeegaardPicture}
  {\bf Four-punctured sphere for a negative crossing.} 
  The four punctures are connected by four arcs as shown;  an auxiliary circle
  meets the four arcs in the points $\North$, $\West$, $\South$, and $\East$.
  Regions are labelled by algebra elements as shown.}
\end{figure}

The arrows in
Equation~\eqref{eq:NegativeCrossing} count rigid holomorphic bigons,
and their contribution is obtained by multiplying the algebra elements
specified by the regions; see Figure~\ref{fig:HeegaardDifferentials} for some examples.
This relationship is central to ~\cite{HolKnot}.

\begin{figure}
  \input{HeegaardDifferentials.pstex_t}
  \caption{\label{fig:HeegaardDifferentials}
  {\bf{Terms in Equation~\eqref{eq:NegativeCrossing} as arising from disks.}}
  The shaded region on the left represents the term $(R_i\otimes 1)\otimes \West$
  in $\delta^1(\South)$; the one on the right represents the
  term $(L_i\otimes R_{i+1} R_i)\otimes \South$ in $\delta^1(\East)$.}
\end{figure}

\newcommand\LocPos{P}
\newcommand\Initial{I}
\newcommand\ed{\widetilde\delta}
\newcommand\Xelt{X}
\newcommand\Yelt{Y}
\newcommand\Zelt{Z}
\section{$DA$ bimodules associated to crossings}
\label{sec:CrossingDA}

Our goal here is to construct certain $DA$ bimodules associated to crossings,
which are in a suitable sense Koszul dual to the $DD$ bimodules 
constructed in Section~\ref{sec:CrossingDD}; see Lemma~\ref{lem:CrossingDADD}
for the precise duality statement.

For the construction,
choose integers $k$ and $m$ with $0\leq k\leq m+1$, and let
$\Upwards\subset \{1,\dots,m\}$ be arbitrary.  Let $\tau_i\colon
\{1,\dots,m\}\to \{1,\dots,m\}$ be the map that transposes $i$ and
$i+1$. 
Let $\Blg_1=\Blg(m,k,\Upwards)$ and $\Blg_2=\Blg(m,k,\tau(\Upwards))$.
(Note that $\Blg_1$ and $\Blg_2$ here denote different algebras than they did  in Section~\ref{sec:CrossingDD}.)
The aim of the present section is to construct a type $DA$
bimodule $\lsup{\Blg_2}\Pos^i_{\Blg_1}$,
which we think of as the bimodule associated to a region in the knot
diagram $t_2\leq y\leq t_1$ that contains exactly one ``positive''
crossing, and no local maxima or minima; and the crossing occurs
between the $i^{th}$ and $(i+1)^{st}$ strands, as shown on the left in
Figure~\ref{fig:PosCrossDA}. (Positivity here is meant with respect to
a braid orientation, e.g. where all the strands are oriented upwards,
which might differ from the orientation specified by $\Upwards$.)

\begin{figure}[ht]
\input{PosCrossDA.pstex_t}
\caption{\label{fig:PosCrossDA} {\bf{Positive crossing $DA$ bimodule generators.}}  
The four generator types are pictured to the right.}
\end{figure}

Consider the submodule $\Pos^i$ of $\IdempRing{(m,k)}\otimes_{\Field}\IdempRing{(m,k)}$,
consisting of $\Idemp{\x}\otimes \Idemp{\y}$ where either $\x=\y$ or
there is some $\w\subset \{1,\dots,i-1,i+1,\dots,m\}$ with
$\x=\w\cup\{i\}$ and $\y=\w\cup \{i-1\}$ or $\y=\w \cup\{i+1\}$.
Thus, there are once again four types of generators, of type $\North$, $\South$, $\West$, $\East$
as pictured in Figure~\ref{fig:PosCrossDA}; i.e.
\begin{align*}
  \sum_{i\in\x} \Idemp{\x}\cdot \North \cdot \Idemp{\x}= \North, \qquad &
  \sum_{i\not\in\x} \Idemp{\x}\cdot \South \cdot \Idemp{\x} = \South, \\
  \sum_{\begin{tiny}\begin{array}{c}
        i\not\in \x \\
        i-1\in\x
    \end{array}
  \end{tiny}} \Idemp{\{i\}\cup \x\setminus\{i-1\}}\cdot \West \cdot \Idemp{\x}
  =  \West, \qquad &
  \sum_{\begin{tiny}
      \begin{array}{c}
        i+1\in\x \\
        i\not\in\x
        \end{array}
        \end{tiny}}\Idemp{\{i\}\cup \x\setminus\{i+1\}}\cdot \East \cdot \Idemp{\x}=\East.
\end{align*}
  
This description has a geometric interpretation in terms of 
Kauffman states.

\begin{defn}
  \label{def:PartialKnotDiagram}
  A {\em partial knot diagram} is the portion of a knot diagram
  contained in the $(x,y)$ plane with $t_2\leq y\leq t_1$, so that the
  diagram meets the slices $y=t_1$ and $y=t_2$ generically.  The knot
  projection divides the partial knot diagram into regions.  A {\em
    partial Kauffman state} is a triple of data $(\state,\y,\x)$,
  where $\state$ is a map that associates to each crossing one of
  its four adjacent regions; $\x$ is a collection of intervals in the
  intersection of the diagram with the $y=t_1$ slice; and $\y$ is a
  collection of intervals in the intersection of the diagram with the
  $y=t_2$ slice.  The regions assigned to the crossings are called
  {\em occupied regions}; those that are not are called {\em
    unoccupied}. The data in a partial Kauffman state are further
  required to satisfy the following compatibility conditions:
  \begin{itemize}
  \item 
    No two crossings are assigned to the same region.
  \item 
    If a region $R$ is occupied, then $\x$ contains 
    all the intervals in $R\cap (y=t_1)$ and 
    $\y$ contains none of the regions in $R\cap (y=t_2)$.
   \item 
     If a region $R$ is unoccupied, then 
     either $\x$ contains all but one of the regions in $R\cap (y=t_1)$
     and $\y$ contains none of the regions in
     $R\cap (y=t_2)$; or
     $\x$ contains all of the regions in $R\cap (y=t_1)$ and
     the region $\y$ contains exactly one of the regions 
     of $R\cap (y=t_2)$.
  \end{itemize}
\end{defn}

The compatibility conditions can be formulated more succinctly
  as follows: each region contains exactly one of the following: a
  quadrant assigned by $\state$, an interval in $R\cap (y=t_1)$ not
  contained in $\x$, or an interval in $R\cap (y=t_2)$ contained
  in $\y$.

The picture is simplified considerably when the partial knot diagram
contains a single crossing. In that case, it is straightforward to see that
the generators of the bimodule $\Pos^i$ defined above correspond to partial Kauffman states; $\state$ is one of $\North$, $\South$, $\West$, or $\East$;
$\y$ specifies the right idempotent of the generator 
and $\x$ the left idempotent of the generator.

The bimodules $\Pos^i$ are graded by the set $\Q^m$ in the following sense.
Define gradings of the generators $\North$, $\South$, $\East$, and $\West$ as in Equation~\eqref{eq:GradeCrossing}
(only now thinking of these as generators of $\Pos^i$ rather than $\Pos_i$).
The actions $\delta^1_\ell$ respect this grading, in the following sense.
Suppose that $\Xelt$ is some homogeneous generator, and $a_1,\dots,a_{\ell}\in\Blg_1$ are homogeneous elements, then
$\delta^1_{\ell}(\Xelt,a_1,\dots,a_{\ell})$ can be written as a sum of elements of the form $b\otimes \Yelt$ where
$\Yelt$ are homogeneous generators and 
$b\in\Blg_2$ is are homogeneous elements of the algebra, with 
\begin{equation}
  \label{eq:WeightGradings}
  \gr(\Xelt) + \tau_i^{\gr}(\gr(a_1)+\dots+\gr(a_{\ell-1}))
= \gr(b)+\gr(\Yelt),
\end{equation}
where here $\gr(a_i)$ and $\gr(b)$ denote the weight gradings on $\Blg_1$ and $\Blg_2$.

There is also a $\Z$-grading, specified by
where the grading of a generator type coincides
with the local Maslov contribution of the corresponding Kauffman
state as in Figure~\ref{fig:LocalCrossing}. For instance, if
all crossings are oriented upwards, then
\begin{equation}
  \MasGr(\North)=-1 \qquad \MasGr(\South)=\MasGr(\West)=\MasGr(\East)=0.
\end{equation}
The modules respect this $\Z$-grading:
for $\Xelt$, $\Yelt$, $a_1,\dots,a_{\ell-1}$ and $b$ as above,
\begin{equation}
  \label{eq:MasGradedCrossing}
  \MasGr(\Xelt)+\MasGr(a_1)+\dots+\MasGr(a_{\ell-1})+\ell-1
  = \MasGr(b)+\MasGr(\Yelt).
\end{equation}

The partial knot diagram with a single crossing in it determines in an obvious way a collection of arcs.
Equation~\eqref{eq:WeightGradings} can be interpreted as saying that the bimodule $\Pos^i$ is adapted
to this one-manifold with boundary, in the sense of Definition~\ref{def:Adapted}.

We consider first the case where $\Upwards\cap \{i,i+1\}=\emptyset$,
we will specify these actions by localizing to the crossing region, defining first bimodules
over $\Blg(2)=\Blg(2,0)\oplus \Blg(2,1) \oplus \Blg(2,2)\oplus \Blg(2,3)$, and
then extending them to $\Blg(m,k,\Upwards)$ (with $\Upwards\cap\{i,i+1\}=\emptyset$)
in Subsection~\ref{subsec:Extension}. The local models when $\Upwards\cap\{i,i+1\}$ is non-empty is defined
in Section~\ref{sec:AddCs}, and it is extended in the general case in~\ref{sec:GenCrossing}.

\subsection{Local bimodule for a positive crossing}
\label{subsec:LocalPos}

We define first a type $DA$ bimodule $\LocPos$ over 
$\Blg(2)=\Blg(2,0)\oplus\Blg(2,1) \oplus \Blg(2,2)\oplus \Blg(2,3)$.  
Note that $\Blg(2)$ is
defined over
$\IdempRing(2)=\IdempRing(2,0)\oplus\IdempRing(2,1)\oplus\IdempRing(2,2)\oplus\IdempRing(2,3)\cong
\Field^8$. 

As an $\IdempRing(2)-\IdempRing(2)$-module, $\LocPos$ has four generators
$\North$, $\South$, $\West$, and $\East$. As an $\Field$-vector space, $\LocPos$ is $12$ dimensional,
and all $12$ basis vectors appear on the right hand sides of the following four expressions,
which determine the $\IdempRing(2)-\IdempRing(2)$-module structure:
\begin{align*}
  \North& =\Idemp{\{1\}} \cdot \North \cdot \Idemp{\{1\}} + 
\Idemp{\{0,1\}} \cdot \North \cdot \Idemp{\{0,1\}} + 
\Idemp{\{1,2\}} \cdot \North \cdot \Idemp{\{1,2\}} + \Idemp{\{1,2,3\}}\cdot \North\cdot \Idemp{\{1,2,3\}} \\
  \South& =\Idemp{\emptyset}\cdot \South\cdot\Idemp{\emptyset}
  + \Idemp{\{0\}} \cdot\South\cdot \Idemp{\{0\}}
  + \Idemp{\{2\}} \cdot\South\cdot \Idemp{\{2\}} 
  + \Idemp{\{0,2\}} \cdot\South\cdot \Idemp{\{0,2\}} \\
  \West& =\Idemp{\{1\}} \cdot\West\cdot\Idemp{\{0\}}  + 
  \Idemp{\{1,2\}} \cdot\West\cdot\Idemp{\{0,2\}} \\
  \East& =\Idemp{{\{1\}}} \cdot\East\cdot\Idemp{\{2\}}  + 
\Idemp{\{0,1\}} \cdot\East\cdot\Idemp{\{0,2\}}.
\end{align*}

Next, we define the $\delta^1_1$ and $\delta^1_2$ actions on $\LocPos$. Some of these actions are specified in the following diagram:
  \begin{equation}
    \label{eq:PositiveCrossingDA}
    \begin{tikzpicture}
    \node at (0,5) (N) {$\North$} ;
    \node at (-5,2) (W) {$\West$} ;
    \node at (5,2) (E) {$\East$} ;
    \node at (0,0) (S) {$\South$} ;
    \draw[->] (N) [loop above] to node[above,sloped]{\tiny{$1\otimes 1 + L_{1} L_{2}\otimes L_{1} L_{2}+ R_{2} R_{1} \otimes R_{2} R_{1}$}} (N);
    \draw[->] (W) [loop left] to node[above,sloped]{\tiny{$1\otimes 1$}} (W);
    \draw[->] (E) [loop right] to node[above,sloped]{\tiny{$1\otimes 1$}} (E);
    \draw[->] (S) [loop below] to node[below,sloped]{\tiny{$1\otimes 1$}} (S);
    \draw[->] (N) [bend right=7] to node[above,sloped]{\tiny {$U_{2}\otimes L_{1} + R_{2} R_{1} \otimes R_{2}$}}  (W)  ;
    \draw[->] (W) [bend right=7] to node[below,sloped,pos=.6]{\tiny {$1\otimes R_{1} + L_{1} L_{2} \otimes L_{2} U_{1}$}}  (N)  ;
    \draw[->] (E)[bend left=7] to node[below,sloped,pos=.6]{\tiny {$1 \otimes L_{2} + R_{2} R_{1} \otimes R_{1} U_{2}$}}  (N)  ;
    \draw[->] (N)[bend left=7] to node[above,sloped]{\tiny {$U_{1}\otimes R_{2} + L_{1} L_{2} \otimes L_{1}$}} (E) ;
    \draw[->,dashed] (W)to node[below,sloped]{\tiny {$L_{1}$}} (S) ;
    \draw[->,dashed] (E) to node[below,sloped]{\tiny{$R_{2}$}} (S) ;
    \draw[->] (W) [bend left=7] to node[above,sloped]{\tiny {$L_{1} L_{2} \otimes U_{1}$}} (E);
    \draw[->] (E) [bend left=7] to node[below,sloped]{\tiny {$R_{2} R_{1}\otimes U_{2}$}} (W);
  \end{tikzpicture}
\end{equation}
Here, dashed arrows indicate $\delta^1_1$ actions; while $\delta^1_k$ for $k>1$ are indicated by solid arrows. For example, $\delta^1_1(\West)= L_1\otimes \South$.

Further actions are obtained by the following {\em local extension rules}.  For any $X\in \{\North,\West,\East,\South\}$ and
any pure algebra element $a\in\Blg(2)$,
\begin{equation}
  \label{eq:U1U2}
  \delta^1_2(X, U_1 U_2\cdot a)=U_1 U_2 \cdot \delta^1_2(X,a).
\end{equation}
and also:
\begin{itemize}
\item If $b\otimes Y$ appears with non-zero coefficient in $\delta^1_2(\North,a)$,
then $(b\cdot U_2)\otimes Y$ appears with non-zero coefficient in $\delta^1_2(\North,a \cdot U_1)$
and $(b\cdot U_1)\otimes Y$ appears with non-zero coefficient in $\delta^1_2(\North,a \cdot U_2)$.
\item If $b\otimes Y$ appears with non-zero coefficient in 
$\delta^1_2(\West, a)$, then $(U_2\cdot b)\otimes \Yelt$ appears with non-zero coefficient
in $\delta^1_2(\West,U_1\cdot a)$.
\item If $b\otimes \Yelt$ appears with non-zero coefficient in 
$\delta^1_2(\East, a)$, then $(U_1\cdot b)\otimes \Yelt$ appears with non-zero coefficient
in $\delta^1_2(\East,U_2\cdot a)$.
\end{itemize}
For example, the action $\delta^1_2(\West,1)=\West$ combined with the local extension rules shows 
that $\delta^1_2(\West,U_{1})=U_{2}\otimes  \West + L_{1} L_{2} \otimes \East$.

The above rules uniquely specify the $\Field$-linear map
$\delta^1_2\colon \Pos \otimes \Blg(2) \to \Blg(2)\otimes \Pos(2)$.

We wish next to specify  $\delta^1_3$. As in the case for $\delta^1_2$, we define these on a basis.
More formally,
an algebra element is called {\em elementary} if it is of the form
$p \cdot e$, where $p$ is a monomial in $U_{1}$ and $U_{2}$, and 
\[ e\in\{ 1,\qquad L_{1},\qquad R_{1},\qquad L_{2},\qquad R_{2},\qquad L_{1} L_{2},\qquad R_{2} R_{1}\}. \]
Having specified  cations
$\delta^1_2(X,a)$, where
$X\in\{\North,\South,\West,\East\}$ and $a$ is elementary. we turn to defining $\delta^1_3(X,a_1,a_2)$ where
$a_1$ and $a_2$ are elementary. 

Suppose that $a_1$ and $a_2$ are elementary algebra elements with
$a_1\otimes a_2\neq 0$ (i.e. there is an idempotent state $\x$ so that 
$a_1\cdot\Idemp{\x}\neq 0$ and $\Idemp{\x}\cdot a_2\neq 0$); and suppose 
moreover that $U_1\cdot U_2$ does not divide either $a_1$ nor $a_2$. In this case,
$\delta^1_3(\South,a_1,a_2)$ is the sum of terms:
\begin{itemize}
  \item $R_{1} U_{1}^t \otimes \East$ if $(a_1,a_2)=(R_{1}, R_{2} U_{2}^t)$ and $t\geq 0$
  \item $L_2 U_1^t U_2^n\otimes \East$ if   $(a_1,a_2)\in$
\[
\begin{array}{llll}
 \{(U_1^{n+1},U_2^t), & (R_1U_1^n,L_1U_2^t),& (L_2U_1^{n+1}, R_2U_2^{t-1})\}
  &{\mbox{when $0\leq n<t$}} \\
 \{(U_2^t,U_1^{n+1}),& (R_1U_2^t, L_1U_1^n), &
(L_2U_2^{t-1}, R_2U_1^{n+1})\}& \mbox{when $1\leq t\leq n$}
\end{array}
\]
  \item $L_{2} U_{2}^n \otimes \West$ if $(a_1,a_2)=(L_{2},L_{1} U_{1}^n)$ and $n\geq 0$
\item $R_1 U_1^t U_2^n\otimes \West$ if 
$(a_1,a_2)\in $ 
\[ \begin{array}{llll}
\{(U_2^{t+1},U_1^n), &
(L_2U_2^t,R_2U_1^n), &
(R_1U_2^{t+1}, L_1U_1^{n-1})\} &\mbox{when $0\leq t<n$}\\
\{(U_1^n,U_2^{t+1}), &
(L_2U_1^n, R_2U_2^t), &
(R_1U_1^{n-1}, L_1U_2^{t+1})\}, &\mbox{when $1\leq n\leq t$}
\end{array}\]
\item $L_2 U_1^t U_2^n\otimes \North$  if
$(a_1,a_2)\in$
\[
\begin{array}{llll}
\{(U_1^{n+1},L_2U_2^t), &
(R_1U_1^n,L_1L_2U_2^t), &
(L_2U_1^{n+1}, U_2^{t})\} &\mbox{when $0\leq n<t$}\\
\{(L_2U_2^{t}, U_1^{n+1}), & (U_2^t,L_2U_1^{n+1})& 
(R_1U_2^t, L_1L_2U_1^n)\} &\mbox{when $1\leq t\leq n$}\\
\{(L_2, U_1^{n+1})\} &  & &\mbox{when $0=t\leq n$}
\end{array}
\]
\item 
$R_1 U_1^t U_2^n\otimes \North$ if 
$(a_1,a_2)$ is in the following list:
\[\begin{array}{llll}
\{(U_2^{t+1},R_1U_1^n), &
(L_2U_2^t,R_2R_1U_1^n), &
(R_1U_2^{t+1}, U_1^{n})\} &\mbox{when $0\leq t<n$ }\\
\{(R_1U_1^{n}, U_2^{t+1}) & 
(U_1^n,R_1U_2^{t+1}), &
(L_2U_1^n, R_2R_1U_2^t)\} &\mbox{when $1\leq n\leq t$}\\
\{(R_1, U_2^{t+1})\}& & & \mbox{when $0= n\leq t$.}
\end{array}\]
\end{itemize}
For example,
$\delta^1_3(\South,U_1,U_2^2)=(L_2 U_1^2)\otimes \East + (R_1 U_1  U_2) \otimes \West$.

Extend this to the case where $U_1 U_2$ divides $a_1$ or $a_2$ or both, by requiring
\begin{equation}
  \label{eq:ExtendM3}
  \delta^1_3(\Zelt,(U_1 U_2) \cdot a,b) =  \delta^1_3(\Zelt,a,(U_1 U_2)\cdot b)= 
(U_1 U_2) \cdot \delta^1_3(\Zelt, a, b).
\end{equation}

\begin{prop}
  \label{prop:LocalBimodule}
  The operations $\delta^1_\ell$ defined above give 
  $\LocPos$ the structure of a $DA$ bimodule over
  $\Blg(2)=\Blg(2,0)\oplus\Blg(2,1) \oplus \Blg(2,2)\oplus \Blg(2,3)$,
  which is graded as in Equations~\eqref{eq:WeightGradings}
  and~\eqref{eq:MasGradedCrossing}.
\end{prop}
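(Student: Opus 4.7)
The proposal has two parts: verify the gradings and verify the $DA$ structure equation. The gradings are a direct check: for each of the listed arrows in Diagram~\eqref{eq:PositiveCrossingDA}, one confirms Equation~\eqref{eq:WeightGradings} and Equation~\eqref{eq:MasGradedCrossing} on the nose, noting that the local extension rules preserve both gradings (each insertion of $U_i$ on the right is balanced by a $U_i$ on the left, and $U_1U_2$ insertions have zero Maslov and $e_1+e_2$ weight on both sides). Since $\Blg(2)$ has no $C$-generators, $\mu_1^{\Blg}=0$ on both the input and output algebras, so the two $\mu_1$-terms of the $DA$ structure equation drop out, leaving
\[
0 \;=\; \sum_{j=1}^{i-2}\delta^1_{i-1}(x,\dots,\mu_2(a_j,a_{j+1}),\dots) \;+\; \sum_{j=1}^{i}(\mu_2\otimes\Id)\bigl(\delta^1_j(x,a_1,\dots,a_{j-1}),\delta^1_{i-j+1}(-,a_j,\dots,a_{i-1})\bigr).
\]

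The plan is to verify this equation one value of $i$ at a time. For $i=1$ the equation reduces to $(\mu_2\otimes\Id)(\Id\otimes\delta^1_1)\delta^1_1=0$, which is immediate from $\delta^1_1(\South)=0$ and $\delta^1_1(\North)=0$. For $i=2$ with input $a$ elementary, the equation reads
\[
\mu_2\bigl(\delta^1_1(x)\cdot a\bigr)\text{-contribution} \;+\; \mu_2\bigl(\Id\otimes\delta^1_1\bigr)\delta^1_2(x,a) \;+\; \bigl(\mu_2\bigl)(\Id\otimes\delta^1_2\bigr)(\delta^1_1(x),a) \;=\; 0,
\]
and one checks this for $x\in\{\North,\South,\West,\East\}$ and each elementary $a$; the local extension rules then promote each identity to all algebra elements (using that $\mu_2$ is $\Field[U_1,U_2]$-linear and factors through the relations in $\Blg(2)$). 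The reflection symmetry swapping $(\West\leftrightarrow\East)$, $(L_i\leftrightarrow R_{3-i})$, $(U_i\leftrightarrow U_{3-i})$ halves the cases.

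For $i=3$ the equation has four contributions (one $\mu_2$ on inputs, plus three compositions through $\delta^1_j\circ\delta^1_{4-j}$ with $j=1,2,3$), and here the explicit formulas for $\delta^1_3(\South,a_1,a_2)$ must be matched term by term against $\delta^1_2(\South,\mu_2(a_1,a_2))$ and the composite $\mu_2\circ(\Id\otimes\delta^1_2)\circ(\delta^1_2\otimes\Id)$. The strategy is to stratify by the idempotent types of $(x,a_1,a_2)$, then by whether $a_1a_2\neq 0$ in $\Blg(2)$; in the case where $a_1a_2$ is killed by the relation $R_1R_2=L_2L_1=0$, the term $\delta^1_2(\South,\mu_2(a_1,a_2))$ vanishes and the two composite terms must cancel each other, which is precisely what the lists above~\eqref{eq:ExtendM3} were chosen to achieve. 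When $a_1a_2\neq 0$, the list entries match the single contribution of $\delta^1_2(\South,a_1a_2)$ plus the composite, using the relation $L_iR_i=R_iL_i=U_i$. For $i\geq 4$, the claim is that $\delta^1_\ell=0$ for $\ell\geq 4$ is forced, so we need only check that the compositions $(\mu_2\otimes\Id)(\Id\otimes\delta^1_{i-j+1})(\delta^1_j\otimes\Id)$ cancel against $\delta^1_{i-1}(x,\dots,\mu_2(a_j,a_{j+1}),\dots)$; this follows by induction using the local extension rules~\eqref{eq:U1U2} and~\eqref{eq:ExtendM3}, since any algebra element in $\Blg(2)$ factors as $(U_1U_2)^k$ times an elementary element, and the $(U_1U_2)^k$ factors passes transparently through every operation.

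The main obstacle is the $i=3$ case: the four pairs of formulas for $\delta^1_3(\South,a_1,a_2)$ give roughly two dozen individual cancellation checks (before symmetry reduction), each requiring careful bookkeeping of which $L_j,R_j,U_j$ products are nonzero in the relevant idempotents. I would organize this as a table indexed by the pair of ``exit directions'' ($\West/\East/\North$) of the two composite terms, using the symmetry above and the further symmetry $\North\leftrightarrow\South$ of Equation~\eqref{eq:SymmetryOfPos} (suitably adapted to the $DA$ setting) to reduce the bookkeeping to a manageable number of representative calculations.
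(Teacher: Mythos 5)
Your plan works, and matches the paper's own strategy, for the structure relations with at most two algebra inputs (direct check on elementary inputs, then $U_1U_2$-equivariance), but it has a genuine gap at three algebra inputs, which you dismiss as following ``by induction using the local extension rules.'' There is no such induction. With inputs $a_1,a_2,a_3$ and initial generator $\South$ (the only nontrivial case, since $\delta^1_3$ vanishes on the other generators and $\delta^1_2$ never outputs $\South$), the structure equation is the four-term identity
\[
 \ed^1_2(\delta^1_3(\South,a_1,a_2),a_3)+\ed^1_3(\delta^1_2(\South,a_1),a_2,a_3)+\delta^1_3(\South,a_1\cdot a_2,a_3)+\delta^1_3(\South,a_1,a_2\cdot a_3)=0,
\]
which is an independent constraint on $\delta^1_3$: it does not follow formally from the two-input relation, and the extension rules~\eqref{eq:U1U2} and~\eqref{eq:ExtendM3} only reduce it to inputs not divisible by $U_1U_2$, after which the cancellation among the six defining families of $\delta^1_3$ still has to be established. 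This is exactly the content of Lemma~\ref{lem:CaseOfAinf} (Equation~\eqref{eq:PartAinf}) in the paper, and the way it is made tractable there is by first recasting $\delta^1_3$ conceptually: Lemma~\ref{lem:UniqueInitial} shows that an elementary $a$ and a target $Y\in\{\North,\West,\East\}$ determine a unique source $\Initial(a,Y)$, Lemma~\ref{lem:CharacterizeDelta3} says that $b\otimes Y$ occurs in $\delta^1_3(\South,a_1,a_2)$ precisely when $\Initial(a_1,\Initial(a_2,Y))\neq\Initial(a_1\cdot a_2,Y)$, and then two structural observations --- that when $a\neq(U_1U_2)^\ell$ the value $\Initial(a,Z)$ depends only on $a$ and the class of $Z$ in $\{\North\}$ versus $\{\West,\East\}$, and that the class of $\Initial(a_1,\Initial(a_2,Z))$ agrees with that of $\Initial(a_1a_2,Z)$ --- force the four terms to cancel in pairs; the summand $\Blg(2,1)$, where products like $R_1\cdot R_2$ vanish, needs a separate short check. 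Your proposal contains neither this recasting nor an exhaustive substitute for it, so as written the three-input case is unproved; note also that the same recasting is what collapses your ``two dozen cancellation checks'' in the two-input case, where the paper argues that the $\delta^1_3$-term ($G$, respectively $F$) is defined precisely so as to cancel the $D$ and $E$ contributions.

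A secondary caution about your bookkeeping: the symmetry $\North\leftrightarrow\South$ of Equation~\eqref{eq:SymmetryOfPos} is a symmetry of the type $DD$ bimodule $\Pos_i$, not of the type $DA$ bimodule $\LocPos$. In $\LocPos$ the generators $\North$ and $\South$ play asymmetric roles ($\delta^1_1$ maps only into $\South$, $\delta^1_3$ is nonzero only out of $\South$), and the opposite of $\Pos^i$ is $\Neg^i$ rather than $\Pos^i$; so only the reflection exchanging $\West\leftrightarrow\East$, $L_j\leftrightarrow R_{3-j}$, $U_1\leftrightarrow U_2$ is available to halve your case list. Relatedly, in the extension rules an input $U_1$ produces an output $U_2$ (and vice versa) for $\North$, $\West$, $\East$; the gradings match because of the transposition $\tau^{\gr}$ in Equation~\eqref{eq:WeightGradings}, not because a $U_i$ on the input side is balanced by the same $U_i$ on the output side.
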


We prove the above proposition after some lemmas.

\begin{lemma}
  The actions $\delta^1_{\ell}$ respect the gradings, as in Equation~\eqref{eq:WeightGradings}.
\end{lemma}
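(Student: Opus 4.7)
The plan is a direct case-by-case verification of Equation~\eqref{eq:WeightGradings}, organized to minimize the bookkeeping. The key observation is that the local extension rules automatically preserve the grading identity, so it is enough to check the base cases.

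First I would dispose of the extension rules. For Equation~\eqref{eq:U1U2} and Equation~\eqref{eq:ExtendM3}, the identity is preserved because $\gr(U_1 U_2) = e_1 + e_2$ is fixed by $\tau_1^{\gr}$, so multiplying one input and the output by $U_1 U_2$ adds equal contributions to both sides. For the asymmetric rules at $\North$, $\West$, $\East$, the identity $\tau_1^{\gr}(e_1) = e_2$ is exactly what is needed: trading a $U_1$ on the input for a $U_2$ on the output at $\North$ (or $U_2\leftrightarrow U_1$) leaves the identity unchanged; similarly for the $\West$ and $\East$ rules. Thus it suffices to check the identity on the actions listed explicitly in Diagram~\eqref{eq:PositiveCrossingDA} and on the explicit $\delta^1_3$-actions for elementary inputs not divisible by $U_1 U_2$.

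Next I would tabulate the gradings used: $\gr(\North)=(\tfrac14,\tfrac14)$, $\gr(\West)=(\tfrac14,-\tfrac14)$, $\gr(\East)=(-\tfrac14,\tfrac14)$, $\gr(\South)=(-\tfrac14,-\tfrac14)$, together with the weights of the elementary algebra elements. Then I would run through Diagram~\eqref{eq:PositiveCrossingDA} one arrow at a time. For instance, $\delta^1_1(\West)=L_1\otimes\South$ demands $(\tfrac14,-\tfrac14) = (\tfrac12,0) + (-\tfrac14,-\tfrac14)$, which is immediate; and the arrow $\North\to\West$ labeled $U_2\otimes L_1$ demands $(\tfrac14,\tfrac14) + \tau_1(\tfrac12,0) = (0,1) + (\tfrac14,-\tfrac14)$, i.e.\ $(\tfrac14,\tfrac34)=(\tfrac14,\tfrac34)$. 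Each of the remaining eight diagram arrows is a one-line check of the same form.

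Finally I would verify Equation~\eqref{eq:WeightGradings} for every entry in the explicit list defining $\delta^1_3$, grouped by the output $b\otimes Y$. For each group (for example, the outputs $L_2 U_1^t U_2^n\otimes\East$), the grading identity reduces to a single arithmetic relation among the exponents $n,t$ that is manifestly the same for all inputs in the group; one then confirms that each listed $(a_1,a_2)$ in the group satisfies this relation by directly computing $\tau_1^{\gr}(\gr(a_1)+\gr(a_2))$. The main obstacle is only the size of the case list and the careful application of $\tau_1^{\gr}$ (which swaps the two weight coordinates), but no conceptual difficulty arises; every case collapses to elementary arithmetic once the gradings are substituted.
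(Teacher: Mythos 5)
Your proposal is correct and follows essentially the same route as the paper, which simply records the four generator gradings and declares the identity a straightforward verification of the defined actions $\delta^1_\ell$. Your added observation that the $U_1U_2$-equivariance and the asymmetric extension rules (via $\tau^{\gr}(e_1)=e_2$) automatically preserve Equation~\eqref{eq:WeightGradings}, so only the explicitly listed base cases need checking, is a sound way of organizing that same verification.
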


\begin{proof}
  In the present case, the gradings on $\Pos^i$ are specified by
\[\begin{array}{llll}
  \gr(\North)=\frac{e_i + e_{i+1}}{4} & 
  \gr(\West)=\frac{e_{i}-e_{i+1}}{4} & 
  \gr(\East)=\frac{-e_{i}+e_{i+1}}{4} &
  \gr(\South)=\frac{-e_{i}-e_{i+1}}{4},
  \end{array}
\]
Bearing this in mind, the lemma is a straighforward verification
using the above defined actions $\delta^1_\ell$.
\end{proof}

Observe that if $b\otimes \Yelt$ appears with non-zero multiplicity in
$\delta^1_{\ell}(\Xelt,a_1,\dots,a_{\ell-1})$, and $\Xelt$ is a fixed
generator, and the $a_i\in\Blg(2)$ are elementary for
$i=1,\dots,\ell-1$, then the output algebra $b$ is uniquely specified
in Equation~\eqref{eq:WeightGradings}.

\begin{lemma}
  \label{lem:UniqueInitial}
  Given a monomial $p$ in $U_{1}$ and $U_{2}$ and an algebra element
  \[ a\in \{p, \qquad R_{1}\cdot p, \qquad L_{2}\cdot p,\qquad  R_{2} R_{1}\cdot p,\qquad
  L_{1}L_{2}\cdot p\},\]
  there is a unique $X\in\{\North,\West,\East\}$ and $b$ so that 
  $b\otimes \North$ appears with non-zero multiplicity in $\delta^1_2(X,a)$.
  Similarly, given a monomial $p$ in $U_{1}$ and $U_{2}$
  an algebra element from
  \[ a\in\{p, \qquad L_{1}\cdot p, \qquad R_{2} \cdot p\},\]
  and a fixed state type $Y\in \{\West,\East\}$, there is a unique state type
  $X\in \{\North,\West,\East\}$ and $b$ so that $b\otimes Y$ appears with non-zero
  multiplicity in $\delta^1_2(X,a)$.
\end{lemma}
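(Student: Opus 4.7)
The plan is to reduce the statement to showing that the source generator $X$ is uniquely determined by the input algebra element $a$ together with the target $Y$, and then verify this by direct inspection of the bimodule structure.

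First, note that by Equation~\eqref{eq:WeightGradings}, once the source $X$, target $Y$, and input $a$ are fixed, the output algebra element $b$ has a predetermined weight; moreover, its left and right idempotents are determined by those of $X$ and $Y$. By Proposition~\ref{prop:DeterminedByMultiplicities}, this data determines $b$ uniquely (provided it is non-zero). Hence the lemma reduces to showing that for each specified input $a$ and each target $Y$, there is a unique source $X \in \{\North, \West, \East\}$ for which $\delta^1_2(X, a)$ contains a non-zero term of the form $b \otimes Y$.

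The next step is a case analysis on the form of $a$, based on the arrows into $Y$ in Diagram~\eqref{eq:PositiveCrossingDA} combined with the local extension rules (Equation~\eqref{eq:U1U2} and the single-$U_i$ rules). For $Y = \North$, the incoming arrows originate from the self-loop at $\North$ (with elementary inputs $\{1, L_1 L_2, R_2 R_1\}$), from $\West$ (with inputs $\{R_1, L_2 U_1\}$), and from $\East$ (with inputs $\{L_2, R_1 U_2\}$). The key observation is that the ``elementary part'' of the input (the factor other than a pure monomial in $U_1, U_2$) pins down the source: for $a = p$, $a = L_1 L_2 \cdot p$, or $a = R_2 R_1 \cdot p$, only $X = \North$ contributes; for $a = R_1 \cdot p$ or $a = L_2 \cdot p$, the source must be $\West$ or $\East$. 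Within this remaining ambiguity, extensions of the $\West \to \North$ action produce inputs $R_1 U_1^s U_2^t$ precisely when $s \geq t$, while extensions of the $\East \to \North$ action produce such inputs when $s < t$; these disjoint parameter ranges cover all possibilities. An analogous dichotomy handles inputs of the form $L_2 \cdot p$.

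The second statement, with $Y \in \{\West, \East\}$ fixed, follows from the same kind of analysis applied to the arrows into $\West$ or $\East$. Again, the elementary prefix (one of $1$, $L_1$, or $R_2$) together with the balance of $U_1$ versus $U_2$ powers in $p$ determines the unique source. The main obstacle is the careful bookkeeping of extended actions: ensuring that no source is missed, no input is double-counted, and that idempotent compatibility does not unexpectedly annihilate some of the extensions one would otherwise expect to contribute.
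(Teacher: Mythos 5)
Your proposal is correct and takes essentially the same route as the paper's proof, which likewise proceeds by straightforward inspection of Diagram~\eqref{eq:PositiveCrossingDA} together with the local extension rules: the elementary factor of $a$ pins down the source except for the $\West$/$\East$ ambiguity, which is resolved by comparing the $U_1$- and $U_2$-exponents (e.g.\ for $a=L_2\cdot U_1^xU_2^y$ with target $\North$, $X=\West$ if $x>y$ and $X=\East$ if $x\leq y$). Your preliminary reduction, that $b$ is forced by the gradings once $X$, $Y$, $a$ are fixed, is exactly the observation the paper records just before the lemma.
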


\begin{proof}
  This follows from a straightforward inspection
  of Diagram~\eqref{eq:PositiveCrossingDA} and the local extension rules.
  For example, if $Y=\North$, then if $a=p \cdot e$ where $p$ is a
  monomial in $U_{1}$ and $U_{2}$, and $e\in \{1, L_{1}L_{2}, R_{2}
  R_{1}\}$, then $X=\North$.  For $e=L_2$, write $p=U_{1}^x U_{2}^y$.
  If $x>y$, then $X=\West$, and if $x\leq y$, then $X=\East$.  The
  case where $e=R_1$ works similarly.
\end{proof}

For any elementary $a$ and $Y\in \{\North,\West,\East\}$ with $a\otimes Y\neq 0$, define
$\Initial(a,Y)$ to be the generator  $X\in \{\North,\West,\East\}$ as
defined in Lemma~\ref{lem:UniqueInitial}. Otherwise (i.e. if the pair $(a,Y)$
is not covered by the lists of Lemma~\ref{lem:UniqueInitial}), define
$\Initial(a,Y)=0$.

\begin{lemma}
  \label{lem:CharacterizeDelta3}
  Fix elementary algebra elements $a_1$, $a_2$ in $\Blg(2)$, with
  $\South\otimes a_1\otimes a_2\neq 0$, an elementary $b$ and
  $Y\in\{\North, \West, \East\}$ so that $b\otimes Y\neq 0$ and
  \begin{equation}
    \label{eq:GrCondition}
    \gr(\South\otimes a_1\otimes a_2)=\gr(b\otimes Y).
  \end{equation}
  Suppose moreover that $a_1\cdot a_2\neq 0$.
  Then the element
  $b\otimes Y$ appears with non-zero multiplicity in
  $\delta^1_3(\South,a_1,a_2)$ if 
  and only if
  $I(a_2,Y)\neq 0$ and 
  $\Initial(a_1, \Initial(a_2,Y))\neq \Initial(a_1\cdot a_2,Y)$;
  (i.e. one of them is $\East$ and the other is $\West$).
\end{lemma}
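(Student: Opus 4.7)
The proof is a case-by-case verification against the explicit list defining $\delta^1_3(\South, a_1, a_2)$ given just before the lemma. My plan has four steps.

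First, I reduce to the case that neither $a_1$ nor $a_2$ is divisible by $U_1 U_2$. The extension rule~\eqref{eq:ExtendM3} shows the $\delta^1_3$-side of the equivalence is unaffected (up to multiplying outputs by $U_1 U_2$) when $a_i$ is replaced by $U_1 U_2 \cdot a_i$, and the corresponding rule~\eqref{eq:U1U2} for $\delta^1_2$ shows $\Initial$-values are likewise preserved. The grading condition~\eqref{eq:GrCondition}, together with the grading relation~\eqref{eq:WeightGradings} applied to $\delta^1_3$, then pins down the output $b$ uniquely in terms of $(a_1, a_2, Y)$, so the criterion depends only on $(a_1, a_2, Y)$.

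Second, I assemble a table of $\Initial(a, Y)$ by reading off Diagram~\eqref{eq:PositiveCrossingDA} together with the local extension rules of Section~\ref{subsec:LocalPos}. Writing an elementary $a = U_1^x U_2^y \cdot e$ with $e \in \{1, L_1, R_2, L_2, R_1, L_1 L_2, R_2 R_1\}$, one finds that $\Initial(a, Y)$ depends only on $e$, $Y$, and the comparison of $x$ with $y$; for example $\Initial(U_1^x U_2^y L_2, \North) = \West$ if $x > y$ and $\East$ if $x \leq y$, while $\Initial(a, \North) = \North$ for $e \in \{1, L_1 L_2, R_2 R_1\}$, and $\Initial(a, \West)$ equals $\West$ for $e \in \{1, L_1\}$ and $\East$ for $e = R_2 R_1$. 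The $\West \leftrightarrow \East$ involution exchanging $U_1 \leftrightarrow U_2$ and $L_i \leftrightarrow R_{3-i}$ halves the table.

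Third, I match the table against the list. Each family of entries has an $n < t$ versus $n \geq t$ (or $t < n$ vs.\ $t \leq n$) dichotomy, exactly mirroring the $x$ vs.\ $y$ dichotomy in the table. For each listed triple $(a_1, a_2, Y)$ I compute $\Initial(a_2, Y)$, then $\Initial(a_1, \Initial(a_2, Y))$ and $\Initial(a_1 \cdot a_2, Y)$, and check they take distinct values in $\{\West, \East\}$. For the converse, given any pair of elementary elements with $a_1 \cdot a_2 \neq 0$, $\Initial(a_2, Y) \neq 0$, and the two $\Initial$-values disagreeing in $\{\West, \East\}$, I run the table in reverse: the elementary factor $e$ of each piece is forced by its $\Initial$-value, and the resulting triple is recognized on the list.

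The main obstacle is bookkeeping rather than conceptual difficulty: the list has six families per value of $Y \in \{\North, \West, \East\}$, each with a further subcase. The symmetries of the setup and the observation that $\Initial$ depends only on $e$ and on the sign of $x - y$ keep the enumeration tractable; a careful but routine check completes the verification.
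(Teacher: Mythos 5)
Your overall strategy is exactly what the paper does: its proof of Lemma~\ref{lem:CharacterizeDelta3} is literally ``a straightforward inspection of the definition of $\delta^1_3$,'' and your plan (reduce out $U_1U_2$ factors via~\eqref{eq:ExtendM3} and~\eqref{eq:U1U2}, let the grading~\eqref{eq:WeightGradings} pin down $b$, tabulate $\Initial$, and match the table against the defining list) is the structured version of that inspection. So there is no difference in route; the only issue is that the sample entries of your $\Initial$-table are not all correct, and since the entire proof is the table check, they need to be fixed before the case-match will close.

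Concretely: $\Initial(L_1\cdot p,\West)$ is $\North$, not $\West$ --- the only arrow of Diagram~\eqref{eq:PositiveCrossingDA} producing an output ending in $\West$ from an input of type $L_1\cdot p$ is the arrow $\North\to\West$ labelled $U_2\otimes L_1+R_2R_1\otimes R_2$, and this agrees with Observation~1 in the proof of Lemma~\ref{lem:CaseOfAinf}, which states that $\Initial(L_1\cdot p,Z)=\Initial(R_2\cdot p,Z)=\North$ for every $Z$. Likewise $\Initial(R_2R_1\cdot p,\West)$ is $0$, not $\East$: for targets $Y\in\{\West,\East\}$ the second list of Lemma~\ref{lem:UniqueInitial} only covers inputs of type $p$, $L_1\cdot p$, $R_2\cdot p$, and inputs of type $R_2R_1\cdot p$ only feed the self-loop at $\North$. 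Finally, for $e=1$ the value $\Initial(p,\West)$ is not constantly $\West$; it is $\West$ when $x\ge y$ and $\East$ when $x<y$ (and symmetrically for target $\East$), so the dependence on the comparison of $x$ with $y$ occurs for $e=1$ as well as for $e\in\{L_2,R_1\}$. A practical remark that shrinks the bookkeeping: since $\South\otimes a_1\neq 0$ forces the left idempotent of $a_1$ (and of $a_1\cdot a_2$) to be $\{0,2\}$ in $\Blg(2,2)$, only types $e\in\{1,R_1,L_2\}$ actually occur for $a_1$ and for the product, so the erroneous rows are largely never exercised --- but the table must still be stated correctly, because the ``only if'' direction of the lemma is checked by running it in reverse, as you propose.
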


\begin{proof}
  This follows from a straightforward inspection of 
  the definition of $\delta^1_3$. 
\end{proof}

We introduce a notational shorthand. The maps 
$\delta^1_i\colon\LocPos\otimes \overbrace{\Blg\otimes\dots\otimes \Blg}^{i-1}\to \Blg\otimes\LocPos$
naturally extend to maps 
${\ed}^1_i\colon\Blg\otimes \LocPos\otimes \overbrace{\Blg\otimes\dots\otimes \Blg}^{i-1}\to \Blg\otimes \LocPos$,
by the rule 
\[ \de^1_i(b\otimes x\otimes a_1\otimes\dots\otimes a_{i-1})
= b\cdot \delta^1_i(x\otimes a_1 \dots\otimes \dots a_{i-1}).\]

Fix elementary algebra elements $a_1,a_2\in\Blg(2,k)$, so that
$\South\otimes a_1\otimes a_2\neq 0$. When $k=2$, $a_1\cdot a_2$ is
always non-zero. On the other hand, when $k=1$, the actions
$\delta^1_3(\South,a_1,a_2)$ are those for which $a_1\cdot a_2=0$ and
$b\otimes Y$ (as specified by Equation~\eqref{eq:GrCondition})
appears with non-zero multiplicity in
$\ed^1_2(\delta^1_2(\South,a_1),a_2)$.

\begin{lemma}
  \label{lem:CaseOfAinf}
  With the above definition, if $a_1$,  $a_2$, and $a_3$ are elementary, then 
  \begin{align}
    \ed^1_2 (\delta^1_3(\South,a_1,a_2),a_3)
    &+ 
    \ed^1_3(\delta^1_2(\South,a_1),a_2,a_3) \nonumber \\
    &+\delta^1_3(\South,a_1\cdot a_2, a_3) 
     + \delta^1_3(\South,a_1, a_2\cdot a_3) =0
    \label{eq:PartAinf}
    \end{align}
\end{lemma}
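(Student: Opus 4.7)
The plan is to prove Equation~\eqref{eq:PartAinf} by a sequence of reductions that simplify the identity dramatically, followed by a combinatorial case analysis.

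Step 1 (Linear and $U_1 U_2$-equivariant reductions). Both sides of Equation~\eqref{eq:PartAinf} are $\Field$-linear in each $a_i$, so I may assume each $a_i$ is elementary. The rules Equations~\eqref{eq:U1U2} and~\eqref{eq:ExtendM3} show that both sides are equivariant under multiplication of any single slot by $U_1 U_2$, so I may further assume that $U_1 U_2$ divides none of $a_1, a_2, a_3$.

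Step 2 (Unit reductions). Suppose some $a_i$ equals the idempotent unit. Inspection of the defining list in Section~\ref{subsec:LocalPos} shows that $\delta^1_3(\South, 1, b) = 0$ and $\delta^1_3(\South, a, 1) = 0$ for all elementary $a, b$. Combined with strict unitality $\delta^1_2(X, 1) = 1 \otimes X$, the four terms of Equation~\eqref{eq:PartAinf} cancel in pairs in each of the three unit cases. I may therefore assume that no $a_i$ is a unit.

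Step 3 (Elimination of $T_2$). From Diagram~\eqref{eq:PositiveCrossingDA}, the only $\delta^1_2$-arrow starting and ending at $\South$ is the loop labelled $1 \otimes 1$, and the only extension rule active at $\South$ is $U_1 U_2$-equivariance. Under the reductions of Steps 1 and 2, $\delta^1_2(\South, a_1)$ therefore has no $\South$-component, so its image lies in the span of $\North, \West, \East$. Since $\delta^1_3$ is declared only starting from $\South$ (and vanishes on the remaining three generators), it follows that $T_2 = \ed^1_3(\delta^1_2(\South, a_1), a_2, a_3) = 0$, and Equation~\eqref{eq:PartAinf} reduces to
\[
\ed^1_2(\delta^1_3(\South, a_1, a_2), a_3) = \delta^1_3(\South, a_1 a_2, a_3) + \delta^1_3(\South, a_1, a_2 a_3).
\]

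Step 4 (Combinatorial verification). By the grading constraint Equation~\eqref{eq:WeightGradings}, for each output state $Y \in \{\North, \West, \East\}$ the output algebra element is determined by $Y$ together with the inputs, so it suffices to check the coefficient-wise identity in $\Field$ for each $Y$. I split into cases according to the vanishing of $a_1 a_2$ and $a_2 a_3$. When both products are non-zero (automatic when $k=2$), Lemma~\ref{lem:CharacterizeDelta3} characterizes the contribution of each $\delta^1_3$-term via a boolean condition on the function $\Initial$ of Lemma~\ref{lem:UniqueInitial}, and the identity reduces to a parity statement in $\Z/2\Z$ relating four such booleans built from $\Initial$ applied to the six input pairings $(a_3, Y), (a_2, -), (a_1, -), (a_1 a_2, -), (a_2 a_3, Y), (a_1 a_2 a_3, Y)$. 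When $a_1 \cdot a_2 = 0$ (possible only for $k=1$), $\delta^1_3(\South, a_1, a_2)$ is defined as the non-zero remainder of $\ed^1_2(\delta^1_2(\South, a_1), a_2)$, while $T_3 = 0$ automatically; the desired identity then follows by direct substitution. The case $a_2 a_3 = 0$ is handled analogously. The $\VRot$-symmetry (reflection about a vertical axis, exchanging $\West \leftrightarrow \East$ and $L_i \leftrightarrow R_{3-i}$) cuts the case enumeration roughly in half.

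The main obstacle is the case analysis in Step 4, specifically the subcase in which both $a_1 a_2$ and $a_2 a_3$ are non-zero, which requires a careful enumeration over the elementary types for each $a_i$. The $\Initial$-reformulation condenses this enumeration to a finite parity check that can be verified by hand.
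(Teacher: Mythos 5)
Your Steps 1--3 are sound, and they reorganize (correctly) what the paper handles via the case $a_1=(U_1U_2)^\ell$ and the vanishing of the term through $\delta^1_2(\South,\cdot)$; but the two places where the real content of the lemma lives are asserted rather than proved. In your main case, the parity statement is neither finite nor automatic: writing $\alpha=\Initial(a_1 a_2,\Initial(a_3,Y))$, $\beta=\Initial(a_1a_2a_3,Y)$, $\gamma=\Initial(a_1,\Initial(a_2 a_3,Y))$, $\delta=\Initial(a_1,\Initial(a_2,\Initial(a_3,Y)))$, the three surviving terms of \eqref{eq:PartAinf} contribute according to whether $\{\alpha,\beta\}$, $\{\beta,\gamma\}$, $\{\alpha,\delta\}$ equal $\{\West,\East\}$, and for arbitrary values in $\{\North,\West,\East\}$ an odd number of these conditions can hold (e.g.\ $\alpha=\delta=\North$ and $\{\beta,\gamma\}=\{\West,\East\}$), so a blind ``parity check'' fails. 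What forces even parity are structural facts about $\Initial$ that your proposal never states: for $a\neq(U_1U_2)^\ell$ the value $\Initial(a,Z)$ depends only on $a$ and on whether $Z=\North$, and the class ($\North$ versus $\{\West,\East\}$) of $\Initial(a_1,\Initial(a_2,Z))$ agrees with that of $\Initial(a_1\cdot a_2,Z)$; together these give $\gamma=\delta$ and put all four values in one class, which is exactly the paper's argument. Since the elementary inputs form an infinite family (arbitrary $U_1,U_2$-powers), no hand enumeration of value patterns substitutes for these facts. Moreover, applying Lemma~\ref{lem:CharacterizeDelta3} to $\delta^1_3(\South,a_1a_2,a_3)$ and $\delta^1_3(\South,a_1,a_2a_3)$ requires $a_1a_2a_3\neq 0$, which your split on the two pairwise products does not supply; the paper secures it by running the $\Initial$-argument only in $\Blg(2,2)$ and treating $\Blg(2,1)$ by explicit enumeration.

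Second, your degenerate case rests on a characterization that is vacuous precisely where you invoke it: after Steps 1--2, $a_1$ is neither an idempotent nor a power of $U_1U_2$, so $\delta^1_2(\South,a_1)=0$ (the only $\delta^1_2$-operation out of $\South$ is the $1\otimes 1$ loop extended by $U_1U_2$-equivariance), and hence ``the non-zero remainder of $\ed^1_2(\delta^1_2(\South,a_1),a_2)$'' is zero --- while, for instance, $\delta^1_3(\South,R_1,R_2U_2^t)=R_1U_1^t\otimes\East\neq 0$. So ``direct substitution'' proves nothing. When $a_1\cdot a_2=0$, the identity that actually needs checking is $\ed^1_2(\delta^1_3(\South,a_1,a_2),a_3)+\delta^1_3(\South,a_1,a_2\cdot a_3)=0$, and this must be verified directly against the four exceptional families $(R_1,R_2U_2^t)$, $(L_2,L_1U_1^n)$, $(L_2,U_1^{n+1})$, $(R_1,U_2^{t+1})$ together with the explicit $\delta^1_2$-operations and the vanishing imposed by the ideals in $\Blg(2,1)$; this finite verification is what the paper performs for the $\Blg(2,1)$ summand and what your proposal skips.
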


\begin{proof}
  The above actions $\delta^1_3$ defined above vanish on $\Blg(2,0)$
  and $\Blg(2,3)$, so Equation~\eqref{eq:PartAinf} is obvious.  On
  $\Blg(2,1)$, the non-trivial $\delta^1_3$ actions are
  \[\begin{array}{ll}
    \delta^1_3(\South,R_{1}, R_{2} U_{2}^t)= R_{1} U_{1}^t \otimes \East&
    \delta^1_3(\South,L_2,L_1 U_1^n)= L_2 U_2^n \otimes \West \\
    \delta^1_3(\South,L_2, U_1^{n+1})=L_2 U_2^n\otimes \North &
    \delta^1_3(\South,R_1, U_2^{t+1})=R_1 U_1^t\otimes \North
    \end{array}\]
  with $k,n\geq 0$.  For these, the verification of
  Equation~\eqref{eq:PartAinf} is straightforward. 

  In $\Blg(2,2)$, if
  $a_1\otimes a_2\otimes a_3\neq 0$, then in fact $a_1\cdot a_2\cdot
  a_3\neq 0$; so we wil henceforth  assume  that $a_1\cdot a_2\cdot a_3\neq 0$.
  We  treat two subcases separately, according to whether or
  not $a_1=(U_{1} U_{2})^\ell$.

  {\bf Case 1: $a_1=(U_{1} U_{2})^{\ell}$.}
  The first and the fourth terms in Equation~\eqref{eq:PartAinf}
  vanish, and the middle two agree by Equation~\eqref{eq:ExtendM3}.

  {\bf Case 2:  $a_1\neq (U_{1} U_{2})^\ell$.}
  In this case, $\delta^1_2(\South,a_1)=0$, so the second term in Equation~\eqref{eq:PartAinf} is missing.
  Given $Y\in \{\North,\West,\East\}$, let
  \begin{align*}
    \alpha=\Initial(a_1\cdot a_2,\Initial(a_3, Y)), & \qquad \beta=\Initial(a_1\cdot a_2\cdot a_3, Y), \\
    \qquad\gamma=\Initial(a_1,\Initial(a_2\cdot a_3,Y)), & \qquad
    \delta=\Initial(a_1,\Initial(a_2,\Initial(a_3,Y))).
  \end{align*}

  By Lemma~\ref{lem:CharacterizeDelta3}, $b\otimes Y$ 
  (where
    $\gr(\South\otimes a_1\otimes a_2\otimes a_3)=\gr(b\otimes Y)$)
    appears with non-zero multiplicity in:
  \begin{itemize}
  \item $\delta^1_3(\South,a_1\cdot a_2,a_3)$ if the set $\{\alpha,\beta\}$
    equals $\{\West,\East\}$
  \item $\delta^1_3(\South,a_1,a_2\cdot a_3)$ if  $\{\gamma, \beta\}=\{\West,\East\}$
  \item $\ed^1_2(\delta^1_3(\South,a_1,a_2),a_3)$ if $\{\alpha, \delta\}=\{\West,\East\}$
  \end{itemize}

  Equation~\ref{eq:PartAinf} states that either
  $\alpha=\beta=\gamma=\delta$ or exactly one of the three relations
  $(\alpha=\beta, \beta=\gamma,\alpha=\delta)$ holds.  To establish
  this, we will need the following two observations about
  $\Initial(a,Z)$:

  {\bf Observation 1.}  If $a$ is elementary and $Z$ is a generator type,
  then $\Initial(a,Z)$ is uniquely determined by
  $a$ and whether or not $Z=\North$, except in the special case where
  $a=(U_{1} U_{2})^\ell$ for some $\ell \geq 0$. For example,
  if $a= L_{1} \cdot p$ or $R_{2}\cdot p$, where $p$ is a monomial in $U_{1}$ and $U_{2}$, 
  then $\Initial(a,Z)=\North$; if $a= U_{1}^x \cdot U_{2}^y$ and $Z\in\{\West,\East\}$, then 
  $\Initial(a,Z)=\West$ if $x>y$ and $\Initial(a,Z)=\East$ if $x<y$.

  The generator types $\{\North, \West, \East\}$ are further grouped into two classes, $\{\North\}$ and $\{\West,\East\}$;
  in this language, the Observation 1 can be phrased as saying that if $a\neq (U_{1} U_{2})^\ell$,
  the class of $Z$ and the algebra element $a$
  uniquely determines $\Initial(a,Z)$.

  {\bf Observation 2.} If $a_1$ and $a_2$
  are elementary and $a_1\cdot a_2\neq 0$, 
  then the class of $\Initial(a_1,\Initial(a_2,Z))$
  agrees with the class of $\Initial(a_1\cdot a_2,Z)$.

  By the second observation, the class of $\Initial(a_2\cdot a_3,Y)$
  agrees with the class of $\Initial(a_2,\Initial(a_3,Y))$, so by the
  first observation, $\Initial(a_1,\Initial(a_2\cdot a_3,
  Y))=\Initial(a_1,\Initial(a_2,\Initial(a_3,Y)))$;
  i.e. $\gamma=\delta$, so if any two of the conditions
  $(\alpha=\beta, \beta=\gamma,\alpha=\delta)$ 
  holds, the third one holds, verifying Equation~\eqref{eq:PartAinf}
  when $a_1\neq (U_{1} U_{2})^{\ell}$.
\end{proof}

\begin{proof}[Proof of Proposition~\ref{prop:LocalBimodule}]
  Note that in $\Blg(2,3)$, there is only one non-trivial generator,
  and it is $\North$; so $\delta^1_\ell=0$ for all $\ell>2$. In fact, $\Pos^1$ in this case
  is simply the identity bimodule for $\Blg(2,3)$, and the $\Ainfty$ relation holds.
  A similar simplification occurs for $\Blg(2,0)$, where the algebra is one-dimensional,
  and $\South$ is the only module generator. 
  
  We turn to the more interesting cases, in the summands $\Blg(2,1)$
  and $\Blg(2,2)$.  Varying $n$, we verify the $DA$ bimodule relation
  with $n$ algebra inputs $a_1,\dots,a_n$ (and one module input), subdividing further
  each verification according to whether we are in $\Blg(2,1)$ or $\Blg(2,2)$.

  When $n=0$, the $\Ainfty$ relation
  is clear from the form of $\delta^1_1$. 

  When $n=1$, terms of the form $\ed^1_1\circ \delta^1_{2}$ cancel in pairs,
  except in the special case where $a_1=(U_1 U_2)^{\ell}$, in which case
  a term of the form $\ed^1_1\circ \delta^1_{2}$ cancels with another of the
  form $\ed^1_2\circ \delta^1_1$.
  
  When $n=2$, we find it convenient to label the terms in the $\Ainfty$ relation as follows:
  \begin{equation}
    \label{eq:LabelAinftyTerms}
  \begin{array}{lll}
  A=\mathcenter{\begin{tikzpicture}[scale=.8]
        \node at (0,1) (inD) { };
        \node at (0,0) (d) {$\delta^1_3$};
        \node at (-1,-1) (mu) {$\mu_1$};
        \node at (-1.9,-1.5) (outA) {};
        \node at (1,1) (inB1) {$a_1$};
        \node at (2,1) (inB2) {$a_2$};
        \node at (0,-1.5) (outD) {};
        \draw[modarrow] (inD) to (d); 
        \draw[algarrow] (inB1) to (d);
        \draw[algarrow] (inB2) to (d);
        \draw[algarrow] (d) to (mu);
        \draw[modarrow] (d) to (outD);
        \draw[algarrow] (mu) to (outA);
      \end{tikzpicture}
    } &
B=\mathcenter{\begin{tikzpicture}[scale=.8]
        \node at (-1,1) (inD) { };
        \node at (-1,-.7) (d) {$\delta^1_3$};
        \node at (-.4,0.2) (mu) {$\mu_1$};
        \node at (-2,-1.5) (outA) {};
        \node at (0,1) (inB1) {$a_1$};
        \node at (1.2,1) (inB2) {$a_2$};
        \node at (-1,-1.5) (outD) {};
        \draw[modarrow] (inD) to (d); 
        \draw[algarrow] (inB1) to (mu);
        \draw[algarrow] (inB2) to (d);
        \draw[algarrow] (mu) to (d);
        \draw[modarrow] (d) to (outD);
        \draw[algarrow] (d) to (outA);
      \end{tikzpicture}
    }
&    
C=\mathcenter{\begin{tikzpicture}[scale=.8]
        \node at (-1,1) (inD) { };
        \node at (-1,-.7) (d) {$\delta^1_3$};
        \node at (0,0) (mu) {$\mu_1$};
        \node at (-2,-1.5) (outA) {};
        \node at (-.5,1) (inB1) {$a_1$};
        \node at (1,1) (inB2) {$a_2$};
        \node at (-1,-1.5) (outD) {};
        \draw[modarrow] (inD) to (d); 
        \draw[algarrow] (inB1) to (d);
        \draw[algarrow] (inB2) to (mu);
        \draw[algarrow] (mu) to (d);
        \draw[modarrow] (d) to (outD);
        \draw[algarrow] (d) to (outA);
      \end{tikzpicture}
    } \\
D=\mathcenter{\begin{tikzpicture}[scale=.8]
        \node at (-1,1) (inD) { };
        \node at (-1,-.7) (d) {$\delta^1_2$};
        \node at (-.5,0.2) (mu) {$\mu_2$};
        \node at (-2,-1.5) (outA) {};
        \node at (0,1) (inB1) {$a_1$};
        \node at (1,1) (inB2) {$a_2$};
        \node at (-1,-1.5) (outD) {};
        \draw[modarrow] (inD) to (d); 
        \draw[algarrow] (inB1) to (mu);
        \draw[algarrow] (inB2) to (mu);
        \draw[algarrow] (mu) to (d);
        \draw[modarrow] (d) to (outD);
        \draw[algarrow] (d) to (outA);
      \end{tikzpicture}
    }
&
E=\mathcenter{\begin{tikzpicture}[scale=.8]
  \node at (0,1) (inD) { };
  \node at (1,1) (inB1) {$a_1$};
  \node at (2,1) (inB2) {$a_2$};
  \node at (0,.1) (d1) {$\delta^1_2$};
  \node at (0,-.8) (d2) {$\delta^1_2$};
  \node at (-1,-1) (mu) {$\mu_2$};
  \node at (0,-1.6) (outD) {};
  \node at (-1.9,-1.5) (outA) {};
  \draw[algarrow] (inB1) to (d1);
  \draw[algarrow] (inB2) to (d2);
  \draw[modarrow] (inD) to (d1); 
  \draw[algarrow] (d1) to (mu);
  \draw[modarrow] (d1) to (d2);
  \draw[algarrow] (d2) to (mu);
  \draw[algarrow] (mu) to (outA);
  \draw[modarrow] (d2) to (outD);
\end{tikzpicture}} & 
  F=\mathcenter{\begin{tikzpicture}[scale=.8]
        \node at (0,1) (inD) { };
        \node at (0,.1) (d1) {$\delta^1_3$};
        \node at (0,-.8) (d2) {$\delta^1_1$};
        \node at (-1,-1) (mu) {$\mu_2$};
        \node at (-1.9,-1.5) (outA) {};
        \node at (.5,1) (inB1) {$a_1$};
        \node at (1.5,1) (inB2) {$a_2$};
        \node at (0,-1.6) (outD) {};
        \draw[modarrow] (inD) to (d1); 
        \draw[algarrow] (inB1) to (d1);
        \draw[algarrow] (inB2) to (d1);
        \draw[algarrow] (d1) to (mu);
        \draw[algarrow] (d2) to (mu);
        \draw[modarrow] (d1) to (d2);
        \draw[modarrow] (d2) to (outD);
        \draw[algarrow] (mu) to (outA);
      \end{tikzpicture}} \\
  G=\mathcenter{\begin{tikzpicture}[scale=.8]
        \node at (0,1) (inD) { };
        \node at (0,.1) (d1) {$\delta^1_1$};
        \node at (0,-.8) (d2) {$\delta^1_3$};
        \node at (-1,-1) (mu) {$\mu_2$};
        \node at (-1.9,-1.5) (outA) {};
        \node at (1,1) (inB1) {$a_1$};
        \node at (2,1) (inB2) {$a_2$};
        \node at (0,-1.6) (outD) {};
        \draw[modarrow] (inD) to (d1); 
        \draw[algarrow] (inB1) to (d2);
        \draw[algarrow] (inB2) to (d2);
        \draw[algarrow] (d2) to (mu);
        \draw[algarrow] (d1) to (mu);
        \draw[modarrow] (d1) to (d2);
        \draw[modarrow] (d2) to (outD);
        \draw[algarrow] (mu) to (outA);
      \end{tikzpicture}}
  \end{array}
  \end{equation}
  So that the $\Ainfty$ relation on a $DA$ bimodule with two inputs reads
  \[ A+B+C+D+E+F+G=0.\] 
  On the algebras we are considering presently, $\mu_1=0$, so 
  $A=B=C=0$.

  When $n=2$, 
  since all actions are $U_1 U_2$-equivariant, the case
  where at least one of $a_1$ or $a_2$ is $(U_1 U_2)^{\ell}$ is
  straightforward: the only two possibly non-zero 
  terms are $D$ and $E$, and they cancel.  

  When $n=2$, and we are in the summand in $\Blg(2,2)$, and the
  starting and ending generator is in $\{\East, \West, \North\}$, the
  $\Ainfty$ relation follows from Lemma~\ref{lem:CharacterizeDelta3},
  and the observation that $a_1\cdot a_2\neq 0$.  Specifically, in
  this case, $A=B=C=0$, and the $\delta^1_3$ actions are defined so
  that the term in $G$ cancels $D$ and $E$. 
  
  When $n=2$ and we are in the summand in $\Blg(2,2)$, and the
  starting and ending generator is $\South$, then $A=B=C=G=0$, and we
  find that terms of type $F$ cancel in pairs, except in the special cases where
  $a_1\cdot a_2=(U_1 U_2)^\ell$, which are the cases where $D\neq 0$,
  in which case there is a single cancelling term of type $F$.  This
  is verified by looking at the formulas defining $\delta^1_3$.  For
  example, consider an $\Ainfty$ relation with generator of type
  $\South$, $a_1=U_1^n$, and $a_2=U_2^t$. If $1\leq n\leq t$, there is a
  term of type $F$, factoring through $\East$, and this cancels against another term of type $D$,
  factoring through $\West$, when $n<t$; otherwise, it cancels against 
  a contribution of type $D$.

  When $n=2$ and we are in the summand in $\Blg(2,1)$ and the initial
  generator is of type $\{\East,\West,\North\}$, terms of type $D$ and
    $E$ cancel except in special cases where $E$ contributes but
    $a_1\cdot a_2=0$, in which case there is a cancelling non-zero
    term of type $G$. When the initial generator is of type $\South$,
    and neither of $a_1$ nor $a_2$ equals $(U_1 U_2)^\ell$, the only
    possible non-zero term, which is of type $F$, vanishes thanks to
    the algebra; for example, in the $\Ainfty$ relation
    $\delta^1_3(\South,R_1,R_2 U_2^t)=R_1 U_1^t \otimes \East$,
    and $\delta^1_1(\East)=R_2\otimes \South$, but 
    $R_1\cdot R_2=0$ in $\Blg(2,1)$.
  
    The cases where $n=3$, the $\Ainfty$ relation
    trivially holds except in the special cases where the initial
    generator is of type $\South$. That case was covered by
    Lemma~\ref{lem:CaseOfAinf}.
  
  The case where $n>3$ is obvious.
\end{proof}

\subsection{Extension}
\label{subsec:Extension}

Fix integers $k$ and $m$ with $0\leq k \leq m+1$,
we extend the bimodule $\LocPos$ to a bimodule
$\lsup{\Blg(m,k)}\Pos^{i}_{\Blg(m,k)}$, as follows.  

Let $a\in\Blg_0(m,k)$ with $a=\Idemp{\x}\cdot a\cdot \Idemp{\y}$,
and suppose that $\x$ and $\y$ are close enough.
Suppose moreover that $a$ is pure, in the sense that it corresponds to some
monomial in $U_1,\dots,U_m$ under $\phi^{\x,\y}$.
We define the {\em type} of $a$, denoted $t(a)$, which is an expression in 
in $U_1$, $U_{2}$, $R_{1}$, $L_{1}$ $R_{2}$ and $L_{2}$, defined as follows:
\[ t(a)=\left\{
\begin{array}{ll}
R_{2}R_{1}U_{1}^{w_i(a)-\OneHalf} U_{2}^{w_{i+1}(a)-\OneHalf}
&{\text{if $w_i(a)\equiv w_{i+1}(a)\equiv \OneHalf\pmod{\Z}$}} \\
&{\text{and $v^{\x}_{i+1}<v^{\y}_{i+1}$}} \\
L_{1}L_{2}U_1^{w_i(a)-\OneHalf} U_{2}^{w_{i+1}(a)-\OneHalf}
&{\text{if $w_i(a)\equiv w_{i+1}(a)\equiv \OneHalf\pmod{\Z}$}} \\
&{\text{and $v^{\x}_{i+1}>v^{\y}_{i+1}$}} \\
R_{2}U_1^{w_i(a)} U_{2}^{w_{i+1}(a)-\OneHalf}
&{\text{if $w_i(a)\in\Z$ and $w_{i+1}(a)\equiv\OneHalf\pmod{\Z}$,}} \\
& {\text{and $v^{\x}_{i+1}<v^{\y}_{i+1}$}} \\
L_{2}U_1^{w_i(a)} U_{2}^{w_{i+1}(a)-\OneHalf}
&{\text{if $w_i(a)\in\Z$ and $w_{i+1}(a)\equiv\OneHalf\pmod{\Z}$,}} \\
& {\text{and $v^{\x}_{i+1}>v^{\y}_{i+1}$}} \\
R_{1}U_{1}^{w_i(a)-\OneHalf} U_{2}^{w_{i+1}(a)}
&{\text{if $w_i(a)\equiv \OneHalf\pmod{\Z}$ and 
    $w_{i+1}(a)\in\Z$,}} \\
& {\text{and $v^{\x}_{i}<v^{\y}_{i}$}} \\
L_{1}U_1^{w_i(a)-\OneHalf} U_{2}^{w_{i+1}(a)}
&{\text{if $w_i(a)\equiv \OneHalf\pmod{\Z}$ and 
    $w_{i+1}(a)\in\Z$,}} \\
& {\text{and $v^{\x}_{i}>v^{\y}_{i}$}} \\
 U_1^{w_i(a)} U_{2}^{w_{i+1}(a)}
&{\text{if $w_i(a)$ and $w_{i+1}(a)$ are integers.}}
\end{array}
\right.\]
Similarly, there is a map $t$ from generators of $\Pos^{i}$ to 
the four generators of $\LocPos$, that remembers only the type
($\North$, $\South$, $\West$, $\East$) of the generator of $\Pos^{i}$.

\begin{defn}
  \label{def:DefD}
  For $X\in\Pos^i$, an integer $\ell\geq 1$,
  and a sequence of algebra elements
  $a_1,\dots,a_{\ell-1}$ in $\Blg_0(m,k)$
  with specified weights, so that there exists a sequence of idempotent
  states $\x_0,\dots,\x_{\ell}$ with
  \begin{itemize}
    \item   $X=\Idemp{\x_0}\cdot X\cdot \Idemp{\x_1}$ \
    \item
      $a_t=\Idemp{\x_t}\cdot a_t\cdot \Idemp{\x_{t+1}}$ for
      $t=1,\dots,\ell-1$
      \item $\x_t$ and $\x_{t+1}$ are close enough (for $t=0,\dots,\ell-1$),
        \end{itemize}
  define
  $D_{\ell}(X,a_1,\dots,a_{\ell-1})\in \Blg_0(m,k)\otimes \Pos^i$ 
  as the sum of pairs $b\otimes Y$ where $b\in\Blg_0(m,k)$
  and $Y$ is a generator of $\Pos^i$, satisfying the following conditions:
  \begin{itemize}
  \item the weights of $b$ and $Y$ satisfy
    \begin{equation}
      \label{eq:GradedOperations}
      \gr(X)+\tau_i^{\gr}(\gr(a_1)+\dots+\gr(a_{\ell-1}))=\gr(b)+\gr(Y)
      \end{equation}
  \item
    There are generators $X_0$ and $Y_0$ with the same type
    (i.e. with the same label $\{\North,\South,\West,\East\}$)
    as $X$ and $Y$ respectively, so that 
    $t(b)\otimes Y_0$ appears with non-zero multiplicity in
    $\delta^1_{\ell}(X_0,t(a_1),\dots,t(a_{\ell-1}))$.
    \end{itemize}
\end{defn}

Clearly, $D_{\ell}=0$ for $\ell>3$.
Setting  $\delta^1_1=D_1$, we get
\[ 
\begin{array}{lll}
\delta^1_1(\West)=L_1\otimes \South, &
\delta^1_1(\East)=R_2\otimes \South, &
\delta^1_1(\North)=\delta^1_1(\South)=0.
\end{array}\]

\begin{lemma}
  \label{lem:DeltaWellDefined}
  Suppose that any $a_t\in\Ideal(\x_t,\x_{t+1})$
  (the ideal studied in
  Proposition~\ref{prop:IdentifyJ}), then
  the projection of $D_{\ell}(X,a_1,\dots,a_{\ell-1})$
  to $\Blg(m,k)\otimes \Pos^i$ vanishes; 
  i.e. the maps 
  $D_{\ell}$ induce well-defined maps 
  \[ \delta^1_\ell\colon \Pos^i\otimes 
  \overbrace{\Blg(m,k)\otimes \dots\otimes\Blg(m,k)}^{\ell-1}
  \to\Blg(m,k)\otimes \Pos^i,\]
  for all $\ell=1,2,3$.
\end{lemma}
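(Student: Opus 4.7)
The plan is to show that $D_\ell$ descends to $\Blg(m,k) = \Blg_0(m,k)/\mathcal{J}$ by checking that whenever some input $a_t$ lies in $\Idemp{\x_t}\cdot \mathcal{J}\cdot \Idemp{\x_{t+1}} = \phi^{\x_t,\x_{t+1}}(\Ideal(\x_t,\x_{t+1}))$, every summand $b\otimes Y$ of $D_\ell(X,a_1,\dots,a_{\ell-1})$ has $b\in \mathcal{J}$. By Proposition~\ref{prop:IdentifyJ}, $\Ideal(\x_t,\x_{t+1})$ is generated by monomials $U_{i'+1}\cdots U_{j'}$ corresponding to generating intervals $[i'+1,j']$, so by bilinearity in the algebra inputs it suffices to treat $a_t = \phi^{\x_t,\x_{t+1}}(U_{i'+1}\cdots U_{j'}\cdot p)$ for one such generating interval. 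The case $\ell=1$ is vacuous since there are no algebra inputs.

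For $\ell\in\{2,3\}$, I would case-split according to whether $[i'+1,j']$ meets $\{i,i+1\}$. In the \emph{disjoint case}, the local type map $t$ is unaffected by the extra factor $U_{i'+1}\cdots U_{j'}$ (since $t$ only records weights at positions $i, i+1$), so the same local pairs $(b_0,Y_0)$ appear in $\delta^1_\ell(X_0,t(a_1),\dots,t(a_{\ell-1}))$. The grading relation~\eqref{eq:GradedOperations} acts as the identity at coordinates $s\notin\{i,i+1\}$, and both $X$ and $Y$ carry trivial weight off $\{i,i+1\}$, so $w_s(b)=\sum_u w_s(a_u)\geq 1$ for each $s\in[i'+1,j']$. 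Since the left and right idempotents of $b$ agree off $\{i,i+1\}$ with those of $\x_t$ and $\x_{t+1}$ (the local action only modifies idempotents at positions $i, i+1$), the interval $[i'+1,j']$ remains a generating interval for $(\x_0,\x_\ell')$, where $\x_\ell'$ is the right idempotent of $Y$. Hence $b\in\mathcal{J}$.

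The main obstacle is the \emph{interacting case}, where $[i'+1,j']$ contains $i$ or $i+1$ (or both). Here the generating interval condition forces $w_r^{\x_t,\x_{t+1}}=0$ for $r\in[i'+1,j']\cap\{i,i+1\}$, which pins down the local type $t(a_t)$ and ensures that $i'$ and/or $j'+1$ lie in $\x_t\cap\x_{t+1}$ in a prescribed way.  I would work through the explicit formulas in Diagram~\eqref{eq:PositiveCrossingDA}, together with the local extension rules and the case list defining $\delta^1_3$, and verify in each subcase that the $U$-factor $U_{i'+1}\cdots U_{j'}$ in $a_t$ produces a corresponding monomial factor in the output $b$ witnessing membership of $b$ in $\Ideal(\x_0,\x_\ell')$ (possibly with endpoints shifted by $\tau_i$ when the interval straddles $\{i,i+1\}$).

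I expect the interacting case to reduce to a finite list of subcases indexed by the type ($\North,\South,\West,\East$) of $X$ and $Y$ and by how the interval $[i'+1,j']$ overlaps $\{i,i+1\}$ (contains only $i$, only $i+1$, both, or neither endpoint at a boundary). In each subcase the explicit formulas show that the output $b$ either vanishes in $\Blg_0(m,k)$ itself (e.g.\ because $t(b)$ involves an incompatible composition in $\Blg(2)$) or contains a monomial factor associated to a generating interval for the outer idempotents, again forcing $b\in\mathcal{J}$. The bookkeeping is routine but not illuminating; the conceptual content is that the local action preserves the ``excess'' $U$-weight that the generating interval imposes.
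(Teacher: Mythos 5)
There is a genuine gap, and it sits precisely where you declared the argument ``automatic.'' In your disjoint case you justify that the generating interval $[i'+1,j']$ survives to the output by asserting that ``the local action only modifies idempotents at positions $i,i+1$.'' That is false: the generators $\West$ and $\East$ have left and right idempotents differing at the points $i-1,i,i+1$ (e.g.\ $\West$ trades $i$ for $i-1$), so the endpoint conditions of a generating interval that abuts the crossing --- an interval ending at $i-1$, or beginning at $i+2$ --- are exactly the ones disturbed by the operation, even though such intervals are ``disjoint from $\{i,i+1\}$'' in your sense. These are not vacuous cases: the paper's own proof singles out, as its nontrivial checks, inputs such as $U_\alpha\cdots U_{i-1}\cdot L_i$ acting on $\North$ and $U_\alpha\cdots U_{i-1}\cdot R_i$ acting on $\West$, where the witnessing interval for the output is for a \emph{different} idempotent pair and its validity depends on how the generator type constrains the idempotents at $i-1$. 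For $\ell=3$ your disjoint argument has a further problem: the idempotents of the output $b$ are those of $X$ and of $Y$, which are related to $\x_0$ and $\x_3$, not to the pair $(\x_t,\x_{t+1})$ carrying the generating interval, and the \emph{other} algebra input can move idempotent points into or out of the interval's endpoints; so ``the interval remains a generating interval for $b$'s idempotents'' does not follow from weight additivity off $\{i,i+1\}$.

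Second, the ``interacting'' case --- intervals containing $i$ or $i+1$ --- is the actual content of the lemma, and you do not carry it out; you only promise that the bookkeeping is routine. The paper's proof consists exactly of this bookkeeping: the computation of $\delta^1_2(\East,a\cdot U_{i+1}^\ell)$ showing the output is divisible by $U_{i+2}\cdots U_\beta$, the five displayed $\delta^1_2$ checks (plus their $\West\leftrightarrow\East$ mirrors), and for $\delta^1_3$ a case split according to whether $(t(a_1),t(a_2))$ lies in the short exceptional list (where the generating intervals avoid $i,i+1$) versus the remaining cases, where the paper in fact proves the stronger statement that $a_1\cdot a_2\in{\mathcal J}$ together with the weight bound~\eqref{eq:WeightBound} already forces the output into ${\mathcal J}$ --- a strengthening that is needed later (Lemma~\ref{lem:CharacterizeD3Extended}) and which your outline would not deliver. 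So while your overall strategy (reduce to generating-interval monomials and compare against the explicit local formulas) matches the paper's, the one case you argue in detail rests on an incorrect claim about how the idempotents move, and the cases that require real verification are left undone.
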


\begin{proof}
  For the $\delta^1_1$ operations, the result is obvious.
  
  The operation $\delta^1_{2}$ can be written as a sum of $16$ terms,
  corresponding to the terms in the labels on
  Figure~\ref{eq:PositiveCrossingDA}.

  For example, consider $\delta^1_2(\East, a  \cdot U_{i+1}^\ell)$, 
  where $w_{i}(a)=w_{i+1}(a)=0$ and $\ell>0$. According to Equation~\ref{eq:PositiveCrossingDA},
  this can be written as a sum of two terms,
  \[ R_{i+1} R_{i} \cdot a \cdot U_{i}^{\ell-1} \otimes \West + a
  \cdot U_{i}^{\ell} \otimes \East.\] We check that 
  for any idempotent states $\x$ and $\y$
  with $\East\cdot \Idemp{\x}\neq 0$ and $\Idemp{\x}\cdot a \cdot U_{i+1}^{\ell}\cdot \Idemp{\x}\neq 0$,
  if $\Idemp{\x}\cdot a \cdot U_{i+1}^\ell\in{\mathcal J}$, then the two output terms also vanish.
  If $a$ is divisible by a monomial corresponding to a generating
  interval for $(\x,\y)$ that is disjoint from $\{i,i+1\}$, each output elements are also
  divisible by such monomials, and the claim is clear.  It remains to
  check the claim when $a \cdot U_{i+1}$
  is the monomial corresponding to a generating interval
  $U_{i+1}\cdots U_\beta$ for the incoming element.
  In this case, the output is divisible by $U_{i+2}\cdots U_\beta$,
  which corresponds to a generating interval in both output algebras.

  The other non-trivial checks include the following:
  \begin{align*}
    \delta^1_2(\West,U_{\alpha}\cdots U_{i})&= (U_{\alpha}\cdots U_{i-1} U_{i+1})\otimes \West
    + L_{i} L_{i+1} (U_{\alpha}\cdots U_{i-1})\otimes \East \\
    \delta^1_2(\North,U_{i+1}\cdots U_\beta\cdot L_{i})
    &= (U_{i}\cdots U_\beta)\otimes \West 
    + L_{i} L_{i+1} U_i (U_{i+2}\cdots U_\beta)\otimes \East \\
    \delta^1_2(\North,U_{\alpha}\cdots U_{i-1}\cdot L_{i})
    &= (U_{\alpha}\cdots U_i)\otimes \West 
    + L_{i} L_{i+1}(U_\alpha\cdots U_{i-1})\otimes \East \\
    \delta^1_2(\West,U_\alpha\cdots U_{i} \cdot L_{i+1} )
    &= U_\alpha\cdots U_{i-1}\cdot L_i \cdot L_{i+1} \otimes \North \\
    \delta^1_2(\West,U_\alpha\cdots U_{i-1} \cdot R_i )
    &= U_\alpha\cdots U_{i-1} \otimes \North
  \end{align*}
  where the monomials $U_{\alpha}\cdots U_{i}$ and $U_{i+1}\cdots
  U_\beta$ (with $\alpha\leq i$ and $\beta\geq i+2$) correspond to
  generating intervals for the input, for suitable choices of
  idempotents; it is then straightforward to check that the outputs
  are also zero. The remaining non-trivial checks are symmetric to the above ones (under the symmetry
  exchanging $\West$ and $\East$, and $L_i$ with $R_{i+1}$).

  For $\delta^1_3$, we must show that if $a_1$ or $a_2\in {\mathcal
    J}$, then the output $D_3(X,a_1,a_2)$ projects to zero in
  $\Blg(m,k)\otimes \Pos^i$.  In cases where
  $\min(w_i(a_1)+w_i(a_2),w_{i+1}(a_1)+w_{i+1}(a_2))=\OneHalf$,
  i.e. when $(t(a_1),t(a_2))\in \{(R_1,R_2 U_2^t),(L_2,L_1 U_1^n),
  (L_2,U_1^{n+1}),(R_1,U_2^{t+1})\}$, are considered separately. 
  In all these cases, the generating intervals for $a_1$ and $a_2$ do not contain
  $i$ or $i+1$, so the result is obvious.

  For the remaining cases,  the following stronger assertion holds:
  if $a_1$ and $a_2$ satisfy
  \begin{equation}
    \label{eq:WeightBound}
    \min(w_i(a_1)+w_i(a_2),w_{i+1}(a_1)+w_{i+1}(a_2))> \OneHalf,
  \end{equation}
  and $a_1\cdot a_2\in{\mathcal J}$
  (i.e. $a_1\cdot a_2$ projects to zero in $\Blg(m,k)$), 
  and with $\Idemp{\x}\cdot a_1=a_1$
  then $D_3(X,a_1,a_2)\in{\mathcal J}$.
  For
  instance, the terms in $\delta^1_3(\South, a_1,a_2)$ in $\Blg(2)$ that output
  $L_2 U_1^t U_2^n\otimes \East$ (all of which satisfy Equation~\eqref{eq:WeightBound})
  give rise to actions with
  $a_1, a_2\in\Blg(k,m)$ with $\Idemp{\x}\cdot a_1\cdot a_2=a_1\cdot a_2$
  with 
  \[ \begin{array}{lll}
    i\not\in{\mathbf p}, &
  w_{i}(a_1\cdot a_2)=n+1\geq 1, &w_{i+1}(a_1\cdot a_2)=t\geq 1
  \end{array}\] and
  output algebra element $b=\Idemp{\x}\cdot b \cdot \Idemp{\y}$ with
  \[ \begin{array}{lll}
    i\in\y, & i+1\not\in \y & w_{i}(b)=t\geq 1
  \end{array}
  \]
  It is easy to see that if $a_1\cdot a_2$ is divisible by a monomial
  corresponding to a generating interval of the form $U_{\alpha}\cdots
  U_i$, then so is $b$;   checking cases
  where $a_1\cdot a_2$ is divisible by other generating intervals is
  even more straightforward.  Remaining cases where the output
  contains $\West$ work similarly. Cases where the output contains
  $\North$ are easily verified, as well.  The stronger statement (following Equation~\eqref{eq:WeightBound}) now
  follows.
\end{proof}

For example,
$\delta^1_{2}(\North,R_{i+2}R_{i+1})=R_{i+2}R_{i+1}
R_{i}\otimes \West
+R_{i+2} U_{i}\otimes \East$.

As another example, choose $R_j$ with $j\neq i$ or $i+1$. Then, 
for all generators $X$, $\delta^1_{2}(X,R_j)=R_{j}\otimes X$.
(Note that in this example, $t(R_j)=1$.)

As another example, $\delta^1_{3}(\South,R_{i},R_{i+2}R_{i+1})=
\delta^1_{3}(\South,R_{i}R_{i+2},R_{i+1})=R_{i}R_{i+2}\otimes \East$.

Let $\delta^1_{\ell}=0$ for all $\ell\geq 4$.

It will be useful to have the following characterization of $\delta^1_3$:
\begin{lemma}
  \label{lem:CharacterizeD3Extended}
  The operation $\delta^1_3(X,a_1,a_2)$ contains only those terms $b\otimes Y$
  where both Equation~\eqref{eq:GradedOperations} holds, and 
  one of the following two conditions holds:
  \begin{enumerate}[label=(C-\arabic*),ref=(C-\arabic*)]
  \item \label{eq:ExceptionalCase}
    $(t(a_1),t(a_2))\in\{(R_1,R_2 U_2^t),(L_2, L_1 U_1^n),(L_2,U_1^{n+1}), (R_1,U_2^{t+1})\}$
    and $t(b)\otimes t(Y)$ appears with non-zero multiplicity in $\delta^1_3(t(X),t(a_1),t(a_2))$;
  \item $a_1 \cdot a_2\neq 0$,  $I(a_2,Y)\neq 0$ and 
    $\Initial(t(a_1), \Initial(t(a_2),t(Y)))\neq \Initial(t(a_1)\cdot t(a_2),t(Y))$.
  \end{enumerate}
\end{lemma}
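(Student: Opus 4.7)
The plan is to reduce directly to the local characterization Lemma~\ref{lem:CharacterizeDelta3} via the extension procedure of Definition~\ref{def:DefD}. By that definition, a term $b\otimes Y$ appears in $D_3(X,a_1,a_2)$ precisely when the grading condition \eqref{eq:GradedOperations} holds and $t(b)\otimes t(Y)$ appears with non-zero multiplicity in the local action $\delta^1_3(t(X),t(a_1),t(a_2))$; Lemma~\ref{lem:DeltaWellDefined} then says this descends to $\Blg(m,k)$ modulo terms in the ideal $\mathcal{J}$. So it suffices to read off, from the local model, for which input types $(t(a_1),t(a_2))$ the local $\delta^1_3$ is non-zero, and then verify for each type whether the descent picks up a non-zero term in $\Blg(m,k)\otimes \Pos^i$.

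First, I would split into the two cases according to whether $t(a_1)\cdot t(a_2)$ vanishes or not in $\Blg(2)$. If $t(a_1)\cdot t(a_2)\neq 0$, then either the local summand being used is $\Blg(2,2)$, in which case Lemma~\ref{lem:CharacterizeDelta3} applies verbatim and yields the $\Initial$ condition; or the summand is $\Blg(2,3)$, where the local $\delta^1_3$ vanishes trivially. If instead $t(a_1)\cdot t(a_2)=0$, then inspecting the explicit list of non-zero $\delta^1_3$ actions in $\Blg(2,1)$ from the proof of Lemma~\ref{lem:CaseOfAinf} shows these are exactly the four families of input pairs
\[(t(a_1),t(a_2))\in \{(R_1,R_2U_2^t),\ (L_2,L_1U_1^n),\ (L_2,U_1^{n+1}),\ (R_1,U_2^{t+1})\},\]
which are the exceptional types of condition~\ref{eq:ExceptionalCase}. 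In each such family the output type $t(b)\otimes t(Y)$ is uniquely determined by Lemma~\ref{lem:UniqueInitial} together with the grading, matching the condition in (C-1).

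Next I would argue the descent to $\Blg(m,k)$. In the exceptional case (C-1), the elements $a_1$ and $a_2$ have weights supported in the pair $\{i,i+1\}$ only up to a half-integer, so (as already noted in the proof of Lemma~\ref{lem:DeltaWellDefined}) no generating interval of either input can meet $\{i,i+1\}$; consequently $b$ is automatically outside $\mathcal{J}$ whenever $a_1,a_2$ are, and the descended term $b\otimes Y$ is genuinely non-zero. In the non-exceptional case, one invokes the stronger assertion from the proof of Lemma~\ref{lem:DeltaWellDefined}: if the weight bound~\eqref{eq:WeightBound} holds and $a_1\cdot a_2\in\mathcal{J}$, then $D_3(X,a_1,a_2)\in\mathcal{J}$. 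This gives the ``only if'' direction of (C-2). For the ``if'' direction, one uses that $a_1\cdot a_2\neq 0$ in $\Blg(m,k)$ forces $t(a_1)\cdot t(a_2)\neq 0$ in $\Blg(2)$ (since no ideal relations can kill the local factors), so Lemma~\ref{lem:CharacterizeDelta3} directly produces the $\Initial$ condition, and the corresponding pure algebra element $b$ lies outside $\mathcal{J}$ by tracking its generating intervals against those of $a_1\cdot a_2$.

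The main obstacle is not any single calculation but rather the careful bookkeeping between three different notions: vanishing of $t(a_1)\cdot t(a_2)$ in $\Blg(2)$, vanishing of $a_1\cdot a_2$ in $\Blg(m,k)$, and membership of $D_3(X,a_1,a_2)$ in $\Blg_0(m,k)\otimes \Pos^i$ modulo $\mathcal{J}\otimes \Pos^i$. The key to keeping these straight is that $t$ records exactly the idempotent-level information at positions $\{i,i+1\}$ plus the $U$-exponents at those positions, while the idempotents and generating intervals at positions away from $\{i,i+1\}$ pass through the extension unchanged; once this dictionary is set up, the characterization follows mechanically by combining Lemma~\ref{lem:CharacterizeDelta3} with the descent analysis already contained in the proof of Lemma~\ref{lem:DeltaWellDefined}.
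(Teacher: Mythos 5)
Your argument is correct and takes essentially the same route as the paper: identify the exceptional (weight $\OneHalf$) families with case (C-1), and in the remaining cases use the weight bound~\eqref{eq:WeightBound} together with the stronger assertion in the proof of Lemma~\ref{lem:DeltaWellDefined} to force $a_1\cdot a_2\neq 0$ in $\Blg(m,k)$, so that Lemma~\ref{lem:CharacterizeDelta3} yields the $\Initial$ condition. The extra material on the converse (``if'') direction, and the claim about which local summand carries the action, are not needed, since the statement only asserts that nonzero terms must satisfy (C-1) or (C-2).
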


\begin{proof}
  In the proof of Lemma~\ref{lem:DeltaWellDefined}, we showed that if
  we are not in Case~\ref{eq:ExceptionalCase}, then 
  Equation~\eqref{eq:WeightBound} holds, and so we conclude that
  $a_1\cdot a_2\neq 0$. Thus, our description of $\delta^1_3$ follows
  from Lemma~\ref{lem:CharacterizeDelta3}.
\end{proof}

\begin{prop}
  \label{prop:DAnoS}
  The above maps give $\Pos^i$ the structure of 
  a type $DA$ bimodule over $\Blg(m,k)$-$\Blg(m,k)$.
\end{prop}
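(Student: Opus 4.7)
The strategy is to reduce the $DA$ bimodule structure equation for $\lsup{\Blg(m,k)}\Pos^i_{\Blg(m,k)}$ to its local analogue for $\LocPos$ from Proposition~\ref{prop:LocalBimodule}, using the type map $t$ as a bridge. By Definition~\ref{def:DefD}, each operation $\delta^1_\ell$ is computed by lifting pure algebra inputs $a_1,\dots,a_{\ell-1}$ to $\Blg_0(m,k)$, applying $t$ to obtain local data in $\Blg(2)$, running the corresponding local action on $\LocPos$, and reading off the output algebra element via the weight-grading constraint~\eqref{eq:GradedOperations}. Lemma~\ref{lem:DeltaWellDefined} ensures this procedure descends to a well-defined map on $\Blg(m,k)$. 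I would therefore verify the $\Ainfty$ identity first on pure inputs in $\Blg_0(m,k)$, and then invoke Lemma~\ref{lem:DeltaWellDefined} (applied to each term of the identity individually) to conclude that the relation holds after passing to the quotient.

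The key compatibility is that the type map is multiplicative: if $a_1, a_2 \in \Blg_0(m,k)$ are pure with $a_1\cdot a_2 \neq 0$, then $t(a_1 \cdot a_2) = t(a_1)\cdot t(a_2)$ in $\Blg(2)$. Since $\mu_1=0$ on both algebras, the only terms in the $DA$ relation involve $\mu_2$ and nested compositions of the $\delta^1_j$. Each such term on the $\Pos^i$ side therefore projects under $t$ to exactly one corresponding term in the local $DA$ relation on $\LocPos$, so the cancellations already identified in the proof of Proposition~\ref{prop:LocalBimodule} lift verbatim. The cases $n=0,1$ are immediate; the case $n \geq 4$ reduces (since $\delta^1_\ell=0$ for $\ell \geq 4$) to checking that each individual composition vanishes, which follows from the fact that pairs of non-trivial $\delta^1_j$ compositions are captured locally; and the cases $n=2,3$ are matched to the local cancellations via Lemmas~\ref{lem:CharacterizeD3Extended} and~\ref{lem:CaseOfAinf}.

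The main obstacle will be controlling the bookkeeping when generating intervals for $(\x_t, \x_{t+1})$ straddle the crossing indices $\{i, i+1\}$, since this is where the type map loses information and where the behavior of $\Blg(m,k)$ departs from that of its localization $\Blg(2)$. Here there are two subtleties: first, a product $a_1 \cdot a_2$ may be nonzero in $\Blg_0(m,k)$ but land in $\mathcal{J}$ (so its image in $\Blg(m,k)$ vanishes) even when $t(a_1)\cdot t(a_2) \neq 0$; and second, compositions such as $\delta^1_2 \circ \delta^1_2$ can exhibit apparent new terms arising from non-trivial $R_j, L_j, U_j$ with $j \notin \{i,i+1\}$ passing through the crossing bimodule. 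For the first issue, Lemma~\ref{lem:DeltaWellDefined} applied to the outputs shows that the corresponding contributions also lie in $\mathcal{J}$ and hence vanish in the quotient, preserving pairwise cancellation. For the second, the extended operations for $a$ with $t(a)=1$ act by the identity $b \otimes X$, so such generators commute freely through the local operations and do not obstruct the matching with the local cancellations from the proof of Proposition~\ref{prop:LocalBimodule}.
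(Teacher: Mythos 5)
Your overall plan---reduce everything to the local computation of Proposition~\ref{prop:LocalBimodule} via the type map---is the right spirit, and it is also what the paper does, but the reduction mechanism you propose has a genuine gap. The $\Ainfty$ identity is \emph{not} true at the level of $\Blg_0(m,k)$, so you cannot ``verify the identity first on pure inputs in $\Blg_0(m,k)$ and then pass to the quotient.'' Concretely, take the generator $\South$ and inputs with $(t(a_1),t(a_2))=(R_1,R_2U_2^t)$: the only potentially non-zero term in the two-input relation is the composite in which $\delta^1_3(\South,a_1,a_2)=b\otimes\East$ is followed by $\delta^1_1(\East)=R_{i+1}\otimes\South$ and the two outputs are multiplied. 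Over $\Blg_0(m,k)$ this product of outputs is non-zero and nothing cancels it; it vanishes only because of the quotient relation $R_i\cdot R_{i+1}=0$ (this is exactly the ``vanishes thanks to the algebra'' step in the $\Blg(2,1)$ case of Proposition~\ref{prop:LocalBimodule}). Lemma~\ref{lem:DeltaWellDefined} cannot rescue this: its hypothesis is that an \emph{input} $a_t$ lies in $\Ideal(\x_t,\x_{t+1})$, and it says nothing about individual terms of the structure equation, nor about products of \emph{output} algebra elements lying in $\mathcal J$. So the structure equation must be verified directly in $\Blg(m,k)$, which is what the paper does.

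Relatedly, your ``key compatibility'' is false as stated: for $t(a_1)=R_1$ and $t(a_2)=R_2$ one has $t(a_1)\cdot t(a_2)=0$ in $\Blg(2)$ while $t(a_1\cdot a_2)=R_2R_1\neq 0$ (and conversely, local products such as $U_1\cdot U_2$ can vanish in the $\Blg(2,1)$ summand while the corresponding global product survives, since the type map does not remember the local occupancy). Because of this, terms do not ``project to exactly one corresponding local term'' and the cancellations do not lift verbatim; one needs an intrinsic description of $\delta^1_3$ in the big algebra --- Lemma~\ref{lem:CharacterizeD3Extended} --- and a separate treatment of the cases where $a_1\cdot a_2=0$ in $\Blg(m,k)$ while the factors are non-zero. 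In that case the paper kills the $\delta^1_3$-terms by Lemma~\ref{lem:CharacterizeD3Extended}, the term involving $\mu_2^{\Blg_1}$ by Lemma~\ref{lem:DeltaWellDefined}, and then argues that the remaining iterated-$\delta^1_2$ term must vanish because the local computation identifies its contribution with that of one of the already-vanishing terms; your proposal replaces this step with an appeal to Lemma~\ref{lem:DeltaWellDefined} ``applied to the outputs,'' which is not something that lemma provides. These are the genuinely delicate points of the extension, and your outline would need them supplied before it becomes a proof.
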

\begin{proof}
  The proof of Proposition~\ref{prop:LocalBimodule} adapts, with a few
  remarks. In the verification of the 
  $\Ainfty$ relation with two algebra inputs, 
  that proof decomposed according to whether we
  were working in $\Blg(2,1)$ or $\Blg(2,2)$.  In the present case,
  when $(t(a_1),t(a_2))\in\{(R_1,R_2 U_2^t),(L_2,L_1 U_1^n),(L_2,U_1^{n+1}),($ the
  $\Ainfty$ relation holds exactly as it did in $\Blg(2,1)$. Otherwise , if
  $a_1\cdot a_2\neq 0$, the proof of the $\Ainfty$ relation for
  $\Blg(2,2)$ applies, using Lemma~\ref{lem:CharacterizeD3Extended}.
  Finally, if $a_1\cdot a_2=0$, then by
  Lemma~\ref{lem:CharacterizeD3Extended}, the term involving
  $\delta^1_3$ ($F$ or $G$) vanishes. The term $D$ vanishes by
  Lemma~\ref{lem:DeltaWellDefined}. The verification of the $\Ainfty$ relation
  in $\Blg(2,2)$ shows that the remaining possible non-zero term, which is of type $E$,
  has the same contribution as a term of type $F$, $G$, or $D$, all of which contribute $0$.
  
  Consider next the case of three algebra inputs $a_1$, $a_2$, and $a_3$.
  When $a_1\cdot a_2\cdot a_3=0$, the verification 
  (now in the proof of Lemma~\ref{lem:CaseOfAinf})
  works as it did in $\Blg(2,1)$. 
  In the remaining cases, the earlier proof of the $\Ainfty$ relation
  in $\Blg(2,2)$
  (contained in the proof of Lemma~\ref{lem:CaseOfAinf}) 
  still applies, as it hinges on the description of $\delta^1_3$
  (Lemma~\ref{lem:CharacterizeDelta3}) which still holds in this case
  (according to Lemma~\ref{lem:CharacterizeD3Extended}).
\end{proof}

\subsection{Adding $C_i$, locally}
\label{sec:AddCs}

We extend the local bimodule $\LocPos$ defined over $\Blg(2)$ to
a bimodule $\lsup{\Blg(2,\tau(\Upwards))}\LocPos_{\Blg(2,\Upwards)}$, 
where $\Upwards$ is a non-empty subset of
$\{1,2\}$.

In the local module $\LocPos$, there were $\delta^1_{2}$ actions
connecting generators of types $\{\North, \West, \East\}$ to 
$\{\North, \West, \East\}$ or $\South$ to $\South$. We extend these to
similar actions so that 
\begin{equation}
  \label{eq:CEquivariance}
  \begin{array}{ll}
\delta^1_{2}(X,C_1 \cdot a)=C_2\cdot \delta^1_{2}(X,a) &
\delta^1_{2}(X,C_2 \cdot a)=C_1\cdot \delta^1_{2}(X,a) 
\end{array}
\end{equation}
when the formulas make sense; i.e. the first holds when $1\in \Upwards$
and the second when $2\in\Upwards$.
Similarly, we extend the previous $\delta^1_{3}$ actions so that 
\[ 
\begin{array}{ll}
C_2\cdot \delta^1_{3}(\South,a_1,a_2) &=
\delta^1_{3}(\South,C_1 \cdot a_1,a_2)=
\delta^1_{3}(\South,a_1,C_1 \cdot a_2) \\
C_1\cdot \delta^1_{3}(\South,a_1,a_2) &=
\delta^1_{3}(\South,C_2 \cdot a_1,a_2)=
\delta^1_{3}(\South,a_1,C_2 \cdot a_2).
\end{array}
\]
For example, $\delta^1_{3}(C_1 R_1,C_2 R_2) = 
\delta^1_{3}(C_1 C_2 R_1,R_2) = 
R_1 C_1 C_2 \otimes \East$.

We specify further $\delta^1_2$ actions from $\South$ to $\{\North,\West,\East\}$:
\[\begin{array}{lll}
  \delta^1_2(\South,C_2)=R_1\otimes \West & \\
  \delta^1_2(\South,C_1)=L_2\otimes \East &
  \delta^1_2(\South,C_1 C_2)=C_2 R_1\otimes \West + C_1 L_2\otimes \East \\
  \delta^1_{2}(\South,U_1C_2)=U_1 L_2 \otimes \East &
  \delta^1_{2}(\South,U_1C_1 C_2)=U_1 C_2 L_2 \otimes \East \\
  \delta^1_{2}(\South,C_1 U_2)=R_1  U_2 \otimes \West &
    \delta^1_{2}(\South,C_1 C_2 U_2)=C_1 R_1  U_2 \otimes \West \\
  \delta^1_{2}(\South, R_1 C_2)= R_1\otimes \North &
  \delta^1_{2}(\South, R_1 C_1 C_2)= R_1 C_2\otimes \North \\
  \delta^1_{2}(\South, L_2 C_1)= L_2\otimes \North
  &
  \delta^1_{2}(\South, C_1 L_2 C_2)= C_1 L_2\otimes \North \\
  \delta^1_{2}(\South, R_1 C_1 U_2)= R_1 U_2\otimes \North 
  &
  \delta^1_{2}(\South, R_1 C_1 U_2 C_2)= R_1 C_1 U_2\otimes \North \\
  \delta^1_{2}(\South, U_1 L_2 C_2)= L_2 U_1\otimes \North & 
  \delta^1_{2}(\South, U_1 C_1 L_2 C_2)= U_1 L_2  C_2\otimes \North 
\end{array}\]

These are extended to commute with multiplication by $U_1U_2$ (as in
Equation~\eqref{eq:U1U2}).

By Equation~\eqref{eq:CEquivariance}, the actions
$\delta^1_2(\South,1)=\South$ gives rise to actions
$\delta^1_2(\South,C_2)=C_1\otimes \South$. The first action listed
above, i.e. the relation $\delta^1_2(\South,C_2)=R_1\otimes \West$, is
now a consequence of this fact, together with the $\Ainfty$ relation
with module element $\South$, the single algebra input $C_2$, and
output module element $\South$. The second action on the above list
follows symmetrically.  The actions on the left column of rows
three, five, and seven respectively are forced by the
following $\delta^1_3$ actions (with $\Upwards=\emptyset$) 
\begin{align*}
    \delta^1_3(\South,U_1,U_2)&=L_2 U_1\otimes \East \\
    \delta^1_3(\South,L_2,U_1)&=L_2 \otimes\North  \\
    \delta^1_3(\South,R_1 U_2,U_1)&= R_1 U_2 \otimes\North
\end{align*}
and the $\Ainfty$ relations with inputs $(\South,U_1,C_2)$, $(\South,L_2,C_1)$ and $(\South,R_1 U_2,C_1)$ respectively. The remaining actions on the left column follow symmetrically.
The actions in the second column follow from actions from
the first column column of the form $(\South,da)$
and $\Ainfty$ relations with inputs $(\South,a)$.

\begin{lemma}
  \label{lem:LocalBimoduleWithC}
  For any $\Upwards\subset \{1,2\}$, the above actions induce a 
  type $DA$ bimodule structure on
  $\lsup{\Blg(2,\tau(\Upwards))}\LocPos_{\Blg(2,\Upwards)}$.
\end{lemma}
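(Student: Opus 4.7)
The plan is to verify the $\Ainfty$ bimodule relations by reducing to Proposition~\ref{prop:LocalBimodule} (the relations for $\LocPos$ over $\Blg(2)$ without $C$'s), and then bookkeeping the new terms introduced by the presence of $C_i$'s and the algebra differential $dC_i=U_i$. First I would observe that every action listed in Section~\ref{sec:AddCs} is $(C_1,C_2)$-equivariant in a precise sense: if $a_1,\dots,a_{\ell-1}$ are pure algebra elements each of the form $p_s\cdot \prod_{j\in J_s}C_j$, then the output of $\delta^1_\ell$ is (up to the new $\South\to\{\North,\West,\East\}$ actions) obtained by taking $\delta^1_\ell$ on the underlying $C$-free factors and then multiplying the output by $\prod_s\prod_{j\in J_s}C_{\tau(j)}$.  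Combined with $U_1U_2$-equivariance, this reduces the check to a finite list of cases where each input algebra element has $w_1,w_2\leq 1$ and no $U_1U_2$ factor.

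Next I would split the $\Ainfty$ relation into two groups of terms: the \emph{core terms}, obtained from $\Ainfty$ relations that already held for $\LocPos$ over $\Blg(2)$, and the \emph{new terms}, arising from either $dC_j=U_j$ appearing in some $\mu_1$ slot, or from the new $\delta^1_2$ actions $\delta^1_2(\South,-)\to\{\North,\West,\East\}$. The core terms cancel by Proposition~\ref{prop:LocalBimodule} together with the equivariance rules. So the task reduces to checking that the new terms cancel among themselves.

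The key pairings to verify are the following. (i) For an input $(\South,a)$ where $a$ contains a $C_j$, applying $\mu_1$ inside gives an action $\delta^1_2(\South,a')$ where $a'$ is obtained from $a$ by replacing some $C_j$ with $U_j$; this must cancel against the appropriate new $\delta^1_2$ action (and/or a composition $\ed^1_1\circ\delta^1_2$ using $\delta^1_1(\West)=L_1\otimes\South$ and $\delta^1_1(\East)=R_2\otimes\South$). (ii) For inputs $(\South,a_1,a_2)$ with $a_i$ containing $C_j$, the $C$-equivariant extension of a $\delta^1_3$ action may produce, via $dC_j=U_j$ terms, contributions that must cancel against the new $\delta^1_2$ actions combined via $\delta^1_1$. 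As the text explicitly notes after the list of new actions, each one was rigged to satisfy exactly such an $\Ainfty$ relation: e.g.\ $\delta^1_2(\South,C_2)=R_1\otimes\West$ is forced by $\delta^1_2(\South,1)=\South$, the equivariance $\delta^1_2(\South,C_2)$ should also produce $C_1\otimes\South$, and $\mu_1(C_2)=U_2$, paired with $\delta^1_2(\South,U_2)$ which outputs through $\West$; the left column entries are forced by $\delta^1_3$ actions like $\delta^1_3(\South,U_1,U_2)=L_2U_1\otimes\East$ with $C_2$ substituted for $U_2$, and the right column entries are forced by the corresponding relations involving $C_1C_2$.

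The main obstacle is keeping track of the case analysis: there are two algebras ($\AlgB(2,\Upwards)$ and $\AlgB(2,\tau(\Upwards))$), four module generator types, and up to three algebra inputs that can each carry either or both of $C_1,C_2$. I would organize this by first fixing, for each $\Ainfty$ relation, the underlying $C$-free ``shadow'' of the inputs, and for each shadow matching up the contributions according to how many $C_j$'s have been promoted to $U_j$'s via $\mu_1$. After this reorganization, every verification becomes either a direct appeal to Proposition~\ref{prop:LocalBimodule} or one of the finitely many explicit pairings forced by the definition, and no infinite-sum issue arises because $\delta^1_\ell=0$ for $\ell\geq 4$.
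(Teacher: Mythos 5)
Your overall framework is sound and is essentially the paper's: reduce to the $C$-free case via equivariance (this is a filtration by $C$-content, which the paper makes explicit via the functions $\MasFilt_1,\MasFilt_2$), and then check that the ``new'' terms cancel among themselves. But there is a genuine gap in how you propose to carry out that second step. The claimed reduction ``to a finite list of cases where each input algebra element has $w_1,w_2\leq 1$ and no $U_1U_2$ factor'' is false: $U_1U_2$-equivariance only lets you strip off $U_1U_2$ factors, so inputs such as $a_1=U_1^nC_1$, $a_2=U_2^t$ with unrelated exponents cannot be reduced, and the operations $\delta^1_3(\South,\cdot,\cdot)$ genuinely depend on the comparison between $n$ and $t$. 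This is exactly where the content of the verification lives: in the two-input relation at $\South$, the terms where $\mu_1$ hits an input or the output of $\delta^1_3$, the term through $\mu_2(a_1,a_2)$, and the iterated $\delta^1_2\circ\delta^1_2$ term each switch on or off according to inequalities among the exponents (in the example above they are nonzero exactly when $0\leq n-1<t$, $0\leq n<t$, $n=t\geq0$, and $n=0,\ t\geq 0$ respectively), and one must check that in every regime either zero or exactly two of them survive. Your ``finitely many explicit pairings forced by the definition'' never engages with these borderline cases, which are the crux.

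Second, your pairings (i)--(ii) only address the relations with one and two algebra inputs; the three-input relation is not covered by them, and it does not follow formally from $\delta^1_\ell=0$ for $\ell\geq 4$. One still has terms such as $\ed^1_2(\delta^1_3(\South,a_1,a_2),a_3)$ and $\ed^1_3(\delta^1_2(\South,a_1),a_2,a_3)$ with $C$'s in the inputs, and one must rule out contributions that drop the $C$-count. The paper does this with a short structural observation: every $C$-count-dropping action is a $\delta^1_2$ whose target lies in the span of $\{\North,\West,\East\}$, while every $\delta^1_3$ starts at $\South$, so no such term can appear in the three-input relation, and what remains reduces to the $C$-free case. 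Your proposal needs either this observation or some substitute for it; as written, the assertion that everything reduces to Proposition~\ref{prop:LocalBimodule} plus a finite check is unsupported at precisely the two points (exponent-dependent cancellations with two inputs, and the three-input relation) where the lemma requires an argument.
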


\begin{proof}
  It suffices to prove the lemma in the case where $\Upwards=\{1,2\}$.
  For example, if $\Upwards=\{1\}$, the outputs of the $\delta^1_j$ actions
  lie in the subalgebra $\Blg(2,\{2\})$.
  
  We consider the $\Ainfty$ relation with $n$ incoming algebra
  elements $a_1,\dots,a_n$. 

  It is a straightforward verification to check that the actions defined above 
  are consistent with the $\Ainfty$ relation with $n=1$ input.

  When $n=2$, and the incoming generator is not $\South$, the
  $\Ainfty$ relation follows from Equation~\eqref{eq:CEquivariance}
  together with the $\Ainfty$ relation with $\Upwards=\emptyset$.

  When $n=2$ and the incoming generator is $\South$, label the terms
  in the $\Ainfty$ relation as in
  Equation~\eqref{eq:LabelAinftyTerms}.
  The key point to verifying this relation now is
  that when differentiating $a_1$ or $a_2$ (in terms $B$ or $C$
  above), the $U_1$ or $U_2$-power can change by at most one so the
  corresponding $\delta^1_3$ action with $a_1$ and $a_2$ is non-zero
  when the action with $(d a_1,a_2)$ and $(a_1,d a_2)$ is.  However,
  there are borderline cases where this change in the $U_1$ or the
  $U_2$ power is enough to to turn off one of those actions. These are
  precisely the cases either the product $a_1 a_2$ acts non-trivially
  (i.e. where $D$ contributes),
  or there is an iterated $\delta^1_2$ (i.e. $E$ contributes). 

  For example, consider the case where $a_1=U_1^n C_1$, and
  $a_2=U_2^t$ with $n,t\geq 0$, and consider the terms where the
  output generator has type $\East$.  We have the following non-zero
  terms in the $\Ainfty$ relation:
  \[\begin{array}{lll}
    A\neq 0 \Leftrightarrow 0\leq n-1< t, &
    B\neq 0 \Leftrightarrow 0\leq n< t, &
    D\neq 0 \Leftrightarrow  n=t\geq 0, \\
    & E\neq 0 \Leftrightarrow  n=0, t\geq 0; 
  \end{array}\]
  Thus, for all choices of $n,t\geq 0$, there are either no non-zero terms or exactly two, and so the $\Ainfty$ relation holds. Other cases where $n=2$
  and the initial generator is of type $\South$ work similarly.

  Consider the case where $n=3$. Recall that the algebra has a filtration
  in $\{0,1\}\times \{0,1\}$ given by the functions $(\MasFilt_1,\MasFilt_2)$,
  with the property that if $a$ is a pure algebra element not divisible by $C_j$, then
  $\MasFilt_j(a)=0$ and $\MasFilt_j(C_j a)=1$.
  Clearly, the operations $\delta^1_{\ell}$ respect this filtration,
  in the sense that if a $(\MasFilt_1,\MasFilt_2)$-homogeneous element
  $b\otimes Y$ appears with non-zero multiplicity
  in $\delta^1_{\ell}(a_1,\dots,a_{\ell-1})$, and each $a_i$ is homogeneous
  (with respect to $\mu$), then for $j=1,2$,
  \[ (\MasFilt_1(b),\MasFilt_2(b))\leq 
  \sum_{k=1}^{\ell-1}(\MasFilt_1(a_k),\MasFilt_2(a_k)).\]

  For the terms that preserve this $\{0,1\}\times \{0,1\}$-filtration, the $\Ainfty$
  relation with $3$ inputs is an immediate consequence of
  Proposition~\ref{prop:DAnoS}.  Consider next terms that drop
  filtration level by one.  Note that each $\delta^1_\ell$
  action of this type has $\ell=2$ and end in the span of
  $\{\North,\West,\East\}$, while all $\delta^1_3$ actions start from
  $\South$. It follows that there are no such terms that appear in the
  $\Ainfty$ relation with $3$ inputs. (It also follows that there are
  no terms of this kind in the $\Ainfty$ relation which drop by more
  than $1$.)

  For $n>3$, the $\Ainfty$ relation is easy.
\end{proof}

\subsection{The general case of $\Pos^i$}
\label{sec:GenCrossing}

Let $\Blg_1=\Blg(m,k,\Upwards)$, where $0\leq k \leq m+1$ and
$\Upwards\subset \{1,\dots,m\}$ is arbitrary; and let
$\Blg_2=\Blg(m,k,\tau(\Upwards))$.  In cases where
$\Upwards\cap\{i,i+1\}\neq \emptyset$, we must modify our earlier
constructions as follows.

Extend the type $t(a)$ to be a monomial in $U_1,U_2,L_1,L_2,C_1,C_2,R_1,R_2$,
so that 
\[\begin{array}{ll}
t(C_i\cdot a)=C_1\cdot t(a) &{\text{if  $C_i\cdot a\neq 0$}} \\
t(C_{i+1}\cdot a)=C_2\cdot t(a) &{\text{if $C_{i+1}\cdot a \neq 0$};}
\end{array}\]
and $t(a)$ is as defined before when $a$ is a pure algebra element
not divisible by $C_i$ or $C_{i+1}$.

Let $t(\Upwards)\subset \{1,2\}$ be the set with $1\in t(\Upwards)$ iff $i\in\Upwards$ and $2\in t(\Upwards)$ iff
$i+1\in\Upwards$.

\begin{defn}
  For $X\in\Pos^i$
  and a sequence of pure algebra elements $a_1,\dots,a_{\ell-1}$
  in $\Blg_0(m,k,\Upwards)$, so that there exist a sequence of idempotent states
  $\x_0,\dots,\x_{\ell}$ with
  \begin{itemize}
    \item   $X=\Idemp{\x_0}\cdot X\cdot \Idemp{\x_1}$ \
    \item
      $a_t=\Idemp{\x_t}\cdot a_t\cdot \Idemp{\x_{t+1}}$ for
      $t=1,\dots,\ell$
      \item $\x_t$ and $\x_{t+1}$ are close enough (for $k=0,\dots,\ell-1$),
      \end{itemize}
  define
  $D_{\ell}(X,a_1,\dots,a_{\ell-1})\in \Blg_0(m,k,\tau(\Upwards))\otimes \Pos^i$ 
  as the sum of pairs $b\otimes Y$ where $b\in\Blg_0(m,k,t(\tau(\Upwards)))$
  and $Y$ is a generator of $\Pos^i$, satisfying the
  the conditions
  of Definition~\ref{def:DefD}, with the understanding that now
  $\delta^1_{\ell}(X_0,t(a_1),\dots,t(a_{\ell-1})$ is computed using 
  the crossing over $\lsup{\Blg(2,t\circ\tau(\Upwards))}\LocPos_{\Blg(2,t(\Upwards))}$;
  and the additional condition that
  for any $k\neq i$ or $i+1$, and any $m=1,\dots,\ell-1$,
      \[ D_\ell(X,a_1,\dots, C_k\cdot  a_m,\dots,a_{\ell-1})=
      C_k\cdot D_\ell(X,a_1,\dots, a_m,\dots,a_{\ell-1}).\]
\end{defn}

\begin{lemma}
  If any $a_t\in\Ideal(\x_t,\x_{t+1})$, then
  the projection of $D_{\ell}(X,a_1,\dots,a_{\ell-1})$
  to $\Blg(m,k)\otimes \Pos^i$ vanishes; 
  i.e. the maps 
  $D_{\ell}$ induce well-defined maps 
  \[ \delta^1_\ell\colon \Pos^i\otimes 
  \overbrace{\Blg(m,k,\Upwards)\otimes \dots\otimes\Blg(m,k,\Upwards)}^{\ell-1}
  \to\Blg(m,k,\tau(\Upwards))\otimes \Pos^i,\]
  for all $\ell=2,3$.
\end{lemma}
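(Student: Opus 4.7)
The plan is to reduce this to the already-proved $\Upwards = \emptyset$ case (Lemma~\ref{lem:DeltaWellDefined}) by exploiting $C$-equivariance, and then to handle separately the finitely many new local actions at the crossing that involve $C_i$ or $C_{i+1}$.

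First, I would dispose of generating intervals $[j+1,k]$ for some $(\x_t, \x_{t+1})$ that lie entirely outside $\{i, i+1\}$. If $a_t = U_{j+1}\cdots U_k \cdot a_t'$ with $\{j+1,\dots,k\} \cap \{i,i+1\} = \emptyset$, then by construction of $D_\ell$ (together with the $C_s$-equivariance rule imposed for $s \neq i,i+1$), the factor $U_{j+1}\cdots U_k$ commutes past the action and is carried into the output $b$. Since $[j+1,k]$ is still a generating interval for the idempotents of the output in the target algebra (the types $\North, \South, \West, \East$ only alter idempotents in positions $\{i-1,i,i+1\}$, leaving positions away from $\{i,i+1\}$ unchanged), the output lies in $\Ideal$ and projects to zero. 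This reduces the problem to the case where the offending generating interval of $a_t$ meets $\{i,i+1\}$.

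Next, for actions that do not involve $C_i$ or $C_{i+1}$, the pre-$C$ statement is exactly Lemma~\ref{lem:DeltaWellDefined}. By the rule that $D_\ell$ commutes with multiplication by $C_s$ for $s \neq i, i+1$, this extends verbatim to inputs built by multiplying pure algebra elements by such $C_s$; the output has the same $C_s$ factor, and vanishes in $\Blg(m,k,\tau(\Upwards))$ whenever the un-$C$-decorated part vanishes there. So the only genuinely new checks are for actions whose input sequence contains a factor $C_i$ or $C_{i+1}$, with the associated generating interval meeting $\{i, i+1\}$. By the pullback through $t$, these correspond to the new local actions involving $C_1$ or $C_2$ defined in Section~\ref{sec:AddCs}.

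Finally, I would run through the finite list of those new local actions — the $\delta^1_2(\South, \cdot)$ actions on monomials involving $C_1$, $C_2$, $C_1 C_2$, as well as the extended $\delta^1_3$ actions with a $C_1$ or $C_2$ multiplier — and verify case by case that whenever the input is divisible by the monomial of a generating interval meeting $\{i,i+1\}$ (the only surviving case after the reduction), the outputs are also divisible by the monomial of a generating interval (in the idempotents of the outgoing type) and hence project to zero. This is the step I expect to require the most bookkeeping: one must track how the idempotent shifts from $\North/\South/\East/\West$ reposition the generating interval endpoints, and verify that e.g.\ a factor $U_{\alpha}\cdots U_i \cdot C_i$ on input yields output divisible by a generating-interval monomial such as $U_\alpha \cdots U_{i-1}$ or $U_\alpha \cdots U_{i+1}$ in the output idempotents. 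The main obstacle will be the $\delta^1_2(\South, \cdot)$ actions producing $\North$, $\West$, or $\East$ outputs (the seven pairs listed in the third through eighth rows of the table in Section~\ref{sec:AddCs}), since there both the $C$-structure and the idempotent type change, so the generating intervals before and after must be matched by hand; all other cases follow either from $C$-equivariance or directly from the already-verified $\Upwards = \emptyset$ analysis by identifying $(t(a_1),t(a_2))$ with the local model.
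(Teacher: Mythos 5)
Your proposal is correct and follows essentially the same route as the paper's proof: reduce to the $C$-free case already handled by Lemma~\ref{lem:DeltaWellDefined}, use the $C_s$-equivariance rules (for $s\neq i,i+1$, and the equivariant definition of the extended $\delta^1_3$) to carry those cases over, and then verify directly the finitely many new $\delta^1_2(\South,\cdot)$ actions into $\{\North,\West,\East\}$ involving $C_i$, $C_{i+1}$ — which is exactly the residual check the paper performs (and likewise declares straightforward), so no gap here.
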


\begin{proof}
  Lemma~\ref{lem:DeltaWellDefined} takes care of most of this; we must
  check further the additional $\delta^1_2$ actions from $\South$ to
  $\{\North,\West,\East\}$ listed in the beginning of this subsection;
  but this is straightforward.
\end{proof}

\begin{prop}
  \label{prop:CrossingGeneral}
  The above maps give $\Pos^i$ the structure of 
  a type $DA$ bimodule over $\Blg(m,k,\tau(\Upwards))$-$\Blg(m,k,\Upwards)$.
\end{prop}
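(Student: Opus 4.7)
The plan is to prove Proposition~\ref{prop:CrossingGeneral} by combining the strategies of Proposition~\ref{prop:DAnoS} and Lemma~\ref{lem:LocalBimoduleWithC}: the former handles the extension from $\Blg(2)$ to $\Blg(m,k)$ via the type map $t$ and passage to the ideal quotient, while the latter handles the enlargement of the algebra by the exterior generators $C_1, C_2$ over $\Blg(2)$. Since the $C_k$ for $k \notin \{i, i+1\}$ enter only through $C_k$-equivariance, the genuine new content sits entirely at positions $i$ and $i+1$, where it should reduce to the local analysis from Lemma~\ref{lem:LocalBimoduleWithC} after applying the type map $t$.

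First I would verify that $\delta^1_\ell$ is well-defined on the quotient algebras $\Blg(m,k,\Upwards) = \Blg_0(m,k,\Upwards)/\mathcal J$, which is the analogue of Lemma~\ref{lem:DeltaWellDefined}. Since each $C_k$-action is defined to be simply equivariant for $k \notin\{i, i+1\}$, and since the preceding lemma (just above the statement of Proposition~\ref{prop:CrossingGeneral}) already checks the generating intervals that do not involve $C_i$ or $C_{i+1}$, what remains to check are the new $\delta^1_2$ actions from $\South$ listed in Section~\ref{sec:AddCs}, as well as their $C_k$-equivariant extensions; each of these is an easy case analysis of which generating intervals can be divided into the output, identical in spirit to the case analysis in Lemma~\ref{lem:DeltaWellDefined}.

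Next I would verify the $\Ainfty$ relation with $n$ algebra inputs $a_1, \dots, a_n$. For $n=1$ this is immediate. For $n=2$, I would split according to whether the initial generator is $\South$ or lies in $\{\North, \West, \East\}$. For the non-$\South$ case, the $C_k$-equivariance reduces the relation to the one already verified in Proposition~\ref{prop:DAnoS}, exactly as in the proof of Lemma~\ref{lem:LocalBimoduleWithC}. For the $\South$-initial case, one uses the type map $t$ to reduce each possible term to the local bimodule $\LocPos$ over $\Blg(2, t(\Upwards))$, where the relation has already been checked; the borderline cases (when algebra products vanish after passing to the ideal quotient, or when an action becomes non-zero because of divisibility by a $C_j$) are handled by a filtration argument by $(\MasFilt_1, \MasFilt_2)$, noting that non-filtration-preserving contributions can only appear in the $\delta^1_2$ actions from $\South$, and those are explicitly forced by $\Ainfty$ relations with one input involving $da$ for suitable pure $a$.

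For $n=3$, I would use the same filtration by $(\MasFilt_1, \MasFilt_2)$ as in the proof of Lemma~\ref{lem:LocalBimoduleWithC}: on the filtration-preserving part, the relation follows from Proposition~\ref{prop:DAnoS} together with $C_k$-equivariance; and the only $\delta^1_\ell$ operations that drop filtration level are the $\delta^1_2$ actions out of $\South$, so they cannot combine (via two factors) into a three-input $\Ainfty$ term. The cases $n \geq 4$ are trivial, since $\delta^1_\ell = 0$ for $\ell \geq 4$. The main obstacle I anticipate is an efficient bookkeeping of the new $\delta^1_2$ and $\delta^1_3$ actions: ensuring that the type map $t$ genuinely intertwines everything (including the $C_i, C_{i+1}$ actions that output $C_1, C_2$) so that each term in the $\Ainfty$ relation over $\Blg(m,k,\Upwards)$ unambiguously corresponds, after applying $t$, to a single term in the $\Ainfty$ relation for the local model established in Lemma~\ref{lem:LocalBimoduleWithC}.
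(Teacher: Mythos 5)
Your proposal is correct and follows essentially the same route as the paper, whose proof is precisely to invoke Proposition~\ref{prop:DAnoS} for the $C$-free part and to handle the terms with $C_i, C_{i+1}$ inputs as in Lemma~\ref{lem:LocalBimoduleWithC} (including the $(\MasFilt_1,\MasFilt_2)$-filtration argument for the three-input relation); your write-up simply makes these reductions explicit.
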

\begin{proof}
  This follows easily from Proposition~\ref{prop:DAnoS}, handling terms with
  $C_i$ inputs as in Lemma~\ref{lem:LocalBimoduleWithC}.
\end{proof}

\subsection{The negative crossing}
\label{def:NegativeCrossing}

Consider the map $\VRot\colon \Blg(m,k,\Upwards)\to\Blg(m,k,\Upwards)$ 
from Section~\ref{subsec:Symmetry}.
Recall that $\Opposite(a\cdot b)=\Opposite(b)\cdot\Opposite(a)$,
$\Opposite(\Idemp{\x})=\Idemp{\x}$,
$\Opposite(L_t)=R_t$ $\Opposite(R_t)=L_{t}$
$\Opposite(U_t)=U_{t}$ and $\Opposite(C_j)=C_j$ for all $t=1,\dots,m$ and $j\in\Upwards$.

Let $\Neg^i$ be generated by the same generators $\{\North,\South,\West,\East\}$ as before. If 
$\delta^1_1(X)=b\otimes Y$ in $\Pos^i$, then
$\delta^1_1(Y)=\Opposite(b)\otimes X$ in $\Neg^i$
If $\delta^1_{2}(X,a)=\Opposite(b)\otimes Y$ in $\Pos^i$, then
$\delta^1_{2}(Y,\Opposite(a)))=\Opposite(b)\otimes X$ in $\Neg^i$
If $\delta^1_{3}(X,a_1,a_2)=b\otimes Y$ in $\Pos^i$, then
$\delta^1_{3}(Y,\Opposite(a_2),\Opposite(a_1))=\Opposite(b)\otimes X$.

More succinctly, the opposite module of $\Pos^i$ is identified with 
\[\lsub{\Blg_2}({\overline \Pos}^i)^{\Blg_1}
\cong \lsup{{\Blg_1^{\op}}}{\overline \Pos}^i_{\Blg_2^{\op}}
=\lsup{\Blg_1}{\Neg}^i_{\Blg_2},\]
under the identification of $\Blg_t^{\op}\cong \Blg_t$ 
for $t=1,2$
(Equation~\eqref{eq:OppositeIsomorphism}).

For example, in $\Neg^i$, we have actions
\begin{align*}
  \delta^1_1(\South)&=R_i\otimes \West + L_{i+1}\otimes \East \\
  \delta^1_2(\East,R_i) &= R_{i+1}R_{i}\otimes \North \\
  \delta^1_3(\West,R_{i} U_{i}^n,R_{i+1})&=R_{i+1} U_{i+1}^n \otimes \South.
\end{align*}

\begin{prop}
  The above maps give $\Neg^i$ the structure of 
  a type $DA$ bimodule over $\Blg(m,k,\tau(\Upwards))$-$\Blg(m,k,\Upwards)$.
\end{prop}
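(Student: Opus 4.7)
The plan is to derive this proposition as a formal consequence of Proposition~\ref{prop:CrossingGeneral}, using the fact that $\Neg^i$ is defined precisely as the opposite of $\Pos^i$, combined with the algebra involution $\Opposite$ of Equation~\eqref{eq:OppositeIsomorphism}.

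First I would observe that the construction of $\Neg^i$ described above is not a fresh definition to be checked by a new bare-hands computation, but rather the image of $\Pos^i$ under a composition of two canonical operations: (a) take the opposite bimodule, turning the left/right $DA$ bimodule $\lsup{\Blg_2}\Pos^i_{\Blg_1}$ into the right/left bimodule $\lsub{\Blg_2}\overline{\Pos^i}^{\Blg_1}$ with actions that reverse orderings and dualize; and (b) apply the algebra isomorphism $\Opposite\colon \Blg_t \to \Blg_t^{\op}$ to rewrite this as a left/right $DA$ bimodule $\lsup{\Blg_1}\Neg^i_{\Blg_2}$. The explicit action formulas displayed in the text (e.g.\ $\delta^1_1(\South)=R_i\otimes \West + L_{i+1}\otimes \East$, etc.) are exactly what one obtains by taking a $\delta^1_\ell$ action in $\Pos^i$, reversing the order of the algebra inputs, and replacing each of $L_j,R_j,U_j,C_j$ by $\Opposite(L_j)=R_j,\Opposite(R_j)=L_j,\Opposite(U_j)=U_j,\Opposite(C_j)=C_j$. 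So the only thing to verify is that these two canonical operations preserve the $\Ainfty$ structure relations, which is purely formal.

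More concretely, I would check step by step that the $DA$ bimodule structure equation for $\Neg^i$ is the image of the $DA$ bimodule structure equation for $\Pos^i$ under the ``opposite + $\Opposite$'' functor. The structure equation is a sum of trees where each internal node is either an algebra multiplication $\mu_2$, a differential $\mu_1$, or some $\delta^1_k$. Under the functor, inputs are read in reversed order, $\mu_2^{\Blg_j}$ gets replaced by the pullback of $\mu_2^{\Blg_j^{\op}}$ along $\Opposite$, and $\delta^1_k$ for $\Pos^i$ gets replaced by $\delta^1_k$ for $\Neg^i$. Because $\Opposite$ is an algebra isomorphism intertwining $\mu_1$ and satisfying $\Opposite(a\cdot b)=\Opposite(b)\cdot\Opposite(a)$, each summand of the structure relation for $\Pos^i$ maps bijectively to a summand of the structure relation for $\Neg^i$, preserving cancellations. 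Hence vanishing of the $\Pos^i$ structure equation, established in Proposition~\ref{prop:CrossingGeneral}, forces vanishing of the $\Neg^i$ structure equation.

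The only real point requiring attention is bookkeeping: one must match the dualization and reindexing conventions carefully so that the explicit formulas listed in the text (such as $\delta^1_3(\West,R_i U_i^n,R_{i+1})=R_{i+1}U_{i+1}^n\otimes\South$) genuinely agree with what the opposite construction produces from, say, $\delta^1_3(\East,L_{i+1},L_i U_i^n)$ in $\Pos^i$ (after applying $\Opposite$ to each entry). I would therefore spot-check a handful of actions of each kind ($\delta^1_1$, $\delta^1_2$ involving $C$-inputs, and $\delta^1_3$) to confirm the bijective correspondence of terms. Once that correspondence is confirmed, no further $\Ainfty$ computation is needed, and the proposition follows. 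The main ``obstacle'' is not mathematical depth but the conventions: correctly tracking how the opposite of a $DA$ bimodule is itself a $DA$ bimodule over the opposite algebras, and then using $\Opposite$ to return to the original algebras.
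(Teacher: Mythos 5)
Your proposal is correct and matches the paper's argument: the paper also proves this as a purely formal consequence of the definition of $\Neg^i$ as the opposite module of $\Pos^i$ combined with the identification $\Blg_t\cong\Blg_t^{\op}$ via $\Opposite$, with no new $\Ainfty$ verification. Your extra spot-checking of individual actions is harmless but not needed beyond confirming the conventions.
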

\begin{proof}
  This is a formal consequence of the above definition.
\end{proof}

\subsection{Gradings and partial Kauffman states}

As noted earlier, $\Pos^i$ is adapted to the underlying manifold. 
Thus, its grading set takes values in $H^1(W,\partial W)$. 
We can specialize to a simply $\Q$-graded-graded setting as follows.

Recall (Equation~\eqref{eq:DefAlex}) that the algebra has an Alexander
grading with values in $\Q$, obtained from the map $\phi\colon
\OneHalf \Z^{m}\to \OneHalf\Z$ defined by
\[\phi(e_i)=\left\{\begin{array}{ll}
    -1 &{\text{if $i\in\Upwards$}} \\
    1 &{\text{if $i\not\in\Upwards$.}}
\end{array}\right.\]
There is an induced grading on the bimodule, given by
\begin{equation}
  \label{eq:IntegerAlexanderGrading}
  A(X)=\phi(\gr(X));
\end{equation}
which we can think of, more abstractly, as the 
evaluation of the grading, thought of as an element of
$H^1(W,\partial W)$, against the element
$[W,\partial]\in H_1(W,\partial W)$ induced by the orientation of
$W$.)

It is now an immediate consequence of Equation~\eqref{eq:WeightGradings} that
\[ A(X,a_1,\dots,a_\ell)=A(X)+A(a_1)+\dots+A(a_{\ell})=A(b)+A(Y),\]
if $b\otimes Y$ appears with non-zero multiplicity in $\delta^1_{\ell+1}(X,a_1,\dots,a_{\ell})$.

\begin{prop}
  \label{prop:ComputeAlexander}
  The $\Q$-valued Alexander grading on generators from Equation~\eqref{eq:IntegerAlexanderGrading}
  for the crossing bimodules
  is computed by the local Kauffman contributions displayed in Figure~\ref{fig:LocalCrossing}.
\end{prop}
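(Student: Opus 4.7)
The plan is a direct computation combined with an orientation check against Figure~\ref{fig:LocalCrossing}. Setting $\epsilon_j := \phi(e_j) \in \{\pm 1\}$, the definition of $\phi$ preceding Equation~\eqref{eq:IntegerAlexanderGrading} gives $\epsilon_j = -1$ when strand $j$ is oriented upwards ($j \in \Upwards$) and $\epsilon_j = +1$ when it is oriented downwards. Applying $\phi$ coordinatewise to the gradings of Equation~\eqref{eq:GradeCrossing} yields, for a positive crossing,
\[
A(\North) = \tfrac{\epsilon_i+\epsilon_{i+1}}{4}, \quad A(\South) = -\tfrac{\epsilon_i+\epsilon_{i+1}}{4}, \quad A(\West) = \tfrac{\epsilon_i-\epsilon_{i+1}}{4}, \quad A(\East) = -\tfrac{\epsilon_i-\epsilon_{i+1}}{4},
\]
and substitution into Equation~\eqref{eq:GradeNegCrossing} gives the opposite signs for a negative crossing.

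Next, I would identify the four module generators with the four quadrants at the crossing, via the partial Kauffman state dictionary of Definition~\ref{def:PartialKnotDiagram}. Under this dictionary, $\North$ corresponds to the state assigning the crossing to its upper quadrant (both idempotents contain $i$), $\South$ to the lower quadrant (neither contains $i$), and $\West$, $\East$ to the remaining two quadrants, determined by whether $i-1$ or $i+1$ replaces $i$ in passing from the right idempotent to the left.

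Finally, I would run through the $2 \times 2 \times 2 = 8$ cases parameterized by $(\epsilon_i,\epsilon_{i+1}) \in \{\pm 1\}^2$ and crossing sign. One representative case suffices to pin down the overall convention: for a positive crossing with both strands pointing upwards ($\epsilon_i=\epsilon_{i+1}=-1$), the formulas give $A(\North)=-\OneHalf$, $A(\South)=+\OneHalf$, and $A(\West)=A(\East)=0$, which matches the quadrant labels in the second column of Figure~\ref{fig:LocalCrossing}. The remaining seven cases follow from three symmetries already visible in the formulas: simultaneous reversal of both strand orientations ($\epsilon \mapsto -\epsilon$) negates all four values; swapping the two strands ($\epsilon_i \leftrightarrow \epsilon_{i+1}$) interchanges $\West \leftrightarrow \East$ and fixes $\North,\South$; and passage from positive to negative crossings globally negates the quadrant values.

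The only genuine obstacle is matching conventions, i.e.\ checking that the pictorial labelling of quadrants in Figure~\ref{fig:LocalCrossing} (which depends on the strand orientations and over/under data) agrees with the algebraic labelling in Equations~\eqref{eq:GradeCrossing} and~\eqref{eq:GradeNegCrossing} under the generator-to-quadrant identification above. Once this dictionary is fixed by the one representative case, each of the remaining cases reduces to a one-line verification.
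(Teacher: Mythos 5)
Your proposal is correct and matches the paper's own argument, which likewise verifies the claim as a direct check of Equations~\eqref{eq:IntegerAlexanderGrading}, \eqref{eq:GradeCrossing}, and~\eqref{eq:GradeNegCrossing} against Figure~\ref{fig:LocalCrossing} over all orientations of the braidlike positive and negative crossings. Your organization of the eight cases via the sign symmetries is just a tidy way of carrying out that same case check.
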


\begin{proof}
  This a straightforward check using
  Equation~\eqref{eq:IntegerAlexanderGrading},~\eqref{eq:GradeCrossing}, and~\eqref{eq:GradeNegCrossing},
  considering all the possible orientations of the braidlike positive
  and negative crossing.
\end{proof}

\newcommand\Canon{K}
\newcommand\gen{{\mathbf 1}}
\newcommand\NxN{\North\North}
\newcommand\NxW{\North\West}
\newcommand\NxE{\North\East}
\newcommand\ExS{\East\South}
\newcommand\SxS{\South\South}
\newcommand\WxS{\West\South}
\newcommand\ExW{\East\West}
\section{Braid relations}
\label{sec:Braid}

We prove that the $DA$ bimodules $\Pos^i$ and $\Neg^i$
satisfy the following braid relations:

\begin{thm}
  \label{thm:BraidRelation}
  Fix $m$, $k$ with $0\leq k\leq m+1$, 
  $i$ with $1\leq i\leq m-1$,
  and $\Upwards\subset \{1,\dots,m\}$.
  Let
  $\Blg_1=\Blg(m,k,\Upwards)$ and $\Blg_2=\Blg(m,k,\tau_i(\Upwards))$.
  Then,
  \begin{equation}
    \lsup{\Blg_1}\Pos^i_{\Blg_2}\DT~^{\Blg_2}\Neg^i_{\Blg_1}\simeq~^{\Blg_1}\Id_{\Blg_1}\!\simeq~
    \lsup{\Blg_1}\Neg^i_{\Blg_2}\DT~^{\Blg_2}\Pos^i_{\Blg_1}
    \label{eq:InvertPos} 
  \end{equation}
  Given $j\neq i$ with $1\leq j\leq m-1$, let
  $\Blg_3=\Blg(m,k,\tau_j \tau_i(\Upwards))$ and
  $\Blg_4=\Blg(m,k,\tau_j(\Upwards))$.
  If $|i-j|>1$
  \begin{equation}
    \lsup{\Blg_3}\Pos^j_{\Blg_2}\DT~^{\Blg_2}\Pos^i_{\Blg_1}
    \simeq~
    \lsup{\Blg_3}\Pos^i_{\Blg_4}\DT~^{\Blg_4}\Pos^j_{\Blg_1}
    \label{eq:FarBraids}.
    \end{equation}
    while if $j=i+1$,
    let 
    $\Blg_5=\Blg(m,k,\tau_i\tau_{i+1}\tau_i(\Upwards))$
    and $\Blg_6=\Blg(m,k,\tau_{i}\tau_{i+1}(\Upwards))$;
    then,    
    \begin{equation}
      \lsup{\Blg_5}\Pos^i_{\Blg_3}\DT
      \lsup{\Blg_3}\Pos^{i+1}_{\Blg_2}\DT
      \lsup{\Blg_2}\Pos^i_{\Blg_1}
      \simeq~
      \lsup{\Blg_5}\Pos^{i+1}_{\Blg_6}\DT~^{\Blg_6}\Pos^{i}_{\Blg_4}
      \DT~^{\Blg_4}\Pos^{i+1}_{\Blg_1}.
    \label{eq:NearBraids}
    \end{equation}
\end{thm}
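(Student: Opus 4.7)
The overall strategy is to verify the three relations by direct computation of tensor products at the $DA$ level, making systematic use of the homological perturbation lemma (Lemma~\ref{lem:HomologicalPerturbation2}) to collapse the many acyclic pairs of generators produced by the $\delta^1_1$ actions of the crossing bimodules. A complementary technique, especially useful for the braid relation~\eqref{eq:NearBraids}, is Koszul duality via Theorem~\ref{thm:DDisInvertible}: since the canonical $DD$ bimodule $\CanonDD$ is invertible, a homotopy equivalence of $DA$ bimodules can be detected after tensoring with $\CanonDD$ on one side, converting the question into an equivalence of $DD$ bimodules whose structure maps (as in Diagrams~\eqref{eq:PositiveCrossing} and~\eqref{eq:NegativeCrossing}) are more symmetric and more amenable to direct manipulation.

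I begin with invertibility~\eqref{eq:InvertPos}. The generators of $\Pos^i \DT \Neg^i$ are pairs $X \otimes Y$ with $X, Y \in \{\North, \South, \West, \East\}$ compatible in the middle idempotent. The dashed arrows in Diagrams~\eqref{eq:PositiveCrossing} and~\eqref{eq:NegativeCrossing} (the $\delta^1_1$ actions $\West, \East \to \South$ in $\Pos^i$, and the dual actions $\South \to \West, \East$ in $\Neg^i$) pair up many $X \otimes Y$ generators acyclically in the induced differential. Cancelling these pairs by Lemma~\ref{lem:HomologicalPerturbation2} should leave a single surviving generator per idempotent, and the weight-grading identity~\eqref{eq:WeightGradings} then forces all surviving higher actions $\delta^1_j$ with $j > 2$ to vanish on weight-zero inputs, yielding the identity bimodule. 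The reverse composition $\Neg^i \DT \Pos^i$ reduces to the same computation, using the opposite-module symmetry from Section~\ref{def:NegativeCrossing}.

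For far commutation~\eqref{eq:FarBraids}, the two crossings occupy disjoint pairs of strand positions, so the generators of both tensor products are canonically in bijection. The local extension rules of Section~\ref{subsec:Extension} are designed so that an algebra input supported away from $\{i, i+1, j, j+1\}$ passes transparently through each crossing; this reduces the verification of the $DA$ isomorphism to checking that the local actions at the two crossings commute term by term, which is direct if somewhat bookkeeping-heavy.

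The main obstacle is the braid relation~\eqref{eq:NearBraids}. Both sides are triple tensor products with as many as $64$ generators, whose $\delta^1_j$ actions mix contributions from all three crossings. My plan is first to cancel the acyclic pairs produced by the $\delta^1_1$ actions on each side (via Lemma~\ref{lem:HomologicalPerturbation2}), and then to match the surviving generators on the two sides with the partial Kauffman states (Definition~\ref{def:PartialKnotDiagram}) of the three-crossing tangle; because the two triple tangles differ by a planar isotopy, their partial Kauffman states are canonically identified. The delicate step is checking that the induced $\delta^1_j$ actions agree under this identification. Organizing the comparison bigrading-by-bigrading via Equation~\eqref{eq:WeightGradings} renders each check finite. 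Should this direct approach prove too unwieldy, an alternative is to pass through Koszul duality to the $DD$ side and verify the analogous equivalence for triple tensor products of $\Pos_i$ bimodules, whose explicit structure maps from Diagram~\eqref{eq:PositiveCrossing} afford a more organized case analysis.
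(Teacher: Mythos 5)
Your plan is missing the ingredient around which the paper's entire proof is organized: Lemma~\ref{lem:CrossingDADD}, the duality $\Pos^i\DT\CanonDD\simeq\Pos_i$ and $\Neg^i\DT\CanonDD\simeq\Neg_i$. Your fallback of ``passing to the DD side'' silently assumes exactly this---that after tensoring with $\CanonDD$ the structure maps become those of Diagrams~\eqref{eq:PositiveCrossing} and~\eqref{eq:NegativeCrossing}---but that identification is itself a nontrivial computation (it requires explicit change-of-basis homotopies whenever $\Upwards\cap\{i,i+1\}\neq\{i,i+1\}$), and it is where the paper concentrates the real work. With it, and the invertibility of $\CanonDD$, all three relations reduce to \emph{finite} computations with DD bimodules: $\Pos^i\DT\Neg_i\simeq\CanonDD$ gives \eqref{eq:InvertPos} (the reverse composition then follows formally by dualizing, as you also note); a symmetric intermediate DD bimodule gives \eqref{eq:FarBraids}; and the explicit computation of Figure~\ref{fig:R3Module}, combined with a $180^\circ$ rotational symmetry of the answer, gives \eqref{eq:NearBraids}. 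Without this reduction your primary DA-level computations have no organizing principle that makes them finite: the algebras are infinite dimensional (arbitrary powers of the $U_i$, factors of $C_j$), so ``bigrading-by-bigrading'' does not bound the input sequences one must control for the operations induced by homological perturbation.

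Beyond this, two of your specific reductions are unsupported or fail as stated. For \eqref{eq:InvertPos}, after cancelling generators the weight and Maslov gradings do \emph{not} force the induced higher operations to vanish (the crossing bimodules themselves carry grading-compatible $\delta^1_3$ actions), and identifying the reduced object with $\lsup{\Blg_1}\Id_{\Blg_1}$ requires showing $\delta^1_2(x,a)=a\otimes x$ for \emph{all} $a$ and the vanishing of all induced $\delta^1_j$, $j\neq 2$---no mechanism for this is given; the paper instead verifies $\Pos^i\DT\Neg_i\simeq\CanonDD$, where only finitely many arrows occur. For \eqref{eq:FarBraids}, the claim that inputs supported away from $\{i,i+1,j,j+1\}$ pass through transparently, so that only a term-by-term commutation of local actions remains, overlooks inputs whose support meets \emph{both} crossings when $j=i\pm 2$ (e.g.\ $R_{i+2}R_{i+1}$, for which $\delta^1_2(\North,\cdot)$ at crossing $i$ already produces outputs interacting with crossing $i+2$); this adjacent case is exactly where the paper observes that the generator type $\ExW$ disappears and why it constructs a symmetric intermediate bimodule rather than matching actions directly. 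Finally, for \eqref{eq:NearBraids}, even granting the passage to the DD side you still need a device for comparing the two triple products; the paper computes one side explicitly and identifies the other via the rotation symmetry of Figure~\ref{fig:R3Module}, and your proposal contains no analogue of that step.
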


\begin{figure}[ht]
\input{BraidRelation.pstex_t}
\caption{\label{fig:BraidRelation} 
  Bimodules and algebras appearing in Theorem~\ref{thm:BraidRelation}.}
\end{figure}

We will prove the above with the help of the following:

\begin{lemma}
  \label{lem:CrossingDADD}
  Fix $m$, $k$ with $0\leq k\leq m+1$, $i$ with $1\leq i\leq m-1$, and
  $\Upwards\subset \{1,\dots,m\}$.  Let $\Blg_1=\Blg(m,k,\Upwards)$,
  $\Blg_2=\Blg(m,k,\tau_i(\Upwards))$,
  $\Blg_1'=\Blg(m,m+1-k,\{1,\dots,m\}\setminus\Upwards)$.
    Let
  $\lsup{\Blg_2}\Pos^i_{\Blg_1}$ be the $DA$ bimodule
  from Section~\ref{sec:CrossingDA}, and let $\lsup{\Blg_1,\Blg_1'}\CanonDD$ be the canonical type
  $DD$ bimodule from Section~\ref{subsec:CanonDD}. Then,
  \begin{align}
    \lsup{\Blg_2}\Pos^i_{\Blg_1} \DT
    \lsup{\Blg_1,\Blg_1'}\CanonDD\simeq~\lsup{\Blg_2,\Blg_1'}\Pos_i 
    \label{eq:PosDual} \\
    \lsup{\Blg_2}\Neg^i_{\Blg_1} \DT
    \lsup{\Blg_1,\Blg_1'}\CanonDD\simeq~\lsup{\Blg_2,\Blg_1'}\Neg_i 
    \label{eq:NegDual}
  \end{align}
  where the type $DD$ bimodule appearing on the right
  is the one defined in Section~\ref{sec:Crit}.
\end{lemma}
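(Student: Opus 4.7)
The plan is to verify Equation~\eqref{eq:PosDual} by computing $\lsup{\Blg_2}\Pos^i_{\Blg_1}\DT\lsup{\Blg_1,\Blg_1'}\CanonDD$ directly with the standard $DA\DT DD$ formula
\[
\delta^1(X\otimes k_\x)=\sum_{j\geq 0}\bigl(\delta^1_{j+1}\otimes \Pi\bigr)\bigl(X\otimes \delta^j_{\CanonDD}(k_\x)\bigr),
\]
where $\Pi$ multiplies the $\Blg_1'$-outputs produced by iterating $\delta^1_{\CanonDD}$. Equation~\eqref{eq:NegDual} will then follow by taking opposites, using the identification of $\Neg^i$ with the opposite of $\Pos^i$ from Section~\ref{def:NegativeCrossing} together with the symmetry of $\CanonDD$ under $\Opposite$ from Equation~\eqref{eq:OppositeIsomorphism}. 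Because $\delta^1_\ell=0$ in $\Pos^i$ for $\ell>3$, only $j\in\{0,1,2\}$ will contribute to the sum.

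First I match generators. A generator of the tensor product has the form $X\otimes k_\z$, where $X\in\{\North,\West,\East,\South\}$ is a generator of $\Pos^i$ with right idempotent $\Idemp{\z}$, and $k_\z$ is paired via $\CanonDD$ with the complementary idempotent $\Idemp{\z'}$ on the $\Blg_1'$ side, where $\z'=\{0,\dots,m\}\setminus\z$. Comparing with the description in Section~\ref{sec:CrossingDD}, these bi-idempotents are precisely those of the four families of generators of $\Pos_i$ of the corresponding type. For instance, $\West\otimes k_\z$ (with $i\notin\z$, $i-1\in\z$) matches the $\West$ generator of $\Pos_i$ with left idempotent $\Idemp{\{i\}\cup\z\setminus\{i-1\}}$ and right idempotent $\Idemp{\z'}$, as in both cases $\z'$ omits $i-1$ and meets the left idempotent exactly in $\{i\}$.

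Next I verify that the differentials match term by term with those of $\Pos_i$. The ``outside'' terms of Type~\ref{type:OutsideLRP} come from $j=1$, using the identity-like actions $\delta^1_2(X,L_j)=L_j\otimes X$ and $\delta^1_2(X,R_j)=R_j\otimes X$ for $j\neq i,i+1$ paired with the matching $L\otimes R$ or $R\otimes L$ summand of $A$; the Type~\ref{type:UCP} terms arise similarly, using the $C$-equivariance~\eqref{eq:CEquivariance} together with the $C\otimes U$ or $U\otimes C$ summands of $A$. The inside arrows of Type~\ref{type:InsideP} recorded in Equation~\eqref{eq:PositiveCrossing} split into three contributions: the coefficients $L_i\otimes 1$ and $R_{i+1}\otimes 1$ come from $j=0$ and the explicit $\delta^1_1$ actions on $\West$ and $\East$; the short-monomial coefficients like $1\otimes L_i$ and $U_{i+1}\otimes R_i$ come from $j=1$ together with the inside $\delta^1_2$-actions of Equation~\eqref{eq:PositiveCrossingDA}, extended to $\Blg(m,k,\Upwards)$ as in Section~\ref{subsec:Extension}; and the two-letter coefficients $L_iL_{i+1}\otimes R_i$ and $R_{i+1}R_i\otimes L_{i+1}$ come from $j=2$ together with the $\delta^1_3$-actions of Section~\ref{subsec:LocalPos}. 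For example, the term $L_{i+1}\otimes R_{i+1}R_i$ in the $\South\to\West$ arrow of $\Pos_i$ comes from $\delta^1_3(\South,L_{i+1},L_i)=L_{i+1}\otimes\West$ iterated against the $L_{i+1}\otimes R_{i+1}$ and $L_i\otimes R_i$ summands of $A$.

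The main obstacle is the Type~\ref{type:InsideP} check involving $\delta^1_3$: the formulas in Section~\ref{subsec:LocalPos} were engineered precisely so that, after telescoping against two $\CanonDD$-iterations and invoking the $U_iU_{i+1}$-equivariance of Equation~\eqref{eq:U1U2} together with the extension rules of Section~\ref{subsec:Extension} and the $C$-input actions of Section~\ref{sec:AddCs}, the $\Blg_1'$-side monomials (and their $\Blg_2$-companions) assemble exactly as prescribed by Equation~\eqref{eq:PositiveCrossing}. The bookkeeping requires tracking the borderline cases in which a genuine $\delta^1_3$-action triggers versus an iterated $\delta^1_2$, which is precisely the dichotomy captured by Lemma~\ref{lem:CharacterizeD3Extended}; this is also the point at which one checks that no extraneous terms appear in the output of the tensor product.
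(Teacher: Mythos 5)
Your strategy — compute $\Pos^i\DT\CanonDD$ directly, match generators through complementary idempotents, and deduce Equation~\eqref{eq:NegDual} by taking opposites — is broadly the paper's strategy, and your term-by-term matching of the differentials is essentially what happens in the paper in the case $\{i,i+1\}\subset\Upwards$: there the tensor product literally equals $\Pos_i$, with the Type~\ref{type:InsideP} arrows arising from $\delta^1_1$, $\delta^1_2$, and $\delta^1_3$ exactly as you describe (modulo the small point that the $\Blg_1'$-side products are taken in $(\Blg_1')^{\op}$, since $\CanonDD$ is a left/left bimodule).

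The genuine gap is your final claim that the remaining work is "to check that no extraneous terms appear." When $\Upwards\cap\{i,i+1\}$ is a proper subset of $\{i,i+1\}$, extraneous terms \emph{do} appear, and the identification with $\Pos_i$ is not an equality but requires an explicit isomorphism of type $DD$ structures. Concretely, the extra $\delta^1_2$-actions out of $\South$ with $C$-inputs from Section~\ref{sec:AddCs} (e.g.\ $\delta^1_2(\South,C_{i+1})=R_i\otimes\West$, $\delta^1_2(\South,C_i)=L_{i+1}\otimes\East$, $\delta^1_2(\South,R_iC_{i+1})=R_i\otimes\North$) pair with the $U_t\otimes C_t$ summands of the element $A$ defining $\CanonDD$ to produce arrows such as $\South\to\North$, $\West\to\East$, and $\East\to\West$ that have no counterpart in Equation~\eqref{eq:PositiveCrossing}; this is exactly what is recorded in Figures~\ref{fig:PtimesK2} and~\ref{fig:PtimesKe}. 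The paper removes these by a change of basis, e.g.\ $h^1(\South)=\South+(L_{i+1}\otimes C_i)\otimes\East+(R_i\otimes C_{i+1})\otimes\West$ (with the obvious one-term versions when only one of $C_i$, $C_{i+1}$ lies in $\Blg_1'$), and verifies that $h^1$ is an isomorphism of $DD$ structures. Your proposal has no mechanism for producing such an identification, so as written it would stall in three of the four cases of $\Upwards\cap\{i,i+1\}$; the case analysis and the change of basis are the actual content of the proof beyond the bookkeeping you outline.
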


\begin{proof}
  We start by verifying Equation~\eqref{eq:PosDual}; and 
  for simplicity, we assume $i=1$.
  The computation depends on $\Upwards\cap \{1,2\}$.

  We start with the case where $\{1,2\}\subset\Upwards$.  In fact, in
  this case, $\Pos^i\DT \CanonDD= \Pos_i$.  The arrows connecting
  $\North$, $\West$, and $\East$ are all induced from $\delta_2$ actions on
  $\Pos^i$.  For example, the differential from $\North$ to $\East$
  labelled by $L_1 L_2\otimes R_1$ from Figure~\ref{eq:PositiveCrossing} is obtained
  by pairing the term $L_1\otimes R_1$ term in the canonical
  $DD$ bimodule pairs with the action $\delta^1_2(\North,L_1)=L_1 L_2\otimes
  \East$ in the $\Pos^1$,  as shown on the left diagram:
\[
\begin{array}{ll}
\mathcenter{\begin{tikzpicture}[scale=.8]
  \node at (0,1) (inX) {$\North$};
  \node at (2,-2) (outC) {};
  \node at (1,1) (inY) {};
  \node at (0,-2) (outX) {$\East$};
  \node at (1,-2) (outY) {};
  \node at (-1,-2) (outA) {};
  \node at (1,0) (diffY) {$\delta^1_{\CanonDD}$};
  \node at (0,-1) (diffX) {$\delta^1_{\Pos^1}$};
  \draw[modarrow] (inX) to (diffX);
  \draw[modarrow] (diffX) to (outX);
  \draw[modarrow] (inY) to (diffY);
  \draw[modarrow] (diffY) to (outY);
  \draw[algarrow] (diffY) to node[above,sloped] {\tiny{$L_1$}}(diffX);
  \draw[algarrow] (diffY) to node[above,sloped] {\tiny{$R_1$}}(outC);
  \draw[algarrow] (diffX) to node[above,sloped] {\tiny{$L_1 L_2$}}(outA);
\end{tikzpicture}}
&
\mathcenter{\begin{tikzpicture}[scale=.8]
  \node at (0,1) (inX) {$\North$};
  \node at (2,-2) (outC) {};
  \node at (1,1) (inY) {};
  \node at (0,-2) (outX) {$\North$};
  \node at (1,-2) (outY) {};
  \node at (-1,-2) (outA) {};
  \node at (1,0) (diffY) {$\delta^1_{\CanonDD}$};
  \node at (0,-1) (diffX) {$\delta^1_{\Pos^1}$};
  \draw[modarrow] (inX) to (diffX);
  \draw[modarrow] (diffX) to (outX);
  \draw[modarrow] (inY) to (diffY);
  \draw[modarrow] (diffY) to (outY);
  \draw[algarrow] (diffY) to node[above,sloped] {\tiny{$C_1$}}(diffX);
  \draw[algarrow] (diffY) to node[above,sloped] {\tiny{$U_1$}}(outC);
  \draw[algarrow] (diffX) to node[above,sloped] {\tiny{$C_2$}}(outA);
\end{tikzpicture}}
\end{array}
\]
On the right, we demonstrate how to construct 
the term $(C_2\otimes U_1)\otimes \North$ in $\delta^1(\North)$
(of Type~\ref{type:UC}).

The differentials into $\South$ come from the $\delta^1_1$ action on
$\Pos^i$, and the differentials out of $\South$ come from $\delta^1_2$
and $\delta^1_3$ actions on $\Pos^i$.  For example, the two terms out
of $\South$ involving the generator 
$\East$ arise as follows:
\[
\begin{array}{ll}
\mathcenter{\begin{tikzpicture}[scale=.8]
  \node at (0,1) (inX) {$\South$};
  \node at (2,-2) (outC) {};
  \node at (1,1) (inY) {};
  \node at (0,-2) (outX) {$\East$};
  \node at (1,-2) (outY) {};
  \node at (-1,-2) (outA) {};
  \node at (1,0) (diffY) {$\delta^1_{\CanonDD}$};
  \node at (0,-1) (diffX) {$\delta^1_{\Pos^1}$};
  \draw[modarrow] (inX) to (diffX);
  \draw[modarrow] (diffX) to (outX);
  \draw[modarrow] (inY) to (diffY);
  \draw[modarrow] (diffY) to (outY);
  \draw[algarrow] (diffY) to node[above,sloped] {\tiny{$C_1$}}(diffX);
  \draw[algarrow] (diffY) to node[above,sloped] {\tiny{$U_1$}}(outC);
  \draw[algarrow] (diffX) to node[above,sloped] {\tiny{$L_2$}}(outA);
\end{tikzpicture}}
&
\mathcenter{\begin{tikzpicture}[scale=.8]
  \node at (-.5,1) (inX) {$\South$};
  \node at (2.5,-2) (multC) {$\mu_2$};
  \node at (3,-3) (outC) {};
  \node at (1,1) (inY) {};
  \node at (-.5,-3) (outX) {$\East$};
  \node at (1,-3) (outY) {};
  \node at (-1,-3) (outA) {};
  \node at (1,0) (diffY) {$\delta^1_{\CanonDD}$};
  \node at (1,-1.5) (diffY2) {$\delta^1_{\CanonDD}$};
  \node at (-.5,-2) (diffX) {$\delta^1_{\Pos^1}$};
  \draw[modarrow] (inX) to (diffX);
  \draw[modarrow] (diffX) to (outX);
  \draw[modarrow] (inY) to (diffY);
  \draw[modarrow] (diffY) to (diffY2);
  \draw[modarrow] (diffY2) to (outY);
  \draw[algarrow] (diffY) to node[above,sloped] {\tiny{$R_1$}}(diffX);
  \draw[algarrow] (diffY) to node[above,sloped] {\tiny{$L_1$}}(multC);
  \draw[algarrow] (diffY2) to node[above,sloped] {\tiny{$L_2$}}(multC);
  \draw[algarrow] (diffY2) to node[above,sloped] {\tiny{$R_2$}}(diffX);
  \draw[algarrow] (diffX) to node[above,sloped] {\tiny{$R_1$}}(outA);
  \draw[algarrow] (multC) to node[above,sloped] {\tiny{$L_1 L_2$}} (outC);
\end{tikzpicture}}
\end{array}
\]
(Note that since $\CanonDD$ bimodule is a left/left bimodule, the multiplication appearing on the right
is taking place in $(\Blg_1')^{\op}$.)

Terms of Type~\ref{type:OutsideLR} are easily constructed from pairing
differentials in the identity $DD$ bimodule with the part of $\Pos^i$
that behaves like an identity bimodule.

  Consider next the case where $\Upwards\cap\{1,2\}=\{2\}$. Then,
  $\Pos^1\DT \CanonDD$ is given as in Figure~\ref{fig:PtimesK2},
  \begin{figure}
    \begin{tikzpicture}[scale=1.8]
    \node at (0,4) (N) {${\mathbf N}$};
    \node at (-1.5,2.5) (W) {${\mathbf W}$} ;
    \node at (1.5,2.5) (E) {${\mathbf E}$} ;
    \node at (0,1) (S) {${\mathbf S}$} ;
    \draw[->] (S) [bend left=12] to node[below,sloped] {\tiny{$R_1\otimes U_{2}+L_{2}\otimes R_{2}R_{1}$}}  (W)  ;
    \draw[->] (W) [bend left=12] to node[above,sloped] {\tiny{$L_{1}\otimes 1$}}  (S)  ;
    \draw[->] (E)[bend right=12] to node[above,sloped] {\tiny{$R_{2}\otimes 1$}}  (S)  ;
    \draw[->] (S)[bend right=12] to node[below,sloped] {\tiny{$R_{1} \otimes L_{1} L_{2}$}} (E) ;
    \draw[->] (W)[bend right=12] to node[below,sloped] {\tiny{$1\otimes L_{1}$}} (N) ;
    \draw[->] (N)[bend right=12] to node[above,sloped] {\tiny{$U_{2}\otimes R_{1} + R_{2} R_{1} \otimes L_{2}$}} (W) ;
    \draw[->] (E)[bend left=12] to node[below,sloped]{\tiny{$1\otimes R_{2}$}} (N) ;
    \draw[->] (N)[bend left=12] to node[above,sloped]{\tiny{$U_{1}\otimes L_{2} + L_{1} L_{2}\otimes R_{1}$}} (E) ;
    \draw[->] (N) [loop above] to node[above]{\tiny{$C_{1}\otimes U_{2} + U_2\otimes C_{1}$}} (N);
    \draw[->] (W) [loop left] to node[above,sloped]{\tiny{$C_{1}\otimes U_{2} + U_2\otimes C_{1}$}} (W);
    \draw[->] (E) [loop right] to node[above,sloped]{\tiny{$C_{1}\otimes U_{2}$}} (E);
    \draw[->] (S) [loop below] to node[below]{\tiny{$C_{1}\otimes U_{2}$}} (S);
    \draw[->] (W) to node[above,sloped,pos=.3] {\tiny{$L_1 L_2\otimes C_1$}} (E) ;
    \draw[->] (S) to node[below,sloped,pos=.3] {\tiny{$L_2\otimes C_1 R_2$}}  (N) ;
    \end{tikzpicture}
    \caption{\label{fig:PtimesK2}{\bf $\Pos^1\DT \CanonDD$ when $\Upwards\cap\{1,2\}=\{2\}$}}
  \end{figure}
    and outside actions (i.e. of Type~\ref{type:OutsideLRP} and~\ref{type:UCP} with $j\neq 1,2$).
    Consider the 
    map $h^1\colon \Pos^1\DT \CanonDD \to \Pos_1$
    \[ h^1(X) = \left\{\begin{array}{ll}
        \South+ (L_2\otimes C_1)\otimes \East & {\text{if $X=\South$}} \\
        X &{\text{otherwise.}}
      \end{array}
    \right.\]
    Let $g^1\colon \Pos_1\to \Pos^1\DT \CanonDD$ be given by the same formula.
    It is easy to verify that $h^1$ and $g^1$ are homomorphisms of type $DD$ structures, 
    $h^1\circ g^1 = \Id$, and $g^1\circ h^1=\Id$.
    
    The case where $\Upwards\cap\{1,2\}=\{1\}$ works similarly.
    
    When $\Upwards\cap\{1,2\}=\emptyset$,
    $\Pos^1\DT \CanonDD$ is given as in Figure~\ref{fig:PtimesKe}
    \begin{figure}
    \begin{tikzpicture}[scale=1.85]
    \node at (0,4) (N) {${\mathbf N}$} ;
    \node at (-2,2.5) (W) {${\mathbf W}$} ;
    \node at (2,2.5) (E) {${\mathbf E}$} ;
    \node at (0,.5) (S) {${\mathbf S}$} ;
    \draw[->] (S) [bend left=10] to node[below,sloped] {\tiny{$R_1 U_2 \otimes C_1 C_2+L_{2}\otimes R_{2}R_{1}$}}  (W)  ;
    \draw[->] (W) [bend left=10] to node[above,sloped] {\tiny{$L_{1}\otimes 1$}}  (S)  ;
    \draw[->] (E) [bend right=10] to node[above,sloped] {\tiny{$R_{2}\otimes 1$}}  (S)  ;
    \draw[->] (S)[bend right=10] to node[below,sloped] {\tiny{$R_{1} \otimes L_{1} L_{2} + L_2 U_1\otimes C_1 C_2$}} (E) ;
    \draw[->] (W)[bend right=10]to node[below,sloped] {\tiny{$1\otimes L_{1}$}} (N) ;
    \draw[->] (N)[bend right=10] to node[above,sloped] {\tiny{$U_{2}\otimes R_{1} + R_{2} R_{1} \otimes L_{2}$}} (W) ;
    \draw[->] (E)[bend left=10]to node[below,sloped]{\tiny{$1\otimes R_{2}$}} (N) ;
    \draw[->] (N)[bend left=10] to node[above,sloped]{\tiny{$U_{1}\otimes L_{2} + L_{1} L_{2}\otimes R_{1}$}} (E) ;
    \draw[->] (N) [loop above] to node[above]{\tiny{$U_1\otimes C_2 + U_2\otimes C_1$}} (N);
    \draw[->] (W) [loop left] to node[above,sloped]{\tiny{$U_2\otimes C_{1}$}} (W);
    \draw[->] (E) [loop right] to node[above,sloped]{\tiny{$U_1\otimes C_{2}$}} (E);
    \draw[->] (E) [bend right=5] to node[above,pos=.3] {\tiny{$R_2 R_1 \otimes C_2$}} (W) ;
    \draw[->] (W) [bend right=5] to node[below,pos=.3] {\tiny{$L_1 L_2\otimes C_1$}} (E) ;
    \draw[->] (S) to node[below,sloped,pos=.3] {\tiny{$L_2\otimes C_1 R_2 + R_1\otimes L_1 C_2$}} (N) ;
    \end{tikzpicture}
    \caption{\label{fig:PtimesKe} {\bf $\Pos^1\DT\CanonDD$ when $\Upwards\cap\{1,2\}=\emptyset$.}}
    \end{figure}

    Consider the 
    map $h^1\colon \Pos^1\DT \CanonDD \to \Pos_1$
    \[ h^1(X) = \left\{\begin{array}{ll}
        \South+ (L_2\otimes C_1)\cdot \East + (R_1\otimes C_2)\cdot 
        \West & {\text{if $X=\South$}} \\
        X &{\text{otherwise.}}
      \end{array}
    \right.\]
    Let $g^1\colon \Pos_1\to \Pos^1\DT \CanonDD$ be given by the same formula.
    It is easy to verify that $h^1$ and $g^1$ are homomorphisms of type $DD$ structures, 
    $h^1\circ g^1 = \Id$, and $g^1\circ h^1=\Id$.
    
    Equation~\eqref{eq:PosDual} in cases where $i\neq 1$ works the same way.
    Equation~\eqref{eq:NegDual} can be proved similarly. (Or alternatively,
    it can be seen as a consequence of Equation~\eqref{eq:PosDual} and the symmetry
    of the bimodules, phrased in terms of the opposite algebras.)
\end{proof}

\begin{lemma}
  \label{lem:InvertPos}
  Equation~\eqref{eq:InvertPos} holds.
\end{lemma}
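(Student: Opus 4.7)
The plan is to reduce the $DA$-bimodule equivalence \eqref{eq:InvertPos} to an equivalence of type $DD$ bimodules, using the canonical quasi-invertible bimodule $\CanonDD$ of Section~\ref{subsec:CanonDD}. By Theorem~\ref{thm:DDisInvertible}, $\lsup{\Blg_1,\Blg_1'}\CanonDD$ is quasi-invertible, so the functor $(-)\DT \lsup{\Blg_1,\Blg_1'}\CanonDD$ reflects homotopy equivalences of type $DA$ bimodules. Applying this functor to both sides of \eqref{eq:InvertPos} and combining associativity (Lemma~\ref{lem:AssociateDD}) with Lemma~\ref{lem:CrossingDADD} gives
\[
(\lsup{\Blg_1}\Pos^i_{\Blg_2}\DT~\lsup{\Blg_2}\Neg^i_{\Blg_1})\DT \lsup{\Blg_1,\Blg_1'}\CanonDD
\simeq \lsup{\Blg_1}\Pos^i_{\Blg_2}\DT (\lsup{\Blg_2}\Neg^i_{\Blg_1}\DT \lsup{\Blg_1,\Blg_1'}\CanonDD)
\simeq \lsup{\Blg_1}\Pos^i_{\Blg_2}\DT \lsup{\Blg_2,\Blg_1'}\Neg_i,
\]
while $\lsup{\Blg_1}\Id_{\Blg_1}\DT~\lsup{\Blg_1,\Blg_1'}\CanonDD = \lsup{\Blg_1,\Blg_1'}\CanonDD$. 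Thus it suffices to establish the $DD$-level equivalence
\[
\lsup{\Blg_1}\Pos^i_{\Blg_2}\DT \lsup{\Blg_2,\Blg_1'}\Neg_i \simeq \lsup{\Blg_1,\Blg_1'}\CanonDD.
\]

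Next I would compute the left-hand side directly. Its generators are compatible pairs $X\otimes Y$ where $X\in\{\North,\South,\West,\East\}$ is a generator of $\Pos^i$ and $Y\in\{\North,\South,\West,\East\}$ is a generator of $\Neg_i$, subject to the right idempotent of $X$ matching the middle idempotent of $Y$. A short case analysis at positions $i-1, i, i+1$ yields only a handful of surviving combined types. The $DD$ differential is assembled by pairing the $\delta^1_j$ operations on $\Pos^i$ (including the $\delta^1_3$-terms from Section~\ref{subsec:LocalPos}) with the iterated operation $\delta$ on $\Neg_i$ via the formula in Section~\ref{subsec:Tensor}.

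I would then write down an explicit homotopy equivalence with $\CanonDD$ following the pattern of the maps $h^1, g^1$ in the proof of Lemma~\ref{lem:CrossingDADD}: define type $DD$ morphisms $f^1\colon \lsup{\Blg_1,\Blg_1'}\CanonDD\to \Pos^i\DT \Neg_i$ and its inverse, with off-diagonal corrections compensating for the cancelling pairs organized by the topological picture that a positive crossing stacked on its negative mirror is isotopic to the identity braid. Verifying that these are $DD$-homomorphisms and mutually inverse up to null-homotopy is a finite check against the differentials of $\Neg_i$ from Section~\ref{sec:CrossingDD} and the operations of $\Pos^i$. The second half of Equation~\eqref{eq:InvertPos}, namely $\lsup{\Blg_1}\Neg^i_{\Blg_2}\DT~\lsup{\Blg_2}\Pos^i_{\Blg_1}\simeq \lsup{\Blg_1}\Id_{\Blg_1}$, follows by the mirror of this argument (together with the exchange of $\Pos^i$ and $\Neg^i$ under the opposite-algebra isomorphism $\Opposite$ of Equation~\eqref{eq:OppositeIsomorphism}, analogous to the identification in Section~\ref{def:NegativeCrossing}).

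The main obstacle will be the combinatorial bookkeeping. Once the extension rules of Section~\ref{subsec:Extension} and the $C_j$ actions from Sections~\ref{sec:AddCs} and~\ref{sec:GenCrossing} are unpacked, the tensor product $\Pos^i\DT \Neg_i$ will carry a substantial number of operations that must conspire to cancel down to $\CanonDD$. The trickiest step will be handling the $\delta^1_3$ contributions, which arise when $\Neg_i$ outputs a length-two algebra sequence through $\delta^1\circ \delta^1$ to be absorbed by $\delta^1_3$ on $\Pos^i$; tracking their coefficients is a case analysis of the same flavor as (but longer than) that carried out for Lemma~\ref{lem:CrossingDADD}.
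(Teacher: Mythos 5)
Your proposal is correct and follows essentially the same route as the paper: reduce via the quasi-invertibility of $\CanonDD$, associativity, and Lemma~\ref{lem:CrossingDADD} to the $DD$-level identity $\Pos^i\DT\Neg_i\simeq\CanonDD$, verify that by explicit computation and cancellation (the paper's change of basis plays the role of your off-diagonal corrections), and obtain $\Neg^i\DT\Pos^i\simeq\Id$ formally from the first half using the opposite-module/opposite-algebra identifications.
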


\begin{proof}
  Since $(\Pos^i\DT\Neg^i)\DT\CanonDD \simeq 
  \Pos^i \DT(\Neg^i\DT \CanonDD) \simeq
  \Pos^i \DT\Neg_i$
  (by associativity of $\DT$ and Lemma~\ref{lem:CrossingDADD}),
  the verification that
  $\Pos^i\DT \Neg^i\simeq \Id_{\Blg_1}$, will follow from the identity
  \begin{equation}
    \label{eq:EasyNegPosInv}
    \Pos^i\DT\Neg_i\simeq  \CanonDD,
  \end{equation}
  which we verify presently. For notational simplicity, we assume that $i=1$.

  The verification of Equation~\eqref{eq:EasyNegPosInv} can be divided
  into cases, according to $\Upwards\cap \{1,2\}$.
  Consider first the case where  $\Upwards\cap\{1,2\}=\{1,2\}$.
  Classify the  generators of $\Pos^i\DT\Neg_i$ into types, labeled
  $XY$, where $X,Y\in\{\North,\South,\West,\East\}$, where the first
  symbol $X$ denotes the generator type in $\Pos^i$ and $Y$
  denotes the generator type in $\Neg_i$, as illustrated in Figure~\ref{fig:InvertPos}.
  \begin{figure}[ht]
    \input{InvertPos.pstex_t}
    \caption{\label{fig:InvertPos} Tensoring bimodules on the left; at the
      right, the generator type of $\NxE=\North\DT\East\DT \gen$.}
  \end{figure}

  By a straightforward computation,
  we find the following kinds of differential:
  the outside actions, $C_1\otimes U_1$,  $C_2\otimes U_2$, and additional arrows indicated on the diagram on the left
  of Figure~\ref{fig:NegPos}.
  \begin{figure}
  \[\mathcenter{
    \begin{tikzpicture}[scale=1.7]
    \node at (0,4) (NN) {$\NxN$} ;
    \node at (-2,3) (NW) {$\NxW$} ;
    \node at (2,3) (NE) {$\NxE$} ;
    \node at (-2,1) (WS) {$\WxS$} ;
    \node at (2,1) (ES) {$\ExS$} ;
    \node at (0,0) (SS) {$\SxS$} ;
    \draw[<-] (NN) [bend right=15] to node[above,sloped] {\tiny{$U_1\otimes L_1+L_1L_2\otimes R_2$}}  (NW)  ;
    \draw[<-] (NW) [bend right=15] to node[below,sloped] {\tiny{$1\otimes R_1$}}  (NN)  ;
    \draw[<-] (NN) [bend left=15] to node[above,sloped] {\tiny{$U_2\otimes R_2 + R_2 R_1\otimes L_1$}}  (NE)  ;
    \draw[<-] (NE) [bend left=15] to node[below,sloped] {\tiny{$1\otimes L_2$}}  (NN)  ;
    \draw[<-] (NW) [bend right=15] to node[below,sloped] {\tiny{$1\otimes 1$}}  (WS)  ;
    \draw[<-] (WS) [bend right=15] to node[below,sloped] {\tiny{$U_2\otimes U_2$}}  (NW)  ;
    \draw[<-] (NE) [bend left=15] to node[above,sloped] {\tiny{$1\otimes 1$}}  (ES)  ;
    \draw[<-] (ES) [bend left=15] to node[above,sloped] {\tiny{$U_1\otimes U_1$}}  (NE)  ;
    \draw[<-] (WS) to node[above, sloped,pos=.2] {\tiny{$R_2 R_1\otimes U_1 +  U_2\otimes R_2 R_1$}} (NE);
    \draw[<-] (ES) to node[above, sloped,pos=.2] {\tiny{$L_1 L_2\otimes U_2+U_1\otimes L_1 L_2$}} (NW);
    \draw[<-] (NN) to node[above,sloped,pos=.25] {\tiny{$R_1\otimes L_1 + L_2\otimes R_2$}}  (SS) ;
    \draw[<-] (WS) [bend right=15] to node[below,sloped] {\tiny{$R_1\otimes U_1+L_2\otimes R_2 R_1$}}  (SS)  ;
    \draw[<-] (SS) [bend right=15] to node[above,sloped] {\tiny{$L_1\otimes 1$}}  (WS)  ;
    \draw[<-] (ES) [bend left=15] to node[below,sloped] {\tiny{$L_2\otimes U_2+R_1\otimes L_1 L_2$}}  (SS)  ;
    \draw[<-] (SS) [bend left=15] to node[above,sloped] {\tiny{$R_2\otimes 1$}}  (ES)  ;
    \end{tikzpicture}}
  \qquad
    \Rightarrow
    \qquad
    \mathcenter{\begin{tikzpicture}[y=48pt,x=1in]
    \node at (0,3) (N) {$\North$} ;
    \node at (0,0) (S) {$\South$} ;
    \draw[->] (N) [bend right=20] to node[below,sloped] {\tiny{$L_1\otimes R_1 + R_2\otimes L_2$}}  (S)  ;
    \draw[->] (S) [bend right=20]to node[below,sloped] {\tiny{$R_1\otimes L_1 + L_2\otimes R_2$}} (N);
    \end{tikzpicture}}\]
    \caption{\label{fig:NegPos}{\bf Arrows in $\Pos^i\DT\Neg_i$ when $\Upwards\cap\{1,2\}=\{1,2\}$.}}
    \end{figure}
    Canceling arrows 
    (i.e. setting $\North=\NxN+(1\otimes R_1) \WxS +
     (1\otimes L_2)\ExS$ and $\South=\SxS$; and noting that the quotient complex is acyclic)
     gives the diagram on the right (in addition
    to the $C_i\otimes U_i$ arrows for $i=1,2$ and the outside
    arrows).  This is, in fact, the canonical $DD$ bimodule, verifying
    Equation~\eqref{eq:EasyNegPosInv}.

    Similarly, if $\Upwards\cap\{1,2\}=\{2\}$, we compute the bimodule to be as given in Figure~\ref{fig:NegPos2}.
    \begin{figure}\[
      \mathcenter{
    \begin{tikzpicture}[scale=1.7]
    \node at (0,4) (NN) {$\NxN$} ;
    \node at (-2,3) (NW) {$\NxW$} ;
    \node at (2,3) (NE) {$\NxE$} ;
    \node at (-2,1) (WS) {$\WxS$} ;
    \node at (2,1) (ES) {$\ExS$} ;
    \node at (0,0) (SS) {$\SxS$} ;
    \draw[->] (NN) [loop above] to node[above,sloped]{\tiny{$U_1 \otimes C_1$}} (NN);
    \draw[->] (NW) [loop left] to node[above,sloped]{\tiny{$U_1\otimes C_1$}} (NW);
    \draw[->] (NE) [loop right] to node[above,sloped]{\tiny{$U_1\otimes C_1$}} (NE);
    \draw[->] (ES) [loop right] to node[above,sloped]{\tiny{$U_1\otimes C_1$}} (ES);
    \draw[<-] (NN) [bend right=15] to node[above,sloped] {\tiny{$U_1\otimes L_1+L_1L_2\otimes R_2$}}  (NW)  ;
    \draw[<-] (NW) [bend right=15] to node[below,sloped] {\tiny{$1\otimes R_1$}}  (NN)  ;
    \draw[<-] (NN) [bend left=15] to node[above,sloped] {\tiny{$U_2\otimes R_2 + R_2 R_1\otimes L_1$}}  (NE)  ;
    \draw[<-] (NE) [bend left=15] to node[below,sloped] {\tiny{$1\otimes L_2$}}  (NN)  ;
    \draw[<-] (NW) [bend right=15] to node[below,sloped] {\tiny{$1\otimes 1$}}  (WS)  ;
    \draw[<-] (WS) [bend right=15] to node[below,sloped] {\tiny{$U_2\otimes U_2$}}  (NW)  ;
    \draw[<-] (NE) [bend left=15] to node[above,sloped] {\tiny{$1\otimes 1$}}  (ES)  ;
    \draw[<-] (ES) [bend left=15] to node[above,sloped] {\tiny{$U_1\otimes U_1$}}  (NE)  ;
    \draw[->] (ES) to  node[above,sloped,pos=.3] {\tiny{$R_2 R_1 \otimes C_1$}} (WS) ;
    \draw[->] (SS) to node[above,sloped,pos=.8] {\tiny{$R_1\otimes C_1$}} (NW) ;
    \draw[<-] (WS) to node[above, sloped,pos=.8] {\tiny{$R_2 R_1\otimes U_1 + U_2\otimes R_2 R_1$}} (NE);
    \draw[<-] (ES) to node[above, sloped,pos=.2] {\tiny{$L_1 L_2\otimes U_2+U_1\otimes L_1 L_2$}} (NW);
    \draw[<-] (NN) to node[above,sloped,pos=.25] {\tiny{$R_1\otimes L_1 + L_2\otimes R_2$}}  (SS) ;
    \draw[<-] (WS) [bend right=15] to node[below,sloped] {\tiny{$L_2\otimes R_2 R_1$}}  (SS)  ;
    \draw[<-] (SS) [bend right=15] to node[above,sloped] {\tiny{$L_1\otimes 1$}}  (WS)  ;
    \draw[<-] (ES) [bend left=15] to node[below,sloped] {\tiny{$L_2\otimes U_2+R_1\otimes L_1 L_2$}}  (SS)  ;
    \draw[<-] (SS) [bend left=15] to node[above,sloped] {\tiny{$R_2\otimes 1$}}  (ES)  ;
    \end{tikzpicture}}\]
    \caption{\label{fig:NegPos2}{\bf Arrows in $\Pos^i\DT\Neg_i$ when $\Upwards\cap\{1,2\}=\{2\}$.}}
      \end{figure}
  Again, we have suppressed the outside arrows, and additional arrows of the
  form $C_2\otimes U_2$ that connect every generator to themselves. By
  contrast, we did include self-arrows of the form $U_1\otimes C_1$,
  since not all generator types have these kinds of terms.  We can
  reduce to the previous case by changing basis; replacing $\SxS$ by
  $\SxS + (R_1\otimes C_1)\otimes \WxS$. A similar computation works for
  $\Upwards\cap\{1,2\}=\{1\}$, only now the reduction to the previous case
  involves the change of basis $\SxS + (L_2\otimes C_2)\otimes
  \ExS$. Finally, when $\Upwards\cap \{1,2\}=\emptyset$, we get a
  bimodule which can be reduced to the earlier case by the basis change
  $\SxS + (R_1\otimes C_1)\otimes \WxS + (L_2\otimes C_2)\otimes\ExS$.

  Once again, cancelling arrows we find that this gives the identity $DD$ bimdoule. 
  The other two computations needed to verify Equation~\eqref{eq:EasyNegPosInv} work similarly. 

  Having verified that $\Pos^i\DT\Neg^i\simeq~
  \lsup{\Blg_1}\Id_{\Blg_1}$, the verification that
  $\Neg^i\DT\Pos^i\simeq~ \lsup{\Blg_1}\Id_{\Blg_1}$ is a formality:
  \begin{align*}
    \Id&={\overline \Id} \simeq {\overline{\lsup{\Blg_1}{\Pos^i_{\Blg_2}}\DT \lsup{\Blg_2}{\Neg^i_{\Blg_1}}}}
    \simeq \lsub{\Blg_1}{{{\overline \Neg}^i}}^{\Blg_2}\DT \lsub{\Blg_2}{{\overline\Pos^i}}^{\Blg_1} \\
    &=  \lsup{{\Blg_1^{\op}}}{\overline\Pos^i}\lsub{\Blg_2^{\op}} \DT~
    \lsup{{\Blg_2^{\op}}}{\overline \Neg^i}_{\Blg_1^{\op}} 
    =\lsup{\Blg_1}{\Neg^i}_{\Blg_2} \DT
    \lsup{\Blg_2}{\Pos^i}_{\Blg_2} 
  \end{align*}
\end{proof}

\begin{lemma}
  \label{lem:FarBraids}
  Equation~\eqref{eq:FarBraids} holds.
\end{lemma}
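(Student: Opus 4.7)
The strategy is to pass through Koszul duality. Tensoring both sides of Equation~\eqref{eq:FarBraids} on the right with the canonical dualizing $DD$ bimodule $\lsup{\Blg_1,\Blg_1'}\CanonDD$ from Subsection~\ref{subsec:CanonDD}, and invoking Theorem~\ref{thm:DDisInvertible} (so that $-\DT \CanonDD$ is an equivalence of categories), associativity (Lemma~\ref{lem:AssociateDD}, valid since both crossing bimodules are bounded: $\delta^1_k=0$ for $k>3$), and Lemma~\ref{lem:CrossingDADD} identifying $\Pos^? \DT \CanonDD \simeq \Pos_?$, the claim reduces to establishing a homotopy equivalence of $DD$ bimodules
\[
\lsup{\Blg_3}\Pos^j_{\Blg_2}\DT~\lsup{\Blg_2,\Blg_1'}\Pos_i
\;\simeq\;
\lsup{\Blg_3}\Pos^i_{\Blg_4}\DT~\lsup{\Blg_4,\Blg_1'}\Pos_j.
\]

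I would then construct this equivalence directly. Generators of $\Pos^j\DT \Pos_i$ are in bijection with pairs $(Y,X)$, where $Y\in\{\North,\South,\West,\East\}$ is a generator of the $j$-crossing factor and $X\in\{\North,\South,\West,\East\}$ is a generator of the $i$-crossing factor; the same holds for the right-hand side. The natural swap map $(Y,X)\mapsto (X,Y)$ should be the desired isomorphism of $DD$ structures, provided the differential decouples as
\[
\partial \;=\; \partial^j\otimes \Id \;+\; \Id \otimes \partial^i,
\]
where $\partial^j$ is the differential intrinsic to the $j$-crossing and $\partial^i$ the one intrinsic to the $i$-crossing.

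The verification of this decoupling rests on two observations. First, every algebra element appearing in the $DD$ differential of $\lsup{\Blg_2,\Blg_1'}\Pos_i$ (the terms of Types~\ref{type:OutsideLRP}, \ref{type:UCP}, \ref{type:InsideP}) is either of the ``outside'' form $L_k, R_k, U_k, C_k$ with $k\notin\{i,i+1\}$, or else is supported on positions in $\{i-1,i,i+1,i+2\}$; since $|i-j|>1$, the latter set is disjoint from $\{j-1,j,j+1,j+2\}$. Second, on algebra elements $a$ supported away from $\{j,j+1\}$, the bimodule $\Pos^j$ acts trivially: by the extension rules in Subsection~\ref{subsec:Extension}, $w_j(a)=w_{j+1}(a)=0$ forces $t(a)=1$ (or just a power of $U_1 U_2$ locally), so $\delta^1_2(-,a)=a\otimes -$ is the identity action. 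Symmetrically for $\Pos^i$ acting on outputs of $\Pos_j$.

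The main obstacle is a careful bookkeeping of the higher operations $\delta^1_2$ and $\delta^1_3$. The key point is that every non-trivial $\delta^1_3(X,a_1,a_2)$ in $\Pos^j$ requires $t(a_1)$ and $t(a_2)$ both to be non-trivial at position $j$ (per Lemma~\ref{lem:CharacterizeD3Extended}), and this is impossible for the algebra outputs produced by $\Pos_i$; likewise any $\delta^1_2(X,a)$ of $\Pos^j$ that changes the generator type requires $a$ to have non-zero weight at $\{j,j+1\}$. Once these cross-terms are ruled out, the explicit tensor-product differential is manifestly symmetric in $i$ and $j$, and the swap map is a genuine isomorphism of $DD$ bimodules.
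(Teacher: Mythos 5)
Your opening reduction is exactly the paper's: tensor with $\CanonDD$, use Lemma~\ref{lem:CrossingDADD} and invertibility, and recast Equation~\eqref{eq:FarBraids} as an equivalence $\Pos^j\DT\Pos_i\simeq \Pos^i\DT\Pos_j$ of type $DD$ bimodules, with generators on both sides identified with partial Kauffman states. The gap is in your verification that the tensor differential ``decouples'' and that the naive swap is an isomorphism on the nose. The claim that no $\delta^1_3$ of $\Pos^j$ can fire on outputs of $\Pos_i$ is false: the differential of $\lsup{\Blg_2,\Blg_1'}\Pos_i$ contains the outside terms $R_j\otimes L_j$, $R_{j+1}\otimes L_{j+1}$, $U_j\otimes C_j$ (or $C_j\otimes U_j$), etc., at positions $j,j+1$, and iterating $\delta^1$ produces composable sequences such as $(R_j,R_{j+1})$, whose local types are $(R_1,R_2)$; by Lemma~\ref{lem:CharacterizeD3Extended} these trigger $\delta^1_3(\South,R_j,R_{j+1})=R_j\otimes\East$ in $\Pos^j$. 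Far from being forbidden cross-terms, these $\delta^1_3$ contributions, together with the type-changing $\delta^1_2$ actions fed by $L_j$, $R_{j+1}$, $C_j$, \dots, are precisely what reconstructs the $j$-crossing $DD$ differential in the tensor product --- for instance the summand $R_j\otimes L_jL_{j+1}$ of the $\South\to\East$ arrow of Equation~\eqref{eq:PositiveCrossing}. If you rule them out you do not obtain $\partial^j\otimes\Id+\Id\otimes\partial^i$; you obtain the wrong bimodule.

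Two further points are missing. First, even with the correct contributions, the two sides are in general not literally isomorphic via the swap: whenever the orientations put $C_j$ or $C_{j+1}$ (resp.\ $C_i$ or $C_{i+1}$) in the relevant incoming algebra, the special actions of Section~\ref{sec:AddCs} create extra arrows (compare Figures~\ref{fig:PtimesK2} and~\ref{fig:PtimesKe}), and one needs a change of basis of the kind $h^1(\South)=\South+(L_2\otimes C_1)\otimes\East+\cdots$ used in the proof of Lemma~\ref{lem:CrossingDADD} before the differentials match; so the statement is a homotopy equivalence, not an isomorphism of $DD$ structures. Second, your disjointness claim fails when $|i-j|=2$ (for $i=1$, $j=3$ one has $\{0,1,2,3\}\cap\{2,3,4,5\}\neq\emptyset$), and this adjacent case is exactly where extra care is required: by the idempotent constraints there are only $15$ generator types (no $\East\West$), and the outside terms with index $i+2=j$ vanish on $\East$-type generators, so the clean product bookkeeping does not hold verbatim. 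The paper handles all of this by writing down a third, manifestly symmetric $DD$ bimodule (Figure~\ref{fig:FarBraidDD} in the adjacent case) and checking that both $\Pos^j\DT\Pos_i$ and $\Pos^i\DT\Pos_j$ are quasi-isomorphic to it, rather than asserting that the two tensor products coincide under a relabeling.
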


\begin{proof}
  In view of Lemma~\ref{lem:CrossingDADD} and the invertibility of
  $\CanonDD$, it suffices
  to show that
  \[ \Pos^j\DT \Pos_i\sim \Pos^i \DT\Pos_j.\]

  Note that generators for both bimodules in Equation~\eqref{eq:FarBraids} correspond to partial Kauffman
  states in the partial knot diagrams contianing the two crossings. Clearly, these generators are independent of the order;
  to see that  the bimodules are independent of the order, it suffices to construct a third, more symmetric bimodule 
  (where we think of the two crossings as appearing in the same level), that is quasi-isomorphic to both.

  Generators for this bimodule will also correspond to partial Kauffman states.
  There will in general be $16$ generator types, labelled by pairs of letters among $\North$, $\South$, $\West$, and $\East$,
  corresponding to the four local choices at each of the two crossings.
  The computation is mostly straightforward.
  In the special case where the two crossings are adjacent,
  there are only $15$ generator types: there are no generators of type $\ExW$. This
  occurs, for example , when $i=1$ and $j=3$; see Figure~\ref{fig:FarBraids}. In this case, the $DD$
  bimodule is as specified in Figure~\ref{fig:FarBraidDD}
  (where,   as usual, we have suppressed here the distant arrows and the additional arrows of the form 
  $U_\tau(i)\otimes C_i$ or $C_\tau(i)\otimes U_i$).
\end{proof}
  \begin{figure}[ht]
    \input{FarBraids.pstex_t}
    \caption{\label{fig:FarBraids} 
    When the two crossings appearing in Equation~\eqref{eq:FarBraids} are adjacent,
    there are only $15$ types of partial Kauffman states; a partial Kauffman state cannot
    associate $\East$ to the crossing on the left and 
    $\West$ to the one on the right.}
  \end{figure}

  \begin{figure}
  \begin{center}
  \begin{tikzpicture}[scale=.81]
    \node at (0,8) (NN) {${\mathbf {NN}}$} ;
    \node at (4,8) (NE) {${\mathbf {NE}}$} ;
    \node at (8,8) (NS) {${\mathbf {NS}}$} ;    
    \node at (0,4) (WN) {${\mathbf {WN}}$} ;
    \node at (4,4) (WE) {${\mathbf {WE}}$} ;
    \node at (8,4) (WS) {${\mathbf {WS}}$} ;    
    \node at (0,0) (SN) {${\mathbf {SN}}$} ;
    \node at (4,0) (SE) {${\mathbf {SE}}$} ;
    \node at (8,0) (SS) {${\mathbf {SS}}$} ;    
    \node at (-4,8) (NW1) {${\mathbf {NW}}$} ;
    \node at (-4,4) (WW1) {${\mathbf {WW}}$} ;
    \node at (-4,0) (SW1) {${\mathbf {SW}}$} ;
    \node at (12,8) (NW2) {$ $} ;
    \node at (12,4) (WW2) {$ $} ;
    \node at (12,0) (SW2) {$ $} ;
    \node at (0,-4) (EN1) {${\mathbf {EN}}$} ;
    \node at (4,-4) (EE1) {${\mathbf {EE}}$} ;
    \node at (8,-4) (ES1) {${\mathbf {ES}}$} ;
    \node at (0,12) (EN2) {$ $} ;
    \node at (4,12) (EE2) {$ $} ;
    \node at (8,12) (ES2) {$ $} ;
    \draw [->] [bend left=7] (NW1) to node[above,sloped] {\tiny{$1\!\otimes\! L_3$}} (NN);
    \draw [->] [bend left=7] (NN) to node[below,sloped] {\tiny{$U_4\!\otimes\! R_3+R_4 R_3\!\otimes\! L_4$}} (NW1);
    \draw [->] [bend left=7] (NN) to node[above,sloped] {\tiny{$U_3\otimes L_4+L_3 L_4 \otimes R_3$}} (NE) ;
    \draw [->] [bend left=7] (NE) to node[below,sloped] {\tiny{$1\!\otimes\! R_4$}} (NN) ;
    \draw [->] [bend left=7] (NE) to node[above,sloped] {\tiny{$R_4\!\otimes\! 1$}} (NS) ;
    \draw [->] [bend left=7] (NS) to node[below,sloped] {\tiny{$L_4\!\otimes\! U_3+R_3\!\otimes\! L_3 L_4$}} (NE) ;
    \draw [->] [bend left=7] (NS) to node[above,sloped] {\tiny{$R_3 \!\otimes\! U_4 + L_4\!\otimes\! R_4 R_3$}} (NW2) ;
    \draw [->] [bend left=7] (NW2) to node[below,sloped] {\tiny{$L_3\!\otimes\! 1$}} (NS) ;

    \draw [->] [bend left=7] (WW1) to node[above,sloped] {\tiny{$1\!\otimes\! L_3$}} (WN);
    \draw [->] [bend left=7] (WN) to node[below,sloped] {\tiny{$U_4\!\otimes\! R_3+R_4 R_3\!\otimes\! L_4$}} (WW1);
    \draw [->] [bend left=7] (WN) to node[above,sloped] {\tiny{$U_3\otimes L_4+L_3 L_4 \otimes R_3$}} (WE) ;
    \draw [->] [bend left=7] (WE) to node[below,sloped] {\tiny{$1\!\otimes\! R_4$}} (WN) ;
    \draw [->] [bend left=7] (WE) to node[above,sloped] {\tiny{$R_4\!\otimes\! 1$}} (WS) ;
    \draw [->] [bend left=7] (WS) to node[below,sloped] {\tiny{$L_4\!\otimes\! U_3+R_3\!\otimes\! L_3 L_4$}} (WE) ;
    \draw [->] [bend left=7] (WS) to node[above,sloped] {\tiny{$R_3 \!\otimes\! U_4 + L_4\!\otimes\! R_4 R_3$}} (WW2) ;
    \draw [->] [bend left=7] (WW2) to node[below,sloped] {\tiny{$L_3\!\otimes\! 1$}} (WS) ;

    \draw [->] [bend left=7] (SW1) to node[above,sloped] {\tiny{$1\!\otimes\! L_3$}} (SN);
    \draw [->] [bend left=7] (SN) to node[below,sloped] {\tiny{$U_4\!\otimes\! R_3+R_4 R_3\!\otimes\! L_4$}} (SW1);
    \draw [->] [bend left=7] (SN) to node[above,sloped] {\tiny{$U_3\otimes L_4+L_3 L_4 \otimes R_3$}} (SE) ;
    \draw [->] [bend left=7] (SE) to node[below,sloped] {\tiny{$1\!\otimes\! R_4$}} (SN) ;
    \draw [->] [bend left=7] (SE) to node[above,sloped] {\tiny{$R_4\!\otimes\! 1$}} (SS) ;
    \draw [->] [bend left=7] (SS) to node[below,sloped] {\tiny{$L_4\!\otimes\! U_3+R_3\!\otimes\! L_3 L_4$}} (SE) ;
    \draw [->] [bend left=7] (SS) to node[above,sloped] {\tiny{$R_3 \!\otimes\! U_4 + L_4\!\otimes\! R_4 R_3$}} (SW2) ;
    \draw [->] [bend left=7] (SW2) to node[below,sloped] {\tiny{$L_3\!\otimes\! 1$}} (SS) ;

    \draw [->] [bend left=7] (EN1) to node[above,sloped] {\tiny{$U_3\otimes L_4+L_3 L_4 \otimes R_3$}} (EE1) ;
    \draw [->] [bend left=7] (EE1) to node[below,sloped] {\tiny{$1\!\otimes\! R_4$}} (EN1) ;
    \draw [->] [bend left=7] (EE1) to node[above,sloped] {\tiny{$R_4\!\otimes\! 1$}} (ES1) ;
    \draw [->] [bend left=7] (ES1) to node[below,sloped] {\tiny{$L_4\!\otimes\! U_3+R_3\!\otimes\! L_3 L_4$}} (EE1) ;

    \draw[->] [bend left=7] (SN) to node[above,sloped] {\tiny{$R_1\!\otimes\! U_2+L_2\!\otimes\! R_2 R_1 $}} (WN) ;
    \draw[->] [bend left=7] (WN) to node[above,sloped] {\tiny{$L_1\!\otimes\! 1 $}} (SN) ;
    \draw[->] [bend left=7] (WN) to node[above,sloped] {\tiny{$1\!\otimes\! L_1$}} (NN) ;
    \draw[->] [bend left=7] (NN) to node[above,sloped] {\tiny{$U_2\!\otimes\! R_1+R_2 R_1\!\otimes\! L_2$}} (WN) ;
    \draw[->] [bend left=7] (EN1) to node[above,sloped] {\tiny{$R_2\!\otimes\! 1$}} (SN) ;
    \draw[->] [bend left=7] (SN) to node[above,sloped] {\tiny{$L_2\!\otimes\! U_1+ R_1\!\otimes\! L_1 L_2$}} (EN1) ;
    \draw[->] [bend left=7] (NN) to node[above,sloped] {\tiny{$U_1\!\otimes\! L_2+ L_1 L_2\!\otimes\! R_1$}} (EN2) ;
    \draw[->] [bend left=7] (EN2) to node[above,sloped] {\tiny{$1\!\otimes\! R_2$}} (NN) ;

    \draw[->] [bend left=7] (SE) to node[above,sloped] {\tiny{$R_1\!\otimes\! U_2+L_2\!\otimes\! R_2 R_1 $}} (WE) ;
    \draw[->] [bend left=7] (WE) to node[above,sloped] {\tiny{$L_1\!\otimes\! 1 $}} (SE) ;
    \draw[->] [bend left=7] (WE) to node[above,sloped] {\tiny{$1\!\otimes\! L_1$}} (NE) ;
    \draw[->] [bend left=7] (NE) to node[above,sloped] {\tiny{$U_2\!\otimes\! R_1+R_2 R_1\!\otimes\! L_2$}} (WE) ;
    \draw[->] [bend left=7] (EE1) to node[above,sloped] {\tiny{$R_2\!\otimes\! 1$}} (SE) ;
    \draw[->] [bend left=7] (SE) to node[above,sloped] {\tiny{$L_2\!\otimes\! U_1+ R_1\!\otimes\! L_1 L_2$}} (EE1) ;
    \draw[->] [bend left=7] (NE) to node[above,sloped] {\tiny{$U_1\!\otimes\! L_2+ L_1 L_2\!\otimes\! R_1$}} (EE2) ;
    \draw[->] [bend left=7] (EE2) to node[above,sloped] {\tiny{$1\!\otimes\! R_2$}} (NE) ;

    \draw[->] [bend left=7] (SS) to node[above,sloped] {\tiny{$R_1\!\otimes\! U_2+L_2\!\otimes\! R_2 R_1 $}} (WS) ;
    \draw[->] [bend left=7] (WS) to node[above,sloped] {\tiny{$L_1\!\otimes\! 1 $}} (SS) ;
    \draw[->] [bend left=7] (WS) to node[above,sloped] {\tiny{$1\!\otimes\! L_1$}} (NS) ;
    \draw[->] [bend left=7] (NS) to node[above,sloped] {\tiny{$U_2\!\otimes\! R_1+R_2 R_1\!\otimes\! L_2$}} (WS) ;
    \draw[->] [bend left=7] (ES1) to node[above,sloped] {\tiny{$R_2\!\otimes\! 1$}} (SS) ;
    \draw[->] [bend left=7] (SS) to node[above,sloped] {\tiny{$L_2\!\otimes\! U_1+ R_1\!\otimes\! L_1 L_2$}} (ES1) ;
    \draw[->] [bend left=7] (NS) to node[above,sloped] {\tiny{$U_1\!\otimes\! L_2+ L_1 L_2\!\otimes\! R_1$}} (ES2) ;
    \draw[->] [bend left=7] (ES2) to node[above,sloped] {\tiny{$1\!\otimes\! R_2$}} (NS) ;

    \draw[->] [bend left=7] (SW1) to node[above,sloped] {\tiny{$R_1\!\otimes\! U_2+L_2\!\otimes\! R_2 R_1 $}} (WW1) ;
    \draw[->] [bend left=7] (WW1) to node[above,sloped] {\tiny{$L_1\!\otimes\! 1 $}} (SW1) ;
    \draw[->] [bend left=7] (WW1) to node[above,sloped] {\tiny{$1\!\otimes\! L_1$}} (NW1) ;
    \draw[->] [bend left=7] (NW1) to node[above,sloped] {\tiny{$U_2\!\otimes\! R_1+R_2 R_1\!\otimes\! L_2$}} (WW1) ;
      \end{tikzpicture}
  \end{center}
  \caption{\label{fig:FarBraidDD}
  {\bf {$\Pos^i\DT\Pos_j$.}} This is  drawn on the torus; e.g. the arrows that point off
  to the right wrap around to the left.}
  \end{figure}

\begin{lemma}
  \label{lem:NearBraids}
  Equation~\eqref{eq:NearBraids} holds.
\end{lemma}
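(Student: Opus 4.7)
The plan is to reduce the statement, just as in the proof of Lemma~\ref{lem:FarBraids}, to a statement about type $DD$ bimodules, where the comparison is purely combinatorial. Tensoring both sides of \eqref{eq:NearBraids} on the right with the canonical $DD$ bimodule $\lsup{\Blg_1,\Blg_1'}\CanonDD$ and using Lemma~\ref{lem:CrossingDADD} repeatedly together with associativity of $\DT$ (Lemma~\ref{lem:AssociateDA}, Lemma~\ref{lem:AssociateDD}), the desired homotopy equivalence is equivalent to
\[
\lsup{\Blg_5}\Pos^i_{\Blg_3}\DT\lsup{\Blg_3}\Pos^{i+1}_{\Blg_2}\DT\lsup{\Blg_2,\Blg_1'}\Pos_i
\;\simeq\;
\lsup{\Blg_5}\Pos^{i+1}_{\Blg_6}\DT\lsup{\Blg_6}\Pos^{i}_{\Blg_4}\DT\lsup{\Blg_4,\Blg_1'}\Pos_{i+1},
\]
and then invoking invertibility of $\CanonDD$ (Theorem~\ref{thm:DDisInvertible}) to cancel the trailing $\CanonDD$. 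Finally, applying Lemma~\ref{lem:CrossingDADD} one more time on each side turns this into a $DD$-bimodule statement
\[
\lsup{\Blg_5,\Blg_1'}\!\!\left(\Pos^i\DT\Pos^{i+1}\DT\Pos_i\right)
\;\simeq\;
\lsup{\Blg_5,\Blg_1'}\!\!\left(\Pos^{i+1}\DT\Pos^i\DT\Pos_{i+1}\right).
\]

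To prove this $DD$ equivalence, I will construct a third, more symmetric $DD$ bimodule $\mathcal M$, whose generators correspond naturally to partial Kauffman states on the three-crossing \emph{level} picture (three crossings arranged symmetrically, without any vertical ordering), and which therefore depends only on the combinatorics of the triangle configuration common to both sides of the braid relation. Concretely, a generator of $\mathcal M$ records, for each of the three crossings, one of the four local labels $\{\North,\South,\West,\East\}$, subject to the partial Kauffman state compatibility from Definition~\ref{def:PartialKnotDiagram} applied to the three-crossing partial diagram; the differential is pieced together from the local differentials of the crossing $DD$ bimodules of Section~\ref{sec:CrossingDD} and the outside actions of the form $L_j\otimes R_j$, $R_j\otimes L_j$, $U_{\tau(j)}\otimes C_j$, $C_{\tau(j)}\otimes U_j$ for $j$ not among the four strands $\{i-1,i,i+1,i+2\}$ involved in the braid region. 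I will then produce explicit chain maps
\[
\Pos^i\DT\Pos^{i+1}\DT\Pos_i \longrightarrow \mathcal M \longleftarrow \Pos^{i+1}\DT\Pos^i\DT\Pos_{i+1},
\]
each realized as a sequence of cancellations of acyclic summands (of the form $X\xrightarrow{1\otimes 1} Y$ together with suitable higher structure), exactly as in the identification $\Pos^i\DT\Neg_i\simeq\CanonDD$ in the proof of Lemma~\ref{lem:InvertPos} and in the computation of Figure~\ref{fig:FarBraidDD}.

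The key steps, in order, are: (1)~list the generators of both sides and of $\mathcal M$, and match them by idempotent, using the fact that in the braid region the idempotent data of each generator type is determined by its three local labels; (2)~write out the contributions to the $DD$ differential that arise by taking one $\delta^1$ from one tensor factor and combining it with structure on the adjacent factor, separating terms that are supported in the braid region from the outside actions and the $C_j\otimes U_j$ self-loops (which match term-by-term on the two sides and with $\mathcal M$); (3)~identify the cancelling $1\otimes 1$ arrows between adjacent crossing bimodules (these are exactly the arrows that compose two of the crossing factors into a local bigon), and perform the homotopy cancellation, which on each side yields a bimodule naturally isomorphic to $\mathcal M$ up to an explicit change of basis of the form $\Zelt\mapsto \Zelt+\sum_{\Yelt} h(\Zelt,\Yelt)\otimes \Yelt$, in the spirit of the basis changes like $\SxS+(R_1\otimes C_1)\otimes\WxS+(L_2\otimes C_2)\otimes\ExS$ used in the proof of Lemma~\ref{lem:InvertPos}; (4)~verify invariance under $\Upwards$, by separating the argument into cases according to $\Upwards\cap\{i-1,i,i+1,i+2\}$, just as Lemma~\ref{lem:LocalBimoduleWithC} was extended from $\Blg(2)$ to the general case through Section~\ref{sec:GenCrossing}.

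The main obstacle will be bookkeeping in step~(3): unlike the distant-braid case, where the two crossings are independent and the generator count is either $16$ or $15$, here the generators and the arrows between them in the three-crossing bimodule can in principle involve up to $64$ local label triples. In practice the idempotent constraints, and the requirement that a partial Kauffman state exists, cut this down substantially, but verifying that the surviving generators on both sides cancel down to the same symmetric bimodule $\mathcal M$ requires a systematic case analysis crossing-by-crossing. I expect that, once the generators are enumerated, the cancellations are forced by Lemma~\ref{lem:CharacterizeD3Extended} and the local-to-global principle underlying Definition~\ref{def:DefD}: the $\delta^1_2$- and $\delta^1_3$-contributions that survive cancellation are exactly the ones whose local type $t(a_1),t(a_2)$ matches the local cancellation pattern governing the $\Blg(2)$ model $\LocPos$ of Section~\ref{subsec:LocalPos}, so that the symmetry of $\LocPos$ (for instance the identification $\lsup{\Blg_2,\Blg_1}\Pos_i\cong\lsup{\Blg_1,\Blg_2}\Pos_i$ of \eqref{eq:SymmetryOfPos}) propagates to yield the symmetry of $\mathcal M$ needed to equate the two sides.
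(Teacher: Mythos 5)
Your opening reduction is fine, and it is essentially how the paper begins: using Lemma~\ref{lem:CrossingDADD}, associativity, and the invertibility of $\CanonDD$ (Theorem~\ref{thm:DDisInvertible}), the claim is equivalent to a homotopy equivalence of type $DD$ bimodules
$\Pos^i\DT\Pos^{i+1}\DT\Pos_i\simeq\Pos^{i+1}\DT\Pos^{i}\DT\Pos_{i+1}$, and the paper indeed works directly with $\Pos^1\DT\Pos^{2}\DT\Pos_1$.

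The gap is in your intermediate object $\mathcal M$. For distant crossings (Lemma~\ref{lem:FarBraids}) the ``same level'' diagram exists because the two crossings involve disjoint pairs of strands; for the Reidemeister~3 configuration the three crossings occur at positions $i,i+1,i$, so adjacent crossings share a strand and there is no partial knot diagram with all three ``at the same level.'' Consequently ``partial Kauffman states of the three-crossing level picture'' is not defined, and, more seriously, you never specify the differential of $\mathcal M$: the whole content of the lemma lies in the interaction terms between crossings sharing a strand, which cannot be ``pieced together from the local differentials'' of the individual crossing $DD$ bimodules. The paper's computation bears this out: after cancelling the acyclic pieces, the reduced bimodule (Figure~\ref{fig:R3Module}) has \emph{two} generators, $P_{03}$ and $Q_{03}$, occupying the same pair of regions, so its generators do not biject with Kauffman states of any putative symmetric picture in the way your step~(1) assumes. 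What the paper actually does is compute one side explicitly (Figures~\ref{fig:NearBraidDD} and~\ref{fig:R3Module}, with extra homotopies when $\Upwards$ meets the braid region) and then identify the other side with the same bimodule via the $180^\circ$ rotation of the planar picture, which on the algebra exchanges the two tensor factors and sends $R_j$ to $L_{4-j}$ (in local indices); the proof concludes by checking that the explicit answer is invariant under this rotation. Note that the symmetry you invoke, Equation~\eqref{eq:SymmetryOfPos}, is the top--bottom transposition of $\Pos_i$ (swapping $\North$ and $\South$), not the rotation that exchanges the two sides of the braid relation, so it does not by itself ``propagate'' to give the identification you need. To repair your argument you would either have to carry out the explicit reduction on both sides (at which point the symmetric middleman is unnecessary), or reproduce the paper's rotation-invariance argument for the explicitly computed bimodule.
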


\begin{proof}
  We describe the case where $i=1$.
  We first compute 
  $\lsup{\Blg_3}\Pos^{2}_{\Blg_2} \DT \lsup{\Blg_2,\Blg_1'}\Pos_1$.
  Again, generators correspond to partial Kauffman states, which we now label as an ordered pair,
  showing which crossing is associated to which region; see Figure~\ref{fig:BraidGenNames}.
  \begin{figure}[ht]
    \input{BraidGenNames.pstex_t}
    \caption{\label{fig:BraidGenNames} 
      At the right, we have shown a generator of type $SW$; or equivalently, $X_{04}$.}
  \end{figure}
  When $\{1,2,3\}\subset \Upwards$, after cancelling arrows, 
  we find that the bimodule is given as in Figure~\ref{fig:NearBraidDD},
  along with the usual outside arrows and self-arrows of the form $C_i
  \otimes U_{\tau_2\tau_1(i)}$
  or $U_i\otimes C_{\tau_2\tau_1(i)}$, depending on whether or not
  $i\in\Upwards$.   \begin{figure}
    \[\begin{tikzpicture}[scale=.77]
    \node at (0,0) (X05) {$X_{05}$} ;
    \node at (4,0) (X15) {$X_{15}$} ;
    \node at (8,0) (X02) {$X_{02}$} ;    
    \node at (12,0) (X12) {$X_{12}$} ;
    \node at (0,4) (X04) {$X_{04}$} ;
    \node at (4,4) (X03) {$X_{03}$} ;
    \node at (8,4) (X14) {$X_{14}$} ;    
    \node at (12,4) (X13) {$X_{13}$} ;
    \node at (0,8) (X45) {$X_{45}$} ;
    \node at (4,8) (X35) {$X_{35}$} ;
    \node at (8,8) (X24) {$X_{24}$} ;    
    \node at (12,8) (X23) {$X_{23}$} ;
    \draw[->] (X35) [bend left=7] to node[above,sloped] {\tiny{$R_1\otimes U_2$}} (X03) ;
    \draw[->] (X03) [bend left=6] to node[above,sloped,pos=.2] {\tiny{$1\otimes R_3$}} (X02) ;
    \draw[->] (X02) [bend left=7] to node[above,sloped] {\tiny{$1\otimes L_1$}} (X12) ;
    \draw[->] (X12) [bend left=7] to node[above,sloped] {\tiny{$U_2\otimes L_{3}$}} (X13) ;
    \draw[->] (X13) [bend left=7] to node[above,sloped] {\tiny{$U_1\otimes L_2$}} (X23) ;
    \draw[->] (X45) [bend left=7] to node[above,sloped] {\tiny{$R_1\otimes U_2$}} (X04) ;
    \draw[->] (X04) [bend left=7] to node[above,sloped] {\tiny{$R_2\otimes U_3$}} (X05) ;
    \draw[->] (X05) [bend left=7] to node[above,sloped] {\tiny{$1 \otimes L_1$}} (X15) ;
    \draw[->] (X15) [bend left=6] to node[above,sloped,pos=.2] {\tiny{$L_2  \otimes 1$}} (X14) ;
    \draw[->] (X14) [bend left=7] to node[above,sloped] {\tiny{$U_1\otimes L_2$}} (X24) ;
    \draw[->] (X03) [bend left=7] to node[above,sloped] {\tiny{$L_1\otimes 1$}} (X35) ;
    \draw[->] (X02) [bend left=6] to node[below,sloped,pos=.2] {\tiny{$U_2\otimes L_3$}} (X03) ;
    \draw[->] (X12) [bend left=7] to node[below,sloped] {\tiny{$U_3\otimes R_1$}} (X02) ;
    \draw[->] (X13) [bend left=7] to node[above,sloped] {\tiny{$1\otimes R_{3}$}} (X12) ;
    \draw[->] (X23) [bend left=7] to node[above,sloped] {\tiny{$1\otimes R_2$}} (X13) ;
    \draw[->] (X04) [bend left=7] to node[above,sloped] {\tiny{$L_1\otimes 1$}} (X45) ;
    \draw[->] (X05) [bend left=7] to node[above,sloped] {\tiny{$L_2\otimes 1$}} (X04) ;
    \draw[->] (X15) [bend left=7] to node[below,sloped] {\tiny{$U_3 \otimes R_1$}} (X05) ;
    \draw[->] (X14) [bend left=6] to node[below,sloped,pos=.2] {\tiny{$R_2\otimes U_3$}} (X15);
    \draw[->] (X24) [bend left=7] to node[above,sloped] {\tiny{$1\otimes R_2$}} (X14) ;
    \draw[->] (X45) [bend left=7] to node[above,sloped] {\tiny{$L_3\otimes U_1$}} (X35) ;
    \draw[->] (X04) [bend left=7] to node[above,sloped] {\tiny{$L_3\otimes U_1$}} (X03) ;
    \draw[->] (X14) [bend left=7] to node[above,sloped] {\tiny{$L_3\otimes U_1$}} (X13) ;
    \draw[->] (X24) [bend left=7] to node[above,sloped] {\tiny{$L_3\otimes U_1$}} (X23) ;
    \draw[->] (X35) [bend left=7] to node[below,sloped, sloped] {\tiny{$R_3\otimes 1$}} (X45) ;
    \draw[->] (X03) [bend left=7] to node[below,sloped] {\tiny{$R_3\otimes 1$}} (X04) ;
    \draw[->] (X13) [bend left=7] to node[below,sloped] {\tiny{$R_3\otimes 1$}} (X14) ;
    \draw[->] (X23) [bend left=7] to node[below,sloped] {\tiny{$R_3\otimes 1$}} (X24) ;
    \draw[->] (X02) [bend left=24]to node[below,sloped] {\tiny{$R_3 R_2\otimes L_3$}} (X05) ;
    \draw[->] (X12) [bend left=24]to node[below,sloped] {{\tiny{$R_3 R_2\otimes L_3$}}} (X15) ;
    \draw[->] (X02) [bend left=7] to node[below,sloped] {\tiny{$L_1\otimes R_2$}} (X15) ;
    \draw[->] (X15) [bend left=7] to node[above,sloped] {\tiny{$R_1\otimes L_2$}} (X02) ;
    \draw[->] (X03) [bend left=24] to node[above,sloped,pos=.8] {\tiny{$1\otimes L_1$}} (X13) ;
    \draw[->] (X35) [bend left=24] to node[above,sloped] {\tiny{$R_1\otimes L_1 L_2$}} (X23) ;
    \draw[->] (X12) [bend right=24] to node[below,sloped] {\tiny{$L_1 L_2 L_3\otimes R_1$}} (X23) ;
    \draw[->] (X45) [bend left=24] to node[above,sloped] {\tiny{$R_1\otimes L_1L_2$}} (X24) ;
    \draw[->] (X04) [bend left=24] to node[above,sloped,pos=.2] {\tiny{$1\otimes L_1$}} (X14) ;
    \draw[->] (X45) [bend right=24] to node[below,sloped]{\tiny{$L_3\otimes R_3 R_2 R_1$}}  (X05) ;
    \draw[->] (X24) [bend left=7] to node[below,sloped] {\tiny{$R_2\otimes L_3$}} (X35) ;
    \draw[->] (X35) [bend left=7] to  node[above,sloped] {\tiny{$L_2\otimes R_3$}} (X24) ;
    \draw[->] (X35) [bend right=24] to  node[below,sloped,pos=.7] {\tiny{$1\otimes R_3 R_2$}} (X15) ;
    \draw[->] (X02) [bend right=24] to  node[below,sloped,pos=0.3] {\tiny{$L_1 L_2\otimes 1$}} (X24) ;
    \draw[->] (X14)  to  node[above,sloped] {\tiny{$R_2 R_1 \otimes L_2 L_3$}} (X03) ;
    \draw[->] (X15) [bend left=7] to  node[below,sloped,pos=0.51] {\tiny{$L_2 L_3 \otimes R_1$}} (X03) ;
    \draw[->] (X14) [bend left=7] to  node[above,sloped,pos=.4] {\tiny{$L_3 \otimes R_3 R_1$}} (X02) ;
  \end{tikzpicture}\]
  \caption{\label{fig:NearBraidDD}
   {\bf{${}^{\Blg_3}\Pos^{2}_{\Blg_2} \DT ^{\Blg_2,\Blg_1'}\Pos_1$ when $\{1,2,3\}\subset\Upwards$}}}
  \end{figure}
When $\{1,2,3\}\cap \Upwards$ is a proper subset
of $\{1,2,3\}$, we need to apply additional homotopies to obtain this bimodule,
as in the proof of Lemma~\ref{lem:CrossingDADD}.

\begin{figure}[ht]
  \input{R3GenNames.pstex_t}
\caption{\label{fig:R3GenNames} 
  We illustrate here
  generators of type $Y_{15}$ and $Y_{24}$ respectively.}
\end{figure}

Tensoring this on the left with $\lsup{\Blg_5}\Pos^i_{\Blg_3}$, and following the labelling conventions
from Figure~\ref{fig:R3GenNames}, we find that the bimodule is homotopic to one of  the following form
from Figure~\ref{fig:R3Module}, along with the usual outside arrows and self-arrows.
\begin{figure}
  \begin{tikzpicture}[scale=.77]
    \node at (0,0) (Y05) {$Y_{05}$} ;
    \node at (4,0) (Y15) {$Y_{15}$} ;
   \node at  (8,0) (Q03) {$Q_{03}$} ;    
    \node at (12,0) (Y13) {$Y_{13}$} ;
    \node at (0,4) (Y04) {$Y_{04}$} ;
    \node at (4,4) (P03) {$P_{03}$} ;
    \node at (8,4) (Y24) {$Y_{24}$} ;    
    \node at (12,4) (Y23) {$Y_{23}$} ;
    \node at (0,8) (Y45) {$Y_{45}$} ;
    \node at (4,8) (Y35) {$Y_{35}$} ;
    \node at (8,8) (Y34) {$Y_{34}$} ;    
    \node at (4,-4) (Y01) {$Y_{01}$} ;
    \node at (8,-4) (Y02) {$Y_{02}$} ;
    \node at (12,-4) (Y12) {$Y_{12}$} ;

    \draw[->] (Y35) [bend left=7] to node[above,sloped] {\tiny{$R_1\otimes U_3$}} (P03) ;
    \draw[->] (P03) [bend left=6] to node[above,sloped,pos=.2] {\tiny{$1\otimes U_2$}} (Q03) ;
    \draw[->] (Q03) [bend left=7] to node[above,sloped] {\tiny{$1\otimes L_1$}} (Y13) ;
    \draw[->] (Y13) [bend left=7] to node[above,sloped] {\tiny{$U_2\otimes L_{2}$}} (Y23) ;
    \draw[->] (Y45) [bend left=7] to node[above,sloped] {\tiny{$R_1\otimes U_3$}} (Y04) ;
    \draw[->] (Y04) [bend left=7] to node[above,sloped] {\tiny{$R_2\otimes U_2$}} (Y05) ;
    \draw[->] (Y05) [bend left=7] to node[above,sloped] {\tiny{$1 \otimes L_1$}} (Y15) ;
    \draw[->] (Y15) [bend left=6] to node[above,sloped,pos=.2] {\tiny{$L_2  \otimes L_2$}} (Y24) ;
    \draw[->] (Y24) [bend left=7] to node[above,sloped] {\tiny{$U_1\otimes L_3$}} (Y34) ;

    \draw[->] (P03) [bend left=7] to node[above,sloped] {\tiny{$L_1\otimes 1$}} (Y35) ;
    \draw[->] (Q03) [bend left=6] to node[below,sloped,pos=.2] {\tiny{$U_2\otimes 1$}} (P03) ;
    \draw[->] (Y13) [bend left=7] to node[below,sloped] {\tiny{$U_3\otimes R_1$}} (Q03) ;
    \draw[->] (Y23) [bend left=7] to node[above,sloped] {\tiny{$1\otimes R_{2}$}} (Y13) ;
    \draw[->] (Y04) [bend left=7] to node[above,sloped] {\tiny{$L_1\otimes 1$}} (Y45) ;
    \draw[->] (Y05) [bend left=7] to node[above,sloped] {\tiny{$L_2\otimes 1$}} (Y04) ;
    \draw[->] (Y15) [bend left=7] to node[below,sloped] {\tiny{$U_3 \otimes R_1$}} (Y05) ;
    \draw[->] (Y24) [bend left=6] to node[below,sloped,pos=.2] {\tiny{$R_2\otimes R_2$}} (Y15);
    \draw[->] (Y34) [bend left=7] to node[above,sloped] {\tiny{$1\otimes R_3$}} (Y24) ;

    \draw[->] (Y45) [bend left=7] to node[above,sloped] {\tiny{$L_3\otimes U_1$}} (Y35) ;
    \draw[->] (Y04) [bend left=7] to node[above,sloped] {\tiny{$L_3\otimes U_1$}} (P03) ;
    \draw[->] (Y24) [bend left=7] to node[above,sloped] {\tiny{$L_3\otimes U_1$}} (Y23) ;

    \draw[->] (Y35) [bend left=7] to node[below,sloped, sloped] {\tiny{$R_3\otimes 1$}} (Y45) ;
    \draw[->] (P03) [bend left=7] to node[below,sloped] {\tiny{$R_3\otimes 1$}} (Y04) ;
    \draw[->] (Y23) [bend left=7] to node[below,sloped] {\tiny{$R_3\otimes 1$}} (Y24) ;
    \draw[->] (Q03) [bend left=25]to node[below,sloped,pos=.8] {\tiny{$R_3 R_2\otimes 1$}} (Y05) ;
    \draw[->] (Y13) [bend left=25]to node[below,sloped,pos=.2] {{\tiny{$R_3 R_2\otimes 1$}}} (Y15) ;

    \draw[->] (Y15) to node[above,sloped] {\tiny{$R_1\otimes L_2L_3$}} (Q03) ;
    
    \draw[->] (P03) [bend left=25] to node[above,sloped,pos=.8] {\tiny{$1\otimes L_1 L_2$}} (Y23) ;

    \draw[->] (Y45) [bend left=25] to node[above,sloped] {\tiny{$R_1\otimes L_1L_2L_3$}} (Y34) ;
    \draw[->] (Y04) [bend left=25] to node[above,sloped,pos=.2] {\tiny{$1\otimes L_1L_2$}} (Y24) ;
    \draw[->] (Y45) [bend right=25] to node[below,sloped,pos=0.7]{\tiny{$L_3\otimes R_3 R_2 R_1$}}  (Y05) ;

    \draw[->] (Y34) [bend left=7] to node[below,sloped] {\tiny{$R_2\otimes 1$}} (Y35) ;
    \draw[->] (Y35) [bend left=7] to  node[above,sloped] {\tiny{$L_2\otimes U_2$}} (Y34) ;

    \draw[->] (Y35) [bend right=25] to  node[below,sloped,pos=.7] {\tiny{$1\otimes R_3 R_2$}} (Y15) ;
    \draw[->] (Q03) [bend right=25] to  node[below,sloped,pos=0.3] {\tiny{$L_1 L_2\otimes 1$}} (Y34) ;

    \draw[->] (Y24)  to  node[above,sloped] {\tiny{$R_2 R_1 \otimes L_3$}} (P03) ;

    \draw[->] (Y15) [bend left=7] to  node[below,sloped,pos=0.6] {\tiny{$L_2 L_3 \otimes R_1$}} (P03) ;
    \draw[->] (Y24) [bend right=7]to  node[above,sloped,pos=0.51] {\tiny{$L_3 \otimes R_2 R_1$}} (Q03) ;
    
    \draw[->] (Y01) [bend left=7] to node[above,sloped] {\tiny{$1 \otimes L_2$}} (Y02) ;
    \draw[->] (Y02) [bend left=7] to node[above,sloped] {\tiny{$1 \otimes L_1$}} (Y12) ;
    \draw[->] (Y02) [bend left=7] to node[below,sloped] {\tiny{$U_2 \otimes R_2$}} (Y01) ;
    \draw[->] (Y12) [bend left=7] to node[below,sloped] {\tiny{$U_3\otimes R_1$}} (Y02) ;

    \draw[->] (Y01) [bend left=7] to node[above,sloped] {\tiny{$L_1 \otimes 1$}} (Y15) ;
    \draw[->] (Y02) [bend left=7] to node[above,sloped,pos=0.51] {\tiny{$U_1\otimes L_3$}} (Q03) ;
    \draw[->] (Y12) [bend left=7] to node[above,sloped] {\tiny{$U_1\otimes L_3$}} (Y13) ;

    \draw[->] (Y15) [bend left=7] to node[above,sloped] {\tiny{$R_1 \otimes U_3$}} (Y01) ;
    \draw[->] (Q03) [bend left=7] to node[above,sloped,pos=0.49] {\tiny{$1\otimes R_3$}} (Y02) ;
    \draw[->] (Y13) [bend left=7] to node[above,sloped] {\tiny{$1\otimes R_3$}} (Y12) ;

    \draw[->] (P03) [bend right=25] to node[below,sloped,pos=.8] {\tiny{$1\otimes R_3 R_2$}} (Y01) ;
    \draw[->] (Y02) [bend right=25] to node[below,sloped,pos=.2] {\tiny{$L_1 L_2 \otimes 1$}} (Y24) ;
    \draw[->] (Y12) [bend right=25] to node[below,sloped,pos=0.3] {\tiny{$L_1 L_2 L_3 \otimes R_1$}} (Y23) ;
    \draw[->] (Y12) [bend left=25] to node[above,sloped] {\tiny{$R_3 R_2 R_1\otimes L_3$}} (Y01) ;
  \end{tikzpicture}
  \caption{
  \label{fig:R3Module}{\bf{${}^{\Blg_5}\Pos^i_{\Blg_3}\DT^{\Blg_3}\Pos^{2}_{\Blg_2} \DT ^{\Blg_2,\Blg_1'}\Pos_1$}}}
\end{figure}

Tensoring in the other order gives the same bimodule; this can be seen
quickly from a symmetry on the answer. The picture after the Reidemeister move can be realized
as a rotation by $180^\circ$ (i.e. rotate the leftmost picture in 
Figure~\ref{fig:R3GenNames}). There is
a corresponding action on the algebra, exchanging the two tensor
factors of the algebra, and further exchanging $R_i$ and $L_{4-i}$.
The corresponding symmetry of the bimodule can be realized by rotating
the description from Figure~\ref{fig:R3Module} by $180^\circ$. The fact that the bimodule is fixed by 
the symmetry implies the claimed invariance under Reidemeister $3$ moves.
\end{proof}

\subsection{Other symmetries in the crossing bimodule}

The canonical $DD$ bimodules commute with the action of the braid group, in the following sense.

\begin{lemma}
  \label{lem:CommuteDDBraid}
  Fix $0\leq k \leq m+1$ and an arbitrary subset $\Upwards\subset \{1,\dots,m\}$, and let 
  \[\begin{array}{ll}
    \Blg_1=\Blg(m,k,\Upwards), & 
    \Blg_2=\Blg(m,k,\tau_i(\Upwards))  \\
    \Blg_1'=\Blg(m,m+1-k,\{1,\dots,m\}\setminus \Upwards) &
    \Blg_2'=\Blg(m,m+1-k,\{1,\dots,m\}\setminus \tau_i(\Upwards)) 
  \end{array}\]
  There is an equivalence
  $\lsup{\Blg_2}\Pos^i_{\Blg_1}\DT~^{\Blg_1,\Blg_1'}\CanonDD\simeq~^{\Blg_1'}\Pos^i_{\Blg_2'}\DT~^{\Blg_2',\Blg_2}\CanonDD$.
\end{lemma}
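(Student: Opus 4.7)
The plan is to identify both sides of the equivalence with the type $DD$ bimodule $\Pos_i$ from Section~\ref{sec:CrossingDD}, using Lemma~\ref{lem:CrossingDADD} twice together with the symmetry recorded in Equation~\eqref{eq:SymmetryOfPos}. Applying Lemma~\ref{lem:CrossingDADD} directly to the left-hand side gives
\[
\lsup{\Blg_2}\Pos^i_{\Blg_1}\DT~^{\Blg_1,\Blg_1'}\CanonDD\;\simeq\;\lsup{\Blg_2,\Blg_1'}\Pos_i,
\]
so the task reduces to showing that the right-hand side is equivalent to the same $DD$ bimodule (up to the obvious swap of the two left actions).

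First I would verify that the triple of algebras $(\Blg_2',\Blg_1',\Blg_2)$ satisfies the hypotheses of Lemma~\ref{lem:CrossingDADD} under the substitution playing the role of $(\Blg_1,\Blg_2,\Blg_1')$. This is a direct check: with $k'=m+1-k$ and $\Upwards'=\{1,\dots,m\}\setminus\tau_i(\Upwards)$, we have $\Blg_2'=\Blg(m,k',\Upwards')$, $\Blg_1'=\Blg(m,k',\tau_i(\Upwards'))$, and $\Blg_2=\Blg(m,m+1-k',\{1,\dots,m\}\setminus\Upwards')$, exactly as required. Invoking Lemma~\ref{lem:CrossingDADD} in this form gives
\[
\lsup{\Blg_1'}\Pos^i_{\Blg_2'}\DT~^{\Blg_2',\Blg_2}\CanonDD\;\simeq\;\lsup{\Blg_1',\Blg_2}\Pos_i.
\]

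To close the argument, I would invoke the symmetry isomorphism $\lsup{\Blg_2,\Blg_1'}\Pos_i\cong\lsup{\Blg_1',\Blg_2}\Pos_i$ from Equation~\eqref{eq:SymmetryOfPos}, which exchanges the roles of $\North$ and $\South$ while fixing $\West$ and $\East$. Chaining the three equivalences yields the desired homotopy equivalence.

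I do not anticipate a significant obstacle: the proof is purely structural, combining two applications of Lemma~\ref{lem:CrossingDADD} with the $180^\circ$-rotational symmetry of $\Pos_i$. The only point requiring care is bookkeeping of the substitution of algebras (which is straightforward once one tracks that $k\leftrightarrow m+1-k$ and $\Upwards\leftrightarrow\{1,\dots,m\}\setminus\tau_i(\Upwards)$ are involutive), and ensuring that the associativity of $\DT$ and the tensor-product boundedness needed to form each side are met — both are guaranteed since $\CanonDD$ is a finite-rank $DD$ bimodule and $\Pos^i$ is adapted in the sense of Definition~\ref{def:Adapted}.
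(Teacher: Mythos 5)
Your proposal is correct and matches the paper's own argument: the paper likewise deduces the statement as an immediate consequence of Lemma~\ref{lem:CrossingDADD} (applied to identify each side with a crossing type $DD$ bimodule) together with the symmetry $\lsup{\Blg_2,\Blg_1}\Pos_i\cong\lsup{\Blg_1,\Blg_2}\Pos_i$ of Equation~\eqref{eq:SymmetryOfPos}. Your bookkeeping of the substitution $k\leftrightarrow m+1-k$, $\Upwards\leftrightarrow\{1,\dots,m\}\setminus\tau_i(\Upwards)$ is exactly the detail the paper leaves implicit, and it checks out.
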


\begin{proof}
  This is an immediate consequence of Lemma~\ref{lem:CrossingDADD}, and the symmetry
  of the $\Pos_i$ from Equation~\eqref{eq:SymmetryOfPos}.
\end{proof}

\newcommand\TridentDD{\mathcal T}
\section{The $DD$ bimodule of a critical point}
\label{sec:Crit}

In Sections~\ref{sec:Maximum} and~\ref{sec:Minimum}, we will construct
$DA$ bimodules $\lsup{\Blg_2}\Max_{\Blg_1'}$ and $\lsup{\Blg_1}\Min_{\Blg_2'}$ 
(where the algebras will be made precise shortly)
associated to a region in the knot diagram where there are no crossings and a
single critical point, which can be a maximum (as in
Section~\ref{sec:Maximum}) or a minimum (as in
Section~\ref{sec:Minimum}).  In the present section, we will construct
a type $DD$ bimodule, called {\em the type $DD$ bimodule for a
  critical point}, $\lsup{\Blg_1,\Blg_2}\Crit$ 
which is related to the aforementioned type $DA$ bimodules via equivalences
\[ 
\begin{array}{lll}
  \lsup{\Blg_2}\Max_{\Blg_1'}\DT~\lsup{{\Blg_1',\Blg_1}}\CanonDD \simeq~
  \lsup{\Blg_2,\Blg_1}\Crit &{\text{and}}&
  \lsup{\Blg_2}\Min_{\Blg_1'}\DT~\lsup{{\Blg_1',\Blg_1}}\CanonDD 
  \simeq~\lsup{\Blg_2,\Blg_1}\Crit. \\
\end{array}
\]
(See Propositions~\ref{prop:MaxDual} and~\ref{prop:MinDual} for precise statements.)

Fix integers $c$, $k$, and $m$ with $1\leq c\leq m+1$ and $0\leq k\leq m+1$. The  algebras appearing in the bimodule for a critical point are specified as follows. 
Let $\phi_c\colon \{1,\dots,m\}\to\{1,\dots,m+2\}$ be the function
\begin{equation}
\label{eq:DefInsert}
\phi_c(j)=\left\{\begin{array}{ll}
j &{\text{if $j< c$}} \\
j+2 &{\text{if $j\geq c$.}}
\end{array}\right.
\end{equation}
Let 
$\Blg_1=\Blg(m,k,\Upwards_1)$ and
$\Blg_2=\Blg(m+2,m+2-k,\Upwards_2)$,
where $\Upwards_1\subset
\{1,\dots,m\}$ and $\Upwards_2\subset \{1,\dots,m+2\}$, so that 
$\phi_c(\Upwards_1)\cap \Upwards_2=\emptyset$
and 
$|\Upwards_1|+|\Upwards_2|=m+1$
and $|\Upwards_2\cap \{c,c+1\}|=1$;
equivalently, 
$\Upwards_2=\phi_{c}(\{1,\dots,m\}\setminus \Upwards_1)\cup\{c\}$ or
$\Upwards_2=\phi_{c}(\{1,\dots,m\}\setminus\Upwards_1)\cup\{c+1\}$.

In this section, we 
construct the type $DD$ bimodule for a critical point, denoted
$\Crit_c= \lsup{\Blg_1,\Blg_2}\Crit_c$, where here $c$ (which we will sometimes drop from the notation) 
indicates where the critical point occurs.

Having specified the algebras, we now
describe the underlying vector space for $\Crit_c$.
We call an idempotent state $\y$ for $\Blg_2$ an {\em allowed idempotent state for $\Blg_2$} if
\[ |\y\cap\{c-1,c,c+1\}|\leq 2\qquad\text{and}\qquad c\in\y.\] 
There is a map $\psi$ from
allowed idempotent states $\y$ for $\Blg_2$ to idempotent states  for $\Blg_1$,
where $\x=\psi(\y)\subset \{0,\dots,m\}$ is characterized by
\begin{equation}
  \label{eq:SpecifyPsi}
  |\y\cap \{c-1,c,c+1\}| + 
  |\x\cap \{c-1\}| =2~\qquad{\text{and}}~\qquad \phi_c(\x)\cap \y=\emptyset.
\end{equation}

As a vector space, $\Crit_c$ is spanned by vectors that are in
one-to-one correspondence with allowed idempotent states for $\Blg_2$. 
The bimodule structure, over the rings of idempotents $\IdempRing(\Blg_1)$ and $\IdempRing(\Blg_2)$,
is specified as follows.
If ${\mathbf P}={\mathbf P}_{\y}$ is the generator associated to 
the idempotent state $\y$, then for idempotent states $\x$ and $\z$ for $\Blg_1$ and $\Blg_2$ respectively,
\[ (\Idemp{\x}\otimes \Idemp{\z})\cdot {\mathbf P}_{\y}= \left\{\begin{array}{ll}
{\mathbf P}_{\y} &{\text{if $\y=\z$ and $\x=\psi(\y)$}} \\
0 &{\text{otherwise.}}
\end{array}\right..\]

To specify the differential, 
consider the element 
$A\in \Blg_1\otimes\Blg_2$ 
\begin{align}
A&=(1\otimes L_{c} L_{c+1}) + 
(1\otimes R_{c+1} R_{c}) 
 + \sum_{j=1}^{m} R_j \otimes L_{\phi(j)} + L_j \otimes R_{\phi(j)} \label{eq:defA}\\
&
  + \left\{\begin{array}{ll}
        (1\otimes C_{c} U_{c+1}) &{\text{if $c\in\Upwards_2$}} \nonumber \\
        (1\otimes U_{c} C_{c+1}) &{\text{if $c+1\in\Upwards_2$,}}
      \end{array}\right\}
 + \sum_{j=1}^{m} \left\{\begin{array}{ll}
    C_j \otimes U_{\phi(j)} & {\text{if $j\in\Upwards_1$}} \nonumber \\
    U_j \otimes C_{\phi(j)} & {\text{if $j\not\in\Upwards_1$}}
    \end{array}\right\}. \nonumber
  \end{align}
where we have dropped the subscript $c$ from $\phi_c=\phi$.
Let 
\[ \delta^1({\mathbf P}_{\y}) = (\Idemp{\psi(\y)}\otimes \Idemp{\y})\cdot A \otimes
\sum_{\z} {\mathbf P}_{\z},\]
where the latter sum is taken over all allowed idempotent states $\z$ for $\Blg_2$.

\begin{lemma}
  The space $\lsup{\Blg_1,\Blg_2}\Crit_c$ defined above, and equipped with the map
  \[ \delta^1\colon \Crit_c \to (\Blg_1\otimes \Blg_2)\otimes \Crit_c,\]
  specified above, is a type $DD$ bimodule over $\Blg_1$ and $\Blg_2$.
\end{lemma}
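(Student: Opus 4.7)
Since $\delta^1$ is left-multiplication by the algebra element $A\in \Blg_1\otimes\Blg_2$ followed by projection onto the span of allowed generators, the type $DD$ structure equation
\[ \bigl(\mu_1\otimes \Id + (\mu_2\otimes \Id)(\Id\otimes \delta^1)\bigr)\circ \delta^1 = 0 \]
reduces to verifying
\[ E\cdot\bigl(dA + A\cdot E\cdot A\bigr)\cdot E = 0 \]
in $\Blg_1\otimes \Blg_2$, where $E=\sum_{\y\text{ allowed}}\Idemp{\psi(\y)}\otimes\Idemp{\y}$ is the idempotent projecting onto the subspace spanned by allowed generators. The first step is to show that $A$ preserves allowedness, i.e.\ $(1-E)AE=0$. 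I would check this summand-by-summand: the only delicate terms are $R_c\otimes L_{c+2}$ and $L_c\otimes R_{c+2}$, which could in principle produce $\y'$ with $|\y'\cap\{c-1,c,c+1\}|=3$, but the $\Blg_1$-side idempotent condition $\Idemp{\psi(\y)}\cdot R_c$ (or $\Idemp{\psi(\y)}\cdot L_c$) vanishes in exactly those cases by Equation~\eqref{eq:SpecifyPsi}. Granting this, the identity reduces further to $E(dA+A^2)E=0$.

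The second step is to write $A=A_0+A_c$, where the critical-point block is $A_c=1\otimes L_cL_{c+1}+1\otimes R_{c+1}R_c+\epsilon$ (with $\epsilon=1\otimes C_cU_{c+1}$ or $1\otimes U_cC_{c+1}$), and to observe that $A_0$ is a $\phi$-shifted analogue of the element appearing in Lemma~\ref{lem:CanonicalIsDD}.  Because the insertion map $\phi$ preserves order and local adjacency away from $\{c,c+1\}$, all the cancellations in $dA_0+A_0^2$ involving only $\Blg_1$-positions $j\notin\{c-1,c\}$ proceed exactly as in that lemma: the pairings of $R_j\otimes L_{\phi(j)}$ with $L_j\otimes R_{\phi(j)}$ produce $U_j\otimes U_{\phi(j)}$ contributions, which cancel the differentials of the corresponding $C\otimes U$ or $U\otimes C$ terms; products of terms with $|i-j|>1$ commute and cancel; adjacent products of the same kind vanish by $L_{i+1}L_i=R_iR_{i+1}=0$.

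The third step handles the critical-point residues. These consist of (a) the ``gap'' products of $A_0$ whose $\Blg_1$-indices lie in $\{c-1,c\}$---for instance $(R_{c-1}\otimes L_{c-1})(R_c\otimes L_{c+2})=R_{c-1}R_c\otimes L_{c-1}L_{c+2}$ and its transpose, which would be killed by $L$-$L$ or $R$-$R$ relations in the canonical setting but not here since $\phi(c-1)$ and $\phi(c)$ are separated by the gap $\{c,c+1\}$ in $\Blg_2$; (b) cross products $A_0\cdot A_c+A_c\cdot A_0$ at nearby positions; (c) the $A_c^2$ contributions $(1\otimes L_cL_{c+1})(1\otimes R_{c+1}R_c)$ and its transpose, each of which equals $1\otimes U_cU_{c+1}$ on the appropriate idempotent sector by Proposition~\ref{prop:DeterminedByMultiplicities}; and (d) $d\epsilon=1\otimes U_cU_{c+1}$. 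I would pair the gap terms in (a) against the cross products in (b), and show that (c) plus (d) sum to zero once one tracks the idempotent sectors (Cases 1, 2, 3 distinguished by $|\y\cap\{c-1,c,c+1\}|$ and which of $c\pm 1$ lies in $\y$), using that the support of (c) covers precisely the sectors where $\epsilon$ itself is non-zero.

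The main obstacle is the bookkeeping in this third step: there is a modest proliferation of cases indexed by $|\y\cap\{c-1,c,c+1\}|$ and by the occupancy of positions $c-2,c-1$ and $c+2$ in $\psi(\y)$. The only computational tool needed throughout is Proposition~\ref{prop:DeterminedByMultiplicities}, which identifies any non-zero homogeneous algebra element by its initial idempotent and weight vector; this makes every individual check of the form ``product $=$ product'' into a routine comparison.
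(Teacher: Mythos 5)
Your proposal is correct and follows essentially the same route as the paper, whose proof consists of observing that the structure relation amounts to the identity $dA+A\cdot A=0$ checked term by term exactly as in Lemma~\ref{lem:CanonicalIsDD}, the restriction to allowed generators being harmless because the $\Blg_1$-side idempotent conditions forced by Equation~\eqref{eq:SpecifyPsi} kill every term that would leave the allowed range. (Two bookkeeping slips that your summand-by-summand check would catch: the term that could push a type-$\YY$ generator out of the allowed range is $L_{c-1}\otimes R_{c-1}$ rather than $L_c\otimes R_{c+2}$, whose $\Blg_2$-factor only removes a point from the window $\{c-1,c,c+1\}$; and the ``gap product'' $R_{c-1}R_c\otimes L_{c-1}L_{c+2}$ needs no new cancellation partner, since $R_{c-1}\cdot R_c=0$ in $\Blg_1$ by Equation~\eqref{eq:MoveTooFarRs}, just as in the canonical case.)
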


\begin{proof}
  The proof is a straightforward adaptation of Lemma~\ref{lem:CanonicalIsDD}.
\end{proof}

It is helpful to understand $\Crit_c$ a little more explicitly.
To this end, we classify the allowed idempotents for $\Blg_2$ into three types,  labelled $\XX$, $\YY$, and $\ZZ$:
\begin{itemize}
  \item $\y$ is of type $\XX$ if $\y\cap \{c-1,c,c+1\}=\{c-1,c\}$,
  \item $\y$ is of type $\YY$ if $\y\cap \{c-1,c,c+1\}=\{c,c+1\}$,
  \item $\y$ is of type $\ZZ$ if $\y\cap \{c-1,c,c+1\}=\{c\}$. 
\end{itemize}
There is a corresponding classification of the generators ${\mathbf P}_{\y}$ into $\XX$, $\YY$, and $\ZZ$, according to the type of $\y$;
see Figure~\ref{fig:DDcap}.

\begin{figure}[ht]
\input{DDcap.pstex_t}
\caption{\label{fig:DDcap} {\bf{$DD$ bimodule of a critical point.}}  
Three generator types are illustrated.}
\end{figure}

With respect to this decomposition, 
terms in  the differential are of the following four types:
\begin{enumerate}[label=(P-\arabic*),ref=(P-\arabic*)]
\item 
  \label{type:OutsideLR}
  $R_j\otimes L_{\phi(j)}$
  and $L_j\otimes R_{\phi(j)}$ for all $j\in \{1,\dots,m\}\setminus \{c-1,c\}$; these connect
  generators of the same type. 
\item
  \label{type:UC}
  $C_j\otimes U_{\phi(j)}$ if $j\in \Upwards_1$ 
  and $U_j\otimes C_{\phi(j)}$ if $j\in\{1,\dots,m\}\setminus \Upwards_1$
\item 
  \label{type:UC2}
  $1\otimes U_{c} C_{c+1}$ if $c+1\in \Upwards_2$ or 
  $1\otimes C_{c} U_{c+1}$ if $c\in \Upwards_2$.
\item 
  \label{type:InsideCup}
  Terms in the diagram below connect  generators
  of different types.
\begin{equation}
  \label{eq:CritDiag}
  \begin{tikzpicture}[scale=1.2]
    \node at (-1.5,0) (X) {$\XX$} ;
    \node at (1.5,0) (Y) {$\YY$} ;
    \node at (0,-2) (Z) {$\ZZ$} ;
    \draw[->] (X) [bend right=7] to node[below,sloped] {\tiny{$1\otimes R_{c+1} R_{c}$}}  (Y)  ;
    \draw[->] (Y) [bend right=7] to node[above,sloped] {\tiny{$1\otimes L_{c} L_{c+1}$}}  (X)  ;
    \draw[->] (X) [bend right=7] to node[below,sloped] {\tiny{$R_{c-1}\otimes L_{c-1}$}}  (Z)  ;
    \draw[->] (Z) [bend right=7] to node[above,sloped] {\tiny{$L_{c-1}\otimes R_{c-1}$}}  (X)  ;
    \draw[->] (Z) [bend right=7] to node[below,sloped] {\tiny{$R_{c} \otimes L_{c+2}$}}  (Y)  ;
    \draw[->] (Y) [bend right=7] to node[above,sloped] {\tiny{$L_{c}\otimes R_{c+2}$}}  (Z)  ;
  \end{tikzpicture}
\end{equation}
\end{enumerate}

With the understanding that
if $c=1$, then the terms containing $L_{c-1}$ or $R_{c-1}$ are missing; 
similarly, if $c=m+1$, the terms containing $R_{c+2}$ and $L_{c+2}$ are missing.

\subsection{Grading sets}
\label{sec:CritGradingSet}

For any generator $P$ of $\Crit_c$, if $(a_2 \otimes
a_1) \otimes Q$ appears with non-zero multiplicity in $d^1(P)$, then
for all $i=1,\dots,m$, $w_i(a_1)=w_{\phi_c(i)}(a_2)$ and
$w_{c}(a_2)=w_{c+1}(a_2)$. This is obvious from the form of the
bimodule (Equation~\eqref{eq:defA}).
In the language of Section~\ref{subsec:OurGradingSets}, we can express this
by saying that the bimodule is supported in grading $0$ in $H_1(W,\partial W)$, where $W$
is the partial knot diagram with a single critical point in it.

\subsection{Critical points and crossings}

Later, we will study in detail how the bimodules of critical points
interact with the bimodules associated to crossings. For the time
being, we will content ourselves with the following special case,
where the critical point and the crossing are connected to each other.

To set notation, let
    $\psi_c\colon \{1,\dots,m\}\to\{1,\dots,m+2\}$ be the function
\[
\psi_c(j)=\left\{\begin{array}{ll}
j &{\text{if $j< c$}} \\
c+1 &{\text{if $j=c$}} \\
j+2 &{\text{if $j>c$;}}
\end{array}\right.
\]
i.e. $\psi_c=\tau_{c+1}\circ \phi_c=\tau_c\circ \phi_{c+1}$.

\begin{lemma}
  \label{lem:BasicTrident}
  Fix integers $0\leq k\leq m+1$, and an arbitrary subset $\Upwards_1\subset \{1,\dots,m\}$,
  and let $\Blg_1=\Blg(m,k,\Upwards_1)$, $\Blg_2=\Blg(m+2,m+2-k,\Upwards_2)$
  $\Blg_3=\Blg(m+2,m+2-k,\Upwards_3)$, $\Blg_4=\Blg(m+2,m+2-k,\Upwards_4)$
  where
  \[
  \Upwards_2=\phi_{c+1}(\{1,\dots,m\})\setminus \Upwards_1)\cup\{c+1\}
  \qquad{\text{or}}\qquad
  \Upwards_2=\phi_{c+1}(\{1,\dots,m\}\setminus\Upwards_1)\cup\{c+2\},
  \]
  $\Upwards_3=\tau_{c}(\Upwards_2)$ and
  $\Upwards_4=\tau_{c+1}(\Upwards_3)$.  
  There is homotopy equivalence of graded bimodules:
  \begin{equation}
    \label{eq:BasicTrident}
    \lsup{\Blg_3}\Pos^{c}_{\Blg_2}\DT~^{\Blg_2,\Blg_1}\Crit_{c+1}
    \simeq
    \lsup{\Blg_3}\Neg^{c+1}_{\Blg_4}\DT~^{\Blg_4,\Blg_1}\Crit_{c}
  \end{equation}
\end{lemma}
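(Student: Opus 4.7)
The plan is to prove Equation~\eqref{eq:BasicTrident} by computing both type $DD$ bimodules explicitly and exhibiting an identification. The generators of $\Pos^c\DT\Crit_{c+1}$ are pairs $(X,P_{\y})$ where $X\in\{\North,\South,\East,\West\}$ and $P_{\y}$ is an allowed generator of $\Crit_{c+1}$, subject to compatibility of the middle $\Blg_2$-idempotent; generators of $\Neg^{c+1}\DT\Crit_c$ are enumerated symmetrically. First I would set up an explicit bijection between the two generator sets, guided by the topological picture: both sides represent partial Kauffman states on the local tangle formed by a cap with a crossing attached next to it, and sliding the crossing across the cap (which reverses its sign) exchanges one description for the other.

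Next I would compute the structure map $\delta^1$ on both sides via Equation~\eqref{eq:DefDD}. For the left-hand side, the iterated structure map $\delta$ of $\Crit_{c+1}$ is determined by the single algebra element $A$ of Equation~\eqref{eq:defA}, and paired with the actions $\delta^1_\ell$ of $\Pos^c$ from Section~\ref{sec:CrossingDA} this produces a finite, explicit list of differentials. Organized by generator type, the answer should be a table analogous to Figure~\ref{fig:NearBraidDD}. The right-hand side computes similarly. A change of basis, together with cancellation of acyclic subcomplexes in the spirit of Lemma~\ref{lem:CrossingDADD}, should then identify the two bimodules. The grading match is automatic: both bimodules are adapted to the same partial knot diagram, by the considerations of Sections~\ref{subsec:OurGradingSets} and~\ref{sec:CritGradingSet}.

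The principal obstacle is the case analysis: the precise form of each bimodule depends on whether $c$ and $c+1$ belong to $\Upwards_1$ and $\Upwards_2$, which in turn governs which of the Type~\ref{type:UC} and~\ref{type:UC2} differentials appear, and also which terms of Diagram~\eqref{eq:PositiveCrossingDA} involving $C_i$ contribute. I would begin with the case where $\Upwards_1\cap\{c,c+1\}=\emptyset$, where the bimodules are simplest, and then exploit $C_i$-equivariance (in the spirit of Equation~\eqref{eq:CEquivariance}) to reduce the other configurations to this base case. A secondary subtlety is to verify that the outside arrows of Type~\ref{type:OutsideLR} pair correctly at the strands $c, c+1, c+2$ where the cap attaches; this means checking that the contributions involving $L_c, R_c, L_{c+1}, R_{c+1}$ match consistently under the topological slide of the crossing across the cap.
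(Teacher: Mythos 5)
Your proposal is correct and follows essentially the same route as the paper: the paper likewise computes both tensor products directly and, after cancellations/homotopies as in Lemma~\ref{lem:CrossingDADD}, identifies each with an explicitly specified intermediate ``trident'' bimodule $\lsup{\Blg_1,\Blg_3}\TridentDD$ whose generator types are exactly the partial Kauffman states of the cap-plus-crossing tangle you describe. The one small caveat is that the grading match is not quite ``automatic'' from adaptedness alone (which only fixes the grading set, not the additive shift); it is instead the quick check against the explicit formulas of Equations~\eqref{eq:GradeCrossing} and~\eqref{eq:GradeNegCrossing}, the crossing gradings having been chosen precisely so that Equation~\eqref{eq:BasicTrident} holds as a bigraded equivalence (cf.\ Remark~\ref{rem:MotivateGradings}).
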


\begin{rem}
  \label{rem:MotivateGradings}
  The gradings on the crossing bimodules  were chosen so that Equation~\eqref{eq:BasicTrident} holds
  as a bigraded module map.
\end{rem}

The above lemma is a straightforward computation, using the following
{\em trident bimodule} $~^{\Blg_1,\Blg_3}\TridentDD$.

Generators correspond to pairs of idempotents $\x$ and $\y$ for $\Blg_1$ and $\Blg_3$
with the following properties:
\begin{itemize}
  \item Letting,
then
$\psi_c(\x)\cap\y=\emptyset$.
  \item $|\y|=|\x|+1$
  \item $|\y\cap\{c,c+1\}\geq 1$
  \item if $|\y\cap\{c,c+1\}|=2$ then either $c-1\not\in\x$ and $c-1\not\in\y$ or
    $c\not\in\x$ and $c+2\not\in\y$
\end{itemize}
We separate these pairs into four types:
\begin{itemize}
  \item $(\x,\y)$ is of Type~${\mathbf P}$ if $c-1\not\in \x$ and $c-1\not\in \y$
    (so $\{c,c+1\}\subset \y$)
  \item $(\x,\y)$ is of Type~${\mathbf Q}$ if $c\not\in \x$ and $c+2\not\in \y$
    (so $\{c,c+1\}\subset \y$)
  \item $(\x,\y)$ is of Type~$\XX$ if $c\not\in\y$
  \item $(\x,\y)$ is of Type~$\YY$ if $c+1\not\in\y$
\end{itemize}
See Figure~\ref{fig:TridentModule} for a picture.
(Note that these generator types correspond to the partial Kauffman states in the sense of Definition~\ref{def:PartialKnotDiagram}
of the diagram containing the maximum and the crossing, with the understanding that the incoming idempotent
is complementary to idempotent state coming from the top of the diagram.)

\begin{figure}[ht]
\input{TridentModule.pstex_t}
\caption{\label{fig:TridentModule} {\bf{The four generator types of the trident bimodule.}}}
\end{figure}

Let ${\mathbf T}_{\x,\y}$ be the corresponding generator of $\TridentDD$.
As a left module over $\IdempRing(\Blg_1)\otimes \IdempRing(\Blg_3)$, 
the action is specified by $(\Idemp{\x}\otimes \Idemp{\y})\cdot {\mathbf T}_{\x,\y}={\mathbf T}_{\x,\y}$.
The differential has the following types of terms:
\begin{enumerate}
\item 
  \label{type:OutsideLRT}
  $R_j\otimes L_j$
  and $L_j\otimes R_j$ for all $j\in \{1,\dots,m\}\setminus \{c\}$; these connect
  generators of the same type. 
\item
  \label{type:UCT}
  $C_j\otimes U_{\psi_c(j)}$ if $j\in \Upwards_1\setminus \{c\}$
  and $U_j\otimes C_{\psi_c(j)}$ if $j\in\{1,\dots,m\}\setminus(\Upwards_1\cup\{c\})$; these connect generators of the same type.
\item $C_c\otimes U_{c+1}$ if $c\in \Upwards_1$ and
  $U_c\otimes C_{c+1}$ if $c\not\in\Upwards_1$
\item 
  \label{type:InsideT}
  Terms in the diagram below connect  generators
  of different types:
  \begin{equation}
    \label{eq:PositiveTrident}
    \begin{tikzpicture}[scale=1.6]
    \node at (-1,1) (P) {${\mathbf P}$} ;
    \node at (1,1) (Q) {${\mathbf Q}$} ;
    \node at (-1,-1) (X) {${\mathbf X}$} ;
    \node at (1,-1) (Y) {${\mathbf Y}$} ;
    \draw[->] (P)[bend left=7] to node[above,sloped]{\tiny{$L_{c}\otimes U_{c+1} + 1\otimes L_{c} L_{c+1} L_{c+2}$}} (Q) ;
    \draw[->] (Q)[bend left=7] to node[below,sloped]{\tiny{$R_{c}\otimes 1$}} (P);

    \draw[->] (P)[bend right=7] to node[below,sloped]{\tiny{$1 \otimes L_{c}U_{c+2} + L_{c}\otimes R_{c+2} R_{c+1}$}} (X) ;
    \draw[->] (X)[bend right=7] to node[below,sloped]{\tiny{$1 \otimes R_{c} $}} (P) ;

    \draw[->] (X)[bend left=7] to node[above,sloped]{\tiny{$1\otimes L_{c+1}$}} (Y) ;
    \draw[->] (Y)[bend left=7] to node[below,sloped]{\tiny{$U_{c}\otimes R_{c+1} +R_{c}\otimes L_{c}L_{c+2}$}} (X) ;

    \draw[->] (Y)[bend right=7] to node[below,sloped]{\tiny{$1\otimes U_{c} L_{c+2}+L_{c}\otimes R_{c+1}R_{c}$}} (Q) ;
    \draw[->] (Q)[bend right=7] to node[below,sloped]{\tiny{$1\otimes R_{c+2}$}} (Y) ;
  \end{tikzpicture}
\end{equation}
\end{enumerate}

\begin{proof}[Proof of Lemma~\ref{lem:BasicTrident}]
  A straightforward computation identifies both sides with the trident
  bimodule specified above, after a possible homotopy (as in the proof
  of Lemma~\ref{lem:CrossingDADD}).  The fact that the map respects
  gradings follows quickly from the grading conventions; see
  Equations~\eqref{eq:GradeCrossing} and\eqref{eq:GradeNegCrossing}.
\end{proof}

\section{The $DA$ bimodule associated to a maximum}
\label{sec:Maximum}

We will describe now the type $DA$ bimodule $\lsup{\Blg_2}\Max^c_{\Blg_1'}$
of a region in the knot
diagram where there are no crossings and a single local maximum,
which connects the $c^{th}$ and the $(c+1)^{st}$ outgoing strand.
The algebras are specified as follows.
Fix integers $c$, $k$, and $m$ with $1\leq c\leq m+1$ and $0\leq k\leq m+1$.,
and let
\[ \Blg_1'=\Blg(m,k,\Upwards_1)\qquad{\text{and}}\qquad
\Blg_2=\Blg(m+2,k+1,\Upwards_2),\]
where $\Upwards_1'\subset\{1,\dots,m\}$ is arbitrary,
and $\Upwards_2\subset \{1,\dots,m+2\}$ is given by
\[
\Upwards_2=\phi_c(\Upwards_1)\cup\{c\}
~\text{or}~\Upwards_2=\phi_c(\Upwards_1)\cup\{c+1\}.\]
The two cases as corresponding to the two possible orientations of the strand with the maximum;
see Figure~\ref{fig:Maximum}.
\begin{figure}[ht]
\input{Maximum.pstex_t}
\caption{\label{fig:Maximum} {\bf{Picture of maximum.}} Here the maximum is oriented from left to right,
so $\Upwards_2\cap \{c,c+1\}=c$.}
\end{figure}

As in Section~\ref{sec:Crit}, 
an {\em allowed idempotent state for $\Blg_2$} 
is an idempotent state $\y$ for $\Blg_2$ with 
$c\in\y$ and
$|\y\cap \{c-1,c+1\}|\leq 1$.
There is a map $\psi'$ from allowed idempotent states for 
$\Blg_2$ to idempotent states for $\Blg_1'$,
given by 
\[ \psi'(\x)=\left\{\begin{array}{ll}
\phi^{-1}(\y) & {\text{if $c+1\not\in \y$}} \\
\phi^{-1}(\y)\cup\{c-1\} & {\text{if $c+1\in \y$}}
\end{array}
\right.\]
Observe that $\psi'(\y)=\{0,\dots,m\}\setminus \psi(\y)$,
where $\psi$ is the map from Section~\ref{sec:Crit}
(c.f. Equation~\eqref{eq:SpecifyPsi}).

A basis for the underlying vector space of
$\lsup{\Blg_2}\Max^c_{\Blg_1'}$ is specified by the allowed idempotent
states for $\Blg_2$.  The bimodule structure, over the rings of
idempotents $\IdempRing(\Blg_1')$ and $\IdempRing(\Blg_2)$, is
specified as follows.  If ${\mathbf Q}_{\y}$ is the
generator associated to the allowed idempotent state $\y$, then for idempotent
states $\x$ and $\z$ for $\Blg_1'$ and $\Blg_2$ respectively,
\[ \Idemp{\z}\cdot {\mathbf Q}_{\y}\cdot \Idemp{\x}= \left\{\begin{array}{ll}
{\mathbf Q}_{\y} &{\text{if $\y=\z$ and $\x=\psi'(\y)$}} \\
0 &{\text{otherwise.}}
\end{array}\right.\]

The map
$\delta^1_1\colon \lsup{\Blg_2}\Max^c_{\Blg_1'}\to \Blg_2\otimes_{\IdempRing(\Blg_2)} {\lsup{\Blg_2}\Max^c_{\Blg_1'}}$
is given by
\[ {\mathbf Q}_{\y}\mapsto \Idemp{\y}\cdot \left(R_{c+1} R_{c} + L_{c} L_{c+1} + \left\{\begin{array}{ll}
U_{c} C_{c+1} & {\text{if $c+1\in\Upwards_2$}} \\
C_{c} U_{c+1} & {\text{if $c\in\Upwards_2$,}}
\end{array}
 \right\}\right)\otimes \sum_{\z} {\mathbf Q}_{\z}.\]
where the sum is taken over all allowed idempotents $\z$ for $\Blg_2$.

We split the bimodule
$\lsup{\Blg_2}\Max^c_{\Blg_1'}\cong \XX\oplus\YY\oplus \ZZ$
according to the types of the corresponding idempotents as defined in Section~\ref{sec:Crit};
i.e.
\begin{align*}
  \XX = \bigoplus_{\{\y\big|\y\cap\{c-1,c,c+1\}=\{c-1,c\}\}} &{\mathbf Q}_\y \hskip1cm
  \YY = \bigoplus_{\{\y\big|\y\cap\{c-1,c,c+1\}=\{c,c+1\}\}} {\mathbf Q}_\y \\
  \ZZ = &\bigoplus_{\{\y\big|\y\cap\{c-1,c,c+1\}=\{c\}\}} {\mathbf Q}_\y.
\end{align*}

With respect to this splitting, $\delta^1_1$ can be expressed as follows.
If $c\in\Upwards_2$, then 
\begin{align*}
\delta^1_1(\XX)=& C_{c}U_{c+1}\otimes \XX + R_{c+1}R_{c}\otimes \YY\\
\delta^1_1(\YY)=& C_{c}U_{c+1}\otimes \YY + L_{c}L_{c+1}\otimes \XX\\
\delta^1_1(\ZZ)= & C_{c}U_{c+1}\otimes \ZZ;
\end{align*}
whereas if $c+1\in \Upwards_2$, 
\begin{align*}
\delta^1_1(\XX)=& U_{c}C_{c+1}\otimes \XX + R_{c+1}R_{c}\otimes \YY\\
\delta^1_1(\YY)=& U_{c}C_{c+1}\otimes \YY + L_{c}L_{c+1}\otimes \XX\\
\delta^1_1(\ZZ)= & U_{c}C_{c+1}\otimes \ZZ;
\end{align*}

To define $\delta^1_2$, it is helpful to have the following:

\begin{lemma}
  \label{lem:ConstructDeltaTwo}
  Let $\x'$ and $\y'$ be two idempotents for $\Blg_1'$ that are close
  enough, and $\x$ be an allowed idempotent state for $\Blg_2$ with
  $\psi'(\x)=\x'$.  Then, there is a uniquely associated allowed
  idempotent state $\y$ with $\psi'(\y)=\y'$ so that there is a
  surjective map
  $\Phi_{\x}\colon \Idemp{\x'}\cdot \Blg'_1\cdot \Idemp{\y'}\to
  \Idemp{\x}\cdot \Blg_2\cdot \Idemp{\y}$
  that maps the portion of $\Idemp{\x'}\cdot \Blg_1'\cdot \Idemp{\y'}$ with weights
  $(w_1',\dots,w_m')$ surjectively onto the portion of 
  $\Idemp{\x}\cdot \Blg_2\cdot\Idemp{\y}$ with 
  $w_{\phi_c(i)}=w'_i$ and $w_{c}=w_{c+1}=0$, and that satisfies 
  the relations
  $\Phi_{\x}(U_i\cdot a)=U_{\phi_c(i)} \cdot \Phi_{\x}(a)$
  and $\Phi_{\x}(C_j\cdot a)=C_{\phi_c(j)}\cdot \Phi_{\x}(a)$
  for any $i\in 1,\dots,m$, $j\in\Upwards_1'$, and $a\in \Idemp{\x'}\cdot \Blg_1'\cdot \Idemp{\y'}$.
\end{lemma}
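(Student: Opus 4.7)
The plan is to construct $\y$ and $\Phi_\x$ explicitly, then verify the required properties. The appropriate $\y$ is determined by requiring that $\y\cap\{c,c+1\}=\x\cap\{c,c+1\}$ (forced by the weight constraints $w_c=w_{c+1}=0$ in the target), together with $\psi'(\y)=\y'$. The first condition pins down whether $\y$ is of type $\XX/\ZZ$ or of type $\YY$ in the trichotomy from Section~\ref{sec:Crit}, in a way dictated by the type of $\x$; the second condition, combined with the type, pins down $\y$ on positions outside $\{c-1,c,c+1\}$, namely $i\in\y$ iff $\phi_c^{-1}(i)\in\y'$ for such $i$. One then checks that $\y$ is an allowed idempotent state for $\Blg_2$, that $\psi'(\y)=\y'$, and that closeness of $\x'$ and $\y'$ implies closeness of $\x$ and $\y$, using that $\phi_c$ is order-preserving and that $\x$ and $\y$ agree on $\{c,c+1\}$.

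Next I define $\Phi_\x$ on the $\Field$-basis of pure algebra elements. Any pure element of $\Idemp{\x'}\cdot\Blg_1'\cdot\Idemp{\y'}$ has the form $\phi^{\x',\y'}(p)\cdot\prod_{j\in J}C_j$ with $p$ a monomial in $U_1,\dots,U_m$ not divisible by any generating-interval monomial of $(\x',\y')$ and $J\subset\Upwards_1'$. Set
\[
\Phi_\x\Bigl(\phi^{\x',\y'}(p)\cdot\prod_{j\in J}C_j\Bigr)=\phi^{\x,\y}(\phi_c^{*}p)\cdot\prod_{j\in J}C_{\phi_c(j)},
\]
where $\phi_c^{*}p$ is obtained from $p$ by the substitution $U_i\mapsto U_{\phi_c(i)}$, and extend $\Field$-linearly. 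This is manifestly $U_i$- and $C_j$-equivariant in the sense stated, and it is surjective onto the portion of $\Idemp{\x}\cdot\Blg_2\cdot\Idemp{\y}$ with $w_{\phi_c(i)}=w_i'$ and $w_c=w_{c+1}=0$, because every such pure element of the target arises as the image of a unique pure element upstairs.

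The nontrivial step is well-definedness. By Proposition~\ref{prop:IdentifyJ}, this reduces to showing that $\phi_c^{*}$ sends each generating-interval monomial $U_{i+1}\cdots U_j$ for $(\x',\y')$ into $\Ideal(\x,\y)$. When the interval $[i+1,j]$ lies entirely in $\{1,\dots,c-2\}$ or $\{c,\dots,m\}$ (and so is disjoint from a neighborhood of $c-1$), its image is a generating-interval monomial for $(\x,\y)$ of the same shape, shifted by $\phi_c$, and the check is immediate. The main obstacle is a case analysis for generating intervals whose endpoints abut $c-1$: depending on the type of $\x$ (and hence of $\y$) and the relative position of $c-1$ in $\x'\cap\y'$, the shifted monomial must either itself be a generating-interval monomial for $(\x,\y)$, or it must factor as a product of such monomials together with short generating intervals at $c$ or $c+1$ in $(\x,\y)$ that arise from the fact that $c\in\x\cap\y$ (always) and possibly $c+1\in\x\cap\y$ (in type $\YY$). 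Verifying this case by case is routine but is where the essential combinatorial content of the lemma lies.
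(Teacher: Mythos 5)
The proposal breaks at its first step: your rule for determining $\y$ is not the right one. You claim that the constraint $w_c=w_{c+1}=0$ on the target forces $\y\cap\{c,c+1\}=\x\cap\{c,c+1\}$. What it actually forces is equality of weight vectors, $v^{\x}_{c}=v^{\y}_{c}$ (equivalently, since $c\in\x\cap\y$, $v^{\x}_{c+1}=v^{\y}_{c+1}$, as in Equation~\eqref{eq:ZeroWeight}); and since $v^{\x}_{c+1}$ equals $v^{\x'}_{c}$ plus $1$ or $0$ according to whether $c+1\in\x$, the membership of $c+1$ in $\y$ is governed by the type of $\x$ \emph{together with} the difference $v^{\x'}_c-v^{\y'}_c$, i.e., by how the incoming algebra element moves strands across positions $c-1,c$ in $\Blg_1'$. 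Whenever that element has half-integer weight there, the type of $\y$ differs from the type of $\x$, contradicting your rule. A concrete counterexample: $m=1$, $c=1$, $\x'=\{1\}$, $\y'=\{0\}$, $\x=\{1,3\}$ (type $\ZZ$). The correct $\y$ is $\{1,2\}$ (type $\YY$), with $v^{\x}_1=v^{\y}_1=2$, $v^{\x}_2=v^{\y}_2=1$, yielding the action $\delta^1_2(\ZZ,L_1)=L_3\otimes\YY$ of the summary diagram after Theorem~\ref{thm:MaxDA}; your rule instead forces $\y=\{0,1\}$ (type $\XX$), for which $w_1^{\x,\y}=\OneHalf$, so the portion of $\Idemp{\x}\cdot\Blg_2\cdot\Idemp{\y}$ with $w_c=w_{c+1}=0$ vanishes and your substitution map does not land in the asserted weights. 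Since these type-changing $\delta^1_2$ actions are exactly the interesting ones (they are needed for the $\Ainfty$ relations in Theorem~\ref{thm:MaxDA} and for the duality with $\Crit_c$), this is a genuine error, not a presentational slip. The paper's proof determines $\y$ by the eight-case analysis of $(v^{\x'}_{c-1},v^{\x'}_{c})$ versus $(v^{\y'}_{c-1},v^{\y'}_{c})$, which is precisely the bookkeeping your shortcut tries to avoid.

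On the second half: your reduction of well-definedness to the containment $\varphi(\Ideal(\x',\y'))\subset\Ideal(\x,\y)$ via Proposition~\ref{prop:IdentifyJ} is the same route the paper takes, and surjectivity and equivariance are indeed easy once the correct $\y$ is fixed. But you declare the case of a generating interval for $(\x',\y')$ straddling $c-1$ to be ``routine'' and omit it; this is where the paper's proof does its real work (when both $\x$ and $\y$ are of type $\XX$ the image is divisible by the generating interval $[c+2,q+2]$ for $(\x,\y)$, and in type $\YY$ by $[p,c-1]$), and that verification can only be carried out after the correct $\y$ has been identified. So the proposal needs both the corrected rule for $\y$ and the explicit case analysis to count as a proof.
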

  
\begin{proof}
  Recall the weight $v^{\x}$ of idempotents, as defined in Equation~\eqref{eq:DefOfV}.
  Since $\x'$ and $\y'$ are close enough,
  then there is some integer 
  $j\in\Z$ so that exactly one of the following holds:
  \begin{enumerate}
  \item $(v_{c-1}^{\x'},v_{c}^{\x'})=(j,j)$ and $(v_{c-1}^{\y'},v_{c}^{\y'})=(j,j-1)$ 
  \item $(v_{c-1}^{\x'},v_{c}^{\x'})=(j,j)$ and $(v_{c-1}^{\y'},v_{c}^{\y'})=(j,j)$ 
  \item $(v_{c-1}^{\x'},v_{c}^{\x'})=(j,j)$ and $(v_{c-1}^{\y'},v_{c}^{\y'})=(j+1,j)$ 
  \item $(v_{c-1}^{\x'},v_{c}^{\x'})=(j,j-1)$ and $(v_{c-1}^{\y'},v_{c}^{\y'})=(j-1,j-2)$
  \item $(v_{c-1}^{\x'},v_{c}^{\x'})=(j,j-1)$ and $(v_{c-1}^{\y'},v_{c}^{\y'})=(j-1,j-1)$
  \item $(v_{c-1}^{\x'},v_{c}^{\x'})=(j,j-1)$ and $(v_{c-1}^{\y'},v_{c}^{\y'})=(j,j-1)$
  \item $(v_{c-1}^{\x'},v_{c}^{\x'})=(j,j-1)$ and $(v_{c-1}^{\y'},v_{c}^{\y'})=(j,j)$
  \item $(v_{c-1}^{\x'},v_{c}^{\x'})=(j,j-1)$ and $(v_{c-1}^{\y'},v_{c}^{\y'})=(j+1,j)$
  \end{enumerate}
  In each of these cases, we claim that 
  there is a unique allowed idempotent state $\y$ for $\Blg_2$ with $\psi'(\y)=\y'$ 
  so that
  \begin{equation}
    \label{eq:ZeroWeight}
    (v_{c-1}^{\y},v_{c}^{\y})=(v_{c-1}^{\x},v_{c}^{\x}).
  \end{equation}
  To specify $\y$, it suffices to specify its type, as we do in the eight cases listed above:
  \begin{enumerate}
  \item $\x$ is of type $\ZZ$;
    $\y$ is of type $\YY$.
  \item 
    $\x$ is of type $\ZZ$ and $\y$ is of type $\ZZ$
  \item 
    $\x$ is of type $\ZZ$; $\y$ is of type $\XX$.
  \item 
    $\x$ is of type $\XX$ and $\y$ is of type $\YY$.
  \item 
    $\x$ is of type $\XX$ and $\y$ is of type $\ZZ$
  \item 
    \label{case:Special}
    $\x$ and $\y$ are both of type $\XX$ or both of type $\YY$
  \item $\x$ is of type $\YY$ and $\y$ is of type $\ZZ$
  \item $\x$ is of type $\YY$ and $\y$ is of type $\XX$.
  \end{enumerate}

  Recall that there are graded identifications
  \begin{align*}
    \phi^{\x',\y'}\colon \Field[U_1,\dots,U_m] &\to \Idemp{\x'}\cdot \AlgBZ(m,k)\cdot \Idemp{\y'} \\
    \phi^{\x,\y}\colon \Field[U_1,\dots,U_{m+2}] &\to \Idemp{\x}\cdot \AlgBZ(m+2,k+1)\cdot\Idemp{\y}.
  \end{align*}
  The ring map 
  \[ \varphi\colon \Field[U_1,\dots,U_m]\to \Field[U_1,\dots,U_{m+2}]\]
  with $\varphi(1)=1$ and $\varphi(U_i)=U_{\phi(i)}$
  induces a map 
  \[ \Idemp{\x'}\cdot \AlgBZ(m,k)\cdot \Idemp{\y'} \to \Idemp{\x}\cdot \AlgBZ(m+2,k+1)\cdot\Idemp{\y}\]
  that maps the portion of
  $\Idemp{\x'}\cdot \AlgBZ(m,k)\cdot \Idemp{\y'}$ 
  with weights fixed at $(w_1',\dots,w_m')$ onto the portion of 
  $\Idemp{\x}\cdot \AlgBZ(m+2,k+1)\cdot\Idemp{\y}$
  with weights fixed at $w_i=w'_{\phi(i)}$ and $w_{c}=w_{c+1}=0$.

  In the eight above cases, we check that $\varphi(\Ideal(\x',\y'))$
  is mapped into $\Ideal(\x,\y)$.  In all cases other than
  Case~\ref{case:Special}, there are no generating intervals for
  $(\x',\y')$ that contain $c-1$ in their interior. Moreover, the images
  of these generating intervals under $\phi_c$ are generating
  intervals for $(\x,\y)$.  In Case~\ref{case:Special}, there is a
  generating interval $[p,q]$ with $p<c-1<q$. If, furthermore, $\x$ and $\y$ are of
  type $\XX$, then $[c+2,\dots, q+2]$ is a generating interval for $(\x,\y)$, so
  $\Ideal(\x',\y')$ is still mapped into $\Ideal(\x,\y)$.
  Similarly, if $\x$ and $\y$ are of type $\YY$,
  $[p,\dots,c-1]$ is a generating interval for $(\x,\y)$, so once again $\Ideal(\x',\y')$ is 
  mapped into $\Ideal(\x,\y)$.

  It follows that $\varphi$ induces the map $\Phi_\x$ on $\Blg(m,k)\subset \Blg_1'$, which we can extend so that
  $\Phi_{x}(a\cdot C_j)=C_{\phi(j)} \cdot \Phi_{\x}(a)$ for all $j\in \Upwards_1'$ to get the map required by the lemma.
\end{proof}

\begin{lemma}
  \label{lem:RingMap}
  Suppose that $\x_1'$, $\x_2'$, and $\x_3'$ are three idempotent
  states for $\Blg_1'$, so that $\x_i'$ is close enough to $\x_{i+1}'$
  for $i=1,2$; and choose any $\x_1$ so that $\psi(\x_1)=\x_1'$.  Let
  $\x_2$ be the idempotent state associated to $(\x_1,\x_1',\x_2')$ as
  in Lemma~\ref{lem:ConstructDeltaTwo}, and then $\x_3$ be associated
  to $(\x_2,\x_2',\x_3')$ by that lemma. Then, there is a commutative
  diagram:
\[ \begin{CD}
  (\Idemp{\x_1'} \cdot \Blg_1'\cdot \Idemp{\x_2'})\otimes (\Idemp{\x_2'}\cdot \Blg_1'\cdot\Idemp{\x_3'}) @>{\mu_2'}>> \Idemp{\x_1'}\cdot \Blg_1'\cdot \Idemp{\x_3'} \\
@V{\Phi_{\x_1}\otimes \Phi_{\x_2}}VV @VV{\Phi_{\x_1}}V \\
  (\Idemp{\x_1}\cdot  \Blg_2\cdot \Idemp{\x_2})\otimes (\Idemp{\x_2}\cdot \Blg_2\cdot\Idemp{\x_3}) @>{\mu_2}>> 
  \Idemp{\x_1}\cdot \Blg_2\cdot\Idemp{\x_3},
\end{CD}\]
where $\mu_2'$ and $\mu_2$ are multiplications on $\Blg_1'$ and $\Blg_2$ respectively.
\end{lemma}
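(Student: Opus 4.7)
The plan is to reduce the statement to a direct computation of weights, using the explicit formula for multiplication on $\AlgBZ$ together with Proposition~\ref{prop:DeterminedByMultiplicities}.

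First I would observe that $\Phi_\x$ is induced by the ring homomorphism $\varphi\colon \Field[U_1,\dots,U_m]\to \Field[U_1,\dots,U_{m+2}]$ sending $U_i\mapsto U_{\phi_c(i)}$ (extended by $C_j\mapsto C_{\phi_c(j)}$), as used in the proof of Lemma~\ref{lem:ConstructDeltaTwo}. Since multiplication in $\AlgBZ$ satisfies
\[ \phi^{\x,\y}(p)\cdot \phi^{\y,\z}(q) \;=\; \phi^{\x,\z}\!\bigl(p\cdot q\cdot g^{\x,\y,\z}\bigr),\]
with $g^{\x,\y,\z}=\prod_i U_i^{t_i}$ and $t_i=w_i^{\x,\y}+w_i^{\y,\z}-w_i^{\x,\z}$, and since $\varphi$ is a ring homomorphism, commutativity of the diagram for pure generators $a=\phi^{\x_1',\x_2'}(p)\prod_{j\in J_1}C_j$, $b=\phi^{\x_2',\x_3'}(q)\prod_{j\in J_2}C_j$ reduces to the single identity
\begin{equation}
\label{eq:gIdentity}
\varphi\bigl(g^{\x_1',\x_2',\x_3'}\bigr)=g^{\x_1,\x_2,\x_3}.
\end{equation}

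The identity~\eqref{eq:gIdentity} is in turn a direct calculation of weight vectors. The construction of $\x_{t+1}$ from $\x_t$ in Lemma~\ref{lem:ConstructDeltaTwo} was dictated by Equation~\eqref{eq:ZeroWeight}, which gives $v^{\x_t}_{c-1}=v^{\x_{t+1}}_{c-1}$ and $v^{\x_t}_c=v^{\x_{t+1}}_c$; since $c\in \x_t\cap \x_{t+1}$, this also forces $v^{\x_t}_{c+1}=v^{\x_{t+1}}_{c+1}$. Hence $w_c^{\x_t,\x_{t+1}}=w_{c+1}^{\x_t,\x_{t+1}}=0$, and the exponents of $U_c$ and $U_{c+1}$ in $g^{\x_1,\x_2,\x_3}$ vanish. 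At the remaining positions $i=\phi_c(j)$, the same case analysis of Lemma~\ref{lem:ConstructDeltaTwo} identifies $v^{\x_t}_{\phi_c(j)}-v^{\x_t'}_j$ as a constant depending only on the type ($\XX$, $\YY$, or $\ZZ$) of $\x_t$; however, that constant drops out of the difference $w_{\phi_c(j)}^{\x_t,\x_{t+1}}=\tfrac12|v^{\x_t}_{\phi_c(j)}-v^{\x_{t+1}}_{\phi_c(j)}|$ exactly when the types of $\x_t$ and $\x_{t+1}$ agree at all three positions $c-1,c,c+1$ (which, by the construction of Lemma~\ref{lem:ConstructDeltaTwo}, happens whenever $w_c^{\x_t,\x_{t+1}}=w_{c+1}^{\x_t,\x_{t+1}}=0$). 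Thus $w_{\phi_c(j)}^{\x_t,\x_{t+1}}=w_j^{\x_t',\x_{t+1}'}$ for each $t=1,2$ and each $t$-to-$3$ transition, and~\eqref{eq:gIdentity} follows by summing.

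Putting these pieces together: for pure $a,b$,
\[
\Phi_{\x_1}(\mu_2'(a,b))=\phi^{\x_1,\x_3}\bigl(\varphi(p)\varphi(q)\,g^{\x_1,\x_2,\x_3}\bigr)\prod_{j\in J_1\cup J_2}C_{\phi_c(j)}=\mu_2\bigl(\Phi_{\x_1}(a),\Phi_{\x_2}(b)\bigr),
\]
where the $C$-factors multiply cleanly because $\varphi$ carries disjoint subsets of $\Upwards_1'$ to disjoint subsets of $\Upwards_2$. By $\Field$-bilinearity and the fact that $\Phi_\x$ descends cleanly from $\AlgBZ$ to $\Blg$ (by the ideal compatibility already established in Lemma~\ref{lem:ConstructDeltaTwo}), the commutative diagram holds on all of $\Idemp{\x_1'}\cdot \Blg_1'\cdot \Idemp{\x_2'}\otimes \Idemp{\x_2'}\cdot \Blg_1'\cdot \Idemp{\x_3'}$.

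The main obstacle is the bookkeeping in the verification of~\eqref{eq:gIdentity}: tracking how the local type ($\XX$, $\YY$, or $\ZZ$) of each $\x_t$ is forced by the choices of $\x_1$ and $\x_t'$, and checking that the position-$\phi_c(j)$ weights really are preserved rather than merely shifted. No conceptual complication arises, but a case analysis parallel to the eight-case argument in Lemma~\ref{lem:ConstructDeltaTwo} is what underlies the claim.
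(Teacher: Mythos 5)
Your overall strategy is sound and is in essence an expanded version of the paper's argument: the paper simply observes that $\Phi$ preserves weights and that, by Proposition~\ref{prop:DeterminedByMultiplicities}, a homogeneous element of $\Blg_2$ is determined by its idempotent and its weight; your reduction to the single identity $\varphi\bigl(g^{\x_1',\x_2',\x_3'}\bigr)=g^{\x_1,\x_2,\x_3}$ makes the same point at the level of the structure constants of $\AlgBZ$. The gap is in how you justify that identity. You assert that the offset $v^{\x_t}_{\phi_c(j)}-v^{\x_t'}_j$ depends on the type of $\x_t$ and that it cancels in $w^{\x_t,\x_{t+1}}_{\phi_c(j)}$ because ``the types of $\x_t$ and $\x_{t+1}$ agree at all three positions $c-1,c,c+1$ whenever $w_c^{\x_t,\x_{t+1}}=w_{c+1}^{\x_t,\x_{t+1}}=0$.'' That assertion is false: the construction in Lemma~\ref{lem:ConstructDeltaTwo} routinely pairs idempotents of \emph{different} types while keeping $w_c=w_{c+1}=0$ (its first case pairs an $\x$ of type $\ZZ$ with a $\y$ of type $\YY$, and several other cases likewise mix $\XX$, $\YY$, $\ZZ$); indeed these type-changing pairs are exactly the ones underlying the interesting $\delta^1_2$ arrows of the maximum bimodule. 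As written, your verification of the $g$-identity would therefore fail precisely in those cases.

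What rescues the argument, and what you should prove instead, is that the offset is independent of the type: for every allowed idempotent $\x$ one has $v^{\x}_{\phi_c(j)}-v^{\psi'(\x)}_j=1$ for $j<c$ and $=0$ for $j\geq c$ (in type $\YY$ the missing $c-1$ is compensated by the element $c-1$ that $\psi'$ adds, the element $c+1$ by the removed $c$, etc.). Consequently $w^{\x_t,\x_s}_{\phi_c(j)}=w^{\x_t',\x_s'}_j$ for \emph{every} pair, including $(\x_1,\x_3)$ --- which is needed, and which is not itself an instance of Lemma~\ref{lem:ConstructDeltaTwo}, since $\x_1'$ and $\x_3'$ need not be close enough --- while the exponents of $U_c$ and $U_{c+1}$ in $g^{\x_1,\x_2,\x_3}$ vanish because the construction gives $\x_1,\x_2,\x_3$ the same values of $v_{c-1}$ and $v_c$ (hence of $v_{c+1}$, as $c$ lies in all allowed idempotents). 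Relatedly, your final appeal to ``ideal compatibility already established in Lemma~\ref{lem:ConstructDeltaTwo}'' only covers the two adjacent pairs $(\x_1',\x_2')$ and $(\x_2',\x_3')$: to rule out one composite vanishing while the other does not, you still need the containment $\varphi\bigl(\Ideal(\x_1',\x_3')\bigr)\cdot g^{\x_1,\x_2,\x_3}\subset \Ideal(\x_1,\x_3)$ for the composite pair, including the case where $\x_1'$ and $\x_3'$ are far and the top right corner of the diagram is zero. With the corrected weight computation and this extra remark, your proof closes and agrees in substance with the paper's.
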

\begin{proof}
  Commutativity of this diagram is an immediate consequence of the fact that $\Phi$ 
  preserves the weights, which in turn determine the multiplication on the algebras.
\end{proof}

If $a=\Idemp{\x'}\cdot a\cdot \Idemp{\y'}\in\Blg(m,k)\subset \Blg_1'$ is a non-zero algebra element,
and $\x$ is any allowed idempotent with $\psi'(\x)=\x'$, let
$\delta^1_2({\mathbf Q}_{\x},a)=\Phi_{\x}(a)\otimes {\mathbf
  Q}_{\y}$, where $\y$ is the idempotent from Lemma~\ref{lem:ConstructDeltaTwo} 
associated to $(\x,\x',\y')$; i.e.
$\Phi_{\x}(a)\cdot \Idemp{\y}=\Phi_{\x}(a)$.

\begin{thm}
  \label{thm:MaxDA}
  These actions give $\lsup{\Blg_2}\Max^c_{\Blg_1'}$ the structure of a  $DA$ bimodule
  that is adapted to the corresponding partial knot diagram (containing a single maximum).
\end{thm}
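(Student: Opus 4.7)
The plan is to verify the $A_\infty$ structure relations for $\lsup{\Blg_2}\Max^c_{\Blg_1'}$ as a $DA$ bimodule, together with the grading properties. Since only $\delta^1_1$ and $\delta^1_2$ are defined (with $\delta^1_j = 0$ for $j\geq 3$), the relations with $i\geq 4$ slots vanish automatically, leaving three non-trivial relations to check, indexed by the number of algebra inputs $0,1,2$.

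First I would verify the relation with no algebra input, which is the type $D$ condition on $\delta^1_1$. Writing $A = R_{c+1}R_c + L_cL_{c+1} + \epsilon \in \Blg_2$, where $\epsilon\in\{C_cU_{c+1},U_cC_{c+1}\}$, so that $\delta^1_1(Q_\y) = \Idemp{\y}\cdot A\otimes\sum_\z Q_\z$, this reduces to $(dA + A\cdot A)\cdot\Idemp{\y} = 0$ for each allowed idempotent $\y$. The verification mirrors Lemma~\ref{lem:CanonicalIsDD}: the cross-products $R_{c+1}R_c\cdot L_cL_{c+1}$ and $L_cL_{c+1}\cdot R_{c+1}R_c$ each collapse via $R_cL_c=U_c$ and $L_{c+1}R_{c+1}=U_{c+1}$ (at the relevant idempotents) to $U_cU_{c+1}$, which precisely cancels $dA = U_cU_{c+1}$ coming from $d\epsilon$. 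The squares $(R_{c+1}R_c)^2$ and $(L_cL_{c+1})^2$ vanish for idempotent reasons, and all products involving $\epsilon$ are easily checked.

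Next I would verify the relation with two algebra inputs. Since $\delta^1_3 = 0$, this reduces to the identity $\delta^1_2(Q_\x, a_1\cdot a_2) = (\mu_2\otimes\Id)(\Id\otimes\delta^1_2)(\delta^1_2(Q_\x,a_1)\otimes a_2)$, which unpacks to $\Phi_\x(a_1\cdot a_2) = \Phi_\x(a_1)\cdot\Phi_{\x_2}(a_2)$, where $\x_2$ is the intermediate allowed idempotent produced by Lemma~\ref{lem:ConstructDeltaTwo}. This is precisely the commutativity of the diagram in Lemma~\ref{lem:RingMap}.

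The remaining relation with one algebra input reads
\[ d(\Phi_\x(a)) + \Phi_\x(da) + \Phi_\x(a)\cdot A\cdot\Idemp{\y} + \sum_\alpha \alpha\cdot\Phi_{\z(\alpha)}(a) = 0, \]
where $\alpha$ ranges over the summands of $\Idemp{\x}\cdot A$ landing in idempotents $\z(\alpha)$. The first two terms cancel because $\Phi_\x$ is a chain map in a strong sense: it sends $C_j\in\Blg_1'$ to $C_{\phi_c(j)}\in\Blg_2$ and $U_j$ to $U_{\phi_c(j)}$, and in both algebras the differential acts only by $C_*\mapsto U_*$. The main obstacle is the cancellation of the last two terms, which encodes a delicate commutation identity between $A$ and the image of $\Phi$ inside $\Blg_2$. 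The image of $\Phi_\x$ is supported at positions $\phi_c(\{1,\dots,m\}) = \{1,\dots,m+2\}\setminus\{c,c+1\}$, so the only possible non-commutation with $A$ (which lives at positions $c,c+1$) occurs at the adjacent positions $c-1$ and $c+2$. Here the delicate relations of $\Blg_2$, particularly $L_{i+1}L_i = 0$ and $R_iR_{i+1} = 0$ from Equations~\eqref{eq:MoveTooFarLs}~and~\eqref{eq:MoveTooFarRs}, together with the $U$-central relations $R_jL_j = L_jR_j = U_j$, conspire to make the two expressions agree. The key structural observation is that $\Phi$ is constructed so that the output always has weight zero at positions $c,c+1$, which in turn constrains how $\Phi_\x$ and $\Phi_{\z(\alpha)}$ can differ: they differ only by shifting the idempotent at positions $c-1,c$ (or $c+1,c+2$) in a way that exactly compensates for the movement produced by $\alpha$. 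One would enumerate the (finitely many) cases based on the type ($\XX$, $\YY$, or $\ZZ$) of the starting and ending generators, and verify the identity directly in each case.

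Finally, the gradings behave as required by the construction: $\Phi_\x$ preserves weights at positions $i\neq c,c+1$ and assigns weight zero at positions $c,c+1$, while $A$ is supported at weight $(1/2)(e_c+e_{c+1})$, and these conventions make the bimodule adapted to the partial knot diagram of a single maximum, as described in Section~\ref{sec:CritGradingSet}.
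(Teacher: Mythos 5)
Your proposal follows essentially the same route as the paper's proof: the zero-input relation reduces to the cancellation of $U_{c}U_{c+1}$ (from composing $R_{c+1}R_{c}$ with $L_{c}L_{c+1}$) against the differential of the $C$-term, with the $\ZZ$-contribution vanishing; the two-input relation is exactly Lemma~\ref{lem:RingMap}; and the one-input relation is the chain-map property of $\Phi_{\x}$ together with its commutation with the element $A$. The case analysis you defer is precisely what the paper carries out: when $\delta^1_2$ changes the generator type both terms vanish because the product would move an idempotent point too far, and otherwise one uses centrality of the $U$- and $C$-factors together with the identity $\Phi_{\x_1}(a)\cdot L_{c}L_{c+1}=L_{c}L_{c+1}\cdot\Phi_{\x_2}(a)$ and its $R_{c+1}R_{c}$ analogue.
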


\begin{proof}
  Consider the case where $c+1\in\Upwards_2$; the case where
  $c\in\Upwards_2$ works similarly.

  The $\Ainfty$ relation with no
  incoming algebra elements has the form
\begin{equation}
    \label{eq:D1Squared}
\mathcenter{
  \begin{tikzpicture}[scale=.8]
    \node at (0,0) (cin) {}; 
    \node at (0,-1) (d1) {$\delta^1_1$}; 
    \node at (0,-2) (d2) {$\delta^1_1$};
    \node at (-1.5,-2.5) (muA) {$\mu_2^{\Blg_2}$}; 
    \node at (-2.5,-3) (aout) {}; 
    \node at (0,-3) (cout) {};
    \draw[modarrow] (cin) to (d1); 
    \draw[modarrow] (d1) to (d2);
    \draw[modarrow] (d2) to (cout);
    \draw[algarrow] (d1) to (muA); 
    \draw[algarrow] (d2) to (muA); 
    \draw[algarrow] (muA) to (aout);
  \end{tikzpicture}
} +
 \mathcenter{
  \begin{tikzpicture}[scale=.8]
    \node at (0,0) (cin) {}; 
    \node at (0,-1) (d) {$\delta^1_1$};
    \node at (-1,-2.1) (mu) {$\mu_1^{\Blg_2}$}; 
    \node at (-2,-3) (aout) {}; 
    \draw[modarrow] (cin) to (d); 
    \draw[modarrow] (d) to (cout); 
    \draw[blgarrow] (d) to (mu); 
    \draw[blgarrow] (mu) to (aout) ;
  \end{tikzpicture}
}=0.
\end{equation}
 The terms that change type from $\XX$ to $\YY$ and back to $\XX$ (contributing on the left to Equation~\eqref{eq:D1Squared})
 contribute 
 $(R_{c+1}\cdot R_{c} \cdot L_{c} \cdot L_{c+1})\otimes \XX =
 U_{c}\cdot U_{c+1}\otimes \XX$; which cancels with $d(C_{c+1}\cdot
 U_{c}) \otimes \XX$ (contributing on the right to Equation~\eqref{eq:D1Squared}).  A similar argument works for
 generators of type $\YY$.  
 Since $U_{c} U_{c+1}\otimes \ZZ=0$, 
 Equation~\eqref{eq:D1Squared} now follows.

 The $\Ainfty$ relation with one algebra input follows from the equations
\begin{equation}
    \label{eq:D1Squared2}
\mathcenter{
  \begin{tikzpicture}[scale=.7]
    \node at (0,0) (cin) {}; 
    \node at (0,-1) (d1) {$\delta^1_2$}; 
    \node at (0,-2) (d2) {$\delta^1_1$};
    \node at (1,0) (ain) {};
    \node at (-1.5,-2.5) (muA) {$\mu_2^{\Blg_2}$}; 
    \node at (-2,-3.5) (aout) {}; 
    \node at (0,-3.5) (cout) {};
    \draw[algarrow] (ain) to (d1) ;
    \draw[modarrow] (cin) to (d1); 
    \draw[modarrow] (d1) to (d2);
    \draw[modarrow] (d2) to (cout);
    \draw[algarrow] (d1) to (muA); 
    \draw[algarrow] (d2) to (muA); 
    \draw[algarrow] (muA) to (aout);
  \end{tikzpicture}
} +
\mathcenter{
  \begin{tikzpicture}[scale=.7]
    \node at (0,0) (cin) {}; 
    \node at (0,-1) (d1) {$\delta^1_1$}; 
    \node at (0,-2) (d2) {$\delta^1_2$};
    \node at (1,0) (ain) {};
    \node at (-1.5,-2.5) (muA) {$\mu_2^{\Blg_2}$}; 
    \node at (-2,-3.5) (aout) {}; 
    \node at (0,-3.5) (cout) {};
    \draw[algarrow,bend left=10] (ain) to (d2) ;
    \draw[modarrow] (cin) to (d1); 
    \draw[modarrow] (d1) to (d2);
    \draw[modarrow] (d2) to (cout);
    \draw[algarrow] (d1) to (muA); 
    \draw[algarrow] (d2) to (muA); 
    \draw[algarrow] (muA) to (aout);
  \end{tikzpicture}
}=0
\end{equation}
and 
\begin{equation}
    \label{eq:D1Squared3}
    \mathcenter{
  \begin{tikzpicture}[scale=.8]
    \node at (0,0) (cin) {}; 
    \node at (0,-1) (d1) {$\delta^1_2$}; 
    \node at (1,0) (ain) {};
    \node at (-1,-2) (muA) {$\mu_1^{\Blg_2}$}; 
    \node at (-2,-3) (aout) {}; 
    \node at (0,-3) (cout) {};
    \draw[algarrow] (ain) to (d1) ;
    \draw[modarrow] (cin) to (d1); 
    \draw[modarrow] (d1) to (cout);
    \draw[algarrow] (d1) to (muA); 
    \draw[algarrow] (muA) to (aout);
  \end{tikzpicture}
} +
\mathcenter{
  \begin{tikzpicture}[scale=.8]
    \node at (-1,0) (cin) {}; 
    \node at (-1,-2) (d2) {$\delta^1_2$};
    \node at (1,0) (ain) {};
    \node at (0,-1) (muA) {$\mu_1^{\Blg_1'}$}; 
    \node at (-2,-3) (aout) {}; 
    \node at (-1,-3) (cout) {};
    \draw[algarrow] (ain) to (muA) ;
    \draw[modarrow] (cin) to (d2); 
    \draw[modarrow] (d2) to (cout);
    \draw[algarrow] (d2) to (aout); 
    \draw[algarrow] (muA) to (d2);
  \end{tikzpicture}
}=0.
\end{equation}
We verify Equation~\eqref{eq:D1Squared2} for incoming algebra elements
$a=\Idemp{\x'}\cdot a \cdot \Idemp{\y'}$. 
For the terms in $\delta^1_1$ that preserve type, $U_{c} C_{c+1}$, both have the same contribution:
multiplication by  $U_{c} \cdot C_{c+1}$ induces an endomorphism of 
$\Idemp{\x}\cdot \Blg_2 \cdot \Idemp{\y}$  that maps the portion with $w_{c}=w_{c+1}=0$ injectively into
$w_{c}=w_{c+1}=1$ (since the generating interval for $(\x,\y)$ containing $c$ or $c+1$ contains both $c$ and $c+1$).

Consider next the terms ($L_{c} L_{c+1}$ and $R_{c+1} R_{c}$)
where $\delta^1_1$ changes the type of the idempotent. When $\delta^1_{2}$ 
changes the type of the idempotent, as well, both terms in
Equation~\eqref{eq:D1Squared2} vanish, since in that case the outgoing algebra element moves too far.
Finally, consider the the case where $\delta^1_2$ preserves the idempotent, and suppose the incoming 
generator is of the form $Q_{\x_1}$ where $\x_1$ is of type $\XX$ and $\psi'(\x_1)=\x$. 
Let $\x_2$ be the allowed algebra element of type $\YY$ with $\psi'(\x_2)=\y$, so that
$\Idemp{\x_1} \cdot L_{c} L_{c+1}= L_{c}L_{c+1}  \cdot \Idemp{\x_2}$.
The cancellation of the corresponding terms in Equation~\eqref{eq:D1Squared2} now follows
from the easily verified identity 
$\Phi_{\x_1}(a)\cdot L_{c} L_{c+1} = L_{c} L_{c+1} \cdot \Phi_{\x_2}(a)$.
The case where the incoming generator is of type $\YY$ follows from 
the similar equation: $\Phi_{\x_2}(a)\cdot R_{c+1}R_{c} = R_{c+1}R_{c} \cdot \Phi_{\x_1}(a)$.

Equation~\eqref{eq:D1Squared3} follows from the fact that $C_{\phi_c(j)}\cdot \Phi_{\x}=\Phi_{\x}\cdot C_j$ for
all $j\in\Upwards_1$.

The relation
\[
\mathcenter{
  \begin{tikzpicture}[scale=.8]
    \node at (0,0) (cin) {}; 
    \node at (2,0) (a1in) {};
    \node at (3,0) (a2in) {};
    \node at (1,-1) (m2) {$\mu^{\Blg_1'}_{2}$} ;
    \node at (0,-2) (d2) {$\delta^1_{2}$} ;
    \node at (0,-3) (cout) {} ;
    \node at (-1,-3) (aout) {} ;
    \draw[algarrow] (a1in) to (m2) ;
    \draw[algarrow] (a2in) to (m2) ;
    \draw[algarrow] (m2) to (d2) ;
    \draw[algarrow] (d2) to (aout) ;
    \draw[modarrow] (cin) to (d2); 
    \draw[modarrow] (d2) to (cout); 
  \end{tikzpicture}
}
+
\mathcenter{
  \begin{tikzpicture}[scale=.8]
    \node at (0,0) (cin) {}; 
    \node at (0,-1) (d1) {$\delta^1_2$}; 
    \node at (0,-2) (d2) {$\delta^1_2$};
    \node at (1,0) (a1in) {};
    \node at (2,0) (a2in) {};
    \node at (-1.5,-2.5) (muA) {$\mu_2^{\Blg_2}$}; 
    \node at (-2,-3.5) (aout) {}; 
    \node at (0,-3.5) (cout) {};
    \draw[algarrow] (a1in) to (d1) ;
    \draw[algarrow] (a2in) to (d2); 
    \draw[modarrow] (d1) to (d2);
    \draw[modarrow] (cin) to (d1);
    \draw[modarrow] (d2) to (cout);
    \draw[algarrow] (d1) to (muA) ;
    \draw[algarrow] (d2) to (muA) ;
    \draw[algarrow] (muA) to (aout);
  \end{tikzpicture}
}
=0.
\]
is equivalent to Lemma~\ref{lem:RingMap}.
Since $\delta^1_{\ell}=0$ for $\ell>2$, the $\Ainfty$ relations now follow.

To verify the grading, note that all the algebra output in the bimodule satisfy $w_c(b)=w_{c+1}(b)$.
Combined with the grading properties from Lemma~\ref{lem:ConstructDeltaTwo}, it follows
that $\Max^c$ is graded by $H^1(W,\partial W)$, where $W$ is specified by the partial knot diagram,
and the module is thought of as supported in grading $0$.
To verify the Maslov grading, note that all the algebra outputs 
appearing in $\delta^1_1$ have Maslov grading $-1$. We can think of $\Max^c$ as supported in Maslov grading $0$.
It follows readily that $\Max^c$ is adapted to $W$, as in Definition~\ref{def:Adapted}.
\end{proof}

It is convenient to summarize this as follows: when
$c+1\in\Upwards_2$, the $\delta^1_1$ actions are specified by the following diagram:
\[  \begin{tikzpicture}[scale=1.2]
    \node at (-1.5,0) (X) {$\XX$} ;
    \node at (1.5,0) (Y) {$\YY$} ;
    \node at (0,-1.5) (Z) {$\ZZ$} ;
    \draw[->] (X) [bend right=7] to node[below,sloped] {\tiny{$R_{c+1} R_{c}$}}  (Y)  ;
    \draw[->] (Y) [bend right=7] to node[above,sloped] {\tiny{$L_{c} L_{c+1}$}}  (X)  ;
    \draw[->]  (X) [loop above] to node[above,sloped] {\tiny{$C_{c} U_{c+1}$}}  (X)  ;
    \draw[->]  (Y) [loop above] to node[above,sloped] {\tiny{$C_{c} U_{c+1}$}}  (Y)  ;
    \draw[->]  (Z) [loop above] to node[above,sloped] {\tiny{$C_{c} U_{c+1}$}}  (Z)  ;
  \end{tikzpicture}\]
and the $\delta^1_2$ actions are specified by the diagram
\[  \begin{tikzpicture}[scale=1.3]
    \node at (-1.5,0) (X) {$\XX$} ;
    \node at (1.5,0) (Y) {$\YY$} ;
    \node at (0,-2) (Z) {$\ZZ$} ;
    \draw[->] (Y) [bend right=5] to node[above,sloped] {\tiny{$R_{c+2} R_{c-1}\otimes R_{c} R_{c-1}$}}  (X)  ;
    \draw[->] (X) [bend right=5] to node[below,sloped] {\tiny{$L_{c-1} L_{c+2}\otimes L_{c-1} L_{c}$}}  (Y)  ;
    \draw[->] (X) [bend right=5] to node[below,sloped] {\tiny{$L_{c-1}\otimes L_{c-1}$}}  (Z)  ;
    \draw[->] (Z) [bend right=5] to node[above,sloped] {\tiny{$R_{c-1}\otimes R_{c-1}$}}  (X)  ;
    \draw[->] (Z) [bend right=5] to node[below,sloped] {\tiny{$L_{c+2} \otimes L_{c}$}}  (Y)  ;
    \draw[->] (Y) [bend right=5] to node[above,sloped] {\tiny{$R_{c+2}\otimes R_{c}$}}  (Z)  ;
  \end{tikzpicture}
\]
once we include outside actions $L_i\otimes L_{\phi_c(i)}$, $R_i \otimes R_{\phi(i)}$, $U_i\otimes U_{\phi(i)}$ and 
$C_j\otimes C_{\phi_c(j)}$ for $i\in\{1,\dots,m\}$ and $j\in\Upwards_1$, with the further understanding that
$\delta^1_2$ is extended to be multiplicative in the incoming algebra elements.

\begin{prop}
  \label{prop:MaxDual}
  $\Max^c$ is dual to the module $\Crit_c$ from Section~\ref{sec:Crit}, in the following sense.
  Let $\Blg_1=\Blg(m,k,\Upwards_1)$, where $\Upwards_1\subset \{1,\dots,m\}$;
  $\Blg_1'=\Blg(m,m+1-k,\Upwards_1')$, where $\Upwards_1'=\{1,\dots,m\}\setminus\Upwards_1$;
  and
  $\Blg_2=\Blg(m,m+1-k,\Upwards_2)$, where $\Upwards_2=\Upwards_1'\cup\{c\}$ or $\Upwards_1'\cup\{c+1\}$.
  Then, $\lsup{\Blg_2}\GenMax^c_{\Blg_1'}\DT~\lsup{{\Blg_1',\Blg_1}}\CanonDD
  \simeq ~~\lsup{\Blg_2,\Blg_1}\Crit_c$.
\end{prop}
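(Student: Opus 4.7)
My plan is to establish the equivalence by direct computation, identifying $\Max^c\DT\CanonDD$ with $\Crit_c$ as type $DD$ bimodules (and indeed the pairing will produce an isomorphism on the nose, not merely a homotopy equivalence). First I identify generators. A generator of the tensor product is a pair $({\mathbf Q}_{\y},k_{\x})$ whose matching idempotent condition forces $\x=\psi'(\y)$; since the canonical $DD$ bimodule assigns to each $\Blg_1'$-idempotent its complementary $\Blg_1$-idempotent, and $\psi(\y)=\{0,\dots,m\}\setminus\psi'(\y)$, these are in bijection with the allowed idempotent states $\y$ for $\Blg_2$, matching the generators ${\mathbf P}_{\y}$ of $\Crit_c$ with the correct bimodule idempotent structure.

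Next I compute the differential of $\Max^c\DT\CanonDD$ using Equation~\eqref{eq:DefDD}. Since $\delta^1_\ell=0$ on $\Max^c$ for $\ell\ge 3$ (by Theorem~\ref{thm:MaxDA}), only two types of contributions arise: (a) the action of $\delta^1_1$ on $\Max^c$, which contributes to the $\Blg_2$-output with trivial $\Blg_1$-output; and (b) the action of $\delta^1_2$ on $\Max^c$ paired with a single application of $\delta^1$ on $\CanonDD$, which for each summand $a_1\otimes a_2$ of the structure element $A\in\Blg_1'\otimes\Blg_1$ of $\CanonDD$ contributes $\Phi_{\x}(a_1)\otimes a_2$ (with the multiplication $\Pi$ being trivial on single factors).

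The matching is then direct. The three $\delta^1_1$-actions $R_{c+1}R_c$, $L_cL_{c+1}$, and $C_cU_{c+1}$ (or $U_cC_{c+1}$) on $\Max^c$ reproduce the corresponding terms $1\otimes R_{c+1}R_c$, $1\otimes L_cL_{c+1}$, $1\otimes C_cU_{c+1}$ of the element $A$ in Equation~\eqref{eq:defA}. The outside $\delta^1_2$-actions on $\Max^c$, i.e.\ $\delta^1_2(\cdot,L_i)=L_{\phi(i)}\otimes\cdot$ and its analogues for $R_i$, $U_i$, $C_j$ with $i\notin\{c-1,c\}$, pair against the corresponding summands $L_i\otimes R_i$, $R_i\otimes L_i$, $C_s\otimes U_s$, $U_t\otimes C_t$ of the $\CanonDD$ structure to yield precisely the outside terms of~\eqref{eq:defA}. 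Finally the boundary $\delta^1_2$-actions on $\Max^c$ relating $\ZZ$ with $\XX$ and $\YY$ through $L_{c-1}$, $R_{c-1}$, $L_c$, $R_c$ pair analogously to produce the inner arrows of Equation~\eqref{eq:CritDiag}.

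The main point requiring care is to rule out the extra $\delta^1_2$-arrows on $\Max^c$ connecting $\XX$ and $\YY$ directly, whose inputs are products $L_{c-1}L_c$ and $R_cR_{c-1}$ in $\Blg_1'$. Such products cannot arise from a single application of $\delta^1$ on $\CanonDD$, which outputs only single letters of $A$; and obtaining them via two iterations of $\delta^1$ on $\CanonDD$ would require $\delta^1_3$ of $\Max^c$, which vanishes. Therefore these actions contribute nothing to the tensor product, and the matching above gives a bijective identification of $\Blg_1\otimes\Blg_2$-labeled arrows, hence an isomorphism of type $DD$ bimodules. Grading compatibility is immediate, since both $\Max^c$ and $\CanonDD$ are supported in grading $0$ of $H^1(W,\partial W)$ (for $W$ the partial knot diagram containing the maximum), as is $\Crit_c$ by Section~\ref{sec:CritGradingSet}.
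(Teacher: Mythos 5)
Your proposal is correct, and it is essentially the paper's argument: the paper simply asserts that the equivalence "is straightforward to check using the definitions," and your direct computation — matching generators via $\psi'$/complementarity, matching the $\delta^1_1$-terms and the $\Phi_{\x}$-images of the single letters of the $\CanonDD$ structure element against Equation~\eqref{eq:defA}, and noting that the product inputs $L_{c-1}L_c$, $R_cR_{c-1}$ can never be fed to $\delta^1_2$ since $\delta^1_{\ell}=0$ on $\Max^c$ for $\ell\geq 3$ — is exactly that check carried out. Nothing in your route differs from what the paper intends.
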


\begin{proof}
  This is straightforward to check using the definitions.
\end{proof}

\subsection{A special case}
\label{subsec:SmallMaximum}

When there is a single maximum, and no other strands, we define a type
$D$ structure over the algebra $\Blg_2=\Blg(2,1,\{1\})$ or
$\Blg(2,1,\{2\})$, $\lsup{\Blg_2}\Max$. This type $D$ structure has 
one generator $\ZZ$ with $\Idemp{\{1\}}\cdot \ZZ= \ZZ$, and
$\delta^1(\ZZ)= C_1 U_2 \otimes \ZZ$ or  $\delta^1(\ZZ)= U_1 C_2 \otimes \ZZ$,
according to how the strand is oriented.
Obviously, this can be thought of as a degenerate case of the earlier construction,
where the incoming algebra $\Blg_1'=\Blg(0,0,\emptyset)\cong \Field$.

\subsection{Partial Kauffman states and grading sets}

Formally, the generators can be thought of as corresponding to partial Kauffman states. There are no crossings, and it has the following types of regions:
\begin{itemize}
\item one of these regions does not meet the top slice, and so it must be ``unoccupied'', and so its only intersection
  with the bottom slice must be occupied,
\item one of these regions meets the top slice in one interval and the bottom in two: thinking of this region as ``occupied''
  gives the generator of type $\ZZ$; thinking of it as unoccupied gives the other two generator types $\XX$ and $\YY$,
\item all other regions meet both the top and the bottom slice in one interval apiece; they can be either occupied or unoccupied.
\end{itemize}

The bimodules can be graded by $\OneHalf \Z^m$ exactly as in the case
of Section~\ref{sec:Crit}. The fact that $\delta^1_2$ respects this
grading is contained in Lemma~\ref{lem:ConstructDeltaTwo}; the fact
that $\delta^1_1$ respects it is clear.  As in Section~\ref{sec:CritGradingSet},
the grading set of can be thought of as $\OneHalf\Z$-valued functions
on the arcs in the partial knot diagram. This grading is consistent
with the grading of $\CanonDD$ by $\OneHalf \Z^m$, combined with the
induced grading on the tensor product, Proposition~\ref{prop:MaxDual}.

\section{The minimum}
\label{sec:Minimum}

Fix integers $0\leq k\leq m+1$ and some $1\leq c \leq m+1$, and 
let 
\begin{equation}
  \label{eq:InitialAlgebras}
  \Blg_1=\Blg(m+2,k+1,\Upwards_1)\qquad{\text{and}}\qquad\Blg_2=\Blg(m,k,\Upwards_2)
\end{equation}
and 
where $\Upwards_2\subset \{1,\dots,m\}$ is arbitrary and
$\Upwards_1=\phi_c(\Upwards_2)\cup\{c\}$ or
$\Upwards_1=\phi_c(\Upwards_2)\cup\{c+1\}$,
where $\phi_c$ is the function from Equation~\eqref{eq:DefInsert}.
We will describe a bimodule
$\lsup{\Blg_2}\GenMin^c_{\Blg_1}$ that will correspond to introducing a
new minimum (or cup) in the diagram that connects the incoming strands $c$ and $c+1$.
 (We will be primarily interested in the
case where $m=2n$, $k=n=|\Upwards|$.)  Note that we follow the convention
that $\Blg_1$ is the incoming algebra and $\Blg_2$ is the outgoing
algebra; thus the notation for $\Blg_1$ and $\Blg_2$ is opposite to
the one used in Section~\ref{sec:Maximum}.

\subsection{Description of the bimodule when $c=1$}
\label{subsec:cEqualsOne}

We start by describing
$\lsup{\Blg_2}\GenMin^c_{\Blg_1}$ when 
$c=1$ and $2\in \Upwards_1$ (and so $1\not\in\Upwards_1$).

A {\em preferred idempotent state} for $\Blg_1=\Blg(m+2,k+1,\Upwards_2)$ is
an idempotent state $\x$ with $\x\cap\{0,1,2\}\in \{\{0\}, \{2\}, \{0,2\}\}$.
We define a map $\psi$ from preferred idempotent states of $\Blg_1$ to idempotent states of $\Blg_2$, as follows.
Given preferred idempotent state $\x$ for $\Blg_1$, order the components
$\x=\{x_1,\dots,x_{k+1}\}$ so that
$x_1<\dots<x_{k+1}$. Define
\[
\psi(\x) = \left\{
\begin{array}{ll}
\{0,x_3-2,\dots,x_{k+1}-2\} &{\text{if $|\x\cap\{0,1,2\}|=2$}} \\
\{x_2-2,\dots,x_{k+1}-2\} &{\text{if $|\x\cap\{0,1,2\}|=1$}} \\
\end{array}\right.
\]

Generators of the $DA$ bimodule
$\GenMin^1=\lsup{\Blg_2}\GenMin^1_{\Blg_1}$ correspond to preferred
idempotent states, and the bimodule structure over the idempotent algebras is as follows. 
If $\x$ is a preferred idempotent state, let $\MinGen_\x$ be its corresponding generator.
Then,
\[\Idemp{\y}\cdot {\mathbf T}_\x \cdot \Idemp{\z}=
\left\{\begin{array}{ll}
    \MinGen_{\x} & {\text{if $\x=\z$ and $\y=\psi(\x)$}} \\
    0 &{\text{otherwise.}}
  \end{array}\right.\]

The bimodule structure is expressed in terms of the following oriented graph
$\Gamma$. The vertices of $\Gamma$
correspond to words
$\{ C_2, L_1 C_2, R_2, U_1^t, L_1 U_1^t\}_{t\geq 0}$,
with the understanding that $U_1^0=1$. The graph
comes equipped with the following oriented edges, labelled by words of
the form $\{1, L_1, U_1^n, U_1^t C_2, R_1 U_1^t C_2\}_{t\geq 0, n>0}$:
\begin{itemize}
  \item An edge labelled $1$ from $R_2$ to $R_2$.
  \item An edge labelled $1$ from $L_1 C_2$ to $L_1 C_2$.
    \item An edge labelled $L_1 C_2$ from $1$ to $L_1 C_2$.
  \item  An edge labelled $L_1$ from $C_2$ to $L_1 C_2$.
  \item  An edge labelled $R_2$ from from $1$ to $R_2$.
  \item An edge labelled $U_2$ from $1$ to $C_2$.
    \item An edge labelled $L_2$ from $R_2$ to $C_2$.
  \item  For each $n>0$, an edge labelled $U_1^n$ from $L_1 C_2$ to $L_1 U^{n-1}$.
  \item  For each $t\geq 0$, an edge labelled $R_1 U_1^t$ from $L_1 C_2$ to $U_1^t$.
  \item  For each $s\geq 0$ and $t\geq 0$ so that $s+t>0$,
    an edge labelled $C_2 U_1^s$ 
    from $U_1^t$ to $U_1^{s+t-1}$; and another with the same label from $U_1^t L_1$ to $U_1^{s+t-1} L_1 $.
  \item  For each $s\geq 0$ and $t\geq 0$ with $s+t>0$
    an edge labelled $C_2 U_1^s L_1 $ 
    from $U_1^t$ to $U_1^{s+t-1} L_1$.
  \item  For each $s\geq 0$ and $t\geq 0$, an edge labelled $C_2  U_1^s R_1$ from $U_1^t L_1$ to $U_1^{s+t}$.
  \item For each $n\geq 1$, an edge labelled $U_1^n$ from $C_2$ to $U_1^{n-1}$.
  \item For each $n\geq 1$, an edge labelled $U_1^n L_1$ from $C_2$ to $U_1^n L_1$.
\end{itemize}

See Figure~\ref{eq:MinimumOperations} for an illustration.

\begin{figure}
\begin{tikzpicture}[scale=1.6]
  \node at (-2.5,0) (p0) {$L_1 C_2$} ;
  \node at (0,-1) (px1) {$C_2$} ;
  \node at (0,1) (p1) {$1$} ;
  \node at (0,2.5) (p2) {$L_1$} ;
  \node at (0,4) (p3) {$U_1$} ;
  \node at (2.5,0) (y) {$R_2$} ;
  \draw[<-] (px1) [bend left=30] to node[above,sloped]{\tiny{$U_2$}} (p1) ;
  \draw[->] (y) to node[above,sloped]{\tiny{$L_2$}} (px1) ;
  \draw[->] (p1) to node[above,sloped,pos=.7]{\tiny{$R_2$}} (y) ;
  \draw[->] (p0) [bend left=5] to node[above,sloped]{\tiny{$R_1$}} (p1) ;
  \draw[->] (px1) to node[above,sloped]{\tiny{$L_1$}} (p0) ;
  \draw[<-] (p0) [bend right=5] to node[below,sloped,pos=.55]{\tiny{$C_2$}} (p2) ;
  \draw[->] (p1) [bend left=5] to node[below,sloped]{\tiny{$L_1 C_2$}} (p0) ;
  \draw[<-] (p1) [bend left=30] to node[above,sloped,pos=.2]{\tiny{$U_1$}} (px1) ;
  \draw[<-] (p2) [bend left=50] to node[above,sloped,pos=.4]{\tiny{$U_1 L_1$}} (px1) ;
  \draw[<-] (p3) [bend left=60] to node[above,sloped,pos=.5]{\tiny{$U_1^2$}} (px1) ;
  \draw[<-] (p1) [bend left=50] to node[above,sloped,pos=.55]{\tiny{$C_2$}} (p3) ;
  \draw[<-] (p2) [bend left=30] to node[below,sloped,pos=.55]{\tiny{$L_1 U_1 C_2$}} (p1) ;
  \draw[<-] (p3) [bend left=50] to node[below,sloped,pos=.6]{\tiny{$U_1^2 C_2$}} (p1) ;
  \draw[<-] (p1) [bend left=30] to node[above,sloped,pos=.55]{\tiny{$R_1 C_2$}} (p2) ;
  \draw[<-] (p2) [bend right=10] to node[above,sloped]{\tiny{$U_1$}} (p0) ;  
  \draw[<-] (p3) [bend right=25] to node[above,sloped]{\tiny{$U_1 R_1$}} (p0) ;  
  \draw[->] (p1) [loop above] to node[above,sloped]{\tiny{$U_1 C_2$}} (p1) ;
  \draw[->] (p2) [loop above] to node[above,sloped]{\tiny{$U_1 C_2$}} (p2) ;
  \draw[->] (p3) [loop above] to node[right,sloped]{\tiny{$U_1 C_2$}} (p3) ;
  \draw[<-] (p2) [bend left=30] to node[below,sloped]{\tiny{$L_1 C_2$}} (p3) ;
  \draw[<-] (p3) [bend left=30] to node[below,sloped]{\tiny{$U_1 R_1 C_2$}} (p2) ;
  \draw[->] (p0) [loop below] to node[below]{\tiny{$1$}} (p0) ;
  \draw[->] (y) [loop below] to node[below]{\tiny{$1$}} (y) ;
\end{tikzpicture}
\caption{{\bf Operation graph for a minimum when $2\in\Upwards_1$.}
\label{eq:MinimumOperations}
}
\end{figure}

Fix a sequence $a_1,\dots,a_{\ell-1}$ of pure algebra elements  in $\Blg(m+2,k+1,\Upwards_1)$. 
We call the sequence an {\em preferred sequence} if the following conditions are satisfied:
\begin{enumerate}[label=({$\GenMin$}-\arabic*),ref=({$\GenMin$}-\arabic*)]
\item there are idempotent states $\x_1,\dots,\x_\ell$ so that
  $\Idemp{\x_{i}}\cdot a_i\cdot \Idemp{\x_{i+1}}=a_i$ for $i=1,\dots,\ell-1$,
\item The idempotent states $\x_1$ and $\x_{\ell}$ are  preferred idempotent states.
\item Each $C_j$ with $j\in \Upwards_1\setminus\{2\}$ divides at most one of the $a_k$.
\item There is an oriented path $e_1,\dots,e_{\ell}$ in $\Gamma$ 
  beginning at 
  $C_2 L_1$ if $0\in\x_1$ and at $R_2$ if $0\not\in\x_1$.
  The label on the edge $e_i$, thought of as an algebra element in $\Blg(m+2,k+1,\Upwards_1)$,
  has $w_1(a_i)=w_1(e_i)$ and $w_2(a_i)=w_2(e_i)$;
  and $a_i$ is divisible by $C_2$ if and only if $e_i$ is divisible by $C_2$, 
  none of the internal vertices are at $R_2$ or $L_1 C_2$,
  and the terminal vertex is
  at $L_1 C_2$ or $R_2$.
\end{enumerate}

For a preferred sequence, there is at most one pure non-zero element $b\in\Blg_2$ characterized by the following properties
\begin{enumerate}[label=(PS-\arabic*),ref=(PS-\arabic*)]
  \item $b=\Idemp{\psi(\x_1)}\cdot b$
  \item 
    \label{eq:MasGradeMinimum} For $j\in \Upwards_2$, $C_j$ divides $b$ if and only if $C_{j+2}$ divides some $a_k$
  \item 
    \label{eq:GradeMinimum}
    For $i=1,\dots,m$,
      $w_i(b)=\sum_{j=1}^{\ell-1} w_{i+2}(a_j)$.
\end{enumerate}

Define maps
$\delta^1_{\ell}\colon \lsup{\Blg_2}\GenMin^1_{\Blg_1}\otimes \overbrace{\Blg_1 \otimes \dots \otimes \Blg_1}^{\ell-1} \to 
\Blg_2\otimes~^{\Blg_2}\GenMin^1_{\Blg_1}$
by specifying them on sequences
$a_1,\dots,a_{\ell-1}\in\Blg(m+2,k+1,\Upwards_1)$
for which there are   idempotent states $\x_1,\dots,\x_{\ell}$ 
with 
$\Idemp{\x_i}\cdot a_i \cdot \Idemp{\x_{i+1}}=a_i$ and the
$a_i$ are pure. For such a sequence, the operation
$\delta^1_{\ell}(\MinGen_{\x_1},a_1,\dots,a_{\ell-1})$ is non-zero
only if the sequence is a preferred sequence, and we define
$\delta^1_{\ell}(\MinGen_{\x_1},a_1,\dots,a_{\ell-1})=b\otimes \MinGen_{\x_{\ell}}$,
where $b$ is the algebra element specified by the sequence.
For $\ell=1$, we define $\delta^1_1=0$.

For example, sequences $(a_1,\dots,a_{\ell-1})$ for which
$\delta^1_\ell(\MinGen,a_1,\dots,a_\ell)$ is non-zero (for suitably chosen $\MinGen$) include the sequences
\begin{equation}
  \label{eq:SampleSequences}
  \begin{array}{llll}
    (L_2,L_1), & (R_1,R_2), & (L_2,U_1,R_2), & (U_1,C_2);
  \end{array}
\end{equation}
the
first comes from a path from $R_2$ to $L_1 C_2$; the second comes from a path from
$L_1 C_2$ to $R_2$,
 the third goes from $R_2$ to itself, and the last  goes from
$L_1 C_2$ to itself).
The loops labelled $1$ also give rise to the following actions: 
if $\x_1$ and $\x_2$ are preferred idempotent states, then:
\[ \begin{array}{ll}
  \delta^1_2(\MinGen_{\x_1},R_i)=R_{i-2}\otimes \MinGen_{\x_2} & {\text{if $i>2$ and $\Idemp{\x_1}\cdot R_i = R_i\cdot \Idemp{\x_2}$}} \\
  \delta^1_2(\MinGen_{\x_1},L_i)=L_{i-2}\otimes \MinGen_{\x_2} &{\text{if $i>2$ and if $\Idemp{\x_1}\cdot L_i = L_i\cdot \Idemp{\x_2}$}} \\
  \delta^1_2(\MinGen_{\x_1},U_i)=U_{i-2}\otimes \MinGen_{\x_1} &{\text{if $i>2$}} \\
  \delta^1_2(\MinGen_{\x_1},C_j)=C_{j-2}\otimes \MinGen_{\x_1} &{\text{if $2<j\in\Upwards_1$.}}
  \end{array}
\]

As an illustration, we list some other preferred sequences:
\[ (U_1 R_1, L_1 C_2, C_2), \qquad (L_1, \overbrace{U_1, U_2, \dots, U_1, U_2 }^n, L_2),
\qquad (U_1^n, \overbrace{C_2,\dots, C_2}^n).\]

So far, we described the case where $2\in\Upwards_1$.  In cases where
$1\in\Upwards_1$, we modify the earlier construction slightly as
follows: in the description of the graph $\Gamma$, switch the roles of
$U_1$ and $U_2$, $L_1$ and $R_2$, $R_1$ and $L_2$, $C_2$ and $C_1$.
To read off the actions, allowed idempotent states with $0\in\x$ correspond to the starting vertex $L_1$,
and those with $0\not\in\x$ correspond to $R_2 C_1$.
With this adjustment, the bimodule is defined as before.
See Figure~\ref{eq:MinimumOperations2} for a picture.

\begin{figure}
\begin{tikzpicture}[scale=1.6]
  \node at (-2.5,0) (p0) {$R_2 C_1$} ;
  \node at (0,-1) (px1) {$C_1$} ;
  \node at (0,1) (p1) {$1$} ;
  \node at (0,2.5) (p2) {$R_2$} ;
  \node at (0,4) (p3) {$U_2$} ;
  \node at (2.5,0) (y) {$L_1$} ;
  \draw[<-] (px1) [bend left=30] to node[above,sloped]{\tiny{$U_1$}} (p1) ;
  \draw[->] (y) to node[above,sloped]{\tiny{$R_1$}} (px1) ;
  \draw[->] (p1) to node[above,sloped,pos=.7]{\tiny{$L_1$}} (y) ;
  \draw[->] (p0) [bend left=5] to node[above,sloped]{\tiny{$L_2$}} (p1) ;
  \draw[->] (px1) to node[above,sloped]{\tiny{$R_2$}} (p0) ;
  \draw[<-] (p0) [bend right=5] to node[below,sloped,pos=.55]{\tiny{$C_1$}} (p2) ;
  \draw[->] (p1) [bend left=5] to node[below,sloped]{\tiny{$R_2 C_1$}} (p0) ;
  \draw[<-] (p1) [bend left=30] to node[above,sloped,pos=.2]{\tiny{$U_2$}} (px1) ;
  \draw[<-] (p2) [bend left=50] to node[above,sloped,pos=.4]{\tiny{$U_2 R_2$}} (px1) ;
  \draw[<-] (p3) [bend left=60] to node[above,sloped,pos=.5]{\tiny{$U_2^2$}} (px1) ;
  \draw[<-] (p1) [bend left=50] to node[above,sloped,pos=.55]{\tiny{$C_1$}} (p3) ;
  \draw[<-] (p2) [bend left=30] to node[below,sloped,pos=.55]{\tiny{$R_2 U_2 C_1$}} (p1) ;
  \draw[<-] (p3) [bend left=50] to node[below,sloped,pos=.6]{\tiny{$U_2^2 C_1$}} (p1) ;
  \draw[<-] (p1) [bend left=30] to node[above,sloped,pos=.55]{\tiny{$L_2 C_1$}} (p2) ;
  \draw[<-] (p2) [bend right=10] to node[above,sloped]{\tiny{$U_2$}} (p0) ;  
  \draw[<-] (p3) [bend right=25] to node[above,sloped]{\tiny{$U_2 L_2$}} (p0) ;  
  \draw[->] (p1) [loop above] to node[above,sloped]{\tiny{$U_2 C_1$}} (p1) ;
  \draw[->] (p2) [loop above] to node[above,sloped]{\tiny{$U_2 C_1$}} (p2) ;
  \draw[->] (p3) [loop above] to node[right,sloped]{\tiny{$U_2 C_1$}} (p3) ;
  \draw[<-] (p2) [bend left=30] to node[below,sloped]{\tiny{$R_2 C_1$}} (p3) ;
  \draw[<-] (p3) [bend left=30] to node[below,sloped]{\tiny{$U_2 L_2 C_1$}} (p2) ;
  \draw[->] (p0) [loop below] to node[below]{\tiny{$1$}} (p0) ;
  \draw[->] (y) [loop below] to node[below]{\tiny{$1$}} (y) ;
\end{tikzpicture}
\caption{{\bf Operation graph for a minimum when $1\in\Upwards_1$.}
\label{eq:MinimumOperations2}}
\end{figure}

\begin{prop}
  \label{prop:ConstructMinimum}
  The above maps $\delta^1_{\ell}$ on $\lsup{\Blg_2}\GenMin^1_{\Blg_1}$ satisfy the $DA$ bimodule relations;
  and  $\GenMin^1$ is adapted to the one-manifold underlying
  its knot diagram (Definition~\ref{def:Adapted}).
\end{prop}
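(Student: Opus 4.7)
The plan is to verify the $A_\infty$ bimodule relations systematically, using the explicit description of $\delta^1_\ell$ via paths in the graph $\Gamma$. Adaptedness (Definition~\ref{def:Adapted}) is immediate from conditions (PS-2) and (PS-3): the weight identity $w_i(b)=\sum_j w_{i+2}(a_j)$ places the bimodule in the Alexander grading set of the one-manifold underlying the partial knot diagram (positions $\{1,2\}$ are absorbed by the minimum and the remaining indices shift by two), and the $C_j$-divisibility correspondence yields the Maslov grading compatibility. Well-definedness of $\delta^1_\ell$ on pure algebra inputs reduces to Proposition~\ref{prop:DeterminedByMultiplicities}: the stipulated idempotent $\Idemp{\psi(\x_1)}$ together with the prescribed weights and $C_j$-factors uniquely determine $b\in\Blg_2$, or force $b=0$ when these constraints are inconsistent.

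For the $A_\infty$ relation with inputs $(a_1,\dots,a_{\ell-1})$, I would organize contributions by their effect on the underlying graph path. Iterated actions $\delta^1_i \circ_1 \delta^1_j$ correspond to concatenating two preferred paths joined at $R_2$ or $L_1 C_2$; inner multiplications $\mu_2(a_i,a_{i+1})$ correspond to replacing two consecutive edges by a single edge carrying the product label; and input differentials $\mu_1(a_i)$ together with the output differential $\mu_1\circ\delta^1_\ell$ exchange $C_j$ for $U_j$ on the input and output sides respectively. For indices $i>2$ the reindexing $C_j \leftrightarrow C_{j-2}$, $U_j \leftrightarrow U_{j-2}$ decouples the verification from the graph, and the check proceeds exactly as in the proof of Theorem~\ref{thm:MaxDA}. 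The substantive content therefore lies at positions $1$ and $2$, where the combinatorics of $\Gamma$ is essential.

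The main obstacle will be verifying the cancellations at the hub vertex $C_2$ of $\Gamma$ (and symmetrically $C_1$ for the other orientation), where many edges and loops converge and where $dC_2=U_2$ creates interactions across multiple types of terms. For example, for the input $(U_1^s,C_2,U_1^t)$ one must balance the inner multiplication $\mu_2(C_2,U_1^t)=U_1^t C_2$ (which produces a single edge of $\Gamma$), possible iterated actions factoring through $L_1 C_2$, and the input differentiation $dC_2=U_2$, all matched against output differentials in $\Blg_2$. I would carry out this verification by tabulating, for every pair of composable edges in $\Gamma$, the corresponding contributions to the $A_\infty$ relation and checking cancellation in pairs; the symmetry exchanging $\{U_1,L_1,R_2,C_2\}$ with $\{U_2,R_2,L_1,C_1\}$ between the two orientations of the minimum reduces the casework by half.
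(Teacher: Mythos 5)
Your plan is a genuinely different route from the paper's, but as it stands it has two real gaps. The paper does not verify the $\Ainfty$ relations directly at all: it builds an honest DG bimodule $M={\mathbf I}\cdot \Blg_1/L_1L_2\cdot\Blg_1$ with differential $\partial=d+U_1C_2$ (the relation $\partial^2=0$ holds because $U_1U_2\in L_1L_2\cdot\Blg_1$), shows via Lemma~\ref{lem:BimoduleSplitting} and Lemma~\ref{lem:SmallerModelMinimum} that $\left(\Blg_2\cdot L_1C_2\right)\oplus\left(\Blg_2\cdot R_2\right)$ is a deformation retract, and then transfers the structure by homological perturbation (Lemma~\ref{lem:HomologicalPerturbation2}); the graph $\Gamma$ is exactly the bookkeeping of the transferred operations, so the DA relations are inherited for free. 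Your direct verification is possible in principle, but the organizational step you rely on is not correct as stated: the $\mu_2$-terms of the structure equation do \emph{not} correspond to ``replacing two consecutive edges by a single edge carrying the product label,'' because the product of two consecutive edge labels is in general not an edge label of $\Gamma$ (e.g.\ $U_2$ followed by $U_1^n$ multiplies to an element of weight $(n,1)$ not divisible by $C_2$, and no such edge exists out of the vertex $1$); these are precisely the terms that must instead cancel against iterated $\delta^1$'s factoring through $R_2$ or $L_1C_2$, or against $\mu_1$-terms. Moreover the relation must be checked for input sequences of arbitrary length, and ``tabulating pairs of composable edges'' does not obviously control those higher relations; the hub-vertex cancellations you flag as the main obstacle are exactly the content of the proposition, and the proposal defers rather than supplies them. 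One also needs well-definedness over the quotient algebra (inputs in the ideal $\mathcal{J}$ must give vanishing output), which is not addressed.

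The adaptedness claim is also not immediate from conditions (PS-2) and (PS-3). The Alexander statement needs, in addition to $w_i(b)=\sum_j w_{i+2}(a_j)$, the identity $\sum_j w_1(a_j)=\sum_j w_2(a_j)$, which is a property of preferred \emph{paths} (they start and end at $R_2$ or $L_1C_2$), not of the output-defining conditions alone. The Maslov statement is more delicate still: since the DA grading convention carries a shift by $\ell-1$ in the number of inputs, and the $C_2$'s appearing in the inputs have no counterpart in the output, one needs the path identity $\sum_{j}\bigl(\MasFilt_2(a_j)-2w_2(a_j)\bigr)=1-\ell$ along every preferred sequence (in the paper this is read off from the change of vertical coordinate in Figure~\ref{eq:MinimumOperations}). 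Without that identity, PS-2 alone does not give a well-defined $\Z$-valued Maslov grading. So either carry out the full cancellation and grading analysis on $\Gamma$, or follow the paper's route and realize $\GenMin^1$ as a homotopy-transferred model of $M$, which yields both the structure relations and the gradings simultaneously.
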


Proposition~\ref{prop:ConstructMinimum} will be proved after we give
an alternative construction of $\GenMin^1$, later in this section.
The significance of $\GenMin^1$ is formulated in the following:

\begin{lemma}
  \label{lemma:MinDual}
  $\GenMin^1$ is dual to $\Crit_1$, in the following sense.
  Fix arbitrary integers $0\leq k<m$, a
  subsequence $\Upwards_2=\{u_1,\dots,u_\ell\} \subset \{1,\dots,m\}$, and let
  \[
    \Upwards_1=\{1,u_1+2,\dots,u_\ell+2\}~\qquad~\text{or}~\qquad
      \Upwards_1=\{2,u_1+2,\dots,u_\ell+2\},
      \]
      $\Upwards_3=\{1,\dots,m+2\}\setminus \Upwards_1$;
      $\Blg_1=\Blg(m+2,k+1,\Upwards_1)$,
      $\Blg_2=\Blg(m,k,\Upwards_2)$, and
      $\Blg_3=\Blg(m+2,m+1-k,\Upwards_3)$. Then, there is an equivalence
  $\lsup{\Blg_2} \GenMin_{\Blg_1} \DT~\lsup{\Blg_1,\Blg_3}\CanonDD 
  \simeq ~\lsup{\Blg_2,\Blg_3}\Crit_1$.
\end{lemma}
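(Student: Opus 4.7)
The plan is to verify the equivalence by direct computation, following the model of Proposition~\ref{prop:MaxDual} and Lemma~\ref{lem:CrossingDADD}. First I would match generators. As a $\ground$--$\ground$ bimodule, $\lsup{\Blg_2}\GenMin^1_{\Blg_1}\DT\lsup{\Blg_1,\Blg_3}\CanonDD$ has a basis in bijection with preferred idempotent states $\x$ for $\Blg_1$. Passing to complements $\y=\{0,\ldots,m+2\}\setminus\x$ gives a bijection with the allowed idempotent states for $\Blg_3$ (in the sense of Section~\ref{sec:Crit} with $c=1$): the three classes $\x\cap\{0,1,2\}\in\{\{0\},\{2\},\{0,2\}\}$ correspond to allowed $\y$ of types $\YY$, $\XX$, and $\ZZ$ respectively, matching the generators of $\lsup{\Blg_2,\Blg_3}\Crit_1$ verbatim.

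Next I would unpack the type $DD$ tensor-product formula~\eqref{eq:DefDD} to identify the differentials. Each contribution arises from iterating $\delta^1_{\CanonDD}$ some number $j\geq 0$ of times to produce matching sequences $(a_1,\ldots,a_j)\in\Blg_1^{\otimes j}$ and $(b_1,\ldots,b_j)\in\Blg_3^{\otimes j}$, then applying $\delta^1_{j+1}$ of $\GenMin^1$ to the left side and multiplying via $\Pi$ on the right. The outside summands (positions $\geq 3$) match trivially: the identity-like actions $\delta^1_2(\MinGen_{\x},L_i)=L_{i-2}\otimes\MinGen_{\x'}$ and analogues for $R_i$, $U_i$, and $C_j$ paired with the corresponding outside terms $L_i\otimes R_i$, $R_i\otimes L_i$, $C_j\otimes U_j$ or $U_j\otimes C_j$ of $\delta^1_{\CanonDD}$ reproduce the outside summands $R_j\otimes L_{\phi_1(j)}$, $L_j\otimes R_{\phi_1(j)}$, $C_j\otimes U_{\phi_1(j)}$, and $U_j\otimes C_{\phi_1(j)}$ of $\Crit_1$, using $\phi_1(j)=j+2$.

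The inside contributions (positions $0,1,2$) are matched by identifying each non-trivial transition in diagram~\eqref{eq:CritDiag} and each of the self-loops labelled $1\otimes C_1 U_2$ or $1\otimes U_1 C_2$ with a specific path in the operation graph $\Gamma$ defining $\GenMin^1$. For instance, the $\XX\to\YY$ arrow labelled $1\otimes R_2 R_1$ comes from the length-two path $R_2\xrightarrow{L_2}C_2\xrightarrow{L_1}L_1C_2$, whose operation $\delta^1_3(\MinGen_{\x},L_2,L_1)=1\otimes\MinGen_{\x'}$ pairs with two elementary $\CanonDD$-steps (contributing $R_2$ and $R_1$ to the $\Blg_3$-factor) to output $(1\otimes R_2 R_1)\otimes(\MinGen_{\x'}\DT k_{\x'})$; symmetrically, the path $L_1C_2\xrightarrow{R_1}1\xrightarrow{R_2}R_2$ yields the $\YY\to\XX$ arrow, and the shorter one-step paths $C_2\xrightarrow{L_1}L_1C_2$ and $1\xrightarrow{R_2}R_2$, together with the unique $\ZZ$-terminating paths, account for the $\ZZ\leftrightarrow\XX,\YY$ arrows.

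The main obstacle is controlling the infinite families of paths in $\Gamma$ passing through internal vertices $U_1^t$, $U_1^t L_1$, $1$, and $C_2$: a priori the infinite sum defining $\delta^1$ on the tensor product could generate terms beyond the finitely many appearing in $\Crit_1$. To rule this out I would use a combination of weight bookkeeping and the defining relations of the algebras: every traversal of an internal $U_1^n$, $U_1^n L_1$, or $C_2 U_1^s$-labelled edge inflates either the $w_1+w_2$-weight of the accumulated $\Blg_3$-output or the $C$-decoration, and the positivity of the Alexander grading (Definition~\ref{def:PositiveAlexanderGrading}) together with the vanishing relation~\eqref{eq:KillUs} and the $U_1 U_2$-extension rule~\eqref{eq:ExtendM3} forces all higher-length contributions either to vanish in $\Blg_3$ or to cancel in pairs against the $C$-differentials produced by $\CanonDD$. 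A cleaner alternative is to first apply Lemma~\ref{lem:HomologicalPerturbation2} to $\GenMin^1\DT\CanonDD$ to cancel the acyclic subcomplex generated by the internal-vertex paths, and then observe that the reduced complex manifestly agrees with $\Crit_1$.
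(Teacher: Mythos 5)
Your proposal is correct and takes essentially the same route as the paper: identify the generators of the tensor product with the three idempotent types $\XX$, $\YY$, $\ZZ$, and match the differentials by pairing the preferred sequences $(L_2,L_1)$, $(R_1,R_2)$, $(L_2,U_1,R_2)$, $(U_1,C_2)$ together with the loops labelled $1$ against iterates of $\delta^1$ on $\CanonDD$. The only caveat concerns your last paragraph: no cancellation or homological perturbation is needed, because a preferred path of length at least two cannot have internal vertices at $R_2$ or $L_1 C_2$ and the $(w_1,w_2)$- and $C_2$-matching conditions rule out any longer sequence of $\CanonDD$-outputs, so the tensor product agrees with $\Crit_1$ on the nose (the would-be extra terms never arise, rather than cancelling in pairs).
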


\begin{proof}
  The generators of $\GenMin^1$ corresponding to idempotent states
  with $\x\cap\{0,1,2\}=\{2\}$, $\{0\}$, and $\{0,2\}$ respectively
  induce generators in 
  $\lsup{\Blg_2} \GenMin_{\Blg_1} \DT~\lsup{\Blg_1,\Blg_3}\CanonDD $
  corresponding to generators of
  $\GenMax$ of type 
  $\XX$, $\YY$, and $\ZZ$, in the notation of Section~\ref{sec:Maximum}.

  If  $2\in\Upwards_1$, we can 
  pair the actions from Equation~\eqref{eq:SampleSequences}
  (counting the last one twice) with the differential from $\CanonDD$
  to give the  differentials 
  \[ 
  \begin{array}{lll}
    \XX \to (1\otimes L_1 L_2) \otimes \YY, &  \YY\to
    (1\otimes R_2 R_1)\otimes \XX, &
    \YY\to (1\otimes C_1 U_2)\otimes \YY \\
    \XX\to 
    (1\otimes C_1 U_2)\otimes \XX, 
    & \ZZ\to (1\otimes C_1 U_2)\otimes \ZZ
  \end{array}\]
  appearing in the description of $^{\Blg_2,\Blg_3}\Crit_1$.
  The loops labelled $1$ induce $\delta^1_2$ actions by the part of the algebra with $w_1=w_2=0$.
  These actions give rise to the remaining terms for the differential in $\Crit_1$.
  The case where $1\in\Upwards_1$ works similarly.
\end{proof}

\subsection{An alternative construction}

We describe a $\Blg_2-\Blg_1$ DG bimodule $M$ that is quasi-isomorphic
to the bimodule $\lsup{\Blg_2}\GenMin^c_{\Blg_1}$ with
$c=1$. Proposition~\ref{prop:ConstructMinimum} will be an immediate
consequence of this construction. We continue with the hypothesis that
$2\in\Upwards_1$, returning to the other case at the end of the subsection.

Let ${\mathbf I}=\sum_{\{\x\big|x_1=1\}} \Idemp{\x}\in\Blg_1$.
There is a natural inclusion of $\phi\colon \Blg_2\to {\mathbf I}\cdot 
\Blg_1 \cdot {\mathbf I}$, whose image consists of the portion of ${\mathbf
  I}\cdot \Blg_1\cdot {\mathbf I}$ with $w_1=w_2=0$. 
In particular, $\phi(R_i)={\mathbf I}\cdot R_{i+2}$
and $\phi(L_i)={\mathbf I}\cdot L_{i+2}$.
Thus, we can think of ${\mathbf I}\cdot \Blg_1$ as a left module for
$\Blg_2$; 
 it is also a right module for $\Blg_1$. 
Consider the bimodule
\[ \lsub{\Blg_2}M_{\Blg_1}= {\mathbf I}\cdot \Blg_1/L_1 L_2
\cdot {\Blg_1},\] 
thought of as a $\Blg_2$-$\Blg_1$-module, i.e., with 
$m_{1|1|0}(b,a)=\phi(b)\cdot a$ and $m_{0|1|1}(a,a')=a\cdot a'$; and equipped with the 
equipped with the endomorphism
$\partial=d+U_1 C_2$.

In the next lemma, we describe the left $\Blg_2$-module
  strcture on $M$, using the following notation. Fix an algebra
  element $a\in\Blg_1$, and consider ${\mathbf I}\cdot a$. This
  generates a left $\IdempRing(\Blg_2)$-module, under the identification
  $\IdempRing(\Blg_2)\cong {\mathbf I}\cdot \IdempRing(\Blg_2)\cdot {\mathbf I}$. Thus, we can form  a new
  left $\Blg_2$-module $\Blg_2\otimes_{\IdempRing(\Blg_2)} \IdempRing({\Blg_2})\cdot {\mathbf I} a$,
  which we abbreviate $\Blg_2\otimes_{\IdempRing(\Blg_2)}a$. In general,
  there is a quotient map of left $\Blg_2$-modules
  $\Blg_2\otimes_{\IdempRing(\Blg_2)}a\to \Blg_2\cdot a$.

\begin{lemma}
  \label{lem:BimoduleSplitting}
  Consider $\lsub{\Blg_2}M_{\Blg_1}$ equipped 
  with endomorphism $m_{0|1|0}=\partial$
  as a left module over $\Blg_2$. There is an isomorphism
  $\lsub{\Blg_2}M \cong \bigoplus_{a_i} \Blg_2\otimes_{\IdempRing(\Blg_2)} a_i$, where the $a_i$ are chosen from the generating set
  \begin{equation}
    \label{eq:IdealGenerators}
    \begin{array}{lllllllll}
    \{ R_2 U_2^t,& U_2^n, &
    C_2 R_2 U_2^t,& C_2 U_2^t, &
    & L_1 U_1^t,& U_1^t,& 
    C_2 U_1^t\}_{t\geq 0,n>0}.
    \end{array}
  \end{equation}
\end{lemma}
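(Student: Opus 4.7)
The plan is to produce, for each pure algebra element $e\in \mathbf{I}\cdot \Blg_1$, a factorization $e = \phi(b)\cdot a$ modulo $L_1 L_2 \cdot \Blg_1$ with $a$ chosen canonically from the generating list, and then to verify directness. First I would identify the image $\phi(\Blg_2)$ inside $\mathbf{I}\cdot \Blg_1\cdot \mathbf{I}$ as the subalgebra consisting of pure algebra elements with $w_1=w_2=0$ and no $C_2$ factor: since $\phi$ shifts indices by $2$ and restricts to a bijection between idempotents of $\Blg_2$ and idempotents in the support of $\mathbf{I}$, this description is immediate. The key commutation fact to check is that elements supported at positions $\{0,1,2\}$ (built from $L_1, R_1, L_2, R_2, U_1, U_2, C_2$) commute with elements supported at positions $\geq 3$ whenever both products are non-zero; this lets every pure algebra element be rewritten as $\phi(b)\cdot a$ with $a$ involving only the local generators.

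Next I would enumerate the equivalence classes of such local elements $a$ modulo $L_1 L_2 \cdot \Blg_1$. The crucial identity is $U_1 U_2 = L_1 L_2 R_2 R_1$ in $\Blg_1$ (valid on idempotents containing $\{1,2\}$), which combined with the centrality of $U_1$ and $U_2$ implies that any pure algebra element with $w_1\geq 1$ and $w_2\geq 1$ lies in $L_1 L_2 \cdot \Blg_1$ and therefore vanishes in $M$. After applying this reduction, the local elements surviving in $M$ are exactly the seven families listed in Equation~\eqref{eq:IdealGenerators}; they are distinguished by the invariant tuple $(w_1,w_2,\MasFilt_2,\y\cap\{0,1,2\})$ where $\y$ is the terminal idempotent.

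Finally, directness of the decomposition follows because this invariant tuple is preserved under left multiplication by $\phi(\Blg_2)$, so distinct generators $a_i$ lie in distinct graded pieces of $M$ and cannot mix; within each summand, injectivity of $\phi$ gives freeness. The analogous case $1\in \Upwards_1$ is obtained by swapping the roles of positions $1$ and $2$ throughout, with $C_1$ replacing $C_2$.

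The main obstacle will be the enumeration step: one must verify that both local initial-idempotent types $\x\cap\{0,1,2\}=\{1\}$ and $\x\cap\{0,1,2\}=\{1,2\}$ lead, after the $L_1 L_2$ reduction, to precisely the listed generators, with no omissions or duplications. This is a straightforward but delicate case analysis, which also requires checking that no extra relations arise from generating intervals of $\Blg_1$ that straddle the positions $1$ and $2$.
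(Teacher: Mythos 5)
Your skeleton (split off a $w_1=w_2=0$ factor, enumerate local remainders, then argue directness) matches the paper's, but two steps are not right as stated. First, the ``crucial identity'' reduction is false: it is not true that every pure element with $w_1\geq 1$ and $w_2\geq 1$ lies in $L_1L_2\cdot\Blg_1$. The element $\Idemp{\x}\cdot C_2U_1^t$ with $t\geq 1$ has $w_1=t\geq 1$ and $w_2=1$ (the $w_2$ comes from $C_2$, not from $U_2$), it is nonzero in $M$, and it appears on your own generating list --- so the claim is internally inconsistent with the enumeration you are trying to prove. The correct statement concerns divisibility of the monomial part by $U_1U_2$; and even then the mechanism is different in the two local idempotent types: when $\{1,2\}\subset\x$ one uses $U_1U_2=L_1L_2R_2R_1\in L_1L_2\cdot\Blg_1$, but when $\x\cap\{0,1,2\}=\{1\}$ one has $\Idemp{\x}\cdot L_1L_2=0$, so nothing is killed by the quotient; instead $U_1U_2$ already vanishes in $\Blg_1$ because $[1,2]$ is a generating interval for such idempotent pairs. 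Your argument needs this case split, since the quotient by $L_1L_2\cdot\Blg_1$ alone does not account for the second case.

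Second, and more seriously, the freeness of each cyclic summand is not addressed. The lemma asserts $\lsub{\Blg_2}M\cong\bigoplus_i\Blg_2\otimes_{\IdempRing(\Blg_2)}a_i$, which requires that $b\mapsto \phi(b)\cdot a_i$ (composed with the projection to $M$) be injective on $\Blg_2\otimes_{\IdempRing(\Blg_2)}a_i$; injectivity of the algebra inclusion $\phi$ says nothing about whether right multiplication by $a_i$, or the passage to the quotient, can annihilate a nonzero $\phi(b)$. This is exactly where the paper invests its argument: using Proposition~\ref{prop:IdentifyJ} one shows that if $\Idemp{\x}\cdot b\cdot a_i=0$ then $\Idemp{\x}\cdot b=0$, because for idempotent states with $x_1=1$ no generating interval contains exactly one of the positions $1,2$, while none of the listed $a_i$ is divisible by $U_1U_2$, so no relation of $\Blg_1$ can kill $\phi(b)\cdot a_i$ without already killing $b$ in $\Blg_2$. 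You mention straddling generating intervals only as a check inside the enumeration; in fact they are the essential input to freeness, and your stated justification (``injectivity of $\phi$ gives freeness'') is a genuine gap. The grading-tuple argument for directness across distinct generators is fine, granted completeness of the enumeration.
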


\begin{proof}
  Consider any $a=\Idemp{\x}\cdot a\cdot \Idemp{\y}$ with $x_1=1$,
  and with non-trivial projection to  ${\mathbf I}\cdot \Blg_1/L_1 L_2\cdot \Blg_1$.
  If $y_1=0$, then $a=L_1 U_1^t \cdot a_2$ with $w_1(a_2)=w_2(a_2)=0$; indeed, in this case,
  $a=a_2' L_1 U_1^t$ (where $a_2'$ and $a_1'$ differ in their initial idempotent).
  If $y_1=2$, then $a=b\cdot R_2 U_2^t$ or $b\cdot R_2 U_2^t C_2$.
  Finally, if $y_1=1$, then $a=b\cdot U_2^t$ or $a=b\cdot U_1^t$.
  This shows that the generating sets are as enumerated above.

  So far, we have identified the $\Blg_2$-orbits $\lsub{\Blg_2}M \cong
  \bigoplus_{a_i}\Blg_2\cdot a_i$, where the $a_i$ are as listed
  above.  For each of the above summands, the identification
  $\Blg_2\cdot a_i\cong \Blg_2\otimes_{\IdempRing(\Blg_2)} a_i$ is
  equivalent to the statement that if $\x$ is an idempotent state with
  $x_1=1$, $b={\mathbf I}\cdot b\cdot {\mathbf I}$ is an element of
  $\Blg_1$ with $w_1(b)=w_2(b)=0$, and $\Idemp{\x}\cdot b \cdot
  a_i=0$, then $\Idemp{\x}\cdot b=0$. But this follows from
  Proposition~\ref{prop:IdentifyJ}, together with the fact that that
  if $\x$ is an idempotent state for $\Blg_1$ with $x_1=1$ and $\y$ is
  another idempotent state, then no generating interval (in the sense
  of Definition~\ref{def:GeneratingInterval}) can contain exactly one
  of $1$ or $2$, whereas none of the $a_i$ enumerated above is
  divisible by $U_1 U_2$.
\end{proof}

According to the above lemma, $\lsub{\Blg_2}M_{\Blg_1}$ can be thought
of as a type $DA$ bimodule with generating set specified in the lemma
(see Equation~\eqref{eq:TypeDModuleSplitting});
i.e. there is a type $DA$ bimodule $\lsup{\Blg_2}X_{\Blg_1}$
so that $\lsub{\Blg_2}M_{\Blg_1}=\Blg_2\DT \lsup{\Blg_2}X_{\Blg_1}$.
There is a much smaller model for this type $DA$ bimodule using the following:

\begin{lemma}
  \label{lem:SmallerModelMinimum}
  The inclusion map $\left(\Blg_2 \cdot L_1 C_2\right) \oplus \left(\Blg_2 \cdot R_2\right)\subset \lsub{\Blg_2}M_{\Blg_1}$
  induces an isomorphism in homology.
\end{lemma}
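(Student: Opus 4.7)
The plan is to show that the inclusion is a quasi-isomorphism by proving the quotient $Q := M/\bigl(\Blg_2\cdot L_1C_2 \oplus \Blg_2\cdot R_2\bigr)$ is acyclic.

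First, I invoke Lemma~\ref{lem:BimoduleSplitting} to present $M$ as a free left $\Blg_2$-module, with generators drawn from~\eqref{eq:IdealGenerators} together with the implicit family $\{C_2 L_1 U_1^t\}_{t\geq 0}$, whose members are nonzero in $M$ whenever $L_1C_2$ is. Since $\phi(\Blg_2)$ is supported only at positions $\geq 3$ it commutes with $U_1C_2$, so the differential $\partial = d + U_1 C_2$ is left $\Blg_2$-linear, and it suffices to compute $\partial$ on the generators $a_i$.

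A direct calculation using $dC_2 = U_2$ and centrality of the $C_j$'s produces the cancelling pairs (for $t\geq 0$): $U_1^t \leftrightarrow C_2 U_1^{t+1}$ via $U_1C_2$; $C_2 U_2^t \leftrightarrow U_2^{t+1}$ via $dC_2$; $C_2 R_2 U_2^t \leftrightarrow R_2 U_2^{t+1}$ via $dC_2$; and $L_1 U_1^t \leftrightarrow C_2 L_1 U_1^{t+1}$ via $U_1 C_2$. Each pair spans a contractible two-term sub-bimodule over $\Blg_2$. The only unpaired generators are $R_2$ and $L_1 C_2$ (the $t=0$ cases of $R_2 U_2^t$ and $C_2 L_1 U_1^t$). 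A short computation confirms both are $\partial$-cycles: $\partial R_2 = U_1C_2 R_2 = 0$ because $U_1 R_2 = 0$ at the right idempotent of $R_2$, and $\partial L_1 C_2 = L_1 U_2 = L_1 L_2 R_2$, which vanishes in $M=\mathbf{I}\cdot\Blg_1/L_1L_2\cdot\Blg_1$.

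Consequently, $Q$ decomposes as a direct sum of acyclic two-term complexes, so $H(Q)=0$ and the inclusion is a quasi-isomorphism.

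The main obstacle is the bookkeeping: verifying that no unintended cross-terms appear between distinct $\Blg_2$-orbits under $\partial$, and ensuring that the $\{C_2L_1U_1^t\}_{t\geq 0}$ family---which is essential for pairing off the $L_1U_1^t$'s but is absent from the explicit list in~\eqref{eq:IdealGenerators}---is indeed present as an additional $\Blg_2$-summand in the decomposition of $M$.
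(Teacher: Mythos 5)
Your argument is correct, and it rests on exactly the same cancellation as the paper's proof, just packaged differently: where you pass to the quotient $Q$ and split it into two-term mapping cones, the paper writes down an explicit contraction $H$ (sending $b_2\cdot U_2^n\mapsto b_2U_2^{n-1}C_2$, $b_2\cdot R_2U_2^n\mapsto b_2R_2U_2^{n-1}C_2$, $b_2\cdot C_2U_1^n\mapsto b_2U_1^{n-1}$ for $n>0$, and vanishing on the remaining generators) and checks that $\Id+\partial\circ H+H\circ\partial$ is the projection onto $\Blg_2\cdot L_1C_2\oplus\Blg_2\cdot R_2$; your cancelling pairs are precisely the graphs of (the inverse of) this $H$, and your vanishing statements ($U_1R_2=0$, $U_1U_2=0$ and $L_1U_2=L_1L_2R_2=0$ in $M$) are the same idempotent/ideal computations the paper relies on. Your point about the family $L_1C_2U_1^t=C_2L_1U_1^t$ is also consistent with the paper's intent: it is omitted from the printed list~\eqref{eq:IdealGenerators}, but the paper's $H$ explicitly treats it, and it is where the surviving generator $L_1C_2$ lives.

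Two caveats. First, $\partial=d+U_1C_2$ is \emph{not} strictly left $\Blg_2$-linear in general: when $\Upwards_2\neq\emptyset$ the subalgebra $\phi(\Blg_2)$ contains elements $C_{j+2}$ with nonzero differential, so one only has the Leibniz rule $\partial(b_2\cdot a)=(db_2)\cdot a+b_2\cdot\partial(a)$ (the paper's own verification carries these $(db_2)$ terms). This does not damage your argument, since the extra term stays in the same $\Blg_2$-orbit, so each piece $\Blg_2\cdot a\oplus\Blg_2\cdot\partial a$ is still a subcomplex, namely the cone of an isomorphism of DG $\Blg_2$-modules, hence acyclic; but the linearity claim should be replaced by this Leibniz statement. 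Second, what the paper's formulation buys is the explicit, genuinely $\Blg_2$-linear homotopy $H$: in the proof of Proposition~\ref{prop:ConstructMinimum} this $H$ is fed into the homological perturbation lemma (Lemma~\ref{lem:HomologicalPerturbation2}) as a type $D$ morphism to transfer the $DA$ structure onto $\Blg_2\cdot L_1C_2\oplus\Blg_2\cdot R_2$. Your proof establishes the quasi-isomorphism asserted in the lemma, and the contraction can be read off from your pairings, but for the later application it is worth recording $H$ explicitly.
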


\begin{proof}
  In view of Lemma~\ref{lem:BimoduleSplitting},
  we can define $H\colon M \to M$ by specifying
  \begin{align*}
    H(b_2\cdot R_2 U_2^n) &= b_2 R_2 U_2^{n-1} C_2 \\
    H(b_2\cdot U_2^n) &= b_2 U_2^{n-1} C_2  \\
    H(b_2\cdot C_2 U_1^n) &= b_2 U_1^{n-1}\\
    H(b_2)&= H(b_2 \cdot R_2)=H(b_2 C_2 R_2 U_2^t)=
    H(b_2 \cdot L_1 U_1^t)  \\
    &=H(b_2 \cdot C_2 R_2 U_2^t)  =
    H(b_2 \cdot L_1 C_2 U_1^t) =0 
  \end{align*}
  for all $n>0$, $t\geq 0$, and $b_2\in \Blg_2$ (thought of as the subalgebra of ${\mathbf I}\cdot \Blg_1 \cdot{\mathbf I}$ with $w_1=w_2=0$).
  It is straightforward to verify that 
  $\Id + \partial \circ H + H \circ \partial$
  is projection onto the image of $\left(\Blg_2 \cdot L_1 C_2\right) \oplus \left(\Blg_2 \cdot R_2\right)\subset \lsub{\Blg_2}M_{\Blg_1}$.
  For example, if $a=b_2\cdot C_2 U_1^n$ with $n>0$, then $\partial \circ H(a)=a+(d b_2) U_2^{n-1}$, and $H\circ \partial(a)=(d b_2) U_1^{n-1}$;
  so $(\Id + \partial\circ H + H\circ \partial)(a)=0$. 
\end{proof}

\begin{proof}[Proof of Proposition~\ref{prop:ConstructMinimum}]
  We continue with the hypothesis that $2\in\Upwards_1$.
  By Lemma~\ref{lem:SmallerModelMinimum} (and the homological
  perturbation lemma, in the form of
  Lemma~\ref{lem:HomologicalPerturbation2}) $\left(\Blg_2 \cdot C_2
    L_1\right) \oplus \left(\Blg_2 \cdot R_2\right)$ inherits an
  $\Ainfty$ bimodule structure quasi-isomorphic to $M$. To this end,
  note that the maps $H$ appearing in
  Lemma~\ref{lem:SmallerModelMinimum} are $\Blg_2$-linear, and so they
  can be thought of as morphisms of type $D$ structures as in
  Lemma~\ref{lem:HomologicalPerturbation2}.
  
  The induced $DA$ bimodule structure on $\left(\Blg_2 \cdot C_2
    L_1\right) \oplus \left(\Blg_2 \cdot R_2\right)$ coincides with
  the bimodule structure on $\lsup{\Blg_2}\GenMin^1_{\Blg_1}$. In a
  little more detail, the labels on the edges $e_1,\dots,e_{\ell-1}$,
  associated to the algebra elements $a_1,\dots,a_{\ell-1}$ are chosen
  according to the following rules:
  \begin{itemize}
    \item 
      labels for the possible edges $e_1$ out of 
      $v_1=R_1$ or $L_1 C_2$ are chosen
      so that, according to Lemma~\ref{lem:BimoduleSplitting} we have
      $b_1\in\Blg_2$ with $v_1 \cdot a_1 = b_1\cdot v_2$ (thinking of
      $v_i$ as certain left $\Blg_2$-module generators of $M$ from
      Equation~\eqref{eq:IdealGenerators}); 
    \item labels for the edges $e_i$ with $1<i<\ell-1$ from $v_i$ to
      $v_{i+1}$ (thought of as certain elements of $\Blg_1$) are
      chosen so that, according to Lemma~\ref{lem:BimoduleSplitting},
      there is  $b_i\in\Blg_2$ with $v_i\cdot a_i = b_i \cdot w_i$,
      and 
      $v_{i+1}=H(w_i)$
    \item For the choices of final edges $e_{\ell-1}$ into
      (with $v_{\ell}=R_1$ or $L_1 C_2$), once again the labels were chosen
      so that for some $b_{\ell-1}\in\Blg_2$,  $v_{\ell-1}\cdot a_{\ell-1}= b_{\ell-1}\cdot v_{\ell}$.
    \end{itemize}
    Thus, as in Lemma~\ref{lem:HomologicalPerturbation2}, the
    induced $DA$ bimodule structure associates to the sequence
    $a_1,\dots,a_{\ell-1}$, is the product $b=b_1\cdots
    b_{\ell-1}$, which in turn is the output
    element associated to the pure sequence in our definition of
    $\delta^1_\ell$ from Section~\ref{subsec:cEqualsOne}.
    
    Observe that for all sequences $a_1\otimes\dots\otimes a_{\ell-1}$ appearing in the $\Ainfty$ operations,
    $\sum_{i=1}^{\ell-1} w_1(a_i)= \sum_{i=1}^{\ell-1} w_2(a_i)$. This,
    together with Property~\ref{eq:GradeMinimum} amounts to the
    statement that $\GenMin^i$ is graded  by the set
    $H^1(W,\partial W)$. Observe also that
    for each preferred sequence $a_1,\dots,a_{\ell-1}$, we have that
    $\sum_{i=1}^{\ell}\MasFilt_2(a_i)-2 w_2(a_i)=1-\ell$.
    (To see this, note that the change in the vertical coordinate of an arrow labelled by 
    an algebra element $a$ in Figure~\ref{eq:MinimumOperations} is given by
    $\MasFilt_2(a)-2 w_2(a)-1$.)
    This, together with 
    Property~\ref{eq:MasGradeMinimum} ensures
    that the operations respect a $\Z$-valued Maslov grading.  Obviously, 
    $\GenMin^i$ is finite dimensional. We have thus verified that $\GenMin^i$ is adapted
    to the one-manifold underlying the partial knot diagram with a left-most minimum.

    When $1\in\Upwards_1$, the above discussion applies with
    straightforward notational changes, switching the roles of $U_1$
    and $U_2$, $L_1$ and $R_2$, $R_1$ and $L_2$, $C_2$ and $C_1$.  For
    example, the analogue of
    Lemma~\ref{lem:BimoduleSplitting} holds, where now $a_i$ are chosen from
    \[ 
      \begin{array}{lllllllll}
        \{ L_1 U_1^t,& U_1^n, &
        C_1 L_1 U_1^t,& C_1 U_1^t, &
        & R_2 U_2^t,& U_2^t,& 
        C_1 U_2^t\}_{t\geq 0,n>0}.
      \end{array}
      \]
      The analogue of Lemma~\ref{lem:SmallerModelMinimum} gives a
      model generated by $(\Blg_2\cdot R_2 C_1) \oplus(\Blg_2 \cdot L_1)$, with  $\Ainfty$ structure obtained from 
      the graph $\Gamma$ with the corresponding modification.
  \end{proof}

\subsection{The bimodule of a minimum with arbitrary $c$}

Having defined $\lsup{\Blg_2}\GenMin^c_{\Blg_1}$ for $c=1$, we can
inductively define it for arbitrary $c$ by the relation 
\begin{equation}
  \label{eq:GenMinDef}
\lsup{\Blg_2}\GenMin^{c}_{\Blg_1}=\lsup{\Blg_2}\GenMin^{c-1}_{\Blg_4}\DT
\lsup{\Blg_4}\Pos^{c}_{\Blg_3}\DT~^{\Blg_3}\Pos^{c-1}_{\Blg_1};
\end{equation}
with
algebras $\Blg_1$ and $\Blg_2$ chosen as in the beginning of the
section; and
$\Blg_3=\Blg(m+2,k+1,\tau_{c-1}(\Upwards_1))$ and
$\Blg_4=\Blg(m+2,k+1,\tau_c\circ\tau_{c-1}(\Upwards_1))$.
See Figure~\ref{fig:GenMinDef}.

\begin{figure}[ht]
\input{GenMinDef.pstex_t}
\caption{\label{fig:GenMinDef} To define $\GenMin^{c}$, tensor $\GenMin^{c-1}$
  with $\Pos^{c}$ and $\Pos^{c-1}$, as shown.}
\end{figure}

\begin{prop}
  \label{prop:MinDual}
  Choose $\Blg_1$ and $\Blg_2$ as in Equation~\eqref{eq:InitialAlgebras},
  and let $\Blg_1'=\Blg(m+2,m+2-k,\{1,\dots,m+2\}\setminus\Upwards_1)$.
  Then
  $\lsup{\Blg_2}\GenMin^c_{\Blg_1}$  is a type $DA$ bimodule that is adapted to the one-manifold underlying
  its partial knot diagram, and it is dual to $\Crit_c$, in the sense that
  \begin{equation}
    \label{eq:MinDual}
    \lsup{\Blg_2} \GenMin^c_{\Blg_1} \DT~\lsup{\Blg_1,\Blg_1'}\CanonDD 
    \simeq ~\lsup{\Blg_2,\Blg_1'}\Crit_c.
  \end{equation}
\end{prop}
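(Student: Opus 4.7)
Proceed by induction on $c$. The base case $c=1$ has already been established: Proposition~\ref{prop:ConstructMinimum} shows that $\lsup{\Blg_2}\GenMin^1_{\Blg_1}$ is a type $DA$ bimodule adapted to the partial knot diagram of a left-most minimum, and Lemma~\ref{lemma:MinDual} verifies the duality $\GenMin^1 \DT \CanonDD \simeq \Crit_1$.

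For the inductive step, assume the result holds for $c-1$. From the defining recursion \eqref{eq:GenMinDef}, the fact that the crossing bimodules $\Pos^c$ and $\Pos^{c-1}$ are adapted to their partial diagrams (Proposition~\ref{prop:CrossingGeneral} together with the grading discussion in Section~\ref{sec:CrossingDA}), and the inductive hypothesis on $\GenMin^{c-1}$, Proposition~\ref{prop:AdaptedTensorProducts} ensures that $\GenMin^c$ is itself adapted to the corresponding partial knot diagram; in particular all tensor products appearing below make sense. Using associativity (Lemma~\ref{lem:AssociateDD}) and Lemma~\ref{lem:CrossingDADD}, which identifies $\Pos^{c-1} \DT \CanonDD$ with the type $DD$ crossing bimodule $\Pos_{c-1}$, we obtain
$$\lsup{\Blg_2}\GenMin^c_{\Blg_1}\DT\lsup{\Blg_1,\Blg_1'}\CanonDD \;\simeq\; \lsup{\Blg_2}\GenMin^{c-1}_{\Blg_4}\DT\lsup{\Blg_4}\Pos^c_{\Blg_3}\DT\lsup{\Blg_3,\Blg_1'}\Pos_{c-1},$$
so it remains to identify the right-hand side with $\lsup{\Blg_2,\Blg_1'}\Crit_c$.

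For this identification, the plan is to relocate the copy of $\CanonDD$ hidden inside $\Pos_{c-1}$ so that the inductive hypothesis can be applied. Concretely: apply Lemma~\ref{lem:CrossingDADD} once more to rewrite $\Pos_{c-1}$ as $\Pos^{c-1} \DT \CanonDD$, then use Lemma~\ref{lem:CommuteDDBraid} iteratively to slide the factor $\CanonDD$ past the DA crossings $\Pos^c$ and $\Pos^{c-1}$ (with appropriate algebra shifts $\Blg_3, \Blg_4$ adjusted by the transpositions $\tau_c, \tau_{c-1}$). After this rearrangement, the inductive hypothesis $\GenMin^{c-1} \DT \CanonDD \simeq \Crit_{c-1}$ applies, and the problem reduces to a quasi-isomorphism of DD bimodules expressed in terms of $\Crit_{c-1}$ and the two crossings. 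This final identification is exactly the content of the trident relation (Lemma~\ref{lem:BasicTrident}), applied with index shift $c \mapsto c-1$ and combined with the invertibility of the crossing bimodules (Theorem~\ref{thm:BraidRelation}): it is the algebraic expression of the geometric observation that a minimum at position $c$ arises from one at position $c-1$ by conjugation by two adjacent braid transpositions.

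The principal obstacle is bookkeeping: at each step the intermediate algebras change by transpositions of $\Upwards$, and one must verify that Lemmas~\ref{lem:CrossingDADD}, \ref{lem:CommuteDDBraid}, and \ref{lem:BasicTrident} are each being applied with compatible algebras, orientations of the capped strands, and grading shifts. Once this bookkeeping is done, the required quasi-isomorphisms are provided by the cited lemmas and the adapted property supplied by Proposition~\ref{prop:AdaptedTensorProducts} guarantees that all relevant tensor products converge.
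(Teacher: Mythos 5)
Your proposal is correct and follows essentially the same route as the paper's proof: induction on $c$ with base case Lemma~\ref{lemma:MinDual}, adaptedness via Proposition~\ref{prop:AdaptedTensorProducts}, and an inductive step that uses associativity together with Lemma~\ref{lem:CommuteDDBraid} to slide $\CanonDD$ past the two crossing bimodules, then the inductive hypothesis, the trident relation (Lemma~\ref{lem:BasicTrident}), and finally the invertibility $\Pos^{c-1}\DT\Neg^{c-1}\simeq\Id$ from Equation~\eqref{eq:InvertPos}. The only detail worth adding is the explicit remark that Lemma~\ref{lem:AssociateDD} applies because the crossing bimodules satisfy $\delta^1_j=0$ for $j>3$, which is the boundedness hypothesis that lemma requires.
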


\begin{proof}
  The tensor product defining $\GenMin^c$ is well defined thanks to 
  Proposition~\ref{prop:AdaptedTensorProducts}, and induction on $c$.

  Equation~\eqref{eq:MinDual} is verified by induction on $c$, and the basic case $c=1$ 
  is Lemma~\ref{lemma:MinDual}. For the inductive step, we compute:
  \begin{align*}
    ^{\Blg_2}\GenMin^c_{\Blg_1}\DT~^{\Blg_1,\Blg_1'}\CanonDD &\simeq
    ~^{\Blg_2}\GenMin^{c-1}_{\Blg_4} \DT 
    ~^{\Blg_4}\Pos^c_{\Blg_3}\DT
    \Big({}^{\Blg_3}\Pos^{c-1}_{\Blg_1}\DT
    ~^{\Blg_1,\Blg_1'}\CanonDD\Big) \\
    &\simeq 
    ~^{\Blg_2}\GenMin^{c-1}_{\Blg_4} \DT 
    ~^{\Blg_4}\Pos^c_{\Blg_3}\DT
    \Big({}^{\Blg_1'}\Pos^{c-1}_{\Blg_3'}\DT
    ~^{\Blg_3',\Blg_3}\CanonDD\Big) \\
    &\simeq 
    ~^{\Blg_2}\GenMin^{c-1}_{\Blg_4} \DT 
    ~\left(^{\Blg_1'}\Pos^{c-1}_{\Blg_3'}\DT
    ~\Big({}^{\Blg_4}\Pos^c_{\Blg_3}\DT
    ~^{\Blg_3,\Blg_3'}\CanonDD\Big)\right) \\
    &\simeq 
    ~^{\Blg_2}\GenMin^{c-1}_{\Blg_4} \DT 
    ~\left(^{\Blg_1'}\Pos^{c-1}_{\Blg_3'}\DT
    ~\Big({}^{\Blg_3'}\Pos^c_{\Blg_4'}\DT
    ~^{\Blg_4',\Blg_4}\CanonDD\Big)\right) \\
    &\simeq 
    ~^{\Blg_1'}\Pos^{c-1}_{\Blg_3'}\DT
   ~ ^{\Blg_3'}\Pos^c_{\Blg_4'}\DT~\Big({}^{\Blg_2}\GenMin^{c-1}_{\Blg_4} \DT
    ~^{\Blg_4,\Blg_4'}\CanonDD\Big) \\
    &\simeq 
    ~^{\Blg_1'}\Pos^{c-1}_{\Blg_3'}\DT
   ~ ^{\Blg_3'}\Pos^c_{\Blg_4'}\DT~^{\Blg_4',\Blg_2}\Crit_{c-1} \\
    &\simeq 
    ~^{\Blg_1'}\Pos^{c-1}_{\Blg_3'}\DT
   ~ ^{\Blg_3'}\Neg^{c-1}_{\Blg_1'}\DT~^{\Blg_1',\Blg_2}\Crit_{c} \\
   &\simeq ~^{\Blg_1',\Blg_2}\Crit_{c},
  \end{align*}
  using associativity of $\DT$ (Lemmas~\ref{lem:AssociateDA} and~\ref{lem:AssociateDD}), Lemma~\ref{lem:CommuteDDBraid},
  the trident relation (Lemma~\ref{lem:BasicTrident}), the inductive hypothesis,
  and the fact that $\Pos$ and $\Neg$ are inverses (Equation~\eqref{eq:InvertPos}).
  Lemma~\ref{lem:AssociateDD} applies, since the bimodules 
  $\Pos^{k}$ for any $k$ have $\delta^1_j=0$ for all $j>3$.
  The steps (skipping associativity) are illustrated in Figure~\ref{fig:TridentEquations}.
\begin{figure}[ht]
\input{TridentEquations.pstex_t}
\caption{\label{fig:TridentEquations} 
  Pictures of bimodules appearing in the  inductive step of Proposition~\ref{prop:ConstructMinimum}.
  Boxed components correspond to type $DD$ bimodules, the rest correspond to type $DA$ bimodules.}
\end{figure}
  \end{proof}

\subsection{The terminal Type $A$ module}
\label{subsec:TerminalMinimum}

We have described the $DA$ bimodule associated to a local (but not
global) minimum. For the global minimum, we give the following different
construction. 
Note that the algebra immediately above the global minimum
has $\Blg=\Blg(2,1,\{2\})$ or $\Blg(2,1,\{1\})$, depending on the orientation on the
global minimum (left to right or right to left).
Consider first the case where $\Blg=\Blg(2,1,\{2\})$.
Since the outgoing algebra has no strands (and so it can be thought of as $\Field$), 
the we construct a type $A$ module over the incoming algebra.

As a vector space, $\TerMin_{\Blg(2,1,\{2\})}$ is a two-dimensional, with generators $\XX$ and $\YY$.
The right $\IdempRing(\Blg(2,1,\{2\}))$-module structure is determined by
\begin{equation}
  \label{eq:TerMinIdemp}
  \XX =\XX\cdot \Idemp{\{1\}}~\qquad \YY=\YY\cdot \Idemp{\{0\}}.
\end{equation}
The right module structure is determined by the formulas:
\begin{equation}
  \label{eq:ModuleStructure}
\XX \cdot L_1 = \YY\qquad \YY\cdot R_1 = \XX \qquad \XX\cdot C_2=\YY\cdot C_2=0.
\end{equation}
The idempotent relations imply that $\XX\cdot R_2=0$. It follows that
\[
\XX \cdot L_1 U_1^t=\YY \qquad\YY \cdot  R_1 U_1^t=\XX \qquad 
\XX \cdot U_1^t = \XX \qquad \YY\cdot U_1^t = \YY,\]
for all $t\geq 0$; and $U_2$ acts trivially.

Defining
$\MasGr(\XX)=\MasGr(\YY)=0$,
it follows that $\TerMin$ has a $\Z$-valued Maslov grading.

Unlike the modules encountered thus far, $\TerMin$ is not graded by Alexander set, which in this case is $\OneHalf \Z$.
However, it is filtered by it, in the sense that if $a$ is a homogeneous element with $m_2(X,a)\neq 0$, then 
$w_2(a)-w_1(a)\leq 0$.

An analogous construction works for $\Blg=\Alg(1,\{1\})$. In that case,
$\XX =\XX\cdot \Idemp{\{1\}}$, $\YY=\YY\cdot \Idemp{\{2\}}$; 
and 
actions are determined by 
$\XX \cdot R_2 = \YY$,
$\YY\cdot L_2 = \XX$,
$\XX\cdot C_1=\YY\cdot C_1=0$.
Again, this module is filtered.

Both versions of $\TerMin$ have an associated graded object, with two generators $\XX$ and $\YY$ satisfying
Equation~\eqref{eq:TerMinIdemp}. The actions by $L_1$ $L_2$, $R_1$, $R_2$, $U_1$, $U_2$ are all $0$;
the action by $C_1$ or $C_2$ (whichever is in the algbera) is also $0$.

\begin{prop}
  \label{prop:FilteredTensorProduct}
  Let $Y$ be a type $D$ structure over $\Blg=\Blg(2,1,\{1\})$ or $\Blg(2,1,\{2\})$, with equipped with
  a $\Z$-valued Maslov grading and a $\OneHalf\Z$-valued Alexander grading.
  Then, the tensor product $\TerMin\DT Y$ is naturally $\Z$-graded (by the Maslov grading)
  and it is filtered by the Alexander grading. Its associated graded object coincides with
  the tensor product $\TerMina\DT Y$.
\end{prop}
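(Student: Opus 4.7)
The plan is to endow $\TerMin\DT Y$ with the natural total Maslov grading and Alexander filtration inherited from the factors, verify the filtration axiom, and then observe that the associated graded has no non-trivial differential for the same reason that $\TerMina$ has trivial actions.

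First, I would set $\Maslov(x\otimes y)=\Maslov(x)+\Maslov(y)$ and $A(x\otimes y)=A(x)+A(y)$, where on $\TerMin$ we declare $A(\XX)=A(\YY)=0$ and $\Maslov(\XX)=\Maslov(\YY)=0$. The differential is
\[
\partial(x\otimes y)=m_1(x)\otimes y+\sum m_2(x,a)\otimes y'
\]
summed over the terms $a\otimes y'$ in $\delta^1(y)$; here $m_1=0$ and all $m_j=0$ for $j\geq 3$ on $\TerMin$, so this is the only contribution. That the Maslov grading is a genuine $\Z$-grading (respected by $\partial$, shifting by $-1$) is immediate from the conventions: the Maslov gradings of $m_2$ on $\TerMin$ and of $\delta^1$ on $Y$ combine so that $\Maslov(m_2(x,a)\otimes y')=\Maslov(x)+\Maslov(a)+\Maslov(y')=\Maslov(x)+\Maslov(y)-1$.

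The core step is the Alexander filtration. Working in $\Blg(2,1,\{2\})$ (the other case is symmetric), $A(a)=w_1(a)-w_2(a)$, and one reads off the list in Section~\ref{subsec:TerminalMinimum} that every non-zero action on $\TerMin$ has the form $\XX\cdot L_1U_1^t=\YY$, $\YY\cdot R_1U_1^t=\XX$, $\XX\cdot U_1^t=\XX$, or $\YY\cdot U_1^t=\YY$ with $t\geq 0$ (and $t\geq 1$ in the last two), hence the output has $A=0$ while the acting algebra element has $A(a)>0$ strictly. In particular, for every such non-zero action $A(m_2(x,a))=A(x)<A(x)+A(a)$. Combined with the fact that $\lsup{\Blg}Y$ is $A$-graded, so $A(y')=A(y)-A(a)$ for each summand $a\otimes y'$ of $\delta^1(y)$, this gives
\[
A\bigl(m_2(x,a)\otimes y'\bigr)=A(x)+A(y)-A(a)<A(x\otimes y)
\]
for every non-zero summand of $\partial$. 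Thus $A$ descends to a filtration.

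For the associated graded, the strict inequality above shows that every non-zero summand of $\partial$ lowers the Alexander grading strictly, so the induced differential on the associated graded is identically zero. On the other side, $\TerMina$ has vanishing $m_2$-action by every non-idempotent homogeneous algebra element and vanishing $m_1$, so the differential on $\TerMina\DT Y$ is also zero. As graded vector spaces both objects equal $\TerMin\otimes_{\ground}Y$ with the same Maslov grading and the same Alexander grading, producing the asserted identification. The only real work is the case-by-case check of strict inequality on the short list of non-trivial actions of $\TerMin$, which is routine bookkeeping rather than a conceptual obstacle.
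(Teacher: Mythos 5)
Your reduction to a ``short list of nontrivial actions'' on $\TerMin$ omits the unital ones, and that omission is exactly where the content of the statement lives. Since $\TerMin$ is strictly unital, $m_2(\XX,\Idemp{\{1\}})=\XX$ and $m_2(\YY,\Idemp{\{0\}})=\YY$ (the $t=0$ case of $\XX\cdot U_1^t=\XX$, $\YY\cdot U_1^t=\YY$ that you excluded) are non-zero actions with $A(a)=0$, not $A(a)>0$. Moreover, a type $D$ structure $Y$ as in the hypothesis can perfectly well have terms $\iota\otimes y'$ in $\delta^1(y)$ with $\iota$ an idempotent: the gradings only force $\Maslov(y')=\Maslov(y)-1$ and $A(y')=A(y)$, and in the type $D$ structures actually fed into this proposition (the tensor product of all the upper bimodules) such terms do occur --- they are precisely the source of the differential on $\KCa(\Diag)$. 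Consequently both of your key claims --- that every non-zero summand of $\partial$ on $\TerMin\DT Y$ strictly drops the Alexander filtration, and that the differential on $\TerMina\DT Y$ vanishes --- are false in general. If they were true, the associated graded complex would always carry the zero differential, so $\KHa$ would be the free module on the Kauffman states of any diagram; this contradicts invariance, since different diagrams of the same knot have different numbers of Kauffman states.

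The correct bookkeeping is only slightly different from what you wrote: a non-zero action $m_2(x,a)$ on $\TerMin$ has $A(a)\geq 0$, with equality exactly when $a$ lies in the idempotent ring (this is the inequality $w_2(a)-w_1(a)\leq 0$ recorded when $\TerMin$ is defined). Hence the box-tensor differential splits into strictly filtration-dropping terms (those where $\delta^1(y)$ outputs an algebra element with $A(a)>0$) and filtration-preserving terms (those where the output is an idempotent). The associated graded differential therefore retains precisely the idempotent terms; and since $\TerMina$ is by definition the module obtained from $\TerMin$ by killing all non-idempotent actions while keeping the idempotent ones, these surviving terms are exactly the differential of $\TerMina\DT Y$, which is the assertion of the proposition. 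With that correction, the remaining parts of your argument --- the Maslov grading count, the finiteness of the sums defining $\DT$ (which follows, as you note, from $m_i=0$ for $i\geq 3$ on $\TerMin$, i.e.\ boundedness, the point the paper's proof leads with), and the non-strict filtration inequality --- go through.
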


\begin{proof}
  The sums in the tensor product are finite since $\TerMin$ is bounded. 
  The Maslov gradings on $Y$ and $\TerMin$ induce Maslov gradings on the tensor product as usual,
  and the  Alexander grading on $Y$ induces an Alexander function on $\TerMin\DT Y$.
  Since $\TerMina$ is the associated graded object for $\TerMin$, it follows that the
  associated graded object for $\TerMin\DT Y$ is $\TerMina\DT Y$.
\end{proof}

\newcommand\Mirr{\mathcal M}
\section{Symmetries}
\label{sec:Symmetries}

We note some symmetries in the algebras and bimodules described above.

One symmetry, which can be visualized as rotation through a vertical
axis, is induced by an isomorphism between the algebras.  That is,
consider the isomorphism $\VRot\colon \Blg(m,k,\Upwards)\to
\Blg(m,k,\rho(\Upwards))$ defined in Equation~\eqref{eq:DefVRot}.  As
in Example~\ref{ex:MorphismBimodule}, this map gives a $DA$ bimodule,
denoted $[\VRot]=\lsup{\Blg(m,k,\rho(\Upwards))}[\VRot]_{\Blg(m,k,\Upwards)}$.

The bimodules for crossings and critical points are symmetric under
this vertical rotation, according to the following:

\begin{lemma}
\label{lem:Rotate}
Fix integers $c$, $k$, and $m$, with $1\leq c \leq m+1$ and $0\leq k\leq m+1$; and fix
$\Upwards_1\subset \{1,\dots,m\}$, $\Upwards_2\subset\{1,\dots,m+2\}$ so that
$\Upwards_2=\phi_c(\Upwards_1)\cup\{c\}$ or $\Upwards_2=\phi_c(\Upwards_1)\cup\{c+1\}$
(with $\phi_c$ as in Equation~\eqref{eq:DefInsert}).
Let $\Blg_1=\Blg(m,k,\Upwards_1)$, $\Blg_2=\Blg(m,k,\Upwards_2)$,
$\Blg_1'=\Blg(m,k,\rho'_m(\Upwards_1))$, and
$\Blg_2'=\Blg(m,k,\rho'_{m+2}(\Upwards_2))$
(using notation from Section~\ref{subsec:Symmetry}). 
The following identities hold:
\[
  \lsup{{\Blg_2'}}[\VRot]_{\Blg_2}\DT \lsup{\Blg_2}\Max^c_{\Blg_1} \simeq \lsup{{\Blg'}}\Max^{m-c}_{\Blg_1'}\DT 
  \lsup{{\Blg_1'}}[\VRot]_{\Blg_1} \qquad
  \lsup{{\Blg_1'}}[\VRot]_{\Blg_1}\DT \lsup{\Blg_1}\Min^c_{\Blg_2} \simeq 
  \lsup{{\Blg_1'}}\Min^{m-c}_{\Blg_2'}\DT \lsup{{\Blg_2'}}[\VRot]_{\Blg'}.
\]
Also, for any $i=1,\dots,m-1$, let $\Blg_3=\Blg(m,k,\tau(\Upwards_1))$
and $\Blg_3'=\Blg(m,k,\rho'_m(\tau(\Upwards_1))$ we have identities
\[  \lsup{\Blg_3'}[\VRot]_{\Blg_3}\DT \lsup{\Blg_3}\Pos^i_{\Blg_1} \simeq 
  \lsup{\Blg_3'}\Pos^{m-i}_{\Blg_1'}\DT \lsup{{\Blg_1'}}[\VRot]_{\Blg_1} \qquad
  \lsup{\Blg_3'}[\VRot]_{\Blg_3}\DT \lsup{\Blg_3}\Neg^i_{\Blg_1} \simeq 
  \lsup{\Blg_3'}\Neg^{m-i}_{\Blg_1'}\DT \lsup{{\Blg_1'}}[\VRot]_{\Blg_1} \]
\end{lemma}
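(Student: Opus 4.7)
All four identities express the statement that the bimodules of interest are invariant under the reflection symmetry of the underlying partial diagrams: $\VRot$ is an algebra isomorphism implementing a vertical rotation, so the claim is that $\Pos^i$, $\Neg^i$, $\Max^c$, and $\Min^c$ are carried by this reflection to $\Pos^{m-i}$, $\Neg^{m-i}$, $\Max^{m-c}$, and $\Min^{m-c}$ respectively. I would prove each identity by exhibiting an explicit isomorphism of $DA$ bimodules, whose action on generators is the obvious bijection coming from the geometric reflection. Recall that $\lsup{\Alg}[\VRot]_{\Blg}$ has a single generator $\sgen$ with $\delta^1_2(\sgen,b) = \VRot(b) \otimes \sgen$; tensoring on the left simply replaces output algebra elements by their $\VRot$ images, and tensoring on the right replaces input algebra elements by $\VRot^{-1}$ of themselves. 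So in each case, it suffices to give a bijection of generator sets that fits into a commutative diagram with the structure maps after applying $\VRot$ to every algebra element.

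\textbf{Crossings.} For $\Pos^i$, define the bijection fixing $\North$ and $\South$ and swapping $\West \leftrightarrow \East$. Diagram~\eqref{eq:PositiveCrossingDA} and the local extension rules are manifestly invariant under the combined operation of this swap together with the algebra substitution $L_j \leftrightarrow R_{m+1-j}$, $U_j \leftrightarrow U_{m+1-j}$, $C_j \leftrightarrow C_{m+1-j}$ (which is exactly $\VRot$), and the reindexing $i \leftrightarrow m-i$. The $\delta^1_3$ actions listed in Section~\ref{subsec:LocalPos} come in $\VRot$-related pairs (for instance, $(R_1, R_2 U_2^t) \mapsto R_1 U_1^t \otimes \East$ is exchanged with $(L_2, L_1 U_1^n) \mapsto L_2 U_2^n \otimes \West$), and the extensions to $\Blg(m,k,\Upwards)$ of Section~\ref{subsec:Extension} respect the symmetry since the type $t(a)$ and the ideal $\Ideal(\x,\y)$ are both manifestly $\VRot$-equivariant. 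The identity for $\Neg^i$ then follows from the opposite-module construction of Section~\ref{def:NegativeCrossing}.

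\textbf{Maximum.} For $\Max^c$, define the bijection $\XX \leftrightarrow \YY$, $\ZZ \leftrightarrow \ZZ$, reflecting the fact that a type-$\XX$ partial state (meeting $\{c-1,c\}$) becomes type $\YY$ under reflection while $\ZZ$ is fixed. The $\delta^1_1$ terms $R_{c+1}R_c$, $L_c L_{c+1}$, and either $C_c U_{c+1}$ or $U_c C_{c+1}$ are visibly invariant under $\VRot$ combined with this swap. For $\delta^1_2$, the map $\Phi_\x$ of Lemma~\ref{lem:ConstructDeltaTwo} is specified entirely by weights and by the insertion $\phi_c$, both of which are equivariant under $\VRot$; the eight-case classification of $(v^\x,v^\y)$ configurations is symmetric under the reflection, with cases interchanged in the way that matches $\XX \leftrightarrow \YY$.

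\textbf{Minimum.} For $c=1$, the bimodule is described by the graph $\Gamma$ in Figure~\ref{eq:MinimumOperations} (when $2 \in \Upwards_1$) or Figure~\ref{eq:MinimumOperations2} (when $1 \in \Upwards_1$); these two graphs are interchanged by the algebra substitution $L_1 \leftrightarrow R_2$, $U_1 \leftrightarrow U_2$, $C_1 \leftrightarrow C_2$, which is precisely $\VRot$ (for $m=0$), giving the base case. For general $c$, the plan is to use the inductive definition~\eqref{eq:GenMinDef} together with the crossing identity and associativity of $\DT$ (Lemmas~\ref{lem:AssociateDA},~\ref{lem:AssociateDD}):
\[
  [\VRot] \DT \Min^c = [\VRot] \DT \Min^{c-1} \DT \Pos^c \DT \Pos^{c-1}
  \simeq \cdots \simeq \Min^{m-c} \DT [\VRot],
\]
pushing $[\VRot]$ across each factor in turn via an identity already established.

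\textbf{Main obstacle.} The principal technical point is verifying that the $\delta^1_2$ action of $\Max^c$ respects the symmetry, namely that $\Phi_\x$ intertwines $\VRot$ in all eight configurations of Lemma~\ref{lem:ConstructDeltaTwo}; the bookkeeping here is the most delicate part, since one must match the uniquely determined output idempotent $\y$ on each side of the would-be isomorphism. Once this is in place, the minimum case follows formally by induction and associativity, and the crossing cases follow directly from inspection of the defining diagrams.
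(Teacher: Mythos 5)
Your route for the crossings and the maximum is a genuinely different one from the paper's and looks workable: you verify the symmetry directly on the $DA$ structure maps (checking that Diagram~\eqref{eq:PositiveCrossingDA}, the $\delta^1_3$ tables, the $C$-extended actions, and the map $\Phi_\x$ of Lemma~\ref{lem:ConstructDeltaTwo} are intertwined by $\VRot$ together with the swaps $\West\leftrightarrow\East$, resp.\ $\XX\leftrightarrow\YY$), whereas the paper never touches the $DA$ operations at all: it proves $[\VRot]\DT\CanonDD\cong\CanonDD\DT[\VRot]$, $[\VRot]\DT\Crit_c\cong\Crit_{m+2-c}\DT[\VRot]$ and $[\VRot]\DT\Pos_i\cong\Pos_{m-i}\DT[\VRot]$ (one-line inspections, since the $DD$ bimodules are specified by a single algebra element), and then transfers these through the duality statements (Lemma~\ref{lem:CrossingDADD}, Propositions~\ref{prop:MaxDual} and~\ref{prop:MinDual}) using the invertibility of $\CanonDD$. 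Your approach costs more case-checking but is self-contained at the $DA$ level for those two families of bimodules; the paper's buys all four identities simultaneously, including the minimum, with no computation.

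The genuine gap is in your treatment of $\Min^c$. First, your ``base case'' only covers $m=0$: for $c=1$ and $m>0$ the reflection carries $\Min^1$ to the rightmost minimum $\Min^{m+1}$, not back to $\Min^1$, and the exchange of the two graphs in Figures~\ref{eq:MinimumOperations} and~\ref{eq:MinimumOperations2} only reflects the two orientation choices of the minimal strand, so it says nothing about $[\VRot]\DT\Min^1\simeq\Min^{m+1}\DT[\VRot]$ when other strands are present ($\Min^{m+1}$ is itself only defined through the inductive formula~\eqref{eq:GenMinDef}). Second, the inductive step does not close as stated: from $[\VRot]\DT\Min^{c-1}\simeq\Min^{\bar{c}+1}\DT[\VRot]$ (with $\bar{c}$ the reflected index) and the crossing identity you obtain $[\VRot]\DT\Min^{c}\simeq\Min^{\bar{c}+1}\DT\Pos^{\bar{c}}\DT\Pos^{\bar{c}+1}\DT[\VRot]$, and you still need $\Min^{\bar{c}+1}\DT\Pos^{\bar{c}}\DT\Pos^{\bar{c}+1}\simeq\Min^{\bar{c}}$; this is not the defining relation~\eqref{eq:GenMinDef} (which would require the crossings in the opposite, inverse, order), but a trident-type identity such as~\eqref{eq:Trident3}, which is not among the ``identities already established'' in your argument. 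One can repair the step by importing~\eqref{eq:Trident3} (whose proof in the paper does not use the present lemma, so there is no circularity), but the base case $[\VRot]\DT\Min^1\simeq\Min^{m+1}\DT[\VRot]$ still requires real input; the natural fix is to tensor with $\CanonDD$ and use Proposition~\ref{prop:MinDual} together with the reflection symmetry of $\Crit_c$ — which is exactly the paper's proof.
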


\begin{proof}
  It is easy to see that $[\VRot]\DT\CanonDD\cong \CanonDD\DT[\VRot]$.

  The above results then follow from the invertibility of the $DD$ bimodule, and the
  easily verified identities
  $[\VRot]\DT\Crit_c \cong \Crit_{m+2-c}\DT[\VRot]$
  and 
  $[\VRot]\DT\Pos_i\cong \Pos_{m-i}\DT[\VRot]$,
  where the latter isomorphism preserves 
  $\North$ and $\South$ and switches $\East$ and $\West$.
\end{proof}

Recall that there is an algebra isomorphism 
$\Blg(m,k,\Upwards)\cong\Blg(m,k,\Upwards)^{\op}$
(Equation~\eqref{eq:OppositeIsomorphism}).
An arbitrary $DA$ bimodule $\lsup{\Blg_2}X_{\Blg_1}$ has an {\em opposite module} ${\overline X}$, of the form
$\lsub{\Blg_1}X^{\Blg_2}\cong \lsup{\Blg_2^{\op}}{\overline X}_{\Blg_1^{\op}}$. 
If $\Blg_i=\Blg(m_i,k_i,\Upwards_i)$ for $i=1,2$, we can view the opposite module $X^{\op}$ as a bimodule
 over the same two algebras,
$X^{\opp}=[\Opposite]\DT {\overline X}\DT [\Opposite]$.

%
%

\begin{prop}
  \label{lem:OppositeBimodules}
  Under the identification $\Blg(m,k,\Upwards)\cong\Blg(m,k,\Upwards)^{\op}$ from Equation~\eqref{eq:OppositeIsomorphism},
  there are identifications of bimodules
  $\Max_c^{\op} \simeq \Max_{c}$,
  $\Min_c^{\op} \simeq \Min_c$,
  $(\Pos^i)^{\op} \simeq \Neg_i$, and
  $(\Neg^i)^{\op} \simeq \Pos_i$.
\end{prop}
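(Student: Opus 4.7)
The four assertions split into two qualitatively different cases. For the crossing bimodules, the identifications $(\Pos^i)^{\op}\simeq \Neg_i$ and $(\Neg^i)^{\op}\simeq \Pos_i$ are essentially tautological: in Section~\ref{def:NegativeCrossing} we defined $\Neg^i$ precisely as the opposite bimodule of $\Pos^i$, rewritten using the algebra involution $\Opposite$ of Equation~\eqref{eq:OppositeIsomorphism} that identifies $\Blg_t^{\op}\cong \Blg_t$. The proof of these two cases reduces to unpacking the relevant opposite constructions and noting that $\Opposite\circ \Opposite=\Id$ on the generators $L_i$, $R_i$, $U_i$, $C_j$, from which it follows formally that opposite is an involution on the homotopy categories of bimodules.

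For the critical point bimodules $\Max^c$ and $\Min^c$, I plan to reduce the claim to the analogous self-opposite property of the type $DD$ bimodule $\Crit_c$ from Section~\ref{sec:Crit}. Concretely, the first step is to show that, under the identification $\Blg_t^{\op}\cong \Blg_t$ supplied by $\Opposite\otimes \Opposite$, the opposite of $\Crit_c$ is isomorphic to $\Crit_c$ itself. This step is a direct inspection: the generators are indexed by the same allowed idempotent states, and the four families of differential terms \ref{type:OutsideLR}--\ref{type:InsideCup} are governed by the algebra element $A$ of Equation~\eqref{eq:defA}, which is manifestly invariant under simultaneously swapping $L_j\leftrightarrow R_j$ and exchanging the two tensor factors. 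The same inspection handles the canonical $DD$ bimodule $\CanonDD$ of Section~\ref{subsec:CanonDD}, whose defining algebra element is also palindromic under $\Opposite\otimes\Opposite$.

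Given these symmetries, I plan to invoke the duality relations $\Max^c\DT \CanonDD\simeq \Crit_c$ and $\Min^c\DT \CanonDD\simeq \Crit_c$ from Propositions~\ref{prop:MaxDual} and~\ref{prop:MinDual}, together with the invertibility of $\CanonDD$ from Theorem~\ref{thm:DDisInvertible}. Applying the opposite functor to these tensor decompositions (which reverses the order of a tensor product), and then tensoring against the inverse of $(\CanonDD)^{\op}\simeq \CanonDD$, isolates $(\Max^c)^{\op}$ and $(\Min^c)^{\op}$ as equivalent to $\Max_c$ and $\Min_c$ respectively.

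The main obstacle will be careful bookkeeping around the change in bimodule type under the opposite construction (a $DA$ bimodule becomes an $AD$ bimodule) and around the fact that at a critical point the two algebras have different ranks, so that the sides of the opposite bimodule are swapped relative to the original. Keeping track of these swaps under the $\Opposite$ identification, and ensuring that the final equivalences are stated over the correct pairs of algebras, is where most of the care is required; the structural content of the argument is really just the manifest symmetry of $\Crit_c$ and $\CanonDD$ under $\Opposite\otimes\Opposite$.
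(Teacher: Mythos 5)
Your overall route is the same as the paper's: the crossing cases are definitional (the paper's proof literally says $(\Pos_i)^{\op}\simeq\Neg_i$ follows from the definition of the negative crossing in Section~\ref{def:NegativeCrossing}), and the critical-point cases rest on the self-opposite property of the type $DD$ bimodule $\Crit_c$ of Section~\ref{sec:Crit}, which the paper checks by direct inspection; the transfer to $\Max^c$ and $\GenMin^c$ via Propositions~\ref{prop:MaxDual} and~\ref{prop:MinDual} and invertibility of $\CanonDD$ is exactly the (implicit) mechanism, in the style of Lemma~\ref{lem:Rotate}, so making it explicit is fine.

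There is, however, one misstated step that would fail as written: the symmetry you attribute to the element $A$ of Equation~\eqref{eq:defA}. First, "exchanging the two tensor factors" is not part of forming the opposite, and for $\Crit_c$ it is not even meaningful, since the two factors lie in algebras of different index ($\Blg_1$ has $m$ strands, $\Blg_2$ has $m+2$). Second, a letter-by-letter swap $L_j\leftrightarrow R_j$ is not the right involution: it would send the term $1\otimes L_{c}L_{c+1}$ to $1\otimes R_{c}R_{c+1}$, which vanishes by Equation~\eqref{eq:MoveTooFarRs}. What you actually need is invariance of $A$ under $\Opposite\otimes\Opposite$, where $\Opposite$ is the anti-automorphism of Equation~\eqref{eq:OppositeIsomorphism}; being an anti-automorphism it reverses word order, so $L_{c}L_{c+1}\mapsto R_{c+1}R_{c}$ and $R_j\otimes L_{\phi(j)}\mapsto L_j\otimes R_{\phi(j)}$, all of which are again terms of $A$ (and likewise for $\CanonDD$). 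Equivalently, and this is how the paper phrases the check against Equation~\eqref{eq:CritDiag}: dualizing reverses arrows and applies $\Opposite$ to labels, and every arrow labelled $a\otimes b$ from one generator to another is matched in $\Crit_c$ by an arrow labelled $\Opposite(a)\otimes\Opposite(b)$ going back (here it matters that $\Opposite$ fixes idempotents but swaps the left and right idempotent of an algebra element). With the symmetry stated this way the inspection goes through, and the rest of your argument — cancelling $\CanonDD$ against its quasi-inverse from Theorem~\ref{thm:DDisInvertible}, and the bookkeeping of sides under $[\Opposite]$-conjugation — is routine and correct.
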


\begin{proof}
  We claim that $\Crit_c^{\op}\simeq \Crit_c$.  The identification
  switches the roles of $\XX$ and $\YY$, and fixes $\ZZ$.  For each
  pair of generators in Equation~\eqref{eq:CritDiag}, there are two
  arrows, and the symmetry switches those two arrows; observe that if
  one arrow is labelled by $a\otimes b$, then the other is labelled by
  $\Opposite(a)\otimes \Opposite(b)$.
  
  The identity $(\Pos_i)^{\op}\simeq \Neg_i$ follows from the definition 
  of  $\Neg_i$ given
  in Section~\ref{def:NegativeCrossing}.
\end{proof}

\newcommand\orK{\vec{K}}
\section{Construction and invariance of the invariant}
\label{sec:ConstructionAndInvariance}

\subsection{Topological preliminaries}

\begin{defn}
  A {\em pointed knot diagram}
  is a projection of an oriented knot diagram in $S^2$,
  with a marked point on it.
  A {\em pointed Reidemeister move} is a Reidemeister move supported
  in a complement of the marked point.
\end{defn}

The following result is well known; see~\cite[Section~3]{KhovanovPatterns}.

\begin{prop}
  \label{prop:ConnectPointedKnotDiagrams}
  Any two pointed knot diagrams for the same knot can be connected by a sequence of isotopies (in $S^2$) and pointed Reidemeister moves (in $S^2$).
\end{prop}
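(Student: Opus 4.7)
The plan is to reduce to the classical Reidemeister theorem for unpointed diagrams in $S^2$, and then upgrade any such sequence to a pointed one by preemptively sliding the marked point out of the way before each Reidemeister move.

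First, I would forget the marked points on $\Diag_1$ and $\Diag_2$ and invoke the classical Reidemeister theorem in $S^2$: there is a sequence of planar isotopies (i.e.\ ambient isotopies of $S^2$) and ordinary Reidemeister moves taking $\Diag_1$ to $\Diag_2$. Each Reidemeister move is supported in some small open disk $U_j \subset S^2$ (at time $t_j$) which meets the knot diagram only in the two or three local strands participating in the move. The goal is to modify this path so that at each time $t_j$ the marked point lies outside $U_j$, and so that at the end the marked point lands at $p_2$.

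Next, I would carry the initial marked point $p_1$ along with the ambient isotopies, and insert, just before each time $t_j$, an isotopy that slides the (current) marked point along the knot until it lies outside the disk $U_j$. Such an isotopy exists because the disk $U_j$ contains only a bounded, known portion of the knot (a local over/undercrossing for R1, two arcs for R2, a triangle of arcs for R3), so the complement of $U_j$ in the knot is non-empty and path-connected as a subset of the knot. Concretely, since the knot is $S^1$ and $U_j$ meets it in a union of finitely many open arcs, any point on the knot can be slid along $S^1$ to the complementary region; this slide extends to an ambient isotopy of $S^2$ supported in a neighborhood of an arc of the knot. After the move at time $t_j$ is performed in $U_j$ (now disjoint from the marked point, so it is a pointed R-move), we continue transporting the marked point by the subsequent ambient isotopies.

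Finally, after the last Reidemeister move, the marked point has landed at some point $p' \in \Diag_2$, which may differ from $p_2$. Since $\Diag_2$ is connected and we are working up to isotopy in $S^2$, I would conclude by appending a single ambient isotopy of $S^2$ that fixes $\Diag_2$ setwise and slides $p'$ to $p_2$ along the knot (extending the $S^1$-reparametrization of $\Diag_2$ to all of $S^2$ via a collar neighborhood). This completes the sequence.

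The only real content is the local sliding step, and the main (minor) obstacle is verifying that sliding the marked point off of $U_j$ can be achieved by an ambient isotopy of $S^2$ rather than just an isotopy of the immersed curve; this is handled by noting that the knot is a smoothly embedded circle away from the crossings, so any reparametrization of the knot along an embedded arc extends to a compactly supported ambient isotopy of $S^2$ via a tubular neighborhood. The use of $S^2$ rather than $\R^2$ is essential here, since it provides the room to push the marked point through the ``point at infinity'' if necessary.
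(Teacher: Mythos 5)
There is a genuine gap, and it sits exactly where the real content of this proposition lies. Your key claim is that sliding the marked point along the knot ``extends to a compactly supported ambient isotopy of $S^2$.'' This is false whenever the slide has to pass a crossing of the diagram: if $h_t$ is an ambient isotopy of $S^2$ that carries the diagram back to itself (setwise), then the finite set of double points is preserved, so the marked point can never be pushed through a crossing point, and it certainly cannot end up on the other side of a transverse strand relative to the diagram. (At the instant of passage the marked point would coincide with a double point, which is not even a legal pointed diagram, but the deeper problem is that no isotopy realizes the before/after pair at all.) Passing a crossing is unavoidable in your scheme: for instance, if the marked point sits on the middle strand of a Reidemeister~3 configuration inside $U_j$, both ways out of $U_j$ along the knot cross another strand; and your final step, sliding the endpoint $p'$ to $p_2$ along $\Diag_2$, will in general have to cross many strands. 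So ``sliding the basepoint along the knot'' is not an isotopy and not a pointed Reidemeister move, and your reduction to the classical Reidemeister theorem does not close.

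This missing step is precisely what the paper's proof supplies: after invoking Reidemeister's theorem, it observes that it suffices to show the basepoint can be moved \emph{through a crossing} by pointed moves, and it does this not by an isotopy but by trading the motion of the basepoint for a motion of the transverse strand, sweeping that strand the other way around $S^2$ through the point at infinity; this sweep is realized by a sequence of Reidemeister~2 and~3 moves supported away from the marked point (Figure~\ref{fig:MoveBasepoint}). If you add such a lemma (basepoint transport past a crossing via pointed R2/R3 moves), your strategy of preemptively relocating the basepoint before each move, and at the end relocating it to $p_2$, becomes correct; without it, the argument fails at every crossing the basepoint must cross. Note also that this is where working in $S^2$ is genuinely used — not to push the \emph{marked point} through infinity, as you suggest, but to push the \emph{other strand} around the back of the sphere.
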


\begin{proof}
  By  Reidemeister's theorem~\cite{Reidemeister}, any two
  knot diagrams for the same knot can be connected by a sequence of
  Reidemeister moves. To see that we can arrange to make those moves
  disjoint from the marked point $p$, it suffices to show
  that  $p$ can be
  moved through a crossing by a sequence of pointed Reidemeister
  moves. Suppose we are attempting to move a
  crossing across $p$, situated immediately south of the crossing.  
  Viewing the knot
  projection as supported in $S^2$, we can equivalently move the
  strand north across the knot diagram, bringing it back north 
  through the point at infinity, as shown in
  Figure~\ref{fig:MoveBasepoint}. This move can be realized as a
  sequence of Reidemeister $2$ and $3$ moves disjoint from $p$.
\begin{figure}[ht]
\input{MoveBasepoint.pstex_t}
\caption{\label{fig:MoveBasepoint} {Moving an overpass across the basepoint
      is equivalent to a sequence of Reidemeister moves away from the basepoint.}}
\end{figure}
\end{proof}

Consider a pointed knot diagram in the plane, and consider the projection
to the $y$ axis.
We say that the diagram is in {\em bridge position} if the following properties hold:
\begin{itemize}
  \item All critical points are either minima or maxima.
  \item The minima, maxima, and crossings project to distinct 
    points in the $y$ axis.
  \item The global minimum is the marked point.
\end{itemize}

There are two kinds of diagrams in bridge position, depending on the
orientation of the global minimum; i.e. 
left to right (LR) or  right to left (RL).

\begin{defn}
  A {\em bridge move} connecting two diagrams ${\mathcal D}_1$ and ${\mathcal D}_2$ in bridge position is any of the following types of moves:
  \begin{itemize}
    \item Creation of a single local maximum and local minimum; or the cancellation of such a pair.
      This is called a {\em pair creation} or {\em pair annihilation}.
    \item Sliding a maximum or non-global minimum through a crossing, called a {\em trident move}
    \item Commutations between distant pairs of  special points (each of which is a maxmimum, non-global minimum, or a crossing), called a {\em  commutation move}.
    \end{itemize}
\end{defn}

\begin{figure}[ht]
\input{PlanarMoves.pstex_t}
\caption{\label{fig:PlanarMoves} {\bf{Bridge moves.}}
  Top row: annihilation and creation of a pair of local
  minimum and a maximum; second row: a trident move.
  (The three other trident moves are obtained by changing the two crossings, 
  reflecting through a horizontal line, or both.)
  Third row: commutations of a maximum and a distant crossing.}
\end{figure}

In the above description, ``distant'' refers to special points not on
the same strand, which are covered by the other two kinds of moves.
All of the above moves are pointed isotopies; they leave the
global minimum in place.

If  ${\mathcal D}$ is in bridge position,
there is a pointed diagram ${\mathcal D}'$
in the sphere, by adding a  point at infinity adjacent to the
marked point. We say that ${\mathcal D}$ {\em induces} ${\mathcal D'}$.

\begin{lemma}
  \label{lem:BraidMoves}
  Any pointed knot diagram in the sphere is induced from a planar knot
  diagram in bridge position.  Moreover if two planar knot diagrams
  ${\mathcal D}_1$ and ${\mathcal D}_2$ of the same type (i.e. $LR$ or
  $RL$) induce (planar) isotopic knot diagrams in the sphere, then they can be
  connected by a sequence of bridge moves as above.
\end{lemma}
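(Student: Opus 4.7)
The plan is to prove the two halves of the lemma separately, handling existence by Morse-theoretic positioning and uniqueness by a Cerf-theoretic transversality argument.

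For the existence claim, I would start with a pointed knot diagram $\mathcal{D}' \subset S^2$ with marked point $p$ and produce a planar bridge diagram inducing it. First choose a diffeomorphism $S^2 \setminus \{p\} \cong \R^2$; this yields a planar knot diagram. By a small smooth isotopy of the plane, arrange that the $y$-projection of the knot is Morse with only local maxima and minima (ruling out horizontal inflection points), and that all special points (crossings and critical points of the height function) have pairwise distinct $y$-values. After a sufficiently large vertical translation, we may assume the unbounded region contains every point of sufficiently negative $y$-coordinate, so that the unique global minimum of the $y$-projection lies on the boundary of the unbounded region. The marked edge (the one adjacent to $p$ at infinity) is then exactly the edge containing this global minimum, placing the diagram in bridge position.

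For the uniqueness claim, suppose $\mathcal{D}_1$ and $\mathcal{D}_2$ are planar bridge diagrams of the same type inducing isotopic pointed diagrams in $S^2$. Lift this spherical pointed isotopy to a smooth one-parameter family $\{\mathcal{D}_t\}_{t \in [0,1]}$ of planar diagrams; the same-type hypothesis ensures the local identification of $S^2\setminus\{p\}$ with $\R^2$ can be chosen consistently at both ends. Next I would apply transversality: the space of planar knot diagrams in bridge position (with the global minimum on the outer edge) is an open subset of the space of smooth planar knot diagrams, and its complement decomposes into strata of positive codimension. After a small generic perturbation of the path, it meets only the codimension-one strata, and at only finitely many parameter values. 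These codimension-one strata correspond exactly to:
\begin{itemize}
\item two special points on distinct strands momentarily sharing a $y$-coordinate --- giving a commutation move;
\item a local extremum and an adjacent crossing on a common strand exchanging heights --- giving a trident move;
\item the birth or death of a pair of critical points of the height function through a cubic tangency --- giving a pair creation or annihilation.
\end{itemize}
No Reidemeister-type codimension-one events appear, because the ambient isotopy of knots in the sphere preserves the combinatorics of crossings throughout.

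The main technical obstacle is ensuring that the normalization ``global minimum equals the marked point'' is preserved along the generic path $\mathcal{D}_t$. Generically it holds, but at isolated times a different critical point might drop below the previous global minimum; such an event is handled by a preliminary commutation move sliding the offending critical point back above the marked edge, which is a composition of the moves already listed. The same-type hypothesis on $\mathcal{D}_1$ and $\mathcal{D}_2$ guarantees the local orientation at the global minimum is preserved throughout, so no additional move type involving the marked edge is required. Combining these ingredients decomposes the path into a finite sequence of bridge moves, completing the proof.
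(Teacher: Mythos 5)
There is a genuine gap, and it sits exactly at the one non-trivial point of the lemma: arranging, and then maintaining, the normalization that the marked point is the \emph{global} minimum. In your existence step you puncture $S^2$ at the marked point $p$ itself; but the marked point lies on the knot, so $S^2\setminus\{p\}$ contains an arc, not a closed diagram. Even reading this charitably as puncturing at a nearby point of a region adjacent to the marked point, the subsequent step fails: a ``sufficiently large vertical translation'' changes nothing about which point of the diagram realizes the minimum of the $y$-coordinate. It is true that the global minimum always lies on the boundary of the unbounded region, but there is no reason it lies on the marked edge, let alone at the marked point, so bridge position is not achieved. The paper's proof gets this for free by a different device: place an auxiliary projection point $p$ (not on the knot) so that the marked point is the point of $K$ \emph{closest} to $p$, with the side of the knot on which $p$ sits determining the type $LR$ or $RL$; after stereographic projection from $p$ and a rotation taking the short arc from $p$ to the basepoint onto the downward vertical ray, every other point of the knot is strictly closer to the origin than the basepoint and hence has strictly larger $y$-coordinate, so the marked point is automatically the global minimum.

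The same issue undermines your uniqueness step. You correctly identify the codimension-one events (births/deaths, trident moves, commutations, and no Reidemeister events), but you never explain how the family of planar diagrams is produced from the spherical isotopy (one must choose a moving projection point $p_t$ disjoint from the moving diagram), and your repair of the normalization problem --- ``a preliminary commutation move sliding the offending critical point back above the marked edge'' --- invokes a move that is not on the list: the allowed commutations are explicitly between maxima, \emph{non-global} minima, and crossings, and nothing in the invariance machinery lets another critical point pass the global minimum; you give no argument that such a slide is a composition of allowed moves. In the paper this problem simply never arises: carrying out the projection-from-$p_t$ construction in the one-parameter family (with $p_t$ riding along next to the moving basepoint on the fixed side) keeps the marked point as the global minimum for every $t$, so the only codimension-one degenerations in a generic family are exactly the three listed bridge moves. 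To salvage your argument you would either need to adopt this choice of projection point, or prove separately that the family can be modified rel endpoints so that no critical point ever crosses the height of the marked point --- a statement that is not justified in your proposal.
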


\begin{proof}
  Consider the basepoint on the knot projection, and place a second
  point $p\in S^2$ so that the point on the knot with minimal distance
  to $p$ is the basepoint.  There is a choice here: the orientation on
  the knot distiguishes the two  sides where $p$ can be
  placed; e.g. thinking of the knot locally as the $x$ axis, oriented
  from left to right, $p$ could either be slightly above or
  slightly below the $x$ axis.  Let $a$ be a minimal arc from $p$ to
  the basepoint in $K$.  Stereographic projection from $p$ gives a
  planar knot diagram, which we rotate so that $a$ is taken to the
  portion of the $y$ axis with $y\leq t$.  If $p$ was slightly below
  resp. above the distinguished point, the resulting diagram is of
  type $LR$ resp. $RL$.  Perturbing this map slightly (e.g. by moving
  $p$ slightly) we can arrange that the resulting diagram is in bridge
  position.
  
  Performing this construction in a one-parameter family, we get a
  one-parameter family of planar diagrams, for which the global
  minimum is the marked point. In a generic  one-parameter family,
  the following  can occur: two critical points create or
  cancel, a crossing crosses a critical point, or any two
  distant special points project to the same $y$ coordinate.  Crossing
  this codimension one locus, the diagram undergoes a pair creation
  or annihilation, a trident move, or a commutation move.
\end{proof}

\subsection{Constructing the invariant}

Choose a knot planar knot diagram ${\mathcal D}$ in bridge position, and
slice it up  $t_1<\dots<t_k$ so that the following conditions hold:
\begin{itemize}
\item for $i=1,\dots,k-1$, the interval $[t_i,t_{i+1}]$ contains the
  projection onto the $y$ axis of exactly one crossing or critical
  point
\item for $i=1,\dots,k$, $t_i$ is not the projection of any crossing
  or critical point 
\item there are no crossings or critical points whose $y$ value is
  greater than $t_k$ (and so $[t_{k-1},t_k]$ contains the global
  maximum)
\item there are no crossings below $t_1$, 
  and the only critical point whose $y$ value is smaller than $t_1$
  is the global minium.
\end{itemize}
  
For $i=1,\dots,k-1$, we have seen how to associate a type $DA$
bimodule to the portion of the diagram that projects into
$[t_i,t_{i+1}]$; it is either of the form $\Pos^j$, $\Neg^j$, $\Max^c$
or $\Min^c$.  The top portion $[t_{k-1},t_k]$ contains the global
maximum, where the incoming algebra has no strands, so the $DA$
bimodule is, in fact, simply a type $D$ module (see
Section~\ref{subsec:SmallMaximum}). The bottom portion $[-\infty,t_1]$
has only the global minimum, so the outgoing algebra has no strands;
there we attach the terminal type $A$ module $\TerMin$ from
Section~\ref{subsec:TerminalMinimum}.  Note that for each partial knot
diagram, the assoicated bimodule is adapted to the one-manifold
underlying the partial knot diagram; and the hypothesis that our
diagram is a indeed one for a knot ensures that the tensor products
can always be taken; see Proposition~\ref{prop:AdaptedTensorProducts}.
It is also worth noting that the generating sets for all the partial knot diagrams
correspond to partial Kauffman states; this property is evidently preserved by $\DT$.

The chain complex $\KC(\Diag)$ is now obtained as an iterated tensor
product (via $\DT$) of these pieces. It inherits a filtration, whose associated graded object,
$\KCa(\Diag)$, can be computed by exchanging $\TerMin$ with the simpler terminal module $\TerMina$.

\begin{thm}
  \label{thm:KnotInvariance}
  The filtered homotopy type of $\KC(\Diag)$ is an oriented knot invariant.
\end{thm}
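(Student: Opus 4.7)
The plan is to reduce invariance to a finite list of local moves and verify each locally, using the bimodule identities established in Sections~\ref{sec:CrossingDA}--\ref{sec:Minimum}. First, for a fixed bridge-position diagram $\Diag$ (with a fixed ordering of its crossings and critical points along the $y$-axis), the chain complex $\KC(\Diag)$ does not depend on the choice of slicing heights $t_1<\dots<t_k$, because changing the $t_i$ only reassociates the iterated tensor product of the elementary bimodules. This is legitimate by the associativity results of Lemmas~\ref{lem:AssociateDA} and~\ref{lem:AssociateDD}, noting that all elementary bimodules ($\Pos^i$, $\Neg^i$, $\Max^c$, $\Min^c$) have $\delta^1_j=0$ for $j$ sufficiently large, and each is adapted to its partial one-manifold in the sense of Definition~\ref{def:Adapted}, so that Proposition~\ref{prop:AdaptedTensorProducts} guarantees that the iterated tensor products make sense.

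Next, by Proposition~\ref{prop:ConnectPointedKnotDiagrams}, any two pointed knot diagrams for the same knot are connected by pointed planar isotopies and pointed Reidemeister moves; and by Lemma~\ref{lem:BraidMoves}, each pointed diagram (of a fixed type LR or RL) is induced by a bridge-position diagram, and any two such inducing the same pointed diagram are connected by a sequence of bridge moves. It therefore suffices to check filtered homotopy-invariance of $\KC(\Diag)$ under (i) commutation moves, (ii) trident moves, (iii) pair creation/annihilation, and (iv) the three pointed Reidemeister moves R1, R2, R3. Commutation invariance follows immediately from associativity, since two elementary bimodules associated to disjoint horizontal bands can be swapped at no cost (the non-interacting slots act through the outside algebra elements of Types~\ref{type:OutsideLR} and~\ref{type:UC}). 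Invariance under the trident moves is exactly Lemma~\ref{lem:BasicTrident} together with its $180^\circ$-rotated, mirrored, and $\Pos \leftrightarrow \Neg$ variants; these are obtained from Lemma~\ref{lem:BasicTrident} together with the symmetries of Lemma~\ref{lem:Rotate} and Proposition~\ref{lem:OppositeBimodules}. Reidemeister~2 and~3 are exactly Equations~\eqref{eq:InvertPos} and~\eqref{eq:NearBraids} of Theorem~\ref{thm:BraidRelation}, which show that the crossing bimodules satisfy the braid relations.

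The two remaining moves are the pair creation/annihilation and the Reidemeister~1 move, both of which reduce to computing that a composite of critical-point bimodules is homotopy-equivalent to the identity (or, near the marked edge, to $\TerMin$ as a filtered module). For a pair creation on a single strand, I would check directly that $\lsup{\Blg}\Min^{c+1}_{\Blg'}\DT~\lsup{\Blg'}\Max^c_{\Blg}\simeq~\lsup{\Blg}\Id_\Blg$, which amounts to a small computation with the graph $\Gamma$ of Figure~\ref{eq:MinimumOperations} and the cap operations of Section~\ref{sec:Maximum}; after cancelling the two edges labelled $R_2$ and $L_1 C_2$ that connect the cap generators $\XX,\YY,\ZZ$ to the cup generators, what remains is precisely the identity bimodule. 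For Reidemeister~1, the kink is a composite $\Min\DT\Pos^i\DT\Max$ (or with $\Neg^i$), which by Lemma~\ref{lem:BasicTrident} (the trident relation) slides through to a pair creation--annihilation composite, handled by the previous step. Finally, the invariant does not depend on whether the global minimum is of type LR or RL: this is because the two possible terminal modules $\TerMin$ of Section~\ref{subsec:TerminalMinimum} are interchanged by the symmetries of Lemma~\ref{lem:Rotate} and the mirror symmetry of Proposition~\ref{lem:OppositeBimodules}, and the filtration they induce is the same after identification.

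The main obstacle, and the step I would expect to require the most care, is the Reidemeister~1 computation: unlike R2 and R3, it is not built into the braid relations and requires the interplay of $\Max$, a crossing bimodule, and $\Min$ along a single strand. One must track not only the homotopy equivalence but also the filtration coming from $\TerMin$ (Proposition~\ref{prop:FilteredTensorProduct}), since the Maslov and Alexander contributions of the new kink must cancel exactly to give the identity up to filtered homotopy. Once this calculation is done, the remaining invariance statements assemble formally from associativity, the braid relations of Theorem~\ref{thm:BraidRelation}, and the trident relation of Lemma~\ref{lem:BasicTrident}.
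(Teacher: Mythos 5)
Your overall architecture matches the paper's (slice the diagram, reduce via Proposition~\ref{prop:ConnectPointedKnotDiagrams} and Lemma~\ref{lem:BraidMoves} to bridge moves plus pointed Reidemeister moves, and verify each move as a bimodule equivalence), but your treatment of Reidemeister~1 has a genuine gap. A kink decomposes as $\Min^{c+1}\DT\Pos^{c}\DT\Max^{c+1}$ (cup, crossing, cap). The trident relation (Lemma~\ref{lem:BasicTrident}, or its $DA$ form Lemma~\ref{lem:TridentMoves}) only applies when the crossing and the critical point sit at \emph{adjacent} positions and share exactly one strand; applying it here gives, say, $\Min^{c+1}\DT\Pos^{c}\DT\Max^{c+1}\simeq \Min^{c}\DT\Neg^{c+1}\DT\Max^{c+1}$, and you are left with a crossing between the \emph{two legs of the same cap} (or, sliding the other way, of the same cup). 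No further trident or commutation move applies to that configuration, and pair creation/annihilation (Lemma~\ref{lem:PairCreation}) cannot be invoked because the crossing still sits between the cup and the cap. What is missing is exactly the new computation the paper does for R1, namely that a crossing between the two legs of a maximum is absorbed, $\Pos^{c}\DT\Max^{c}_{r}\simeq \Max^{c}_{\ell}$ (with the orientation decoration flipped); this is not a consequence of the trident relation or of the planar-isotopy moves, as it changes the writhe. Without it your R1 step fails.

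A second, smaller gap: commutation of distant special points does \emph{not} ``follow immediately from associativity.'' Lemmas~\ref{lem:AssociateDA} and~\ref{lem:AssociateDD} let you reparenthesize an iterated $\DT$, but they never interchange the order of two factors; the bimodules for two slices both act on the full set of strands, and swapping their vertical order is a genuine equivalence of bimodules. In the paper this occupies Lemma~\ref{lem:FarBraids} (inside Theorem~\ref{thm:BraidRelation}) and Lemmas~\ref{lem:CommuteMaxPos}, \ref{lem:CommuteCrossCritPoints}, \ref{lem:CommuteCritCrit}, each proved by dualizing against the invertible canonical $DD$ bimodule (via Lemma~\ref{lem:CrossingDADD} and Propositions~\ref{prop:MaxDual}, \ref{prop:MinDual}) and doing a finite computation; note even the ``distant'' case has a subtlety when the two crossings are adjacent in the horizontal direction ($15$ rather than $16$ generator types). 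Relatedly, your identification of the LR and RL invariants by invoking Lemma~\ref{lem:Rotate} together with the mirror symmetry of Proposition~\ref{lem:OppositeBimodules} is not right as stated (the opposite-module symmetry relates $K$ to its mirror); the paper instead uses the direct relation $\TerMin_{\Blg(2,1,\{2\})}=\TerMin_{\Blg(2,1,\{1\})}\DT\lsup{\Blg(2,1,\{1\})}\Pos^1_{\Blg(2,1,\{2\})}$ at the global minimum.
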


The quasi-isomorphism type of the chain complex $\KC(\Diag)$ does not
depend on the order in which the type $DA$ bimodules are tensored
together (Lemma~\ref{lem:AssociateDA}). Thus, to check the invariance of
$\KC(\Diag)$, it suffices to check identities between $DA$ bimodules
associated to the bridge moves, which we do in the next section. The proof of Theorem~\ref{thm:KnotInvariance}
is then given in Subsection~\ref{sec:InvarianceProof}.

\subsection{Invariance under bridge moves}
\label{subsec:InvarainceUnderBridgeMoves}

We order bridge moves as follows:
\begin{enumerate}
\item Commutations of distant crossings
\item Trident moves
\item Critical points commute with distant crossings
\item Commuting distant critical points
\item Pair creation and annihilation.
\end{enumerate}
We verify the invariance of our invariant under bridge moves in the
above order. 

Bridge moves involve the interactions of two consecutive pieces in the
chopped up knot diagram, The above procedure associates bimodules to
each of those consecutive pieces which we tensor together, and the
bridge moves are verified by verifying identities between the bimodule
associated to the two bimodules tensored together and the tensor of
the two bimodules after the bridge move (except in pair creation or
annihilation, which states a relation between the tensor of two
bimodules and a third bimodule, the identity bimodue).
Thus, for example, commutations of distant positive crossings asserts the relation
$\Pos^i\DT \Pos^j\sim \Pos^j\DT \Pos^i$
when $|i-j|>1$, which was already verified in the
verification of the braid relations (Theorem~\ref{thm:BraidRelation}). 

Consider next trident moves. There are four types of trident
moves: the one illustrated in the second row of
Figure~\ref{fig:PlanarMoves}, the one obtained by changing the
crossings in the picture, the one obtained by mirroring the picture
through a horizontal line, and the one obtained by changing the
crossings in the horizontally reflected picture.  

\begin{lemma}
  \label{lem:TridentMoves}
  The $DA$ bimodules associated to the two pictures before and after a trident move
  are quasi-isomorphic; i.e. the following four identities hold, corresponding to the four kinds of trident moves:
  \begin{align}
    \label{eq:Trident1}
    \lsup{\Blg_3}{\Pos^{c}_{\Blg_4}}\DT~\lsup{\Blg_4}{\Max^{c+1}_{\Blg_1}}
    &\simeq~\lsup{\Blg_3}{\Neg^{c+1}_{\Blg_2}}\DT~\lsup{\Blg_2}\Max^{c}_{\Blg_1} \\
    \lsup{\Blg_3}{\Pos^{c+1}_{\Blg_2}}\DT~\lsup{\Blg_2}\Max^{c}_{\Blg_1}
    &\simeq~\lsup{\Blg_3}{\Neg^{c}_{\Blg_4}}\DT~\lsup{\Blg_4}\Max^{c+1}_{\Blg_1} 
    \label{eq:Trident2} \\
    \lsup{\Blg_3}{\Min^{c+1}_{\Blg_2}}\DT \lsup{\Blg_2}{\Pos^{c}_{\Blg_1}}
    &\simeq\lsup{\Blg_3}\Min^{c}_{\Blg_4}\DT~\lsup{\Blg_4}\Neg^{c+1}_{\Blg_1}
    \label{eq:Trident3}  \\
    \lsup{\Blg_3}{\Min^{c}_{\Blg_2}}\DT \lsup{\Blg_2}\Pos^{c+1}_{\Blg_1}
    &\simeq~\lsup{\Blg_3}{\Min^{c+1}_{\Blg_4}}\DT~\lsup{\Blg_4}{\Neg^{c}_{\Blg_1}}
    \label{eq:Trident4}
  \end{align}
\end{lemma}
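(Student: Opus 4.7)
All four identities will be deduced from Lemma~\ref{lem:BasicTrident} (together with a natural companion of it in which the crossing index is shifted by one) by tensoring with the invertible canonical $DD$ bimodule $\CanonDD$, applying the pairings $\Max^c \DT \CanonDD \simeq \Crit_c$ and $\Min^c \DT \CanonDD \simeq \Crit_c$ of Propositions~\ref{prop:MaxDual} and~\ref{prop:MinDual} to absorb the critical-point bimodule, using the $\Crit$-level trident identity, and finally cancelling $\CanonDD$ using its invertibility (Theorem~\ref{thm:DDisInvertible}).

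For Equation~\eqref{eq:Trident1}, I would tensor both sides on the right with $\lsup{\Blg_1,\Blg_1'}\CanonDD$. Associativity (Lemma~\ref{lem:AssociateDD}) together with Proposition~\ref{prop:MaxDual} converts each side into a tensor product of a crossing $DA$ bimodule with a critical-point $DD$ bimodule, and the resulting equivalence is precisely Lemma~\ref{lem:BasicTrident}. Invertibility of $\CanonDD$ then permits cancellation, yielding~\eqref{eq:Trident1}. Equation~\eqref{eq:Trident2} follows identically, modulo a shifted-index companion of Lemma~\ref{lem:BasicTrident},
$$\lsup{\Blg_3}\Pos^{c+1}_{\Blg_2}\DT~\lsup{\Blg_2,\Blg_1}\Crit_c \simeq \lsup{\Blg_3}\Neg^c_{\Blg_4}\DT~\lsup{\Blg_4,\Blg_1}\Crit_{c+1},$$
which I would establish by repeating the proof of Lemma~\ref{lem:BasicTrident} verbatim, replacing the trident bimodule $\TridentDD$ of Section~\ref{sec:Crit} with the analogous bimodule whose crossing sits on the opposite side of the critical point.

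For the minimum identities~\eqref{eq:Trident3} and~\eqref{eq:Trident4}, the same strategy applies. Tensor the two sides on the right with $\CanonDD$; commute $\CanonDD$ past the crossing bimodule via Lemma~\ref{lem:CommuteDDBraid} (whose effect is only to relabel the intermediate algebras) so that it becomes adjacent to the $\Min$ factor; apply Proposition~\ref{prop:MinDual} to replace $\Min \DT \CanonDD$ by a $\Crit$ bimodule; and then invoke Lemma~\ref{lem:BasicTrident} or its companion above to equate the two sides. Invertibility of $\CanonDD$ then gives the desired equivalences.

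\textbf{Main obstacle.} The chief difficulty is bookkeeping: each manipulation produces a sequence of indexed algebras $\Blg_i$, and one must verify that the inputs and outputs of the various $DA$ and $DD$ factors line up correctly under each application of $\DT$, especially when commuting $\CanonDD$ past a crossing in the minimum case. The only genuinely new mathematical input is the shifted-index companion of Lemma~\ref{lem:BasicTrident}, but its proof is a line-by-line adaptation of the one already given, and no computation beyond those already carried out in Sections~\ref{sec:CrossingDA}, \ref{sec:Crit}, \ref{sec:Maximum}, and \ref{sec:Minimum} is required.
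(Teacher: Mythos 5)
Your argument is correct, and for Equations~\eqref{eq:Trident1} and~\eqref{eq:Trident3} it is exactly the paper's proof: tensor with $\CanonDD$, use Proposition~\ref{prop:MaxDual} (resp.\ commute the crossing past $\CanonDD$ via Lemma~\ref{lem:CommuteDDBraid} and use Proposition~\ref{prop:MinDual}) to absorb the critical point into $\Crit$, apply Lemma~\ref{lem:BasicTrident}, and cancel $\CanonDD$ by invertibility. Where you diverge is in the mirrored cases \eqref{eq:Trident2} and \eqref{eq:Trident4}: you propose to prove a shifted companion of Lemma~\ref{lem:BasicTrident}, namely $\Pos^{c+1}\DT\Crit_c\simeq\Neg^{c}\DT\Crit_{c+1}$, by redoing the trident computation with a second trident $DD$ bimodule whose crossing sits on the other side of the critical point, and then to rerun the same tensoring argument. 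The paper avoids this extra computation entirely: it deduces \eqref{eq:Trident2} from \eqref{eq:Trident1} and \eqref{eq:Trident4} from \eqref{eq:Trident3} by the horizontal-rotation symmetry $[\VRot]$ of Lemma~\ref{lem:Rotate}, which intertwines $\Pos^i\leftrightarrow\Pos^{m-i}$, $\Neg^i\leftrightarrow\Neg^{m-i}$, $\Max^c\leftrightarrow\Max^{m-c}$, $\Min^c\leftrightarrow\Min^{m-c}$. Both routes are legitimate; yours is self-contained but costs an additional explicit $DD$-bimodule verification, while the paper's symmetry argument is computation-free. One small caveat: your description of Lemma~\ref{lem:CommuteDDBraid} as "only relabelling the intermediate algebras" understates it slightly---the lemma moves the crossing action from one output of $\CanonDD$ to the complementary one, which is what lets you re-associate so that $\Min\DT\CanonDD$ can be formed---but this is exactly how it is used in the paper, so the step goes through as you intend.
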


\begin{proof}
  All four identities follow from
  Lemma~\ref{lem:BasicTrident}, together with symmetries of the
  bimodules, as follows.
  
  Since the $\CanonDD$ is invertible
  (Theorem~\ref{thm:DDisInvertible}), 
  Equation~\eqref{eq:Trident1}
  follows from 
  $\Pos^{c}\DT \Max^{c+1}\DT \CanonDD
  \simeq 
  \Neg^{c+1}\DT \Max^{c}\DT\CanonDD$;
  which follows from  Proposition~\ref{prop:MaxDual} and Lemma~\ref{lem:BasicTrident}.

  Applying horizontal rotation to Equation~\eqref{eq:Trident1}
  (with $m-c-1$ in place of $c$), together with
  Lemma~\ref{lem:Rotate}, Equation~\eqref{eq:Trident2} follows from Equation~\eqref{eq:Trident1}

  Similarly, 
  observe that
  \begin{align*}
    \lsup{\Blg_3}{\Min^{c+1}_{\Blg_2}}\DT~\lsup{\Blg_2}{\Pos^{c}_{\Blg_1}}
    \DT \lsup{\Blg_1,\Blg_1'}{\CanonDD}
        &\simeq\lsup{\Blg_3}{\Min^{c+1}}_{\Blg_2}\DT 
                    (\lsup{{\Blg_1'}}\Pos^{c}_{\Blg_2'}\DT 
                        \lsup{{\Blg_2',\Blg_2}}\CanonDD) \\
        &\simeq~\lsup{{\Blg_1'}}\Pos^{c}_{\Blg_2'}
        \DT
        (\lsup{\Blg_3}\Min^{c+1}_{\Blg_2} \DT \lsup{{\Blg_2,\Blg_2'}}\CanonDD)  \\
        &\simeq~\lsup{{\Blg_1'}}\Pos^{c}_{\Blg_2'} 
        \DT~ \lsup{{\Blg_2',\Blg_3}}\Crit_{c+1}
        \end{align*}
    (using Lemma~\ref{lem:CommuteDDBraid} twice, 
    Lemma~\ref{lem:AssociateDD}, 
    Proposition~\ref{prop:MinDual});
    and by the same logic,
    \[
    \lsup{\Blg_3}\Min^{c}_{\Blg_4}\DT~^{\Blg_4}\Neg^{c+1}_{\Blg_1}\DT~^{\Blg_1,\Blg_1'}\CanonDD
    \simeq~^{\Blg_1'}\Neg^{c+1}_{\Blg_4'}\DT~^{\Blg_4',\Blg_3}\Crit_{c}.\]
    Thus, Equation~\eqref{eq:Trident3} follows from Lemma~\ref{lem:BasicTrident}.

    Equation~\eqref{eq:Trident4} follows from Equation~\eqref{eq:Trident3}
    by horizontal rotation.
\end{proof}

\begin{lemma}
  \label{lem:CommuteMaxPos}
  The $DA$ bimodules for positive crossings commute with those for local maxima and minima, in the sense that
  if $|c+1-i|>0$, then 
  (with $\phi_c$ as in Equation~\eqref{eq:DefInsert})
  \begin{align}
    \label{eq:CommuteMaxCross}
    \Pos^{\phi_c(i)} \DT \Max^c&\simeq 
    \Max^c\DT \Pos^i \\
    \label{eq:CommuteMinCross}
    \Pos^i \DT \Min^c&\simeq 
    \Min^c\DT \Pos^{\phi_c(i)}
  \end{align}
\end{lemma}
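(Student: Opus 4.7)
The plan is to reduce both identities of $DA$ bimodules to identities of $DD$ bimodules by tensoring on the right with the canonical type $DD$ bimodule $\CanonDD$ of Section~\ref{subsec:CanonDD}, which is invertible by Theorem~\ref{thm:DDisInvertible}. For Equation~\eqref{eq:CommuteMaxCross}, applying $\DT\,\CanonDD$ to each side and using associativity (Lemma~\ref{lem:AssociateDD}), together with Proposition~\ref{prop:MaxDual} (which gives $\Max^c\DT\CanonDD\simeq\Crit_c$) and Lemma~\ref{lem:CrossingDADD} (which gives $\Pos^i\DT\CanonDD\simeq\Pos_i$), the desired equivalence is reduced to a $DD$-bimodule identity
\[
\lsup{\Blg_3}\Pos^{\phi_c(i)}_{\Blg_2}\DT\,\lsup{\Blg_2,\Blg_1'}\Crit_c
\;\simeq\;
\lsup{\Blg_3}\Max^c_{\Blg_4}\DT\,\lsup{\Blg_4,\Blg_1'}\Pos_i,
\]
over the appropriate two algebras (with $m+2$ strands on one side and $m$ strands on the other).

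The hypothesis $|c+1-i|>0$, together with the exclusion of the trident configurations already handled by Lemma~\ref{lem:TridentMoves}, guarantees that the crossing between strands $\phi_c(i)$ and $\phi_c(i)+1$ is disjoint from the two strands $c$, $c+1$ meeting the critical point. Thus both sides correspond to the same partial knot diagram: a cap and a disjoint crossing. The generators on both sides are the corresponding partial Kauffman states, classified by the four local types $\{\North,\South,\West,\East\}$ at the crossing together with the three local types $\{\XX,\YY,\ZZ\}$ at the critical point. The verification of the identity then proceeds as in the proof of Lemma~\ref{lem:FarBraids}: one constructs a more symmetric ``level-position'' $DD$ bimodule in which the crossing and critical-point contributions to the differential are written simultaneously, and shows by canceling arrows (exactly as in Lemmas~\ref{lem:CrossingDADD} and~\ref{lem:FarBraids}) that both $\Pos^{\phi_c(i)}\DT\Crit_c$ and $\Max^c\DT\Pos_i$ reduce to this common model. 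The key structural point is that the ``crossing part'' and the ``critical-point part'' of the differential act on disjoint strands, so the commutation of their $L_j\otimes R_j$, $R_j\otimes L_j$, and $C_j\otimes U_j$ outside arrows with the inside arrows of~\eqref{eq:PositiveCrossing} and~\eqref{eq:CritDiag} is automatic.

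Equation~\eqref{eq:CommuteMinCross} can be handled by an entirely parallel argument, using Proposition~\ref{prop:MinDual} instead of Proposition~\ref{prop:MaxDual} to identify $\Min^c\DT\CanonDD\simeq\Crit_c$; alternatively, one can deduce it from~\eqref{eq:CommuteMaxCross} by dualizing via Proposition~\ref{lem:OppositeBimodules} (which exchanges $\Max^c$ with $\Min^c$ and $\Pos^j$ with $\Neg^j$) and then inverting crossings using the braid invertibility of Theorem~\ref{thm:BraidRelation}.

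The main obstacle is notational rather than conceptual: carefully checking the idempotent bookkeeping near the boundary indices $\{c-1,c,c+1,c+2\}$ and $\{\phi_c(i)-1,\ldots,\phi_c(i)+2\}$, and verifying that the cancellation homotopies used to match the two $DD$ bimodules genuinely produce chain-homotopy inverses (not merely partial ones). As in the proof of Lemma~\ref{lem:FarBraids}, the heart of the computation is a diagram-by-diagram check that the ``outside'' actions~\ref{type:OutsideLRP} and~\ref{type:UCP} commute with the ``inside'' actions of the crossing and critical-point bimodules, which is routine once the general picture is in place.
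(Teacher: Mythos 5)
Your proposal follows essentially the same route as the paper's proof: the paper establishes, by a direct computation in the style of Lemma~\ref{lem:FarBraids}, the single $DD$ identity $\Pos^{\phi_c(i)}\DT \Crit_c \simeq \Max^c\DT \Pos_i$, and then deduces both~\eqref{eq:CommuteMaxCross} and~\eqref{eq:CommuteMinCross} from it via the invertibility of $\CanonDD$ together with Propositions~\ref{prop:MaxDual} and~\ref{prop:MinDual} (with the trident-style reassociation using Lemma~\ref{lem:CommuteDDBraid} for the minimum case), which is exactly the reduction you describe. Your treatment of~\eqref{eq:CommuteMinCross} via Proposition~\ref{prop:MinDual} (or by dualizing) matches the paper's, so there is no essential difference in approach.
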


\begin{proof}
  Direct computation, as in the proof of Lemma~\ref{lem:FarBraids},
  shows that
  \begin{equation}
    \label{eq:DistantCommuteCritCross}
    \Pos^{\phi_c(i)}\DT \Crit_c \simeq \Max^c\DT \Pos_i.
  \end{equation}
  Both stated equations now follow from
  Equation~\eqref{eq:DistantCommuteCritCross}, the invertibility of
  $\CanonDD$, and
  Propositions~\ref{prop:MaxDual} and~\ref{prop:MinDual}; compare the
  proof of Lemma~\ref{lem:TridentMoves}.
\end{proof}

\begin{lemma}
  \label{lem:CommuteCrossCritPoints}
  The $DA$ bimodules for crossings commute with those for distant critical points.
\end{lemma}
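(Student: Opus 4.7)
The plan is to reduce Lemma~\ref{lem:CommuteCrossCritPoints} to Lemma~\ref{lem:CommuteMaxPos} via the invertibility of the crossing bimodules established in Theorem~\ref{thm:BraidRelation}. Since Lemma~\ref{lem:CommuteMaxPos} already handles the commutation of positive crossings with distant maxima and minima, the remaining content is the analogous statements for negative crossings; concretely, for $|c+1-i|>0$,
\[
\Neg^{\phi_c(i)}\DT \Max^c \simeq \Max^c \DT \Neg^i \qquad\text{and}\qquad \Neg^i \DT \Min^c \simeq \Min^c \DT \Neg^{\phi_c(i)},
\]
with the appropriate algebras understood from context.

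First I would start from the positive-crossing equivalence $\Pos^{\phi_c(i)}\DT \Max^c \simeq \Max^c \DT \Pos^i$ supplied by Lemma~\ref{lem:CommuteMaxPos}. Tensoring on the left with $\Neg^{\phi_c(i)}$ and on the right with $\Neg^i$, then collapsing the resulting $\Neg^j\DT \Pos^j$ and $\Pos^j\DT \Neg^j$ pairs to identity bimodules using Equation~\eqref{eq:InvertPos}, would yield the first displayed equivalence. The associativity needed for these manipulations is Lemma~\ref{lem:AssociateDA}, which applies since all the bimodules in question are $DA$ bimodules. The analogous minimum statement follows symmetrically from the second equivalence of Lemma~\ref{lem:CommuteMaxPos}, by tensoring on the left with $\Neg^i$ and on the right with $\Neg^{\phi_c(i)}$.

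The main obstacle is purely bookkeeping: I must track the sequence of algebras $\Blg(m,k,\Upwards)$ that appear as each crossing or critical-point bimodule modifies the underlying marked $1$-manifold (so that e.g. the $\Neg^j$ used to cancel the $\Pos^j$ on the right has the algebra coming from $\Max^c$ as its outgoing algebra, not the original $\Blg_1$), and verify that every tensor product in the chain of equivalences is defined. Existence of the tensor products is guaranteed by Proposition~\ref{prop:AdaptedTensorProducts}, provided the partial knot diagrams assembled in each step have no closed components; since a distant commutation of a crossing past a critical point never produces a closed component, this condition is automatic. Once the algebras are spelled out carefully, no further computation is required beyond the invertibility of $\Pos^j$ and Lemma~\ref{lem:CommuteMaxPos} itself.
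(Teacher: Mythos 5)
Your proposal is correct and follows essentially the same route as the paper: positive crossings are covered by Lemma~\ref{lem:CommuteMaxPos}, and negative crossings are obtained by tensoring those equivalences with $\Neg$ on both sides and cancelling $\Pos\DT\Neg$ and $\Neg\DT\Pos$ via Equation~\eqref{eq:InvertPos}. The extra bookkeeping you flag (tracking the algebras and invoking Lemma~\ref{lem:AssociateDA} and Proposition~\ref{prop:AdaptedTensorProducts}) is implicit in the paper's shorter argument.
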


\begin{proof}
  Lemma~\ref{lem:CommuteMaxPos} handles positive crossings.
  Multiplying Equation~\eqref{eq:CommuteMaxCross} on both
  sides by $\Neg^i$ and using the fact that $\Neg^i$ and $\Pos^i$ are
  inverses of one another (Equation~\eqref{eq:InvertPos}), it follows
  that negative crossings also commute with local maxima; Equation~\eqref{eq:CommuteMinCross} shows that
  negative crossings commute with local minima.
\end{proof}

\begin{lemma}
  \label{lem:CommuteCritCrit}
  The $DA$ bimodules for pairs of distant critical points commute.
\end{lemma}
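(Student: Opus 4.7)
There are three cases to verify, according to whether the two distant critical points are both maxima, both (non-global) minima, or one of each. In each case the statement is an equivalence of $DA$ bimodules of the form $\Max^{c}\DT\Max^{c'}\simeq \Max^{\widetilde{c'}}\DT\Max^{\widetilde{c}}$ (and similarly for the other combinations), where the relabellings $\widetilde{c},\widetilde{c'}$ account for the index shifts $\phi_c,\phi_{c'}$ caused by inserting the other critical point first. My plan is to reduce each such $DA$ identity to an equivalence of type $DD$ bimodules and then verify that $DD$ equivalence by direct inspection, in the spirit of the proofs of Lemmas~\ref{lem:TridentMoves} and~\ref{lem:CommuteMaxPos}.

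Concretely, using the invertibility of the canonical $DD$ bimodule (Theorem~\ref{thm:DDisInvertible}) together with Propositions~\ref{prop:MaxDual} and~\ref{prop:MinDual}, the two-maxima statement is equivalent to
\[
  \lsup{\Blg}\Max^c_{\Blg'}\DT~\lsup{\Blg',\Blg''}\Crit_{c'} \;\simeq\; \lsup{\Blg}\Max^{\widetilde{c'}}_{\Blg'''}\DT~\lsup{\Blg''',\Blg''}\Crit_{\widetilde{c}},
\]
and the two remaining cases reduce similarly by applying $\DT~\CanonDD$ on the right and invoking Proposition~\ref{prop:MinDual}.  Associativity (Lemmas~\ref{lem:AssociateDA} and~\ref{lem:AssociateDD}) allows these rearrangements, just as in the proof of Lemma~\ref{lem:CommuteMaxPos}.

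To verify the reduced $DD$-identity, I would compute both sides explicitly. The differential of $\Crit_{c'}$ involves only algebra elements supported at positions $\{c'-1,c',c'+1\}$ (together with the universal pass-through terms of types~\ref{type:OutsideLR},~\ref{type:UC},~\ref{type:UC2}), while the operations $\delta^1_1$ and $\delta^1_2$ of $\Max^c$ involve algebra elements supported at $\{c-1,c,c+1\}$. When $|c-c'|>1$ these two local regions are disjoint, and the $\delta^1_2$-action of $\Max^c$ on any algebra element arising from $\Crit_{c'}$'s differential is forced by Lemma~\ref{lem:ConstructDeltaTwo} to be the tautological relabelling via $\phi_c$, since such elements have vanishing weights at $\{c-1,c,c+1\}$. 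Thus the structure maps of $\Max^c\DT\Crit_{c'}$ split into independent $\Max^c$-type and $\Crit_{c'}$-type contributions, and the resulting $DD$ bimodule is manifestly the same (under the evident identification of generators and the commutativity of the shifts $\phi_c$ and $\phi_{c'}$) as that obtained by inserting the two critical points in the opposite order. The min-min and mixed cases are handled identically, invoking Proposition~\ref{prop:MinDual} where needed.

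\textbf{Main obstacle.} The principal difficulty is purely bookkeeping: composing the insertion maps $\phi_c$ and $\phi_{c'}$ and matching the basic idempotents and generator labels on the two sides requires some care, since inserting the critical point with the smaller index first shifts the position at which the second is inserted. Once the two compositions $\phi_{\widetilde{c'}}\circ\phi_c$ and $\phi_{\widetilde{c}}\circ\phi_{c'}$ are seen to produce the same embedding of positions, the actual algebraic content of the verification is essentially vacuous, because there are no nontrivial interactions between disjoint local regions in any of the algebras $\Blg(m,k,\Upwards)$.
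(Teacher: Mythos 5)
Your opening move is the same as the paper's: tensor with $\CanonDD$ and use Propositions~\ref{prop:MaxDual} and~\ref{prop:MinDual} plus associativity to reduce each commutation to an identity of type $DD$ bimodules, and for the max--max case the resulting identity (the paper's Equation~\eqref{eq:FarCrits}) is indeed checked by a direct, local computation in the spirit of Lemma~\ref{lem:FarBraids}. But your claim that the min--min and mixed cases are ``handled identically'' and are ``essentially vacuous'' hides a genuine gap. First, $\GenMin^c$ for $c>1$ has no direct local description: it is \emph{defined} (Equation~\eqref{eq:GenMinDef}) as $\GenMin^{c-1}\DT\Pos^{c}\DT\Pos^{c-1}$, so ``direct inspection'' of $\GenMin^c\DT\Crit_{c'}$ is not available without unwinding that tensor product, which is exactly why the paper proves the mixed identity only for $i=1$ and then bootstraps to $i>1$ using Equation~\eqref{eq:GenMinDef} together with Lemma~\ref{lem:CommuteMaxPos}. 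Second, even for $c=1$ the verification is not a tautological relabelling: $\GenMin^1$ has nonzero $\delta^1_\ell$ for arbitrarily large $\ell$ (preferred sequences), and these higher actions pair nontrivially with the pass-through terms of the distant $\Crit$ differential; the computation is of the same nature as Lemma~\ref{lemma:MinDual}. You also invoke Lemma~\ref{lem:ConstructDeltaTwo}, which concerns $\Max^c$, not $\GenMin^c$, and your assertion that the algebra elements arising from $\Crit_{c'}$ have vanishing weights at $\{c-1,c,c+1\}$ is false: the outside terms $R_j\otimes L_{\phi_{c'}(j)}$, $L_j\otimes R_{\phi_{c'}(j)}$, $U_j\otimes C_{\phi_{c'}(j)}$, etc., occur for all $j$, including $j$ near $c$, and it is precisely the action of the critical-point bimodule on these that produces the type-changing terms of the new $\Crit$; this is where the content of Equation~\eqref{eq:FarCrits} lies.

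The paper sidesteps the difficulties you gloss over by two further maneuvers you would need (or a substitute for): for min--min it uses that $\Crit$ is dual to \emph{both} $\Max$ and $\GenMin$, rewriting $\GenMin^i\DT\GenMin^{j+1}\DT\CanonDD\simeq \GenMin^i\DT\bigl(\Max^{j+1}\DT\CanonDD\bigr)\simeq \Max^{j+1}\DT\bigl(\GenMin^i\DT\CanonDD\bigr)\simeq\Max^{j+1}\DT\Crit_i$, so that both sides reduce to the same max-only identity~\eqref{eq:FarCrits} and no computation with $\GenMin$'s higher actions is needed; and the final mixed identity is obtained from the other by vertical rotation (Lemma~\ref{lem:Rotate}). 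As written, your proposal would require either reproducing these reductions or carrying out a substantially more involved direct computation with $\GenMin$ (including a local model for $\GenMin^c$, $c>1$, which the paper does not supply), so the min-involving cases cannot be dismissed as bookkeeping.
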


\begin{proof}
  Fix $i<j$. The lemma states identifications
\[\begin{array}{ll}
    \Max^{i}\DT\Max^{j-1}\simeq \Max^{j+1}\DT \Max^{i} &
    \Min^{i}\DT\Min^{j+1}\simeq \Min^{j-1}\DT \Min^{i} \\
    \Max^{j-1}\DT\Min^{i}\simeq \Min^{i}\DT \Max^{j+1} &
    \Min^{j+1}\DT\Max^{i}\simeq \Max^{i}\DT \Min^{j-1}.
    \end{array}\]
  To verify the first identity, we tensor on the right with $\CanonDD$,
  to reduce to the identity between two type $DD$ bimodules
  (using Proposition~\ref{prop:MaxDual}):
  \begin{equation}
    \label{eq:FarCrits}
      \Omega^i\DT \Crit_{j-1}\simeq \Max^{j+1}\DT \Crit_i,
  \end{equation}
  which is easy to verify in the spirit of the proof of 
  Lemma~\ref{lem:FarBraids}.
  Tensoring on the right with $\CanonDD$, the second on the right,
  \begin{align*}
    \lsup{\Blg_3}\Min^i_{\Blg_2}\DT \lsup{\Blg_2}\Min^{j+1}_{\Blg_1}
    \DT\lsup{\Blg_1,\Blg_1'}\CanonDD 
    &\simeq 
    \lsup{\Blg_3}\Min^i_{\Blg_2}\DT \lsup{\Blg_2,\Blg_1'}\Crit_{j+1} \\
    &\simeq
    \lsup{\Blg_3}\Min^i_{\Blg_2}\DT \left(\lsup{\Blg_1'}\Max^{j+1}_{\Blg_2'} \DT 
    \lsup{\Blg_2,\Blg_2'}\CanonDD\right) \\
  &\simeq 
    \lsup{\Blg_1'}\Max^{j+1}_{\Blg_2'}\DT \left(\lsup{\Blg_3}\Min^i_{\Blg_2'} \DT 
    \lsup{\Blg_2,\Blg_2'}\CanonDD\right) \\
  &\simeq 
    \lsup{\Blg_1'}\Max^{j+1}_{\Blg_2'}\DT \lsup{\Blg_2',\Blg_3}\Crit_i 
  \end{align*}
  Similarly,
  $\Min^{j-1}\DT \Min^i\DT \CanonDD\simeq \Max^{i}\DT \Crit_{j-1}$,
  so the second identity also follows from Equation~\eqref{eq:FarCrits}.

  Consider the third identity, now with $i=1$. Tensoring on the right
  with $\CanonDD$, we reduce to the easily verified identity
  $\Max^{j-1}\DT \Crit_1 \simeq \Min^1\DT \Crit_{j+1}$.
  (Compare Equation~\eqref{eq:FarCrits}.) 
  The cases where $i>1$ now follow from the case where $i=1$,
  the inductive definition of $\GenMin^i$ (Equation~\eqref{eq:GenMinDef}),
  and Lemma~\ref{lem:CommuteMaxPos}.

  The fourth identity follows from the third and Lemma~\ref{lem:Rotate}.
\end{proof}

\begin{lemma}
  \label{lem:PairCreation}
  The $DA$ bimodules are invariant under pair creation and
  annihilations, in the sense that $\Min^{c}\DT \Max^{c+1}\simeq
  \Id\simeq \Min^{c+1}\DT\Max^{c}$
\end{lemma}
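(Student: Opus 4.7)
The plan is to reduce the statement to an identity between $DD$ bimodules by tensoring with the canonical dualizing bimodule $\CanonDD$, and then to verify that identity by direct computation.

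First, by the invertibility of $\CanonDD$ (Theorem~\ref{thm:DDisInvertible}) and associativity of the tensor product (Lemma~\ref{lem:AssociateDD}), the claim $\Min^c \DT \Max^{c+1} \simeq \Id$ is equivalent to
\[
\Min^c \DT \Max^{c+1}\DT \CanonDD \;\simeq\; \CanonDD .
\]
Applying Proposition~\ref{prop:MaxDual} to rewrite $\Max^{c+1}\DT \CanonDD \simeq \Crit_{c+1}$, this reduces to establishing the $DD$-bimodule equivalence
\[
\Min^c \DT \Crit_{c+1} \;\simeq\; \CanonDD .
\]

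Second, this remaining equivalence is verified by a direct computation. The bimodule $\Crit_{c+1}$ has the explicit three-generator description from Section~\ref{sec:Crit} (types $\XX,\YY,\ZZ$, with outside differentials and the additional differentials displayed in Equation~\eqref{eq:CritDiag}). For the base case $c=1$, the bimodule $\Min^1$ is described explicitly in Section~\ref{sec:Minimum} via the operation graph $\Gamma$ (Figure~\ref{eq:MinimumOperations} or~\ref{eq:MinimumOperations2}). The tensor product $\Min^1\DT \Crit_2$ is spanned by generators corresponding to idempotents of the shared $(m+2)$-strand algebra that are simultaneously \emph{preferred} (for $\Min^1$, so they avoid position $1$) and \emph{allowed} (for $\Crit_2$, so they contain position $2$); the induced differential assembles contributions from $\delta^1$ on $\Crit_{2}$ with the $\delta^1_\ell$-actions of $\Min^1$ fed by the $D$-action of $\Crit_{2}$ on the shared algebra. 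One identifies large acyclic summands of the resulting complex and cancels them by the homological perturbation lemma (Lemma~\ref{lem:HomologicalPerturbation2}); the surviving generators and operations then match exactly those of $\CanonDD$ on the $m$-strand pair of algebras.

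Third, for general $c>1$ one passes from the base case via the inductive definition $\Min^{c}=\Min^{c-1}\DT \Pos^{c}\DT \Pos^{c-1}$ from Equation~\eqref{eq:GenMinDef} together with the trident relation (Lemma~\ref{lem:BasicTrident}) and the inverse property $\Pos^i\DT \Neg^i\simeq \Id$ (Equation~\eqref{eq:InvertPos}): each pair of introduced crossings can be moved past the maximum using trident relations until the outer pairs cancel, reducing the computation of $\Min^c\DT\Crit_{c+1}$ to that of $\Min^{c-1}\DT\Crit_{c}$. The companion identity $\Min^{c+1}\DT \Max^{c}\simeq \Id$ is obtained by the same argument applied to $\Min^{c+1}\DT\Crit_{c}$, or alternatively by invoking the vertical-rotation symmetry of Lemma~\ref{lem:Rotate}.

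The principal obstacle is the direct verification of the base case $c=1$. This requires a careful enumeration of the compatible pairs of idempotents, a complete bookkeeping of the higher $\delta^1_\ell$-actions of $\Min^1$ (organized by paths in the graph $\Gamma$) against the $D$-action of $\Crit_2$, and an explicit choice of cancellation data for the homological perturbation argument. The guiding picture is topological: placing a minimum at position $c$ immediately beneath a maximum at position $c+1$ creates a trivial zig-zag on a single strand, so after the cancellation of all the contractible summands exactly the canonical $DD$ bimodule on the $m$-strand algebras remains.
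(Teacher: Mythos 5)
Your proposal is correct and follows essentially the same route as the paper: reduce the second identity to the first by the vertical-rotation symmetry (Lemma~\ref{lem:Rotate}), reduce general $c$ to $c=1$ using trident moves and the invertibility of the crossing bimodules, and settle the base case by an explicit computation combined with $\CanonDD$-duality (Proposition~\ref{prop:MaxDual} and Theorem~\ref{thm:DDisInvertible}). The only differences are organizational: the paper carries out the base-case computation on the $DA$ side, identifying the generators of $\Min^{1}\DT\Max^{2}$ with idempotents of $\Blg_1$ and computing the actions in Equation~\eqref{eq:SomewhatId} and its companions before pairing with $\CanonDD$, and it reduces general $c$ to $c=1$ by conjugating with a braid $P$ and its inverse rather than by unpacking $\Min^{c}$ via Equation~\eqref{eq:GenMinDef}, but both variants rest on the same lemmas and are interchangeable.
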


\begin{proof}
  Rotating through a vertical axis, i.e. using Lemma~\ref{lem:Rotate},
  we see that $\Min^{c}\DT \Max^{c+1}\simeq \Id$ implies also that
  $\Id\simeq \Min^{c+1}\DT\Max^{c}$.  The verification of $\Min^{c}\DT
  \Max^{c+1}\simeq \Id$ can be reduced to the case where $c=1$ using
  Reidemeister $2$ moves, as follows. First, introduce a sequence of
  $c-1$ positive crossings that carry the $c^{th}$ strand to the far
  left, and let $P$ be the corresponding bimodule; then let $N$ be its
  inverse, i.e. $N\DT P \simeq \Id$. (Clearly, $N$ is obtained from
  $P$ by reversing all the crossings and taking them in reverse
  order.)  (cf. Equation~\eqref{eq:InvertPos}).  In view of
  Lemma~\ref{lem:TridentMoves}, a sequence of trident moves
  identifies $P\DT \Min^{c}\DT \Max^{c+1} \simeq \Min^{1}\DT
  \Max^{2}\DT P$. 
  Thus, if $\Min^{1}\DT\Max^{2}\simeq\Id$, we can conclude that 
  $\Min^{c}\DT \Max^{c+1}\simeq N\DT P\DT \Min^{c}\DT \Max^{c+1} \simeq N\DT \Min^{1}\DT
  \Max^{2}\DT P\simeq N\DT P \simeq \Id$.

  It remains now to verify that  $\Min^{1}\DT \Max^{2}\simeq \Id$.
  We claim that the generators of
  $\lsup{\Blg_1}\Min^{1}_{\Blg_2}\DT^{\Blg_2}\Max^{2}_{\Blg_1}$
  correspond to the idempotents in the algebra $\Blg_1$; see
  Figure~\ref{fig:CapCup}.
  \begin{figure}[ht]
   \input{CapCup.pstex_t}
    \caption{\label{fig:CapCup} 
      {\bf{Generators of ${}^{\Blg_1}\Min^{1}_{\Blg_2}\DT^{\Blg_2}{\Max^{2}_{\Blg_1}}$.}}
      Generators correspond to the idempotents in the incoming algebra
      $\Blg_1$; the constraints on the intermediate generators are as
      shown.}
  \end{figure}
  Specifically, $\Max^2_\Blg$, the generator is constrained to be
  either of type $\YY$ or $\ZZ$; while in $\Min^{1}$,
  the generator is either of type $\XX$ or $\ZZ$. Thus, in the tensor 
  product, we divide the generators into four types
  $\XX \DT \YY$, $\XX \DT \ZZ$, $\ZZ \DT\YY$, and $\ZZ \DT \ZZ$.
  \begin{figure}[ht]
    \input{CapCupGen.pstex_t}
    \caption{\label{fig:CapCupGen} {\bf{Generator types for
          ${}^{\Blg_1}\Min^{1}_{\Blg_2}\DT^{\Blg_2}\Max^{2}_{\Blg_1}$.}}}
  \end{figure}
  We claim that
  \begin{equation}
    \label{eq:SomewhatId}
    \begin{array}{ll}
    \delta^1_2(\XX\DT\YY,L_1)=L_1\otimes (\ZZ\DT \ZZ) &
  \delta^1_2(\ZZ\DT\ZZ,R_1)=R_1\otimes (\XX\DT\YY),
  \end{array}
  \end{equation}
  as can be seen from the following diagrams:
\[
\mathcenter{
  \begin{tikzpicture}[scale=.7]
    \node at (0,1) (cin) {$\Max^{2}_{\Blg_1}$}; 
    \node at (0,-1) (d1) {$\delta^1_1$}; 
    \node at (0,-2.5) (d2) {$\delta^1_2$};
    \node at (2.5,1) (ain) {$\Blg_1$};
    \node at (-2,1) (Ain) {$\Min^{1}_{\Blg_2}$};
    \node at (-2,-4) (Aout) {};
    \node at (-2,-3) (muA) {$\delta^1_3$}; 
    \node at (-3.5,-4) (aout) {}; 
    \node at (0,-4) (cout) {};
    \draw[algarrow,bend left=10] (ain) to node[above,sloped]{\tiny{$L_1$}}(d2) ;
    \draw[modarrow] (Ain) to node[left]{\tiny{$\XX$}}(muA) ;
    \draw[modarrow] (muA) to node[left]{\tiny{$\ZZ$}}(Aout) ;
    \draw[modarrow] (cin) to node[right]{\tiny{$\YY$}}(d1); 
    \draw[modarrow] (d1) to node[right]{\tiny{$\XX$}}(d2);
    \draw[modarrow] (d2) to node[right]{\tiny{$\ZZ$}}(cout);
    \draw[algarrow] (d1) to node[above,sloped] {\tiny{$L_2 L_3$}}(muA); 
    \draw[algarrow] (d2) to node[above,sloped,pos=.3] {\tiny{$L_1$}} (muA); 
    \draw[algarrow] (muA) to node[above,sloped]{\tiny{$L_1$}} (aout);
  \end{tikzpicture}
}
\mathcenter{
  \begin{tikzpicture}[scale=.7]
    \node at (0,1) (cin) {$\Max^{2}_{\Blg_1}$}; 
    \node at (0,-1) (d1) {$\delta^1_2$}; 
    \node at (0,-2.5) (d2) {$\delta^1_1$};
    \node at (2.5,1) (ain) {$\Blg_1$};
    \node at (-2,1) (Ain) {$\Min^{1}_{\Blg_2}$};
    \node at (-2,-4) (Aout) {};
    \node at (-2,-3) (muA) {$\delta^1_3$}; 
    \node at (-3.5,-4) (aout) {}; 
    \node at (0,-4) (cout) {};
    \draw[algarrow] (ain) to node[above,sloped]{\tiny{$R_1$}}(d1) ;
    \draw[modarrow] (Ain) to node[left]{\tiny{$\ZZ$}}(muA) ;
    \draw[modarrow] (muA) to node[left]{\tiny{$\XX$}}(Aout) ;
    \draw[modarrow] (cin) to node[right]{\tiny{$\ZZ$}}(d1); 
    \draw[modarrow] (d1) to node[right]{\tiny{$\XX$}}(d2);
    \draw[modarrow] (d2) to node[right]{\tiny{$\YY$}}(cout);
    \draw[algarrow] (d1) to node[above,sloped] {\tiny{$R_1$}}(muA); 
    \draw[algarrow,pos=.3] (d2) to node[above,sloped] {\tiny{$R_3 R_2$}} (muA); 
    \draw[algarrow] (muA) to node[above,sloped]{\tiny{$R_1$}} (aout);
  \end{tikzpicture}
} \]
If the leftmost strand is downwards, then furthermore 
\begin{equation}
  \label{eq:LeftDown}
  \delta^1_2({\mathbf p}\DT{\mathbf q},U_1)=U_1\otimes ({\mathbf p}\DT{\mathbf q})
\end{equation}
for all choices of ${\mathbf p}\in\{\XX,\ZZ\}$ and ${\mathbf q}\in\{\YY,\ZZ\}$:
when ${\mathbf p}\DT{\mathbf q}=\XX\DT\ZZ$, both sides vanish; for the other cases:
\[
\mathcenter{
  \begin{tikzpicture}[scale=.7]
    \node at (0,.5) (cin) {$\Max^{2}_{\Blg_1}$}; 
    \node at (0,-1) (d1) {$\delta^1_2$}; 
    \node at (0,-2.5) (d2) {$\delta^1_1$};
    \node at (2,.5) (ain) {$\Blg_1$};
    \node at (-2,.5) (Ain) {$\Min^{1}_{\Blg_2}$};
    \node at (-2,-4) (Aout) {};
    \node at (-2,-3) (muA) {$\delta^1_3$}; 
    \node at (-3,-4) (aout) {}; 
    \node at (0,-4) (cout) {};
    \draw[algarrow] (ain) to node[above,sloped]{\tiny{$U_1$}}(d1) ;
    \draw[modarrow] (Ain) to node[left]{\tiny{$\ZZ$}}(muA) ;
    \draw[modarrow] (muA) to node[left]{\tiny{$\ZZ$}}(Aout) ;
    \draw[modarrow] (cin) to node[right]{\tiny{$\ZZ$}}(d1); 
    \draw[modarrow] (d1) to node[right]{\tiny{$\ZZ$}}(d2);
    \draw[modarrow] (d2) to node[right]{\tiny{$\ZZ$}}(cout);
    \draw[algarrow] (d1) to node[above,sloped] {\tiny{$U_1$}}(muA); 
    \draw[algarrow,pos=.3] (d2) to node[above,sloped] {\tiny{$C_2U_3$}} (muA); 
    \draw[algarrow] (muA) to node[above,sloped]{\tiny{$U_1$}} (aout);
  \end{tikzpicture}}
\mathcenter{
  \begin{tikzpicture}[scale=.7]
    \node at (0,.5) (cin) {$\Max^{2}_{\Blg_1}$}; 
    \node at (0,-.6) (d1) {$\delta^1_1$}; 
    \node at (0,-1.6) (d2) {$\delta^1_2$};
    \node at (0,-2.6) (d3) {$\delta^1_1$};
    \node at (2,.5) (ain) {$\Blg_1$};
    \node at (-2,.5) (Ain) {$\Min^{1}_{\Blg_2}$};
    \node at (-2,-4) (Aout) {};
    \node at (-2,-2.8) (muA) {$\delta^1_4$}; 
    \node at (-3,-4) (aout) {}; 
    \node at (0,-4) (cout) {};
    \draw[algarrow,bend left=20] (ain) to node[above,sloped]{\tiny{$U_1$}}(d2) ;
    \draw[modarrow] (Ain) to node[left]{\tiny{$\XX$}}(muA) ;
    \draw[modarrow] (muA) to node[left]{\tiny{$\XX$}}(Aout) ;
    \draw[modarrow] (cin) to node[right]{\tiny{$\YY$}}(d1); 
    \draw[modarrow] (d1) to node[right]{\tiny{$\XX$}}(d2);
    \draw[modarrow] (d2) to node[right]{\tiny{$\XX$}}(d3);
    \draw[modarrow] (d3) to node[right]{\tiny{$\YY$}}(cout);
    \draw[algarrow,pos=.3] (d1) to node[above,sloped,pos=.4] {\tiny{$L_2 L_3$}}(muA); 
    \draw[algarrow] (d2) to node[above,sloped,pos=.1] {\tiny{$U_1$}} (muA); 
    \draw[algarrow] (d3) to node[above,sloped,pos=.1] {\tiny{$R_3R_2$}} (muA); 
    \draw[algarrow] (muA) to node[above,sloped]{\tiny{$U_1$}} (aout);
\end{tikzpicture}}
\mathcenter{
  \begin{tikzpicture}[scale=.7]
    \node at (0,.5) (cin) {$\Max^{2}_{\Blg_1}$}; 
    \node at (0,-1) (d1) {$\delta^1_2$}; 
    \node at (0,-2.5) (d2) {$\delta^1_1$};
    \node at (2,.5) (ain) {$\Blg_1$};
    \node at (-2,.5) (Ain) {$\Min^{1}_{\Blg_2}$};
    \node at (-2,-4) (Aout) {};
    \node at (-2,-3) (muA) {$\delta^1_3$}; 
    \node at (-3,-4) (aout) {}; 
    \node at (0,-4) (cout) {};
    \draw[algarrow] (ain) to node[above,sloped]{\tiny{$U_1$}}(d1) ;
    \draw[modarrow] (Ain) to node[left]{\tiny{$\ZZ$}}(muA) ;
    \draw[modarrow] (muA) to node[left]{\tiny{$\ZZ$}}(Aout) ;
    \draw[modarrow] (cin) to node[right]{\tiny{$\YY$}}(d1); 
    \draw[modarrow] (d1) to node[right]{\tiny{$\YY$}}(d2);
    \draw[modarrow] (d2) to node[right]{\tiny{$\YY$}}(cout);
    \draw[algarrow] (d1) to node[above,sloped] {\tiny{$U_1$}}(muA); 
    \draw[algarrow] (d2) to node[above,sloped,pos=.2] {\tiny{$C_2U_3$}} (muA); 
    \draw[algarrow] (muA) to node[above,sloped]{\tiny{$U_1$}} (aout);
  \end{tikzpicture}}\]
With these computations in hand, it follows that
$\Min^{1}\DT \Max^{2}\DT\CanonDD=\CanonDD$,  so 
$\Min^{1}\DT\Max^{2}\simeq \Id$.

If the leftmost strand is upwards, then  instead of 
Equation~\eqref{eq:LeftDown}, we claim that
$\delta^1_2(\XX\DT{\mathbf q},C_1)=C_1\otimes (\XX\DT{\mathbf q})$
for ${\mathbf q}\in\{\YY,\ZZ\}$; for example,
\[\mathcenter{
  \begin{tikzpicture}[scale=.7]
    \node at (0,1) (cin) {$\Max^{2}_{\Blg_1}$}; 
    \node at (0,-.8) (d1) {$\delta^1_1$}; 
    \node at (0,-2.5) (d2) {$\delta^1_2$};
    \node at (2.5,1) (ain) {$\Blg_1$};
    \node at (-2,1) (Ain) {$\Min^{1}_{\Blg_2}$};
    \node at (-2,-4) (Aout) {};
    \node at (-2,-2.8) (muA) {$\delta^1_3$}; 
    \node at (-3.5,-4) (aout) {}; 
    \node at (0,-4) (cout) {};
    \draw[algarrow,bend left=10] (ain) to node[above,sloped]{\tiny{$C_1$}}(d2) ;
    \draw[modarrow] (Ain) to node[left]{\tiny{$\XX$}}(muA) ;
    \draw[modarrow] (muA) to node[left]{\tiny{$\XX$}}(Aout) ;
    \draw[modarrow] (cin) to node[right]{\tiny{$\YY$}}(d1); 
    \draw[modarrow] (d1) to node[right]{\tiny{$\YY$}}(d2);
    \draw[modarrow] (d2) to node[right]{\tiny{$\YY$}}(cout);
    \draw[algarrow] (d1) to node[above,sloped] {\tiny{$U_2 C_3$}}(muA); 
    \draw[algarrow] (d2) to node[above,sloped] {\tiny{$C_1$}} (muA); 
    \draw[algarrow] (muA) to node[above,sloped]{\tiny{$C_1$}} (aout);
  \end{tikzpicture}
}\]
We also claim that $\delta^1_3(\ZZ\DT\ZZ,R_1,L_1)=C_1\otimes \ZZ\DT \ZZ$,
since
\[
\mathcenter{
  \begin{tikzpicture}[scale=.7]
    \node at (0,1.5) (cin) {$\Max^{2}_{\Blg_1}$}; 
    \node at (0,0) (d1) {$\delta^1_2$}; 
    \node at (0,-1.5) (d2) {$\delta^1_1$};
    \node at (0,-2.5) (d3) {$\delta^1_2$};
    \node at (2,1.5) (ain) {$\Blg_1$};
    \node at (3.5,1.5) (a2in) {$\Blg_1$};
    \node at (-2,1.5) (Ain) {$\Min^{1}_{\Blg_2}$};
    \node at (-2,-4) (Aout) {};
    \node at (-2,-2.5) (muA) {$\delta^1_4$}; 
    \node at (-3.5,-4) (aout) {}; 
    \node at (0,-4) (cout) {};
    \draw[algarrow,bend left=10] (ain) to node[above,sloped]{\tiny{$R_1$}}(d1) ;
    \draw[algarrow,bend left=20] (a2in) to node[above,sloped]{\tiny{$L_1$}}(d3) ;
    \draw[modarrow] (Ain) to node[left]{\tiny{$\ZZ$}}(muA) ;
    \draw[modarrow] (muA) to node[left]{\tiny{$\ZZ$}}(Aout) ;
    \draw[modarrow] (cin) to node[right]{\tiny{$\ZZ$}}(d1); 
    \draw[modarrow] (d1) to node[right]{\tiny{$\XX$}}(d2);
    \draw[modarrow] (d2) to node[right,pos=.3]{\tiny{$\XX$}}(d3);
    \draw[modarrow] (d3) to node[right]{\tiny{$\ZZ$}}(cout);
    \draw[algarrow,bend right=10] (d1) to node[above,sloped,pos=.4] {\tiny{$R_1$}}(muA); 
    \draw[algarrow] (d2) to node[above,sloped,pos=.3] {\tiny{$U_2 C_3$}} (muA); 
    \draw[algarrow] (d3) to node[above,sloped,pos=.3] {\tiny{$L_1$}} (muA); 
    \draw[algarrow] (muA) to node[above,sloped]{\tiny{$C_1$}} (aout);
\end{tikzpicture}}
\]
Again, it is straightforward to verify now that 
$\Min^{1}\DT\Max^{2}\simeq \Id$.
\end{proof}

\subsection{The invariance proof}
\label{sec:InvarianceProof}

We can now assemble the pieces to prove Theorem~\ref{thm:KnotInvariance}; and tie it with the discussion from the introduction.

\begin{proof}[Proof of Theorem~\ref{thm:KnotInvariance}]
  The tensor product description gives a chain complex thanks to a
  combination of Proposition~\ref{prop:AdaptedTensorProducts}, and
  finally Proposition~\ref{prop:FilteredTensorProduct} when attaching
  the final stage, to get a filtered complex.  Combining
  Lemma~\ref{lem:BraidMoves} with the
  invariance of the bimodules under bridge moves
  (Theorem~\ref{thm:BraidRelation},
  Lemmas~\ref{lem:TridentMoves},~\ref{lem:CommuteMaxPos},~\ref{lem:CommuteCrossCritPoints},~\ref{lem:PairCreation})
  shows that (Alexander-)filtered chain homotopy type of $\KC(K)$
  depends only on the pointed knot diagram for $K$, and the orientation
  type of the global minimum, i.e. $LR$ or $RL$ of Lemma~\ref{lem:BraidMoves}. 
  (Keep here in mind that
  homotopy equivalences of $DA$ bimodules induce homotopy equivalences
  under  tensor product.) 
  Choose for definiteness the orientation type $LR$.

  Next, we appeal to Proposition~\ref{prop:ConnectPointedKnotDiagrams} to see that we have a knot invariant.

  Invariance under Reidemeister (1)
  moves follows from the easily checked relation
  $\Pos_c\DT \Max^c_r\simeq \Max^c_{\ell}$. (Here $\Max^c_r$ and
  $\Max^c_{\ell}$ are bimodules associated to maxima, with opposite
  orientations on the new strand.)  Invariance under Reidemeister (2)
  and (3) moves now follows from the braid relations,
  Theorem~\ref{thm:BraidRelation}.

  Arguing in the same manner for $RL$, we obtain another conceivably
  different knot invariant.  A straightforward computation shows that
  \[\TerMin_{\Blg(2,1,\{2\})}=\TerMin_{\Blg(2,1,\{1\})}\DT \lsup{\Blg(2,1,\{1\})}\Pos^1_{\Blg(2,1,\{2\})}.\]
    It follows that both $RL$ and $LR$ give the same knot invariant.

  The Alexander filtration, takes values in the $H^1$ of the knot
  modulo a neighborhood of the global minimum, can be turned into a
  rational number by evaluating against the orientation class of the
  knot. This evaluation is computed by the local formula from
  Figure~\ref{fig:LocalCrossing} according to
  Proposition~\ref{prop:ComputeAlexander}. Since it is the
  exponent of $t$ in the contribution of the corresponding Kauffman
  state to the Alexander polynomial (compare~\cite{Kauffman}), it
  follows that the Alexander filtration takes values in $\Z$.
\end{proof}

\begin{cor}
  \label{cor:KnotInvarianceA}
  The homology of $\KCa(\Diag)$ is a bigraded invariant
  of the underlying oriented knot, whose
  Euler characteristic agrees with the Alexander polynomial.
\end{cor}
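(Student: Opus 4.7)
The plan is to deduce the corollary from Theorem~\ref{thm:KnotInvariance} by passing to the associated graded, identifying generators of $\KCa(\Diag)$ with Kauffman states of $\Diag$, and then invoking Kauffman's state-sum formula for the Alexander polynomial.

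First I would handle the invariance statement. By Theorem~\ref{thm:KnotInvariance}, the filtered chain homotopy type of $\KC(\Diag)$ is an oriented knot invariant. Proposition~\ref{prop:FilteredTensorProduct} identifies $\KCa(\Diag)$ with the associated graded object of $\KC(\Diag)$, obtained by substituting $\TerMina$ for $\TerMin$ in the final stage of the tensor product. Since a filtered homotopy equivalence induces a bigraded homotopy equivalence of associated graded complexes, it follows immediately that the bigraded chain homotopy type of $\KCa(\Diag)$, and hence its bigraded homology, is an oriented knot invariant.

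Next I would identify generators with Kauffman states. Each of the constituent bimodules $\Pos^i$, $\Neg^i$, $\Max^c$, $\Min^c$, and $\TerMina$ has been described with generators in bijection with partial Kauffman states of its local partial knot diagram in the sense of Definition~\ref{def:PartialKnotDiagram}, compatibly with the idempotent decompositions on both boundary slices. Under $\DT$, generators of a tensor product are pairs of generators whose matching idempotents agree, which is precisely the condition that two partial Kauffman states glue to form a partial Kauffman state for the union. Iterating through the slicing of $\Diag$, the generators of $\KCa(\Diag)$ are therefore in natural bijection with the Kauffman states $\States(\Diag)$.

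Then I would match the bigrading. Proposition~\ref{prop:ComputeAlexander} computes the Alexander grading on the crossing bimodules locally as the contribution $A(\state)$ in Figure~\ref{fig:LocalCrossing}; checking that the $\Max^c$, $\Min^c$, and $\TerMina$ bimodules contribute $0$ Alexander grading at their generators (which is forced by the grading set of Section~\ref{subsec:OurGradingSets} together with Propositions~\ref{prop:MaxDual} and~\ref{prop:MinDual}), one obtains that the total Alexander grading of a generator $\state$ equals $A(\state)$. A parallel, bookkeeping verification identifies the Maslov grading of $\state$ with $M(\state)$ of Figure~\ref{fig:LocalCrossing}, up to a global normalization which is fixed by the convention on $\TerMina$.

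Finally, with generators and bigradings identified, the Euler characteristic computation is immediate from Kauffman's state-sum formula~\cite{Kauffman}:
\[
\sum_{d,s}(-1)^d \dim \KHa_d(K,s)\, t^s \;=\; \sum_{\state \in \States(\Diag)}(-1)^{M(\state)} t^{A(\state)} \;=\; \Delta_K(t).
\]
The main obstacle I expect is the careful bookkeeping of gradings across the bimodules, especially pinning down the Maslov grading contributions from the $\Max^c$, $\Min^c$, and $\TerMina$ pieces and verifying that the overall normalization matches Figure~\ref{fig:LocalCrossing} so that Kauffman's formula applies without an extra monomial correction; the Alexander side is already essentially done in Proposition~\ref{prop:ComputeAlexander}.
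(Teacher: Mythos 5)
Your proposal is correct and follows essentially the same route as the paper: invariance is deduced from Theorem~\ref{thm:KnotInvariance} by passing to the associated graded object, and the Euler characteristic is identified with Kauffman's state sum via the correspondence between generators and Kauffman states with the local grading contributions of Figure~\ref{fig:LocalCrossing}. The extra bookkeeping you flag is exactly what the paper handles (tersely) in the construction and in the proof of Theorem~\ref{thm:KnotInvariance} via Proposition~\ref{prop:ComputeAlexander}.
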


\begin{proof}
  Invariance follows from Theorem~\ref{thm:KnotInvariance}, since the
  homology of the associated graded object is invariant under filtered
  homotopy equivalences. The Euler characteristic computation follows
  from the correspondence between the generators and Kauffman states,
  and the observation that that $(-1)^{M(\x)} t^{A(\x)}$ (see
  Figure~\ref{fig:LocalCrossing}) is the monomial associated to a
  Kauffman state in the computation of the Alexander polynomial
  from~\cite{Kauffman}.
\end{proof}

It is a straightforward matter to go from the filtered homotopy type
to the invariant over a polynomial algebra described in the
introduction; see~\cite[Chapter~14]{GridBook}. 
Explicitly, replace the filtered 
module $\TerMin$ with an Alexander graded module $\TerMinM$
over a polynomial algebra $\Field[v]$. For $\Blg(2,1\{2\})$, 
replace Equation~\eqref{eq:ModuleStructure} with
\[\XX \cdot L_1 = v\cdot \YY\qquad \YY\cdot R_1 = v\cdot \XX \qquad \XX\cdot C_2=\YY\cdot C_2=0\]
(and making the analogous construction for $\Blg(2,1,\{1\})$).  In
this construction the base algebra now is $\Field[v]$, and $v$ is
given gradings $A(v)=1/2$ and $M(v)=0$. Replacing
$\TerMin$ with $\TerMinM$ and forming the same tensor product as
before, we arrive at a bigraded complex $\KCm$ over $\Field[v]$. Since
the Alexander grading of generators is integral, we can restrict this
to a complex over $\Field[U]$ with $U=v^2$. According to
Theorem~\ref{thm:KnotInvariance}, the homology of the associated
graded object, thought of as a bigraded module $H'(\orK)$ over $\Field[U]$, is an oriented
knot invariant. Letting $\KHm_{d}(\orK,s)=H'_{d-2s}(\orK,-s)$,
we arrive at the bigraded invariant from the introduction.
Setting $v=0$ (and so $U=0$) clearly recaptures
$\KCa$. Since $U$ drops Alexander grading by one and Maslov grading by
$2$ on $\KHm(K)$, Equation~\eqref{eq:GradedEulerHFm} follows from the fact that the
graded Euler characteristic of $\KCa$ is the Alexander polynomial.

\begin{rem}
  \label{rem:CanUseSubalgebra}
  We have described $\TerMin$ as having two generators.  In fact, to
  define our knot invariants, it suffices to work with the submodule
  $\TerMin \cdot \Idemp{\{1\}}$, since the outputs of our $D$ modules
  are all contained in the subalgebras of $\Blg(m,k,\Upwards)$
  $\left(\sum_{\x\big| 0,m\not\in \x}\Idemp{\x}\right)\cdot 
  \Blg(m,k,\Upwards) \cdot \left(\sum_{\x\big| 0,m\not\in \x}\Idemp{\x}\right)$.
\end{rem}

\newcommand\PartInv{\mathcal C}
\section{First properties}
\label{sec:Properties}

We will discuss efficient computations of these invariants in~\cite{HFKa2};
but
there are some easy computations that can be done readily by hand.  
As
a trivial example, the unknot has one-dimensional $\KHa$,
supported in bigrading $(0,0)$, since it has a diagram with a single
Kauffman state in it.  More generally:

\begin{prop}
  \label{prop:AlternatingKnots}
  If $K$ is an alternating knot, then $\KHa(K)$
  is determined by the signature
  $\sigma=\sigma (K)$ of $K$
  and
  the symmetrized Alexander polynomial
  $\Delta_K(t)=\sum_{i} a_i \cdot t^i$ 
  by $\KHa_{d}(K,s)= \Field^{|a_s|}$
  if $d=s+\frac{\sigma}{2}$, and $0$ otherwise.
\end{prop}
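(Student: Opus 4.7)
The plan is to reduce the proposition to two purely combinatorial facts about alternating diagrams, both of which are classical from Kauffman's work. Fix an alternating diagram $\Diag$ of the knot $K$. By the construction of $\KCa(\Diag)$ as an iterated $\DT$-tensor product, its free generators are in bijection with the Kauffman states of $\Diag$; moreover, by Proposition~\ref{prop:ComputeAlexander} (and the analogous statement for the Maslov grading, which follows from Equation~\eqref{eq:MasGradedCrossing} applied at each crossing and the fact that the critical-point and identity bimodules contribute trivially), the bigrading $(d,s)$ of the generator corresponding to $\state$ is computed additively by the local Kauffman contributions of Figure~\ref{fig:LocalCrossing}. Thus, up to verifying a vanishing of differentials and a counting, the conclusion reduces to the behavior of the functions $M(\state),A(\state)$ over Kauffman states of an alternating diagram.

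First I would establish the key combinatorial lemma: for every Kauffman state $\state$ of an alternating diagram $\Diag$ of $K$,
\[ M(\state) - A(\state) = \tfrac{\sigma(K)}{2}. \]
The idea is to exploit the checkerboard coloring of the regions of $\Diag$. In an alternating diagram, the over/under convention at each crossing is compatible with the two-coloring, so at every crossing exactly two opposite quadrants lie in ``black'' regions and two in ``white'' regions; moreover, for positive crossings the black pair is $\{\NE,\SW\}$ and for negative crossings it is $\{\NW,\SE\}$ (or vice versa, depending on the shading convention). Inspecting Figure~\ref{fig:LocalCrossing} then shows that the quantity $M-A$ contributed at each crossing depends only on whether $\state$ picks a black or white quadrant there. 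Writing $b(\state)$ for the number of black quadrants picked by $\state$, one obtains a linear expression $M(\state)-A(\state) = \alpha \, b(\state) + \beta$, where $\alpha,\beta$ depend only on the numbers of positive and negative crossings. But the bijection of a Kauffman state with a spanning tree of the Tait graph (the black graph, say) forces $b(\state)$ to be \emph{independent of $\state$}, equal to a fixed number determined by $\Diag$. This constancy, combined with the Gordon--Litherland formula expressing $\sigma(K)$ in terms of the Goeritz matrix of the shaded diagram, identifies the constant with $\sigma(K)/2$.

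Once this lemma is in hand, the remaining steps are immediate. Since $\partial$ on $\KCa(\Diag)$ preserves the Alexander grading and decreases the Maslov grading by one, any nontrivial differential would produce a generator with $M-A$ smaller by $1$; but all generators have the same value of $M-A$, so $\partial=0$ and $\KHa(K)\cong \KCa(\Diag)$ as bigraded vector spaces, with all generators concentrated on the diagonal $d = s + \sigma/2$. For the rank count, Kauffman's state-sum formula gives
\[ \Delta_K(t) = \sum_{\state} (-1)^{M(\state)} t^{A(\state)} = (-1)^{\sigma/2}\sum_{\state} (-1)^{A(\state)} t^{A(\state)}, \]
so the $\state$-count with $A(\state)=s$ equals $|a_s|$, as required.

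The main obstacle is the linear-algebraic identification of the constant with $\sigma(K)/2$: showing $M(\state)-A(\state)$ is constant over $\state$ is a direct local/combinatorial argument with the checkerboard coloring, but pinning down the specific value $\sigma/2$ requires interpreting the sum of local Maslov/Alexander contributions as a Goeritz-style invariant and applying the Gordon--Litherland signature formula. Alternatively, one can verify the constant on a single reference alternating diagram (e.g.\ for the $(2,n)$-torus knots, where $\KHa$ can be computed by hand from the $3$-generator trefoil model and induction on crossings) and then propagate to all alternating knots via flype moves, noting that both $\sigma(K)/2$ and the Kauffman state diagonal are flype-invariant.
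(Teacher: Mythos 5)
Your proposal is correct and follows essentially the same route as the paper: generators are Kauffman states bigraded by the local contributions of Figure~\ref{fig:LocalCrossing}, the identity $M-A=\tfrac{\sigma}{2}$ on an alternating diagram forces the differential of $\KCa(\Diag)$ to vanish, and Kauffman's state sum gives the rank count; the paper simply cites~\cite{AltKnots} for $M-A=\tfrac{\sigma}{2}$, where you instead sketch a proof (checkerboard coloring, constancy of the number of shaded quadrants via the spanning-tree bijection, Gordon--Litherland). The only caveat is your closing alternative: flypes relate different diagrams of the \emph{same} alternating knot, so verifying the constant on $(2,n)$-torus knots cannot be ``propagated'' to other alternating knots that way---but this aside is unnecessary, since your primary Gordon--Litherland argument suffices.
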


\begin{proof}
  As in~\cite{AltKnots}, this result is simply a consequence of
  the local contributions to the Alexander and Maslov gradings
  (pictured in Figure~\ref{fig:LocalCrossing}).
  Those formulas show that for an alternating diagram 
  $M-A=\frac{\sigma}{2}$. The rest now follows from the 
  interpretation of the Alexander polynomial in terms of Kauffman states~\cite{Kauffman}.
\end{proof}

We list a few of the properties that follow immediately from the
constructions from this paper. The methods of this paper are best suited 
for $\KHa$; analogous results for $\KHm$ will be established in
a follow-up paper~\cite{HFKa2}.

\begin{prop}
  If $K$ and $K'$ are mirror knots, then
  $\KHa_d(K,s)\cong \KHa_{-d}(K',-s)$.
\end{prop}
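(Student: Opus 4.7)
The plan is to produce from a knot diagram $\Diag$ of $K$ a diagram $\Diag'$ of $K'$ so that $\KCa(\Diag')$ is isomorphic, as a bigraded chain complex, to the dual complex $\overline{\KCa(\Diag)}$ (with both gradings negated per Equation~\eqref{eq:OppositeTypeA}). Since $\Field=\Zmod{2}$ and $\KCa$ is finite-dimensional in each bidegree, taking homology then yields $\KHa_{-d}(K',-s)\cong \KHa_d(K,s)^*\cong \KHa_d(K,s)$.

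Given a pointed knot diagram $\Diag$ for $K$, let $\Diag'$ be obtained by reversing over- and under-strand at every crossing while leaving the planar projection, orientation, and all maxima and minima unchanged. Since the mirror operation on $K\subset \R^3$ can be realized as reflection through the plane of projection, $\Diag'$ is indeed a diagram for $K'$.  Inspecting Figure~\ref{fig:LocalCrossing} and comparing positive versus negative crossings, one sees that the Alexander and Maslov contributions of each quadrant are negated under this operation, so at the level of Kauffman states we already have a bijection between generators of $\KCa(\Diag)$ and $\KCa(\Diag')$ under which $(d,s)\mapsto (-d,-s)$.

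The next step is to upgrade this to a chain-level statement via the tensor product description.  Each positive crossing of $\Diag$ contributes a bimodule $\Pos^i$, each negative crossing a $\Neg^i$, each local maximum or minimum a $\Max^c$ or $\Min^c$, and the global minimum contributes the terminal type $A$ module $\TerMina$.  Mirroring swaps $\Pos^i$ with $\Neg^i$ and leaves all other pieces fixed.  By Proposition~\ref{lem:OppositeBimodules}, together with the involution $\Blg\cong \Blg^{\op}$ of Equation~\eqref{eq:OppositeIsomorphism}, the bimodules $\Pos^i$ and $\Neg^i$ are opposite to one another, while $\Max^c$ and $\Min^c$ are each self-opposite; a short direct computation gives the analogous self-duality for $\TerMina$.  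Using that the opposite functor commutes with the bimodule tensor product $\DT$ (after the above identifications of $\Blg$ with $\Blg^{\op}$ on each slice), one then concludes that $\KCa(\Diag')\cong \overline{\KCa(\Diag)}$ as bigraded chain complexes, and taking homology finishes the proof.

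The main obstacle is verifying that the opposite functor genuinely commutes with $\DT$ in our setting: the opposite of a left/right $DA$ bimodule is, \emph{a priori}, a right/left $AD$ bimodule over the opposite algebras, so one must carefully use the involutions of Section~\ref{subsec:Symmetry} to reinterpret these as $DA$ bimodules over the same algebras in the same direction, and then check that this reinterpretation is compatible with $\DT$ on each tensor factor.  Handling the terminal piece $\TerMina$ requires particular care, since its Alexander gradings arise from the filtration on $\TerMin$; once one checks that the chosen splitting makes $\TerMina$ self-opposite on the nose, the entire tensor product is genuinely bigrading-preserving under the mirror identification, as needed.
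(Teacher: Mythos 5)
Your proposal is correct and follows essentially the same route as the paper: the paper's proof also reverses all crossings, invokes Proposition~\ref{lem:OppositeBimodules} to identify the mirror complex with the tensor product of the opposite bimodules, concludes that $\KCa(K)$ and $\KCa(K')$ are dual complexes, and finishes by dualizing over the field $\Field$ (via the universal coefficient theorem). The extra verifications you flag (compatibility of the opposite functor with $\DT$ and self-duality of the terminal piece) are exactly the content the paper leaves implicit in that lemma, so there is no substantive difference in approach.
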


\begin{proof}
  According to Lemma~\ref{lem:OppositeBimodules}, the complex of
  $\KCa(K')$ is computed by tensoring together the opposites of the
  various bimodules used to compute $\KCa(K)$. It follows that
  $\KCa(K)$ and $\KCa(K')$ are dual complexes. Since we are working
  over the field $\Field$, the proposition now follows from the
  universal coefficient theorem.
\end{proof}

\begin{prop}
  \label{prop:ConnSum}
  Let $K_1$ and $K_2$ be two knots. Then,
  $\KHa(K_1\# K_2)$ is the bigraded tensor product of $\KHa(K_1)$
  and $\KHa(K_2)$.
\end{prop}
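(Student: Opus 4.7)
The plan is to choose a bridge diagram for $K_1 \# K_2$ whose associated chain complex visibly factors as a tensor product, and then to apply the knot invariance theorem (Theorem~\ref{thm:KnotInvariance}). Fix bridge diagrams $\Diag_i$ for $K_i$ ($i=1,2$), with basepoints $p_i$ at their respective global minima. I would construct $\Diag$ by beginning with $\Diag_1$, and just above $p_1$ (below all other critical points and crossings of $\Diag_1$) performing a pair creation along one arc of $p_1$'s cup, introducing a local maximum directly above a local minimum; by Lemma~\ref{lem:PairCreation} this preserves the knot type. I would then splice $\Diag_2$ into the bubble so produced via a band sum with one of the bubble's strands, producing a bridge diagram for $K_1 \# K_2$ in which all crossings and critical points of $\Diag_2$ lie strictly between the new $\Max$ and $\Min$, on strands disjoint from those involved in any of $\Diag_1$'s bimodules.

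Next, I would use the commutation lemmas (Lemma~\ref{lem:CommuteCrossCritPoints} and Lemma~\ref{lem:CommuteCritCrit}) to rearrange so that the bimodules coming from $\Diag_2$ form a single contiguous block, sandwiched between the new $\Max$ and $\Min$, with the $\Diag_1$-bimodules lying above. In this configuration the $\Diag_2$-bimodules act on a set of strands disjoint from the strands acted upon by $\Diag_1$'s bimodules, so at the chain level
\[ \KCa(\Diag) \;\simeq\; \bigl(\Diag_1\text{-part}\bigr) \DT \bigl[\,\Max \DT (\Diag_2\text{-part}) \DT \Min\,\bigr] \DT \TerMina. \]
Because $\Min \DT \Max \simeq \Id$ and $\Diag_2$'s strands are disjoint from $\Diag_1$'s cup strands, the bracketed factor, restricted to $\Diag_2$'s strands, is precisely the tensor product forming $\KCa(K_2)$, while the outer pieces assemble to $\KCa(K_1)$. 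The Alexander and Maslov gradings add across tensor products, so passing to homology yields the desired bigraded isomorphism.

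The hard part will be making rigorous the claim that $\Diag_1$'s cup strands act as the identity across the $\Diag_2$-block, so that the chain complex really splits as a tensor product of chain complexes rather than merely being quasi-isomorphic to a more complicated construction. The natural tool is Proposition~\ref{prop:AdaptedTensorProducts}: within the $\Diag_2$-block, the underlying one-manifold splits as a disjoint union of $\Diag_2$'s portion and two trivial vertical strands coming from $\Diag_1$'s cup, and each bimodule is adapted to this decomposition; so the grading sets split as products and the bimodule actions split as external tensor products over the relevant summands of the algebra $\Blg(m,k,\Upwards)$. Once this bookkeeping on idempotents and grading data is in place, the tensor product structure descends cleanly to the bigraded homology, giving $\KHa(K_1 \# K_2)\cong \KHa(K_1)\otimes \KHa(K_2)$.
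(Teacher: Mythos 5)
Your construction stalls exactly where you flag the ``hard part,'' and the tool you propose cannot fix it. The bracketed factor $\Max\DT(\Diag_2\text{-part})\DT\Min$ is not ``precisely the tensor product forming $\KCa(K_2)$'': the complex $\KCa(K_2)$ is built by capping $\Diag_2$ with the one-generator global-maximum type $D$ structure and with the terminal type $A$ module $\TerMina$, whereas in your sandwich the $K_2$ block is closed off by the interior bimodules $\Max^c$ and $\Min^c$, which have several generator types, nontrivial $\delta^1_1$ outputs ($L_cL_{c+1}$, $R_{c+1}R_c$, $C_cU_{c+1}$, etc.), and many higher actions; moreover every bimodule in the block is taken over algebras whose strands include the $\Diag_1$ arcs passing through that region, and the connect-sum band joins the two sets of strands, so the underlying one-manifold does \emph{not} split as a disjoint union there. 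You also cannot invoke $\Min\DT\Max\simeq\Id$ with the $\Diag_2$ block sitting between them--that would be assuming the splitting you are trying to prove. Finally, Proposition~\ref{prop:AdaptedTensorProducts} only guarantees that the relevant infinite sums are finite (adaptedness), and says nothing about bimodule actions factoring as external tensor products over idempotent summands; so the ``bookkeeping on idempotents and grading data'' you defer to it is precisely the missing argument, not a consequence of it.

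The paper's proof avoids this by choosing the connected-sum diagram so that the two partial diagrams sit side by side \emph{above} the two bottom minima. Then the partial invariant of the disjoint union genuinely factors: by Lemma~\ref{lem:Subalgebra} the outputs land in $\Idemp{\{1,3\}}\cdot\Blg(4,2,\{1,3\})\cdot\Idemp{\{1,3\}}$, which is identified with the tensor product of the two one-strand subalgebras, giving $\PartInv(K_1\cup K_2)=\PartInv(K_1)\otimes\PartInv(K_2)$. The remaining issue--that the actual bottom piece $P$ (the connect-sum minimum together with the global minimum) might pair differently from the trivial one-generator module $N$ whose pairing gives $\KCa(K_1)\otimes\KCa(K_2)$--is handled by homological perturbation (Lemma~\ref{lem:DoverHomology}), replacing each factor by a type $D$ structure over the small algebra $\Field[E,U]/E^2$, and then an explicit weight computation ($\sum w_1=\sum w_4=0$, $\sum w_2=\sum w_3$ for any nonzero action on $P$) showing that every nontrivial algebra output ($E_1$, $E_3$, powers of $U_2$, $U_4$) acts trivially on $P$; hence $P$ and $N$ yield isomorphic complexes and $\KCa(K_1\#K_2)\cong\KCa(K_1)\otimes\KCa(K_2)$. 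Some argument of this kind--showing that the capping modules kill all the extra algebra traffic between the two factors--is what your proposal still needs.
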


Before turning to the proof, we start with some more general properties.

Let $\Diag$ be a knot diagram with a single global minimum, oriented
up and to the left. Let $\Blg=\Blg(2,1\{1\})$ and let
$\lsup{\Blg}\PartInv(\Diag)$ denote
the type $D$ structure associated to the diagram with minimum removed.

\begin{lemma}
  \label{lem:Subalgebra}
  Let $\Blg=\Blg(2,1,\{1\})$
  and $\Blg'=\Idemp{\{1\}}\cdot\Blg\cdot\Idemp{\{1\}}$.
  The output algebra for $\lsup{\Blg}\PartInv(\Diag)$ is
  contained in $\Blg'$;
  i.e. if $\lsup{\Blg}[i]_{\Blg'}$ denotes the bimodule associated to 
  the inclusion $i\colon \Blg'\to \Blg$, then we have a type $D$ structure 
  $\lsup{\Blg'}\PartInv(\Diag)$ so that
  $\lsup{\Blg}[i]_{\Blg'}\DT~\lsup{\Blg'}\PartInv(\Diag)
  =\lsup{\Blg}\PartInv(\Diag)$.
\end{lemma}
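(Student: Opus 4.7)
The plan is to show that every generator of $\lsup{\Blg}\PartInv(\Diag)$ has left idempotent $\Idemp{\{1\}}$, from which the desired factorization follows by the idempotent compatibility of type $D$ structures. The heart of the argument is topological: for the upper diagram obtained by removing the global minimum from $\Diag$, only one of the three basic idempotents of $\Blg(2,1,\{1\})$ is compatible with the combinatorics of upper Kauffman states.

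First I would observe that the generators of $\lsup{\Blg}\PartInv(\Diag)$, obtained as an iterated $\DT$ tensor product of the bimodules associated to crossings, local minima, and maxima lying above the global minimum, are in bijection with upper Kauffman states on the upper diagram, in the sense of Definition~\ref{def:PartialKauffman}. This is essentially built into the constructions of Sections~\ref{sec:CrossingDA}--\ref{sec:Minimum}: each factor bimodule $\Pos^i$, $\Neg^i$, $\Max^c$, or $\Min^c$ has generators indexed by partial Kauffman states on the corresponding strip, and these concatenate under $\DT$ to produce upper Kauffman states on the composite partial diagram. The left idempotent of such a generator records which intervals on the bottom slice (just above the removed minimum) are marked by the associated upper Kauffman state.

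Next I would analyze the bottom slice. The two strands of the removed minimum cut it into three intervals, corresponding to the three basic idempotent states $\{0\}$, $\{1\}$, $\{2\}$ of $\Blg(2,1,\{1\})$. With the global minimum removed, the region formerly enclosed by the cup is no longer bounded; the unbounded region of the upper diagram extends around the sides and bottom of the projection, meeting both the leftmost interval (state $\{0\}$) and the rightmost one (state $\{2\}$), while the middle interval (state $\{1\}$) lies inside a region bounded above by the rest of the knot projection. The compatibility condition in Definition~\ref{def:PartialKauffman} that ``the unbounded region meets none of the intervals in the idempotent $I$'' therefore forces $I = \Idemp{\{1\}}$.

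The conclusion is then immediate: since every generator $x$ satisfies $\Idemp{\{1\}}\cdot x = x$, every term $a\otimes x'$ appearing in $\delta^1(x)$ has $a\in\Idemp{\{1\}}\cdot \Blg\cdot \Idemp{\{1\}} = \Blg'$, so the $\delta^1$ maps factor through $\Blg'\otimes_{\IdempRing(\Blg')} X$, producing a type $D$ structure $\lsup{\Blg'}\PartInv(\Diag)$ over $\Blg'$ which tensors with $\lsup{\Blg}[i]_{\Blg'}$ (as in Example~\ref{ex:MorphismBimodule}) to reproduce $\lsup{\Blg}\PartInv(\Diag)$. The only mild technical point is the bookkeeping to identify generators of the iterated tensor product with global upper Kauffman states; once this is in place, the topological characterization of the unbounded region closes the argument.
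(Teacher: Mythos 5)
Your strategy is, underneath the Kauffman-state language, essentially the paper's own induction: the idempotent restriction is imposed at the global maximum (whose single generator $\ZZ$ satisfies $\Idemp{\{1\}}\cdot\ZZ=\ZZ$) and then propagated down strip by strip; and once one knows that every generator of $\lsup{\Blg}\PartInv(\Diag)$ has left idempotent $\Idemp{\{1\}}$, your final deduction—that all $\delta^1$ outputs land in $\Idemp{\{1\}}\cdot\Blg\cdot\Idemp{\{1\}}=\Blg'$ and hence the structure factors through $\lsup{\Blg}[i]_{\Blg'}$—is correct and immediate.

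The gap is the step you set aside as ``mild bookkeeping.'' The identification of generators of the iterated tensor product with upper Kauffman states in the sense of Definition~\ref{def:PartialKauffman}—in particular with states obeying the clause that the unbounded region meets no interval of the idempotent—is not built into the construction. The construction deliberately uses the unrestricted algebras (Equation~\eqref{eq:Unrestricted}), and the constituent bimodules really do have generators whose idempotents mark the outermost intervals: the $\West$ generator of $\Pos^{1}$ has $0$ in its incoming idempotent, and the preferred idempotent states defining $\GenMin^{1}$ include those with $\x\cap\{0,1,2\}=\{0\}$ or $\{0,2\}$. What excludes such generators from $\PartInv(\Diag)$ is precisely a check, for each of $\Max^c$, $\Min^c$, $\Pos^i$, $\Neg^i$, that if the idempotent fed in from above avoids the two outermost intervals then so does the idempotent handed down below (so that, starting from the global maximum, the condition descends through every strip); this is exactly the content of the paper's proof, which introduces the subalgebras $\Blg'(m,k,\Upwards)$ spanned by the idempotents $\Idemp{\x}$ with $0,m\notin\x$ and asserts that each constituent bimodule preserves them. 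So your appeal to the unbounded-region clause presupposes the very propagation statement the lemma is asserting: the topological picture you give (the two outer intervals of the bottom slice lie in the unbounded region, forcing $I=\Idemp{\{1\}}$) correctly explains why the statement holds, but to turn it into a proof you must carry out the per-bimodule verification that you deferred—which is exactly where the paper (tersely) places the work.
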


\begin{proof}
  For  $m,k$ with $0\leq k\leq m+1$
  and $\Upwards\subset\{1,\dots,m\}$, 
  let 
  $\Blg'(m,k,\Upwards)\subset \Blg(m,k,\Upwards)$, be the subalgebra
  $\left(\sum_{\x\big| 0,m\not\in \x}\Idemp{\x}\right)\cdot
  \Blg(m,k,\Upwards) \cdot \left(\sum_{\x\big| 0,m\not\in
      \x}\Idemp{\x}\right)$. We claim that all of the $DA$ bimodules
  $\Max^c$, $\Min^c$, $\Pos^i$, $\Neg^i$ have the property that their
  restriction to $\Blg'\subset \Blg$ (with appropriate decorations) have their
  output algebras in the corresponding $\Blg'\subset\Blg$.
  Moreover, the global maximum has output algebra contained in $\Blg'$
  (c.f. Subsection~\ref{subsec:SmallMaximum}). 
\end{proof}

Recall that $\Blg'=\Idemp{\{1\}}\cdot\Blg(2,1,\{1\}) \cdot\Idemp{\{1\}}$
is $\Field[C_1,U_1,U_2]/C_1^2=0$, with $d C_1=U_1$, with gradings
\[\begin{array}{lll}(w_1(U_1),w_2(U_1))=(1,0),&
(w_1(C_1),w_2(C_1))=(1,0),&
(w_1(U_2),w_2(U_2))=(0,1).
\end{array}\]
It has a subalgebra $\Blg''\subset\Blg'$ isomorphic to 
$\Field[E_1,U_2]/E_1^2=0$ (with vanishing differential), and 
$(w_1(E_1),w_2(E_1))=(1,1)$ and $(w_1(U_2),w_2(U_2))=(0,1)$.
The subalgebra is specified by $E_1=C_1 U_2$.

\begin{lemma}
  \label{lem:DoverHomology}
  With $\Blg=\Blg(2,1,\{1\})$,
  the type $D$ module $\lsup{\Blg}\PartInv(\Diag)$ is homotopy equivalent
  to a type $D$ module over $\Blg''\subset\Blg$.
\end{lemma}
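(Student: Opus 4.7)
The plan is to reduce the lemma to the purely algebraic statement that the inclusion $\iota\colon\Blg''\hookrightarrow\Blg'$ is a quasi-isomorphism of DG algebras, after which the homotopy equivalence follows from homological perturbation along the quasi-isomorphism, in the spirit of Lemma~\ref{lem:HomologicalPerturbation2}.

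First I would verify the key algebraic observation that $U_1 U_2 = 0$ in $\Blg'$. This follows from Proposition~\ref{prop:IdentifyJ} applied to $\x=\y=\{1\}$: taking $i=0$, $j=2$ (with the only intermediate index $1\in\x\cap\y$), the interval $[1,2]$ is a generating interval, so $U_1 U_2\in\Ideal(\{1\},\{1\})$. Consequently $d(E_1)=d(C_1 U_2)=U_1 U_2=0$ in $\Blg'$, so $E_1$ is a cycle and $\Blg''$ is a sub-DG-algebra of $\Blg'$ with vanishing differential. A direct cycles-modulo-boundaries computation then identifies $H(\Blg')$ with $\Blg''$: a cycle $a + C_1 b$ must have $U_1 b = 0$, which (using $U_1 U_2 = 0$) forces $b\in U_2\Field[U_2]$, and the boundaries are $U_1\Field[U_1]$. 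In particular, $\iota$ is a DG algebra quasi-isomorphism. One obtains an explicit chain-level deformation retract: the projection $g\colon\Blg'\to\Blg''$ vanishing on $U_1^a$ ($a\ge 1$), on $C_1$, and on $C_1 U_1^a$ ($a\ge 1$); together with the contracting homotopy $T\colon\Blg'\to\Blg'$ defined by $T(U_1^a)=C_1 U_1^{a-1}$ for $a\ge 1$ and $T=0$ on the other basis elements. The identities $g\iota=\Id$, $\Id-\iota g=dT+Td$, $T^2=gT=T\iota=0$ are routine to check on explicit basis vectors.

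The second step is the transfer itself. Starting from $\lsup{\Blg'}\PartInv(\Diag)$, which is supplied by Lemma~\ref{lem:Subalgebra}, I would construct a type $D$ module $\lsup{\Blg''}Y$ with the same underlying $\ground$-module, and with structure map $\delta^1_Y\colon Y\to\Blg''\otimes Y$ defined by a tree formula: iterate $\delta^1_{\PartInv}$ some number of times, apply $T$ to the intermediate algebra outputs, collect them via $\mu_2^{\Blg'}$, and apply $g$ at the final node. This is analogous to the formula of Figure~\ref{fig:Delta1}, but with the roles of algebra and module exchanged. Using the identities for $(\iota,g,T)$ one then verifies that $\delta^1_Y$ satisfies the $\Blg''$-type $D$ structure equation and that $\lsup{\Blg'}[\iota]_{\Blg''}\DT \lsup{\Blg''}Y$ is homotopy equivalent to $\lsup{\Blg'}\PartInv$ as type $D$ modules over $\Blg'$, hence over $\Blg$.

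The main obstacle is the bookkeeping for this tree formula and the verification that the perturbed structure equation holds; the existing Lemma~\ref{lem:HomologicalPerturbation2} handles retracts on the module side, while here we are perturbing on the algebra side, so the combinatorics must be reworked (though the underlying principle is standard). A conceptually cleaner alternative would be to show directly that the $DA$ bimodule $\lsup{\Blg'}[\iota]_{\Blg''}$ is quasi-invertible, with quasi-inverse built from $(g,T)$ via a construction analogous to Section~\ref{subsec:Duality}, and then deduce the lemma by tensoring $\lsup{\Blg'}\PartInv$ with the quasi-inverse.
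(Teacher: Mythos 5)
Your proposal is correct and follows essentially the same route as the paper: the paper's proof simply restricts the output algebra to $\Blg'$ via Lemma~\ref{lem:Subalgebra} and then invokes homological perturbation theory, citing that the inclusion $\Blg''\subset\Blg'$ is a homotopy equivalence. Your verification that $U_1U_2=0$ in $\Blg'$ (so that $dE_1=0$ and the inclusion is a quasi-isomorphism), together with the explicit retraction data $(g,T)$ and the transfer of the type $D$ structure, just fills in details the paper leaves implicit.
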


\begin{proof}
  Use Lemma~\ref{lem:Subalgebra} to restrict to $\Blg'$.
  This result then follows from homological perturbation theory, since
  the inclusion $\Blg''\subset\Blg'$ is a homotopy equivalence.
\end{proof}

\begin{proof} [Proof of Proposition~\ref{prop:ConnSum}]
  Consider a connected sum diagram for $K_1$ and $K_2$, where the
  connected sum region is taken to be the global minimum and the next
  minimum above it, as pictured in Figure~\ref{fig:ConnSum}.

\begin{figure}[ht]
\input{ConnSum.pstex_t}
\caption{\label{fig:ConnSum} {\bf{Connected sums.}}}
\end{figure}

The invariant associated to the
disjoint union of $K_1$ and $K_2$ (missing the two minima)
is a  type $D$ structure $\PartInv(K_1\cup K_2)$
over $\Blg(4,2,\{1,3\})$. Arguing as in Lemma~\ref{lem:Subalgebra},
the output algebra of this $D$ module is contained in 
$\Idemp{\{1,3\}}\cdot \Blg(4,2,\{1,3\})
\cdot \Idemp{\{1,3\}}$. 
Note that there is a natural identification
\[ \left(\Idemp{\{1\}}\cdot \Blg(2,1,\{1\})\cdot \Idemp{\{1\}}\right)
\otimes\left(\Idemp{\{1\}}\cdot \Blg(2,1,\{1\})\cdot \Idemp{\{1\}}\right)
\cong 
\Idemp{\{1,3\}}\cdot \Blg(4,2,\{1,3\})
\cdot \Idemp{\{1,3\}};
\]
and under this identification, $\PartInv(K_1\cup K_2)=\PartInv(K_1)\otimes\PartInv(K_2)$.

Consider the type $A$ structure $N$ over
$\Blg(4,2,\{1,3\})$ with one generator $\XX$ satisfying
$\XX\cdot \Idemp{\{1,3\}}=\XX$,
and trivial action by all other pure algebra elements.
Clearly, 
\begin{equation}
  \label{eq:Clearly}
  N\DT (\PartInv(K_1\cup K_2))\cong \KCa(K_1)\otimes \KCa(K_2)
\end{equation}

Consider next the type $A$ structure $P$ over $\Blg(4,2,\{1,3\})$
obtained by tensoring together a minimum with a global minimum to obtain the type $A$ structure pictured on the right in Figure~\ref{fig:ConnSum}.
We restrict to the idempotent type $P\cdot \Idemp{\{1,3\}}$.

It is easy to see that $P$ has actions $m_2(\XX,\Idemp{\{1,3\}})=\XX$.
Also,  if 
$m_{\ell+1}(\XX,a_1,\dots,a_{\ell})\neq 0$, then
$\sum_{i=1}^{\ell} w_1(a_i)=0=\sum_{i=1}^{\ell} w_4(a_i)$
and
\begin{equation}
  \label{eq:w2w3}
  \sum_{i=1}^{\ell} w_{2}(a_i)=\sum_{i=1}^{\ell} w_3(a_i).
\end{equation}

Let 
$\Blg'=\Idemp{\{1,3\}}\cdot \Blg(4,2,\{1,3\})\cdot\Idemp{\{1,3\}}$,
and $\Blg''$ be the subalgebra isomorphic to 
$\Field[E_1,E_3,U_2,U_4]/E_1^2=E_2^2=0$,
were $E_1=C_1 U_2$ and $E_2 =C_3 U_4$.
Lemma~\ref{lem:DoverHomology} gives 
type $D$ structures 
$\lsup{\Blg''(2,1,\{1\})}Q_1$ and 
$\lsup{\Blg''(2,1,\{1\})}Q_2$
so that
\[ \lsup{\Blg(2,1,\{1\})}\PartInv(K_1)\simeq Q_1\qquad
\text{and}\qquad 
\lsup{\Blg(2,1,\{1\})}\PartInv(K_2)\simeq Q_2;\]
and so 
$\lsup{\Blg''}(Q_1\otimes Q_2)
\simeq~\lsup{\Blg'}\PartInv(K_1\cup K_2)$.

  Since $w_1(E_1)>0$, we see that the action by $E_1$ on $P$ is trivial.
  Next, we claim that  any output algebra
  element in $Q_1$ of the form $U_2^t$ with $t\geq 0$
  must pair with some action on $P$ with $w_2>0$;
  but such an action must also involve an output from $Q_2$ with $w_3>0$
  (by Equation~\eqref{eq:w2w3}),
  and so it must come from some multiple of  $E_3$.
  But any sequence containing a non-trivial factor of
  $E_3$ in it acts trivially on $P$, since  $w_4(E_3)>0$.
  Similarly, outputs from $Q_2$ containing a non-trivial multiple of
  $E_3$ or $U_4$ act trivially since $w_4$ of both of those algebra elements
  are trivial.

  We have shown that the outputs from $Q_1\otimes Q_2$ giving non-zero
  differentials when paired with $P$ consist only of the trivial
  output ($1$), showing that 
  \[N\DT (Q_1\otimes Q_2)\cong P\DT (Q_1\otimes Q_2)=\KCa(K_1\#K_2).\]
  Combining this with Equation~\eqref{eq:Clearly}, we get an isomorphism
  of chain complexes $\KCa(K_1\# K_2)\cong \KCa(K_1)\otimes \KCa(K_2)$;
  and the proposition follows readily.
\end{proof}


\bibliographystyle{plain}
\bibliography{biblio}

\end{document}